\tikzset{node distance=2cm, auto}
	\numberwithin{equation}{section}
    \theoremstyle{plain}
	\newtheorem*{lem*}{Lemma}
	\newtheorem*{thrm*}{Theorem}
    \newtheorem*{prop*}{Proposition}
    \newtheorem*{cor*}{Corollary}   
    \theoremstyle{definition}
    \newtheorem*{ex*}{Example}
	\newtheorem*{fact*}{Fact}
	\newtheorem*{not*}{Notation}
	\newtheorem*{defn*}{Definition}
    \theoremstyle{remark}
	\newtheorem*{rmk*}{Remark}
    \theoremstyle{definition}
	\newtheorem{defn}{Definition}[section]
	\newtheorem{ex}[defn]{Example}
    \newtheorem{ass}[defn]{Assumption}
    \theoremstyle{remark}
	\newtheorem{rmk}[defn]{Remark}
    \theoremstyle{plain}
	\newtheorem{prop}[defn]{Proposition}
    \newtheorem{thrm}[defn]{Theorem}
	\newtheorem{cor}[defn]{Corollary}
	\newtheorem{lem}[defn]{Lemma}
	\newcommand{\cat}{\mathtt}
	\newcommand{\st}[1]{\underline{\mathtt{#1}}}		
	\newcommand{\cal}{\mathcal}
		\newcommand{\A}{\cat{A}}
		\newcommand{\B}{\cat{B}}
		\newcommand{\cC}{\cat{C}}
		\newcommand{\G}{\cat{G}}
		\newcommand{\T}{\cat{T}}
		\newcommand{\W}{\cat{W}}
		\newcommand{\F}{\cat{F}}
		\newcommand{\jP}{\cat{P}}
		\newcommand{\Per}{\cat{Per}}
		\DeclareMathOperator{\Coh}{Coh}	
		\DeclareMathOperator{\Perf}{Perf}	
		\DeclareMathOperator{\St}{St}
	\newcommand{\M}{\cal M}
	\newcommand{\cCoh}{\underline{\Coh}}
	\newcommand{\Hilb}{\text{Hilb}}
	\newcommand{\Pilb}{\text{Pilb}}
	\newcommand{\Pair}{\cat{Pair}}
	\newcommand{\uPair}{\st{Pair}}
	\newcommand{\X}{\mathcal X}
	\newcommand{\Mum}{\mathfrak{Mum}}
	\DeclareMathOperator{\pt}{pt}
	\let\ss\relax
	\DeclareMathOperator{\ss}{ss}
	\newcommand{\uA}{\underline{\cat{A}}}
	\newcommand{\uC}{\underline{\cat{C}}}
	\newcommand{\uT}{\underline{\cat{T}}}
	\newcommand{\uF}{\underline{\cat{F}}}
    \newcommand{\uP}{\underline{\cat{P}}}
    \newcommand{\uW}{\underline{\cat{W}}}
    \newcommand{\uM}{\underline{\cal{M}}}
	\newcommand{\C}{\mathbf C}
	\newcommand{\Q}{\mathbf{Q}}
	\newcommand{\R}{\mathbf{R}}
	\newcommand{\Z}{\mathbf Z}
	\newcommand{\N}{\mathbf N}
	\newcommand{\D}{\mathbf D}
	\newcommand{\ID}{\mathbf 1}
	\newcommand{\LL}{\mathbf{L}}
	\DeclareMathOperator{\rk}{rk}
	\DeclareMathOperator{\ch}{ch}
	\DeclareMathOperator{\coker}{coker}
	\DeclareMathOperator{\cok}{coker}
    \DeclareMathOperator{\im}{im}
	\DeclareMathOperator{\Ext}{Ext}
	\DeclareMathOperator{\Hom}{Hom}
	\DeclareMathOperator{\CHom}{\!CHom}
	\DeclareMathOperator{\Seq}{Seq}
    \DeclareMathOperator{\Aut}{Aut}
	\DeclareMathOperator{\lHom}{\underline{Hom}}
	\DeclareMathOperator{\lRHom}{R\underline{Hom}}
	\DeclareMathOperator{\Hsc}{H_{\text{sc}}}
	\DeclareMathOperator{\Sym}{Sym}
	\newcommand{\Pic}{\text{Pic}}
	\newcommand{\hE}{\mathcal{E}}
	\newcommand{\hI}{\mathcal{I}}
        \newcommand{\hC}{\mathcal{C}}
        \newcommand{\hD}{\mathcal{D}}
        \newcommand{\hM}{\mathcal{M}}
        \newcommand{\hU}{\mathcal{U}}
    \DeclareMathOperator{\eff}{eff}
    \DeclareMathOperator{\mr}{mr}
    \DeclareMathOperator{\exc}{exc}
    \DeclareMathOperator{\nexc}{n-exc}
    \DeclareMathOperator{\reg}{reg}
    \DeclareMathOperator{\Var}{Var}
    \DeclareMathOperator{\gr}{gr}
    \DeclareMathOperator{\Sc}{sc}
    \DeclareMathOperator{\eex}{ex}
	\newcommand{\os}{\mathcal O}
	\newcommand{\hO}{\mathcal O}
	\newcommand{\Tz}{\cat T_{\zeta, (\gamma, \eta)}}
	\newcommand{\Fz}{\cat F_{\zeta, (\gamma, \eta)}}
	\newcommand{\tT}{\tilde{\T}}
	\newcommand{\fF}{\tilde{\F}}
	\newcommand{\Ti}{\cat T_{\zeta, 0}}
	\newcommand{\Fi}{\cat F_{\zeta, 0}}
	\newcommand{\Tt}{\cat T_{\theta, \gamma}}
	\newcommand{\Ft}{\cat F_{\theta, \gamma}}
	\newcommand{\Tzge}{\T_{\zeta, (\gamma,\eta)}}
	\newcommand{\Fzge}{\F_{\zeta, (\gamma,\eta)}}
	\newcommand{\zge}{\zeta,(\gamma,\eta)}
	\newcommand{\tg}{\theta}
	\DeclareMathOperator{\Quot}{Quot}
	\DeclareMathOperator{\Spec}{Spec}
	\DeclareMathOperator{\supp}{supp}
\let\P\@undefined\makeatother
	\newcommand{\P}{\mathbf{P}}
	\newcommand{\into}{\hookrightarrow}
	\newcommand{\onto}{\twoheadrightarrow}
	\def\ie{i.e.}
	\def\eg{e.g.}
\begin{document}

	
\begin{abstract}
	We prove the crepant resolution conjecture for Donaldson--Thomas invariants of hard Lefschetz CY3 orbifolds, formulated by Bryan--Cadman--Young, interpreting the statement as an equality of rational functions.
	In order to do so, we show that the generating series of stable pair invariants on any CY3 orbifold is the expansion of a rational function.
	As a corollary, we deduce a symmetry of this function induced by the derived dualising functor.
	Our methods also yield a proof of the orbifold DT/PT correspondence for multi-regular curve classes on hard Lefschetz CY3 orbifolds.
\end{abstract}

\title{A proof of the Donaldson--Thomas crepant resolution conjecture}

\author{Sjoerd Viktor Beentjes}
\address{School of Mathematics and Maxwell Institute,
University of Edinburgh,
James Clerk Maxwell Building,
Peter Guthrie Tait Road, Edinburgh, EH9 3FD,
United Kingdom}
\email{sjoerd.beentjes@gmail.com}

\author{John Calabrese}
\address{Rice University MS136,
6100 Main Street, 
Houston TX 77251-1892,
United States of America} 
\email{jcalabrese@pm.me}

\author{J{\o}rgen Vold Rennemo}
\address{Department of Mathematics,
University of Oslo,
PO Box 1053 Blindern,
0316 Oslo,
Norway}
\email{jvrennemo@gmail.com}

\date{\today}
\maketitle



\section{Introduction}
Donaldson--Thomas (DT) invariants, introduced in \cite{MR1818182}, are deformation-invariant numbers that virtually enumerate stable objects in the derived category of coherent sheaves on a smooth projective threefold.
A particularly interesting case is that of DT invariants of curve-like objects, such as ideal sheaves of curves or the stable pairs of \cite{MR2545686}, on a Calabi--Yau threefold.

The crepant resolution conjecture for Donaldson--Thomas invariants, originally conjectured by Bryan, Cadman, and Young in \cite{MR2854183}, is a comparison result that predicts a relation between these curve-type DT invariants of two different Calabi--Yau threefolds.
The first threefold is an orbifold, and the second threefold is a crepant resolution of singularities of the coarse moduli space of the first.

In this paper, we prove the crepant resolution conjecture for Donaldson--Thomas invariants, interpreting the conjecture of \cite{MR2854183} as an equality of rational functions, rather than an equality of generating series.
In Appendix \ref{sec:Appendix_ExampleCRC}, we provide a simple example demonstrating the necessity of the rational function interpretation. 
\newtheorem*{thm:MainTheorem}{Theorem \ref{thm:MainTheorem}}
\begin{thm:MainTheorem}
  The crepant resolution conjecture for Donaldson--Thomas invariants holds as an equality of rational functions.
\end{thm:MainTheorem}

Let $\X$ denote the CY3 orbifold, and $D(\X)$ its \emph{bounded} coherent derived category.
Our proof goes via wall-crossing in the motivic Hall algebra.
This strategy has previously been applied to establish comparison theorems for DT invariants, see e.g.~\cite{todajams,MR2683216,MR2813335,MR3518373,MR3449219,2016arXiv160107519T,Oberdieck_ReducedStablePairInvariants}.

However, our arguments are novel in at least two ways:
\begin{enumerate}
\item \emph{The Euler pairing is non-trivial.}
  A key component of Joyce's wall-crossing formula is the Euler pairing $\chi(E,F)$ of the objects $E,F \in D(X)$ whose slopes cross.
  Previous results have made essential use of the fact that on a 3-dimensional variety $X$, the Euler pairing between two sheaves with 1-dimensional support vanishes.
  The corresponding statement fails on the orbifold $\X$, and this significantly complicates the application of the wall-crossing formula.

  This non-vanishing is related to the fact that the derived equivalence $D(Y) \cong D(\X)$ of the McKay correspondence does not preserve dimensions of supports, \eg curve-like objects on $\X$ can be sent to surface-like objects on the crepant resolution $Y$.
  From this perspective, the novelty of our argument is that we compute a wall-crossing involving surface-like objects.
\item \emph{Rationality of the generating series is crucial.}
  The comparison result between our DT-type generating functions only holds after a re-expansion of a rational generating function, analogous to the analytic continuation for the Gromov--Witten crepant resolution conjectures; see \cite{MR2483931,MR2529944}.
  This means that for a fixed numerical class $(\beta,c)$, the conjecture does not state a relation between the DT invariants of class $(\beta,c)$ on $\X$ and $Y$, but rather between the collection of all DT invariants with fixed curve class $\beta$.

  This phenomenon is analogous to the $q \leftrightarrow q^{-1}$ symmetry of the stable pairs generating function on a variety, which is a symmetry of rational functions, not of generating series.
  Again, the non-triviality of the Euler pairing makes proving these relations more complicated in our case.
\end{enumerate}

\subsection{Statement of results}
Throughout, let $\X$ be a 3-dimensional Calabi--Yau orbifold, by which we mean a smooth Deligne--Mumford stack with $\omega_{\X} \cong \hO_{\X}$, $H^{1}(\X, \hO_{\X}) = 0$, and projective coarse moduli space $g\colon \X \to X$.

\subsubsection{Rationality of stable pair invariants}
The numerical Grothendieck group $N(\X)$ is the Grothendieck group of $D(\X)$ modulo the radical of the Euler pairing:
\begin{align*}
  \chi(E,F) = \sum_i (-1)^i \dim \Ext^i_{\X}(E,F)
\end{align*}
where $E,F \in D(\X)$.
This is a free abelian group of finite rank.
We write $N_0(\X)$ and $N_{\leq 1}(\X)$ for the subgroups generated by sheaves supported in dimension 0 and $\le 1$ respectively, and write $N_{1}(\X) = N_{\le 1}(\X)/N_{0}(\X)$.
It is convenient to pick\footnote{In contrast to the case of varieties, where the holomorphic Euler characteristic gives a canonical splitting, there need not exist a canonical choice of splitting in the case of orbifolds.}
a splitting of the natural inclusion $N_0(\X) \into N_{\leq 1}(\X)$,
\begin{equation}\label{eq:SplittingX}
  N_{\leq 1}(\X) = N_1(\X) \oplus N_0(\X),
\end{equation}
and so we denote a class in $N_{\le 1}(\X)$ by $(\beta,c)$, with $\beta \in N_{1}(\X)$ and $c \in N_{0}(\X)$.\footnote{When $M$ is a smooth and irreducible variety, $N_0(M) \cong \Z$ is generated by the class of a point.
However, when $M$ is a DM stack, $N_0(M)$ has higher rank, since skyscraper sheaves supported at stacky points with different equivariant structures have in general different numerical classes.\label{foot:N0_DM_higher_rank}}

In \cite{MR2600874}, Behrend defines for any finite type scheme $M$ a constructible function $\nu \colon M \to \Z$, and shows that if $M$ is proper and carries a symmetric perfect obstruction theory, with associated virtual fundamental class $[M]^{\text{vir}}$, then
\[
  \int_{[M]^{\text{vir}}}1 = e_{B}(M) \coloneq \sum_{n\in \Z}e_{\text{top}}(\nu^{-1}(n)).
\]
Given $(\beta,c) \in N_{\leq 1}(\X)$, one defines the corresponding DT curve count as
\begin{align}\label{def:DT}
  DT(\X)_{(\beta,c)} \coloneq e_B\bigl( \Quot_\X(\hO_{\X}, (\beta,c)) \bigr) \in \Z,
\end{align}
where $\Quot_\X(\hO_{\X}, (\beta,c))$ is the projective scheme parametrising quotients $\hO_\X \onto \hO_Z$ with $[\hO_Z] = (\beta,c)$.

Given a curve class $\beta \in N_{1}(\X)$, define the generating functions
\begin{equation}\label{eq:DT_GeneratingFunctions}
  \begin{split}
    DT(\X)_\beta &= \sum_{c \in N_0(\X)} DT(\X)_{(\beta,c)} q^c, \\
    DT(\X)_0 &= \sum_{c \in N_0(\X)} DT(\X)_{(0,c)} q^c.
  \end{split}
\end{equation}
In \cite{MR2545686}, Pandharipande and Thomas introduced new curve-counting invariants by considering stable pairs.
A stable pair is a two-term complex concentrated in degrees $-1$ and $0$ of the form
\begin{equation*}
  E = (\hO_\X \stackrel{s}{\to} F) \in D(\X)
\end{equation*}
such that $F$ is pure of dimension 1 and $\coker(s) = H^{0}(E)$ is at most 0-dimensional.
There is a fine moduli space $\Pilb_\X(\beta,c)$ parametrising stable pairs $(\hO_\X \to F)$ of class $[F] = (\beta,c)$.
The corresponding PT curve count is defined as
\begin{equation}\label{def:PT}
  PT(\X)_{(\beta,c)} \coloneq e_B\left( \Pilb_\X(\beta,c) \right) \in \Z.
\end{equation}
The associated generating function for a fixed curve class $\beta \in N_1(\X)$ is
\begin{equation*}
  PT(\X)_\beta \coloneq \sum_{c \in N_0(\X)} PT(\X)_{(\beta,c)} q^c.
\end{equation*}
Note that for the empty curve class $\beta = 0$ we have $PT(\X)_0 = 1$. 
\begin{rmk}
  The generating functions above, and all others we consider in this paper, lie in certain \emph{completions} of the ring $\Q[N_{0}(\X)]$.
  We introduce some terminology to describe these completions.
  Let $L \colon N_{0}(\X) \to \R$ be a linear function.
  We say that an infinite formal sum $\sum_{c\in N_{0}(\X)} a_c q^{c}$, is \emph{Laurent} with respect to $L$ if for any $x \in \R$, the set of $c$ such that $a_c \neq 0$ and $L(c) \le x$ is finite.
  We write $\Q[N_{0}(\X)]_{L}$ for the ring of formal expressions which are Laurent with respect to $L$.
  
  Given a rational function $f \in \Q(N_{0}(\X))$, it is represented by at most one series in $\Q[N_{0}(\X)]_{L}$, which we denote by $f_{L}$ and call the \emph{Laurent expansion} of $f$ with respect to $L$.

  In particular, in Section \ref{sec:preliminaries} we define a linear function $\deg \colon N_{0}(\X) \to \Z$ such that for any effective class $c \in N_{0}(\X)$ we have $\deg(c) > 0$.
  The generating functions $DT(\X)_{\beta}$ and $PT(\X)_{\beta}$ are both Laurent with respect to $\deg$.
\end{rmk}

Our first theorem is the orbifold analogue of the rationality statement for stable pairs theory of varieties proved in \cite{todajams,MR2813335}.
\newtheorem*{thm:PTIsRational}{Theorem \ref{thm:PTIsRational}}
\begin{thm:PTIsRational}
  Let $\X$ be a CY3 orbifold, and let $\beta \in N_{1}(\X)$.
  Then $PT(\X)_\beta$ is the Laurent expansion with respect to $\deg$ of a rational function $f_\beta \in \Q(N_{0}(\X))$.

  Moreover, we may write $f_{\beta}(q) = g/h$ with $g,h \in \Z[N_{0}(\X)]$ in such a way that $h$ is of the form
  \[
    h = (1-q^{2\beta \cdot A})^{n}
  \]
  for some ample divisor $A$ on $X$ and some positive integer $n$.
\end{thm:PTIsRational}

\subsubsection{Symmetry of $PT(\X)$}
The derived dualising functor $\D(-) \coloneq \lRHom(-,\hO_{\X})[2]$ induces an involution on $N_{\le 1}(\X)$ which preserves $N_{0}(\X)$, and so induces an involution on $N_{1}(\X)$.
Note that the splitting $N_{\le 1}(\X) = N_{1}(\X) \oplus N_{0}(\X)$ cannot always be chosen compatibly with this duality, so that in general $\D(\beta,c) \not= (\D(\beta),\D(c))$.

There is an induced involution on $\Q(N_{\le 1}(\X))$, which we also denote by $\D$.
\newtheorem*{thm:dualityResultForPT}{Proposition \ref{thm:dualityResultForPT}}
\begin{thm:dualityResultForPT}
  We have an equality of rational functions
  \[
    \D(z^{\beta}f_{\beta}(q)) = z^{\D(\beta)}f_{\D(\beta)}(q).
  \]
  Equivalently, summing the above over all $\beta \in N_1(\X)$, the function
  \[
    PT(\X) = \sum_{\beta \in N_{1}(\X)} \sum_{c \in N_{0}(\X)} PT(\X)_{(\beta,c)} z^{\beta}q^{c}
  \]
  is invariant under $\D$, when considered as an element of $\Q(N_{0}(\X))[\![N_{1}^{\eff}(\X)]\!]$.
\end{thm:dualityResultForPT}
This generalises the $q \leftrightarrow q^{-1}$ symmetry of the non-orbifold PT generating function.

\subsubsection{The McKay correspondence}\label{subsubsec:McKay}
By the McKay correspondence \cite{bkr,MR2407221}, the coarse space $X$ has a distinguished crepant resolution $f \colon Y \to X$ given \'{e}tale-locally on $X$ by Nakamura's $G$-Hilbert scheme.
Moreover, $Y$ and $\X$ are derived equivalent.
Concretely, $Y = \Quot(\hO_{\X}, [\hO_{x}])$ where $x \in \X$ is a non-stacky point and $\hO_{x}$ is the skyscraper sheaf at $x$.
The universal quotient sheaf on $Y \times \X$ is the kernel of the Fourier--Mukai equivalence $\Phi\colon D(Y) \to D(\X)$.

From now on we impose the additional restriction that $\X$ be \emph{hard Lefschetz}.
This means that the fibres of $f$ are at most 1-dimensional, and it restricts the possible stabiliser groups of stacky points; see \cite[Lem.~24]{MR2551767}.

Note that the functor $\Phi$ identifies the numerical groups $\Phi \colon N(Y) \stackrel{\sim}{\to} N(\X)$, but that it \emph{does not} preserve the filtration by dimension.
This discrepancy induces a series of new subgroups of $N_{\le 1}(\X)$ and $N_{\le 1}(Y)$, which we now describe.

Let $f_{*} \colon N(Y) \to N(X)$ denote the pushforward of numerical classes, and let
\[
  N_{\exc}(Y) = (f_{*})^{-1}\bigl(N_{0}(X)\bigr) \cap N_{\le 1}(Y) \subset N_{\le 1}(Y)
\]
denote the \emph{exceptional classes}, consisting of curve classes supported on the fibres of $f$ and point classes.
The natural inclusion $\Z = N_{0}(Y) \into N_{\exc}(Y)$ is canonically split by the holomorphic Euler characteristic $N_{\exc}(Y) = N_0(Y) \oplus N_{1,\exc}(Y)$, where $N_{1,\exc}(Y)$ denotes the exceptional \emph{curve} classes.
We write $N_{\nexc}(Y) = N_{\le 1}(Y)/N_{\exc}(Y)$ for the non-exceptional classes.

The McKay equivalence induces an inclusion $\Phi \colon N_{\le 1}(Y) \to N_{\le 1}(\X)$ that identifies $\Phi(N_{\exc}(Y)) = N_{0}(\X)$.
Thus the splitting (\ref{eq:SplittingX}) induces a splitting
\begin{equation}\label{eq:SplittingY}
  N_{\leq 1}(Y) = N_{\nexc}(Y) \oplus N_{\exc}(Y).
\end{equation}

The group of \emph{multi-regular classes} is defined as $N_{\mr}(\X) = \Phi(N_{\leq 1}(Y)) \subset N_{\le 1}(\X)$.
The following diagram summarises the relations between these subgroups.
\begin{equation}\label{tikz:NumericalDiagram}
  \begin{tikzcd}
    N_0(Y) \ar[hook]{r} & N_0(Y) \oplus N_{1,\exc}(Y) \ar[equal]{d} \ar[hook]{r} & \ar[equal]{d} N_{\leq 1}(Y) & \\
    & N_0(\X) \ar[hook]{r} & N_{\mr}(\X) \ar[hook]{r} & N_{\leq 1}(\X)
  \end{tikzcd}
\end{equation}
We write $N_{1,\mr}(\X) = \Phi(N_{\nexc}(Y))$.
By \eqref{eq:SplittingY} we obtain an induced splitting 
\begin{equation}\label{eq:SplittingXmr}
  N_{\mr}(\X) = N_{1,\mr}(\X) \oplus N_0(\X).
\end{equation}
We refer to elements in $N_{1,\mr}(\X)$ as \emph{multi-regular curve classes}.

With this notation in place, our second theorem is the following.
\theoremstyle{plain}
\newtheorem*{thm:DTPT}{Theorem \ref{thm:DTPT}}

\begin{thm:DTPT}[Orbifold DT/PT correspondence]
  Let $\X$ be a CY3 orbifold satisfying the hard Lefschetz condition, and let $\beta \in N_{1,\mr}(\X)$.
  Then the equality
  \begin{equation}
    PT(\X)_\beta = \dfrac{DT(\X)_\beta}{DT(\X)_0}
  \end{equation}
  of generating series holds in the ring $\Z[N_{0}(\X)]_{\deg}$.
\end{thm:DTPT}

\subsubsection{The crepant resolution conjecture}
We now define the generating series for DT invariants appearing in the crepant resolution conjecture (CRC).

Note that we tacitly use the identification of numerical groups $N_{\exc}(Y) = N_0(\X)$ and $N_{\nexc}(Y) = N_{1,\mr}(\X)$ induced by $\Phi$.
The exceptional generating series is
\begin{equation}\label{eq:DT_Exceptional}
  DT_{\exc}(Y) \coloneq \sum_{c \in N_0(\X)} DT(Y)_{(0,c)} q^{c}.
\end{equation}

In \cite{MR3449219}, Bryan and Steinberg show that for any $\beta \in N_{\nexc}(Y)$, we have
\begin{equation}\label{eq:BS_Comparison_Theorem}
  BS(Y/X)_{\beta} = \dfrac{DT(Y)_{\beta}}{DT_{\exc}(Y)}
\end{equation}
in $\Z[N_{0}(\X)]_{\deg}$ where $BS(Y/X)_{\beta}$ denotes the generating series of $f$-relative stable pair invariants of class $\beta$.
These are stable pair invariants on $Y$ relative to the crepant resolution $f \colon Y \to X$; see Section~\ref{sec:comparisonBS} for their definition.

Pick a general ample class $\omega \in N^{1}(Y)_{\R}$, a real number $\gamma > 0$, and define the linear function $L_{\gamma} \colon N_{0}(\X) \to \R$ by
\[
  L_{\gamma}(c) = \deg(c) + \gamma^{-1}\deg_{Y}(\ch_{2}(\Psi(c)) \cdot \omega),
\]
where $\Psi \colon D(\X) \to D(Y)$ denotes the inverse to the McKay equivalence $\Phi$.

We are now in a position to state our third and main theorem.
\theoremstyle{plain}
\begin{thm:MainTheorem}[Crepant resolution conjecture]
  Let $\X$ be a CY3 orbifold satisfying the hard Lefschetz condition and let $\beta \in N_{1,\mr}(\X)$.
  Then the equality
  \[
  PT(\X)_{\beta} = BS(Y/X)_{\beta}
  \]
  holds as rational functions.

  More precisely, if $f_{\beta} \in \Q(N_0(\X))$ is the rational function of Theorem~\ref{thm:PTIsRational}, then
  \begin{enumerate}
    \item the Laurent expansion of $f_{\beta}$ with respect to $\deg$ is $PT(\X)_{\beta}$, and
    \item the Laurent expansion of $f_{\beta}$ with respect to $L_{\gamma}$ is $BS(Y/X)_{\beta}$ if $0 < \gamma \ll 1$.
  \end{enumerate}
\end{thm:MainTheorem}

\begin{rmk}
The formulation of the CRC in \cite[Conj.~1]{MR2854183} is the claim
\begin{equation}\label{eq:CRCEquality}
  \dfrac{DT(\X)_{\beta}}{DT(\X)_0} = \dfrac{DT(Y)_{\beta}}{DT_{\exc}(Y)},
\end{equation}
where the meaning of the equality sign is left unspecified.
The example given in Appendix~\ref{sec:Appendix_ExampleCRC} shows that \eqref{eq:CRCEquality} is not in general true as an equality of generating functions, and so a rational function interpretation is necessary.
\end{rmk}

\begin{rmk}
A second conjecture \cite[Conj.~2]{MR2854183} states that
\[
  DT(\X)_{0} = \frac{DT(Y)^{+}_{\exc}DT(Y)^{-}_{\exc}}{DT_{0}(Y)},
\]
where
\[
  DT^{\pm}(Y)_{\exc} = \sum_{\beta \in N_{1,\exc}(Y)} DT(Y)_{\beta}z^{\pm \beta}.  
\]
This conjecture has been proved by the second-named author in \cite[Cor.~2.8]{MR3419955}; in contrast to equation~\eqref{eq:CRCEquality}, this holds as an equality of generating functions.
\end{rmk}

\subsection{Outline of proofs}
We work in $\A = \langle \hO_{\X}[1], \Coh_{\le 1}(\X) \rangle_{\text{ex}} \subset D(\X)$, a Noetherian abelian category introduced by Toda in \cite{todajams}.
For any torsion pair $(\T, \F)$ on $\Coh_{\le 1}(\X)$, we define a $(\T,\F)$-pair to be an object $E \in \A$ of rank $-1$ such that
\begin{equation}\label{eq:Intro_TFpair_HomVanishing}
  \Hom(T,E) = 0 = \Hom(E,F)
\end{equation}
for all objects $T \in \T$ and $F \in \F$.
This notion generalises the usual curve objects we are interested in counting.
For example, $(\T,\F)$-pairs are precisely ideal sheaves of curves (shifted by one) if we choose $\T_{DT} = 0$ and $\F_{DT} = \Coh_{\le 1}(\X)$, whereas taking $\T_{PT} = \Coh_{0}(\X)$ and $\F_{PT} = \Coh_{1}(\X)$ yields stable pairs (Lem.~\ref{prop:CohomCrit}).

We write $\uPair(\T,\F)$ for the moduli stack of $(\T,\F)$-stable pairs and show that under mild assumptions on $(\T,\F)$ it is an open substack of the algebraic stack of all complexes (Prop.~\ref{thm:TFOpenImpliesPOpen}).
We produce families of torsion pairs by considering a stability condition $\mu \colon N_{\le 1}^{\eff}(\X) \to S$, where $S$ is a totally ordered set, and a varying element $s \in S$.
For any such $s$, we define a torsion pair $(\T_{\mu, s}, \F_{\mu, s})$ on $\Coh_{\le 1}(\X)$ via
\begin{align*}
  \T_{\mu,s} &= \{T \in \Coh_{\leq 1}(\X) \mid T \onto G\neq 0 \Rightarrow \mu(G) \ge s\} \\
  \F_{\mu,s} &= \{F \in \Coh_{\leq 1}(\X) \mid 0 \neq E \into F \Rightarrow \mu(E) < s\}.
\end{align*}
We then define the DT invariant $DT^{\mu,s}(\X)_{(\beta,c)} \in \Z$ as the Behrend-weighted Euler characteristic of the stack $\uPair(\T_{\mu,s}, \F_{\mu,s})_{(\beta,c)}$ parametrising pairs of class $(\beta,c)$.

For $s \in S$, let $\uM^{\ss}_{\mu}(s)$ be the stack of $\mu$-semistable objects $F$ with $\mu(F) = s$ in $\Coh_{\le 1}(\X)$.
Assuming $\uM^{\ss}_{\mu}(s)$ satisfies a certain boundedness condition which we call being \emph{log-able} (Def.~\ref{dfn:DecompositionallyFinite}), one can define the generalised DT invariant $J^{\mu}_{(\beta,c)} \in \Q$ counting $\mu$-semistable objects of class $(\beta,c)$, see Section~\ref{sec:StabDelta_DTinvariants}.

These invariants enter into a wall-crossing formula phrased in terms of the Poisson torus $\Q[N(\X)]$.
This is a Poisson algebra with basis $t^{\alpha}$ for $\alpha \in N(\X)$, commutative multiplication\footnote{In fact, the $\star$ product will play no role in our arguments; see Remark~\ref{rmk:Integration_Poisson_vs_Lie}.} $t^{\alpha_{1}} \star t^{\alpha_{2}} = (-1)^{\chi(\alpha_{1},\alpha_{2})} t^{\alpha_{1} + \alpha_{2}}$, and Poisson bracket $\{t^{\alpha_{1}}, t^{\alpha_{2}}\} = (-1)^{\chi(\alpha_{1},\alpha_{2})}\chi(\alpha_{1}, \alpha_{2})t^{\alpha_{1} + \alpha_{2}}$.

We are interested in elements of class $\alpha = (r,\beta,c) \in \Z \oplus N_{\leq 1}(\X)$, where $r$ encodes the rank of a class, and so we use the convention $t^{\alpha} = t^{r[\hO_{\X}]}z^{\beta}q^{c}$ for these.
More precisely, we consider the set of classes
\[
  S_{\beta} = \{\alpha = (r, \beta', c) \mid r \in \{0,-1\}, 0 \le \beta' \le \beta\},
\]
and define $\Q[N_{1}^{\eff}(\X)]_{\le \beta}$ to be the set of all finite sums $\sum_{\alpha \in S_{\beta}} a_{\alpha}t^{\alpha}$ with $a_{\alpha} \in \Q$.
The product and Poisson bracket on $\Q[N(\X)]$ induce a product and Poisson bracket on $\Q[N_{1}^{\eff}(\X)]_{\le \beta}$, where we let $t^{\alpha_{1}}t^{\alpha_{2}} = 0$ if $\alpha_{1} + \alpha_{2} \not\in S_{\beta}$.
We then ``complete'' and define $\Q\{N_{1}^{\eff}(\X)\}_{\le \beta}$ to be the set of possibly infinite sums $\sum_{\alpha \in S_{\beta}} a_{\alpha}t^{\alpha}$, which then inherits a partially defined product and Poisson bracket.

\subsubsection{Rationality and self-duality of $PT(\X)$}
The first stability condition we consider, is Nironi's extension of slope stability to Deligne--Mumford stacks \cite{nironi_moduli_2008}.
It is given by a slope function $\nu \colon N_{\le 1}^{\eff}(\X) \to \R \cup \{+\infty\}$ and depends on the choice of an ample class on $X$ and an auxiliary \emph{generating} vector bundle (see Section~\ref{sec2_Modified_Hilbert}).

To get a varying notion of $(\T_{\nu,\delta}, \F_{\nu,\delta})$-pair, we collapse the Harder--Narasimhan filtration of $\nu$ into a torsion pair at a varying cut-off slope $\delta \in \R$.
For a fixed class $(\beta,c)$, the notion of $(\T_{\nu,\delta},\F_{\nu,\delta})$-pair is independent of $\delta$ for $\delta \gg 0$, and in the limit as $\delta \to \infty$, it agrees with the notion of PT pair.
Applying Joyce's wall-crossing formula now gives the following identity in $\Q\{N_{1}^{\eff}(\X)\}_{\le \beta}$:
\begin{equation}\label{eq:Intro_PT_J_DT}
\begin{split}
  PT(\X)_{\leq \beta}t^{-[\hO_{\X}]} &= DT^{\nu,\infty}(\X)_{\le \beta}t^{-[\hO_{\X}]} \\
    &= \prod_{\delta \in W_{\beta} \cap [\delta_{0}, \infty)} \exp\{J^{\nu}(\delta), -\} DT^{\nu,\delta_{0}}(\X)_{\leq \beta}t^{-[\hO_{\X}]},
\end{split}
\end{equation}
where $W_{\beta} = \frac{1}{l(\beta)!}\Z$ is the set of walls for $\delta$ where the notion of $(\T_{\nu,\delta},\F_{\nu,\delta})$-pair of class $\leq \beta$ may change, where the product is taken in increasing $\delta$, and where
\[
  J^{\nu}(\delta) = \sum_{\substack{(\beta', c) \in N_{\le 1}(\X)\\ \nu(\beta',c) = \delta}} J^{\nu}_{(\beta',c)} z^{\beta'}q^{c}.
\]
If $\X$ were a \emph{variety}, the Euler pairing would vanish on $N_{\le 1}(\X)$ and equation~\eqref{eq:Intro_PT_J_DT} would simplify to a product formula, as in \cite[Thm.~7.4]{MR2813335} and \cite{MR2683216},
\[
PT(\X) = \exp\Biggl(\sum_{\delta \ge \delta_{0}}\sum_{\substack{(\beta, c) \in N_{\le 1}(\X) \\ \nu(\beta,c) = \delta}} \chi(\beta,c) J^{\nu}_{(\beta,c)}z^{\beta}q^{c}\Biggr) DT^{\nu, \delta_{0}}(\X).
\]
In particular, the rationality statement could then be deduced from this expression.

Since $\X$ is an orbifold in our setting, we instead prove rationality of $PT(\X)_{\beta}$ in the following way.
Expanding the right hand side of equation~\eqref{eq:Intro_PT_J_DT} yields an infinite sum of terms of the form
\[
  C \cdot \{J^{\nu}_{(\beta_{r},c_{r})}z^{\beta_{r}}q^{c_{r}}, -\} \circ \ldots \circ \{J^{\nu}_{(\beta_{1},c_{1})}z^{\beta_{1}}q^{c_{1}},-\} DT^{\nu, \delta_{0}}(\X)_{(\beta',c')}z^{\beta'}q^{c'}t^{-[\hO_{\X}]},
\]
where $C$ is a constant of the form $\prod (n_{k}!)^{-1}$ arising from expanding the exponential.

We then group the terms with the same curve classes $\beta_{i}$, the same inequalities between the slopes $\nu(\beta_{i},c_{i})$, and the same values for $c_{i}\!\pmod{\beta_{i} \cdot A}$.
Twisting by the ample line bundle $A$ induces an equality $J^{\nu}_{(\beta,c)} = J^{\nu}_{(\beta, c + \beta \cdot A)}$, and so we may define $J^{\nu}_{(\beta,[c])} \coloneq J^{\nu}_{(\beta,c)}$ for any $[c] \in N_{0}(\X)/\beta \cdot A$.

The sum of the terms in such a \emph{group} has the form
\[
  \sum_{k_{1}, \ldots, k_{r}} C \cdot P(k_{1}, \ldots, k_{r}) \prod_{i=1}^{r} J^{\nu}_{\beta_{i}, [c_{i}]} DT^{\nu, \delta_{0}}(\X)_{\beta',c'} z^{\beta}q^{c + \sum_{i=1}^{r} k_{i} \beta_{i} \cdot A}t^{-[\hO_{\X}]},
\]
where the sum is over $k_{i} \in \Z_{\ge 0}$ satisfying a prescribed set of relations $k_{i} < k_{i+1}$ or $k_{i} = k_{i+1}$ for each $i$.
The constant $C$ arises from the exponential as before, and the term $P$ is the coefficient arising from the formula for the Poisson bracket.

Crucially, since the Poisson bracket $\{t^{\alpha_{1}}, t^{\alpha_{2}}\} = (-1)^{\chi(\alpha_{1}, \alpha_{2})}\chi(\alpha_{1}, \alpha_{2})t^{\alpha_{1} + \alpha_{2}}$ is bilinear up to sign in the exponents, the function $P$ is a \emph{quasi-polynomial}.
It follows formally that the above sum is a rational function of the shape we claim; see Section~\ref{sec:QuasiPolynomials}.
Through various boundedness results proved in Section~\ref{sec:rationalityOfStablePairs}, we show that there are only finitely many such groups.
Thus $PT(\X)_{\beta}$ is a sum of these finitely many rational functions and hence, in particular, is rational.

Examining this rational function further, we show that the degree of
\[
DT^{\nu,\infty}(\X)_{\beta} - DT^{\nu, \delta_{0}}(\X)_{\beta}
\]
tends to $-\infty$ as $\delta_{0} \to -\infty$, and so we find $DT^{\nu, -\infty}(\X)_{\beta} = DT^{\nu, \infty}(\X)_{\beta}$ as rational functions.
This leads to the symmetry $\D (PT(\X)) = PT(\X)$ as rational functions.

\subsubsection{Comparing $PT(\X)$ and $BS(Y/X)$}
\label{Intro_PT_vs_BS}
While it will not appear in the main text, it is helpful to consider a stability condition $\zeta_{1} \colon N_{1}^{\eff}(\X) \to \R \cup \{+\infty\}$ defined as follows.
Let $A$ and $\omega$ be ample classes on $X$ and $Y$ respectively, and define
\[
  \zeta_{1}(F) = - \frac{\deg_{Y}(c_{1}(\Psi(F)) \cdot A \cdot \omega)}{\deg(F \cdot A)} \in \Q,
\]
for any $F \in \Coh_{\le 1}(\X)$.
We set $\zeta_{1}(F) = \infty$ if $F \in \Coh_{0}(\X)$.
Fixing a class $(\beta,c) \in N_{1}(\X)$, the notion of $(\T_{\zeta_{1}, \gamma}, \F_{\zeta_{1},\gamma})$-pair of class $(\beta,c)$ reduces to the notion of PT pair as $\gamma \to \infty$.
In the limit $\gamma \to 0^{+}$, assuming $\beta$ is multi-regular, we show in Section~\ref{sec:comparisonBS} that a $(\T_{\zeta_{1},\gamma}, \F_{\zeta_{1},\gamma})$-pair is identified with a BS-pair by the McKay equivalence.

Fix a curve class $\beta \in N_{1}(\X)$.
Our goal is then to show that for $0 < \gamma \ll 1$, the series $DT^{\zeta_{1},\gamma}(\X)_{\beta}$ is rational and equal to $DT^{\zeta_{1}, \infty}(\X)_{\beta}$ as rational functions.

For $0 \le \beta' \le \beta$, there are only finitely many possible values for $\zeta_{1}(\beta',c)$, and so the set of walls $V_{\beta}$ between $0$ and $\infty$ is finite.
However, the stack $\uM_{\zeta_{1}}^{\ss}(\gamma)$ is not log-able, because in general the stacks $\uM_{\zeta_{1}}^{\ss}(\gamma)_{(\beta,c)}$ are not of finite type.
In conclusion, the wall-crossing formula is not directly applicable.

We must therefore refine the stability condition $\zeta_{1}$ and introduce the stability condition $\zeta = (\zeta_{1}, \nu) \colon N_{1}^{\eff}(\X) \to \R^{2} \cup \{+\infty\}$, where $\R^{2}$ is given the lexicographical ordering: $(a,b) \ge (a',b')$ if and only if $a > a' \text{ or } (a = a', b \ge b')$.

The series\footnote{For now, we omit the orbifold $\X$ from the notation, so $DT^{\zeta,(\gamma,\eta)}(\X)_{\le \beta} = DT^{\zeta,(\gamma,\eta)}_{\le \beta}$.} $DT^{\zeta, (\gamma, \eta)}_{\le \beta}$ has a wall-crossing behaviour described as follows.
Away from the set of $\gamma$-walls $V_{\beta}$, the corresponding notion of pair is independent of $\eta \in \R$ so $(\T_{\zeta,(\gamma,\eta)},\F_{\zeta, (\gamma,\eta)}) = (\T_{\zeta_{1},\gamma}, \F_{\zeta_{1}, \gamma})$.
As a consequence, the function $DT^{\zeta, (\gamma,\eta)}_{\le \beta}$ is locally constant as a function of $(\gamma,\eta)$, and it is independent of $\eta$ when $\gamma \not\in V_{\beta}$.

Fixing a wall $\gamma \in V_{\beta}$, and varying $\eta$, the series $DT^{{\zeta, (\gamma, \eta)}}_{\le \beta}$ has the same walls as the series $DT^{\nu, \delta}_{\le \beta}$.
Moreover, taking the limit as $\eta \to \pm\infty$ makes sense, and we find
\[
DT_{\le \beta}^{\zeta_{\gamma,\pm \infty}} = DT_{\le \beta}^{\zeta_{\gamma \pm \epsilon, \eta}},
\]
for $0 < \epsilon \ll 1$ and for any $\eta \in \R$, allowing us to \emph{slide off the $\gamma$-wall}.
By an argument similar to the one showing $DT^{\nu, \infty}_{\beta} = DT^{\nu, -\infty}_{\beta}$, we deduce that $DT_{\beta}^{\zeta, (\gamma,\infty)} = DT_{\beta}^{\zeta, (\gamma,-\infty)}$ as rational functions, thus completing the re-expansion at the wall $\gamma$.

Labeling the $\gamma$-walls $V_{\beta} = \{\gamma_{1}, \ldots, \gamma_{r}\}$ with $\gamma_{i} < \gamma_{i+1}$, we prove that $PT(\X)_{\le \beta}$ and $BS(Y/X)_{\le \beta}$ are expansions of the same rational function in the following way:
\begin{align*}
  PT(\X)_{\le \beta} &= DT^{\zeta, (\gamma_{r} +\epsilon, 0)}_{\le \beta} = DT_{\le \beta}^{\zeta, (\gamma_{r}, \infty)} \leadsto DT_{\le \beta}^{\zeta, (\gamma_{r},-\infty)} = DT_{\le \beta}^{\zeta, (\gamma_{r-1},\infty)} \\
  & \leadsto DT_{\le \beta}^{\zeta, (\gamma_{r-1},-\infty)} = DT^{\zeta, (\gamma_{r-2},\infty)} \leadsto \ldots \\
  & \leadsto DT_{\le \beta}^{\zeta, (\gamma_{1},-\infty)} = DT^{\zeta, (\epsilon, 0)}_{\le \beta} = BS(Y/X)_{\le \beta},
\end{align*}
where the $\leadsto$ indicate a re-expansion of a rational function; see Figure 1.
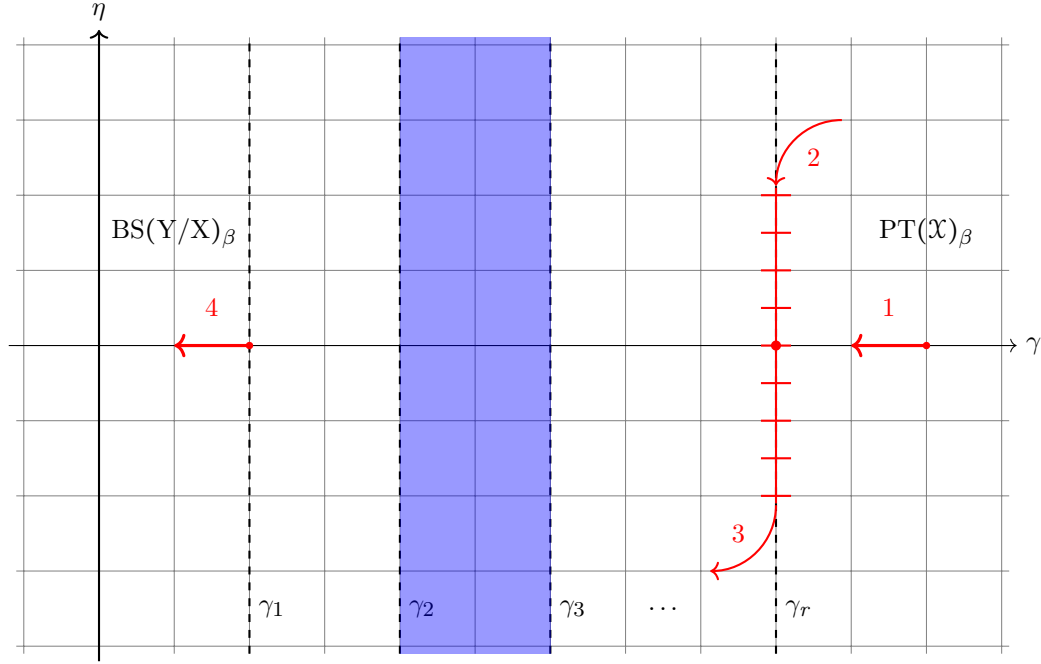
\begin{figure}[!ht]
\begin{center}
\begin{tikzpicture}
    \draw[very thin,color=gray] (-1.1,-4.1) grid (12.1,4.1);    
    \draw[->,name path=xaxis] (-1.2,0) -- (12.2,0) node[right] {$\gamma$};
    \draw[thick,->,name path=yaxis] (0,-4.2) -- (0,4.2) node[above] {$\eta$};
  
    \draw[dashed,thick,name path=line0,domain=-4.1:4.1] plot (2,\x) node[above right] {};
    \draw[dashed,thick,name path=line1,domain=-4.1:4.1] plot (4,\x) node[below right] {};
    \draw[dashed,thick,name path=line2,domain=-4.1:4.1] plot (6,\x) node[below right] {};
    \draw[dashed,thick,name path=line2,domain=-4.1:4.1] plot (9,\x) node[below right] {};
    \draw[->,very thick,name path=WCpath0,color=red] (2,0) -- (1,0) node {};
    \draw[->,very thick,name path=WCpath0,color=red] (11,0) -- (10,0) node {};
  
    \node[right] (a) at (2,-3.5) {$\gamma_1$};
    \node[right] (b) at (4,-3.5) {$\gamma_2$};
    \node[right] (c) at (6,-3.5) {$\gamma_3$};
    \node (d) at (7.5,-3.5) {$\ldots$};
    \node[right] (e) at (9,-3.5) {$\gamma_r$};
    \node (0up) at (9,2) {};
    \node (0upRight) at (10,3) {};
    \node (0down) at (9,-2) {};
    \node (0downLeft) at (8,-3) {};
    \node (PT) at (1,1.5) {$BS(Y/X)_{\beta}$};
    \node (BS) at (11,1.5) {$PT(\X)_{\beta}$};
    \node[red] at (1.5,0.5) {$4$};
    \node[red] at (9.5,2.5) {$2$};
    \node[red] at (8.5,-2.5) {$3$};
    \node[red] at (10.5,0.5) {$1$};
    
    \path[->,thick,red] (0down) edge[bend left=45] node [left] {} (0downLeft);
    \path[->,thick,red] (0upRight) edge[bend right=45] node [right] {} (0up);
    \draw[thick,red] (9,-2.1) to (9,2.1);
    \draw[thick,red] (8.8,2) to (9.2,2);
    \draw[thick,red] (8.8,1.5) to (9.2,1.5);
    \draw[thick,red] (8.8,1) to (9.2,1);
    \draw[thick,red] (8.8,0.5) to (9.2,0.5);
    \draw[thick,red] (8.8,0) to (9.2,0);
    \draw[thick,red] (8.8,-0.5) to (9.2,-0.5);
    \draw[thick,red] (8.8,-1) to (9.2,-1);
    \draw[thick,red] (8.8,-1.5) to (9.2,-1.5);
    \draw[thick,red] (8.8,-2) to (9.2,-2);

    \fill[blue,fill opacity=0.4] (4,-4.1) -- (6,-4.1) -- (6,4.1) -- (4,4.1) -- cycle;
  
    \filldraw[thick,red] (2,0) circle (1pt);
    \filldraw[thick,red] (9,0) circle (1.5pt);
    \filldraw[thick,red] (11,0) circle (1pt);
  
    \end{tikzpicture}
\end{center}
\caption{A schematic of the $(\gamma,\eta)$-wall-crossing.
The notion of $\gamma$-pair is constant between two consecutive $\gamma$-walls, for example in the blue region of $(\gamma,\eta)$ with $\gamma_2 < \gamma < \gamma_3$.}
\end{figure}

\subsection{Variants}
We discuss three variants of the crepant resolution conjecture.
\subsubsection{Euler characteristics}
The methods of this paper also prove exactly the same results for Euler--DT invariants, i.e., those defined by taking the topological Euler characteristic instead of the one weighted by the Behrend function $e_B$.

\subsubsection{The quasi-projective case}
\label{sec1_QuasiProjective_Case}
In order to connect these results with various computations for toric CY3s, let us explain how our arguments generalise to the setting of quasi-projective CY3s.
Let $\X$ be a 3-dimensional orbifold with quasi-projective coarse moduli space $X$ such that $\omega_{\X} \cong \hO_{\X}$ and $\Pic(\X)$ is finitely generated.
As before, let $Y$ denote the crepant resolution of $X$ of the McKay correspondence.

The notion of DT invariants and PT invariants of $\X$ can be defined as follows.
Choose a smooth compactification $\overline{\X}$ of $\X$ and consider those DT/PT objects on $\overline{\X}$ which restrict to $\hO_{\overline{\X}}[1]$ on $\overline{\X} \setminus \X$ (more intrinsic constructions are possible, but require a longer explanation).
As our arguments are motivic in nature and do not rely on properness of the moduli stacks involved, they transpose well to this setting.

One can argue as follows.
Define the category $\A = \langle \hO_{\overline{\X}}[1], \Coh_{\le 1}(\overline \X) \rangle_{\text{ex}}$ as before.
We say that an object in $\A$ of rank $-1$ is \emph{supported on $\X$} if its restriction to $\overline{\X} \setminus \X$ is $\hO_{\overline{\X}}[1]$.
Then the objects of rank $0$ and $-1$ in $\A$ which are supported on $\X$ define open substacks of $\uA \subset \Mum_{\overline{\X}}$, analogous to the stacks that appear in our proof in the projective case.

Note that if $E, F \in \cat A$ are such that $E$ has rank 0, $F$ has rank $0$ or $-1$, and both are supported on $\X$, then $\chi(E,F) = -\chi(F,E)$ and any extension of $E$ by $F$ or of $F$ by $E$ is supported on $\X$.
This is enough to ensure that the wall-crossing arguments of our proof go through in the Euler characteristic case.
Running our proof with minor modifications thus gives the Euler characteristic, quasi-projective versions of our main results: ``rationality of $PT(\X)$'' (Theorem~\ref{thm:PTIsRational}) and equality as rational functions ``$PT(\X) = BS(Y/X)$ if $\X$ is hard Lefschetz'' (Theorem~\ref{thm:MainTheorem}).

In the actual DT (Behrend-weighted) case, we do not know the Behrend function identities on our open moduli substacks.
Toda's argument in the projective case \cite[Thm.~2.6]{2016arXiv160107519T} relies on the existence of $(-1)$-shifted symplectic structures on the relevant moduli stacks.
While it seems reasonable to expect that $(-1)$-shifted symplectic structures also exist in the more general quasi-projective setting, cf. \cite{2014arXiv1403.2403B,Preygel}, a full exploration of this issue is beyond the scope of this paper.

\subsubsection{Beyond multi-regular curve classes}
The multi-regularity assumption on the curve class $\beta$ is used in two places: To prove the equality $PT(\X)_{\beta} = DT(\X)_{\beta}/DT_0(\X)$ and the equality $DT^{\zeta, (\epsilon,0)}(\X)_{\beta} = BS(Y/X)_{\beta}$.
In particular, the re-expansion argument does not require $\beta$ to be multi-regular, and we still have the equality of rational functions $PT(\X)_{\beta} = DT^{\zeta, (\epsilon, 0)}(\X)_{\beta}$ in the non-multi-regular case.

In the non-multi-regular case, both of the missing steps pose potentially interesting problems: that of determining the relation between $PT(\X)_{\beta}$ and $DT(\X)_{\beta}$ for general $\beta$, and that of relating $DT^{\zeta, (\epsilon,0)}(\X)_{\beta}$ to ``curve-counting-like'' invariants on $Y$.

\subsection{Previous work}
Our techniques, using wall-crossing to relate counting invariants via the motivic Hall algebra, are to a large extent refinements of those pioneered by Joyce, Bridgeland, and Toda, see~\cite{MR2354988,MR2813335,todajams,MR3021446}.

Our approach makes essential use of the work of Bryan and Steinberg \cite{MR3449219}, who introduced the notion of $f$-stable pairs associated to a crepant resolution $f$.
Their counting invariants play the role of the PT generating function on the $Y$-side and, in particular, give a geometric interpretation of the fraction $DT(Y)/DT(Y)_{\exc}$.

In \cite{MR3595887}, Ross proved the quasi-projective version of the crepant resolution conjecture for all toric CY3 orbifolds with $A_n$-singularities, by analysing the orbifold topological vertex of \cite{MR2854183} in that case.
As our results and the example in Appendix~\ref{sec:Appendix_ExampleCRC} indicate, the generating function equalities in \cite{MR3595887}, in particular \cite[Thm.~2.2]{MR3595887}, must be interpreted as equalities of rational functions. 

\subsection{Acknowledgements} We thank Kai Behrend, Tom Bridgeland, Jim Bryan, John Christian Ottem, Dustin Ross, Balázs Szendrői, Richard Thomas (who insisted that wall-crossing would work when one of us claimed it didn't), and Yukinobu Toda for useful discussions related to this problem. Special thanks to Arend Bayer; in particular, S.B. is grateful for his continued guidance over the years. 

S.B. was supported by the ERC Starting Grant no.~337039 \emph{WallXBirGeom}.
Parts of this project were carried out while J.V.R. was staying at the Mathematical Sciences Research Institute in Berkeley in connection with the program ``Enumerative Geometry Beyond Numbers'', funded by NSF Grant DMS-1440140.

\subsection{Conventions}
We work over $\C$.
All rings, schemes, and stacks will be assumed to be \emph{locally of finite type} over $\C$, unless specified otherwise.
All categories and functors will be $\C$-linear.
If $M$ is a scheme (or stack) we write $D(M)$ for the \emph{bounded coherent} derived category of $M$.
We write $\Coh_i(M)$ (resp. $\Coh_{\leq i}(M)$) for the full subcategory of coherent sheaves on $M$ pure of dimension $i$ (resp. of dimension at most $i$).
When denoting counting invariants, we omit the orbifold or variety from the notation if no confusion can occur, e.g. $DT^{\zeta,(\gamma,\eta)}_{\leq \beta} \coloneq DT^{\zeta,(\gamma,\eta)}(\X)_{\leq \beta}$.


\section{Preliminaries}
\label{sec:preliminaries}

\subsection{Numerical Grothendieck group}
\label{subsec:Numerical_Grothendieck_Group}
Let $M$ be a projective variety, or more generally a Deligne--Mumford stack with projective coarse moduli space.
We write $\Coh(M)$ for its category of coherent sheaves, and $D(M) = D^\text{b}(\Coh(M))$ for its bounded coherent derived category.
This category contains $\Perf(M)$, the subcategory of {perfect} complexes, which by definition are those locally isomorphic to a bounded complex of locally free sheaves.
When $M$ is smooth and hence satisfies the resolution property \cite{MR2108211},
$\Perf(M) = D(M)$.

For $E \in D(M)$, and $P \in \Perf(M)$, the \emph{Euler pairing}
\begin{align*}
  \chi(P,E) \coloneq \sum_i (-1)^i \dim \Hom_M(P,E[i])
\end{align*}
is well defined.
We call $E$ \emph{numerically trivial} if $\chi(P,E) = 0$ for all $P \in \Perf(M)$.

We write $K(M) = K(D(M)) = K(\Coh(M))$ for the \emph{Grothendieck group} of $M$.
We write $N(M)$ for the \emph{numerical} Grothendieck group, which is the quotient of $K(M)$ by the subgroup generated by all numerically trivial complexes.
For $E \in D(M)$, we write $[E] \in N(M)$ for its numerical class.

Assume $M$ is irreducible, with generically trivial stabiliser groups.
The rank of a sheaf $F \in \Coh(M)$ equals $(-1)^{\dim M}\chi(\hO_{p},F)$, where $p \in M$ is a non-stacky point, and the rank defines a homomorphism $\rk\colon N(M) \to \Z$.

\subsubsection{The dimensional filtration}\label{subsubfilt}
The group $N(M)$ has a filtration by dimension of support.
We write $\Coh_{\leq d}(M) \subset \Coh(M)$ for the subcategory of sheaves supported in dimension at most $d$.
We define $N_{\leq d}(M) \subset N(M)$ as the subgroup generated by classes of sheaves $F \in \Coh_{\leq d}(M)$.
We write $N_d(M) \coloneq N_{\leq d}(M)/N_{\leq d-1}(M)$ for the associated graded pieces.
The groups $N_{\leq d}(M)$ and $N_d(M)$ are free abelian of finite rank.
Note that in general $\rk N_0(M) \geq 2$ for a DM stack; see Footnote~\ref{foot:N0_DM_higher_rank}.

\subsection{Geometric setup}\label{suborbifolds}
Let now $\X$ be a \emph{CY3 orbifold}, which we take to mean that $\X$ is a smooth, irreducible, 3-dimensional Deligne--Mumford (DM) stack such that
\begin{itemize}
\item the stabilizer groups of $\X$ are generically trivial,
\item we have $\omega_\X \cong \hO_{\X}$,
\item $H^1(\X,\hO_\X) = 0$, and
\item the coarse moduli space $X$ of $\X$ is projective.
\end{itemize}
Note that the coarse moduli space $X$ and the associated canonical morphism $g \colon \X \to X$ exist by the Keel--Mori theorem.
Our assumptions imply that $X$ is Gorenstein, that $\omega_{X} = \hO_{X}$, and that $X$ has at worst quotient singularities.

\subsubsection{Splitting}
We choose a splitting of the inclusion $N_{0}(\X) \into N_{\le 1}(\X)$, so
\begin{align}\label{thesacredsplitting1}
  N_{\leq 1}(\X) \cong N_1(\X) \oplus N_0(\X).
\end{align}
Thus we will denote classes in $N_{\le 1}(\X)$ by $(\beta, c)$ with $\beta \in N_{1}(\X)$ and $c \in N_{0}(\X)$.
For $F \in \Coh_{\leq 1}(\X)$, we define $\beta_{F}$ and $c_{F}$ by $[F] = (\beta_F,c_F)$.

In contrast with the case of CY3 varieties, where $\ch_{3}$ defines a canonical splitting of $N_{0}(\X) \into N_{\le 1}(\X)$, there need not exist a canonical splitting in the orbifold case.
This seems a necessary evil.
In particular, we emphasize that the splitting cannot always be chosen compatibly with this duality, so that in general $\D(\beta,c) \not= (\D(\beta),\D(c))$ where $\D(-) = \lRHom(-,\hO_{\X})[2]$ is the derived dualising functor (shifted by two).

\subsubsection{The modified Hilbert polynomial and degree}
\label{sec2_Modified_Hilbert}
We choose a vector bundle $V$ on $\X$ which is generating in the sense of \cite{MR2007396}.
This means that every coherent sheaf on $\X$ is locally a quotient of $V^{\oplus n}$ for some $n$.
Replacing $V$ with $V \oplus V^\vee$, we may (and will) assume that $V \cong V^\vee$.
We fix an ample line bundle $A$ on $X$, and write $A$ also for $g^{*} A$.

For $F \in \Coh(\X)$, we let $F(k) = F \otimes A^{\otimes k}$.
Following Nironi \cite{nironi_moduli_2008}, we define the \emph{modified Hilbert polynomial} $p_{F}$ and the integers $l(F), \deg(F)$ by
\begin{align}\label{eq:ModifiedHilbertPolynomial}
  p_F(k) \coloneq \chi(\X, V^\vee \otimes F(k)) = l(F)k + \deg(F)
\end{align}
The polynomial $p_{F}$ depends only on the numerical class of $F$, and so $p_\alpha(k)$, $l(\alpha)$, $\deg(\alpha)$ is well defined for any class $\alpha \in N_{\le 1}(\X)$.
Moreover $l(N_{0}(\X)) = 0$, so the number $l(\beta)$ is well defined for curve classes $\beta \in N_{1}(\X)$.

Note that in general we do not have $\deg(\beta,c) = \deg(c)$.
Indeed, given a generating vector bundle $V$ there may be no way of choosing the splitting of $N_{0}(\X) \into N_{\le 1}(\X)$ in such a way that $\deg$ is compatible with the splitting.

\subsubsection{The effective cone}\label{subsubsec:EffectiveCone}
We say that $\beta \in N_1(\X)$ is \emph{effective} if $\beta = \beta_F$ for some sheaf $F \in \Coh_{\leq 1}(\X)$, and we write $\beta' \le \beta$ if $\beta - \beta'$ is effective.
We let $N^{\eff}(\X) \subset N(\X)$ be the cone spanned by effective classes, and similarly for $N_{\le d}^{\eff}(\X)$ and $N_{d}^{\eff}(\X)$.

\begin{lem}
  \label{thm:a1DeterminesBetaUpToFinitelyManyChoices}
  For every $n \in \Z$, the set $\{\beta \in N_{1}^{\eff}(\X) \mid l(\beta) = n\}$ is finite.
\end{lem}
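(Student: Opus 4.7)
The plan is to reduce the claim to the boundedness of the moduli of coherent sheaves with a fixed modified Hilbert polynomial. The subtlety is that $l(\beta)$ does not determine the full polynomial $p_{\beta}(k) = l(\beta)k + \deg(\beta)$ on $N_{\le 1}(\X)$, since $\deg$ is not well-defined on $N_{1}(\X)$; however, by twisting with $A$, I can always force $\deg$ into a bounded range, leaving only finitely many relevant Hilbert polynomials to consider.

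First I would dispose of the boundary case $n=0$: if $F \in \Coh_{\le 1}(\X)$ satisfies $l(F) = 0$, then $p_{F}$ is constant, so $F$ has $0$-dimensional support and $[F] \in N_{0}(\X)$. Hence the only $\beta \in N_{1}^{\eff}(\X)$ with $l(\beta) = 0$ is $\beta = 0$. For $n > 0$, take $\beta \in N_{1}^{\eff}(\X)$ with $l(\beta) = n$ and lift it to $F \in \Coh_{\le 1}(\X)$. From $p_{F(k)}(m) = p_{F}(m+k)$ one reads off $l(F(k)) = l(F)$ and $\deg(F(k)) = \deg(F) + nk$, so after replacing $F$ by a suitable twist $F(-k)$ we may assume $\deg(F) \in \{0, 1, \dots, n-1\}$. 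Crucially, the twist does not change the class in $N_{1}(\X)$: picking a divisor $D$ with $\hO(D) \cong A$, the short exact sequence $0 \to F(-1) \to F \to F \otimes \hO_{D} \to 0$ shows that $[F] - [F(-1)] \in N_{0}(\X)$, and iterating gives $[F(k)] \equiv [F] \pmod{N_{0}(\X)}$.

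Next I would invoke boundedness. For each $d \in \{0, \dots, n-1\}$, the stack $\cM_{p}$ of coherent sheaves on $\X$ with modified Hilbert polynomial $p(k) = nk + d$ is of finite type over $\C$ by Nironi's work \cite{nironi_moduli_2008} (indeed, any such $F$ is a quotient of $V^{\oplus N}$ with $N$ bounded, so $\cM_{p}$ is an open substack of a Quot scheme of finite type). A finite type stack has finitely many irreducible components, and the numerical class is constant on each (being locally constant in flat families, and irreducible components being connected). Therefore the set $\{[F] : F \in \cM_{p}(\C)\} \subset N_{\le 1}(\X)$ is finite for each of the $n$ values of $d$, and combining with the twisting reduction gives that
\[
  \{\beta \in N_{1}^{\eff}(\X) \mid l(\beta) = n\}
  \;\subseteq\;
  \text{image of }\bigcup_{d=0}^{n-1}\{[F] : F \in \cM_{nk+d}(\C)\} \text{ in } N_{1}(\X),
\]
which is finite as required.

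The main technical obstacle is the finite-type statement for $\cM_{p}$ on the orbifold $\X$, which is exactly the content of Nironi's generalization of the classical boundedness theorem and requires the choice of a generating vector bundle $V$ made in Section~\ref{sec2_Modified_Hilbert}. Once this is granted, the rest is formal manipulation of Hilbert polynomials modulo $N_{0}(\X)$.
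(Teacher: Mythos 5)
Your overall strategy is the same as the paper's (normalise $\deg(F)$ by twisting with $A$, then invoke boundedness over $\X$), and the auxiliary steps you supply — the case $n=0$, and the verification that twisting by $A$ does not change the class in $N_1(\X)$ — are fine. The gap is the boundedness input itself: it is \emph{false} that the sheaves in $\Coh_{\le 1}(\X)$ with a fixed modified Hilbert polynomial $p(k)=nk+d$ form a finite type stack. Fix an integral curve in $\X$ with normalisation $\iota\colon \tilde C\to\X$, let $M_{\pm a}$ be line bundles of degree $\pm a$ on $\tilde C$, and set $F_a=\iota_*M_a\oplus\iota_*M_{-a}$. By the projection formula and Riemann--Roch on $\tilde C$, the modified Hilbert polynomial $p_{F_a}(k)=\chi\bigl(\tilde C,\iota^*(V^\vee\otimes A^{\otimes k})\otimes(M_a\oplus M_{-a})\bigr)$ is independent of $a$, while for each fixed $k$ one has $h^0(\X,V^\vee\otimes F_a(k))\ge\chi\bigl(\tilde C,\iota^*(V^\vee\otimes A^{\otimes k})\otimes M_a\bigr)\to\infty$ as $a\to\infty$. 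Any finite type substack of $\cCoh_\X$ parametrises a bounded family, along which $h^0(V^\vee\otimes(-)(k))$ takes only finitely many values, so no finite type stack contains all the $F_a$ (note that all $F_a$ have the \emph{same} class $\beta$, consistent with the lemma). The same example explains why your parenthetical justification fails: realising every such $F$ as a quotient of $V^{\oplus N}$ with $N$ bounded would require $V^\vee\otimes F(m)$ to be globally generated for a twist $m$ uniform over all $F$ with $p_F=p$, and that uniform regularity is exactly the boundedness that does not hold. Boundedness for a fixed modified Hilbert polynomial is only available for restricted families — semistable sheaves, or pure sheaves with $\nu_{+}$ bounded above — which is the kind of statement from Nironi's paper that the printed proof appeals to.

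The repair is short, and it shows why only finiteness of the classes $\beta$ (not boundedness of the sheaves) survives. Given $\beta$ with $l(\beta)=n>0$, choose $F$ with $\beta_F=\beta$, replace $F$ by its quotient by the maximal $0$-dimensional subsheaf (this leaves $\beta$ unchanged and makes $F$ pure), and take its $\nu$-Harder--Narasimhan factors $Q_1,\dots,Q_m$: each is $\nu$-semistable, $l(Q_i)\ge 1$, $\sum_i l(Q_i)=n$ and $\sum_i\beta_{Q_i}=\beta$. Applying your twisting argument to each factor separately, you may assume $\deg(Q_i)\in\{0,\dots,l(Q_i)-1\}$ without changing $\beta_{Q_i}$, so only finitely many modified Hilbert polynomials occur among the $Q_i$. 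Boundedness of $\nu$-semistable sheaves with fixed modified Hilbert polynomial now gives a genuinely finite type moduli stack, to which your finitely-many-components argument applies, leaving finitely many possibilities for each $\beta_{Q_i}$ and hence for $\beta$. This reduction to semistable pieces is what legitimises the appeal to Nironi's boundedness theorem made (tersely) in the paper's own proof.
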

\begin{proof}
  Let $\beta \in N_{1}^{\eff}(\X)$ with $l(\beta) = n$, so $\beta = \beta_{F}$ for some $F \in \Coh_{\le 1}(\X)$.
  After twisting by $A$, we may assume that $\deg(F) \in \{0, 1, \ldots, n-1\}$.
  Then \cite[Thm.~4.27]{nironi_moduli_2008} shows that $F$ lies in a bounded set, leaving only finitely many possibilities for $\beta_{F}$.
\end{proof}

\begin{cor}\label{lem:Effective_Cone_Convex}
  If $\beta \in N_{1}^{\eff}(\X)$, there are finitely many $\beta' \in N_{1}^{\eff}(\X)$ with $\beta' \leq \beta$.
\end{cor}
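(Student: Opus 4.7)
The plan is to reduce the statement to Lemma \ref{thm:a1DeterminesBetaUpToFinitelyManyChoices} via the additive invariant $l$. The key observation is that $l \colon N_{1}(\X) \to \Z$ is additive, integer-valued (since $l$ is the linear coefficient of a polynomial in $k$ that takes integer values at $k = 0$ and $k = 1$), and nonnegative on effective classes. Nonnegativity follows because any $\beta = \beta_{F}$ for $F \in \Coh_{\leq 1}(\X)$ has modified Hilbert polynomial with nonnegative leading coefficient, as $p_{F}(k) = \chi(\X, V^{\vee} \otimes F(k))$ grows at least linearly in $k$ for sufficiently large $k$ (or vanishes if $F \in \Coh_{0}(\X)$, in which case $l(F) = 0$).

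Given this, suppose $\beta' \in N_{1}^{\eff}(\X)$ with $\beta' \leq \beta$, so both $\beta'$ and $\beta - \beta'$ lie in $N_{1}^{\eff}(\X)$. By additivity of $l$ we have
\[
  l(\beta') + l(\beta - \beta') = l(\beta),
\]
and both summands are nonnegative integers, which forces $l(\beta') \in \{0, 1, \ldots, l(\beta)\}$. Applying Lemma \ref{thm:a1DeterminesBetaUpToFinitelyManyChoices} to each of these finitely many values of $n = l(\beta')$ yields only finitely many possibilities for $\beta'$, and we are done.

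There is no real obstacle here; the only small point worth verifying carefully is that $l$ is integer-valued and nonnegative on effective classes, which is immediate from the definition of the modified Hilbert polynomial in Section \ref{sec2_Modified_Hilbert}.
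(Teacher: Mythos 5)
Your proof is correct and is essentially the argument the paper intends: the corollary is deduced from Lemma \ref{thm:a1DeterminesBetaUpToFinitelyManyChoices} by noting that $l$ is additive, nonnegative on effective classes, and hence $l(\beta') \in \{0,1,\ldots,l(\beta)\}$ for any effective $\beta' \leq \beta$. Your extra verification that $l$ is integer-valued and nonnegative is exactly the implicit step the paper leaves to the reader.
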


\subsection{Stability conditions}\label{sec:NironiBackground}
We recall the particular notion of stability condition we require, see \eg~\cite{MR1480783,MR2354988}. 
\begin{defn}\label{defn:Stability}
  A \emph{stability condition} on $\Coh_{\leq 1}(\X)$ consists of a \emph{slope function} $\mu \colon N_{\leq 1}(\X) \to S$ where $(S,\leq)$ is a totally ordered set, such that
  \begin{enumerate}
  \item the slope $\mu$ satisfies the \emph{see-saw property}, i.e., given an exact sequence $0 \to A \to B \to C \to 0$ 	in $\Coh_{\leq 1}(\X)$ we have either
    \begin{equation*}
      \mu(A) < \mu(B) < \mu(C) \quad \text{or} \quad
      \mu(A) = \mu(B) = \mu(C) \quad \text{or} \quad
      \mu(A) > \mu(B) > \mu(C);
    \end{equation*}
  \item the category $\Coh_{\leq 1}(\X)$ has the \emph{Harder--Narasimhan property} with respect to $\mu$, i.e., any sheaf $F \in \Coh_{\leq 1}(\X)$ admits a filtration in $\Coh_{\leq 1}(\X)$,
    \begin{equation*}
      0 = F_0 \subset F_1 \subset \ldots \subset F_{n-1} \subset F_n = F,
    \end{equation*}
    called the Harder--Narasimhan (HN) filtration, such that each factor $Q_i = F_i/F_{i-1}$ is semistable of descending slope $\mu(Q_1) > \mu(Q_2) > \ldots > \mu(Q_n)$.
    The semistable factors $Q_i$ are called the \emph{HN factors} of $F$.
  \end{enumerate}
  A sheaf $F \in \Coh_{\leq 1}(\X)$ is \emph{stable} if for all non-trivial proper subsheaves $0 \neq E \subset F$
  \begin{equation*}
    \mu(E) < \mu(F),
  \end{equation*}
  or, equivalently, $\mu(F) < \mu(F/E)$ or $\mu(E) < \mu(F/E)$ by the see-saw property.
  To obtain the notion of \emph{semistability}, replace each strict inequality $<$ by a weak one $\leq$.
\end{defn}

\begin{rmk}
  Let $\mu$ be a slope function on $\Coh_{\leq 1}(\X)$.
  Since the category $\Coh_{\le 1}(\X)$ is Noetherian, it has the Harder--Narasimhan property with respect to $\mu$ whenever $\Coh_{\leq 1}(\X)$ is $\mu$-Artinian, i.e., when any chain of subobjects $F_0 \supset F_1 \supset F_2 \supset \ldots$ in $\Coh_{\leq 1}(\X)$ such that $\mu(F_i) \geq \mu(F_{i-1})$ stabilizes; see \cite[Thm.~4.4]{MR2354988}.
\end{rmk}

\subsubsection{Nironi slope stability}
The usual notion of stability of sheaves on a variety has a useful generalisation to DM stacks.
The foundational results of this theory have been worked out by Nironi \cite{nironi_moduli_2008}.

Given $F \in \Coh_{\leq 1}(\X)$, the \emph{Nironi slope} is 
\begin{align}\label{eq:NironiSlope}
  \nu(F) \coloneq \frac{\deg(F)}{l(F)} \in \Q
\end{align}
if $F \not \in \Coh_{0}(\X)$, and it is $\nu(F) = \infty$ if $F \in \Coh_{0}(\X)$.

\begin{prop}
  \label{thm:NironiSlopeIsJoyce}
  \label{thm:NironiSlopeHasHNFiltration}
  The slope function $\nu$ defines a stability condition on $\Coh_{\leq 1}(\X)$.
\end{prop}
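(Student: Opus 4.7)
The plan is to verify the two axioms of Definition \ref{defn:Stability} separately, using that the modified Hilbert polynomial (\ref{eq:ModifiedHilbertPolynomial}) is additive in short exact sequences, hence so are both $l(\cdot)$ and $\deg(\cdot)$. A small but crucial preliminary observation I would make is that $\deg(Q) > 0$ for every nonzero $Q \in \Coh_0(\X)$: indeed $\deg(Q) = \chi(\X, V \otimes Q)$, and since $V$ is a vector bundle, $V \otimes Q$ is a nonzero sheaf of finite length, which has strictly positive Euler characteristic.

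For the see-saw property, given a short exact sequence $0 \to A \to B \to C \to 0$ in $\Coh_{\le 1}(\X)$, I would argue by cases on which of $A,B,C$ lie in $\Coh_0(\X)$. When $A, B, C$ are all $1$-dimensional (i.e.\ have $l > 0$), the slope $\nu(B) = (\deg(A)+\deg(C))/(l(A)+l(C))$ is a strict weighted mediant of $\nu(A)$ and $\nu(C)$, and the desired trichotomy is immediate from $\nu(A)=\nu(C)$ vs.\ $\nu(A)\lessgtr\nu(C)$. If $A \in \Coh_0(\X) \setminus \{0\}$, then $\nu(A) = \infty$, $l(B) = l(C) > 0$, and the preliminary observation gives $\deg(A) > 0$, hence $\nu(B) = \nu(C) + \deg(A)/l(C) > \nu(C)$, producing the strict inequality $\nu(A) > \nu(B) > \nu(C)$. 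The case $C \in \Coh_0(\X) \setminus \{0\}$ is symmetric, and the remaining cases (one term zero, or all three in $\Coh_0(\X)$) are either trivial or give $\nu(A) = \nu(B) = \nu(C) = \infty$.

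For the Harder--Narasimhan property, I would invoke the remark following Definition \ref{defn:Stability}: since $\Coh_{\le 1}(\X)$ is Noetherian, it suffices to show that it is $\nu$-Artinian, i.e.\ that any descending chain $F_0 \supset F_1 \supset F_2 \supset \cdots$ with $\nu(F_i) \ge \nu(F_{i-1})$ stabilises. The integers $l(F_i)$ are non-negative and non-increasing, hence eventually equal to some constant $l$; after truncating I may assume $l(F_i) = l$ for all $i$, so each quotient $Q_i = F_{i-1}/F_i$ lies in $\Coh_0(\X)$. If $l = 0$, all $F_i$ lie in $\Coh_0(\X)$ and the chain stabilises by finite length. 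If $l > 0$, the condition $\nu(F_i) \ge \nu(F_{i-1})$ becomes $\deg(F_i) \ge \deg(F_{i-1})$; combined with $\deg(F_{i-1}) = \deg(F_i) + \deg(Q_i)$ and $\deg(Q_i) \ge 0$ this forces $\deg(Q_i) = 0$, hence $Q_i = 0$ and $F_i = F_{i-1}$ by the preliminary observation.

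The only non-routine ingredient is the positivity of $\deg$ on nonzero zero-dimensional sheaves; once that is in hand both axioms reduce to additivity plus Noetherianity, and no further obstacle should arise. In particular, nothing here is specific to the orbifold setting beyond the fact that Nironi's $\deg$ is defined via a generating bundle and restricts to a positive length function on $\Coh_0(\X)$.
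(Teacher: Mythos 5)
Your overall route is viable and is essentially the one suggested by the remark after Definition \ref{defn:Stability} (check the see-saw by hand, get HN from Noetherianity plus the $\nu$-Artinian property); the paper itself proceeds differently, simply quoting Nironi's HN theorem for pure $1$-dimensional sheaves and combining it with the torsion filtration. However, the justification of your ``crucial preliminary observation'' has a genuine gap. On an orbifold it is \emph{not} true that a nonzero sheaf of finite length has strictly positive Euler characteristic: for $0$-dimensional $G$ one has $\chi(\X,G)=\dim H^0(X,g_*G)$, and $g_*$ can kill $G$ --- for instance a skyscraper at a stacky point carrying a character of the stabiliser with no invariants (the sheaf $\hO_p^-$ of Appendix \ref{sec:Appendix_ExampleCRC} has $\chi(\X,\hO_p^-)=0$). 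So ``$V$ is a vector bundle'' does not yield the positivity; what is needed is precisely the \emph{generating} property of $V$, which guarantees $g_*(V^\vee\otimes Q)\neq 0$ whenever $Q\neq 0$, and then $\deg(Q)=\chi(\X,V^\vee\otimes Q)=\dim H^0\bigl(X,g_*(V^\vee\otimes Q)\bigr)>0$ since the higher cohomology of a $0$-dimensional sheaf vanishes.

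The same hidden input is used a second time without comment: you assume that $l(Q_i)=0$ forces $Q_i\in\Coh_0(\X)$ (and, in the see-saw case analysis, that $l(C)>0$ when $C\notin\Coh_0(\X)$). For an arbitrary vector bundle this fails --- with $V=\hO_\X$, a $1$-dimensional sheaf supported on the gerby locus and twisted by a nontrivial character has identically vanishing modified Hilbert polynomial, so $l=0$ --- and it holds here only because $V$ is generating: faithfulness of $F\mapsto g_*(V^\vee\otimes F)$, applied over opens meeting the $1$-dimensional part of $\supp F$, gives Nironi's statement that $p_F$ has degree equal to $\dim\supp F$. Once both facts are stated and deduced from the generating property (or cited from Nironi), the rest of your argument --- the weighted-mediant check of the see-saw property and the Artinian chain argument --- goes through.
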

\begin{proof}
  For a pure 1-dimensional sheaf, the existence of Harder--Narasimhan filtrations follows from \cite[Thm.~3.22]{nironi_moduli_2008}.
  Otherwise, combine this result with the usual torsion filtration; see \cite[Cor.~3.7]{nironi_moduli_2008}.
  The see-saw property is easily verified.
\end{proof}

\subsubsection{Nironi moduli}
Let $\cCoh_\X$ denote the moduli stack of coherent sheaves on $\X$.
It is an algebraic stack that is locally of finite type by \cite[Cor.~2.27]{nironi_moduli_2008}.

We write $F_{+}$ (resp.~$F_{-}$) for the HN factor of $F$ with the biggest (resp.~smallest) slope and we write $\nu_{+}(F) = \nu(F_{+})$, $\nu_{-}(F) = \nu(F_{-})$.
Let $I \subset (-\infty,\infty]$ be an interval, and let
\begin{equation*}
  \M_\nu(I) \subset \cCoh_\X
\end{equation*}
denote the substack parametrising sheaves $F$ such that the $\nu$-slopes of all its HN factors lie in $I$, which is equivalent to $\nu_{-}(F),\nu_{+}(F) \in I$.
The moduli stack of $\nu$-semistable sheaves with $\nu(F) = s$, is the special case of $I = [s,s]$; in this case we write $\M^{\ss}_\nu(s)$ instead.
For $\beta \in N_{1}(\X)$, we write $\M_{\nu}(I, \beta)$ for the open substack of $\M_{\nu}(I)$ consisting of sheaves $F$ with $\beta_{F} = \beta$.
\begin{thrm}[Nironi]\label{thm:NironisBoundednessTheorem}
  Let $I \subseteq \R$ be an interval and $\beta \in N_{1}(\X)$.
  Then the substack $\M_{\nu}(I,\beta) \subset \cCoh_\X$ is open.
  If the interval is of finite length, the stack is of finite type.
  In particular, $\M^{\ss}_{\nu}(s, \beta)$ is of finite type for any $s \in \R$.
\end{thrm}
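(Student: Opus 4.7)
The plan is to deduce all three claims from Nironi's foundational work on moduli of sheaves on DM stacks, specifically the boundedness theorem \cite[Thm.~4.27]{nironi_moduli_2008} together with a Shatz-type semicontinuity for the Harder--Narasimhan polygon adapted to the stacky setting.

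First I would establish openness of $\M_\nu(I,\beta) \subset \cCoh_\X$. Given a flat family $\mathcal F$ over a base $T$ of coherent sheaves on $\X$ with $\beta_{\mathcal F_t} = \beta$ constant, I would argue that $\nu_+(\mathcal F_t)$ is upper semicontinuous and $\nu_-(\mathcal F_t)$ lower semicontinuous as functions of $t \in T$. This is the Shatz-style assertion that the HN polygon can only jump upwards under specialisation. Concretely, one represents the maximal $\nu$-destabilising subsheaf via a stacky relative Quot scheme on $\X \times T$ built using the generating bundle $V$, and uses that the slope $\nu$ depends continuously on the numerical class via the modified Hilbert polynomial. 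With $\nu_\pm$ semicontinuous, the condition that both slopes lie in the interval $I$ is an open condition on $T$, which gives openness.

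Next, for the finite-type claim with $I$ bounded, fix $\beta$ and $I = [s_1,s_2]$ of finite length. Any $F \in \M_\nu(I,\beta)$ has $l(F) = l(\beta)$ fixed and, by the see-saw property applied to the HN filtration, $\nu(F) \in I$. Hence $\deg(F) = l(\beta)\,\nu(F)$ lies in a bounded subset of $\Q$, and so the modified Hilbert polynomial $p_F$ ranges over a finite set of polynomials. Every HN factor $Q_i$ has class $\beta_{Q_i} \in N_1^{\eff}(\X)$ with $\beta_{Q_i} \le \beta$, and Corollary~\ref{lem:Effective_Cone_Convex} shows there are only finitely many such classes; combined with the bounded range of $\deg(Q_i)$ forced by the slope constraint, \cite[Thm.~4.27]{nironi_moduli_2008} applied to each semistable factor produces a bounded family of possible HN factors. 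Since iterated extensions of bounded families parametrised by a bounded Quot-type scheme remain bounded, and only finitely many numerical types of HN filtration contribute, we conclude that $\M_\nu(I,\beta)$ is of finite type. Part (3) is then immediate by taking $I = [s,s]$, so $\M^{\ss}_\nu(s,\beta) = \M_\nu([s,s],\beta)$.

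The main obstacle is the semicontinuity needed for openness: although the formal structure of Shatz's argument carries over, one must handle the HN filtration through Nironi's framework with the auxiliary generating bundle $V$, and verify that the relative stacky Quot scheme parametrising maximal destabilising subsheaves behaves as expected. Once this openness is secured, the finite-type statement reduces cleanly to \cite[Thm.~4.27]{nironi_moduli_2008} combined with Corollary~\ref{lem:Effective_Cone_Convex}, with no further novelty required.
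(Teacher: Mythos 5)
Your proposal is correct and is essentially the paper's argument: the paper's proof simply cites the Grothendieck lemma for stacks \cite[Lem.~4.13]{nironi_moduli_2008} and the proofs of \cite[Prop.~4.15]{nironi_moduli_2008} and \cite[Prop.~2.3.1]{MR2665168}, which is exactly the relative-Quot/boundedness machinery underlying your Shatz-type semicontinuity of $\nu_{\pm}$, while your finite-type argument (bounded degree from bounded slope, finitely many effective classes $\beta' \le \beta$, Nironi's boundedness theorem applied to the HN factors, and extension-closure) is the intended reduction. The one point to make explicit is that for an interval with closed endpoints, semicontinuity of $\nu_{\pm}$ alone only exhibits the locus as an intersection of open sets; since $l(Q) \le l(\beta)$ and $\deg(Q) \in \Z$ for every subquotient $Q$, the relevant slopes lie in the discrete set $\frac{1}{l(\beta)!}\Z$, so conditions such as $\nu_{+}(F) \le s_{2}$ and $\nu_{-}(F) \ge s_{1}$ can be replaced by strict inequalities and openness follows.
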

\begin{proof}
  These results follow by the Grothendieck lemma for stacks \cite[Lem.~4.13]{nironi_moduli_2008}, and applying the same proof as in \cite[Prop.~4.15]{nironi_moduli_2008} and \cite[Prop.~2.3.1]{MR2665168}.
\end{proof}

If $p \in \Q[x]$, let $\Quot_\X(F,p)$ denote the functor of quotients of $F \in\Coh(\X)$ with modified Hilbert polynomial $p_{F} = p$.
The following result is a combination of \cite[Thm.~4.20]{nironi_moduli_2008} and \cite[Thm.~6.1]{MR2007396}.
\begin{thrm}\label{thm:Quot_Scheme_Orbifold_Is_Projective}
  For $p \in \Q[x]$, the functor $\Quot_{\X}(F,p)$ is represented by a projective scheme, which we also denote by $\Quot_\X(F,p)$.
\end{thrm}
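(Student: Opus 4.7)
The plan is to prove representability and projectivity via two essentially independent inputs. First, since $\X$ is a tame Deligne--Mumford stack with projective coarse moduli space and $F \in \Coh(\X)$, the Quot functor $\Quot_{\X}(F,p)$ parametrising flat families of quotients of $F$ with modified Hilbert polynomial $p$ is representable by a separated algebraic space of finite type over $\C$. This is a direct application of the main theorem of Olsson--Starr on Quot functors of tame DM stacks (Theorem~6.1 of \cite{MR2007396}); note that only the coherent, bounded setting is needed, and no ampleness input is required at this stage.

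Second, to upgrade the resulting algebraic space to a projective scheme, I would use the generating vector bundle $V$ from Section~\ref{sec2_Modified_Hilbert} to reduce to Grothendieck's classical projectivity theorem for $\Quot$-schemes of coherent sheaves on $X$. The strategy is to construct a natural transformation
\[
  \Phi_V \colon \Quot_\X(F, p) \longrightarrow \Quot_X\bigl(g_*(V^\vee \otimes F), \tilde p\bigr),
\]
sending a flat family of quotients $F_T \onto Q$ to $g_*(V^\vee \otimes F_T) \onto g_*(V^\vee \otimes Q)$, where $\tilde p$ is the ordinary Hilbert polynomial on $(X,A)$ determined by $p$ via the defining equality $p_F(k) = \chi(\X,V^\vee \otimes F(k))$. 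Since $V$ is generating, the functor $F \mapsto V^\vee \otimes F$ detects quotients, and tameness together with the coarse-moduli map $g \colon \X \to X$ makes $g_*$ exact on the sheaves that actually arise, so $\Phi_V$ is well defined.

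The core step, and the one I expect to require the most care, is showing that $\Phi_V$ is a closed immersion. Monomorphism follows once one checks that a quotient $F \onto Q$ is recovered from the induced quotient on $g_*(V^\vee \otimes F)$: the kernel $K$ and its inclusion into $F$ can be read off as the image of $V \otimes g^*g_*(V^\vee \otimes K) \to F$ via the counit/evaluation map, using that $V$ generates. Properness (hence closedness, since the target is separated of finite type) follows from the valuative criterion applied to families over a DVR, combined with the boundedness of the set of quotients of $F$ with fixed modified Hilbert polynomial $p$ — this boundedness is exactly the Grothendieck-type lemma \cite[Lem.~4.13]{nironi_moduli_2008}, which underpins Theorem~\ref{thm:NironisBoundednessTheorem}.

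Finally, since $\Quot_X(g_*(V^\vee \otimes F), \tilde p)$ is a projective $\C$-scheme by Grothendieck's theorem (using that $A$ is ample on $X$), the closed immersion $\Phi_V$ realises $\Quot_{\X}(F,p)$ as a closed subscheme of a projective scheme, hence projective. Combining representability from Olsson--Starr with this embedding yields the statement as formulated, and matches the citations \cite[Thm.~4.20]{nironi_moduli_2008} and \cite[Thm.~6.1]{MR2007396} in the excerpt.
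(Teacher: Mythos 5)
Your argument is correct and is essentially the approach the paper takes: the paper's proof is just the citation of Olsson--Starr and Nironi, whose arguments proceed exactly by the device you reconstruct, namely embedding $\Quot_{\X}(F,p)$ into $\Quot_X\bigl(g_*(V^\vee \otimes F),p\bigr)$ via the generating sheaf, checking the map is a proper monomorphism (hence a closed immersion) using recovery of the kernel from the evaluation map $V \otimes g^*g_*(V^\vee \otimes K) \to F$ plus boundedness, and invoking Grothendieck's projectivity on the coarse space. The only quibble is attributional rather than mathematical: representability by a separated algebraic space is Olsson--Starr's earlier general theorem, while their Theorem~6.1 and Nironi's Theorem~4.20 (which isolates the components with fixed modified Hilbert polynomial) already contain the projectivity statement you re-derive.
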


The following two lemmas will be used repeatedly in Section \ref{sec:rationalityOfStablePairs}.
\begin{lem}
  \label{thm:BoundOnNuMax}
  Let $F \in \Coh_{1}(\X)$ be a non-zero pure 1-dimensional sheaf.
  Then
  \[
  \begin{split}
    \nu_{+}(F) &\le \deg(F) - [l(F)-1]\nu_{-}(F) \\
    \nu_{-}(F) &\ge \deg(F) - [l(F)-1]\nu_{+}(F)
  \end{split}
  \]
\end{lem}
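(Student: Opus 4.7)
\medskip

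The plan is to estimate the two quantities directly from the Harder--Narasimhan filtration of $F$, using the additivity of $\deg$ and $l$ and the fact that $\nu$ satisfies the see-saw property (Proposition~\ref{thm:NironiSlopeIsJoyce}). Let $0 = F_0 \subset F_1 \subset \cdots \subset F_n = F$ be the HN filtration, with factors $Q_i = F_i/F_{i-1}$ of strictly decreasing slope $\nu(Q_1) > \nu(Q_2) > \cdots > \nu(Q_n)$, so that by definition $\nu_+(F) = \nu(Q_1)$ and $\nu_-(F) = \nu(Q_n)$.

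Since $\deg$ and $l$ are additive on short exact sequences in $\Coh_{\le 1}(\X)$ (being coefficients of the modified Hilbert polynomial), we have
\[
\deg(F) = \sum_{i=1}^n l(Q_i)\,\nu(Q_i), \qquad l(F) = \sum_{i=1}^n l(Q_i),
\]
and by definition of $\nu_+, \nu_-$ each factor satisfies $\nu_-(F) \le \nu(Q_i) \le \nu_+(F)$. For the first inequality, I would rewrite the difference
\[
\deg(F) - \nu_+(F) - [l(F)-1]\nu_-(F) = \sum_{i=1}^n l(Q_i)\bigl(\nu(Q_i)-\nu_-(F)\bigr) - \bigl(\nu_+(F)-\nu_-(F)\bigr),
\]
and observe that the $i=1$ term of the sum equals $l(Q_1)\bigl(\nu_+(F)-\nu_-(F)\bigr)$ while all other terms are non-negative. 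Hence the right-hand side is bounded below by $\bigl(l(Q_1)-1\bigr)\bigl(\nu_+(F)-\nu_-(F)\bigr) \ge 0$, since $l(Q_1) \ge 1$ and $\nu_+(F) \ge \nu_-(F)$.

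The second inequality is handled by the symmetric computation: rewrite
\[
[l(F)-1]\nu_+(F) + \nu_-(F) - \deg(F) = \sum_{i=1}^n l(Q_i)\bigl(\nu_+(F)-\nu(Q_i)\bigr) - \bigl(\nu_+(F)-\nu_-(F)\bigr),
\]
and isolate the $i=n$ term, which equals $l(Q_n)\bigl(\nu_+(F)-\nu_-(F)\bigr)$, to conclude that this expression is at least $\bigl(l(Q_n)-1\bigr)\bigl(\nu_+(F)-\nu_-(F)\bigr) \ge 0$. There is no real obstacle here — the only subtle point is making sure that $\nu$ is a well-defined rational (as opposed to $+\infty$), which holds because $F$ is pure 1-dimensional so every HN factor lies in $\Coh_1(\X)$ and has $l(Q_i) > 0$.
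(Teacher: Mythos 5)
Your proof is correct and is essentially the paper's argument: both rest on the additivity of $l$ and $\deg$ along the HN filtration together with the bounds $\nu_{-}(F) \le \nu(Q_i) \le \nu_{+}(F)$ and $l(Q_i) \ge 1$ for the extreme factor; the paper merely collapses the filtration to the single short exact sequence $0 \to F_{+} \to F \to F' \to 0$ and estimates $\deg(F') \ge \nu_{-}(F)\,l(F')$, whereas you keep all the factors and isolate the $i=1$ (resp.\ $i=n$) term. Your closing remark on finiteness of the slopes is fine, though strictly speaking the purity of the later HN factors follows from semistability plus the strictly decreasing slopes (and purity of $Q_1 \subset F$), not directly from purity of $F$.
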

\begin{proof}
  Consider the exact sequence
  \[
    0 \to F_{+} \to F \to F' \to 0.
  \]
  Note that $l(F_{+}) \geq 1$ and $l(F) - l(F_{+}) \geq 0$.
  We deduce that
  \begin{align*}
    \deg(F) &= \nu(F)l(F) = \nu(F_{+})l(F_{+}) + \nu(F')l(F') \\
            &= \nu(F_{+})l(F_{+}) + \nu(F')[l(F) - l(F_{+})] \\
            &\ge \nu(F_{+})l(F_{+}) + \nu(F_{-})[l(F) - l(F_{+})] \\
            &\ge \nu_{+}(F) + \nu_{-}(F)[l(F) - 1].
  \end{align*}
  The second claim is proven similarly.
\end{proof}

As a consequence, there is the following boundedness result.
\begin{lem}
  \label{thm:BoundingDegreeAndNuMinMaxGivesFiniteType}
  Let $d, \delta \in \R$ and $\beta \in N_{1}(\X)$.
  The substacks of sheaves
  \begin{enumerate}
      \item $F \in \uM_{\nu}([\delta,\infty),\beta)$ with $\deg(F) \le d$, and
      \item $F \in \uM_{\nu}((-\infty,\delta])$ with $\deg(F) \ge d$
  \end{enumerate}
  are both of finite type.
\end{lem}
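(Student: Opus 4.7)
The plan is to reduce both cases to Nironi's boundedness theorem (Theorem \ref{thm:NironisBoundednessTheorem}) by exhibiting each substack as contained in $\uM_\nu(I,\beta)$ for some interval $I \subset \R$ of finite length.

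The preliminary observation is that purity is automatic. A nonzero 0-dimensional subsheaf of $F$ has slope $+\infty$ and would appear as the top HN factor, forcing $\nu_+(F) = +\infty$. Since $\delta$ is a finite real number, the intervals $[\delta,\infty)$ and $(-\infty,\delta]$ both lie in $\R$, so every relevant $F$ satisfies $\nu_+(F) < \infty$ and is therefore pure of dimension 1. This is the hypothesis needed to apply Lemma \ref{thm:BoundOnNuMax}.

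The main step is then a direct application of Lemma \ref{thm:BoundOnNuMax}, using the fact that $l(F) = l(\beta)$ is fixed. In case (1), combining $\nu_-(F) \geq \delta$ with $\deg(F) \leq d$ gives
\[
  \nu_+(F) \leq \deg(F) - (l(F)-1)\nu_-(F) \leq d - (l(\beta)-1)\delta =: M,
\]
so the substack is contained in $\uM_\nu([\delta, M], \beta)$. In case (2) (taking $\beta$ to also be fixed, which appears to be a typographical omission in the statement and is in any case necessary since otherwise $l(F)$ is unbounded), the symmetric inequality of Lemma \ref{thm:BoundOnNuMax} gives $\nu_-(F) \geq d - (l(\beta)-1)\delta =: m$, so the substack is contained in $\uM_\nu([m, \delta], \beta)$. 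In both cases Theorem \ref{thm:NironisBoundednessTheorem} yields finite type.

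There is no substantial obstacle: the purity reduction is immediate from the finiteness of $\delta$, and the two bounds drop straight out of Lemma \ref{thm:BoundOnNuMax}.
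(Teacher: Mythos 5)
Your argument is exactly the paper's: use Lemma \ref{thm:BoundOnNuMax} with $l(F)=l(\beta)$ to trap all HN slopes in a finite interval ($[\delta,\,d-(l(\beta)-1)\delta]$ in case (1), and symmetrically in case (2)) and then invoke Theorem \ref{thm:NironisBoundednessTheorem}. Your side remarks are also consistent with the paper's intent — purity is implicit there since the intervals exclude $\infty$, and case (2) is indeed meant with $\beta$ fixed.
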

\begin{proof}
  Any pure 1-dimensional sheaf $F$ of class $\beta_{F} = \beta$ satisfying $\deg(F) \le d$ and $\nu_{-}(F) \ge \delta$, defines an element in $\M_{\nu}([\delta, d - (l(\beta)-1)\delta],\beta)$.
  The first claim now follows from Theorem~\ref{thm:NironisBoundednessTheorem}.
  The second claim is proven similarly.
\end{proof}

\subsection{The crepant resolution}
\label{subsubsec:CrepantResolutions}
Let $p \in \X$ be a non-stacky point, and let $\hO_{p}$ be the corresponding skyscraper sheaf.
We let $Y = \Hilb^1(\X) = \Quot_{\X}(\hO_{\X}, [\hO_{p}])$.
This space is \'{e}tale-locally on $X$ the moduli space of \emph{$G$-clusters} (\ie~Nakamura's $G$-Hilbert scheme \cite{MR1838978}), and comes with a map $f\colon Y \to X$.
By \cite{bkr,MR2407221}, $Y$ is a smooth projective CY3 variety, and $f$ is a crepant resolution, i.e., $f^*\omega_X = \omega_Y$.

The universal quotient sheaf $\hO_{\mathcal{Z}} \in D(Y \times \X)$ is the kernel of a Fourier--Mukai equivalence, which we refer to as the \emph{McKay correspondence}, and denote by
\begin{equation}\label{eq:McKay_Correspondence}
  \Phi\colon D(Y) \overset{\sim}{\longrightarrow} D(\X).
\end{equation}
Note that $\Phi(\os_Y) = \os_\X$, and that $g_{*} \circ \Phi = Rf_{*}$.
We denote $\Psi = \Phi^{-1}$.

\subsubsection{The hard Lefschetz condition}
\label{sec:hardLefschetz}
For where we deal with stable pair invariants on $\X$, \ie, rationality of $PT(\X)$ and the symmetry of $PT(\X)$, the variety $Y$ will not play any role, and the assumptions listed at the start of Section~\ref{suborbifolds} suffice.
When it comes to the orbifold DT/PT correspondence and the crepant resolution conjecture, we will impose the following extra condition on $\X$.
\begin{defn}
  Let $\X$ be a CY3 orbifold as defined in Section~\ref{suborbifolds}.
  We say that $\X$ is \emph{hard Lefschetz} if the fibres of the resolution $f \colon Y \to X$ have dimension $\le 1$.
\end{defn}
%
With this assumption, the map $f \colon Y \to X$ induces a t-structure on $D(Y)$, first introduced by Bridgeland in \cite{MR1893007}.
By definition, its heart $\Per(Y/X)$ is the category of \emph{perverse coherent sheaves}\footnote{Strictly speaking, there is an instance of this category ${^p}{\Per(Y/X)}$ for each $p \in \Z$. In this paper we deal with the $p=0$ version only, so we suppress this choice of \emph{perversity} throughout.},\label{perverseSheaves} consisting of those $E \in D(Y)$ such that:
\begin{itemize}
  \item $Rf_{*}(E) \in \Coh(X)$, and
  \item for any $C \in \Coh(Y)$ with $Rf_{*}C = 0$, we have $\Hom(C[1], E) = 0$.
\end{itemize}
This abelian category is a left tilt of $\Coh(Y)$ at a torsion pair \cite{MR2057015}.
Furthermore, it admits a description in terms of sheaves on $\X$.
\begin{prop}[{\cite[Thm 1.4]{MR3419955}}]\label{prop:McKay_Hearts}
  \label{cohper}
  The equivalence $\Phi \colon D(Y) \to D(\X)$ restricts to an equivalence $\Per(Y/X) \simeq \Coh(\X)$ of abelian categories.
\end{prop}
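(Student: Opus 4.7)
The plan is to prove the proposition by identifying the heart $\Per(Y/X)$ with the preimage of $\Coh(\X)$ under the McKay equivalence $\Phi$, using a standard comparison-of-hearts argument.

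First, since $\Phi$ is an equivalence of triangulated categories, $\Psi(\Coh(\X)) \subset D(Y)$ is the heart of a bounded t-structure on $D(Y)$, namely the one transported from the standard t-structure on $D(\X)$. Likewise, $\Per(Y/X)$ is the heart of a bounded t-structure by Bridgeland's construction. I would then invoke the standard fact that if $\A_{1}, \A_{2} \subset \mathcal{D}$ are hearts of bounded t-structures and $\A_{1} \subseteq \A_{2}$, then $\A_{1} = \A_{2}$; this reduces the proposition to establishing the single inclusion $\Psi(\Coh(\X)) \subseteq \Per(Y/X)$.

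To verify the inclusion, fix $F \in \Coh(\X)$ and set $E = \Psi(F)$; I must check the two defining properties of $\Per(Y/X)$. The first, $Rf_{*}E \in \Coh(X)$, is immediate from $g_{*} \circ \Phi = Rf_{*}$ together with the fact that $g$ is proper and quasi-finite, so $g_{*}F \in \Coh(X)$. For the second, given $C \in \Coh(Y)$ with $Rf_{*}C = 0$, adjunction through $\Phi$ gives
\[
  \Hom_{Y}(C[1], E) = \Hom_{\X}(\Phi(C)[1], F),
\]
and to see this vanishes it suffices to show $\Phi(C) \in \Coh(\X) \subset D^{\geq 0}(\X)$, since then $\Phi(C)[1] \in D^{\leq -1}(\X)$ is semi-orthogonal to $F \in \Coh(\X)$.

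The crux, and the step I expect to be the main obstacle, is therefore the key lemma: if $C \in \Coh(Y)$ satisfies $Rf_{*}C = 0$, then $\Phi(C)$ is concentrated in degree zero. Such $C$ is supported on the exceptional locus of $f$, so by working étale-locally on $X$ I would reduce to the local BKR setup in which $\X = [U/G]$ and $Y$ is Nakamura's $G$-Hilbert scheme; there the kernel $\hO_{\mathcal{Z}}$ is explicit. The hard Lefschetz hypothesis is essential here, as it forces the fibres of $f$ to be at most $1$-dimensional, bounding the possible cohomological amplitude of $\Phi$ on $f$-null sheaves to a single degree and ruling out the higher perversities $^{p}\Per$. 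Concretely, I would analyse the spectral sequence computing $\Phi(C)$ from a resolution of $C$ by sheaves of the form $\hO_{Y}(-)$, and use the characterization of objects in $\Per(Y/X)$ via $Rf_{*}$ (together with the equality $Rf_{*}C = 0$) to show all higher cohomology sheaves $\mathcal{H}^{i}(\Phi(C))$ vanish for $i \neq 0$. Combining this with the earlier steps completes the inclusion, and thus the proposition.
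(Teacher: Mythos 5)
First, a point of comparison: the paper does not prove this proposition at all — it is imported verbatim from Calabrese \cite[Thm.~1.4]{MR3419955} — so there is no internal argument to measure your proposal against, and it has to stand on its own. Your outer skeleton is sound: nested hearts of bounded t-structures coincide, so one inclusion $\Psi(\Coh(\X)) \subseteq \Per(Y/X)$ suffices; the first defining condition follows from $g_{*}\circ\Phi = Rf_{*}$ together with exactness of $g_{*}$ (which, to be accurate, comes from finiteness of the stabilizers in characteristic zero, i.e.\ tameness, not merely from $g$ being proper and quasi-finite); and the adjunction step correctly reduces the second condition to the claim that $\Phi(C)$ is a sheaf for every $f$-null $C \in \Coh(Y)$ (what you actually use is $\Phi(C) \in D^{\le 0}(\X)$, which that claim gives).

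The genuine gap is that this ``key lemma'' is not a technical verification but essentially the substance of the cited theorem, and your sketch does not prove it. Since every $f$-null sheaf already lies in $\Per(Y/X)$, the statement ``$\Phi(C) \in \Coh(\X)$ for all $f$-null $C$'' is a special case of the reverse inclusion $\Phi(\Per(Y/X)) \subseteq \Coh(\X)$, so your reduction trades the inclusion you chose for a piece of the opposite one of comparable depth. Concretely, writing $\Phi(C) = Rp_{\X*}(\hO_{\mathcal{Z}} \otimes p_{Y}^{*}C)$ (the tensor is underived because $\mathcal{Z}$ is flat over $Y$), the hard Lefschetz hypothesis only bounds the amplitude to degrees $0$ and $1$; the entire issue is the vanishing of the degree-one part, and $Rf_{*}C = 0$ does not formally yield $H^{1}(\mathcal{Z}_{x}, C \otimes \hO_{\mathcal{Z}_{x}}) = 0$ on fibres, since $\mathcal{Z}_{x}$ is in general a proper subscheme of $f^{-1}(g(x))$ and the coefficients are twisted by $\hO_{\mathcal{Z}}$. ``Analysing the spectral sequence of a resolution by line bundles'' does not engage with this point, and the characterization of $\Per(Y/X)$ via $Rf_{*}$ cannot be applied to $\Phi(C)$, which lives on $\X$. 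The known proofs (Bridgeland--King--Reid, Van den Bergh, and Calabrese's argument in the cited reference) instead exhibit a tilting/projective generator for $\Per(Y/X)$ and identify both $\Per(Y/X)$ and $\Coh(\X)$ étale-locally over $X$ with modules over the skew group algebra; that input, or something equivalent, is what your proposal would have to supply to close the gap.
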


We record the following two lemmas for use in Section \ref{sec:zetaStability}.
\begin{lem}\label{thm:PushforwardInjectiveOnOneDimClasses}
  Assume that $\X$ satisfies the hard Lefschetz condition.
  Then the map $g_{*} \colon N_{1,\mr}(\X) \to N_{1}(X)$ is injective.
\end{lem}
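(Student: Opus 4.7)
The plan is to exploit the identity $g_{*} \circ \Phi = Rf_{*}$ together with the hard Lefschetz hypothesis on $\X$. I start by picking $\beta \in N_{1,\mr}(\X)$ with $g_{*}(\beta) = 0$ in $N_{1}(X)$ and aim to show $\beta = 0$. Using the splitting $N_{\mr}(\X) = N_{1,\mr}(\X) \oplus N_{0}(\X)$, I view $\beta$ as a class in $N_{\mr}(\X) = \Phi(N_{\le 1}(Y))$ and set $\alpha \coloneq \Psi(\beta) \in N_{\le 1}(Y)$.

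Next I translate the vanishing through the McKay equivalence. The relation $g_{*} \circ \Phi = Rf_{*}$ turns the hypothesis $g_{*}(\beta) = 0 \in N_{1}(X) = N_{\le 1}(X)/N_{0}(X)$ into $Rf_{*}(\alpha) \in N_{0}(X)$. The hard Lefschetz condition ensures that the fibres of $f$ have dimension at most one, so $R^{i}f_{*} = 0$ for $i \ge 2$, and $R^{1}f_{*}$ of any coherent sheaf has zero-dimensional support (it is contained in the image of the locus of positive-dimensional fibres, which is $0$-dimensional by hard Lefschetz). In particular $R^{1}f_{*}(\alpha) \in N_{0}(X)$, and the expansion $Rf_{*}(\alpha) = f_{*}(\alpha) - R^{1}f_{*}(\alpha)$ then forces $f_{*}(\alpha) \in N_{0}(X)$.

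By the defining equation $N_{\exc}(Y) = f_{*}^{-1}(N_{0}(X)) \cap N_{\le 1}(Y)$, this places $\alpha$ in $N_{\exc}(Y)$, so $\beta = \Phi(\alpha) \in \Phi(N_{\exc}(Y)) = N_{0}(\X)$. The splitting $N_{\mr}(\X) = N_{1,\mr}(\X) \oplus N_{0}(\X)$ then gives $\beta \in N_{1,\mr}(\X) \cap N_{0}(\X) = 0$, yielding injectivity.

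The only genuinely non-formal step is controlling $R^{1}f_{*}$, where hard Lefschetz is used directly; the rest is a short diagram chase through the definitions of $N_{\mr}$, $N_{\exc}$, and the compatibility $g_{*} \circ \Phi = Rf_{*}$. I therefore do not anticipate any serious obstacle.
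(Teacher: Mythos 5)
Your overall chase is the paper's argument: translate $g_{*}(\beta)\in N_{0}(X)$ through $g_{*}\circ\Phi = Rf_{*}$, conclude $\Psi(\beta)\in N_{\exc}(Y)$ by the very definition $N_{\exc}(Y) = (f_{*})^{-1}(N_{0}(X))\cap N_{\le 1}(Y)$, and finish with $\Phi(N_{\exc}(Y)) = N_{0}(\X)$ and the splitting $N_{\mr}(\X) = N_{1,\mr}(\X)\oplus N_{0}(\X)$. In the paper this is a two-line proof precisely because $f_{*}\colon N(Y)\to N(X)$ is \emph{already} the pushforward of numerical classes, i.e.\ the K-theoretic (derived) pushforward; so once you know $Rf_{*}(\alpha)\in N_{0}(X)$ you are done, and your entire middle paragraph is superfluous.

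That middle paragraph is also where the one genuine error sits. The hard Lefschetz condition bounds the \emph{dimension of the fibres} of $f$ by one; it does not make the locus of positive-dimensional fibres $0$-dimensional. In the typical hard Lefschetz situation (transverse ADE singularities, e.g.\ the transverse $A_{1}$ example of Appendix~\ref{sec:Appendix_ExampleCRC}) the exceptional locus is a divisor contracted onto a \emph{curve} in $X$, so $R^{1}f_{*}$ of a general coherent sheaf can have $1$-dimensional support, contrary to your claim. Two further cautions: the individual functors $R^{0}f_{*}$ and $R^{1}f_{*}$ are not additive on short exact sequences, so "$R^{1}f_{*}(\alpha)$" is not well defined for a numerical class $\alpha$ -- only the alternating sum $Rf_{*}$ descends to $N(Y)$; and if you do want the sheaf-level statement for $F\in\Coh_{\le 1}(Y)$, then $R^{1}f_{*}F$ is indeed $0$-dimensional, but for a different reason (the $1$-dimensional support of $F$ can meet only finitely many fibres in dimension one), not because of hard Lefschetz. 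Deleting the paragraph, and reading $f_{*}$ as the numerical pushforward as the paper does, your proof becomes correct and coincides with the paper's.
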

\begin{proof}
  The McKay equivalence commutes with $g_{*}$ and $f_{*}$, and so identifies the kernels of $g_{*} \colon N_{1,\mr}(\X) \to N_{1}(X)$ and $f_* \colon \Psi(N_{1,\mr}(\X)) = N_{\le 1}(Y)/N_{\exc}(Y) \to N_{1}(X)$.
  But the latter kernel is $0$.
\end{proof}
If $E \in N(Y)$, then we define $E \cdot A$ as the class of $[E] - [E(-A)]$ in $N(Y)$. 
\begin{lem}\label{thm:DivisorsCapAAreLinearlyIndependent}
  Assume that $\X$ satisfies the hard Lefschetz condition.
  Let $D_{1}, \ldots, D_{n} \subset Y$ be the irreducible components of the exceptional locus of $f \colon Y \to X$.
  The classes $D_{1} \cdot A, \ldots, D_{n} \cdot A$ are linearly independent in $N_{1}(Y)$.
\end{lem}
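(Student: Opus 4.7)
The plan is to reduce the lemma to the Grauert--Mumford negative-definiteness of the intersection form on exceptional curves of a resolution of surface singularities. Under hard Lefschetz, each exceptional divisor $D_i \subset Y$ must map to a curve $C_i \coloneq f(D_i) \subset X$, since $\dim D_i = 2$ whereas the fibres of $f$ have dimension at most one; let $F_i \in N_1(Y)$ denote the class of a general fibre of $D_i \to C_i$. Because $A$ on $Y$ stands for the pullback $f^{*}A$, the restriction $A|_{D_i}$ is itself pulled back from $A|_{C_i}$, so
\[
  D_i \cdot A = (A \cdot C_i)\,F_i \in N_1(Y),
\]
with $A \cdot C_i > 0$ by ampleness. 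It therefore suffices to prove that $F_1, \ldots, F_n$ are linearly independent, which I will do by showing that the $n \times n$ matrix $M_{ij} \coloneq F_i \cdot D_j$ is non-singular.

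Choose $k \gg 0$ and a general $H \in |kA|$ on $X$ such that $H$ misses the isolated singular points of $X$, the singular loci of the curves $C_i$, and the pairwise intersections $C_i \cap C_j$ for $C_i \ne C_j$. By Bertini applied to the base-point-free pullback system $|f^{*}(kA)|$ on $Y$, the surface $\tilde H \coloneq f^{-1}(H) \subset Y$ is smooth; combined with $\omega_Y \cong \hO_Y$ and $\omega_X \cong \hO_X$, adjunction shows that $\tilde H \to H$ is crepant. Under hard Lefschetz each singularity of $H$---located at one of the finitely many points of $\bigsqcup_i (C_i \cap H)$---is a transverse ADE surface singularity, so $\tilde H \to H$ is the minimal crepant resolution of these. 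Over each $p \in C_i \cap H$, the fibre $F_{i,p}$ of $D_i \to C_i$ at $p$ is a $(-2)$-curve in the ADE tree resolving the singularity at $p$, and as $(i,p)$ varies these curves exhaust the exceptional locus of $\tilde H \to H$.

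By the Grauert--Mumford theorem, the intersection form on this collection of exceptional curves on $\tilde H$ is negative definite. Since distinct $f$-fibres have disjoint preimages in $\tilde H$, $F_{i,p} \cdot F_{j,q} = 0$ unless $p = q$; for $C_i \ne C_j$, our genericity of $H$ ensures $p \ne q$. Writing $D_j|_{\tilde H} = \sum_{q \in C_j \cap H} F_{j,q}$ we deduce that $M_{ij} = F_i \cdot D_j$ vanishes whenever $C_i \ne C_j$, while for $C_i = C_j = C$ it equals $|C \cap H|$ times the (negative-definite) Cartan matrix of the ADE configuration transverse to $C$. Hence $M$ is block-diagonal with negative-definite blocks, so non-singular, and the claim follows. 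The main obstacle is the careful set-up of the slice $\tilde H \to H$ as the minimal resolution of transverse ADE singularities with exceptional curves precisely the $F_{i,p}$; this is where the hard Lefschetz hypothesis is essential, both to force each $D_i$ to fibre over a curve and to identify the transverse singularities of $X$ along each $C_i$ as being of ADE type.
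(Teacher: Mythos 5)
Your core mechanism is the same as the paper's: restrict to the preimage of a general member of $|kA|$, which is a smooth surface resolving a slice of $X$, and invoke negative definiteness of the intersection form on its exceptional curves. The paper, however, gets the conclusion much more directly: assuming a relation $D = \sum a_i D_i$ with $D \cdot A = 0$ in $N_1(Y)$, it observes that then $D \cdot D \cdot A = \bigl(D|_{f^{-1}(S)}\bigr)^2_{f^{-1}(S)} = 0$ for a general $S \in |kA|$ with $f^{-1}(S)$ smooth, contradicting the Negativity Lemma \cite[Lem.~3.40]{MR1658959}. So where you build the fibre classes $F_i$, identify the transverse ADE structure, and verify nonsingularity of a block Gram matrix, the paper simply squares the putative relation on the surface slice; your route is correct but buys nothing extra here, at the cost of needing more of the transverse geometry.

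Two of your intermediate assertions are misstated, though neither breaks the argument. First, when several exceptional divisors lie over the same curve $C$, or when the transverse ADE configuration has monodromy along $C$, the fibre $F_{i,p}$ of $D_i \to C_i$ over a general $p$ need not be a single $(-2)$-curve: it is the sum of the $(-2)$-curves in a monodromy orbit, so the diagonal block of $M$ is not literally the Cartan matrix but the Gram matrix of the orbit-sum classes; since these are sums of simple roots over pairwise disjoint index sets, they are linearly independent in the (negative definite) root lattice and the block is still negative definite. Second, the factor $|C \cap H|$ should not appear: representing $F_i$ by the single fibre $F_{i,p}$ with $p \in C_i \cap H$, only the term $q = p$ in $D_j|_{\tilde H} = \sum_q F_{j,q}$ contributes, so $M_{ij} = F_{i,p}\cdot_{\tilde H} F_{j,p}$. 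Both slips are harmless for nonsingularity, but they should be corrected if you keep this version of the proof.
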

\begin{proof}
  Suppose for a contradiction that $D = \sum a_{i}D_{i}$ is such that not all $a_{i} = 0$, but $D \cdot A = 0$.
  Pick a surface $S$ of class $A$ such that $f^{-1}(S)$ is smooth (replacing $A$ with some multiple if necessary).
  By the {Negativity Lemma}
  \cite[Lem.~3.40]{MR1658959}, we have $D \cdot D \cdot A = (D|_{f^{-1}(S)}, D|_{f^{-1}(S)})_{f^{-1}(S)} < 0$, which is a contradiction.
\end{proof}

\subsection{Rational functions}
The generating series we encounter, formal sums of the form $\sum_{c \in N_{0}(\X)} a(c) q^{c}$, are often expansions of rational functions.
It will be convenient to have a language for describing such expansions.

\subsubsection{Laurent expansions}
To have well-defined expansions in multiple variables, restrictions must be imposed on the sets $\{c \in N_0(\X) \,|\, a(c) \neq 0\}$ appearing.
These can be phrased in terms of various notions of boundedness of subsets of $N_{0}(\X)$.
\begin{defn}\label{def:Lbounded}
  Let $L \colon N_{0}(\X) \to \R$ be a group homomorphism.
  We say that a set $S \subset N_{0}(\X)$ is \emph{$L$-bounded} if $S \cap \{c \in N_{0}(\X) \mid L(c) \le M\}$ is finite for every $M \in \R$.
\end{defn}
\begin{lem}
  \label{thm:LBoundedTrivialities}
  Let $S$ and $T$ be $L$-bounded sets in $N_0(\X)$.
  \begin{enumerate}
    \item The union $S \cup T$ is again $L$-bounded.
    \item The sum $S+T = \{s+t \,|\, s \in S, t \in T\}$ is again $L$-bounded.
  \end{enumerate}
\end{lem}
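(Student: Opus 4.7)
The plan is to unwind the definition of $L$-boundedness in both parts, with the key preliminary observation being that if $S$ is $L$-bounded, then $L$ must be bounded below on $S$. Indeed, were this not the case, one could find a sequence $s_n \in S$ with $L(s_n) \to -\infty$, and then for any sufficiently large $M$ (in particular $M = 0$) the set $S \cap \{c \mid L(c) \le M\}$ would contain infinitely many of the $s_n$, contradicting $L$-boundedness. So I record: for any $L$-bounded set $S$, the number $L_S \coloneq \inf_{s \in S} L(s)$ is finite.

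For part (1), the proof is a direct set-theoretic identity. For any $M \in \R$,
\[
(S \cup T) \cap \{c \mid L(c) \le M\} \;=\; \bigl(S \cap \{c \mid L(c) \le M\}\bigr) \cup \bigl(T \cap \{c \mid L(c) \le M\}\bigr),
\]
which is a union of two finite sets by the $L$-boundedness of $S$ and $T$, and hence is finite.

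For part (2), fix $M \in \R$, and let $c \in (S+T) \cap \{L \le M\}$. Write $c = s + t$ with $s \in S$, $t \in T$. Since $L$ is a group homomorphism,
\[
L(s) + L(t) = L(c) \le M.
\]
Using the lower bounds $L(s) \ge L_S$ and $L(t) \ge L_T$ from the preliminary observation, we obtain $L(t) \le M - L_S$ and $L(s) \le M - L_T$. Hence the pair $(s,t)$ lies in the finite set
\[
\bigl(S \cap \{L \le M - L_T\}\bigr) \;\times\; \bigl(T \cap \{L \le M - L_S\}\bigr),
\]
which is finite by the $L$-boundedness of $S$ and $T$. Therefore only finitely many values of $c = s+t$ arise, proving $S+T$ is $L$-bounded.

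The only non-formal step is the preliminary observation that $L$-bounded sets have $L$ bounded below; everything else is routine unwinding. I do not anticipate any genuine obstacle here, as the lemma is essentially a book-keeping statement about the completed rings in which our generating series will live.
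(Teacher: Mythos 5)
Your proof is correct; the paper states this lemma without proof, treating it as routine, and your argument (first observing that $L$ is bounded below on any $L$-bounded set, then trapping both summands in finite slices) is exactly the standard bookkeeping the paper leaves implicit. The only cosmetic point is the empty-set case, where $L_S = +\infty$ is not finite as you claim, but then $S+T = \varnothing$ and the conclusion holds trivially.
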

\begin{defn}\label{def:Completed_Ring_at_L}
  Let $\Z\{N_{0}(\X)\}$ be the additive group of all infinite formal sums of terms $a(c) q^{c}$ with $a(c) \in \Z$, and $\Z[N_{0}(\X)]$ the additive group of all finite such sums.
  We define $\Z[N_{0}(\X)]_{L} \subset \Z\{N_{0}(\X)\}$ to be the subset of those formal sums for which $\{c \in N_{0}(\X) \mid a(c) \neq 0\}$ is $L$-bounded.
\end{defn}
By Lemma \ref{thm:LBoundedTrivialities}, $\Z[N_0(\X)]_L$ is a ring under the obvious operations.
\begin{defn}\label{def:Expansion_Rational_Function_wrt_L}
  Given a rational function $f = g/h$ with $g, h \in \Z[N_{0}(\X)]$, we say that a series $s \in \Z[N_{0}(\X)]_{L}$ is the expansion of $f$ in $\Z[N_{0}(\X)]_{L}$ if $s h = g$ holds in the ring $\Z[N_{0}(\X)]_{L}$.
\end{defn}
Note that such an expansion $s$ may not exist for given $f$ and $L$, but if it does, it is unique, and we will denote it by $f_{L}$.

\subsubsection{Quasi-polynomials}
\label{sec:QuasiPolynomials}
Let $s_{p} \colon \Z^{r} \to (\Z/p)^{r}$ be the standard surjection.
\begin{defn}\label{def:quasipolynomial}
  A function $a \colon \Z^r \to \C$ is said to be a \emph{quasi-polynomial} of \emph{quasi-period} $p$ if $a|_{s_{p}^{-1}(x)}$ is a polynomial function for every $x \in (\Z/p\Z)^r$.
  For $i = 1, \ldots, r$, we define the degree $\deg_{i} a$ as the supremum of the $i$-degrees of the polynomials $a|_{s_{p}^{-1}(x)}$ over all $x \in (\Z/p)^{r}$.
\end{defn}
As motivation for this definition, let us recall the connection between quasi-polynomials and rational functions in the single variable case.
\begin{lem}[{\cite[Prop.~4.4.1]{MR1442260}}]
  The following conditions on a function $a \colon \Z \to \C$ and an integer $N > 0$ are equivalent:
  \begin{enumerate}
  \item $a$ is a quasi-polynomial of quasi-period $p$, and
  \item there exist $g(q), h(q) \in \C[q]$ such that $\gcd(g,h) = 1$ and
    \begin{equation*}
      \sum_{n \geq 0} a(n)q^n = \dfrac{g(q)}{h(q)},
    \end{equation*}
    where every zero $x$ of $h(q)$ satisfies $x^p = 1$, and $\deg(g) < \deg(h)$.
  \end{enumerate}
\end{lem}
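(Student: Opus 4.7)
The plan is to prove the equivalence through a residue-class decomposition, exploiting the fact that the prescribed location of the zeros of $h$ corresponds exactly to the expansion of $1/(1-q^p)^N$.

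For the direction (1) $\Rightarrow$ (2), I would first decompose $a$ along residue classes modulo $p$: write
\[
  \sum_{n \geq 0} a(n)\, q^n \;=\; \sum_{x=0}^{p-1} \sum_{\substack{n \geq 0 \\ n \equiv x \pmod{p}}} P_x(n)\, q^n,
\]
where $P_x$ is the polynomial that agrees with $a$ on $x + p\Z$. Substituting $n = pm + x$ converts each inner sum into $q^{x}\sum_{m \geq 0} Q_x(m)(q^p)^m$ for a polynomial $Q_x$ of the same degree as $P_x$. The classical identity $\sum_{m \geq 0} m^k t^m = A_k(t)/(1-t)^{k+1}$ then expresses each such piece as a rational function in $q^p$ whose denominator is a power of $(1-q^p)$. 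Summing over $x$ and clearing denominators gives a rational function $g/h$ whose denominator divides a power of $(1-q^p)$; after reducing to $\gcd(g,h)=1$, the remaining zeros of $h$ are still among the $p$-th roots of unity, and since each summand decays at infinity, $\deg g < \deg h$ is preserved.

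For the converse (2) $\Rightarrow$ (1), the hypothesis on the zeros of $h$ means $h \mid (1-q^p)^N$ in $\C[q]$ for some $N \geq 1$, so I can rewrite $g/h = G(q)/(1-q^p)^N$ for some $G \in \C[q]$. Expanding
\[
  \frac{1}{(1-q^p)^N} \;=\; \sum_{m \geq 0} \binom{m+N-1}{N-1} q^{pm}
\]
and convolving with $G$, the coefficient of $q^n$ becomes, on each residue class $n \equiv x \pmod{p}$, a polynomial in $(n-x)/p$, hence in $n$. This is exactly quasi-polynomiality of $a$ with quasi-period $p$.

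The main obstacle is not conceptual but bookkeeping: verifying that one can simultaneously achieve $\gcd(g,h)=1$ together with $\deg g < \deg h$ in the reduced form for (1) $\Rightarrow$ (2). Both facts are readily checked once one observes that the contributions from different residue classes are linearly independent over $\C(q)$ and that each $\sum_{m \geq 0} Q_x(m) t^m$ vanishes at $t = \infty$. Since this statement is the classical \cite[Prop.~4.4.1]{MR1442260}, in the paper itself I would simply cite the reference rather than reproduce the argument in full.
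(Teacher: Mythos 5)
The paper gives no proof of this lemma at all: it is quoted (up to notation) from Stanley \cite[Prop.~4.4.1]{MR1442260}, and your argument is precisely the standard proof of that classical statement, so it is correct and there is nothing in the paper to compare it against beyond the citation you yourself propose to fall back on. The one point worth making explicit is the role of the hypothesis $\deg(g) < \deg(h)$ in the direction (2) $\Rightarrow$ (1): writing $g/h = G(q)/(1-q^{p})^{N}$ with $\deg G < pN$, the terms $q^{j}$ of $G$ with $j > n$ that are absent from the convolution for small $n$ contribute nothing to the polynomial formula either, since $\binom{m+N-1}{N-1}$ vanishes at $m = -1, \ldots, -(N-1)$; this is exactly why the coefficient agrees with the quasi-polynomial for \emph{every} $n \geq 0$ rather than only for $n \gg 0$, and it is the ``bookkeeping'' your last paragraph alludes to.
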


We give two generalisations of this result to the multi-variable case.
Let $q_{1}, \ldots q_{r}$ be variables, and pick a grading on $\Z[q_{1}, \ldots, q_{r}]$ such that $\deg(q_{i}) > 0$ for every $i$.\footnote{In our applications, $q_{i} = q^{c_{i}}$ where $\{c_{i}\}$ is an effective basis of $N_{0}(\X)$ and $\deg(q_{i}) \coloneq \deg(c_{i})$.}
Given a rational function $f = g/h$ with $g, h \in \Q[q_{1}, \ldots q_{r}]$, its degree is defined as $\deg(f) = \deg (g) - \deg (h)$; this is independent of the presentation of $f$ as a fraction.
Moreover, if $f$ admits an expansion with each coefficient $q_{1}^{n_{1}}\cdots q_{r}^{n_{r}}$ of degree $\le d$, then $\deg(f) \le d$.
\begin{lem}
  \label{thm:wickedLemmaSimpleVersion}
  Let $p,r \in \Z_{\ge 1}$, and let $a \colon \Z^r \to \C$ be a quasi-polynomial in $r$ variables of quasi-period $p$.
  Consider the generating series
  \begin{equation}
    f(q_1,\ldots,q_r) = \sum_{n_1, \ldots, n_r \ge 0} a(n_1,\ldots, n_r)q_1^{n_1} \cdots q_r^{n_r}.
  \end{equation}
  Then $f$ is the Laurent expansion in $\Q[q_{1}, \ldots, q_{r}]_{\deg}$ of a rational function $g/h$, where $g \in \Q[q_{1}, \ldots, q_{r}]$ and
  \begin{equation}
    h = \prod_{i = 1}^{r} \left(1-q_{i}^{p}\right)^{1 + \deg_{i} a}.
  \end{equation}
  Moreover, $\deg(g/h) < 0$.
\end{lem}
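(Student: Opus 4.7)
The plan is to reduce the claim to the classical single-variable monomial case by a residue decomposition in each coordinate. First I would split $\Z_{\ge 0}^r$ into the $p^r$ cosets indexed by $x = (x_1, \ldots, x_r) \in \{0, \ldots, p-1\}^r$: writing $n_i = p m_i + x_i$, the quasi-polynomial $a$ restricts on each coset to an honest polynomial $P_x(m)$ of multi-degree at most $(d_1, \ldots, d_r)$, where $d_i \coloneq \deg_i a$. Setting $u_i = q_i^p$, the contribution of the coset $x$ to $f$ becomes
\[
  q_1^{x_1} \cdots q_r^{x_r} \sum_{m \in \Z_{\ge 0}^r} P_x(m)\, u_1^{m_1} \cdots u_r^{m_r},
\]
which reduces the problem to summing a polynomial in $m$ against the multi-variable geometric series.

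By linearity it suffices to treat a single monomial $P(m) = m_1^{e_1} \cdots m_r^{e_r}$ with $e_i \le d_i$, for which the sum factors as $\prod_i \sum_{m_i \ge 0} m_i^{e_i} u_i^{m_i}$. A standard induction on $e$ applying the operator $u\,\tfrac{d}{du}$ to $\sum_n u^n = 1/(1-u)$ shows that each univariate factor equals $G_{e_i}(u_i)/(1-u_i)^{e_i+1}$ for some polynomial $G_{e_i}$ with $\deg G_{e_i} \le e_i$. Clearing to the common denominator $\prod_i (1-u_i)^{d_i+1}$ multiplies the numerator by $\prod_i (1-u_i)^{d_i - e_i}$, producing a rational function whose numerator in the $u_i$ has multi-degree at most $(d_1, \ldots, d_r)$.

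Substituting back $u_i = q_i^p$ and multiplying by $q_1^{x_1} \cdots q_r^{x_r}$ presents each residue's contribution as a fraction with the desired denominator $h = \prod_i (1-q_i^p)^{d_i+1}$ and numerator of $q$-degree at most
\[
  \sum_i (pd_i + x_i) \deg(q_i) \le \sum_i \bigl(pd_i + (p-1)\bigr) \deg(q_i) < \sum_i p(d_i + 1) \deg(q_i) = \deg h.
\]
Summing over the finite set of residues $x$ and over the monomial components of each $P_x$ preserves the denominator $h$, and since each contribution has numerator $q$-degree strictly less than $\deg h$, the total numerator $g$ satisfies $\deg g < \deg h$, giving $\deg(g/h) < 0$. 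That the resulting identity holds in $\Q[q_1, \ldots, q_r]_{\deg}$ rather than merely in formal power series follows from the positivity $\deg(q_i) > 0$: every graded piece of $h \cdot f$ involves only finitely many terms of $f$, so the cross-multiplication is legitimate term by term. The only real obstacle is the degree bookkeeping above, and the multiplicities $d_i + 1$ in $h$ are tuned precisely so that the length-$(p-1)$ shift from the factors $q_i^{x_i}$ is absorbed by the factor of $p$ introduced through $u_i = q_i^p$.
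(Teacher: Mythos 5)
Your argument is correct, but it takes a genuinely different route from the paper. The paper proves the lemma by induction on the number of variables: it multiplies $f$ by $(q_r^{-p}-1)^{1+\deg_r a}$, observes that the $(1+\deg_r a)$-fold finite difference annihilates the polynomial dependence on $n_r$ within each residue class, so the result is a \emph{finite} sum over negative powers of $q_r$ whose coefficients are quasi-polynomials in $r-1$ variables of no larger degrees, and then inducts; the degree bound $\deg(g/h)<0$ falls out of the shape of this intermediate expression. You instead decompose $\Z_{\ge 0}^r$ into the $p^r$ residue cosets, substitute $u_i=q_i^p$, factor each coset contribution into univariate sums, and invoke the classical identity $\sum_{m\ge 0}m^e u^m = G_e(u)/(1-u)^{e+1}$ with $\deg G_e\le e$ (the Eulerian-polynomial computation via $u\,\frac{d}{du}$), followed by explicit bookkeeping: clearing to the common denominator costs $(1-u_i)^{d_i-e_i}$, the shift $q^{x}$ costs at most $p-1$ in each variable, and both are absorbed by the exponents $d_i+1$ in $h$, giving $\deg g<\deg h$. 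Both proofs rest on the same underlying principle --- that $(1-u)^{d+1}$ clears degree-$d$ polynomial coefficients, equivalently that the $(d+1)$-st finite difference of a degree-$d$ polynomial vanishes --- but your version is more explicit and treats all variables at once, producing the numerator concretely, while the paper's induction is shorter and defers all computation to the one-step truncation in the last variable. Your closing remark that the identity $fh=g$ holds in $\Q[q_1,\ldots,q_r]_{\deg}$ because $\deg(q_i)>0$ makes each graded piece a finite computation is exactly the right justification (formal power series supported in $\Z_{\ge 0}^r$ embed in $\Q[q_1,\ldots,q_r]_{\deg}$ under this positivity), so no gap there.
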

\begin{proof}
  Let $f' = (q_{r}^{-p} - 1)^{1 + \deg_{r} (a)}\cdot f$.
  Directly examining the expression shows that
  \[
    f' = \sum_{n_{r} = -p(1+\deg_{r} a)}^{-1} \sum_{n_{1}, \ldots, n_{r-1} \ge 0} a_{n_{r}}(n_{1}, \ldots, n_{r-1})q_{1}^{n_{1}}\cdots q_{r}^{n_{r}}
  \]
  where the $a_{k}$ are quasi-polynomials of period $p$ in $r-1$ variables with $\deg_{i} a_{k} \le \deg_{i} a$ for $i = 1, \ldots, r-1$.
  The claim then follows by induction on $r$.
\end{proof}

\begin{lem}
  \label{thm:SumDefinedByInequalitiesIsARationalFunction}
  \label{thm:degreeClaimForRationalFunction}
  Let $p,r \in \Z_{\ge 1}$, let $E \subset \{1, \ldots, r-1\}$, and let $a \colon \Z^r \to \C$ be a quasi-polynomial in $r$ variables of quasi-period $p$.
  Consider the generating series
  \begin{equation}
    f(q_1,\ldots,q_r) = \sum_{n_1, \ldots, n_r} a(n_1,\ldots, n_r)q_1^{n_1} \cdots q_r^{n_r},
  \end{equation}
  where the sum runs over all sequences of integers
  \begin{equation*}
    \{(n_1,\ldots,n_r) \in \Z^r \mid 0 \leq n_1 \leq \ldots \leq n_r, \,
    \text{and } n_{i} = n_{i+1} \text{ iff } i \in E\}.
  \end{equation*}
  Then $f$ is the Laurent expansion in $\Q[q_{1}, \ldots, q_{r}]_{\deg}$ of a rational function $g/h$, where $g \in \Q[q_{1}, \ldots, q_{r}]$ and
  \begin{equation}
    h = \prod_{i \in \{0,\ldots,r-1\} \setminus E} \left(1-\prod_{j=i+1}^r q_{j}^{p}\right)^{1 + \sum_{j = i + 1}^{r} \deg_{q_{i}} a}.
  \end{equation}
  Moreover, $\deg(g/h) < 0$.
\end{lem}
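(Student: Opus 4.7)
The plan is to reparametrise the index set by the ``jump pattern'' of the weakly increasing sequence $(n_i)$, and then apply Lemma~\ref{thm:wickedLemmaSimpleVersion}. Enumerate $\{0, \ldots, r-1\} \setminus E = \{i_1 < \ldots < i_s\}$ and set $i_{s+1} := r$. Any admissible sequence is determined by the value $v_k := n_{i_k+1} = \ldots = n_{i_{k+1}}$ that $n$ takes on the $k$-th \emph{run} of positions, subject to $0 \le v_1 < v_2 < \ldots < v_s$. Introducing $w_1 := v_1$, $w_k := v_k - v_{k-1}$ for $k \ge 2$, and $U_\ell := \prod_{j = i_\ell + 1}^r q_j$, the identity $\prod_j q_j^{n_j} = \prod_\ell U_\ell^{w_\ell}$ rewrites $f$ as
\[
  f = \sum_{\substack{w_1 \ge 0, \\ w_k \ge 1 \ (k \ge 2)}} A(w) \prod_{\ell=1}^s U_\ell^{w_\ell},
\]
where $A(w) := a(n(w))$ is a quasi-polynomial of quasi-period $p$ in $s$ variables, since $w \mapsto n(w)$ is $\Z$-linear.

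\textbf{Inclusion-exclusion and Lemma~\ref{thm:wickedLemmaSimpleVersion}.} I would next decompose $f = \sum_{T \subseteq \{2, \ldots, s\}} (-1)^{|T|} f_T$ by inclusion-exclusion over the events $\{w_k = 0\}$ for $k \ge 2$, where $f_T$ sums $A(w) \prod U_\ell^{w_\ell}$ over $w_\ell = 0$ for $\ell \in T$ and $w_\ell \ge 0$ otherwise. Setting $A_T := A|_{w_T = 0}$, each $f_T$ fits the hypotheses of Lemma~\ref{thm:wickedLemmaSimpleVersion} (applied in the variables $U_\ell$, $\ell \notin T$, with their induced positive grading), giving $f_T = g_T/h_T$ with $h_T = \prod_{\ell \notin T} (1 - U_\ell^p)^{1 + \deg_\ell A_T}$ and $\deg(g_T/h_T) < 0$. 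Passing to the common denominator $h := \prod_{\ell=1}^s (1 - U_\ell^p)^{1 + \deg_\ell A}$, which is legitimate because $\deg_\ell A_T \le \deg_\ell A$, expresses $f = g/h$; crucially, each contribution $(-1)^{|T|} g_T \cdot (h/h_T)$ has degree at most $\deg h - 1$, so $\deg(g/h) < 0$.

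\textbf{Matching the statement and the obstruction.} Since $w_\ell$ enters $A$ only through those $n_j$ with $j \ge i_\ell + 1$, one obtains $\deg_\ell A \le \sum_{j = i_\ell + 1}^r \deg_j a$. Multiplying numerator and denominator by suitable powers of $(1 - U_\ell^p)$ raises each exponent in $h$ to $1 + \sum_{j = i_\ell + 1}^r \deg_j a$ without affecting the negativity of the degree. The bijection $\ell \leftrightarrow i_\ell$ between $\{1, \ldots, s\}$ and $\{0, \ldots, r-1\} \setminus E$, combined with $U_\ell = \prod_{j = i_\ell + 1}^r q_j$, then puts $h$ into the stated form. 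The main obstacle is precisely the need for inclusion-exclusion: a naive substitution $w'_k := w_k - 1$ for $k \ge 2$ would factor out $\prod_{k \ge 2} U_k$ from $f$, yielding only $\deg f < \deg \prod U_k$, which is not negative; the alternating sum is necessary to cancel these excess polynomial factors and secure $\deg(g/h) < 0$.
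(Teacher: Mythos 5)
Your argument is correct, and its core is the same as the paper's: reparametrise the weakly increasing sequence by successive differences (you do this via the runs determined by $E$, the paper via $k_i = n_i - n_{i-1}$ with $k_{i+1}$ read as $0$ for $i \in E$), substitute the monomials $U_\ell = \prod_{j > i_\ell} q_j$ (the paper's $q_i'$), and invoke Lemma~\ref{thm:wickedLemmaSimpleVersion} together with quasi-polynomiality of $a$ composed with a $\Z$-linear map. Where you genuinely add something is the treatment of the strict inequalities: the paper's two-line proof simply sums over $k_i \ge 0$, i.e.\ it silently passes over the constraints $w_k \ge 1$, and, as you correctly point out, the naive shift $w_k \mapsto w_k - 1$ only yields $\deg f < \deg \prod_{k\ge 2} U_k$, which does not by itself give the asserted $\deg(g/h) < 0$. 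Your inclusion--exclusion over the events $\{w_k = 0\}$ repairs this cleanly: each restricted sum is again of Lemma~\ref{thm:wickedLemmaSimpleVersion} type with $\deg_\ell A_T \le \deg_\ell A$, so the common denominator works and every term stays of negative degree. (Equivalently, one can observe that setting some $w_k = 0$ merges blocks and produces a sum of the same shape with smaller $s$, so the strict case follows from the weak case by a finite signed combination.) Your bound $\deg_\ell A \le \sum_{j > i_\ell} \deg_j a$ and the final exponent-raising to match the stated $h$ are also correct, and they match the exponent count used later in the proof of Theorem~\ref{thm:PTIsRational}. In short: same route as the paper, executed with an extra alternating-sum step that makes the degree claim airtight where the paper's write-up is elliptical.
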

\begin{proof}
  Set $n_0 = 0$.
  Let $k_i = n_i - n_{i-1}$ and rewrite the claim in terms of the $k_i$.
  So
  \begin{align*}
    f = \sum_{i \notin E : k_i \ge 0} a(k_1,k_1+k_2,\ldots,k_1+\ldots+k_r)
        \prod_{j=1}^r (q_j \ldots q_r)^{k_j},
  \end{align*}
  where we read $k_{i+1} = 0$ if $i \in E$.
  Setting
  \begin{gather*}
    a'(k_{1}, \ldots, k_{r}) = a(k_{1},k_{1}+k_{2}, \ldots, k_{1} + \ldots + k_{r}), \qquad q_{i}' = \prod_{j = i}^{r} q_{i}
  \end{gather*}
  and applying Lemma \ref{thm:wickedLemmaSimpleVersion} to $a'$ completes the proof.
\end{proof}

The following result, used in Section \ref{sec:zetaStability}, allows us to detect when two generating series are different Laurent expansions of the same rational function.
\begin{lem}\label{thm:QuasipolynomialDifferenceIsANewExpansion}
  Let $c_{0} \in N_0(\X)$, and let $L_{-}, L_{+} \colon N_0(\X) \to \R$ be linear functions such that $L_{-}(c_{0}) < 0$ and $L_{+}(c_{0}) > 0$.
  Let $f$ be a rational function admitting an expansion $f_{L_{-}}$ in $\Z[N_0(\X)]_{L_{-}}$, and let $f' \in \Z[N_{0}(\X)]_{L_{+}}$ be a series such that for any $c \in N_0(\X)$, the coefficient of $q^{c + kc_{0}}$ in $f_{L_{-}} - f'$ is a quasi-polynomial in $k \in \Z$.

  Then $f' = f_{L_{+}}$ in $\Z[N_0(\X)]_{L_{+}}$.
\end{lem}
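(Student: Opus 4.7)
The plan is to reduce the claim to the formal identity $Dh = 0$, where $D = f_{L_-} - f'$ is viewed as an element of $\Z\{N_0(\X)\}$ and $h \in \Z[N_0(\X)]$ is a denominator of $f$. Write $f = g/h$ with $g, h \in \Z[N_0(\X)]$, so by assumption $f_{L_-} \cdot h = g$ in $\Z[N_0(\X)]_{L_-}$. Since $h$ has finite support, the product $Dh$ is a well-defined element of $\Z\{N_0(\X)\}$ (even though $D$ itself is not in either completion). Moreover, because $\Z[N_0(\X)]_{L_+}$ is a ring containing $h$, the product $f'h$ lies in $\Z[N_0(\X)]_{L_+}$, and proving $f' = f_{L_+}$ amounts (by Definition~\ref{def:Expansion_Rational_Function_wrt_L}) to showing $f'h = g$ there. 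Since $Dh = g - f'h$ as formal sums, this in turn is equivalent to $Dh = 0$.

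Next, I would verify that for every $c \in N_0(\X)$, the coefficient of $q^{c + kc_0}$ in $Dh$ is a quasi-polynomial in $k$. Writing $h = \sum_\alpha h_\alpha q^\alpha$ as a finite sum, we get
\begin{equation*}
[q^{c + kc_0}](Dh) = \sum_\alpha h_\alpha \, [q^{(c-\alpha) + kc_0}](D),
\end{equation*}
which is a finite sum of quasi-polynomials in $k$ by hypothesis on $D$; passing to the least common multiple of their quasi-periods shows the sum is again a quasi-polynomial in $k$.

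Then I would show this quasi-polynomial vanishes for $k \ll 0$ and conclude. Using $Dh = g - f'h$, the coefficient $[q^{c + kc_0}](Dh)$ equals $[q^{c + kc_0}](g) - [q^{c + kc_0}](f'h)$; both $g$ (finite support) and $f'h \in \Z[N_0(\X)]_{L_+}$ have $L_+$-bounded support, so both coefficients vanish whenever $L_+(c + kc_0) = L_+(c) + k L_+(c_0)$ is sufficiently negative, which, since $L_+(c_0) > 0$, occurs for $k$ sufficiently negative. A one-variable quasi-polynomial that vanishes on an infinite set is identically zero: restricted to any residue class modulo the quasi-period, it is a polynomial with infinitely many roots. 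Hence $[q^{c + kc_0}](Dh) = 0$ for all $k \in \Z$; taking $k = 0$ and letting $c$ range over $N_0(\X)$ gives $Dh = 0$, which yields the lemma. The main subtlety is conceptual rather than computational: the two completions $\Z[N_0(\X)]_{L_\pm}$ do not share a common ring structure, so the bookkeeping must be done in the ambient group of formal sums $\Z\{N_0(\X)\}$, and the fact that $h$ is a polynomial is what makes multiplication by $h$ well-defined there.
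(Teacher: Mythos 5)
Your proposal is correct and follows essentially the same route as the paper's proof: both multiply the difference $f_{L_-} - f'$ by the denominator $h$, observe that the resulting coefficients along $q^{c+kc_0}$ are quasi-polynomial in $k$, show they vanish for $k \ll 0$ using $f_{L_-}h = g$ and the $L_+$-boundedness of $f'h$, and conclude that the product vanishes identically. The extra details you supply (well-definedness of multiplication by a polynomial on $\Z\{N_0(\X)\}$, closure of quasi-polynomials under finite sums, and vanishing on a residue class forcing the polynomial to be zero) are exactly the steps the paper leaves implicit.
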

\begin{proof}
  Write $f = g/h$ for polynomials $g,h \in \Z[N_0(\X)]$.
  Consider the expression $(f_{L_{-}}-f')h$.
  This has the property that the coefficient of $q^{c + kc_{0}}$ is quasi-polynomial in $k$ for any $c \in N_0(\X)$, since $f_{L_{-}} - f'$ has this property and $h$ is a polynomial.

  On the one hand, if we let $k \to -\infty$ the coefficient of $q^{c+ kc_{0}}$ in $f'h$ is $0$ since $f' \in \Z[N_0(\X)]_{L_{+}}$ and $L_+(c_0) > 0$.
  On the other hand, the coefficient of $q^{c + kc_{0}}$ in $f_{L_{-}}h$ is $0$ since $f_{L_{-}}h = g$ and $g$ is a polynomial.
  By quasi-polynomiality of the coefficients of the difference $(f_{L_-}-f')h$, it follows that each quasi-polynomial coefficient is $0$.
  Thus $(f_{L_{-}}-f')h = 0$ in $\Z\{N_0(\X)\}$, and the claim follows.
\end{proof}

\begin{ex}
  Consider the rational function $f(q) = 1/(1-q)$, with $g = 1$ and $h = 1-q$.
  There are essentially two choices for the linear function $L$, namely $L_{\pm} \colon \Z \to \R$ where $L_{\pm}(k) = \pm k$.
  We have 
  \begin{equation*}
     f_{L_+}(q) = \sum_{n=0}^{\infty} q^n
  \end{equation*}
  since $(1-q)f_{L_{+}}(q) = 1$ in $\Z[x]_{L_+}$.
  Alternatively, we may  re-expand the function $f$ with respect to $L_{-}$.
  Our educated guess for the expansion of $f$ in $\Z[q]_{L_-}$ is
  \begin{equation*}
    f'(q) = - \sum_{n = 1}^{\infty} (q^{-1})^{n}.
  \end{equation*}
  Indeed, $f' \in \Z[q]_{L_{-}}$.
  To apply the previous lemma take $c_0 = 1$, so $L_{+}(c_0) > 0$ and $L_{-}(c_0) < 0$.
  The coefficient of the term $q^{c+kc_0}$ in the difference
  \begin{equation}\label{eq:RationalityStrangeLemma}
    f_{L_+}(q) - f'(q) = \sum_{n = -\infty}^{\infty} q^n
  \end{equation}
  is equal to $1$ for all $c, k \in \Z$.
  This is a quasi-polynomial of quasi-period $N = 1$.
  Hence
  \begin{equation*}
    f'(q) (1-q) =  \left(-\sum_{n = 1}^{\infty} (q^{-1})^{n} \right) (1-q) = 1 \quad \text{in} \quad \Z[q]_{L_-}
  \end{equation*}
  as the reader easily verifies, and we conclude that $f' = f_{L_-}$.
  Note that the choice of homomorphism $L_-$ means we are expanding the rational function $f$ around $q = \infty$.
  The root $\alpha$ of the denominator of $f$ satisfies $\alpha^N = 1$, and $\deg(f) < 0$ indeed.
\end{ex}


\section{Categories and pairs}
For the remainder of this paper, we let $\X$ be a CY3 orbifold in the sense of Section~\ref{suborbifolds}.
Here, we recall the abelian category $\A = \langle \hO_{\X}[1], \Coh_{\le 1}(\X)\rangle_{\eex}$, which contains all the objects we count.
It is the heart of the category of D$0$-D$2$-D$6$ bound states constructed in \cite{todajams}.
We discuss torsion pairs $(\T,\F)$ on $\Coh_{\le 1}(\X)$, and introduce the notion of \emph{$(\T,\F)$-pairs}. 
These are rank $-1$ objects in $\A$ which should be though of as \emph{stable} with respect to $(\T,\F)$.
Choosing $(\T,\F)$ suitably, this notion specialises to ideal sheaves, stable pairs, or Bryan--Steinberg pairs.

\subsection{Torsion pairs and tilting}
If $\B$ is an abelian category, a \emph{torsion pair} (see e.g.~\cite{MR1327209}) consists of a pair of full subcategories $(\T,\F)$ such that
\begin{enumerate}
\item $\Hom(T,F) = 0$, for all $T \in \T$, $F \in \F$,
\item every object $E \in \cat{B}$ fits into a short exact sequence
  \begin{equation*}
    0 \to T_{E} \to E \to F_{E} \to 0
  \end{equation*}
  with $T_{E} \in \T$ and $F_{E} \in \F$.
\end{enumerate}
We write $\cat B = \langle T, F \rangle$.
Note that the first condition implies that the sequence in the second condition is unique.
In our applications, $\cat B$ will be $\Coh_{\leq 1}(\X)$, and pairs will be induced by various stability conditions.

The following result provides a simply method of constructing torsion pairs.
\begin{lem}[{\cite[Lem.~2.15]{MR3021446}}]\label{lem:TodaTorsion}
  Let $\cat{B}$ be a Noetherian abelian category and let $\T$ be a full subcategory that is closed under extensions and quotients.
  Let
  \begin{equation*}
    \F = \T^{\perp} = \{F \in \cat{B} \,|\, \Hom(T,F) = 0 \text{ for all } T \in \T\}.
  \end{equation*}
  Then $(\T,\F)$ is a torsion pair on $\cat{B}$.
\end{lem}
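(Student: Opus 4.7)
The first defining property of a torsion pair, the Hom-vanishing, is immediate from the very definition of $\F = \T^\perp$, so the entire content of the lemma is the existence, for every $E \in \cat{B}$, of a short exact sequence $0 \to T_E \to E \to F_E \to 0$ with $T_E \in \T$ and $F_E \in \F$. My plan is to produce $T_E$ as the maximal $\T$-subobject of $E$ and then leverage the closure hypotheses on $\T$ to show that the quotient lies in $\F$.

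First I would show that the collection of $\T$-subobjects of a fixed $E \in \cat B$ is closed under finite sums. If $T_1, T_2 \subset E$ both lie in $\T$, then $(T_1+T_2)/T_1$ is a quotient of $T_2$, hence lies in $\T$ by quotient-closure; the extension
\[
0 \to T_1 \to T_1 + T_2 \to (T_1+T_2)/T_1 \to 0
\]
then shows $T_1 + T_2 \in \T$ by extension-closure. Combined with the Noetherian hypothesis, this guarantees that the (non-empty) poset of $\T$-subobjects of $E$ has a unique maximal element $T_E$: take any ascending chain $0 = T^{(0)} \subset T^{(1)} \subset \cdots$ of $\T$-subobjects, which must terminate, and then any other $\T$-subobject can be absorbed into the chain by the sum construction until the process stabilises.

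Next I would set $F_E = E/T_E$ and verify that $F_E \in \F$. Suppose for contradiction that there were a nonzero map $\varphi \colon T \to F_E$ with $T \in \T$. Its image $T' := \im \varphi \subset F_E$ is a quotient of $T$, hence in $\T$ by quotient-closure. Pulling back along the surjection $E \to F_E$, we obtain a subobject $T_E \subsetneq T_E' \subset E$ with $T_E'/T_E \cong T'$; the short exact sequence $0 \to T_E \to T_E' \to T' \to 0$ and extension-closure of $\T$ force $T_E' \in \T$, contradicting the maximality of $T_E$. Hence $F_E \in \F$, completing the construction of the torsion pair.

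The only nontrivial step is the existence of the maximal $\T$-subobject, which requires both the Noetherian hypothesis (to guarantee the ACC on subobjects of $E$) and the sum lemma above. The rest is a formal manipulation of the closure properties, so I expect no real obstacle beyond making sure the two closure conditions are invoked in the right places.
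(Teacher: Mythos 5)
Your proof is correct and is essentially the standard argument behind the cited result of Toda (the paper itself offers no proof, only the reference): one builds $T_E$ as the maximal $\T$-subobject, using closure under quotients and extensions to see that $\T$-subobjects are stable under sums and the Noetherian hypothesis to get a maximum, and then maximality forces $E/T_E \in \T^{\perp}$ exactly as you argue. Nothing further is needed.
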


\begin{ex}
  \label{exp:DimensionalTorsionPair}
  Consider the full subcategory $\T = \Coh_{\leq d}(\X) \subset \cat{B} \coloneq \Coh(\X)$.
  Then $\F = \Coh_{\ge d+1}(\X)$, \ie~the full subcategory of sheaves that admit no subsheaves of dimension $\le d$.
  By the previous lemma, we obtain a torsion pair
  \begin{equation}
    \Coh(\X) = \langle \Coh_{\le d}(\X), \Coh_{\ge d+1}(\X) \rangle.
  \end{equation}
\end{ex}
\begin{ex}
  All the torsion pairs on $\Coh_{\leq 1}(\X)$ that play are role, are constructed as follows.
  Let $S$ be a totally ordered set, and let $\mu \colon N^{\eff}_{\le 1}(\X) \to S$ be a stability condition on $\Coh_{\le 1}(\X)$.
  A choice of element $s \in S$ defines a torsion pair on $\Coh_{\le 1}(\X)$
  \begin{align*}
    \T_{\mu,s} &= \{T \in \Coh_{\le 1}(\X) \mid T \onto Q \neq 0 \implies \mu(Q) \ge s\}\\
    \F_{\mu,s} &= \{F \in \Coh_{\le 1}(\X) \mid 0 \neq E \into F \implies \mu(E) < s\}
  \end{align*}
  by collapsing the Harder--Narasimhan filtration of $\mu$-stability at slope $s$.
\end{ex}



The abelian category $\cat{B}$ defines the standard t-structure on $D(\cat{B})$.
In the presence of a torsion pair, one obtains a different t-structure via the process of \emph{tilting}.
\begin{prop}
  Let $(\T,\F)$ be a torsion pair on the abelian category $\cat{B}$.
  Then
  \begin{equation*}
    \cat{B}^{\flat} = \{E \in D(\cat{B}) \,|\, H^{-1}(E) \in \F,\, H^0(E) \in \T,\, H^i(E) = 0 \text{ if } i \neq -1,0\}
  \end{equation*}
  defines the heart of a bounded t-structure on $D(\cat{B})$.
  In particular, $\cat{B}^{\flat}$ is abelian and closed under extensions.
\end{prop}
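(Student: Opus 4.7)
This is the classical tilting construction of Happel--Reiten--Smal{\o}. The plan is to define a candidate t-structure $(D^{\leq 0}_{\flat}, D^{\geq 0}_{\flat})$ on $D(\cat{B})$ whose heart will turn out to be $\cat{B}^{\flat}$, and then verify the t-structure axioms.

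Explicitly, with $H^i$ denoting the standard cohomology, I will set
\begin{align*}
D^{\leq 0}_{\flat} &= \{E \in D(\cat{B}) : H^i(E) = 0 \text{ for } i > 0, \, H^0(E) \in \T\}, \\
D^{\geq 0}_{\flat} &= \{E \in D(\cat{B}) : H^i(E) = 0 \text{ for } i < -1, \, H^{-1}(E) \in \F\}.
\end{align*}
By construction $D^{\leq 0}_{\flat} \cap D^{\geq 0}_{\flat} = \cat{B}^{\flat}$, and the inclusion $D^{\leq 0}_{\flat}[1] \subset D^{\leq 0}_{\flat}$ is immediate from the way shifting affects cohomology.

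The bulk of the work lies in constructing the truncation triangle for an arbitrary $X \in D(\cat{B})$. First apply the torsion pair to $H^0(X)$ to obtain a short exact sequence $0 \to T \to H^0(X) \to F \to 0$ with $T \in \T$ and $F \in \F$. Take $A$ to be the fibre of the composition $\tau^{\mathrm{st}}_{\leq 0}X \to H^0(X) \to F$, where the first map is the canonical projection from the standard truncation to its top cohomology (viewed as a complex in degree zero), and set $B = \cone(A \to X)$. A direct application of the long exact sequence in standard cohomology then shows $H^i(A) = H^i(X)$ for $i \leq -1$, $H^0(A) = T$, and $H^i(A) = 0$ for $i \geq 1$, so $A \in D^{\leq 0}_{\flat}$; symmetrically, $H^i(B) = 0$ for $i \leq -1$, $H^0(B) = F$, and $H^i(B) = H^i(X)$ for $i \geq 1$, so $B \in D^{\geq 1}_{\flat}$.

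For Hom-vanishing, given $A \in D^{\leq 0}_{\flat}$ and $B \in D^{\geq 1}_{\flat}$, I filter $A$ using the standard triangle $\tau^{\mathrm{st}}_{\leq -1}A \to A \to H^0(A)$; the group $\Hom(\tau^{\mathrm{st}}_{\leq -1}A, B)$ vanishes by the Hom-vanishing of the standard t-structure, reducing the computation to $\Hom(H^0(A), B)$. I then filter $B$ via $H^0(B) \to B \to \tau^{\mathrm{st}}_{\geq 1}B$: the piece $\Hom(H^0(A), H^0(B))$ vanishes because $H^0(A) \in \T$ and $H^0(B) \in \F$, while $\Hom(H^0(A), \tau^{\mathrm{st}}_{\geq 1}B)$ vanishes again by the standard t-structure. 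Boundedness of the tilted t-structure is automatic from boundedness of the standard one, and the abelianness and extension-closedness of $\cat{B}^{\flat}$ are then formal consequences of being the heart of a t-structure. The only mildly delicate step I expect to handle carefully is confirming that the fibre construction gives $H^0(A) = T$ exactly (rather than a proper subobject of $T$), which is a matter of unwinding the long exact cohomology sequence associated to the defining triangle $A \to \tau^{\mathrm{st}}_{\leq 0}X \to F[0]$.
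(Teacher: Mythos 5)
Your proposal is correct: it is the standard Happel--Reiten--Smal{\o} tilting argument, and the key steps check out — the triangle $A \to \tau^{\mathrm{st}}_{\leq 0}X \to F[0]$ does give $H^{0}(A) = \ker(H^{0}(X) \to F) = T$ exactly, and the two-step filtration argument for $\Hom(D^{\leq 0}_{\flat}, D^{\geq 1}_{\flat}) = 0$ is complete. The paper itself gives no proof, treating the proposition as the classical tilting result (cf.\ Happel--Reiten--Smal{\o}), so your write-up supplies precisely the standard argument being invoked.
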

\begin{ex}\label{ex:defCohFlat}
Tilting at the torsion pair of Example \ref{exp:DimensionalTorsionPair} for $d = 1$ yields the heart
  \begin{equation*}
    \Coh^{\flat}(\X) = \langle \Coh_{\ge 2}(\X)[1], \Coh_{\le 1}(\X) \rangle \subset D^{[-1,0]}(\X).
  \end{equation*} 
\end{ex}

\subsection{The category \texorpdfstring{$\A$}{A}}
The objects underlying DT invariants are elements of $\Coh^{\flat}(\X)$.
However, $\Coh^{\flat}(\X)$ is not Noetherian, and so in view of Lemma \ref{lem:TodaTorsion}, we instead work with a certain Noetherian subcategory of $\Coh^{\flat}(\X)$, cf.~\cite{todajams}.

Given full subcategories categories $\hC_{1}, \ldots, \hC_{n} \subset D(\X)$, let $\langle \hC_{1}, \ldots, \hC_{n}\rangle_{\eex} \subset D(\X)$ be the smallest extension-closed subcategory of $D(\X)$ containing each $\hC_{i}$.
\begin{defn}
  The category $\A$ is the full subcategory of $\Coh^{\flat}(\X)$ defined as
  \begin{equation}
    \A = \langle \hO_\X[1], \Coh_{\le 1}(\X) \rangle_{\text{ex}} \subset \Coh^{\flat}(\X) \subset D(\X).
  \end{equation}
\end{defn}
Note that $\Hom(\hO_\X[1],E) = 0 = \Hom(E,\hO_\X[1])$ for all $E \in \Coh_{\leq 1}(\X)$.
\begin{lem}\label{lem:PropertiesOfA}
  The category $\A$ satisfies the following properties.
  \begin{enumerate}
  \item $\A$ is a Noetherian abelian category with exact inclusion $\A \subset D(\X)$, which is to say that given $E', E, E'' \in \A$, a sequence of morphisms
    \[
      E' \to E \to E''
    \]
   is a short exact sequence in $\A$ if and only if it is an exact triangle in $D(\X)$.
  \item If $E \in \A$, then $H^{-1}(E)$ is torsion free or $0$, $H^0(E) \in \Coh_{\leq 1}(\X)$, and $H^{i}(E) = 0$ for $i \not= -1,0$.
  \item The subcategory $\Coh_{\leq 1}(\X) \subset \A$ is closed under extensions, quotients, and subobjects,
    and the inclusion $\Coh_{\leq 1}(\X) \subset \A$ is exact.
  \item $\A$ contains the shifted structure sheaf $\os_\X[1]$, the shifted ideal sheaf $I_C[1]$ of any curve $C \subset \X$, and stable pairs in the sense of Pandharipande--Thomas.
  \item $\A$ contains all Bryan--Steinberg pairs in the sense of Definition~\ref{def:BSpairs}
  \end{enumerate}
\end{lem}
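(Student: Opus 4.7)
My plan is to establish the parts roughly in the order (2), (3), (1), (4)-(5), as each feeds into the next. For part (2), I proceed by induction on the length of an extension filtration $0 = E_0 \subset E_1 \subset \ldots \subset E_n = E$ in $\A$ with successive quotients $Q_i \in \{\hO_\X[1]\} \cup \Coh_{\le 1}(\X)$, afforded by the definition of $\A$. The long exact cohomology sequence attached to each triangle $E_{i-1} \to E_i \to Q_i$ kills $H^j$ for $j \notin \{-1,0\}$. When $Q_i \in \Coh_{\le 1}(\X)$, we read off $H^{-1}(E_i) \cong H^{-1}(E_{i-1})$ while $H^0(E_i)$ extends $H^0(E_{i-1})$ by $Q_i$ inside $\Coh_{\le 1}(\X)$. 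When $Q_i = \hO_\X[1]$, the sequence $0 \to H^{-1}(E_{i-1}) \to H^{-1}(E_i) \to \hO_\X \to H^0(E_{i-1}) \to H^0(E_i) \to 0$ realises $H^{-1}(E_i)$ as an extension of $H^{-1}(E_{i-1})$ by an ideal subsheaf of $\hO_\X$; since torsion-freeness is closed under extensions, the induction goes through.

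Part (3) then follows immediately: if $F \in \Coh_{\le 1}(\X)$ and $E \into F$ in $\A$, then $H^{-1}(E) \into H^{-1}(F) = 0$, so $E = H^0(E) \in \Coh_{\le 1}(\X)$, and dually for quotients; extension-closure is tautological. For part (1), the abelian structure on $\A$ is inherited from $\Coh^{\flat}(\X)$ by verifying closure under kernels and cokernels, which follows from (2) via cohomology chase (subsheaves of torsion-free sheaves are torsion-free, and $\Coh_{\le 1}(\X)$ is closed under subobjects and quotients in the standard heart). Noetherianity is a rank argument: for any ascending chain $E_1 \subset E_2 \subset \ldots \subset E$ in $\A$, the ranks $\rk(E_i) \in \Z_{\le 0}$ form a non-increasing integer sequence bounded below by $\rk(E)$, and so stabilise; past that point the successive quotients have rank $0$ and thus lie in $\Coh_{\le 1}(\X)$ by (2), which forces $\{H^{-1}(E_i)\}$ to become constant and makes $\{H^0(E_i)\}$ an ascending chain in the Noetherian category $\Coh_{\le 1}(\X)$.

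Finally for (4) and (5): $\hO_\X[1]$ is a generator of $\A$ by definition. Rotating the triangle $I_C \to \hO_\X \to \hO_C$ yields a triangle $\hO_C \to I_C[1] \to \hO_\X[1]$, exhibiting $I_C[1]$ as an extension in $\A$ of $\hO_\X[1]$ by $\hO_C \in \Coh_{\le 1}(\X)$. A stable pair $E = (\hO_\X \stackrel{s}{\to} F)$ has $H^{-1}(E) = \ker(s)$ an ideal sheaf of a $1$-dimensional subscheme and $H^0(E) = \coker(s) \in \Coh_0(\X)$, so it sits in a short exact sequence $0 \to \ker(s)[1] \to E \to \coker(s) \to 0$ in $\Coh^{\flat}(\X)$ with both flanking terms in $\A$. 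Bryan--Steinberg pairs (Definition~\ref{def:BSpairs}) are verified analogously once their defining cohomological description is unpacked. The main obstacle is the verification that $\A$ is closed under kernels and cokernels in $\Coh^{\flat}(\X)$, since $\A$ is not a Serre subcategory of $\Coh^{\flat}(\X)$; this is where careful tracking of $H^{-1}$ as an iterated extension of subsheaves of $\hO_\X$, via (2), pays off.
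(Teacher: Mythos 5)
Your reductions for (2), (3) and (4) are fine and essentially match what the paper dismisses as "easily verified", but the two places where the real content of part (1) sits are exactly where your argument has gaps. First, Noetherianity: after the ranks stabilise you obtain a chain of monomorphisms $H^0(E_1) \into H^0(E_2) \into \cdots$ in $\Coh_{\leq 1}(\X)$, but Noetherianity of $\Coh_{\leq 1}(\X)$ only controls ascending chains of subobjects of a \emph{fixed} object; strictly increasing chains of abstract monomorphisms exist (e.g.\ $\hO_C \into \hO_C(1) \into \hO_C(2) \into \cdots$ for a curve $C \subset \X$). Your chain need not live inside $H^0(E)$: from $0 \to E_i \to E \to E/E_i \to 0$ the map $H^0(E_i) \to H^0(E)$ has kernel equal to the image of the connecting map $H^{-1}(E/E_i) \to H^0(E_i)$, which is typically non-zero and can grow with $i$. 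Repairing this needs a further idea, e.g.\ passing to the quotients $G_i = E/E_i$, observing that each torsion-free sheaf $H^{-1}(G_i)$ contains a fixed sheaf $M$ (the image of $H^{-1}(E)$) with quotient of dimension $\leq 1$, hence is trapped in the fixed reflexive hull $M^{\vee\vee}$, and only then invoking Noetherianity of coherent sheaves twice. Second, closure of $\A$ under kernels and cokernels in $\Coh^{\flat}(\X)$ does \emph{not} follow from a cohomology chase based on (2): condition (2) is necessary but not sufficient for membership in $\A$ — for instance $(L \otimes I_C)[1]$ with $L$ a non-trivial line bundle has torsion-free $H^{-1}$ and vanishing $H^0$, yet lies outside $\A$ by Proposition~\ref{prop:ObjectsSmallRank}(2), and in rank $\leq -2$ the constraint is subtler still. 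So one must actually exhibit the kernel/cokernel as an iterated extension of $\hO_{\X}[1]$ and objects of $\Coh_{\leq 1}(\X)$ (or prove a genuine characterisation of $\A$), which is precisely the content the paper outsources to \cite[Lem.~3.5, 3.8]{todajams}.

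Part (5) also cannot be "verified analogously": Bryan--Steinberg pairs (Definition~\ref{def:BSpairs}) are objects $E = (\hO_Y \to F)$ on the resolution $Y$, and the claim is that $\Phi(E) \in \A \subset D(\X)$. The analogy with PT pairs breaks because the McKay equivalence does not preserve dimension of support; the essential input is that $\Phi(F) \in \Coh_{\leq 1}(\X)$ for the underlying sheaf $F$ of an $f$-stable pair, which is \cite[Prop.~18]{MR3449219} and is the whole point of the paper's Lemma~\ref{lem:BSpairs_in_A}; given that, one concludes by applying extension-closure of $\A$ to the triangle $\hO_{\X} \to \Phi(F) \to \Phi(E) \to \hO_{\X}[1]$. (A minor further point: in your step (3) you use that short exact sequences in $\A$ are exact triangles, i.e.\ part (1), before proving it; this is harmless if you phrase (3) inside $\Coh^{\flat}(\X)$, but as written the order of dependence is circular.)
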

\begin{proof}
  The first item is proven in \cite[Lem.~3.5, 3.8]{todajams}, and claims two to four are easily verified.
  The final claim is proven in Lemma~\ref{lem:BSpairs_in_A}.
\end{proof}

\begin{rmk}\label{rmk:splittingA}
  Let $N(\A) \subset N(\X)$ denote the subgroup generated by objects in $\A$.
  The inclusion $\Coh_{\leq 1}(\X) \subset \A$ induces an injection of abelian groups $i \colon N_{\leq 1}(\X) \into N(\A)$.
  The image of a class $\alpha \in N(\A)$ in the cokernel of $i$ equals $\rk(\alpha)$, and $n \mapsto -n[\hO_{\X}[1]]$ defines a splitting of this map, so we have a canonical splitting $N(\A) = \Z \oplus N_{\leq 1}(\X)$.
  For $E \in \A$ with $[E] = \rk(E)[\hO_{\X}] + [E']$, we write $[E] = (\rk(E),\beta_{E'},c_{E'})$.
\end{rmk}

\subsection{Pairs}
We now define the objects which we aim to count.
Note that in the following definition, $(\T,\F)$ need not be a torsion pair.
\begin{defn}\label{def:TFpair}
  Let $\T$ and $\F$ be full subcategories of $\Coh_{\leq 1}(\X)$.
  A \emph{$(\T,\F)$-pair} is an object $E \in \A$ such that
  \begin{enumerate}
  \item $\rk(E) = -1$,
  \item $\Hom(T,E) = 0$ for all $T \in \T$,
  \item $\Hom(E,F) = 0$ for all $F \in \F$.
  \end{enumerate}
  We write $\cat{Pair}(\T,\F) \subset \A$ for the corresponding full subcategory of $(\T,\F)$-pairs.
\end{defn}
\begin{rmk}\label{rmk:TFpairStandardForm}
  Assuming that $(\T,\F)$ is a \emph{torsion} pair, two things follow.
  The third condition is equivalent to $H^0(E) \in \T$.
  Moreover, if $E$ is of the form $\hO_{\X} \to G$ with $G \in \Coh_{\le 1}(\X)$, then the second condition is equivalent to $G \in \F$.
\end{rmk}
Under a cohomological criterion on $\T$, all pairs are of a standard form.
\begin{lem}\label{prop:CohomCrit}
  Let $(\T, \F)$ be a torsion pair on $\Coh_{\leq 1}(\X)$ such that every $T \in \T$ satisfies $H^i(\X,T) = 0$ for all $i \neq 0$.
  Then an object $E \in \A$ of rank $-1$ is a $(\T, \F)$-pair if and only
  if it is isomorphic to a two-term complex
  \begin{equation*}
    E = (\hO_\X \stackrel s \to G)
  \end{equation*}
  with $H^0(E) = \coker(s) \in \T$ and $G \in \F$.
\end{lem}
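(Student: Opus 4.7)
The plan is to prove both implications separately, isolating where the cohomological hypothesis $H^i(\X,T) = 0$ for $i \neq 0$ is used.

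For the easy direction, suppose $E = [\hO_\X \xrightarrow{s} G]$ with $\coker(s) \in \T$ and $G \in \F$. Then $E$ sits in a triangle $\hO_\X \xrightarrow{s} G \to E \to \hO_\X[1]$, which realises $E$ as an extension of $\hO_\X[1]$ by $G$ in $\A$; in particular $E \in \A$ with $\rk(E) = -1$. To check $\Hom(T,E) = 0$ for $T \in \T$, apply $\Hom(T,-)$ to this triangle: $\Hom(T,G) = 0$ by the torsion pair, while $\Hom(T,\hO_\X) \cong H^3(\X,T)^\vee$ and $\Hom(T,\hO_\X[1]) \cong H^2(\X,T)^\vee$ vanish automatically by Serre duality together with $\dim \supp T \le 1$. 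To check $\Hom(E,F) = 0$ for $F \in \F$, apply $\Hom(-,F)$ to the canonical sequence $0 \to H^{-1}(E)[1] \to E \to H^0(E) \to 0$ in $\A$: $\Hom(H^0(E),F) = 0$ since $H^0(E) = \coker(s) \in \T$, and $\Hom(H^{-1}(E)[1],F) = \Ext^{-1}(H^{-1}(E),F) = 0$ trivially.

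For the converse, begin with an abstract $(\T,\F)$-pair $E$. First, $H^0(E) \in \T$: decompose $H^0(E) \in \Coh_{\le 1}(\X)$ with respect to the torsion pair; the surjection $E \twoheadrightarrow H^0(E) \twoheadrightarrow (H^0 E)_{\F}$ in $\A$ vanishes by the pair hypothesis, forcing $(H^0 E)_{\F} = 0$. Second, $H^{-1}(E)$ is an ideal sheaf $I_Z \subset \hO_\X$ for some $Z$ with $\dim Z \leq 1$: since $\rk(E) = -1$ and $\A = \langle \hO_\X[1], \Coh_{\le 1}(\X)\rangle_{\text{ex}}$, one can find a filtration $0 \subset F_1 \subset F_2 \subset E$ with $F_1, E/F_2 \in \Coh_{\le 1}(\X)$ and $F_2/F_1 \cong \hO_\X[1]$; the long exact sequences in $\Coh$-cohomology then identify $H^{-1}(E)$ with the kernel of the extension map $\hO_\X \to F_1$, which visibly has cokernel in $\Coh_{\le 1}(\X)$.

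The main obstacle, and the step where the hypothesis is essential, is producing a morphism $h \colon E \to \hO_\X[1]$ lifting the inclusion $I_Z \hookrightarrow \hO_\X$. Apply $\Hom(-,\hO_\X[1])$ to the triangle $H^{-1}(E)[1] \to E \to H^0(E) \xrightarrow{+1}$: by Serre duality on the CY3 orbifold, $\Ext^1(H^0(E), \hO_\X) \cong H^2(\X, H^0(E))^\vee = 0$ (from the dimensional bound) and $\Ext^2(H^0(E), \hO_\X) \cong H^1(\X, H^0(E))^\vee = 0$ (from the hypothesis applied to $H^0(E) \in \T$). This yields an isomorphism $\Hom(E,\hO_\X[1]) \cong \Hom(H^{-1}(E), \hO_\X)$, and we take $h$ to correspond to the inclusion $I_Z \hookrightarrow \hO_\X$. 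Completing to a triangle $\hO_\X \xrightarrow{s} G \to E \xrightarrow{h} \hO_\X[1]$, a cohomology computation shows that $G$ is a sheaf fitting in $0 \to \hO_Z \to G \to H^0(E) \to 0$, hence $G \in \Coh_{\le 1}(\X)$; applying $\Hom(T,-)$ to this last triangle, together with the automatic vanishing of $\Hom(T,\hO_\X)$ and the assumed vanishing of $\Hom(T,E)$, forces $\Hom(T,G) = 0$ for all $T \in \T$, whence $G \in \F$.
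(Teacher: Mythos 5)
Your proof is correct, and it is essentially the standard argument of Toda's Lemma 3.11(ii), which is exactly what the paper invokes ("goes through verbatim"): reduce to $H^0(E)\in\T$ and $H^{-1}(E)=I_Z$, then lift $I_Z\hookrightarrow\hO_\X$ to a morphism $E\to\hO_\X[1]$ using the Serre-dual vanishing $\Ext^2(H^0(E),\hO_\X)\cong H^1(\X,H^0(E))^\vee=0$, and take the cone. You have correctly isolated that this last vanishing is the only place the cohomological hypothesis on $\T$ enters, which is precisely how the lemma generalises Toda's PT case ($\T=\Coh_0(\X)$).
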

\begin{proof}
  The proof of \cite[Lem.~3.11(ii)]{todajams} goes through verbatim.
\end{proof}
\begin{rmk}\label{ex:PTasTF}
  This cohomological criterion implies that if $\T_{PT} = \Coh_{0}(\X)$ and $\F_{PT} = \Coh_{1}(\X)$, then a $(\T_{PT},\F_{PT})$-pair is the same thing as a stable pair in the sense of Pandharipande--Thomas \cite{MR2545686}.

  The conclusion of Lemma \ref{prop:CohomCrit} can fail if the condition on $\T$ is not satisfied.
  For example, this happens for the perverse torsion pair induced on $\Coh_{\leq 1}(\X)$ via the equivalence of Proposition~\ref{prop:McKay_Hearts}; see \cite[Lem.~3.1.1]{MR2057015}.
\end{rmk}

\subsubsection{A wall-crossing formula}
Recall the following natural generalisation of the notion of torsion pair \cite{2016arXiv160107519T}.
\begin{defn}
  Let $(\A_1,\A_2,\ldots,\A_n)$ be a sequence of full subcategories of an abelian category $\cat{B}$.
  These form a \textit{torsion $n$-tuple},
  notation $\cat{B} = \langle \A_1,\A_2, \ldots, \A_n \rangle$, if
  \begin{enumerate}
  \item we have $\Hom(E_i,E_j) = 0$ for $E_{i} \in \A_{i}$, $E_{j} \in \A_{j}$, $i < j$,
  \item for every object $E \in \cat{B}$ there is a filtration
    \begin{equation*}
      0 = E_0 \subset E_1 \subset \ldots \subset E_n = E
    \end{equation*}{}
    in $\B$ such that $Q_i = E_i/E_{i-1} \in \A_i$ for all $i = 1,2,\ldots,n$.
  \end{enumerate}
  The first condition implies that the filtration in the second condition is unique.
\end{defn}

\begin{rmk}
  Note that for any $1 \leq i \leq n-1$ we obtain a torsion pair
  \begin{equation*}
    \cat{B} = \langle \langle \A_1,\ldots,\A_i \rangle_{\eex},
    \langle \A_{i+1} \ldots, \A_n \rangle_{\eex} \rangle
  \end{equation*}
  by collapsing the filtration of $E$ to $0 \to E_i \to E \to E/E_i \to 0$ in $\cat{B}$.
\end{rmk}
Let $(\T,\F)$ be a pair of full subcategories of $\Coh_{\leq 1}(\X)$.
Define the full subcategory
\begin{equation}\label{eq:defofV}
  \cat{V}(\T,\F) = \left\{ E \in \A \mid \Hom(\T,E) = \Hom(E,\F) = 0 \right\} \subset \A.
\end{equation}
A $(\T, \F)$-pair is the same thing as a rank $-1$ object of $\cat V(\T,\F)$.
\begin{prop}\label{prop:InducedTorsionTriple}
  Let $(\T,\F)$ and $(\tilde{\T},\tilde{\F})$ be torsion pairs on $\Coh_{\leq 1}(\X)$ with $\T \subset \tilde{\T}$.
  There is an induced torsion triple on $\cat A$,
  \begin{equation}\label{eq:InducedTorsionTriple}
    \A = \langle \T, \cat{V}(\T,\tilde{\F}), \tilde{\F} \rangle.
  \end{equation}
\end{prop}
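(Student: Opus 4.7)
The plan is to verify the three Hom-vanishings directly, and then to construct a canonical three-step filtration of any $E \in \A$ by first peeling off a maximal $\T$-subobject and then extracting a maximal $\tilde\F$-quotient from the remainder. Since $\T \subset \tilde\T$ is equivalent to $\tilde\F \subset \F$, the three vanishings are immediate: $\Hom(\T, \cat{V}(\T, \tilde\F)) = 0$ and $\Hom(\cat{V}(\T, \tilde\F), \tilde\F) = 0$ follow from the definition \eqref{eq:defofV}, while $\Hom_\A(\T, \tilde\F) \subset \Hom_{\Coh_{\le 1}(\X)}(\T, \F) = 0$ using the exactness of $\Coh_{\le 1}(\X) \subset \A$ from Lemma \ref{lem:PropertiesOfA}.

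For the first step of the filtration, I would use Lemma \ref{lem:TodaTorsion} to produce a torsion pair $(\T, \T^{\perp_\A})$ on the Noetherian category $\A$. Closure of $\T \subset \A$ under extensions and quotients, taken in $\A$, reduces to the corresponding fact in $\Coh_{\le 1}(\X)$ via the long exact sequence of cohomology and Lemma \ref{lem:PropertiesOfA}(2): an extension in $\A$ of two sheaves in $\Coh_{\le 1}(\X)$ has vanishing $H^{-1}$, and the $H^{-1}$ of an $\A$-quotient of a sheaf in $\Coh_{\le 1}(\X)$, being a torsion-free sheaf injecting into a sheaf of dimension at most one, must also vanish. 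This produces a short exact sequence $0 \to T_E \to E \to V_E \to 0$ in $\A$ with $T_E \in \T$ and $V_E \in \T^{\perp_\A}$.

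For the second step, I would apply the torsion pair $(\tilde\T, \tilde\F)$ on $\Coh_{\le 1}(\X)$ to $H^0(V_E)$, obtaining $0 \to \tilde T \to H^0(V_E) \to \tilde F \to 0$. Combining the canonical short exact sequence $0 \to H^{-1}(V_E)[1] \to V_E \to H^0(V_E) \to 0$ in $\A$ (cf.~\cite[Lem.~3.5]{todajams}) with the surjection $H^0(V_E) \onto \tilde F$ shows that the composite $V_E \to H^0(V_E) \to \tilde F$ is an epimorphism in $\A$. Setting $C = \ker_\A(V_E \to \tilde F)$ gives $0 \to C \to V_E \to \tilde F \to 0$ in $\A$, and pulling back $C \subset V_E$ along $E \onto V_E$ produces the intermediate term in a filtration $0 \subset T_E \subset E_1 \subset E$ with subquotients $T_E \in \T$, $E_1/T_E \cong C$, and $E/E_1 \cong \tilde F \in \tilde\F$.

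It remains to verify that $C \in \cat{V}(\T, \tilde\F)$. The vanishing $\Hom(\T, C) = 0$ follows by applying $\Hom(\T, -)$ to the sequence defining $C$ together with $V_E \in \T^{\perp_\A}$. For $\Hom(C, \tilde\F) = 0$, a short cohomology computation yields $H^{-1}(C) = H^{-1}(V_E)$ and $H^0(C) = \tilde T$; applying $\Hom_{D(\X)}(-, \tilde F')$ for $\tilde F' \in \tilde\F$ to the distinguished triangle $H^{-1}(V_E)[1] \to C \to \tilde T \to H^{-1}(V_E)[2]$ gives outer terms $\Hom(\tilde T, \tilde F') = 0$ (by the torsion pair $(\tilde\T,\tilde\F)$) and $\Hom(H^{-1}(V_E)[1], \tilde F') = \Ext^{-1}(H^{-1}(V_E), \tilde F') = 0$ (vanishing of negative $\Ext$ between sheaves). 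The main technical point I anticipate is the justification of the canonical sequence $0 \to H^{-1}(V_E)[1] \to V_E \to H^0(V_E) \to 0$ in $\A$, since the membership $H^{-1}(V_E)[1] \in \A$ is not formally recorded in Lemma \ref{lem:PropertiesOfA} and must be traced back to the specific generators $\hO_\X[1]$ and $\Coh_{\le 1}(\X)$ of $\A$.
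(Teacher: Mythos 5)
Your proof is correct and takes essentially the same route as the paper's: both apply Lemma \ref{lem:TodaTorsion} to $\T \subset \A$ to split off the maximal $\T$-subobject and then use the $(\tilde{\T},\tilde{\F})$-decomposition of the degree-zero cohomology of what remains to extract the $\tilde{\F}$-quotient, your write-up merely making explicit the verifications (closure of $\T$ under $\A$-quotients and extensions, membership of the middle factor in $\cat{V}(\T,\tilde{\F})$) that the paper leaves implicit. The technical point you flag at the end is harmless: since $\A$ is abelian with exact inclusion into $D(\X)$ (Lemma \ref{lem:PropertiesOfA}), the map $V_E \to H^0(V_E)$ is an epimorphism in $\A$ whose kernel in $\A$ agrees with the kernel in $\Coh^{\flat}(\X)$, so $H^{-1}(V_E)[1] \in \A$ follows formally (and for the rank $0$ and $-1$ objects relevant elsewhere it is Proposition \ref{prop:ObjectsSmallRank}); the paper's own proof uses the same fact implicitly when projecting $H^{0}(E)$ to $\tilde{\F}$.
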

\begin{proof}
  We set $\cat{V}(\T,\tilde{\F}) = \T^{\perp} \cap ^{\perp}\!\tilde{\F}$.
  The semi-orthogonality relations are clear, so it suffices to construct a filtration of every object with factors in $\T, \cat{V}(\T,\tilde{\F})$, and $\tilde{\F}$.

  Let $E \in \cat A$.
  Since $\T$ is closed under quotients and extensions and $\cat A$ is Noetherian, Lemma \ref{lem:TodaTorsion} shows that there exist a unique exact sequence
  \[
    0 \to E_{\T} \to E \to E_{\T^{\perp}} \to 0 
  \]
  with $E_{\T} \in \T$, $E_{\T^{\perp}} \in \T^{\perp}$.
  Defining $E_{\tilde{\F}}$ as the projection of $H^{0}(E)$ to $\tilde{\F}$ induced by the torsion pair $(\tilde{\T}, \tilde{\F})$, we obtain the unique short exact sequence
  \[
    0 \to E_{^{\perp}\!\tilde{\F}} \to E \to E_{\tilde{\F}} \to 0
  \]
  with $E_{^{\perp}\!\tilde{\F}} \in {}^{\perp}\!\tilde{\F}$ and $E_{\tilde{\F}} \in \tilde{\F}$.
  The desired filtration is $0 \into E_{\T} \into E_{^{\perp}\!\tilde{\F}} \into E$.
\end{proof}

\subsubsection{Objects of small rank}\label{subsec:ObSmallRank}
The rank of an object $E \in \Coh^{\flat}(\X)$ is non-positive since $\rk(E) = -\rk H^{-1}(E)$.
In this paper we are exclusively interested in objects of rank $-1$ and $0$, which admit the following explicit description.
\begin{prop}\label{prop:ObjectsSmallRank}
  Let $E \in \Coh^{\flat}(\X)$.
  \begin{enumerate}
  \item If $\rk(E) = 0$, then $E \in \A$ if and only if $H^{-1}(E) = 0$;
  \item If $\rk(E) = -1$, then $E \in \A$ if and only if $H^{-1}(E)$ is torsion free and $\det(E) = \hO_\X$.
  \end{enumerate}
\end{prop}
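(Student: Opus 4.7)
My plan is to handle the two rank cases separately and reduce the rank $-1$ case to showing that a torsion free rank $1$ sheaf with trivial determinant is an ideal sheaf of a subscheme of dimension $\leq 1$.

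\textbf{Rank 0 case.} Since any $E \in \Coh^{\flat}(\X)$ satisfies $H^0(E) \in \Coh_{\le 1}(\X)$ (so has rank $0$), we have $\rk(E) = -\rk(H^{-1}(E))$. If $E \in \A$, then Lemma~\ref{lem:PropertiesOfA}(2) tells us $H^{-1}(E)$ is torsion free or zero; combined with $\rk(E) = 0$, this forces $H^{-1}(E) = 0$. Conversely, if $H^{-1}(E) = 0$, then $E = H^0(E) \in \Coh_{\le 1}(\X) \subset \A$.

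\textbf{Rank $-1$ case, necessity.} If $E \in \A$, then $H^{-1}(E)$ is torsion free by Lemma~\ref{lem:PropertiesOfA}(2). For the determinant, by definition of $\A$ the class $[E] \in K(\X)$ is an integer combination of $[\hO_\X[1]]$ and classes of sheaves supported in codimension $\ge 2$. Since $\det(\hO_\X[1]) = \hO_\X$ and the determinant of any sheaf supported in codimension $\ge 2$ on the smooth DM stack $\X$ is $\hO_\X$ (classes of such sheaves have trivial first Chern class), multiplicativity of $\det$ on triangles yields $\det(E) = \hO_\X$.

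\textbf{Rank $-1$ case, sufficiency.} Consider the canonical short exact sequence in $\Coh^{\flat}(\X)$ coming from the torsion pair in Example~\ref{ex:defCohFlat},
\begin{equation*}
  0 \to H^{-1}(E)[1] \to E \to H^{0}(E) \to 0.
\end{equation*}
Since $H^0(E) \in \Coh_{\le 1}(\X) \subset \A$ and $\A$ is extension-closed in $D(\X)$, it suffices to show $H^{-1}(E)[1] \in \A$. Let $F = H^{-1}(E)$; by assumption $F$ is torsion free of rank $1$. Computing determinants in the sequence above, together with $\det(H^0(E)) = \hO_\X$, gives $\det(F) = \det(E)^{-1} = \hO_\X$. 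Since $\X$ is a smooth DM stack, the reflexive hull $F^{**}$ is a rank $1$ reflexive sheaf, hence a line bundle, and $F^{**} \cong \det(F^{**}) \cong \det(F) \cong \hO_\X$ as $F$ and $F^{**}$ agree in codimension $\le 1$. Therefore the natural injection $F \into F^{**} \cong \hO_\X$ realizes $F$ as the ideal sheaf $I_Z$ of a subscheme $Z \subset \X$ of dimension $\le 1$. The standard triangle associated to $I_Z \into \hO_\X \onto \hO_Z$ rotates to
\begin{equation*}
  \hO_Z \to I_Z[1] \to \hO_\X[1],
\end{equation*}
exhibiting $F[1] = I_Z[1]$ as an extension of $\hO_\X[1] \in \A$ by $\hO_Z \in \Coh_{\le 1}(\X) \subset \A$. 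Extension-closedness of $\A$ in $D(\X)$ then gives $F[1] \in \A$, and we conclude $E \in \A$.

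\textbf{Main subtlety.} The only non-formal input is the identification $F^{**} \cong \hO_\X$ for a rank $1$ torsion free $F$ with $\det(F) = \hO_\X$; this uses that $\X$ is a smooth Deligne--Mumford stack, so its local rings are regular and rank $1$ reflexive sheaves are invertible. Everything else is bookkeeping with the tilted heart $\Coh^{\flat}(\X)$ and Lemma~\ref{lem:PropertiesOfA}.
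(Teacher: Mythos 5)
Your proof is correct and takes essentially the same route as the paper's: necessity comes from the extension structure of $\A$ (the paper runs an induction on the defining filtration where you invoke Lemma~\ref{lem:PropertiesOfA} and multiplicativity of $\det$ in $K$-theory, but the content is identical), and sufficiency proceeds via the triangle $H^{-1}(E)[1] \to E \to H^{0}(E)$ together with the identification of $H^{-1}(E)$ with an ideal sheaf $I_{Z}$ and the rotated triangle exhibiting $I_{Z}[1]$ as an extension of $\hO_{\X}[1]$ by $\hO_{Z}$. The only difference is that you spell out the reflexive-hull argument showing $F^{**} \cong \hO_{\X}$, a step the paper simply asserts.
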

\begin{proof}
  If $E \in \cat A$ there exists a filtration $0 = E_{0} \subset \cdots \subset E_{n} = E$, with each $E_{i}/E_{i-1}$ either lying in $\Coh_{\le 1}(\X)$, or else isomorphic to $\hO_{\X}[1]$.
  The rank of $E$ is the negative of the number of subquotients isomorphic to $\hO_{\X}[1]$.

  Thus if $\rk E = 0$, then each subquotient is a sheaf, and so $H^{-1}(E) = 0$.
  Conversely, if $H^{-1}(E) = 0$, then $E \in \Coh(\X) \cap \Coh^{\flat}(\X) = \Coh_{\le 1}(\X) \subset \cat A$.

  If $E \in \A$, and $F \into E' \onto E$ is a short exact sequence in $\A$ with $F \in \Coh_{\le 1}(\X)$, then $H^{-1}(E') \into H^{-1}(E)$.
  If $F \cong \hO_{\X}[1]$, then we get a short exact sequence of sheaves $\hO_{\X} \into H^{-1}(E') \onto H^{-1}(E)$.
  In both cases, $H^{-1}(E')$ is torsion free if $H^{-1}(E)$ is and $\det(E') = \det(E)$.
  By induction on the number of extensions needed to construct an object, every $E \in \cat A$ is such that $H^{-1}(E)$ torsion free and $\det(E) = \hO_{\X}$.

  Conversely, if $\rk(E) = -1$ and $\det(E) = \hO_{\X}$, then $H^{-1}(E)$ is torsion free of trivial determinant, hence equal to $I_{C}$ for some curve $C \subset \X$.
  Since $I_{C}[1] = (\hO_{\X} \to \hO_{C}) \in \A$ and $H^{0}(E) \in \Coh_{\le 1}(\X) \in \cat A$, we conclude that $E \in \cat A$.  
\end{proof}
We denote the subcategory of objects in $\A$ of rank $-1$ 
(resp.~$\ge -1$) by $\A_{\rk = -1}$ (resp.~$\A_{\rk \ge -1}$).
\begin{cor}\label{cor:Class_TFpair_Effective_HMinusOne_IdealSheaf}
  Let $E \in \A_{\rk \ge -1}$.
  Then $H^{-1}(E)$ is either $0$ or the ideal sheaf of a curve $C \subset \X$ of class $\beta_C = \beta_{\hO_{C}}$, and $\beta_{E} = \beta_{C} + \beta_{H^{0}(E)}$.
  In particular, $\beta_{E} \geq 0$.
\end{cor}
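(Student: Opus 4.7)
The plan is to reduce the statement to Proposition \ref{prop:ObjectsSmallRank} and then do a bookkeeping calculation in $N(\A)$ using the splitting from Remark \ref{rmk:splittingA}. Since $E \in \A_{\rk \ge -1}$ and $\rk$ is non-positive on $\Coh^{\flat}(\X)$, the rank of $E$ is either $0$ or $-1$, and I would treat these two cases separately.

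If $\rk(E) = 0$, Proposition \ref{prop:ObjectsSmallRank}(1) immediately gives $H^{-1}(E) = 0$ and $E \cong H^{0}(E) \in \Coh_{\le 1}(\X)$; taking $C = \emptyset$ so that $\beta_{C} = 0$, the conclusion $\beta_{E} = \beta_{H^{0}(E)} \ge 0$ follows at once. If instead $\rk(E) = -1$, then Proposition \ref{prop:ObjectsSmallRank}(2) tells us $H^{-1}(E)$ is torsion free with trivial determinant; since $\X$ is smooth and three-dimensional, any such rank $1$ sheaf is the ideal sheaf $I_{C}$ of a curve $C \subset \X$ (possibly empty or lower-dimensional). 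This also appears explicitly in the proof of that proposition.

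For the class identification, I would apply the cohomology triangle $H^{-1}(E)[1] \to E \to H^{0}(E)$ in $D(\X)$ to obtain $[E] = -[I_{C}] + [H^{0}(E)]$ in $N(\X)$, then substitute $[I_{C}] = [\hO_{\X}] - [\hO_{C}]$ coming from $0 \to I_{C} \to \hO_{\X} \to \hO_{C} \to 0$, yielding
\[
  [E] = -[\hO_{\X}] + [\hO_{C}] + [H^{0}(E)].
\]
Under the splitting $N(\A) = \Z \oplus N_{\le 1}(\X)$ of Remark \ref{rmk:splittingA}, the term $-[\hO_{\X}]$ accounts for the rank $-1$ part, and the remainder $[\hO_{C}] + [H^{0}(E)] \in N_{\le 1}(\X)$ gives $(\beta_{E}, c_{E})$. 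Projecting to $N_{1}(\X)$ then produces $\beta_{E} = \beta_{C} + \beta_{H^{0}(E)}$; both summands are classes of honest coherent sheaves, hence effective, so $\beta_{E} \ge 0$.

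There is no real obstacle here: the content has already been extracted in Proposition \ref{prop:ObjectsSmallRank}, and what remains is just to translate the cohomological decomposition of $E$ into the coordinates $(\rk, \beta, c)$ supplied by the chosen splitting. The only subtlety worth flagging is that the effectivity of $\beta_{E}$ is not automatic from the definition of a class in $N_{1}(\X)$, but only emerges after writing it as a sum of sheaf classes.
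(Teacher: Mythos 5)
Your proof is correct and is essentially the argument the paper intends: the corollary is stated without a separate proof precisely because it follows from Proposition \ref{prop:ObjectsSmallRank} (whose proof already identifies $H^{-1}(E)$ with an ideal sheaf $I_{C}$ in the rank $-1$ case) together with the class computation $[E] = -[\hO_{\X}] + [\hO_{C}] + [H^{0}(E)]$ under the splitting of Remark \ref{rmk:splittingA}, exactly as you carry out. The bookkeeping with the cohomology triangle and the short exact sequence $0 \to I_{C} \to \hO_{\X} \to \hO_{C} \to 0$, and the observation that effectivity of $\beta_{E}$ comes from writing it as a sum of sheaf classes, match the paper's reasoning.
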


\begin{cor}
  \label{thm:subAndQuotientObjectsHaveLowerD}
  Let $E \in \cat A_{\rk \ge -1}$, and let $F \in \cat A_{\rk \ge -1}$ be a subobject or quotient of $E$.
  Then $\beta_{F} \le \beta_{E}$.
\end{cor}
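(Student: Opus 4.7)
The plan is to reduce the statement to the effectivity fact $\beta_{X} \geq 0$ for $X \in \cat A_{\rk \ge -1}$ that was just established in Corollary~\ref{cor:Class_TFpair_Effective_HMinusOne_IdealSheaf}.

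First I would treat the subobject case: pick a short exact sequence $0 \to F \to E \to G \to 0$ in $\cat A$, so that $\beta_{E} = \beta_{F} + \beta_{G}$ in $N_{1}(\X)$. The key observation is that $G$ automatically lies in $\cat A_{\rk \geq -1}$. Indeed, by Proposition~\ref{prop:ObjectsSmallRank}, rank is non-positive on $\cat A$, i.e.\ $\rk F, \rk G \leq 0$, and additivity gives $\rk F + \rk G = \rk E \geq -1$. Combined with $\rk F \geq -1$ (by hypothesis on $F$) and $\rk G \leq 0$, this forces $\rk G \in \{0,-1\}$. Then Corollary~\ref{cor:Class_TFpair_Effective_HMinusOne_IdealSheaf} applies to $G$ and yields $\beta_{G} \geq 0$, hence $\beta_{F} = \beta_{E} - \beta_{G} \leq \beta_{E}$.

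The quotient case is symmetric: given $0 \to K \to E \to F \to 0$ in $\cat A$, the same rank-additivity argument places $K$ in $\cat A_{\rk \geq -1}$, the corollary yields $\beta_{K} \geq 0$, and we conclude $\beta_{F} = \beta_{E} - \beta_{K} \leq \beta_{E}$. There is no real obstacle here; the only thing to be careful about is that the statement $\beta_{X} \geq 0$ for $X \in \cat A_{\rk \geq -1}$ genuinely needs both possible ranks, which is precisely what the previous corollary provides (either $X$ is a sheaf in $\Coh_{\le 1}(\X)$, in which case $\beta_{X} \in N_{1}^{\eff}(\X)$ by definition, or $H^{-1}(X) = I_{C}$ and $\beta_{X} = \beta_{C} + \beta_{H^{0}(X)}$, both summands effective).
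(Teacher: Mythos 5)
Your proposal is correct and follows essentially the same route as the paper, whose proof is exactly the one-line observation that in any short exact sequence $0 \to F \to E \to F' \to 0$ one has $\beta_F, \beta_{F'} \ge 0$ and $\beta_F + \beta_{F'} = \beta_E$. Your extra rank-additivity bookkeeping, which places the complementary term in $\cat A_{\rk \ge -1}$ so that Corollary~\ref{cor:Class_TFpair_Effective_HMinusOne_IdealSheaf} literally applies, is a harmless (indeed slightly more careful) elaboration of the same argument.
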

\begin{proof}
  Given any exact sequence $0 \to F \to E \to F' \to 0$, we have $\beta_{F}, \beta_{F'} \ge 0$ and $\beta_{F} + \beta_{F'} = \beta_{E}$. 
\end{proof}

\section{Moduli stacks}
In this section we gather some results on various moduli stacks of objects in $D(\X)$, beginning with Lieblich's stack $\Mum_{\X}$ of gluable objects in $D(\X)$.
The main result of this section is Proposition~\ref{thm:TFOpenImpliesPOpen}, which states that the stack of $(\T,\F)$-pairs defines an open substack of $\Mum_{\X}$, provided that $\T$ and $\F$ define open substacks of $\Mum_{\X}$ and certain mild hypotheses are met.

\subsection{The mother of all moduli}
Let $Y$ be a smooth projective variety.
In \cite{MR2177199}, Lieblich constructs a stack $\Mum_Y$ parametrising objects $E \in D(Y)$ which are \emph{gluable}, \ie, such that $\Ext^{<0}(E,E) = 0$.
He shows that $\Mum_Y$ is an Artin stack, locally of finite type.
Via the McKay equivalence $D(Y) \simeq D(\X)$ of Section~\ref{subsubsec:CrepantResolutions}, which is a Fourier--Mukai transform and hence behaves well in families, one deduces the existence of the corresponding stack $\Mum_\X$ with the same properties.

The stack $\Mum_\X$ splits as a disjoint union of open and closed substacks
\begin{equation}\label{eq:MumXdecomposition}
  \Mum_\X = \coprod_{\alpha \in N(\X)} \Mum_{\X,\alpha}
\end{equation}
where $\Mum_{\X,\alpha}$ parametrises objects of class $\alpha \in N(\X)$.

Let $\cat C \subset D(\X)$ be a full subcategory whose objects have vanishing negative self-extensions, and whose objects define an open subset of $\Mum_{\X}(\C)$ in the sense that for every finite type $\C$-scheme $T$ with a morphism $f \colon T \to \Mum_{\X}$, the set $\{x \in T(\C) \mid f(x) \in \cat C\} \subset T(\C)$ is Zariski open.
In this case we say that $\cat C$ is an \emph{open subcategory} and we write $\uC \subset \Mum_{\X}$ for the corresponding open substack.

For any interval $[a,b]$, there is an open substack $\Mum^{[a,b]}_\X\subset\Mum_\X$, parametrising complexes $E \in D(\X)$ with vanishing negative self-extensions and whose cohomology is concentrated in cohomological degrees $[a,b]$; see for example \cite[App.~A]{MR3518373}.
In particular, the stack of coherent sheaves
\begin{equation*}
  \cCoh_\X = \Mum^{[0,0]}_\X \subset \Mum_\X
\end{equation*}
is an open substack.

\begin{lem}
  \label{lem:CohflatIsOpen}
  \label{lem:maxlemma}
  Let $\T$ and $\F$ be full, open subcategories of $\Coh(\X)$ such that $\T$ is closed under quotients and $\F$ is closed under subobjects.
  Each of the following conditions on an object $E \in D(\X)$ defines an open substack of $\Mum_{\X}$:
  \begin{enumerate}
  \item $H^{0}(E) \in \T$ and $H^{i}(E) = 0$ for $i \not= -1,0$.
  \item $H^{-1}(E) \in \F$ and $H^{i}(E) = 0$ for $i \not= -1,0$.
  \end{enumerate}

  In particular, if $(\T, \F)$ is an open torsion pair, then the objects of the tilt of $\Coh(\X)$ at $(\T, \F)$ form an open substack of $\Mum_{\X}$.
\end{lem}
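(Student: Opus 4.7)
The plan is to verify openness for each of the two conditions via the criterion that a constructible subset of a Noetherian scheme is open if and only if it is stable under generization. Since $\Mum_{\X}^{[-1,0]} \subset \Mum_{\X}$ is open, both conditions live inside it, and I may restrict to families $E_T$ with fibers in $D^{[-1,0]}(\X)$. The conditions on the fibers are constructible by standard considerations for perfect complexes, so it suffices to test them on a DVR $R$ with generic point $\eta$ and closed point $s$: given a morphism $\Spec R \to T$ with pullback $E_R$, I want to show that if the relevant condition holds on $E_s$, then it holds on $E_\eta$.

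Locally on $\Spec R \times \X$, such a family may be represented by a two-term complex $[V^{-1} \xrightarrow{d} V^{0}]$ of locally free sheaves, so $H^{0}(E_R) = \coker d$ and $H^{-1}(E_R) = \ker d$. The crucial algebraic point is that $\im d \subset V^{0}$ is $R$-torsion-free as a subsheaf of an $R$-flat sheaf, hence $R$-flat since $R$ is a DVR; from the short exact sequence
\[
  0 \to \ker d \to V^{-1} \to \im d \to 0
\]
one deduces that $H^{-1}(E_R) = \ker d$ is itself $R$-flat. In contrast, $H^{0}(E_R) = \coker d$ need not be $R$-flat, but its maximal $R$-flat quotient $H^{0}(E_R)^{\mathrm{tf}}$ (obtained by modding out the $R$-torsion subsheaf) is. Both $H^{-1}(E_R)$ and $H^{0}(E_R)^{\mathrm{tf}}$ then define genuine morphisms $\Spec R \to \cCoh_{\X}$, to which openness of $\uT$ or $\uF$ can be applied.

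For condition (1), the fiber $H^{0}(E_R)^{\mathrm{tf}}|_s$ is a quotient of $H^{0}(E_R)|_s = H^{0}(E_s) \in \T$, hence lies in $\T$ by quotient-closedness; openness of $\uT \subset \cCoh_{\X}$ then forces $H^{0}(E_R)^{\mathrm{tf}}|_\eta \in \T$, which equals $H^{0}(E_\eta)$ since $R$-torsion vanishes upon localizing. For condition (2), base-changing the sequence above yields an inclusion $H^{-1}(E_R)|_s \into H^{-1}(E_s)$ with cokernel $\mathrm{Tor}_{1}^{R}(\kappa(s), H^{0}(E_R))$; thus $H^{-1}(E_R)|_s \in \F$ by subobject-closedness, and openness of $\uF$ gives $H^{-1}(E_R)|_\eta = H^{-1}(E_\eta) \in \F$. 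Finally, objects of the tilt at $(\T,\F)$ are exactly those $E \in \Mum_{\X}^{[-1,0]}$ satisfying both conditions, so the tilt defines the intersection of these two opens.

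The main obstacle I anticipate is the bookkeeping required to reduce to the DVR case and to verify the constructibility of the loci in the first place, both of which are routine for families of perfect complexes but need some care with local two-term presentations. Conceptually the proof is cleanly dictated by the asymmetry between $\ker$ and $\coker$ under base change: $H^{-1}(E_R)$ is automatically $R$-flat once $E_R$ is resolved by locally free sheaves, so its special fiber embeds into $H^{-1}(E_s)$, which is why $\F$ closed under subobjects is the right hypothesis; dually, $H^{0}(E_R)$ must be modified by its torsion, and the quotient thereby produced at $s$ is a quotient of $H^{0}(E_s)$, which is why $\T$ closed under quotients is needed.
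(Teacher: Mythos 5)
Your strategy -- constructibility plus stability under generization tested over a DVR, producing $R$-flat families ($H^{-1}$ of the total family, and the torsion-free quotient of $H^{0}$) to which openness of $\uT$ and $\uF$ is applied -- is essentially the standard proof of this statement; the paper itself only cites \cite[Thm.~A.3]{MR2998828}, whose argument runs along the same lines, so your write-up is a self-contained version of that route. However, one step is false as written: a perfect complex $E_R$ on $\Spec R \times \X$ whose two derived fibres over $\Spec R$ lie in $D^{[-1,0]}$ is in general \emph{not} locally quasi-isomorphic to a two-term complex of locally free sheaves in degrees $-1,0$. Having fibres in $D^{[-1,0]}$ over $\Spec R$ controls the tor-amplitude relative to $R$, not over $\hO_{\Spec R\times\X}$; for instance the constant family $\hO_{Z\times\Spec R}$ with $Z\subset\X$ a curve has local projective dimension $2$ on the fourfold $\Spec R\times \X$, so no presentation $[V^{-1}\to V^{0}]$ exists. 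Consequently the identifications $H^{0}(E_R)=\coker d$, $H^{-1}(E_R)=\ker d$, and the flatness argument via $\im d$ do not literally apply, and with a longer locally free resolution the naive kernel argument no longer gives $R$-flatness of $H^{-1}(E_R)$.

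The gap is reparable because everything you actually use is true and standard: represent $E_R$ (locally, or globally via the resolution property) by a bounded complex of locally free sheaves, whose terms are $R$-flat, and use the universal-coefficient sequences $0\to H^{q}(E_R)\otimes_R\kappa(s)\to H^{q}(E_s)\to \mathrm{Tor}_1^R\bigl(H^{q+1}(E_R),\kappa(s)\bigr)\to 0$ together with Nakayama and exactness of localisation at $\eta$. These give $H^{i}(E_R)=0$ for $i\neq -1,0$, the vanishing $\mathrm{Tor}_1^R(H^{-1}(E_R),\kappa(s))=0$ (hence $R$-flatness of $H^{-1}(E_R)$; note this uses $H^{-2}(E_s)=0$, so it is a consequence of the hypothesis on the special fibre rather than an automatic feature of resolutions), the base-change identity $H^{0}(E_R)\otimes\kappa(s)=H^{0}(E_s)$, and the injection $H^{-1}(E_R)\otimes\kappa(s)\into H^{-1}(E_s)$ with cokernel $\mathrm{Tor}_1^R(H^{0}(E_R),\kappa(s))$; from there your argument proceeds as stated. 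Two further points deserve explicit treatment rather than the phrase ``standard considerations'': constructibility requires a flattening stratification so that membership in $\uT$, $\uF$ can be tested on flat families stratum by stratum; and since closure of $\T$ under quotients (resp.\ $\F$ under subobjects) is only assumed for the $\C$-linear category $\Coh(\X)$, while the closed point of a DVR witnessing a generization can have residue field strictly larger than $\C$, you need either a spreading-out argument reducing the quotient/subobject step to $\C$-points, or a careful reconciliation with the paper's definition of openness, which is phrased in terms of $\C$-points of finite type bases.
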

\begin{proof}
  The claim about the tilt of $\Coh(\X)$ is shown in \cite[Thm.~A.3]{MR2998828}, and the proof given there also establishes the other two statements in this lemma.
\end{proof}

Recall the abelian category $\Coh^{\flat}(\X) = \langle \Coh_{\ge 2}(\X)[1], \Coh_{\le 1}(\X) \rangle_{\eex}$ from Example~\ref{ex:defCohFlat}.
It is an open subcategory by the above lemma.
\begin{lem}\label{lem:OpennessS}
  The category $\A_{\rk \ge -1} \subset \Coh^{\flat}(\X)$ is open.
  In particular, $\uA_{\rk \ge -1}$ is an Artin stack locally of finite type.
\end{lem}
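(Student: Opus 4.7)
The plan is to combine Proposition \ref{prop:ObjectsSmallRank} with Lemma \ref{lem:maxlemma} and the hypothesis $H^{1}(\X,\hO_{\X})=0$, reducing openness of $\A_{\rk \ge -1}$ inside $\Coh^{\flat}(\X)$ to three concrete local conditions. Because the decomposition \eqref{eq:MumXdecomposition} shows that the rank is locally constant on $\Mum_{\X}$, the loci of rank $0$ and rank $-1$ objects are each open and closed, so I will argue that $\A_{\rk = 0}$ and $\A_{\rk = -1}$ separately cut out open substacks of $\Coh^{\flat}(\X) \subset \Mum_{\X}$. By Proposition \ref{prop:ObjectsSmallRank}, the first is the condition $H^{-1}(E)=0$, and the second is the condition that $H^{-1}(E)$ be torsion free and $\det(E) \cong \hO_{\X}$.

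For the rank $0$ case, openness of $H^{-1}(E) = 0$ is immediate from Lemma \ref{lem:maxlemma}(2) applied with $\F = 0$, which is trivially an open subcategory of $\Coh(\X)$ closed under subobjects. For the torsion-freeness condition in the rank $-1$ case I will again apply Lemma \ref{lem:maxlemma}(2), this time with $\F$ the full subcategory of torsion-free sheaves on $\X$: this subcategory is closed under subobjects on the smooth stack $\X$, and openness in flat families is a standard consequence of upper semicontinuity of the torsion subsheaf in a flat family of coherent sheaves.

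The determinant condition is where a separate input is needed. The hypothesis $H^{1}(\X,\hO_{\X}) = 0$ identifies the tangent space at the identity of $\Pic(\X)$ with zero, so the identity component $\Pic^{0}(\X)$ is trivial and $\Pic(\X)$ is a discrete group. Consequently, for any connected scheme $T$ and any morphism $T \to \Mum_{\X}$ corresponding to a family $E$ of complexes, the induced morphism $T \to \Pic(\X)$, $t \mapsto \det(E_{t})$, is constant. Therefore $\{E \in \Mum_{\X} \mid \det(E) \cong \hO_{\X}\}$ is a union of connected components of $\Mum_{\X}$, hence open. Intersecting with the two preceding open loci establishes that $\A_{\rk \ge -1}$ is an open subcategory of $\Coh^{\flat}(\X)$, and the Artin, locally-of-finite-type properties of $\uA_{\rk \ge -1}$ then follow by restriction from the corresponding properties of $\Mum_{\X}$.

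The main subtlety in this plan, and the one I would write out carefully, is the passage from $H^{1}(\X, \hO_{\X}) = 0$ to discreteness of $\Pic(\X)$ in the Deligne--Mumford setting; the rest is a direct application of the machinery already set up in Section~4.
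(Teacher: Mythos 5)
Your proposal is correct and follows essentially the same route as the paper: split by rank, invoke Proposition \ref{prop:ObjectsSmallRank}, handle torsion-freeness via Lemma \ref{lem:CohflatIsOpen}, and use $H^{1}(\X,\hO_{\X})=0$ for the determinant condition. The only difference is that you spell out the step the paper leaves implicit, namely that $H^{1}(\X,\hO_{\X})=0$ forces $\Pic(\X)$ to be discrete (which is fine in characteristic zero), so the determinant is locally constant in families and the locus $\det(E)\cong\hO_{\X}$ is open and closed.
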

\begin{proof}
  The set of objects in $\cat A_{\rk \ge -1}$ splits as a disjoint union according to their rank.
  The $\rk = 0$ component is $\Coh_{\leq 1}(\X)$, which is open.

  By Proposition~\ref{prop:ObjectsSmallRank}, an object $E \in \Coh^{\flat}(\X)$ of rank $-1$ lies in $\cat A$ if and only if $H^{-1}(E)$ is torsion free and $\det(E) = \hO_{\X}$.
  The former condition is open by Lemma~\ref{lem:CohflatIsOpen}, whereas the latter condition is open because $H^{1}(\X, \hO_{\X}) = 0$.
\end{proof}

\subsection{Openness of pairs}
We are now in a position to prove that the moduli stack of pairs $\uPair(\T,\F)$ is an open substack of $\Mum_\X$ under some mild assumptions on $(\T,\F)$.
In particular, this establishes the fact that $\uPair(\T,\F)$ is an Artin stack, locally of finite type.
\begin{defn}\label{def:Open_Torsion_Pair}
  A torsion pair $(\T,\F)$ on $\Coh_{\leq 1}(\X)$ is called \emph{open} if the subcategories $\T,\F \subset \Coh_{\leq 1}(\X)$ are open.
\end{defn}
Let $(\T,\F)$ be an open torsion pair on $\Coh_{\leq 1}(\X)$.
Note that an object $E \in \A_{\rk = -1}$ is a $(\T,\F)$-pair precisely when
\begin{enumerate}
\item $H^0(E) \in \T$, and 
\item $\Hom(T,E) = 0$ for all $T \in \T$.
\end{enumerate}
Condition (1) is open by Lemma \ref{lem:maxlemma}.
To show that condition (2) is also open, we reformulate it in terms of a condition on the derived dual of $E$ that is open.

By \emph{derived dual} we mean the anti-equivalence of $D(\X)$ given by
\begin{equation}\label{def:DerivedDual}
  \D(-) = \lRHom(-,\hO_\X)[2].
\end{equation}
Note that $\D(\Coh_1(\X)) = \Coh_1(\X)$ and $\D(\Coh_0(\X)) = \Coh_0(\X)[-1]$.
In particular, the abelian category $\D(\Coh_{\leq 1}(\X))$ has a torsion pair given by
\begin{equation}\label{eq:dual_torsion_pair_Coh1}
  \D(\Coh_{\leq 1}(\X)) = \langle\Coh_1(\X), \Coh_0(\X)[-1]\rangle.
\end{equation}

\begin{lem}\label{lem:Dual_Object_A}
  If $E \in \A$, then $H^{i}(\D(E)) = 0$ if $i \neq -1, 0, 1$, and $H^1(\D(E)) \in \Coh_0(\X)$.
\end{lem}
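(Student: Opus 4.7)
The plan is to reduce to the generators of $\A$. Since $\A = \langle \hO_\X[1], \Coh_{\le 1}(\X)\rangle_{\eex}$, every object $E \in \A$ admits a finite filtration whose successive quotients lie in $\{\hO_\X[1]\} \cup \Coh_{\le 1}(\X)$. Because $\D$ is an exact anti-equivalence of $D(\X)$, a short exact sequence $0 \to A \to B \to C \to 0$ in $\A$ induces an exact triangle $\D(C) \to \D(B) \to \D(A) \to \D(C)[1]$ in $D(\X)$, and hence a long exact sequence of cohomology sheaves. I would therefore proceed by induction on the length of such a filtration, so it suffices to verify the conclusion on the two classes of generators and to check that it is preserved under extensions.

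\emph{Verification on the generators.} For $\hO_\X[1]$, the Calabi--Yau hypothesis $\omega_\X \cong \hO_\X$ together with smoothness gives $\D(\hO_\X[1]) = \lRHom(\hO_\X,\hO_\X)[1] = \hO_\X[1]$, whose only non-zero cohomology sheaf sits in degree $-1$, so the statement is vacuous. For $F \in \Coh_{\le 1}(\X)$, since $\X$ is smooth of dimension $3$, the classical codimension estimate for Ext-sheaves on a smooth variety yields $\cat{E}xt^{\,i}_\X(F,\hO_\X) = 0$ for $i < \codim(\supp F) \ge 2$, while $\cat{E}xt^{\,i}_\X(F,\hO_\X)$ has support of codimension $\ge i$ for all $i$. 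Consequently
\[
H^{j}(\D(F)) \;=\; \cat{E}xt^{\,j+2}_\X(F,\hO_\X)
\]
vanishes unless $j \in \{0,1\}$; moreover $H^{1}(\D(F)) = \cat{E}xt^{\,3}_\X(F,\hO_\X)$ has support of codimension $\ge 3$, hence dimension $0$, so it lies in $\Coh_0(\X)$.

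\emph{The extension step.} Suppose $0 \to A \to B \to C \to 0$ is exact in $\A$ with $A$ and $C$ already satisfying the claim. For every $i \notin \{-1,0,1\}$ we have $H^{i}(\D(A)) = 0 = H^{i}(\D(C))$, and the long exact sequence forces $H^{i}(\D(B)) = 0$. For $i = 1$, the segment
\[
H^{1}(\D(C)) \to H^{1}(\D(B)) \to H^{1}(\D(A)) \to H^{2}(\D(C)) = 0
\]
presents $H^{1}(\D(B))$ as an extension of a subsheaf of $H^{1}(\D(A))$ by a quotient of $H^{1}(\D(C))$; both lie in $\Coh_0(\X)$ by hypothesis, and $\Coh_0(\X) \subset \Coh(\X)$ is a Serre subcategory closed under extensions, so $H^{1}(\D(B)) \in \Coh_0(\X)$ as required.

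\emph{Main obstacle.} There is no serious difficulty: the one non-trivial ingredient is the codimension bound on $\cat{E}xt$-sheaves of a coherent sheaf on a smooth variety, which is standard. Everything else is the routine bookkeeping of a long exact sequence combined with the closure properties of the Serre subcategory $\Coh_0(\X)$.
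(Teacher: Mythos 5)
Your argument is correct and is essentially the paper's proof: verify the claim on the generators $\hO_\X[1]$ and $\Coh_{\le 1}(\X)$, then propagate it through extensions using the long exact sequence and the fact that $\Coh_0(\X)$ is closed under subobjects, quotients, and extensions. The only cosmetic difference is that the paper handles the sheaf case by quoting $\D(\Coh_1(\X)) = \Coh_1(\X)$ and $\D(\Coh_0(\X)) = \Coh_0(\X)[-1]$, whereas you rederive it from the codimension bounds on $\cat{E}xt$-sheaves, which is the same underlying computation.
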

\begin{proof}
  If $E \in \Coh_{\le 1}(\X)$, the claim follows from the above discussion.
  If $E = \hO_{\X}[1]$, then $\D(E) = E$.
  Since the conclusion of the lemma is a property preserved by extensions, the claim now follows for all $E \in \A$.
\end{proof}

We wish to compare notions of pair with respect to different torsion pairs.
\begin{lem}\label{lem:TFPairs_Duals}
  Let $(\T,\F)$ and $(\tT,\fF)$ be two torsion pairs on $\Coh_{\leq 1}(\X)$.
  Assume that $\Coh_0(\X) \subset \T$, so $\F \subset \Coh_1(\X)$.
  An object $E \in \A_{\rk = -1}$ is a $(\T,\fF)$-pair if and only if the following two conditions hold:
  \begin{enumerate}
  \item $H^0(E) \in \tT$,
  \item $H^1(\D(E)) = 0$ and $H^0(\D(E)) \in \langle \Coh_0(\X), \D(\F) \rangle_{\eex}$.
  \end{enumerate}
\end{lem}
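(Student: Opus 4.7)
The plan is to show separately that condition (1) is equivalent to the vanishing $\Hom(E, F) = 0$ for all $F \in \fF$, and that condition (2) is equivalent to $\Hom(T, E) = 0$ for all $T \in \T$. Together these characterise $(\T, \fF)$-pairs by Definition~\ref{def:TFpair}.

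For condition (1), Proposition~\ref{prop:ObjectsSmallRank} gives that $H^{-1}(E)$ is torsion-free; since $F \in \fF \subseteq \Coh_{\leq 1}(\X)$, we have $\Ext^{-1}(H^{-1}(E), F) = 0$. Applying $\Hom(-, F)$ to the triangle $H^{-1}(E)[1] \to E \to H^0(E)$ therefore yields $\Hom(E, F) \cong \Hom(H^0(E), F)$. By the defining property of the torsion pair $(\tT, \fF)$, this vanishes for all $F \in \fF$ if and only if $H^0(E) \in \tT$.

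For condition (2), we use that $\D$ is an involutive anti-equivalence of $D(\X)$, reducing $\Hom(T, E) = 0$ to $\Hom(\D E, \D T) = 0$. Since $\Coh_0(\X) \subseteq \T$ and $\T$ is closed under subobjects and quotients in $\Coh_{\leq 1}(\X)$, every $T \in \T$ fits in a sequence $0 \to T_0 \to T \to T_1 \to 0$ with $T_0 \in \Coh_0(\X)$ and $T_1 \in \T \cap \Coh_1(\X)$. The vanishing for all $T \in \T$ thus splits into (a) vanishing for all $T_0 \in \Coh_0(\X)$, and (b) vanishing for all $T_1 \in \T \cap \Coh_1(\X)$. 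For (a), writing $\D T_0 = T_0'[-1]$ with $T_0' \in \Coh_0(\X)$, a truncation argument combined with Lemma~\ref{lem:Dual_Object_A} reduces $\Hom(\D E, T_0'[-1]) = \Ext^{-1}(\D E, T_0')$ to $\Hom(H^1(\D E), T_0')$. Since $H^1(\D E) \in \Coh_0(\X)$, (a) holds for all $T_0$ if and only if $H^1(\D E) = 0$.

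Assume now $H^1(\D E) = 0$; then $\D E \in D^{[-1, 0]}(\X)$. For (b), $\D T_1 \in \Coh_1(\X)$ is pure, and the triangle $H^{-1}(\D E)[1] \to \D E \to H^0(\D E)$ gives $\Hom(\D E, \D T_1) \cong \Hom(H^0(\D E), \D T_1)$ by $\Ext^{<0}$-vanishing between sheaves. An analysis of cohomology along the triangle $\D H^0(E) \to \D E \to \D(H^{-1}(E)[1])$ shows $H^0(\D E) \in \Coh_{\leq 1}(\X)$, so we may decompose $0 \to K \to H^0(\D E) \to L \to 0$ with $K \in \Coh_0(\X)$ and $L \in \Coh_1(\X)$ pure. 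Purity of $\D T_1$ kills $\Hom(K, \D T_1)$, and dualising yields $\Hom(L, \D T_1) \cong \Hom(T_1, \D L)$. Since $\Coh_0(\X) \subseteq \T$ forces $\F = (\T \cap \Coh_1(\X))^{\perp} \cap \Coh_1(\X)$, the vanishing in (b) becomes $\D L \in \F$, i.e., $L \in \D(\F)$, equivalently $H^0(\D E) \in \langle \Coh_0(\X), \D(\F)\rangle_{\eex}$. The main technical point is tracking cohomology under the derived dual, in particular establishing $H^0(\D E) \in \Coh_{\leq 1}(\X)$ and reformulating $\F$ via the assumption $\Coh_0(\X) \subseteq \T$.
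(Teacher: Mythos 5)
Your proof is correct and takes essentially the same route as the paper's: reduce $\Hom(E,\fF)=0$ to $H^0(E)\in\tT$ via the cohomology triangle, write $\T=\langle\Coh_0(\X),\T\cap\Coh_1(\X)\rangle_{\eex}$, dualise, and use Lemma~\ref{lem:Dual_Object_A} to handle the $\Coh_0(\X)$-part; your hands-on check of the final step (dimension filtration of $H^0(\D(E))$, purity, and $\F=(\T\cap\Coh_1(\X))^{\perp}\cap\Coh_1(\X)$) is just an in-line proof of the identification ${}^{\perp}\D(\T\cap\Coh_1(\X))\cap\Coh_{\leq 1}(\X)=\langle\Coh_0(\X),\D(\F)\rangle_{\eex}$ that the paper gets from the dual torsion triple. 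The only slip is the claim that $\T$ is closed under subobjects (torsion classes need not be), but your filtration $0\to T_0\to T\to T_1\to 0$ only requires $\Coh_0(\X)\subset\T$ and closure of $\T$ under quotients, so the argument is unaffected.
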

\begin{proof}
  Let $E \in \A_{\rk = -1}$.
  Then $\Hom(E,\fF) = 0$ is equivalent to $H^0(E) \in \tT$.

  Let $\T_1 = \T \cap \Coh_1(\X)$.
  Since $\Coh_0(\X) \subset \T$, we have $\T = \langle \Coh_{0}(\X), \T_{1} \rangle_{\eex}$.
  Thus $E \in \T^{\perp}$ is equivalent to $E \in \Coh_{0}(\X)^{\perp} \cap \T_{1}^{\perp}$, which is equivalent to
  \begin{equation*}
    \D(E) \in {}^{\perp}\Coh_{0}(\X)[-1] \cap {}^{\perp}\D(\T_{1}).
  \end{equation*}
  By Lemma \ref{lem:Dual_Object_A}, $\D(E) \in {}^{\perp}\Coh_{0}(\X)[-1]$ if and only if $H^{1}(\D(E)) = 0$, and if this holds then $E \in {}^{\perp}\D(\T_{1})$ if and only if $H^0(\D(E)) \in {}^{\perp}\D(\T_{1}) = \langle \Coh_{0}(\X) , \D(\F) \rangle_{\eex}$.
\end{proof}

\begin{prop}
  \label{thm:TFOpenImpliesPOpen}
  Let $(\T,\F)$ and $(\tT,\fF)$ be open torsion pairs on $\Coh_{\leq 1}(\X)$.
  Assume that $\Coh_0(\X) \subset \T$, so $\F \subset \Coh_1(\X)$.
  The substack $\uPair(\T,\fF) \subset \Mum_\X$ parametrising $(\T,\fF)$-pairs is open.
  In particular, it is an algebraic stack locally of finite type.
\end{prop}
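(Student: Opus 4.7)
The plan is to characterise $(\T,\fF)$-pairs using Lemma \ref{lem:TFPairs_Duals}, which reformulates the $\Hom(\T,E) = 0$ condition in terms of derived-dual cohomology. Concretely, an object $E \in \uA_{\rk=-1}$ is a $(\T,\fF)$-pair if and only if (a) $H^0(E) \in \tT$, and (b) $H^1(\D(E)) = 0$ and $H^0(\D(E)) \in \T' := \langle \Coh_0(\X), \D(\F)\rangle_{\eex}$. Since $\uA_{\rk=-1}$ is open in $\Mum_\X$ by Lemma \ref{lem:OpennessS}, I would show each of conditions (a) and (b) cuts out an open substack.

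Condition (a) is direct from Lemma \ref{lem:maxlemma}(1): $\tT$ is open and closed under quotients (being the torsion class of a torsion pair), and every $E \in \cat A$ already has cohomology concentrated in $\{-1,0\}$. For condition (b), I would use that the derived dual $\D = \lRHom(-,\hO_\X)[2]$ is an anti-autoequivalence of $D(\X)$ which works in families on the smooth proper DM stack $\X$, hence induces an auto-anti-equivalence of $\Mum_\X$ that preserves open substacks. Thus it suffices to show that the condition ``$H^1(F) = 0$, $H^0(F) \in \T'$, and $H^i(F)=0$ for $i \neq -1,0$'' is open in $\Mum_\X$, and this reduces by Lemma \ref{lem:maxlemma}(1) to checking that $\T' \subset \Coh(\X)$ is open and closed under quotients.

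For closure under quotients, I would dualise the restricted torsion pair $\Coh_1(\X) = \langle \T_1, \F\rangle$ (where $\T_1 = \T \cap \Coh_1(\X)$, using the hypothesis $\Coh_0(\X) \subset \T$) to obtain the torsion pair $\Coh_1(\X) = \langle \D(\F), \D(\T_1)\rangle$. Combining with the torsion pair $\Coh_{\leq 1}(\X) = \langle \Coh_0(\X), \Coh_1(\X)\rangle$ yields a torsion triple $\Coh_{\leq 1}(\X) = \langle \Coh_0(\X), \D(\F), \D(\T_1)\rangle$, collapsing to a torsion pair $\Coh_{\leq 1}(\X) = \langle \T', \D(\T_1)\rangle$. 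This extends to a torsion pair on $\Coh(\X)$ by adjoining $\Coh_{\geq 2}(\X)$ to the right-hand side, and in particular exhibits $\T'$ as closed under quotients in $\Coh(\X)$.

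The main obstacle is establishing openness of $\T'$ in $\Coh(\X)$. I would deduce this from the given openness of $\F$ as follows: since $\D$ is an equivalence, openness of $\F$ in $\Coh_1(\X)$ transfers to openness of $\D(\F)$ in $\Coh_1(\X)$; together with openness of $\Coh_0(\X)$, and using the torsion-pair description $\Coh_{\leq 1}(\X) = \langle \T', \D(\T_1)\rangle$ above, one verifies that in any flat family the set of points whose fibre lies in $\T'$ is open — equivalently, the set where the maximal pure $1$-dimensional quotient is killed by $\D(\T_1)$ is open, which follows from openness of $\D(\F)$ and upper semicontinuity of dimension of support. Combining conditions (a) and (b) then yields openness of $\uPair(\T,\fF)$ in $\Mum_\X$; the final assertion that $\uPair(\T,\fF)$ is an Artin stack locally of finite type is immediate from the corresponding property of $\Mum_\X$.
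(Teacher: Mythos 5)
Your reduction is essentially the paper's: you invoke Lemma \ref{lem:TFPairs_Duals}, use that $\D$ induces an automorphism of $\Mum_\X$, handle condition (a) via Lemma \ref{lem:maxlemma}, and obtain extension- and quotient-closure of $\G \coloneq \langle \Coh_0(\X), \D(\F)\rangle_{\eex}$ by dualising the torsion triple $\langle \Coh_0(\X), \T_{1}, \F\rangle$. All of that matches the paper's proof. The divergence is in the last and most delicate step, openness of $\G$, and there your argument has a genuine gap.

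You assert that openness of $\G$ ``follows from openness of $\D(\F)$ and upper semicontinuity of dimension of support''. The criterion you are using is that $F \in \G$ iff the maximal pure $1$-dimensional quotient $F/T_0(F)$ lies in $\D(\F)$; but in a flat family $\{F_s\}$ these quotients do \emph{not} form a flat family (the zero-dimensional torsion $T_0(F_s)$ can jump at special points), so openness of $\D(\F)$ --- a statement about flat families, i.e. about points of an open substack of $\Mum_\X$ --- cannot be applied fibrewise to $s \mapsto F_s/T_0(F_s)$, and semicontinuity of the dimension of support does not control this jumping. This is precisely the subtlety that openness-of-tilted-hearts results (\cite[Thm.~A.3]{MR2998828}, quoted as Lemma \ref{lem:CohflatIsOpen}) exist to handle, and it is why the paper does not argue directly on $\G$: it dualises once more, noting that $\G$ is open iff $\D(\G) = \langle \Coh_0(\X)[-1], \F\rangle_{\eex}$ is open, and exhibits the latter as an intersection of two cohomological conditions, each open by Lemma \ref{lem:CohflatIsOpen} (using that $\F$ is open and closed under subobjects, and that $\T \supset \Coh_0(\X)$ is open and closed under quotients). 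To close the gap you should either adopt that step, or supply a genuine relative argument --- for instance, exhibit the complement of $\G$ in a family as the image of a bounded, proper-over-the-base locus in a relative Quot scheme of nonzero quotients lying in $\D(\T_{1})$ --- neither of which is provided by the two facts you cite.
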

\begin{proof}
  Note that the duality functor $\D$ induces an automorphism of the stack $\Mum_\X$.
  In particular, if $G \in \Mum_\X(S)$ is a family of universally gluable complexes over some base $S$, then so is its dual $\D(G) = \lRHom(G,\hO_{S\times \X})[2] \in \Mum_\X(S)$.
  
  Lemma \ref{lem:TFPairs_Duals} shows that $E \in \cat A_{\rk = -1}$ is a $(\T,\fF)$-pair if and only if three properties hold:
  (i) $H^0(E) \in \tT$, (ii) $H^1(\D(E)) = 0$, and (iii) $H^0(\D(E)) \in \G \coloneq \langle \Coh_0(\X), \D(\F) \rangle_{\text{ex}}$.
  By Lemma \ref{lem:maxlemma}, the first condition is open.
  As $\D(E) \in D^{[-1,1]}(\X)$ by Lemma \ref{lem:Dual_Object_A}, the second condition is open as well.    
  
  Set $\T_{1} = \T \cap \Coh_{1}(\X)$.
  Applying $\D$ to the torsion triple $\langle \Coh_{0}(\X), \T_{1}, \F\rangle$ yields a refinement to a torsion triple of the torsion pair in equation~\eqref{eq:dual_torsion_pair_Coh1}.
  Tilting at this torsion pair, we obtain the torsion triple
  \begin{equation*}
    \Coh_{\leq 1}(\X) = \langle \Coh_0(\X), \D(\F), \D(\T_1) \rangle.
  \end{equation*}
  Thus $\G$ is closed under extensions and quotients.
  We now claim that $\G$ is open, which is equivalent to $\D(\G) = \langle \Coh_{0}(\X)[-1], F \rangle_{\eex}$ being open.
  But
  \begin{equation*}
  \D(\G) = \langle \Coh_{1}(\X), \Coh_{0}(\X)[1]\rangle \cap \langle \F, \T[-1] \rangle
  \end{equation*}
  since $\Coh_0(\X) \subset \T$, and both of which are open by Lemma \ref{lem:CohflatIsOpen}.
\end{proof}



\section{Hall algebras}
In this section we define the motivic Hall algebra of the heart of a bounded t-structure $\cC \subset D(\X)$ and recall some of its properties.
For a more detailed discussion, we refer to \cite{MR2813335,MR2854172,MR3518373,1612.00372, 2016arXiv160107519T}.


\subsection{Grothendieck rings}\label{subsec:GrothendieckRings}
The Grothendieck ring $K(\St/\C)$ is the $\Q$-vector space generated by symbols $[X]$, where $X$ is a finite type Artin stack over $\C$ with affine geometric stabilisers.
These symbols are subject to the following relations.
\begin{enumerate}
  \item $[X \sqcup Y] = [X] + [Y]$.
  \item If $f \colon X \to Y$ is a geometric bijection, \ie, $f$ induces an equivalence of groupoids $X(\C) \to Y(\C)$,
    then $[X] = [Y]$.
  \item If $X_{1}, X_{2} \to Y$ are Zariski fibrations\footnote{See \cite{MR2854172} for the definition of this term, which will not be used in this paper.}  with the same fibres, then $[X_{1}] = [X_{2}]$.
\end{enumerate}
One may multiply classes by taking products: $[X]\cdot [Y] \coloneq [X\times Y]$.
This turns $K(\St/\C)$ into a commutative $\Q$-algebra, with unit given by $[\Spec \C]$.

Let $\cal S$ be an Artin stack locally of finite type with affine geometric stabilizers.
We have a relative version $K(\St/\cal{S})$, which is the $\Q$-vector space generated by symbols $[X \to \cal{S}]$, where $X$ is a \emph{finite} type Artin stack over $\C$ with affine geometric stabilisers, and where these symbols are subject to relative versions of the three relations above.
The vector space $K(\St/\cal{S})$ is a $K(\St/\C)$-module, where the module structure is given by setting $[X] \cdot [Y \to \cal{S}] = [X \times Y \to Y \to \cal{S}]$.

For the remainder of this section we fix an open substack $\uC \subset \Mum_\X$ satisfying the hypotheses of Appendix~\ref{sec:BRaxioms}.
In our applications, we take $\cC = \Coh^{\flat}(\X)$ as before.

\subsection{The motivic Hall algebra}
There exists a stack $\uC^{(2)}$ of short exact sequences in the category $\cC$.
It comes with three distinguished maps $\pi_i \colon \uC^{(2)} \to \uC$, $i = 1,2,3$.
The map $\pi_i$ corresponds to sending a short exact sequence $0 \to E_1 \to E_2 \to E_3 \to 0$ to the object $E_i$.
The following proposition is shown in Appendix \ref{sec:BRaxioms}.
\begin{prop}
  \label{thm:extensionStackIsFiniteType}
  The stack $\uC^{(2)}$ is an Artin stack, locally of finite type.
  The morphism $(\pi_1,\pi_3)\colon \uC^{(2)} \to \uC \times \uC$ is of finite type.
\end{prop}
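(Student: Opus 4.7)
My plan is to construct $\uC^{(2)}$ as an open substack of an ambient Artin stack parametrising distinguished triangles in $D(\X)$, and then to deduce the finite type claim from the finiteness of $\lRHom$ in families.

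First I would build an ambient stack $\Mum_\X^{(2)}$ parametrising distinguished triangles $E_1 \to E_2 \to E_3 \to E_1[1]$ whose terms are all universally gluable. One approach is to start with the stack $\cat{Mor}_\X$ of pairs $(E_1, E_2) \in \Mum_\X \times \Mum_\X$ equipped with a morphism $\phi \colon E_1 \to E_2$; over the open locus where $\lRHom(E_1, E_2)$ has only non-negative cohomology in families this is algebraic and locally of finite type by Lieblich's work \cite{MR2177199}. The cone functor $\phi \mapsto \cone(\phi)$ defines a morphism $\cat{Mor}_\X \to \Mum_\X$, and $\Mum_\X^{(2)}$ is the preimage of the gluable locus, which remains algebraic and locally of finite type.

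Next I would define $\uC^{(2)} \subset \Mum_\X^{(2)}$ to be the open substack where each of the three projections $\pi_i$ lands in $\uC \subset \Mum_\X$; openness of $\uC$ in $\Mum_\X$ makes this substack open and hence Artin, locally of finite type. Because $\cC$ is the heart of a bounded t-structure, a distinguished triangle with all three terms in $\cC$ is canonically a short exact sequence in $\cC$, recovering the intended moduli interpretation.

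For the finite type of $(\pi_1, \pi_3) \colon \uC^{(2)} \to \uC \times \uC$, I would analyse the morphism fiberwise and then globalise. Over a geometric point $(E_1, E_3)$ the fiber is the groupoid of extensions of $E_3$ by $E_1$ in $\cC$, which is equivalent to $\Ext^1_\X(E_3, E_1) \times B\Hom_\X(E_3, E_1)$, both finite-dimensional since $E_1, E_3 \in D(\X)$. To globalise, given any finite type $Q \to \uC \times \uC$, the relative $\lRHom$ of the two universal families on $Q \times \X$ is a perfect complex, so its $H^0$ and $H^1$ are coherent sheaves on $Q$. The preimage of $Q$ in $\uC^{(2)}$ is then built as a total space of a coherent sheaf together with the gerbe banded by the group stack attached to another coherent sheaf, and is therefore of finite type over $Q$.

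The main obstacle is really the first step, namely the construction of $\cat{Mor}_\X$ and $\Mum_\X^{(2)}$ with controlled algebraicity, since parametrising morphisms between complexes in $D(\X)$ in families is subtler than parametrising morphisms of sheaves. Once this foundational moduli of morphisms and triangles is in place, the rest of the argument reduces to coherence and base change for cohomology of perfect complexes; the vanishing $\Ext^{<0}(E_i, E_j) = 0$ forced by $\cC$ being a t-structure heart is what makes the relevant tangent-obstruction theory behave as expected.
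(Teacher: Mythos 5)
The finite-type half of your argument is essentially the paper's own: over an affine chart of $\uC \times \uC$ the relative $\CHom$ from the third universal family to the first is perfect of tor-amplitude in $[0,\infty)$ (this is exactly where the vanishing $\Ext^{<0}=0$ forced by $\cC$ enters), so it is represented by a complex $0 \to K^0 \to K^1 \xrightarrow{d_1} K^2 \to \cdots$ of finite rank locally free sheaves, and the fibre of $(\pi_1,\pi_3)$ is covered by the affine scheme of cocycles $\ker d_1$, i.e.\ it is the quotient of $\ker d_1$ by the additive action of $K^0$; this is Lemma~\ref{thm:finiteType} of Appendix~\ref{sec:BRaxioms}, and your fibrewise picture $\Ext^1(E_3,E_1)\times B\!\Hom(E_3,E_1)$ is the pointwise shadow of it. The genuine gap is in your first step. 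The assignment $\phi \mapsto \cone(\phi)$ is not functorial in a triangulated category: given an isomorphism of arrows $(\alpha_1,\alpha_2)\colon (\phi\colon E_1\to E_2)\to(\phi'\colon E_1'\to E_2')$, the induced map on cones is well defined only up to the image of $\Hom(E_1[1],\cone(\phi'))$, and your restriction to the locus where $\lRHom(E_1,E_2)$ sits in non-negative degrees does not kill this group: take $\phi=0$, so $\cone(\phi)=E_2\oplus E_1[1]$ and $\Hom(E_1[1],\cone(\phi))\supseteq \Hom(E_1,E_1)\neq 0$, with the ambiguity acting nontrivially. Hence the ``cone morphism'' $\cat{Mor}_\X \to \Mum_\X$, and with it your ambient stack $\Mum^{(2)}_\X$ of triangles of gluable objects, is not defined by the data you have; making cones functorial requires a dg/derived enhancement of $\Mum_\X$, which is precisely the foundational input you cannot simply attribute to Lieblich's construction of $\Mum_\X$.

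The paper avoids taking cones as a moduli operation altogether: it parametrises the whole sequence $E_1 \to E_2 \to E_3$, i.e.\ the stack $\Seq$ of pairs of composable morphisms, which is algebraic because the Hom-stack is coherent-representable (affine) over $\uC\times\uC$ by the same tor-amplitude statement (Lemmas~\ref{thm:AIsLinearAlgebraic} and \ref{thm:LinearAlgebraicMorphismStacks}); exactness is then a locally closed condition: $p\circ i=0$ is closed, and on that locus the canonical comparison $C(i)\to E_3$ is \emph{unique} because $\Hom(E_1[1],E_3)=0$ for objects of $\cC$, and its being an isomorphism is open (the complement of the image of the support of its cone). Your outline can be repaired along these lines, either by carrying the two morphisms as part of the data as the paper does, or by working with the open locus of the Hom-stack over $\uC\times\uC$ where the cone lies in $\uC$ and checking there, using the same $\Ext^{-1}$-vanishing, that this groupoid is equivalent to that of short exact sequences; but as written the construction of $\Mum^{(2)}_\X$ does not go through.
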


Given two elements $[X_{1} \to \uC]$, $[X_{2} \to \uC]$ of $K(\St/\uC)$, take their fibre product
\begin{equation}\label{eq:HallDiagram}
\begin{tikzcd}
X_1 * X_2 \arrow[r] \arrow[d] \arrow[dr, phantom, "\square"] & \uC^{(2)} \arrow[r, "{\pi_{2}}"] \arrow[d, "{(\pi_{1}, \pi_{3})}"] & \uC \\
X_1 \times X_2 \arrow[r] &\uC \times \uC &
\end{tikzcd}
\end{equation}
Note that $X_1 * X_2$ is again a finite-type stack over $\C$ with affine geometric stabilizers.
Thus the class of the top horizontal line of the diagram $[X_{1} * X_{2} \to \uC]$ is again an element of $K(\St/\uC)$.
\begin{prop}
The operation $([X_{1} \to \uC], [X_{2} \to \uC]) \mapsto [X_{1} * X_{2} \to \uC]$ defines an associative product on $K(\St/\uC)$ with unit $\ID_0 = [\pt \to \uC]$ corresponding to the stack of zero objects in $\cC$.
\end{prop}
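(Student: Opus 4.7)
The plan is to verify three things in turn: that the convolution descends to a well-defined operation on the Grothendieck module $K(\St/\uC)$, that it is associative, and that the class of the point stack (the zero object of $\cC$) serves as a two-sided unit.

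For well-definedness, note that the construction $(X_{1}, X_{2}) \mapsto X_{1} * X_{2}$ is the pullback along the morphism $(\pi_{1}, \pi_{3}) \colon \uC^{(2)} \to \uC \times \uC$ followed by postcomposition with $\pi_{2}$. By Proposition \ref{thm:extensionStackIsFiniteType}, the map $(\pi_{1}, \pi_{3})$ is of finite type, so the pullback of a finite-type stack is again of finite type. Pullback commutes with disjoint unions, sends geometric bijections to geometric bijections (since an equivalence of groupoids of $\C$-points is preserved by base change), and sends Zariski fibrations with fibre $F$ to Zariski fibrations with the same fibre $F$. Each of the three defining relations of $K(\St/\uC)$ is therefore respected in the first argument, and the symmetric statement for the second argument is identical.

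For associativity, the plan is to introduce the stack $\uC^{(3)}$ parametrising two-step filtrations $0 = E_{0} \subset E_{1} \subset E_{2} \subset E_{3}$ of objects of $\cC$. Concretely, $\uC^{(3)}$ can be constructed as the fibre product $\uC^{(2)} \times_{\uC} \uC^{(2)}$, where on the left we project to the total object $\pi_{2}$ and on the right to the sub $\pi_{1}$; finite-typeness of the structure maps follows from Proposition \ref{thm:extensionStackIsFiniteType}. There are three natural evaluation maps $q_{i} \colon \uC^{(3)} \to \uC$ sending a filtration to the subquotient $E_{i}/E_{i-1}$, and a fourth map $q \colon \uC^{(3)} \to \uC$ sending it to the total object $E_{3}$. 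Both iterated convolutions $(X_{1} * X_{2}) * X_{3}$ and $X_{1} * (X_{2} * X_{3})$ are canonically identified, via repeated application of diagram \eqref{eq:HallDiagram}, with the pullback of $X_{1} \times X_{2} \times X_{3}$ along $(q_{1}, q_{2}, q_{3}) \colon \uC^{(3)} \to \uC^{3}$, composed with $q$. Hence the two iterated products yield the same class in $K(\St/\uC)$.

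For the unit axiom, the stack of zero objects in $\cC$ is $\Spec \C$, whose class is $\ID_{0} = [\pt \to \uC]$. Given $[X \to \uC]$, the product $X * \ID_{0}$ is the pullback of $X \times \pt$ along $(\pi_{1}, \pi_{3})$, which parametrises short exact sequences $0 \to E_{1} \to E_{2} \to E_{3} \to 0$ with $E_{3} = 0$ and a factorisation $X \to \uC$ through $E_{1}$. Since any such sequence is isomorphic, uniquely up to unique isomorphism, to $0 \to E \to E \to 0 \to 0$, the resulting morphism $X * \ID_{0} \to X$ is an equivalence of groupoids on $\C$-points and hence a geometric bijection. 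The same argument applies to $\ID_{0} * X$ using sequences with $E_{1} = 0$. Thus $[X * \ID_{0} \to \uC] = [X \to \uC] = [\ID_{0} * X \to \uC]$ in $K(\St/\uC)$.

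The main technical obstacle is really packaged into Proposition \ref{thm:extensionStackIsFiniteType} and the existence of $\uC^{(3)}$ as an Artin stack with the correct universal property; once these are available, together with the fact that $(\pi_{1}, \pi_{3})$ is of finite type so that pullbacks behave well, the three verifications above reduce to unwinding fibre-product diagrams and observing that the natural comparison maps are geometric bijections.
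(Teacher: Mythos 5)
Your proposal is correct and takes essentially the same route as the paper: the paper's own proof consists of citing \cite[Thm.~4.3]{MR2854172} (see also \cite[Thm.~6.3]{Lowrey}), and the argument given there is precisely what you spell out — base change along the finite-type morphism $(\pi_{1},\pi_{3})$ respects the three defining relations of $K(\St/\uC)$, associativity is checked by identifying both iterated products with a pullback along the stack of two-step filtrations $\uC^{(2)} \times_{\uC} \uC^{(2)}$, and the unit axiom follows from a geometric bijection with the stack of sequences having one outer term zero. The only points left implicit (that the fibre products again have affine stabilisers, and that in the $X_{1}*(X_{2}*X_{3})$ order the identification with filtrations uses the unique lift of a subobject of the quotient) are exactly the routine verifications carried out in the cited reference, so nothing essential is missing.
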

\begin{proof}
  Analogous to \cite[Thm.~4.3]{MR2854172}, see also \cite[Thm.~6.3]{Lowrey}.
\end{proof}
\begin{defn}\label{def:MotivicHallAlgebra}
  The \emph{motivic Hall algebra} of $\cC$ is $H(\cC) \coloneq (K(\St/\uC),*,\ID_0)$.
\end{defn}

Taking Cartesian products make $H(\cC)$ into an algebra over $K(\St/\C)$.
Elements of the Hall algebra are naturally graded by the numerical Grothendieck group $N(\X)$, where an element $[f \colon X \to \uC]$ is \emph{homogeneous of degree $\alpha$} if $f$ factors through the substack $\uC_{\alpha}$.
The $K(\St/\C)$-algebra structure respects this grading.

\subsection{The integration map}
\label{Section5_IntegrationMap}
Let $\LL = [\A^1_\C]$.
Consider the map of commutative rings $K(\Var/\C)[\LL^{-1}, (1 + \ldots + \LL^{n})^{-1} \colon n \ge 1] \to K(\St/\C)$, and recall that $H(\cC)$ is a $K(\St/\C)$-module.
Note that the element $(\LL - 1)^{-1} = [B\C^*]$ lies in the latter ring $K(\St/\C)$ but \emph{not} in the former.
We define the subalgebra of \emph{regular elements} $H_{\reg}(\cC) \subset H(\cC)$ as the $K(\Var/\C)[\LL^{-1}, (1 + \ldots + \LL^{n})^{-1} \colon n \ge 1]$-module generated by those elements $[Z \to \uC]$ of $H(\cC)$ for which $Z$ is a variety.
\begin{prop}
  The submodule $H_{\reg}(\cC)$ is closed under the Hall algebra product.
  
  The quotient $\Hsc(\cC) = H_{\reg}(\cC)/(\LL-1)H_{\reg}(\cC)$ is commutative and has a Poisson bracket given by
  \begin{equation}\label{eq:HallPoissonBracket}
  \{f,g\} = \frac{f*g-g*f}{\LL-1}.  
  \end{equation}
\end{prop}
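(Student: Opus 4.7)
The plan is to follow Bridgeland's treatment in \cite{MR2854172} by reducing both statements to a stratum-wise analysis of the extension morphism $(\pi_1,\pi_3) \colon \uC^{(2)} \to \uC \times \uC$. The fibre of this morphism over a geometric point $(E_1,E_3)$ is the quotient stack $[\Ext^1_\cC(E_3,E_1)/\Hom_\cC(E_3,E_1)]$, where $\Hom$ acts trivially on $\Ext^1$. This is the standard description of extensions in an abelian category, and since $\uC^{(2)} \to \uC \times \uC$ is of finite type by Proposition~\ref{thm:extensionStackIsFiniteType}, generic flatness gives a finite constructible stratification of $\uC\times\uC$ over which the pair of dimensions $(a,b) = (\dim \Ext^1, \dim \Hom)$ is locally constant.

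For the closure of $H_{\reg}(\cC)$ under $*$, I would take generators $[Z_1 \to \uC], [Z_2 \to \uC]$ with the $Z_i$ varieties and pull back the above stratification to $Z_1 \times Z_2$. On each locally closed stratum $S$ with constants $(a,b)$, the preimage in $Z_1 * Z_2$ is, up to a geometric bijection, a Zariski fibration over $S$ with fibre $[\mathbb{A}^a/\mathbb{A}^b]$, whose class in $K(\St/\C)$ is $\LL^{a-b}$. Summing over strata writes $[Z_1 * Z_2 \to \uC]$ as a $K(\Var/\C)[\LL^{-1}, (1+\cdots+\LL^n)^{-1}]$-linear combination of classes of varieties over $\uC$, hence as an element of $H_{\reg}(\cC)$.

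For commutativity of $\Hsc$, I would compare $Z_1 * Z_2$ with $Z_2 * Z_1$ stratum by stratum. Refining so that the four numbers $(a,b,a',b')$ are constant on a stratum $S$ (with primes for the reversed ordering), the contribution to $[Z_1 * Z_2 \to \uC] - [Z_2 * Z_1 \to \uC]$ is $(\LL^{a-b} - \LL^{a'-b'}) [V_S \to \uC]$ for some variety $V_S$. Since $(a-b)-(a'-b') = -\chi(E_1,E_3)$ is a fixed integer on $S$, the difference $\LL^{a-b} - \LL^{a'-b'}$ is divisible by $\LL - 1$ inside $K(\Var/\C)[\LL^{\pm 1}]$. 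Summing over strata shows $f*g - g*f \in (\LL - 1) H_{\reg}(\cC)$, so the quotient $\Hsc(\cC)$ is commutative and the formula $\{f,g\} = (f*g - g*f)/(\LL-1)$ defines a well-defined bilinear operation. Skew-symmetry is immediate, the Leibniz rule follows from associativity of $*$ by expanding $(f*g - g*f)*h$ versus $f*(g*h) - (g*h)*f$ modulo $(\LL-1)^2$, and the Jacobi identity is proved by the same associativity-plus-cyclic-sum manipulation used in the Hochschild/Loday-Pirashvili setup.

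The main technical obstacle is ensuring that all of the above takes place in the relative Grothendieck group $K(\St/\uC)$ rather than $K(\St/\C)$: one must verify that the stratifications and the local trivialisations for $[\mathbb{A}^a/\mathbb{A}^b]$-fibrations are compatible with the maps to $\uC$ (as opposed to $\uC \times \uC$), and that one can always refine so that the relevant morphism factors through $\uC$ via a geometric bijection onto a Zariski fibration, as required by the defining relations of $K(\St/\uC)$. The finite-type hypothesis in Proposition~\ref{thm:extensionStackIsFiniteType} is what makes the stratification finite, while the axioms verified in Appendix~\ref{sec:BRaxioms} ensure the constructions descend as needed.
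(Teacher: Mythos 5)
Your outline of the first claim is essentially Bridgeland's argument, which is all the paper itself does (its proof is the one-line remark that \cite[Thm.~5.1]{MR2854172} holds with the same proof), but one correction is needed even there: over a stratum $S \subset Z_1 \times Z_2$ on which the relevant cohomology sheaves are locally free, the class you extract must be $\LL^{-b}\,[T_S \to \uC]$, where $T_S$ is the total space of the bundle with fibres $\Ext^1(E_3,E_1)$ and the map to $\uC$ is given by the middle term of the universal extension. The base $S$ itself does not map to $\uC$, so ``a Zariski fibration over $S$ with fibre $[\Aff^a/\Aff^b]$'' cannot be fed directly into the defining relations of $K(\St/\uC)$: the fibration relation only applies when the map to $\uC$ factors through the base of the fibration.

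The genuine gap is in the commutativity step. You assert that on a stratum the difference $[Z_1 * Z_2 \to \uC] - [Z_2 * Z_1 \to \uC]$ is $(\LL^{a-b} - \LL^{a'-b'})\,[V_S \to \uC]$ for a single variety $V_S$ over $\uC$. No such common $V_S$ exists in general: over $(E_1,E_3) \in S$ the first product parametrises extensions of $E_3$ by $E_1$ and the second extensions of $E_1$ by $E_3$, and the maps to $\uC$ record the (generally non-isomorphic) middle objects, so the two restrictions are only comparable in $K(\St/\uC \times \uC)$, not in $K(\St/\uC)$. (Also, divisibility of $\LL^{a-b} - \LL^{a'-b'}$ by $\LL - 1$ is automatic, so the Euler-form identity you invoke is doing no work here.) The mechanism that actually proves commutativity --- and this is the content of Bridgeland's proof --- is to write $[W_S \to \uC] = \LL^{-b}[T_S \to \uC]$ as above and then decompose $[T_S \to \uC] = [S_0 \to \uC] + (\LL - 1)[\P(T_S) \to \uC]$, where $S_0$ is the zero section: the zero sections for the two orderings map to $\uC$ by the \emph{same} object $E_1 \oplus E_3$, so they cancel in the commutator, exhibiting $f*g - g*f$ as $(\LL-1)$ times an explicit element of $H_{\reg}(\cC)$ built from the projectivised Ext-bundles in the two directions. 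This simultaneously gives commutativity of $\Hsc(\cC)$ and a well-defined bracket (one needs an explicit preimage under multiplication by $\LL - 1$, not mere divisibility); Leibniz and Jacobi then follow from associativity as you say. Your final paragraph flags exactly this relative-over-$\uC$ issue as the main technical obstacle, but the proposal as written does not resolve it.
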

The commutative Poisson algebra $\Hsc(\cC)$ is the \emph{semi-classical} Hall algebra of $\cC$.
\begin{proof}
  This analogue of \cite[Thm.~5.1]{MR2854172} holds with the same proof.
\end{proof}
We now fix a $\sigma \in \{\pm 1\}$, where $+1$ corresponds to the topological Euler characteristic $e$ and $-1$ corresponds to the Behrend-weighted Euler characteristic $e_B$.
Recall that $\chi$ is the Euler form on $N(\X)$.
The \emph{Poisson torus} $\Q[N(\X)]$ is the $\Q$-vector space with basis $\{t^{\alpha} \mid \alpha \in N(\X)\}$ and commutative product $t^{\alpha_1} \star t^{\alpha_2} = \sigma^{\chi(\alpha_1,\alpha_2)}t^{\alpha_1+\alpha_2}$.
It is a Poisson algebra when endowed with the Poisson bracket defined by
\begin{equation}\label{eq:QuantumTorusPoissonBracket}
\{t^{\alpha}, t^{\beta}\} = \sigma^{\chi(\alpha,\beta)}\chi(\alpha,\beta)t^{\alpha + \beta}.
\end{equation}

There is a homomorphism of Poisson algebras $I_{\sigma} \colon \Hsc(\cC) \to \Q[N(\X)]$ which is uniquely determined by the following condition.
Let $Y$ be a variety and let $Y \to \uC_{\alpha}$ be a morphism.
For $\sigma = 1$, we have
\begin{equation}\label{eq:IntegrationMapEuler}
    I_1 [Y \to \uC_\alpha \subset \uC] = e(Y)t^{\alpha}  \in \Q[N(\X)].
\end{equation}
For $\sigma = -1$, we have
\begin{equation}\label{eq:IntegrationMapKai}
    I_{-1} [Y \to \uC_\alpha \subset \uC] = e_B(Y \to \uC)t^{\alpha} \in \Q[N(\X)],
\end{equation}
where we define
\begin{equation}\label{eq:BehrendOfAHallAlgebraElement}
	e_{B}\left( Y \stackrel{s}{\to} \uC \right) = \sum_{k \in \Z} k \cdot e\left( (\nu \circ s)^{-1}(n) \right)
\end{equation}
and where $\nu\colon \uC \to \Z$ is Behrend's constructible function \cite{MR2600874}.
\begin{thrm}\label{thm:IntegrationMap}
  The integration map $I_{\sigma}$ is a map of Poisson algebras.
\end{thrm}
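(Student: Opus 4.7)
The plan is to adapt the standard Joyce--Bridgeland strategy for proving that integration maps out of motivic Hall algebras are Poisson homomorphisms. The proof splits into three checks: well-definedness of $I_\sigma$ on $\Hsc(\cC)$, compatibility with the commutative product, and compatibility with the Poisson bracket. For well-definedness, I would verify that $e$ and $e_B$ respect the defining relations of $K(\St/\C)$: additivity under disjoint unions is immediate, invariance under geometric bijections is standard, and multiplicativity under Zariski fibrations follows from classical results for $e$ together with Behrend's analogous statement for $e_B$ in \cite{MR2600874}. One then checks that the image of $(\LL-1) H_{\reg}(\cC)$ is zero, so that $I_\sigma$ descends to the quotient $\Hsc(\cC)$.

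For the product, take generators $[f_i \colon Y_i \to \uC_{\alpha_i}]$ with each $Y_i$ a variety. The Hall product is represented by the fibre product $Y_1 * Y_2$ of diagram \eqref{eq:HallDiagram}. By Proposition \ref{thm:extensionStackIsFiniteType}, the fibre of $(\pi_1, \pi_3)\colon \uC^{(2)} \to \uC \times \uC$ over a geometric point $(E_1, E_3)$ is the quotient stack $[\Ext^1(E_3, E_1)/\Hom(E_3, E_1)]$, with motivic class $\LL^{\dim \Ext^1 - \dim \Hom}$. Stratifying $Y_1 \times Y_2$ by these dimensions and using multiplicativity of Euler characteristics in the fibration, I would obtain $I_1(f_1 * f_2) = e(Y_1) e(Y_2) t^{\alpha_1 + \alpha_2}$, matching $I_1(f_1) \star I_1(f_2)$ since $(+1)^{\chi(\alpha_1,\alpha_2)}=1$. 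For $\sigma = -1$, applying the Behrend function identity $\nu(E_2) = (-1)^{\chi(E_3, E_1)} \nu(E_1)\nu(E_3)$ for short exact sequences $E_1 \to E_2 \to E_3$ in $\cC$ produces the twist $(-1)^{\chi(\alpha_1, \alpha_2)}$, matching $I_{-1}(f_1) \star I_{-1}(f_2)$.

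For the Poisson bracket, observe that $I_\sigma(f_1 * f_2) = I_\sigma(f_2 * f_1)$ in the commutative ring $\Q[N(\X)]$, so that $f_1 * f_2 - f_2 * f_1$ does lie in $(\LL-1) H_{\reg}(\cC)$ and the bracket is well-defined. Applying $I_\sigma$ to $\{f_1, f_2\} = (f_1 * f_2 - f_2 * f_1)/(\LL - 1)$ extracts the first-order $\LL$-dependence from the motivic product. Using the fibre class $\LL^{\dim \Ext^1 - \dim \Hom}$ together with Serre duality on the CY3 orbifold (which implies $\chi(E,F) = -\chi(F,E)$ and so makes the relevant $\LL$-exponent anti-symmetric under $E_1 \leftrightarrow E_3$), and using $(\LL^n - 1)/(\LL - 1) \to n$ in the classical limit $\LL \to 1$, one extracts the coefficient $\chi(\alpha_1, \alpha_2)$ in front of $I_\sigma(f_1) \star I_\sigma(f_2)$, which is precisely the formula \eqref{eq:QuantumTorusPoissonBracket}.

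The main obstacle is establishing the Behrend function identity for extensions on $\uC^{(2)}$ in the $\sigma = -1$ case. In the projective CY3 orbifold setting this follows from the existence of a $(-1)$-shifted symplectic structure on $\Mum_{\X}$ and its relevant substacks, through arguments analogous to those of \cite[Thm.~2.6]{2016arXiv160107519T}; this is precisely the extra input that distinguishes the genuine DT case from the Euler-characteristic variant discussed in Section \ref{sec1_QuasiProjective_Case}, and it is what makes this the substantive step of the proof.
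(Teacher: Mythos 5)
Your overall route is the paper's route: the paper proves this theorem by citation, namely Bridgeland's argument \cite[Thm.~5.2]{MR2854172} (stratifying the extension stack, using that the fibre of $(\pi_1,\pi_3)$ over $(E_1,E_3)$ has class $\LL^{\dim\Ext^1(E_3,E_1)-\dim\Hom(E_3,E_1)}$, and extracting the bracket from the first-order term in $\LL-1$ using $\chi(E,F)=-\chi(F,E)$ on a CY3), combined with Toda's verification \cite[Thm.~2.8]{2016arXiv160107519T} that the required Behrend function identities hold for these stacks via $(-1)$-shifted symplectic structures. You correctly identify that last point as the substantive input. However, your write-up of the Behrend input contains a genuine error.

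The identity you invoke, $\nu(E_2)=(-1)^{\chi(E_3,E_1)}\nu(E_1)\nu(E_3)$ for \emph{every} short exact sequence $E_1\to E_2\to E_3$, is false: for fixed $E_1,E_3$ the right-hand side is constant, while $\nu(E_2)$ genuinely depends on the extension class (already for extensions of structure sheaves of points on a CY3, the split and non-split extensions sit at points of the moduli stack with different Behrend weights). The identities that actually hold --- and that Toda deduces from the d-critical/$(-1)$-shifted symplectic structure --- are the Joyce--Song identities: multiplicativity $\nu(E_1\oplus E_3)=(-1)^{\chi(E_1,E_3)}\nu(E_1)\nu(E_3)$ for \emph{direct sums only}, together with the integral identity stating that $\int_{\P\Ext^1(E_3,E_1)}\nu\,d e - \int_{\P\Ext^1(E_1,E_3)}\nu\,d e = \bigl(\dim\Ext^1(E_3,E_1)-\dim\Ext^1(E_1,E_3)\bigr)\nu(E_1\oplus E_3)$. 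Bridgeland's computation of $e_B$ of the extension fibres, in both your product step and your bracket step, runs through these two statements (the pointwise formula is only available after integrating over the projectivised Ext-spaces), so your argument needs them substituted for the false pointwise identity; as written, the claimed evaluation of $I_{-1}(f_1*f_2)$ does not go through. A smaller point: you cannot deduce that $f_1*f_2-f_2*f_1$ lies in $(\LL-1)H_{\reg}(\cC)$ from the fact that $I_\sigma$ lands in a commutative ring; commutativity of the semi-classical quotient $\Hsc(\cC)$ is a prior structural fact (the proposition in Section~\ref{Section5_IntegrationMap}, i.e.\ Bridgeland's Thm.~5.1) and should be quoted as an input rather than derived from the integration map.
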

\begin{proof}
  Bridgeland proves this in the case of $\sigma = 1$, and conditionally on a certain assumption on the Behrend function in the case of $\sigma = -1$ \cite[Thm.~5.2]{MR2854172}.
  As shown by Toda in \cite[Thm.~2.8]{2016arXiv160107519T}, this assumption holds true in our setting.
\end{proof}
We work with $\sigma = -1$ for the rest of this paper; however, see Section~\ref{sec1_QuasiProjective_Case} for a discussion of the case $\sigma = 1$.
\begin{rmk}\label{rmk:Integration_Poisson_vs_Lie}
  If we equip $\Q[N(\X)]$ with the \emph{naive} product $t^{\alpha_1}t^{\alpha_2} = t^{\alpha_1+\alpha_2}$ then $\Q[N(\X)]$ is only a \emph{Lie} algebra and $I_{-1}$ is only a morphism of Lie algebras, not of Poisson algebras; this issue only occurs for the Behrend weighted case $\sigma = -1$.

  Henceforth we equip $\Q[N(\X)]$ with the \emph{naive} product.
  The wall-crossing only involves the bracket $\{-,-\}$, so this distinction is of no consequence to the validity of our arguments.
  However, it does mean that the Leibniz rule \emph{does not} hold in $\Q[N(\X)]$.
  In keeping with the literature, we opt to write our variables as $t^{\alpha} = z^{\beta}q^{c}$.
\end{rmk}

\subsection{The graded Hall algebra}
It is convenient to be able to talk about infinite graded sums in the Hall algebra, and we therefore consider the following variant.
\begin{defn}\label{def:GradedHallPreAlgebra}
  The \emph{graded Hall pre-algebra} $H_{\gr}(\cC)$ is the $\Q$-vector space generated by symbols $[X \to \uC]$, where $X$ is an Artin stack \emph{locally} of finite type over $\C$ with affine geometric stabilizers, but such that the restriction of $X$ to $\uC_{\alpha}$ is of finite type for each $\alpha \in N(\X)$.
  We impose the same relations as before.
\end{defn}
\begin{rmk}
  As the name suggests, the graded Hall pre-algebra is not quite an algebra, for the same reason that the set of all formal expressions $\sum_{n \in \Z} a_{n}q^{n}$ is not a ring.
  Indeed, the product of two elements in $H_{\gr}(\cC)$ may not lie in $H_{\gr}(\cC)$, since the product may not be of finite type over each $\uC_{\alpha}$.

  On the other hand, suppose $C = \sum_\alpha C_\alpha$, $D = \sum_\alpha D_\alpha$ are two elements of $H_{\gr}(\cC)$ with $C_\alpha$ and $D_\alpha$ homogeneous of degree $\alpha$.
  If we assume that for every $\alpha \in N(\X)$,
  \begin{equation*}
      \left\{ \alpha_1 + \alpha_2 = \alpha \mid C_{\alpha_1} \neq 0 \neq D_{\alpha_2} \right\}
  \end{equation*}
  is a finite set, then the product of $C$ and $D$ exists in $H_{\gr}(\cC)$.
\end{rmk}
One may define graded versions of the regular subalgebra $H_{\gr,\reg}(\cC) \subset H_{\gr}(\cC)$, and the semi-classical quotient $H_{\gr,\Sc}(\cC)$.
The latter comes equipped with a partially defined Lie bracket and integration morphisms $I_{\sigma} \colon H_{\gr,\Sc}(\cC) \to \Q\{N(\X)\}$, where $\Q\{N(\X)\}$ is the group of all formal expressions $\sum_{\alpha \in N(\X)} n_{\alpha}t^{\alpha}$ with $n_{\alpha} \in \Q$.

By Theorem \ref{thm:IntegrationMap}, the map $I_{\sigma}$ preserves the Lie bracket between any two elements for which it is defined.
\begin{rmk}
  We point out that in this paper, we are only concerned with objects of rank $0$ and $-1$.
  Indeed, we only make use of the (graded versions of) the Hall algebra $H(\Coh_{\le 1}(\X))$ and the Lie-bimodule structure of $H(\A_{\rk = -1}) \subset H(\Coh^{\flat}(\X))$ over $H(\Coh_{\le 1}(\X))$ given by taking Lie brackets with elements in $H(\Coh_{\leq 1}(\X))$.
\end{rmk}

\section{Wall-crossing}
In this section we establish a general wall-crossing formula for $(\T,\F)$-pairs.
As an application, we prove the DT/PT correspondence for hard Lefschetz orbifolds.

We always work in the Hall algebras associated to the abelian category $\Coh^{\flat}(\X)$, and suppress this category from the notation.
For example, $H_{\gr}$ denotes $H_{\gr}(\Coh^{\flat}(\X))$.

We call a torsion pair $(\T,\F)$ on $\Coh_{\leq 1}(\X)$ \emph{numerical} if $[T] = [F]$ in $N(\X)$, for $T \in \T, F \in \F$, implies $T = F = 0$.
Any torsion pair induced from a stability condition on $N_{\le 1}(\X)$ is numerical.
In particular, all the torsion pairs we consider are numerical.
\begin{lem}
  \label{thm:TFPairsHaveSimpleAutomorphisms}
  Let $(\T,\F)$ be a numerical torsion pair on $\Coh_{\le 1}(\X)$.
  Let $E$ be a $(\T,\F)$-pair in the sense of Definition \ref{def:TFpair}.
  Then $\Aut(E) = \C^*$.
\end{lem}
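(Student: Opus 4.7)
The plan is to show that any nonzero endomorphism $\phi \in \End(E)$ is an isomorphism, which will imply that $\End(E)$ is a finite-dimensional $\C$-division algebra, hence $\End(E) = \C$ by algebraic closure of $\C$, giving $\Aut(E) = \C^{*}$.

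Fix $\phi \in \End(E)$ and let $K = \ker(\phi)$, $I = \im(\phi)$, $C = \coker(\phi)$ in $\A$. Since $\rk(E) = -1$ and the rank is additive on short exact sequences in $\A$, each of the three objects $K, I, C$ has rank $0$ or $-1$, constrained by $\rk(K) + \rk(I) = -1 = \rk(I) + \rk(C)$. By Proposition~\ref{prop:ObjectsSmallRank} (or Corollary~\ref{cor:Class_TFpair_Effective_HMinusOne_IdealSheaf}), any rank-$0$ object in $\A$ lies in $\Coh_{\le 1}(\X)$, so such a rank-$0$ subobject/quotient of $E$ is a sheaf to which we can apply the torsion pair decomposition.

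The main step is to rule out $\rk(I) = 0$. If $\rk(I) = 0$ then $I \in \Coh_{\le 1}(\X)$, and $I$ is simultaneously a subobject and a quotient of $E$ in $\A$. Decomposing $0 \to I_{\T} \to I \to I_{\F} \to 0$ via the torsion pair, the composition $I_{\T} \hookrightarrow I \hookrightarrow E$ vanishes by the $(\T,\F)$-pair condition $\Hom(\T,E) = 0$ and the injectivity of $I_\T \hookrightarrow E$ forces $I_\T = 0$, so $I \in \F$; dually, the composition $E \twoheadrightarrow I \twoheadrightarrow I_{\F}$ vanishes by $\Hom(E,\F) = 0$, so $I_{\F} = 0$ and $I \in \T$. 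Since $\T \cap \F = 0$ (an object in the intersection has zero identity map, hence is zero), $I = 0$, i.e., $\phi = 0$.

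Thus any nonzero $\phi$ has $\rk(I) = -1$, so $\rk(K) = \rk(C) = 0$ and $K, C \in \Coh_{\le 1}(\X)$. The same two half-arguments as above, applied to $K \hookrightarrow E$ and $E \twoheadrightarrow C$ respectively, show that $K \in \F$ and $C \in \T$. From the two short exact sequences we get $[K] = [E] - [I] = [C]$ in $N(\X)$; since $(\T,\F)$ is numerical, this forces $K = C = 0$, i.e., $\phi$ is an isomorphism. Hence every nonzero endomorphism is invertible, the finite-dimensional algebra $\End(E)$ is a division algebra over $\C$, and $\End(E) = \C$.

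The main (minor) obstacle is simply keeping the injectivity/surjectivity of the various maps straight when combining the torsion decomposition with the Hom-vanishing conditions, and recalling that the inclusion $\Coh_{\le 1}(\X) \subset \A$ is exact (Lemma~\ref{lem:PropertiesOfA}) so that the torsion pair on $\Coh_{\le 1}(\X)$ can be used inside $\A$ as above.
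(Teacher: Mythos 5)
Your proof is correct and follows essentially the same route as the paper: rule out a rank-$0$ image via $\im\phi \in \T \cap \F = 0$, then for rank $-1$ use $\ker\phi \in \F$, $\coker\phi \in \T$, $[\ker\phi]=[\coker\phi]$ and numericality to conclude $\phi$ is an isomorphism, whence $\End(E)=\C$. You have merely spelled out in more detail the torsion-pair decompositions and the Hom-vanishing arguments that the paper leaves implicit.
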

\begin{proof}
  Let $\phi \colon E \to E$ be an endomorphism of $E \in \cat{Pair}(\cat T, \cat F)$.
  If $\im \phi$ has rank 0, then by definition $\im \phi \in \cat F \cap \cat T = 0$.
  If $\im \phi$ has rank -1, then $\ker \phi \in \cat F$, and $\cok \phi \in \cat T$.
  But since $[\cok \phi] = [\ker \phi]$ in $N(\X)$, we have $\ker \phi = \cok \phi = 0$.
  Thus every non-zero endomorphism of $E$ is an automorphism, and it follows that $\Aut(E) = \C^{*}$.
\end{proof}

\begin{cor}\label{cor:PairsDefineRegularElements}
  Let $(\T,\F)$ be a numerical, open torsion pair on $\Coh_{\le 1}(\X)$, and let $\M \subset \st{Pair}(\T,\F)$ be an open and finite type substack.
  Then $(\LL - 1)[\M] \in H_{\reg}(\cC)$.
\end{cor}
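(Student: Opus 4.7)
The plan is to exploit the fact, established in Lemma \ref{thm:TFPairsHaveSimpleAutomorphisms}, that every geometric point of $\M$ has automorphism group exactly $\C^{*}$, acting by scalar multiplication on the corresponding complex in $D(\X)$. This is a standard situation in motivic Hall algebra calculations: under such a hypothesis, $\M$ is naturally a $\mathbb{G}_{m}$-gerbe over its rigidification $M$, which is an algebraic space of finite type over $\C$.

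First, I would reduce to showing that the $\mathbb{G}_{m}$-gerbe $\M \to M$ can be stratified into trivial gerbes. More precisely, I would exhibit a finite stratification of $M$ by locally closed subvarieties $M = \bigsqcup_{i} M_{i}$ such that each restricted gerbe $\M|_{M_{i}} \to M_{i}$ admits a section, and hence is isomorphic to $M_{i} \times B\mathbb{G}_{m}$. One way to achieve this is to construct a $\mathbb{G}_{m}$-torsor $P_{i} \to \M|_{M_{i}}$ whose total space is a variety; this is a well-known technique in Bridgeland's treatment of the motivic Hall algebra \cite{MR2813335,MR2854172}, typically carried out by using a universal family on $\M \times \X$ (which exists as a twisted complex on the gerbe) together with the rank-$(-1)$ and $\det(E) \cong \hO_{\X}$ constraints from Proposition \ref{prop:ObjectsSmallRank} to produce a natural $\mathbb{G}_{m}$-torsor locally on $M$.

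Given such a stratification, the identities $(\LL-1)[B\mathbb{G}_{m}] = [\mathbb{G}_{m}]$ in $K(\St/\C)$ and the Zariski fibration relation in the Grothendieck ring yield
\[
(\LL-1)\bigl[\M|_{M_{i}} \to \uC\bigr] = [P_{i} \to \uC] \in H_{\reg}(\cC),
\]
and summing over the finite stratification produces $(\LL-1)[\M \to \uC]$ as a sum of variety classes in $H_{\reg}(\cC)$.

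The main obstacle is the construction of the stratification, or equivalently, of $\mathbb{G}_{m}$-torsors over the strata with variety total space. Since $\mathbb{G}_{m}$-gerbes need not be Zariski-locally trivial in general (they are classified by the Brauer group), one must use the specific geometry of $(\T,\F)$-pairs: the presence of a universal complex $\mathcal{E}$ on $\M \times \X$ normalized by $\det(\mathcal{E}) \cong \hO_{\M \times \X}$, combined with the fact that the $\C^{*}$-stabilizers act by the identity character on this universal object, allows one to construct the required torsors explicitly, for example by trivializing a suitable determinant line on an open cover of $M$. This reduction to a well-known lemma in the Hall-algebra literature (compare \cite[Lem.~3.9]{MR2998828}) completes the argument.
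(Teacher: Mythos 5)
Your proposal is correct and follows essentially the same route as the paper, whose proof is just the two steps you begin with: Lemma \ref{thm:TFPairsHaveSimpleAutomorphisms} gives $\Aut(E)=\C^{*}$, so $\uPair(\T,\F)$ is a $\C^{*}$-gerbe over its coarse moduli space, and the remaining Grothendieck-ring step (which the paper leaves as ``the result follows'') is exactly what you spell out via a weight-one torsor with variety total space, using the tautological universal complex on $\M\times\X$ and the triviality of determinants from Proposition \ref{prop:ObjectsSmallRank}. Only a cosmetic slip: $(\LL-1)[B\C^{*}]=[\Spec\C]$, not $[\C^{*}]$; the identity you actually use, $(\LL-1)\bigl[\M|_{M_{i}}\to\uC\bigr]=[P_{i}\to\uC]$ for a $\C^{*}$-torsor $P_{i}$, is the correct one.
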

\begin{proof}
  By \cite[Thm.~5.1.5]{MR2007376}, the moduli stack of pairs $\uPair(\T,\F)$ has a coarse moduli space, over which it is a $\C^*$-gerbe by Lemma \ref{thm:TFPairsHaveSimpleAutomorphisms}.
  The result follows.
\end{proof}

\begin{defn}\label{defdefel}
  We say that a full subcategory $\cC \subset \Coh^{\flat}(\X)$ \emph{defines an element} in the graded Hall pre-algebra $H_{\gr}$ if it is an open subcategory and $\uC_{\alpha}$ is of finite type for every $\alpha \in N(\X)$.
\end{defn}
\begin{rmk}
  If a subcategory $\cC \subset \Coh^{\flat}(\X)$ defines an element of $H_{\gr}$, then we omit the natural inclusion and simply write $[\uC]$ for the corresponding element.
\end{rmk}
Let $(\T_{\pm}, \F_{\pm})$ be two open, numerical torsion pairs on $\Coh_{\le 1}(\X)$ with $T_{+} \subset \T_{-}$.
We think of these torsion pairs as lying on either side of a wall, and we write $\W = \T_{-} \cap \F_{+}$ for the category of objects that change from being torsion to being free when the wall is crossed.
We obtain an open torsion triple $\Coh_{\leq 1}(\X) = \langle \T_{+}, \cat{W}, \F_{-} \rangle$.
\begin{lem}\label{thm:equivalenceOfFiniteTypeConditions}
  Assume that $\Pair(\T_{-},\F_{-})$, $\Pair(\T_{+},\F_{+})$, and $\W$ define elements of $H_{\gr}(\Coh^{\flat}(\X))$, and let $\alpha \in N(\A) \in \Z \oplus N_{\leq 1}(\X)$.
  The following are equivalent:
  \begin{enumerate}
  \item The stack $\uPair(\T_{+}, \F_{-})_{\alpha}$ is of finite type.
  \item The stack $(\uPair(\T_{+}, \F_{+}) * \uW)_{\alpha}$ is of finite type.
  \item The stack $(\uW * \uPair(\T_{-},\F_{-}))_{\alpha}$ is of finite type.
  \end{enumerate}
\end{lem}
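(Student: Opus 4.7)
The plan is to prove the three finite-type statements equivalent by producing, for each $\alpha \in N(\A)$, natural morphisms of stacks
\[
  \uW * \uPair(\T_{-},\F_{-}) \;\longrightarrow\; \uPair(\T_{+},\F_{-}) \;\longleftarrow\; \uPair(\T_{+},\F_{+}) * \uW
\]
which are geometric bijections at class $\alpha$. Since all three stacks are algebraic and locally of finite type, a geometric bijection between them transfers the property of being of finite type, giving the desired equivalences.

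For the right-hand morphism, which sends an extension $0 \to E_1 \to E \to W \to 0$ to its middle term $E$, well-definedness reduces to checking $E \in \Pair(\T_{+}, \F_{-})$: the vanishing $\Hom(\T_{+}, E) = 0$ follows from $\Hom(\T_{+}, E_1) = 0$ (built into $E_1 \in \Pair(\T_+, \F_+)$) and $\Hom(\T_{+}, W) = 0$ (from $\W \subset \F_{+}$); dually $\Hom(E, \F_{-}) = 0$ follows from $\Hom(E_1, \F_{-}) = 0$ (using $\F_{-} \subset \F_{+}$) and $\Hom(W, \F_{-}) = 0$ (using $\W \subset \T_{-}$). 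For bijectivity on $\C$-points, given $E \in \Pair(\T_{+},\F_{-})$, I observe $H^0(E) \in \T_{-}$ (via the torsion pair $(\T_{-},\F_{-})$ and the criterion of Remark \ref{rmk:TFpairStandardForm}), and I use the torsion pair $(\T_{+}, \W)$ induced on $\T_{-}$ by the torsion triple $\Coh_{\le 1}(\X) = \langle \T_{+}, \W, \F_{-}\rangle$ to obtain a canonical quotient $H^0(E) \onto H^0(E)_{\W}$ in $\A$. Its kernel in $\A$ produces the required short exact sequence $0 \to E_1 \to E \to H^0(E)_{\W} \to 0$, and one verifies $E_1 \in \Pair(\T_{+},\F_{+})$: the criterion of Remark \ref{rmk:TFpairStandardForm} applies since $H^0(E_1) = H^0(E)_{\T_{+}} \in \T_{+}$, while $\Hom(\T_{+}, E_1) = 0$ is inherited from $E$ because $E_1 \into E$ in $\A$.

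For the left-hand morphism, I instead apply Lemma \ref{lem:TodaTorsion} to the subcategory $\T_{-} \subset \A$ (closed under extensions and quotients) to produce a torsion pair $(\T_{-}, \T_{-}^{\perp_{\A}})$ on $\A$. Applied to $E \in \Pair(\T_{+},\F_{-})$, this gives $0 \to E_0 \to E \to E_2 \to 0$ in $\A$, and the subobject $E_0 \in \T_{-}$ inherits $\Hom(\T_{+}, E_0) = 0$, placing it in $\T_{-} \cap \F_{+} = \W$, while $E_2$ has rank $-1$ and satisfies $\Hom(E_2, \F_{-}) = 0$ (inherited from $E$ as a quotient), so $E_2 \in \Pair(\T_{-},\F_{-})$. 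Well-definedness of the map $\uW * \uPair(\T_{-},\F_{-}) \to \uPair(\T_{+},\F_{-})$ is immediate from the semi-orthogonality relations of the triple $\langle \T_{+}, \W, \F_{-}\rangle$.

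Uniqueness of these filtrations — hence bijectivity on objects of groupoids — follows from the same semi-orthogonalities: given two filtrations $0 \to E_1 \to E \to W \to 0$ and $0 \to E_1' \to E \to W' \to 0$ of the right-hand type, the composite $E_1 \to E \to W'$ vanishes because $\Hom(E_1, \F_{+}) = 0$ and $W' \in \W \subset \F_{+}$, so $E_1 \subset E_1'$, and the reverse inclusion is symmetric; the left-hand uniqueness is analogous. Combined with Lemma \ref{thm:TFPairsHaveSimpleAutomorphisms} giving $\Aut(E) = \C^{*}$ for every pair, this establishes geometric bijectivity at the level of $\C$-point groupoids. The main obstacle I anticipate is ensuring that these torsion-pair decompositions work relative to an arbitrary base $S$ and not merely pointwise; this I would handle by leaning on the openness of the subcategories $\T_{+}, \T_{-}, \W, \F_{+}, \F_{-}$ in $\cCoh_{\X}$ — openness is built into the standing hypothesis that $\W$ and $\Pair(\T_{\pm},\F_{\pm})$ define elements of $H_{\gr}$ — so that the canonical torsion-subobject constructions commute with arbitrary base change up to canonical isomorphism.
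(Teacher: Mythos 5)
Your construction of the two decompositions is correct and is essentially the paper's argument: the paper packages exactly these filtrations via Proposition \ref{prop:InducedTorsionTriple} (the torsion tuples $\A = \langle \T_{-},\W,V(\T_{-},\F_{-}),\F_{-}\rangle$ and $\A = \langle \T_{+},V(\T_{+},\F_{+}),\W,\F_{+}\rangle$), and from them gets the same geometric bijections $(\uPair(\T_{+},\F_{+})*\uW)_{\alpha}\to\uPair(\T_{+},\F_{-})_{\alpha}$ and $(\uW*\uPair(\T_{-},\F_{-}))_{\alpha}\to\uPair(\T_{+},\F_{-})_{\alpha}$. This gives (2)$\Rightarrow$(1) and (3)$\Rightarrow$(1).

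The gap is in the step where you say that ``a geometric bijection between locally of finite type stacks transfers the property of being of finite type.'' That is only true from source to target. From target to source it is false in general: $\coprod_{t\in\C}\Spec\C \to \Aff^{1}$ is a geometric bijection from a non-quasi-compact, locally finite type scheme onto a finite type one. So your argument does not yield (1)$\Rightarrow$(2) or (1)$\Rightarrow$(3), and nothing in your proposal replaces it — tellingly, you never use the standing hypothesis that $\Pair(\T_{\pm},\F_{\pm})$ and $\W$ define elements of $H_{\gr}(\Coh^{\flat}(\X))$ except for openness, whereas that hypothesis (finite-typeness of each graded piece) is precisely what the missing direction needs. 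The paper's proof closes this as follows: by that hypothesis and Proposition \ref{thm:extensionStackIsFiniteType}, the stack $(\uPair(\T_{+},\F_{+})*\uW)_{\alpha}$ is a countable disjoint union, indexed by the decompositions $\alpha=\alpha_{1}+\alpha_{2}$, of finite type stacks $\uPair(\T_{+},\F_{+})_{\alpha_{1}}*\uW_{\alpha_{2}}$; if it were not of finite type, infinitely many of these pieces would be nonempty, and a countably infinite disjoint union of nonempty finite type stacks cannot be in geometric bijection with a finite type stack over the uncountable field $\C$ (their images would give infinitely many disjoint nonempty constructible subsets exhausting the $\C$-points of a finite type stack). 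You need to add this counting argument, or some substitute using the finite-typeness of the graded pieces, to obtain the implications (1)$\Rightarrow$(2) and (1)$\Rightarrow$(3).
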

\begin{proof}
  By Proposition \ref{prop:InducedTorsionTriple}, we have the following torsion tuple decompositions of $\A$:
  \[
    \A = \langle \T_{+}, V(\T_{+}, \F_{-}), \F_{-} \rangle = \langle \T_{-}, \cat W, V(\T_{-}, \F_{-}), \F_{-} \rangle = \langle \T_{+}, V(\T_{+}, \F_{+}), \cat W, \F_{+} \rangle.
  \]
  Restricting to $V(\T_{+}, \F_{-}) \cap \A_{\rk = -1} = \Pair(\T_{+}, \F_{-})$, we find that every $(\cat T_{+},\cat F_{-})$-pair $E$ admits unique decompositions
  \begin{gather*}
    0 \to W_{-} \to E \to E_{-} \to 0 \\
    0 \to E_{+} \to E \to W_{+} \to 0
  \end{gather*}
  in $\A$, with $W_{\pm} \in \cat W$, $E_{\pm} \in \Pair(\T_{\pm}, \F_{\pm})$.
  Conversely every exact sequence as above with $W_{\pm} \in \cat W$, $E_{\pm} \in \Pair(\T_{\pm}, \F_{\pm})$ defines an $E \in \Pair(\T_{+}, \F_{-})$.

  Thus we have natural geometrically bijective morphisms from $(\uPair(\T_{+}, \F_{+}) * \uW)_{\alpha}$ and $(\uW * \uPair(\T_{-},\F_{-}))_{\alpha}$ to $\uPair(\T_{-},\F_{+})$, proving the implications (2) $\Rightarrow$ (1) and (3) $\Rightarrow$ (1).

  Conversely, if $(\uPair(\T_{+}, \F_{+}) * \uW)_{\alpha}$ is not of finite type, then it is a countably infinite union of finite type stacks, which cannot be in geometric bijection with a finite type stack.
  Thus (1) $\Rightarrow$ (2), and in the same way (1) $\Rightarrow$ (3).
\end{proof}
\begin{lem}
  Assume that $\cat W$, $\Pair(\T_{-}, \F_{-})$, $\Pair(\T_{+},\F_{+})$ and $\Pair(\T_{+},\F_{-})$ each define an element of the graded Hall pre-algebra.
  Then we have in $H_{\gr}$
  \begin{equation}\label{eq:WCformula}
    [\uW] * [\uPair(\T_{-}, \F_{-})] = [\uPair(\T_{+},\F_{-})] = [\uPair(\T_{+}, \F_{+})] * [\uW].
  \end{equation}
\end{lem}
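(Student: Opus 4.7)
The plan is to promote the geometric bijections constructed in the proof of Lemma~\ref{thm:equivalenceOfFiniteTypeConditions} to equalities in the graded Hall pre-algebra $H_{\gr}$. By definition of the Hall product, the element $[\uW] * [\uPair(\T_{-}, \F_{-})]$ is represented by the fibre product described in diagram~\eqref{eq:HallDiagram}: the stack parametrising short exact sequences
\begin{equation*}
    0 \to W \to E \to P \to 0
\end{equation*}
in $\A$ with $W \in \cat W$ and $P \in \Pair(\T_{-}, \F_{-})$, together with the map to $\uC$ sending such a sequence to its middle term $E$. The key input from the previous lemma is the torsion tuple decomposition $\A = \langle \T_{-}, \cat W, V(\T_{-},\F_{-}), \F_{-} \rangle$, which shows that collapsing the filtration of any $(\T_{+},\F_{-})$-pair $E$ at the $\cat W$-step produces a unique such extension with $W \in \cat W$ and $P \in \Pair(\T_{-},\F_{-})$, and conversely that every such extension has middle term in $\Pair(\T_{+},\F_{-})$.

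This correspondence defines a morphism of stacks from the fibre product to $\uPair(\T_{+},\F_{-})$ which is a geometric bijection on $\C$-points. To package this cleanly, I would first note that the finite type hypotheses on all four categories, combined with Lemma~\ref{thm:equivalenceOfFiniteTypeConditions}, ensure that all the stacks in question really do define elements of $H_{\gr}$ (i.e., the products exist) — so the symbols on both sides of \eqref{eq:WCformula} are legitimate elements of $H_{\gr}$. Then relation (2) in the definition of $K(\St/\uC)$ immediately yields
\begin{equation*}
    [\uW] * [\uPair(\T_{-}, \F_{-})] = [\uPair(\T_{+},\F_{-})]
\end{equation*}
in $H_{\gr}$.

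The second equality, $[\uPair(\T_{+},\F_{-})] = [\uPair(\T_{+}, \F_{+})] * [\uW]$, is proved by the mirror image of this argument, using instead the torsion tuple decomposition $\A = \langle \T_{+}, V(\T_{+}, \F_{+}), \cat W, \F_{+}\rangle$: every $(\T_{+},\F_{-})$-pair $E$ has a unique quotient in $\cat W$ with kernel a $(\T_{+},\F_{+})$-pair, giving the corresponding geometric bijection.

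There is no substantial obstacle once the previous lemma is in hand; the only subtlety is bookkeeping the finite type conditions so that the Hall products $[\uW]*[\uPair(\T_{-},\F_{-})]$ and $[\uPair(\T_{+},\F_{+})]*[\uW]$ are defined in $H_{\gr}$ in the first place. Since by hypothesis all three of $\cat W$, $\Pair(\T_{\pm},\F_{\pm})$, and $\Pair(\T_{+},\F_{-})$ are represented by stacks that are of finite type on each numerical component, Lemma~\ref{thm:equivalenceOfFiniteTypeConditions} guarantees that the two fibre products appearing in \eqref{eq:WCformula} are themselves of finite type on each numerical component, so the equalities really do make sense in $H_{\gr}$.
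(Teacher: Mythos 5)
Your argument is correct and is essentially the paper's own: the paper also deduces \eqref{eq:WCformula} from the unique filtrations of $(\T_{+},\F_{-})$-pairs established in Lemma~\ref{thm:equivalenceOfFiniteTypeConditions}, which give geometrically bijective morphisms $(\uPair(\T_{+},\F_{+}) * \uW) \to \uPair(\T_{+},\F_{-})$ and $(\uW * \uPair(\T_{-},\F_{-})) \to \uPair(\T_{+},\F_{-})$, and then applies the geometric bijection relation in $K(\St/\uC)$ (as in Bridgeland's argument), with the finite type hypotheses guaranteeing the products exist in $H_{\gr}$ exactly as you note.
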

\begin{proof}
  See the proof of Lemma \ref{thm:equivalenceOfFiniteTypeConditions}, and argue as in \cite[Lem.~4.1]{MR2813335}.
\end{proof}

\subsection{The no-poles theorem}
We now impose a further smallness assumption on the subcategory $\cat{W}$, taken from \cite{1612.00372}, which may be thought of as saying that the torsion pairs $(\T_{\pm},\F_{\pm})$ are close enough for the wall $\cat{W} = \T_{-} \cap \F_{+}$ to be crossed.
\begin{defn}
  \label{dfn:DecompositionallyFinite}
  A full subcategory $\cat{W} \subset \Coh_{\leq 1}(\X)$ is \emph{log-able} if:
  \begin{itemize}
  \item $\cat{W}$ is closed under direct sums and summands.
  \item $\W$ defines an element of $H_{\gr}(\cC)$.
  \item if $\alpha \in N(\X)$, there are only finitely many ways of writing $\alpha = \alpha_{1} + \cdots + \alpha_{n}$, with each $\alpha_{i}$ the class of a non-zero element in $\cat{W}$.
  \end{itemize}
\end{defn}
\begin{thrm}
  \label{thm:NoPoles}
  If $\cat{W}$ is log-able, then
  \begin{equation*}
    (\LL - 1)\log ([\uW]) \in H_{\gr,\reg}(\Coh^{\flat}(\X)).
  \end{equation*}
\end{thrm}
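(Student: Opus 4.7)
The plan is to follow the standard strategy for no-poles theorems in motivic Hall algebras, originally due to Joyce and adapted to the motivic-stack setting by Bridgeland; compare \cite{MR2813335,1612.00372}.

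First, I would verify that $\log([\uW])$ is well-defined in $H_{\gr}$. Writing
\[
\log([\uW]) = \sum_{n \geq 1} \frac{(-1)^{n-1}}{n}([\uW] - \ID_0)^{*n},
\]
the third clause of Definition \ref{dfn:DecompositionallyFinite} guarantees that for each $\alpha \in N(\X)$ only finitely many $n$ contribute to the $\alpha$-graded component, so the series is a well-defined element of $H_{\gr}$. Geometrically, $([\uW] - \ID_0)^{*n}$ is represented by the stack of strict $n$-step filtrations in $\cat A$ with non-zero subquotients in $\cat W$.

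Second, I would stratify $\uW_\alpha$ by Jordan--H\"older type. Since $\cat W$ is closed under direct sums and summands and sits inside the Noetherian category $\Coh_{\leq 1}(\X)$, every object of $\cat W$ admits a Krull--Schmidt decomposition into indecomposables, and the JH-stratification is well-defined and locally closed. The stratum where the associated graded is $\bigoplus_i F_i^{\oplus n_i}$ for distinct indecomposables $F_i$ is a gerbe banded by a group whose reductive part is $\prod_i GL_{n_i}$; these higher-rank factors are precisely the source of the potential irregularity of $[\uW]$ itself.

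Third, and this is the main step, I would perform the combinatorial cancellation. Grouping the contributions to $([\uW] - \ID_0)^{*n}$ by how the $n$-step filtration distributes the indecomposable summands on each JH-stratum and applying Joyce's identity---essentially the expansion of $\log$ via the M\"obius function of the ordered partition lattice---one finds that all strata whose JH-type is not concentrated on a single indecomposable cancel in the sum defining $\log([\uW])$. The surviving ``virtually indecomposable'' locus has stabilizer $\C^{*} \ltimes U$ with $U$ unipotent, so multiplying by $(\LL - 1) = [\C^{*}]$ trivialises the $\C^{*}$-gerbe structure and produces a class representable by a variety, i.e.\ an element of $H_{\gr,\reg}$.

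The main obstacle is the combinatorial identity in the third step---the vanishing of all mixed JH-type contributions in the $\log$ expansion. This is the essence of Joyce's theorem, and the hypotheses of Definition \ref{dfn:DecompositionallyFinite} are tailored to match its framework, so the cancellation argument transfers once the Krull--Schmidt and finite-type properties (which are immediate from log-ability) have been verified in our setting.
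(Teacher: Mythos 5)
Your overall route---interpret $\log([\uW])$ via the series $\sum_{n\ge 1}\frac{(-1)^{n-1}}{n}([\uW]-\ID_0)^{*n}$, note that the third clause of Definition \ref{dfn:DecompositionallyFinite} makes each graded piece a finite sum, and then reduce to a Joyce-style ``virtual indecomposability'' statement---is in spirit what the paper does: the proof invokes the no-poles theorems of \cite{1612.00372} together with the argument of \cite[Sec.~6]{MR2813335}, and the genuine new content is the verification in Appendix \ref{sec:BRaxioms} that the open heart $\Coh^{\flat}(\X)$ gives an exact linear algebraic stack satisfying the axioms of \cite{1612.00372}. Your step (1) is correct and matches the role that log-ability plays in the paper.

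The gap is in your step (3), which is wrong as stated and is precisely the step you cannot wave through. The strata of mixed decomposition type do \emph{not} cancel in $\log([\uW])$, and the log is \emph{not} supported on objects with stabiliser $\C^{*}\ltimes U$. Already for a single simple object $S\in\cat W$ with $\End(S)=\C$, the component of $\log([\uW])$ in class $2[S]$ is a non-zero combination of the form $[\pt/GL_{2}]-\tfrac{1}{2}[\pt/(\C^{*}\times\C^{*})]$ (mapping to the point $S^{\oplus 2}$ of $\uW$), supported entirely on a decomposable object; regularity of $(\LL-1)\log([\uW])$ comes from the cancellation of the pole at $\LL=1$ \emph{between} such terms, not from the terms cancelling or from the support becoming indecomposable. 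That pole cancellation is exactly the content of \cite[Thm.~8.7]{MR2354988} and \cite[Thms.~4~\&~5]{1612.00372}, so your sketch either misdescribes the mechanism or simply defers to those theorems---and in the latter case you have not verified their hypotheses: they are stated for an abelian category whose stacks of objects and of short exact sequences satisfy a list of representability, finite-type and exactness axioms, and since $\A\subset\Coh^{\flat}(\X)$ consists of complexes rather than sheaves this verification (openness of the heart, the stack $\uC^{(2)}$ of short exact sequences, finite type of $(\pi_{1},\pi_{3})$, Karoubian-ness, etc.) is the real work, carried out in Appendix \ref{sec:BRaxioms}; Krull--Schmidt plus finite type of the graded pieces, which is all you check, does not suffice. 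A further, more minor, point: you conflate Jordan--H\"older type with Krull--Schmidt (direct-sum) type; the stratification relevant to virtual indecomposability is by split type and stabiliser structure, not by JH factors, and for a general log-able $\cat W$ there is no JH theory available.
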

\begin{proof}
  This follows as in \cite[Sec.~6]{MR2813335}.
  We use the fact that the Behrend function identities hold in $\uA$, by \cite[Thm.~2.6]{2016arXiv160107519T}, as well as the fact that the ``no poles'' statement analogous to that of \cite[Thm.~8.7]{MR2354988} holds, by \cite[Thms.~4 \& 5]{1612.00372}.
  The fact that $\uA$ satisfies the axioms used in \cite{1612.00372} is shown in Appendix \ref{sec:BRaxioms}.
\end{proof}

\subsection{The numerical wall-crossing formula}
We apply the integration map.
\begin{defn}
  \label{defn:wallCrossingMaterial}
  Let $(\T_{\pm}, \F_{\pm})$ be open torsion pairs on $\Coh_{\le 1}(\X)$ with $\T_{+} \subset \T_{-}$, and let $\W = \T_{-} \cap \F_{+}$.
  We say that these torsion pairs are \emph{wall-crossing material} if
  \begin{enumerate}
  \item $\cat{W}$ is log-able
  \item the categories $\Pair(\T_+,\F_+)$, $\Pair(\T_-,\F_-)$ and $\Pair(\T_{+}, \F_{-})$ define elements of the graded Hall algebra.
  \end{enumerate}
\end{defn}
\begin{thrm}\label{thm:WallCrossingFormula}
  Let $(\T_\pm,\F_\pm)$ be open torsion pairs on $\Coh_{\leq 1}(\X)$, with $\T_{+} \subset \T_{-}$, which are wall-crossing material.
  Then $w \coloneq I \left( (\LL-1) \log [\uW] \right)$ is well defined, and
  \begin{equation}\label{eq:NumericalWallCrossingFormula}
    I \left( (\LL-1) [\uPair(\T_{+},\F_{+})]\right) =  \exp\left( \left\{ w, - \right\} \right) I \left( (\LL-1) [\uPair(\T_{-},\F_{-})] \right).
  \end{equation}
\end{thrm}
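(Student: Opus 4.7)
The plan is to deduce the formula by applying the integration map to the Hall algebra identity from the preceding lemma, after first rewriting that identity as conjugation by an exponential.

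First, since $\cat{W}$ is log-able, the class $[\uW]$ admits a well-defined logarithm
\[
  L \coloneq \log [\uW] = \sum_{n \geq 1} \tfrac{(-1)^{n+1}}{n}\bigl([\uW] - \ID_0\bigr)^{*n}
\]
in a suitable completion of the graded Hall pre-algebra; the finiteness condition in Definition~\ref{dfn:DecompositionallyFinite} ensures only finitely many terms contribute in each graded piece. Then equation~\eqref{eq:WCformula} becomes $\exp(L) * [\uPair(\T_{-},\F_{-})] = [\uPair(\T_{+},\F_{+})] * \exp(L)$, which rearranges (since $\mathrm{ad}_{L}$ is a derivation of the Hall product and $\exp(\mathrm{ad}_L)$ is conjugation by $\exp(L)$) to
\[
  [\uPair(\T_{+},\F_{+})] = \exp(\mathrm{ad}_{L})\bigl([\uPair(\T_{-},\F_{-})]\bigr),
\]
where $\mathrm{ad}_{L}(x) = L * x - x * L$.

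Multiplying by the central element $(\LL-1)$ and setting $Y \coloneq (\LL-1)[\uPair(\T_{-},\F_{-})]$ and $F \coloneq (\LL-1)L$, this identity reads $(\LL-1)[\uPair(\T_{+},\F_{+})] = \exp(\mathrm{ad}_{L})(Y)$. By Corollary~\ref{cor:PairsDefineRegularElements}, $Y$ lies in $H_{\gr,\reg}$, and the no-poles theorem (Theorem~\ref{thm:NoPoles}) gives that $F$ also lies in $H_{\gr,\reg}$; in particular, $w \coloneq I(F)$ is well defined in the appropriate completion of the Poisson torus. The key identity is that for any regular $y$ one has $\mathrm{ad}_{L}(y) = \tfrac{1}{\LL-1}\mathrm{ad}_{F}(y)$: since $F$ and $y$ are both regular, $\mathrm{ad}_{F}(y) = F * y - y * F$ lies in $(\LL-1)H_{\gr,\reg}$ by the very definition of the Poisson bracket on $H_{\gr,\mathrm{Sc}}$, so $\mathrm{ad}_{L}(y) \in H_{\gr,\reg}$ and its class in $H_{\gr,\mathrm{Sc}}$ equals $\{[F],[y]\}$.

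Applying the integration map $I$, which is a morphism of Poisson algebras by Theorem~\ref{thm:IntegrationMap}, and arguing by induction on $n$ using the observation above (with $y$ replaced successively by $Y, \mathrm{ad}_{L}(Y), \mathrm{ad}_{L}^{2}(Y), \ldots$), we obtain
\[
  I\bigl(\mathrm{ad}_{L}^{n}(Y)\bigr) = \{w,-\}^{n}\,I(Y)
\]
for every $n \geq 0$. Summing weighted by $1/n!$ yields the claimed identity~\eqref{eq:NumericalWallCrossingFormula}. The main obstacle is the regularity-preservation step just described: without the no-poles theorem and the trick $\mathrm{ad}_{L} = (\LL-1)^{-1}\mathrm{ad}_{F}$, the conjugation identity would only live in the Hall pre-algebra and could not be transported to the Poisson torus. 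A secondary technical point is ensuring all infinite sums converge in the correct completions of $H_{\gr}$ and $\Q[N(\X)]$, which is precisely what the finiteness conditions packaged into ``wall-crossing material'' guarantee.
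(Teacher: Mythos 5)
Your proposal is correct and follows essentially the same route as the paper: the paper's proof likewise combines the regularity of $(\LL-1)\log[\uW]$ (Theorem~\ref{thm:NoPoles}) and of $(\LL-1)[\uPair(\T_{\pm},\F_{\pm})]$ (Corollary~\ref{cor:PairsDefineRegularElements}) with the Hall-algebra identity~\eqref{eq:WCformula}, and then invokes the conjugation argument of \cite[Cor.~6.4]{MR2813335}, which is exactly your $\exp(\mathrm{ad}_{L})$ computation with the trick $\mathrm{ad}_{L} = (\LL-1)^{-1}\mathrm{ad}_{(\LL-1)L}$ transported through the integration map.
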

\begin{proof}
  Since $\cat{W}$ is log-able $(\LL-1)\log([\uW]) \in H_{\gr,\reg}(\Coh^{\flat}(\X))$ by Theorem \ref{thm:NoPoles}.
  By Corollary \ref{cor:PairsDefineRegularElements}, we have $(\LL-1)\uPair(\T_{\pm},\F_{\pm}) \in H_{\gr,\reg}(\Coh^{\flat}(\X))$ as well.
  The result then follows by the arguments of \cite[Cor.~6.4]{MR2813335} and equation \eqref{eq:WCformula}.
\end{proof}

\subsection{The DT/PT correspondence}
As a first application of the wall-crossing formula, we prove the orbifold DT/PT correspondence.
Recall that stable pairs on $\X$ are precisely $(\T_{PT},\F_{PT})$-pairs, where $\T_{PT} = \Coh_0(\X)$ and $\F_{PT} = \Coh_1(\X)$.

\begin{lem}\label{lem:Hygiene_for_DTPT_correspondence}
  Let $\T_{DT} = 0, \W = \Coh_0(\X)$, and $\F_{PT} = \Coh_1(\X)$.
  Then $(\T_{DT},\W,\F_{PT})$ is an open numerical torsion triple on $\Coh_{\leq 1}(\X)$ that is wall-crossing material.
\end{lem}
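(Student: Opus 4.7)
My plan is to unpack the definition of ``wall-crossing material'' and verify each ingredient using the established structure of $\Coh_{\le 1}(\X)$ and the moduli stacks from Section 4.

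First I would check the easy structural properties. That $\Coh_{\le 1}(\X) = \langle 0, \Coh_0(\X), \Coh_1(\X)\rangle$ is an honest torsion triple is the usual torsion filtration on $\Coh_{\le 1}$: for any $F$ the maximal $0$-dimensional subsheaf gives the piece in $\Coh_0(\X)$ with pure $1$-dimensional quotient in $\Coh_1(\X)$. Openness of $\Coh_0(\X)$ and $\Coh_1(\X)$ inside $\Coh_{\le 1}(\X)$ (hence inside $\Coh(\X) \subset \Mum_\X$) follows from upper-semicontinuity of dimension of support. Numericality of both induced torsion pairs $(0,\Coh_{\le 1}(\X))$ and $(\Coh_0(\X),\Coh_1(\X))$ is immediate from the modified Hilbert polynomial \eqref{eq:ModifiedHilbertPolynomial}: a non-zero sheaf in $\Coh_{\le 1}(\X)$ has non-zero class since $p_F$ is non-zero, and a non-zero element of $\Coh_1(\X)$ has $l(F) > 0$ while every $T \in \Coh_0(\X)$ has $l(T) = 0$.

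Next I would verify that $\W = \Coh_0(\X)$ is log-able in the sense of Definition~\ref{dfn:DecompositionallyFinite}. Closure under direct sums and summands is obvious. That $\W$ defines an element of $H_{\gr}$ amounts to $\cCoh_{0,\alpha}$ being of finite type for each $\alpha \in N_0(\X)$, which follows from Theorem~\ref{thm:NironisBoundednessTheorem} (or more elementarily from the fact that bounded-length zero-dimensional sheaves form a bounded family on the projective coarse space). The key point, and the main technical obstacle, is the third axiom: for each $\alpha \in N_0(\X)$ there are only finitely many ways of writing $\alpha = \alpha_1 + \cdots + \alpha_n$ with each $\alpha_i$ the class of a non-zero element of $\Coh_0(\X)$. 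Since $\deg\colon N_0(\X) \to \Z$ is a linear form that is strictly positive on non-zero effective classes (every simple skyscraper of a stacky point, in any irreducible representation of the stabiliser, has $\deg \ge 1$), additivity of $\deg$ forces $n \le \deg(\alpha)$ and each $\alpha_i$ to lie in the set of effective $0$-classes of degree $\le \deg(\alpha)$. This last set is finite because the stacky points form a closed substack with only finitely many types of stabiliser representations, so the effective cone in $N_0(\X)$ is generated by finitely many classes of strictly positive degree; this is essentially Corollary~\ref{lem:Effective_Cone_Convex} in dimension zero.

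Finally I would check that the three pair categories $\Pair(0,\Coh_{\le 1}(\X))$, $\Pair(\Coh_0(\X),\Coh_1(\X))$, and $\Pair(0,\Coh_1(\X))$ each define elements of $H_{\gr}$. Openness of each is immediate from Proposition~\ref{thm:TFOpenImpliesPOpen}, applied with the appropriate choices of $(\T,\F)$ and $(\tT,\fF)$. For finite-typeness on each numerical class: by the cohomological criterion of Lemma~\ref{prop:CohomCrit} (whose hypothesis $H^i(\X, T) = 0$ for $i \ne 0$ holds since $\T = 0$ or $\T = \Coh_0(\X)$), every $(\T,\F)$-pair is represented by a two-term complex $\hO_\X \to G$ with $G$ in $\F$. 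Thus $\uPair(0,\Coh_{\le 1}(\X))_{(\beta,c)}$ is a $\C^\ast$-gerbe over $\Quot_\X(\hO_\X,(\beta,c))$, which is projective by Theorem~\ref{thm:Quot_Scheme_Orbifold_Is_Projective}; $\uPair(\Coh_0(\X),\Coh_1(\X))_{(\beta,c)}$ is a $\C^\ast$-gerbe over $\Pilb_\X(\beta,c)$, which is likewise projective; and $\uPair(0,\Coh_1(\X))_{(\beta,c)}$ is the open substack of $\uPair(0,\Coh_{\le 1}(\X))_{(\beta,c)}$ on which the quotient $G$ is pure $1$-dimensional, hence also of finite type. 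This completes all conditions of Definition~\ref{defn:wallCrossingMaterial}.
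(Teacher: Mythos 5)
There is a genuine gap, and it sits in your treatment of the third category $\Pair(\T_{DT},\F_{PT})=\Pair(0,\Coh_1(\X))$. Since $\F_{PT}=\Coh_1(\X)\subset\Coh_{\leq 1}(\X)=\F_{DT}$, the defining condition $\Hom(E,\F)=0$ is \emph{weaker} for the smaller $\F$, so the inclusion runs $\Pair(0,\Coh_{\leq 1}(\X))\subset\Pair(0,\Coh_1(\X))$ — the opposite of what you use. Concretely, because $\Hom(E,F)\cong\Hom(H^{0}(E),F)$ for $F\in\Coh_1(\X)$ and $E\in\A_{\rk=-1}$, a $(0,\Coh_1(\X))$-pair is exactly a rank $-1$ object of $\A$ with $H^{0}(E)\in\Coh_0(\X)$, equivalently an extension of a PT pair by a $0$-dimensional sheaf; this category contains all DT pairs, all PT pairs, and more. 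It is \emph{not} the locus in the DT pair moduli where the quotient $G$ is pure, and it is not a substack of $\uPair(0,\Coh_{\leq 1}(\X))$ at all. Your appeal to Lemma~\ref{prop:CohomCrit} is also not available here, since $(0,\Coh_1(\X))$ is not a torsion pair on $\Coh_{\leq 1}(\X)$ (the lemma requires one, and its conclusion can fail without the hypotheses). So your argument does not establish that $\uPair(\T_{DT},\F_{PT})_{\alpha}$ is of finite type. The paper's route is Lemma~\ref{thm:equivalenceOfFiniteTypeConditions}: finite-typeness of $\uPair(\T_{DT},\F_{PT})_{\alpha}$ is equivalent to finite-typeness of $(\uW * \uPair(\T_{PT},\F_{PT}))_{\alpha}$, which holds because the set $\{c \mid \uPair(\T_{PT},\F_{PT})_{(\beta,c)}\neq\varnothing\}$ is $\deg$-bounded and $\deg$ is strictly positive on non-zero effective $0$-dimensional classes, so a fixed class admits only finitely many splittings $\alpha=\alpha_0+\alpha'$ with $\alpha_0$ the class of a non-zero object of $\W$.

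A secondary issue: you assert that $\Pilb_{\X}(\beta,c)$ is projective. In the orbifold setting this boundedness is precisely the non-obvious point; the paper does not claim projectivity but derives openness and finite-typeness of the PT pair moduli by combining Proposition~\ref{thm:BoundednessOfRank1SigmaSemistables}(\ref{enum:PNuOpenFiniteType}) with Lemma~\ref{thm:sigmaInfinityIsPT}, and you should cite that (or reprove it) rather than take it as known. The remaining parts of your proposal — the torsion triple structure, numericality, openness of the two torsion pairs, log-ability of $\Coh_0(\X)$, and finite-typeness of the DT side via the projective Quot scheme — are correct and in line with the paper.
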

\begin{proof}
  Clearly $\W$ is log-able.

  A $(\T_{DT},\F_{DT})$-pair is an object of the form $I[1]$, where $I$ is a torsion free sheaf of rank $1$.
  Fixing the numerical class of the pair, the stack of such is an open substack of $\Mum_{\X}$, which is moreover of finite type.
  Hence, $\Pair(\T_{DT}, \F_{DT})$ defines an element of $H_{\gr}(\Coh^{\flat}(\X))$.

  If $\X$ is a variety, it is well-known that $\uPair(\T_{PT},\F_{PT})_{\alpha}$ is an open substack of finite type for every $\alpha \in N_{\le 1}(\X)$; for a proof in our setting, combine part~\ref{enum:PNuOpenFiniteType} of Proposition~\ref{thm:BoundednessOfRank1SigmaSemistables} and Lemma~\ref{thm:sigmaInfinityIsPT}.
  Hence, $\Pair(\T_{PT},\F_{PT})$ defines an element of $H_{\gr}(\Coh^{\flat}(\X))$.
  Moreover, it is shown there that for each $\beta \in N_{1}(\X)$, the set $\{c \in N_{0}(\X) \mid \uPair(\T_{PT}, \F_{PT})_{(\beta,c)} \not= \varnothing\}$ is $\deg$-bounded.
  Consequently, it follows that $[\W] * [\uPair(\T_{PT}, \F_{PT})]$ defines an element of $H_{\gr}(\Coh^{\flat}(\X))$.
\end{proof}

For a class $(\beta,c) \in N_{\le 1}(\X)$, we write $PT(\X)_{(\beta,c)}$ for the Behrend-weighted Euler characteristic of the corresponding coarse moduli space.
In terms of the integration morphism of \eqref{eq:IntegrationMapKai}, we have
\begin{equation}
  I\bigl((\LL - 1) [\uPair(\T_{PT},\F_{PT})]_{(\beta,c)}\bigr) = PT(\X)_{(\beta,c)}t^{(-1,\beta,c)}
\end{equation}
in the Poisson torus $\Q[N(\X)]$.
We collect these invariants in a generating function
\begin{equation}\label{eq:PT_Generating_Function_PTX}
  PT(\X)_\beta = \sum_{c \in N_0(\X)} PT(\X)_{(\beta,c)}q^{c}
\end{equation}
Similarly, there is a generating function for the Donaldson--Thomas invariants.

We prove the orbifold DT/PT correspondence for multi-regular curve classes.
\begin{thrm}\label{thm:DTPT}
  Let $\X$ be a CY3 orbifold satisfying the hard Lefschetz condition, and let $\beta \in N_{1,\mr}(\X)$.
  Then we have
  \begin{equation}\label{eq:DTPT_Orbifold_Formula}
    PT(\X)_\beta = \frac{DT(\X)_\beta}{DT(\X)_0}
  \end{equation}
  as generating series in $\Z[N_0(\X)]_{\deg}$.
\end{thrm}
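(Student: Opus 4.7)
The plan is to specialize the wall-crossing formula of Theorem~\ref{thm:WallCrossingFormula} to the torsion pairs $(\T_{DT}, \F_{DT}) = (0, \Coh_{\le 1}(\X))$ and $(\T_{PT}, \F_{PT}) = (\Coh_0(\X), \Coh_1(\X))$. Here $\T_{DT} \subset \T_{PT}$, the wall is $\W = \T_{PT}\cap \F_{DT} = \Coh_0(\X)$, and Lemma~\ref{lem:Hygiene_for_DTPT_correspondence} checks that this configuration is wall-crossing material. With $w \coloneq I((\LL-1)\log[\uW]) \in \Q\{N_0(\X)\}$, the formula yields
\begin{equation*}
\sum_{(\beta,c)} DT(\X)_{(\beta,c)}\, t^{(-1,\beta,c)} \;=\; \exp\{w,-\}\,\biggl(\sum_{(\beta,c)} PT(\X)_{(\beta,c)}\, t^{(-1,\beta,c)}\biggr)
\end{equation*}
in a suitable completion of the Poisson torus $\Q[N(\X)]$.

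The crux of the argument is an Euler-pairing vanishing. Under hard Lefschetz, $N_0(\X) = \Phi(N_{\exc}(Y))$ and $N_{\mr}(\X) = \Phi(N_{\le 1}(Y))$, so every $c \in N_0(\X)$ and every $(\beta, c')$ with $\beta \in N_{1,\mr}(\X)$ corresponds under the McKay equivalence to a class in $N_{\le 1}(Y)$. Grothendieck--Riemann--Roch on the CY3 variety $Y$ forces $\chi|_{N_{\le 1}(Y) \times N_{\le 1}(Y)} = 0$, since the relevant Chern characters are supported in codimension $\ge 2$; because $\Phi$ preserves $\chi$, we conclude $\chi(c, (\beta, c')) = \chi(c, c'') = 0$ for all $c, c'' \in N_0(\X)$ and all multi-regular $(\beta, c')$. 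Expanding bilinearly and using Serre duality on the CY3 orbifold $\X$ (which yields $\chi(c, \hO_\X) = -\chi(\hO_\X, c)$), and setting $\deg_0(c) \coloneq \chi(\hO_\X, c)$, we compute
\begin{equation*}
\chi\bigl((0,0,c),\,(-1,\beta,c')\bigr) \;=\; -\chi(c,\hO_\X) + \chi(c,(\beta,c')) \;=\; \deg_0(c),
\end{equation*}
a quantity depending only on $c$. Writing $w = \sum_c w(c)\, q^c$, it follows that
\begin{equation*}
\{w,\, t^{(-1,\beta,c')}\} \;=\; \tilde w \cdot t^{(-1,\beta,c')}, \qquad \tilde w \coloneq \sum_c w(c)\, (-1)^{\deg_0(c)} \deg_0(c)\, q^c,
\end{equation*}
using the naive product of Remark~\ref{rmk:Integration_Poisson_vs_Lie}. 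Since the shifted class $(-1, \beta, c' + c)$ still has multi-regular curve part, the same identity recurs at each iteration, and therefore $\exp\{w,-\}$ acts on the subspace of classes $(-1, \beta, \cdot)$ by multiplication with $e^{\tilde w}$.

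Extracting the coefficient of $z^\beta\, t^{-[\hO_\X]}$ in the wall-crossing identity for each $\beta \in N_{1,\mr}(\X)$ yields the identity $DT(\X)_\beta = e^{\tilde w}\cdot PT(\X)_\beta$ in $\Z[N_0(\X)]_{\deg}$. Specializing to $\beta = 0$ (which is multi-regular) and using $PT(\X)_0 = 1$ pins down $e^{\tilde w} = DT(\X)_0$, whence $PT(\X)_\beta = DT(\X)_\beta / DT(\X)_0$ for all multi-regular $\beta$. The main obstacle is the Euler-pairing vanishing on multi-regular classes---without it, the Joyce exponential $\exp\{w,-\}$ would not reduce to a single scalar operator and the DT/PT correspondence would acquire complicated extra twists; in the multi-regular case this vanishing makes the whole computation collapse, and the rest of the argument is purely formal bookkeeping in the completed Poisson torus.
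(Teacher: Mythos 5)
Your proposal is correct and follows essentially the same route as the paper: the same DT/PT wall-crossing with $\W = \Coh_0(\X)$, the same key vanishing $\chi(c,(\beta,c'))=0$ for multi-regular classes via the McKay identification with $N_{\le 1}(Y)$, the same observation that the bracket's structure constant then depends only on $c$ so that $\exp\{w,-\}$ acts on rank $-1$ classes with curve part $\beta$ by a $\beta$-independent scalar series, and the same specialization to $\beta=0$ (using $PT(\X)_0=1$) to identify that scalar with $DT(\X)_0$. Your explicit GRR justification of the Euler-pairing vanishing on $N_{\le 1}(Y)$ and the Serre-duality computation of $e^{\tilde w}$ just make explicit what the paper asserts or writes as $t^{[\hO_\X]}\exp(\{w,-\})t^{-[\hO_\X]}$.
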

\begin{proof}
  We apply the numerical wall-crossing formula of Theorem \ref{thm:WallCrossingFormula} to the open numerical torsion triple $\T_+ = \T_{DT} = 0$, $\W = \Coh_0(\X)$, and $\F_- = \F_{PT} = \Coh_1(\X)$.
  The triple $(\T_{+}, \W, \F_{-})$ is wall-crossing material by Lemma \ref{lem:Hygiene_for_DTPT_correspondence}.

  We compute both sides of the wall-crossing formula.
  The left hand side of \eqref{eq:NumericalWallCrossingFormula} yields $I\left((\LL-1)[\uPair(\T_{DT},\F_{DT}]_{\beta}\right) = DT(\X)_\beta z^\beta t^{-[\hO_{\X}]}$.
  Define the element $w = I((\LL-1)\log [\uW])$, which lies in $H_{\gr,\reg}(\cC)$.
  The right hand side of \eqref{eq:NumericalWallCrossingFormula} yields
  \begin{equation}
    \exp(\{w,-\})I\left((\LL-1)[\uPair(\T_{PT},\F_{PT}]_{\beta}\right) = \exp(\{w,-\}) PT(\X)_\beta z^\beta t^{-[\hO_{\X}]}.
  \end{equation}

  Let now $c \in N_{0}(\X)$.
  Applying the McKay homomorphism, we get $\Psi(c) \in N_{\le 1}(Y)$.
  Since $\beta$ is multi-regular, we also have $\Psi(\beta,c') \in N_{\le 1}(Y)$ for every $c' \in N_{0}(\X)$.
  The Euler pairing on $Y$ is trivial on the subspace $N_{\le 1}(Y)$, and so
  \begin{equation*}
    \chi(c,(\beta,c')) = \chi(\Psi(c),\Psi(\beta,c')) = 0.
  \end{equation*}
  We can write $w = \sum_{c \in N_{0}(\X)} w_{c}q^{c}$, and it follows that $\{w, z^{\beta}q^{c'}\} = 0$.
  But then
  \[
    \exp(\{w,-\})PT(\X)_{\beta}z^{\beta}t^{-[\hO_{\X}]} = PT(\X)_{\beta}z^{\beta}\exp(\{w,-\})t^{-[\hO_{\X}]}.
  \]
  Combining the left and right hand sides of equation~\eqref{eq:NumericalWallCrossingFormula} now yields
  \[
    \frac{DT(\X)_{\beta}}{PT(\X)_{\beta}} = t^{[\hO_{\X}]}\exp(\{w,-\})t^{-[\hO_{\X}]}
  \]
  for all $\beta \in N_{1,\mr}(\X)$.
  Choosing $\beta = 0$, we recall that $PT(\X)_0 = 1$, and hence
  \[
    \frac{DT(\X)_{\beta}}{PT(\X)_{\beta}} = \frac{DT(\X)_{0}}{PT(\X)_{0}} = DT(\X)_{0},
  \]
  because $E = \hO_{\X}[1]$ is the only stable pair with $\beta_{E} = 0$.
\end{proof}



\section{Rationality of stable pair invariants}
\label{sec:rationalityOfStablePairs}
In this section we prove the rationality of the series $PT(\X)_\beta$ in Theorem~\ref{thm:PTIsRational}, and a certain symmetry of $PT(\X)$ in Proposition~\ref{thm:dualityResultForPT}.

Let $\delta \in \R$.
Define a torsion pair $(\cat T_{\nu,\delta}, \cat F_{\nu,\delta})$ on $\Coh_{\le 1}(\X)$ by
\begin{equation*}
  \begin{split}
    \T_{\nu,\delta} &\coloneq \left\{ T \in \Coh_{\le 1}(\X) \mid \nu_{-}(T) \ge \delta \right\} = \{ T \in \Coh_{\le 1}(\X) \mid T \onto Q \not= 0 \Rightarrow \nu(Q) \ge \delta\} \\
    \F_{\nu,\delta} &\coloneq \left\{ F \in \Coh_{\le 1}(\X) \mid \nu_{+}(F) < \delta \right\} = \{ F \in \Coh_{\le 1}(\X) \mid 0 \not= S \into F \Rightarrow \nu(S) < \delta \}
  \end{split} 
\end{equation*}
We write $\cat{P}_{\nu,\delta}$ for the category of $(\T_{\nu,\delta},\F_{\nu,\delta})$-pairs in the sense of Definition~\ref{def:TFpair}.

Recall that we have fixed a self-dual generating bundle $V$ on $\X$ and an ample line bundle $A$ on $X$, that $p_F(k) = l(\beta_F)k + \deg(F)$ denotes the modified Hilbert polynomial of $F \in \Coh_{\leq 1}(\X)$, and that $F(k) = F \otimes A^{\otimes k}$; see Section~\ref{sec2_Modified_Hilbert}.

We write $\cC \coloneq \Coh^{\flat}(\X)$ and work in the associated Hall algebra $H_{\gr}(\cC)$ throughout.

\subsection{Openness and finiteness results of Nironi-semistable sheaves}
Let $I \subset \R \cup \{+\infty\}$ be an interval.
Recall that $\M_{\nu}(I)$ denotes the full subcategory of sheaves $F \in \Coh_{\leq 1}(\X)$ such that the slopes of all semistable factors in the Harder--Narasimhan filtration of $F$ lie in $I$.
The corresponding moduli stack is denoted by $\uM_{\nu}(I)$.
\begin{prop}\label{thm:SigmaDeltaHygiene}  
  Let $\delta \in \R$.
  \begin{enumerate}
  \item \label{thm:sigmaTorsionPairIsOpen}
    The torsion pair $(\T_{\nu,\delta},\F_{\nu,\delta})$ is open.
  \item \label{enum:DecompositionallyFinite}
    For any bounded interval $I \subset \R$, the stack $\uM_{\nu}(I)$ is open in $\cCoh(\X)$, and the category $\M_{\nu}(I)$ is log-able.
  \item \label{enum:NuStableFinitelyManyC}
    For $\beta \in N_{1}(\X)$, let
    \begin{equation*}
      {\cal L}_\beta = \left\{c \in N_0(\X) \bigm| \uM^{\ss}_{\nu}(\beta,c) \neq \emptyset \right\} \subset N_{0}(\X).
    \end{equation*}
    The image of ${\cal L}_\beta$ in $N_0(\X)/\Z(A \cdot \beta)$ is finite.
  \end{enumerate}
\end{prop}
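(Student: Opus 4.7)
The three statements reduce to Nironi's openness and boundedness results (Theorem \ref{thm:NironisBoundednessTheorem}) combined with elementary bookkeeping. For part (1), I would observe that $\F_{\nu,\delta}$ coincides with the intersection of $\M_\nu((-\infty,\delta))$ with the open subcategory $\Coh_1(\X)$ of pure one-dimensional sheaves, hence is open by Theorem \ref{thm:NironisBoundednessTheorem}. Likewise $\T_{\nu,\delta}$ equals $\M_\nu([\delta,\infty])$ with the convention $\nu = \infty$ on $\Coh_0(\X)$, whose openness follows by a minor extension of Nironi's argument in \cite[Prop.~4.15]{nironi_moduli_2008} to intervals touching $\infty$.

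For part (2), openness and finite type of $\uM_\nu(I)$ for bounded $I \subset \R$ are directly stated in Theorem \ref{thm:NironisBoundednessTheorem}. For log-ability of $\M_\nu(I)$, closure under direct sums and summands is immediate because the set of HN slopes of $F \oplus G$ is the union of those of $F$ and those of $G$. For the decomposition-finiteness condition, I would use that boundedness of $I$ precludes the slope $\infty$, so every non-zero $F \in \M_\nu(I)$ satisfies $l(F) \ge 1$. Hence in any decomposition $\alpha = \alpha_1 + \cdots + \alpha_n$ with each $\alpha_i = (\beta_i,c_i)$ realised by a non-zero element of $\M_\nu(I)$, the integer $n$ is bounded by $l(\alpha)$. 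For each such $n$, Corollary \ref{lem:Effective_Cone_Convex} produces finitely many tuples $(\beta_1,\ldots,\beta_n)$ of effective curve classes summing to $\beta_\alpha$, and for each such tuple the finite type property of $\uM_\nu(I,\beta_i)$ implies that only finitely many classes $(\beta_i,c_i)$ arise.

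For part (3), where the main subtlety lies, the key observation is that $F \mapsto F \otimes A$ is an autoequivalence of $\Coh_{\le 1}(\X)$ preserving $\nu$-semistability and shifting numerical classes by $(\beta,c) \mapsto (\beta, c + \beta \cdot A)$. Indeed, $p_{F \otimes A}(k) = p_F(k+1)$ yields $l(F \otimes A) = l(F)$ and $\deg(F \otimes A) = \deg(F) + l(F)$, so $\nu(F \otimes A) = \nu(F) + 1$; since $-\otimes A$ shifts the slope of every subsheaf by the same amount, semistability is preserved. Consequently ${\cal L}_\beta$ is stable under translation by $\beta \cdot A$. Given any $c \in {\cal L}_\beta$, I would choose $k \in \Z$ so that $\deg(\beta, c + k \beta \cdot A) \in [0, l(\beta))$, equivalently $\nu(\beta, c + k \beta \cdot A) \in [0, 1)$. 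The stack $\uM_\nu([0,1),\beta)$ is then of finite type by Theorem \ref{thm:NironisBoundednessTheorem} and hence realises only finitely many classes; therefore every coset of $\Z(\beta \cdot A)$ meeting ${\cal L}_\beta$ has a representative in this finite set, proving the claim.
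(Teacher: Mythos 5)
Your proof is correct and follows essentially the same route as the paper: openness via Nironi's Theorem~\ref{thm:NironisBoundednessTheorem}, log-ability from finiteness of effective decompositions (Corollary~\ref{lem:Effective_Cone_Convex}) combined with the finite-type statements, and part (3) by twisting with $A$ to normalise the slope into $[0,1)$ and invoking finite type of the resulting moduli stack. If anything you are more thorough than the paper's own proof, which only spells out two-term decompositions at a single slope and does not comment on the slope-$\infty$ issue for $\T_{\nu,\delta}$; your bound $n \le l(\alpha)$ and the use of $\uM_{\nu}(I,\beta_i)$ for the bounded interval handle the general decomposition count cleanly.
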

\begin{proof}
  The openness statements and the fact that $\M_{\nu}(I)$ defines an element of $H_{\gr}(\cC)$ follow from Theorem~\ref{thm:NironisBoundednessTheorem}.
  Let $(\beta,c) \in N_{\leq 1}(\X)$ and suppose we can decompose $(\beta, c) = (\beta',c') + (\beta'',c'')$ such that
  \begin{equation*}
    \M^{\ss}_{\nu}(\beta',c') \not= \varnothing \not= \M^{\ss}_{\nu}(\beta'',c'')
  \end{equation*}
  and $\nu(\beta',c') = \delta = \nu(\beta'',c'')$.
  By Lemma \ref{lem:Effective_Cone_Convex}, there are only finitely many effective classes $\beta', \beta'' \geq 0 $ such that $\beta = \beta'+\beta''$.
  And given $\beta'$, Theorem \ref{thm:NironisBoundednessTheorem} shows that there are only finitely many choices for $c'$ such that $\uM^{\ss}_{\nu}(\beta',c') \not= \varnothing$ and $\nu(\beta',c') = \delta$.
  Furthermore, $\M_{\nu}^{\ss}(\delta)$ is closed under direct sums and summands, hence is log-able.

  For the third claim, note that a sheaf $F \in \Coh_{\leq 1}(\X)$ such that $\beta_{F} = \beta$ satisfies $\nu(F) = \deg(F)/l(\beta) \in \frac{1}{l(\beta)}\Z$.
  Replacing $F$ by $F(-\lfloor \nu(F) \rfloor)$ if necessary, which does not change the image of $c_{F}$ in $N_{0}(\X)/\Z(A \cdot \beta)$, we may assume that $\nu(F) \in [0, 1)$.
  Thus the image of $c_{F}$ lies in the set.
  \begin{equation*}
    \bigcup_{a = 0}^{l(\beta)-1} \left\{c + \Z(A \cdot \beta) \bigm| \M^{\ss}_{\nu}(\beta,c) \neq \emptyset \text{ and } \nu(\beta,c) = \frac{a}{l(\beta)}\right\} \subset N_0(\X) / \Z(A \cdot \beta).
  \end{equation*}
  But each of the sets in this union is finite since the stack $\uM^{\ss}_{\nu}(a/l(\beta),\beta)$ is of finite type for every $a \in \Z$.
  This completes the proof.
\end{proof}

\subsection{Openness and finiteness for \texorpdfstring{$(\T_{\nu,\delta},\F_{\nu,\delta})$}{Nu-Delta}-pairs}
In this section we show a similar openness and boundedness result for the moduli stacks of $(\T_{\nu,\delta},\F_{\nu,\delta})$-pairs; this is Proposition~\ref{thm:BoundednessOfRank1SigmaSemistables}.
We first collect a number of lemmas.

Recall that given a polynomial $p \in \Z[k]$, there is a projective moduli scheme $\Quot_\X(\hO_{\X},p)$ parametrising quotients $\hO_{\X} \onto F$ with $p_{F} = p$.
If $(\beta,c) \in N_{\le 1}(\X)$,
\begin{equation*}
  \Quot_{\X}(\hO_{\X})_{(\beta,c)} \subset \Quot_{\X}(\hO_{\X}, p_{(\beta,c)})
\end{equation*}
denotes the component parametrising quotients of numerical class $[F] = (\beta,c)$.
\begin{lem}
  \label{thm:LinearTermInHilbertPolynomialOfQuotSchemeIsBoundedBelow}
  Let $\beta \in N_{1}(\X)$.
  The set
  \begin{equation*}
    \bigcup_{\beta' \leq \beta}\{c \in N_0(\X) \mid \Quot_\X(\hO_{\X})_{(\beta',c)} \neq \varnothing\}
  \end{equation*}
  is $\deg$-bounded.
\end{lem}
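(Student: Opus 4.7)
The plan is to reduce to a single effective class $\beta' \le \beta$ and bound the modified degree $\deg(F)$ from below for cyclic quotients $F = \hO_\X/I$ with $\beta_F = \beta'$, via the Harder--Narasimhan filtration with respect to Nironi slope.

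First, by Corollary \ref{lem:Effective_Cone_Convex} there are only finitely many effective $\beta' \le \beta$, so it suffices to fix $\beta'$ and show $\{c \in N_0(\X) \mid \Quot_\X(\hO_\X)_{(\beta',c)} \ne \varnothing\}$ is $\deg$-bounded. Via the splitting (\ref{thesacredsplitting1}), $\deg(c) = \deg(F) - \deg(\beta',0)$, so this reduces to establishing a lower bound $\deg(F) = p_F(0) \ge C$ for some $C = C(\beta')$. Indeed, once such a $C$ is known, for any $M$ the integer $\deg(c)$ takes only finitely many values in $[C-\deg(\beta',0),M]$, and for each such value the polynomial $p_F(k) = l(\beta')k + \deg(F)$ is completely determined; hence by Theorem \ref{thm:Quot_Scheme_Orbifold_Is_Projective} the projective scheme $\Quot_\X(\hO_\X, p_F)$ admits only finitely many distinct numerical classes among its quotients.

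The case $\beta' = 0$ is immediate since then $F$ is $0$-dimensional and $\deg(F) = \chi(V^\vee \otimes F) \ge 0$. For $\beta' \ne 0$, I would pass to the pure $1$-dimensional quotient $F/F_0$, where $F_0 \subset F$ is the maximal $0$-dimensional subsheaf: this remains a cyclic quotient of $\hO_\X$ by composition of surjections. Applying Lemma \ref{thm:BoundOnNuMax} to $F/F_0$ yields $\deg(F) \ge \deg(F/F_0) \ge l(\beta')\nu_-(F/F_0) = l(\beta')\nu_-(F)$, so a lower bound on $\nu_-(F)$ suffices. Let $G$ denote the last Nironi-HN factor of $F/F_0$: a pure $1$-dimensional semistable sheaf with $\nu(G) = \nu_-(F)$, and itself a cyclic quotient $\hO_\X \onto G$. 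Pick a nonzero section $\hO_\X \to G$; its image $\hO_{Z'} \hookrightarrow G$ is the structure sheaf of a pure $1$-dimensional subscheme $Z' \subset \X$ (purity inherited from $G$). By semistability, $\nu(\hO_{Z'}) \le \nu(G)$, and $\beta_{Z'}$ ranges over the finitely many effective classes $\le \beta'$.

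It remains to bound $\nu(\hO_{Z'})$ from below over pure $1$-dimensional subschemes $Z' \subset \X$ of fixed class $\beta_{Z'}$. This is the orbifold analogue of the classical fact that pure $1$-dimensional subschemes of a projective scheme of bounded class have bounded Hilbert polynomial; I would deduce it by pushing forward to the coarse moduli space $X$ via the exact functor $g_*$ (exactness holding since $\X$ is tame in characteristic zero), translating the Nironi degree via $\chi(\X, V^\vee \otimes \hO_{Z'}) = \chi(X, g_*(V^\vee \otimes \hO_{Z'}))$, and invoking Castelnuovo--Mumford regularity on the projective Gorenstein $3$-fold $X$. The main obstacle is making this final boundedness rigorous on $\X$; an alternative route would iterate Nironi's Theorem \ref{thm:NironisBoundednessTheorem} for semistable sheaves of fixed class together with an inductive analysis of the HN filtration, sidestepping the need for an explicit translation between $\X$ and $X$.
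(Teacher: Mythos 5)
Your reductions are sound as far as they go: restricting to the finitely many $\beta' \le \beta$, observing that it suffices to bound $\deg$ from below (finiteness of the set of classes with any fixed modified Hilbert polynomial then follows from projectivity of $\Quot_{\X}(\hO_{\X},p)$, exactly as in the first half of the paper's proof), splitting off the zero-dimensional part, and noting that the minimal Nironi--HN factor of a cyclic quotient is again a cyclic quotient, hence the structure sheaf $\hO_{Z'}$ of a pure one-dimensional substack of class $\le \beta'$. But at that point you have not made the problem easier: bounding $\nu(\hO_{Z'})$, equivalently $\deg(\hO_{Z'})$, from below over pure one-dimensional substacks of bounded class is precisely the special case of the lemma that carries all of the difficulty, and neither of your two suggestions closes it. The appeal to pushforward plus ``Castelnuovo--Mumford regularity on $X$'' is not a proof: what you would need on the coarse space is that pure one-dimensional quotients of the fixed sheaf $g_{*}(V^{\vee})$ with bounded support class have $\chi$ bounded below (equivalently, uniformly bounded regularity), and this is exactly the non-orbifold version of the statement being proved; regularity bounds of Gruson--Lazarsfeld--Peskine type are for integral curves in projective space and do not directly apply to arbitrary Cohen--Macaulay curves, let alone to quotients of $g_{*}V^{\vee}$ on the singular threefold $X$, without an argument of the same order of difficulty. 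The alternative of ``iterating Theorem \ref{thm:NironisBoundednessTheorem} with an inductive analysis of the HN filtration'' is circular: Nironi's theorem gives finite-typeness only once the slopes are confined to an interval bounded below, and producing that lower bound for sheaves admitting a section is the point at issue; moreover the intermediate HN factors of a quotient of $\hO_{\X}$ are not themselves quotients of $\hO_{\X}$, so there is nothing for the induction to run on. You flag this yourself as ``the main obstacle'', and it is a genuine gap.

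The paper closes this gap without any regularity input, by a dimension count. After the same first step (finiteness of the classes with $\deg$ in a fixed window, by projectivity of the Quot scheme), it considers the projective scheme $H_{e}$ given by the union of the Quot components with $\deg$ in $[e,e+d]$, where $d = \deg([\hO_{\X,x}])$ for a non-stacky point $x$, and uses the ``floating point'' construction of \cite[Lem.~3.10]{toda_limit_2009}: adding a free point at a varying non-stacky location produces from quotients of degree in $[e-d,e]$ a family of quotients of degree in $[e,e+d]$, giving $\dim H_{e-d} + 3 \le \dim H_{e}$. Since each $H_{e}$ has finite dimension, iterating forces $H_{e} = \varnothing$ for $e \ll 0$, which is the required lower bound on $\deg$. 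If you want to salvage your outline, you must supply a genuine proof of the boundedness of pure one-dimensional structure-sheaf quotients of bounded class, and this dimension estimate is the mechanism the paper uses for exactly that purpose.
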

\begin{proof}
  By Lemma \ref{lem:Effective_Cone_Convex}, it is enough to prove that the set
  \[
    Q_{\beta} := \{c \in N_0(\X) \mid \Quot_\X(\hO_{\X})_{(\beta,c)} \neq \varnothing\}.
  \]
  is $\deg$-bounded for every $\beta \in N_1(\X)$.
  Given $r \in \R$, the projectivity of the Quot scheme implies that the subscheme
  \begin{equation*}
    \bigcup_{c \in \deg^{-1}(r)} \Quot_\X(\hO_{\X})_{(\beta,c)} \subseteq \Quot_\X(\hO_{\X}, l(\beta)k + \deg(\beta,0) + r)
  \end{equation*}
  is projective, and so $Q_\beta \cap \deg^{-1}(r)$ is a finite set.

  Let $d = \deg([\hO_{\X,x}])$, where $x \in \X$ is a non-stacky point.
  Since $\deg(Q_\beta) \subseteq \Z$, and so is discrete, we have for any $e \in \Z$ that the scheme
  \begin{equation*}
    H_e = \bigcup_{c \in \deg^{-1}([e,e+d])} \Quot_\X(\hO_{\X})_{(\beta, c)}
  \end{equation*}
  is projective, which means that $Q_\beta \cap \deg^{-1}([e,e+d])$ is finite.
  By adding on floating points, see \eg~\cite[Lem.~3.10]{toda_limit_2009}, we deduce that $\dim H_{e-d} + 3 \le \dim H_e$ for any $e \in \Z$.
  We conclude that $H_e = \varnothing$ for $e \ll 0$, and thus that $Q_{\beta}$ is $\deg$-bounded.
\end{proof}

Recall the shifted derived dualising functor $\D(-) = \lRHom(-,\hO_\X)[2]$.
\begin{lem}\label{lem:Modified_Hilbert_Polynomial_Dualising}
  If $F \in \Coh_{\leq 1}(\X)$, then $p_{\D(F)}(k) = -p_{F}(-k)$ and $\nu(\D(F)) = -\nu(F)$.
\end{lem}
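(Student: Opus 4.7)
The plan is to unwind the definition of $p_{\D(F)}$, move everything inside $\lRHom$, and then apply Serre duality on the CY3 orbifold $\X$. The slope statement will then be immediate by taking the ratio of leading coefficients.

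First I would write
\[
p_{\D(F)}(k) = \chi\bigl(\X, V^{\vee} \otimes \D(F) \otimes A^{\otimes k}\bigr)
           = \chi\bigl(\X, V^{\vee} \otimes A^{\otimes k} \otimes \lRHom(F,\hO_{\X})[2]\bigr).
\]
Since $V^{\vee}$ and $A^{\otimes k}$ are locally free, the projection formula rewrites the integrand as $\lRHom\bigl(F, V^{\vee}(k)\bigr)[2]$. The shift by $2$ multiplies the Euler characteristic by $(-1)^{2} = 1$, so
\[
p_{\D(F)}(k) = \chi(F, V^{\vee}(k)),
\]
the Euler pairing on $D(\X)$.

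Next I would apply Serre duality on the CY3 orbifold. Since $\omega_{\X} \cong \hO_{\X}$ and $\dim \X = 3$, we have $\Ext^{i}(F, G) \cong \Ext^{3-i}(G, F)^{*}$ for any $F,G \in D(\X)$, which gives $\chi(F,G) = -\chi(G,F)$. Combined with $V \cong V^{\vee}$, this yields
\[
\chi(F, V^{\vee}(k)) = -\chi(V^{\vee}(k), F) = -\chi(\X, V \otimes A^{-k} \otimes F) = -\chi(\X, V^{\vee} \otimes F(-k)) = -p_{F}(-k).
\]
This establishes $p_{\D(F)}(k) = -p_{F}(-k)$. Writing $p_{F}(k) = l(F)k + \deg(F)$, we read off $l(\D(F)) = l(F)$ and $\deg(\D(F)) = -\deg(F)$. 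When $F \notin \Coh_{0}(\X)$, dividing gives $\nu(\D(F)) = -\nu(F)$; when $F \in \Coh_{0}(\X)$, both sides are $\infty$ under the convention that $\nu = \infty$ on zero-dimensional sheaves (and $\D$ preserves this subcategory up to shift, which is irrelevant for $\nu$).

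The only mildly subtle point is ensuring the Serre duality step is valid on $\X$ rather than a scheme; this follows from $\X$ being a smooth proper Deligne--Mumford stack with trivial canonical sheaf, where the usual statement of Serre duality on smooth proper DM stacks applies. Everything else is bookkeeping with signs and shifts.
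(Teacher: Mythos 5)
Your proof is correct and is exactly the argument the paper has in mind: the paper's proof is only the remark that the claim is ``straightforward, but requires the assumption that the generating vector bundle $V$ ... be self-dual'', and your computation via $\chi(F,V^{\vee}(k)) = -\chi(V^{\vee}(k),F)$ (Serre duality with $\omega_{\X} \cong \hO_{\X}$) uses $V \cong V^{\vee}$ at precisely that point. The bookkeeping with the shift $[2]$, the projection formula, and the slope/edge-case discussion are all fine.
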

\begin{proof}
  This is straightforward, but requires the assumption that the generating vector bundle $V$ in the definition of the modified Hilbert polynomial be self-dual.
\end{proof}
The following is a duality result for the moduli stack $\jP_{\nu,\delta}$ of $(\T_{\nu,\delta},\F_{\nu,\delta})$-pairs.
\begin{lem}
  \label{thm:DualisingPreservesSigmaStables}
  Let $\delta \in \R \setminus \Q$.
  Then $\D(\cat P_{\nu,\delta}) = \cat P_{\nu, -\delta}$.
\end{lem}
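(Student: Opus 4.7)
The plan is to apply Lemma~\ref{lem:TFPairs_Duals}, which characterises pair-ness through cohomological conditions on $E$ and $\D E$, and then to match these conditions under $\D$ using the known behaviour of $\D$ on pure $1$-dimensional sheaves. Since $\delta \in \R \setminus \Q$ and Nironi slopes are rational, one has $\Coh_0(\X) \subset \T_{\nu, \delta}$ and $\F_{\nu, \delta} \subset \Coh_1(\X)$, so Lemma~\ref{lem:TFPairs_Duals} with $(\T, \F) = (\tT, \fF) = (\T_{\nu, \delta}, \F_{\nu, \delta})$ yields: $E \in \cat P_{\nu, \delta}$ if and only if $H^0(E) \in \T_{\nu, \delta}$, $H^1(\D E) = 0$, and $H^0(\D E) \in \langle \Coh_0(\X), \D(\F_{\nu, \delta}) \rangle_{\eex}$; and symmetrically with $\delta \mapsto -\delta$ for $\cat P_{\nu, -\delta}$.

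The central computation is to identify $\D(\F_{\nu, \pm\delta})$ inside $\Coh_1(\X)$. By Grothendieck--Serre duality on the CY3 orbifold, $\D$ restricts to a contravariant auto-equivalence of $\Coh_1(\X)$ preserving $\nu$-semistability and dualising Harder--Narasimhan filtrations. Combined with Lemma~\ref{lem:Modified_Hilbert_Polynomial_Dualising}, this gives $\nu_{\pm}(\D F) = -\nu_{\mp}(F)$ for $F \in \Coh_1(\X)$, and hence $\D(\F_{\nu, \delta}) = \T_{\nu, -\delta} \cap \Coh_1(\X)$. Since $\T_{\nu, -\delta}$ is extension-closed and generated by its $0$-dimensional and pure $1$-dimensional parts,
\begin{equation*}
  \langle \Coh_0(\X), \D(\F_{\nu, \delta}) \rangle_{\eex} = \T_{\nu, -\delta},
\end{equation*}
and similarly $\langle \Coh_0(\X), \D(\F_{\nu, -\delta}) \rangle_{\eex} = \T_{\nu, \delta}$. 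The three conditions characterising $\cat P_{\nu, \delta}$ therefore match, under $\D^2 = \id$, the three conditions characterising $\cat P_{\nu, -\delta}$ applied to $\D E$.

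The remaining obstacle, and the main technical point, is to verify that $\D E \in \A_{\rk = -1}$ to begin with, so that Lemma~\ref{lem:TFPairs_Duals} may be applied to $\D E$. The vanishing $H^1(\D E) = 0$ places $\D E$ in $D^{[-1, 0]}$, while duality preserves rank and trivial determinant; by Proposition~\ref{prop:ObjectsSmallRank} it then remains to show that $H^{-1}(\D E)$ is torsion-free. I would argue by contradiction: a nonzero torsion subsheaf $T \subset H^{-1}(\D E)$ yields $0 \neq \Hom(T[1], \D E) = \Hom(E, \D T[-1])$. Reducing to $T$ either $\le 1$-dimensional or pure $2$-dimensional, in the first case $\D T[-1]$ lies in cohomological degrees $\ge 1$ so the Hom vanishes by the t-structure; in the second case the truncation triangle of $\D T[-1]$ reduces the problem to $\Hom(H^0(E), H^0(\D T[-1])) = 0$, since $H^0(\D T[-1])$ is pure $2$-dimensional while $H^0(E) \in \Coh_{\le 1}(\X)$.

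Combining these steps gives $\D(\cat P_{\nu, \delta}) \subseteq \cat P_{\nu, -\delta}$. The reverse inclusion follows by running the same argument with $\delta$ replaced by $-\delta$ and using $\D^2 = \id$, establishing the claimed equality.
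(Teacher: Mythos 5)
Your proof is correct, and its core coincides with the paper's: everything turns on the identity $\D(\F_{\nu,\delta}) = \T_{\nu,-\delta} \cap \Coh_{1}(\X)$, and the irrationality of $\delta$ enters precisely in converting the strict inequality $\nu_{-}(\D F) > -\delta$ into $\nu_{-}(\D F) \ge -\delta$ (not in the facts $\Coh_{0}(\X) \subset \T_{\nu,\delta}$ and $\F_{\nu,\delta} \subset \Coh_{1}(\X)$, which hold for every $\delta$ — you should attach the hypothesis to the right step). Where you diverge is in the mechanics. The paper does not pass through Lemma~\ref{lem:TFPairs_Duals} or Proposition~\ref{prop:ObjectsSmallRank}: it notes $\D(\A) \subseteq \langle \hO_{\X}[1], \Coh_{1}(\X), \Coh_{0}(\X)[-1] \rangle_{\eex}$, uses $E \in \Coh_{0}(\X)^{\perp}$ to kill the $\Coh_{0}(\X)[-1]$ piece, so that $\D(E) \in \langle \hO_{\X}[1], \Coh_{1}(\X)\rangle_{\eex} \subset \A$, and then obtains the two orthogonality conditions for $\D(E)$ directly by dualising those for $E$. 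Your route instead verifies $\D(E) \in \A_{\rk = -1}$ by hand and then matches the cohomological criteria of Lemma~\ref{lem:TFPairs_Duals} for $E$ and for $\D(E)$; this works, is somewhat longer, and is more explicit. Your two-case contradiction for torsion-freeness of $H^{-1}(\D E)$ is fine; it can also be shortcut: dualising the triangle $H^{-1}(E)[1] \to E \to H^{0}(E)$, with $H^{-1}(E) = I_{C}$ for a curve $C$, exhibits $H^{-1}(\D E)$ as a subsheaf of $\lHom(I_{C},\hO_{\X}) = \hO_{\X}$.

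One small patch is needed: Proposition~\ref{prop:ObjectsSmallRank} applies to objects of $\Coh^{\flat}(\X)$, so besides $H^{1}(\D E) = 0$, rank, determinant, and torsion-freeness of $H^{-1}(\D E)$, you must also check $H^{0}(\D E) \in \Coh_{\le 1}(\X)$; this is not part of Lemma~\ref{lem:Dual_Object_A} as stated, but it follows immediately from $\D(\A) \subseteq \langle \hO_{\X}[1], \Coh_{1}(\X), \Coh_{0}(\X)[-1] \rangle_{\eex}$ (or a short long-exact-sequence induction). With that noted, the argument, including the reverse inclusion via $\D^{2} \cong \mathrm{id}$ and $-\delta \in \R \setminus \Q$, is complete.
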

\begin{proof}
  As $\delta \not\in \Q$ and $\nu(F) \in \Q$ for any $F \in \Coh_{\le 1}(\X)$, the condition $\nu(F) \geq \delta$ holds if and only if $\nu(F) > \delta$.
  It follows that $\D(\F_{\nu,\delta}) = \T_{\nu,-\delta} \cap \Coh_{1}(\X)$.
  
  Let $E \in \cat P_{\nu,\delta}$ be a pair.
  Since $\cat P_{\nu,\delta} \subset \cat A = \langle \hO_{\X}[1], \Coh_0(\X), \Coh_1(\X) \rangle_{\text{ex}}$, we have
  \[
    \D(E) \in \D(\cat A) = \langle \hO_{\X}[1], \Coh_1(\X), \Coh_0(\X)[-1] \rangle_{\text{ex}}.
  \]
  Since $\Coh_0(\X) \subset \T_{\nu,\delta}$, we have $E \in \Coh_{0}(\X)^{\perp}$ and so $\D(E) \in ^{\perp}\!\!\Coh_{0}(\X)[-1]$
  This implies $\D(E) \in \langle \hO_\X[1], \Coh_1(\X) \rangle_{\text{ex}}$ and hence $\D(E) \in \A$.

  Since $E \in (\T_{\nu,\delta} \cap \Coh_{1}(\X))^{\perp}$ we have $\D(E) \in ^{\perp}\!\F_{\nu,-\delta}$, and $E \in {^{\perp}\F_{\nu,\delta} \cap ^{\perp}\!\!\Coh_0(\X)[-1]}$ implies $\D(E) \in (\T_{\nu,-\delta} \cap \Coh_1(\X))^{\perp} \cap \Coh_0(\X)^{\perp} = \T_{\nu,-\delta}^{\perp}$.
  Thus $\D(E) \in \cat P_{\nu,-\delta}$.
\end{proof}

\begin{lem}\label{thm:SetOfBoundedBelowGuysIsBounded}
  Let $\beta \in N_{1}(\X)$ and $\delta \in \R$.
  The set
  \begin{equation}
    \{c \in N_{0}(\X) \mid \M_\nu([\delta,\infty))_{(\beta,c)} \neq \varnothing\}.
  \end{equation}
  is $\deg$-bounded.
\end{lem}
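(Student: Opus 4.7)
The plan is to deduce this directly from Lemma \ref{thm:BoundingDegreeAndNuMinMaxGivesFiniteType}(1). To show that the set
\[
  S \coloneq \{c \in N_{0}(\X) \mid \M_\nu([\delta,\infty))_{(\beta,c)} \neq \varnothing\}
\]
is $\deg$-bounded, fix $M \in \R$; we must show that $S \cap \{c \mid \deg(c) \le M\}$ is finite.

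Any $F$ representing a class $c$ in this intersection has $\beta_F = \beta$, $\nu_{-}(F) \ge \delta$, and
\[
  \deg(F) = \deg((\beta,0)) + \deg(c) \le \deg((\beta,0)) + M.
\]
Thus every such $F$ lies in the substack of $\uM_\nu([\delta,\infty),\beta)$ consisting of sheaves of degree at most $d \coloneq \deg((\beta,0)) + M$. By Lemma \ref{thm:BoundingDegreeAndNuMinMaxGivesFiniteType}(1), this substack is of finite type.

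Finally, $\cCoh_\X$ decomposes as a disjoint union of open-and-closed substacks indexed by numerical class, so any finite-type substack meets only finitely many of these components. In particular only finitely many $c \in N_{0}(\X)$ with $\deg(c) \le M$ can satisfy $\M_\nu([\delta,\infty))_{(\beta,c)} \neq \varnothing$, which is exactly the claim. The argument is essentially immediate given Lemma \ref{thm:BoundingDegreeAndNuMinMaxGivesFiniteType}; no genuine obstacle arises, since the only potentially non-trivial input -- boundedness of the set of classes of $\nu$-slope bounded below and total degree bounded above -- has already been handled via Nironi's boundedness theorem in the proof of that lemma.
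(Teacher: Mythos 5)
Your proof is correct and is essentially the paper's argument: the paper re-derives the bound on $\nu_{+}(F)$ from Lemma~\ref{thm:BoundOnNuMax} and then applies Theorem~\ref{thm:NironisBoundednessTheorem}, which is exactly the content of Lemma~\ref{thm:BoundingDegreeAndNuMinMaxGivesFiniteType}(1) that you cite instead. Your final step (finite type forces only finitely many numerical components, hence finitely many $c$) is the same implicit conclusion the paper draws, so there is no gap.
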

\begin{proof}
  Let $r \in \R$. We have to show that the set
  \[
    \{c \in N_{0}(\X) \mid \deg(c) \le r \text{ and }\M_\nu([\delta,\infty))_{(\beta,c)} \neq \varnothing\}.
  \]
  is finite.
  So let $F$ be a pure 1-dimensional sheaf with $\beta_{F} = \beta$, $\deg(c_{F}) \le r$, and $\nu_{-}(F) \ge \delta$.
  By Lemma \ref{thm:BoundOnNuMax}, we have
  \begin{align*}
    \nu_{+}(F) &\le \deg(F) - [l(\beta)-1]\nu_{-}(F) \\
                  &= \deg(\beta,0) + \deg(c_{F}) - [l(\beta)-1]\nu_{-}(F) \\
                  &\le \deg(\beta,0) + r + [l(\beta)-1]\delta.
  \end{align*}
  By Theorem \ref{thm:NironisBoundednessTheorem}, there are then only finitely many possible values for $c_{F}$.
\end{proof}

\begin{prop}\label{thm:BoundednessOfRank1SigmaSemistables}
  Let $\delta \in \R$.
  \begin{enumerate}
  \item \label{enum:PNuFinitelyManyC}
    For any class $\beta \in N_1(\X)$, the set $\{c \in N_{0}(\X) \mid \uP_{\nu,\delta}(\beta,c) \neq \varnothing\}$ is finite.
  \item \label{enum:PNuOpenFiniteType}
    For any class $(\beta,c) \in N_{\leq 1}(\X)$, the moduli stack $\uP_{\nu,\delta}(\beta,c)$ is an open and finite type substack of $\Mum_{\X}$.
  \item \label{enum:NuFinitelyManyDecompositions}
    There are only finitely many ways of decomposing a class $(\beta,c) \in N_{\leq 1}(\X)$ as $(\beta,c) = (\beta',c') + (\beta'',c'')$ with both $\uP_{\nu,\delta}(\beta',c') \neq \varnothing$ and $\beta'' \in N_{1}^{\eff}(\X)$.
  \end{enumerate}
\end{prop}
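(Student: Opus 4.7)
The plan is to exploit the short exact sequence $I_C[1]\to E\to Q$ in $\cat A$ admitted by every $E\in\cat P_{\nu,\delta}$, where $I_C$ is the ideal sheaf of a (possibly empty) curve $C\subset\X$ by Corollary~\ref{cor:Class_TFpair_Effective_HMinusOne_IdealSheaf}, and $Q=H^0(E)\in\T_{\nu,\delta}$ by the definition of a pair. In the splitting of Remark~\ref{rmk:splittingA} this yields the decompositions $\beta_E=\beta_C+\beta_Q$ and $c_E=c_{\hO_C}+c_Q$ in $N_1(\X)\oplus N_0(\X)$. The openness statement in part~(\ref{enum:PNuOpenFiniteType}) is immediate from Proposition~\ref{thm:TFOpenImpliesPOpen} applied with $(\T,\F)=(\tT,\fF)=(\T_{\nu,\delta},\F_{\nu,\delta})$, combining the openness supplied by Proposition~\ref{thm:SigmaDeltaHygiene}(\ref{thm:sigmaTorsionPairIsOpen}) with the trivial inclusion $\Coh_0(\X)\subset\T_{\nu,\delta}$.

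For the finite-type claim in part~(\ref{enum:PNuOpenFiniteType}), fix $(\beta,c)\in N_{\le 1}(\X)$. Corollary~\ref{lem:Effective_Cone_Convex} restricts the decompositions $\beta=\beta_C+\beta_Q$ into effective classes to a finite set. For each, Lemma~\ref{thm:LinearTermInHilbertPolynomialOfQuotSchemeIsBoundedBelow} bounds $\deg(c_{\hO_C})$ below and Lemma~\ref{thm:SetOfBoundedBelowGuysIsBounded} bounds $\deg(c_Q)$ below; since $c_{\hO_C}+c_Q=c$ is fixed, both are also bounded above. The $\deg$-boundedness provided by those same lemmas then leaves only finitely many pairs $(c_{\hO_C},c_Q)$, and for each the corresponding Quot scheme (Theorem~\ref{thm:Quot_Scheme_Orbifold_Is_Projective}) and Nironi moduli stack (Theorem~\ref{thm:NironisBoundednessTheorem}) are of finite type. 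Since the extension space $\Ext^2(Q,I_C)$ is finite-dimensional and varies in a bounded family over these finite-type pieces, the stack $\uP_{\nu,\delta}(\beta,c)$ is of finite type.

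The main obstacle is part~(\ref{enum:PNuFinitelyManyC}), since the argument above only controls $\deg(c)$ from below once $\beta$ alone is fixed. I would produce the missing upper bound via the derived dualising functor $\D$. By Lemma~\ref{thm:DualisingPreservesSigmaStables}, $\D$ sends $\cat P_{\nu,\delta}$ onto $\cat P_{\nu,-\delta}$ for every $\delta\in\R\setminus\Q$; the rational case reduces to an irrational one after a small perturbation of $\delta$, since for fixed $\beta$ all slopes of subobjects of classes $\leq\beta$ lie in $\tfrac{1}{l(\beta)!}\Z$. Applying the structural analysis of the preceding paragraph to $\D E\in\cat P_{\nu,-\delta}$ yields a lower bound on the modified Hilbert degree of the $N_{\le 1}$-class of $\D E$. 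Lemma~\ref{lem:Modified_Hilbert_Polynomial_Dualising} gives $\deg\circ\D=-\deg$ on $N_{\le 1}(\X)$, so this translates into the desired upper bound on $\deg(c)$. With $\deg(c)$ now confined to a bounded interval, Lemmas~\ref{thm:LinearTermInHilbertPolynomialOfQuotSchemeIsBoundedBelow} and~\ref{thm:SetOfBoundedBelowGuysIsBounded} leave only finitely many possibilities for $c_{\hO_C}$ and $c_Q$, and hence for $c$.

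Part~(\ref{enum:NuFinitelyManyDecompositions}) is then formal: effectiveness of $\beta''$ forces $\beta'=\beta-\beta''$ into the finite set of Corollary~\ref{lem:Effective_Cone_Convex}, part~(\ref{enum:PNuFinitelyManyC}) supplies finitely many admissible $c'$ with $\uP_{\nu,\delta}(\beta',c')\neq\varnothing$ for each such $\beta'$, and $c''=c-c'$ is then uniquely determined.
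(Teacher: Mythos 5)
Your proposal is correct and follows essentially the same route as the paper: the decomposition $I_C[1]\to E\to H^0(E)$ combined with Lemmas~\ref{thm:LinearTermInHilbertPolynomialOfQuotSchemeIsBoundedBelow} and~\ref{thm:SetOfBoundedBelowGuysIsBounded} for $\deg$-boundedness from below, the reduction to irrational $\delta$ followed by the dualising trick of Lemma~\ref{thm:DualisingPreservesSigmaStables} (with Lemma~\ref{lem:Modified_Hilbert_Polynomial_Dualising}) for the upper bound, openness via Propositions~\ref{thm:TFOpenImpliesPOpen} and~\ref{thm:SigmaDeltaHygiene}, and part~(3) formally from part~(1) and Corollary~\ref{lem:Effective_Cone_Convex}. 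The only cosmetic difference is that for the finite-type claim the paper cites Proposition~\ref{thm:extensionStackIsFiniteType} for the stack of extensions, where you appeal informally to boundedness of $\Ext^2(Q,I_C)$ in families, which is the same point.
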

\begin{proof}
  We may assume that $\delta \not\in \Q$ by replacing $\delta$ with $\delta - \epsilon$ for $0 < \epsilon \ll 1$ if necessary, since for a fixed $\beta$, this does not change the notion of $(\T_{\nu,\delta}, \F_{\nu,\delta})$-pairs of class $\le \beta$.

  For the first part, let $E$ be a $(\T_{\nu,\delta},\F_{\nu,\delta})$-pair of class $(-1,\beta,c)$.
  Note that $H^{-1}(E) = I_C$ is the ideal sheaf of an at most 1-dimensional closed substack $C \subset \X$.
  Let $H^0(E) = T$, so $T \in \T_{\nu,\delta}$.
  We have $\deg(E) = \deg(\hO_C) + \deg(T)$.
  The set of possible values for $c_{\hO_{C}}$ and $c_{T}$ are both $\deg$-bounded, by Lemmas~\ref{thm:LinearTermInHilbertPolynomialOfQuotSchemeIsBoundedBelow} and \ref{thm:SetOfBoundedBelowGuysIsBounded}, hence the set of possible values for $c_{E}$ is $\deg$-bounded.

  By Lemma \ref{thm:DualisingPreservesSigmaStables}, the set of possible values for $c_{\D(E)} = -c_{E}$ is $\deg$-bounded as well.
  It follows that the set of possible values for $c_{E}$ is in fact finite.

  For part (2), openness follows from Propositions \ref{thm:TFOpenImpliesPOpen} and \ref{thm:SigmaDeltaHygiene}.
  For the finite type claim, note that the above shows that there are finitely many choices for $[\hO_{C}]$ and $[T]$.
  For each such choice, the relevant moduli stacks (\ie~the stack of ideal sheaves $I_{C}$ of a given class and $\hM_{\nu}([\delta,\infty),[T])$) are of finite type.
  The stack of extensions of objects in $\hM_{\nu}([\delta,\infty),[T])$ by an object in $\Quot(\hO_{\X},[\hO_{C}])$ is of finite type, by Proposition~\ref{thm:extensionStackIsFiniteType}.
  This proves that the stack $\uP_{\nu,\delta}(\beta,c)$ is of finite type.

  The third claim follows from the first claim and Lemma~\ref{lem:Effective_Cone_Convex}.
\end{proof}

\subsection{\texorpdfstring{$\delta$}{Delta}-walls}
Let $\beta \in N_1(\X)$.
We now study the set of $\delta \in \R$ where the notion of $(\cat T_{\nu,\delta}, \cat F_{\nu,\delta})$-pair may change for objects of class $(-1,\beta',c')$ with $\beta' \leq \beta$.

Let $W_{\beta} = \frac{1}{l(\beta)!}\Z \subset \R$ be the set of \emph{walls for $\beta$}.
\begin{lem}\label{lem:Walls_for_Delta_Pairs}
  The notion of $(\T_{\nu,\delta}, \F_{\nu,\delta})$-pair of class $\beta' \leq \beta$ is locally constant for $\delta \in \R \setminus W_\beta$.
\end{lem}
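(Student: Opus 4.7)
The plan is to reduce the notion of a $(\T_{\nu,\delta}, \F_{\nu,\delta})$-pair of class $\beta' \leq \beta$ to conditions that depend only on the torsion pair restricted to sheaves of class $\leq \beta$, and then show that this restricted torsion pair is locally constant in $\delta \in \R \setminus W_\beta$. The key discreteness input is that any pure $1$-dimensional sheaf $F$ with $\beta_F = \beta'' \leq \beta$ has slope
\[
\nu(F) = \frac{\deg(F)}{l(\beta'')} \in \tfrac{1}{l(\beta'')}\Z,
\]
and since $\beta - \beta''$ is effective, we have $l(\beta'') \leq l(\beta)$, so $l(\beta'') \mid l(\beta)!$. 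Hence $\nu(F) \in W_\beta$, and consequently $\nu_{\pm}(G) \in W_\beta \cup \{+\infty\}$ for any sheaf $G \in \Coh_{\le 1}(\X)$ with $\beta_G \le \beta$.

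First I would unpack the definition of a $(\T_{\nu,\delta},\F_{\nu,\delta})$-pair using Corollary~\ref{thm:subAndQuotientObjectsHaveLowerD}. For $E \in \A_{\rk = -1}$ with $\beta_E \le \beta$ and any $T \in \T_{\nu,\delta}$, any nonzero map $T \to E$ factors through its image, which is a subobject of $E$ in $\cat{A}$ lying in $\Coh_{\le 1}(\X)$ and inheriting membership in $\T_{\nu,\delta}$ (which is closed under quotients). Similarly, for $F \in \F_{\nu,\delta}$, the image of any map $E \to F$ is a quotient of $E$ in $\cat A$ that lies in $\F_{\nu,\delta}$ (since $\F_{\nu,\delta}$ is closed under subobjects). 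By Corollary~\ref{thm:subAndQuotientObjectsHaveLowerD}, these images have $\beta$-class $\le \beta_E \le \beta$. Thus $E$ is a $(\T_{\nu,\delta},\F_{\nu,\delta})$-pair if and only if no nonzero subobject of $E$ in $\Coh_{\le 1}(\X)$ lies in $\T_{\nu,\delta}$, and no nonzero quotient of $E$ in $\Coh_{\le 1}(\X)$ lies in $\F_{\nu,\delta}$ — and in both cases, the relevant sheaves automatically have $\beta$-class $\le \beta$.

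Next I would observe that membership in $\T_{\nu,\delta}$ and $\F_{\nu,\delta}$ for such sheaves is governed purely by $\nu_{-}$ and $\nu_{+}$: a sheaf $T$ with $\beta_T \le \beta$ satisfies $T \in \T_{\nu,\delta}$ iff $\nu_{-}(T) \ge \delta$, and a sheaf $F$ with $\beta_F \le \beta$ satisfies $F \in \F_{\nu,\delta}$ iff $\nu_{+}(F) < \delta$. Because the HN factors of such $T$ and $F$ themselves have $\beta$-class $\le \beta$, the discreteness remark above gives $\nu_{-}(T), \nu_{+}(F) \in W_\beta \cup \{+\infty\}$.

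Finally, fix $\delta_0 \in \R \setminus W_\beta$ and pick $\epsilon > 0$ with $(\delta_0 - \epsilon, \delta_0 + \epsilon) \cap W_\beta = \varnothing$. For every $\delta \in (\delta_0 - \epsilon, \delta_0 + \epsilon)$ and every sheaf $G$ with $\beta_G \le \beta$, the inequalities $\nu_{-}(G) \ge \delta$ and $\nu_{+}(G) < \delta$ take the same truth values as at $\delta_0$, since $\nu_{\pm}(G) \in W_\beta \cup \{+\infty\}$ and the open interval contains no element of $W_\beta$. Hence the restrictions of $\T_{\nu,\delta}$ and $\F_{\nu,\delta}$ to sheaves of class $\le \beta$ are independent of $\delta$ on this interval, and by the reduction of the previous paragraph the notion of $(\T_{\nu,\delta},\F_{\nu,\delta})$-pair of class $\beta' \le \beta$ is constant there. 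There is no real obstacle — the whole point is that $W_\beta$ is chosen large enough to absorb all the denominators $l(\beta'')$ appearing for $\beta'' \le \beta$.
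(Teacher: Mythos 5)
Your proposal is correct and follows essentially the same route as the paper: reduce the pair condition to subobjects and quotients in $\Coh_{\le 1}(\X)$, invoke Corollary~\ref{thm:subAndQuotientObjectsHaveLowerD} to bound their classes by $\beta$, and conclude that the relevant slopes lie in $W_\beta = \frac{1}{l(\beta)!}\Z$, so nothing changes on the components of $\R \setminus W_\beta$. Your write-up just spells out (via images of maps and HN factors) steps the paper leaves implicit.
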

\begin{proof}
  Let $E \in \A$ be of class $(-1,\beta,c)$.
  The object $E$ is a $\delta$-pair if and only if there are no surjections $E \onto F$ with $F \in \F_{\nu,\delta}$ and no injections $T \into E$ with $T \in \T_{\nu,\delta}$.
  By Lemma~\ref{thm:subAndQuotientObjectsHaveLowerD}, for any quotient $F$ and any subobject $T$ we must have $l(F), l(T) \le l(E) = l(\beta)$, and so $\nu(F), \nu(T) \in \frac{1}{l(\beta)!}\Z$.
  This proves the claim.
\end{proof}

\begin{prop}\label{cor:WallCrossingMaterial_Delta_Pairs}
  Let $\beta \in N_1(\X)$, let $\delta \in W_\beta$, and let $0 < \epsilon < \frac{1}{l(\beta)!}$.
  The triple
  \begin{equation*}
    (\T_{\nu, \delta+\epsilon}, \M_{\nu}([\delta-\epsilon,\delta + \epsilon)), \F_{\nu, \delta-\epsilon})
  \end{equation*}
  is a torsion triple that is wall-crossing material in the sense of Definition~\ref{defn:wallCrossingMaterial}.
\end{prop}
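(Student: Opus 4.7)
The plan is to verify the torsion triple structure and then check each of the two clauses of Definition~\ref{defn:wallCrossingMaterial} in turn. For the torsion triple structure, I would invoke the Harder--Narasimhan filtration with respect to $\nu$: every $F \in \Coh_{\le 1}(\X)$ admits a unique three-step filtration $0 \subset F_1 \subset F_2 \subset F_3 = F$ whose successive quotients have HN factors with slopes lying respectively in $[\delta+\epsilon,\infty]$, $[\delta-\epsilon,\delta+\epsilon)$, and $(-\infty,\delta-\epsilon)$. The three required Hom-vanishings reduce by the see-saw property to slope comparisons: a nonzero map $T \to W$ with $T \in \T_{\nu,\delta+\epsilon}$ and $W \in \W$ would have image simultaneously of minimal slope $\ge \delta+\epsilon$ (as a quotient of $T$) and of maximal slope $< \delta+\epsilon$ (as a subobject of $W$), a contradiction, and the other two Hom-vanishings are entirely analogous.

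The log-ability of $\W = \M_{\nu}([\delta-\epsilon,\delta+\epsilon))$ is then immediate from Proposition~\ref{thm:SigmaDeltaHygiene}\eqref{enum:DecompositionallyFinite}, since $[\delta-\epsilon,\delta+\epsilon)$ is a bounded interval. What remains is to show that the three pair categories $\cat{P}_{\nu,\delta+\epsilon}$, $\cat{P}_{\nu,\delta-\epsilon}$, and $\Pair(\T_{\nu,\delta+\epsilon},\F_{\nu,\delta-\epsilon})$ each define elements of $H_{\gr}$ in the sense of Definition~\ref{defdefel}. The first two are handled directly by Proposition~\ref{thm:BoundednessOfRank1SigmaSemistables}\eqref{enum:PNuOpenFiniteType}, which gives both openness in $\Mum_\X$ and finite-typeness on each numerical class. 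For the mixed category, openness follows from Proposition~\ref{thm:TFOpenImpliesPOpen}, whose hypotheses are satisfied because both $(\T_{\nu,\delta\pm\epsilon},\F_{\nu,\delta\pm\epsilon})$ are open torsion pairs by Proposition~\ref{thm:SigmaDeltaHygiene}\eqref{thm:sigmaTorsionPairIsOpen}, and $\Coh_0(\X) \subset \T_{\nu,\delta+\epsilon}$ since zero-dimensional sheaves have slope $+\infty$.

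The main step I anticipate needing care with is the finite-typeness over each class of the mixed stack $\uPair(\T_{\nu,\delta+\epsilon},\F_{\nu,\delta-\epsilon})_\alpha$. My approach is to apply Lemma~\ref{thm:equivalenceOfFiniteTypeConditions}, which translates the problem into showing that the convolution $(\uPair(\T_{\nu,\delta+\epsilon},\F_{\nu,\delta+\epsilon}) * \uW)_\alpha$ is of finite type. By Proposition~\ref{thm:extensionStackIsFiniteType}, each component of this convolution indexed by a decomposition $\alpha = \alpha_1 + \alpha_2$ with both factor-stacks nonempty is itself of finite type (using the already-established finite-typeness of $\uP_{\nu,\delta+\epsilon,\alpha_1}$ and $\uW_{\alpha_2}$), so it suffices to show that only finitely many such decompositions occur. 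This boundedness is provided by Proposition~\ref{thm:BoundednessOfRank1SigmaSemistables}\eqref{enum:NuFinitelyManyDecompositions}, which bounds the decompositions of $\alpha$ into a $\cat{P}_{\nu,\delta+\epsilon}$-class plus an effective curve class, and thus a fortiori the decompositions into a $\cat{P}_{\nu,\delta+\epsilon}$-class plus a nonempty $\W$-class. Together these three steps assemble into the desired verification.
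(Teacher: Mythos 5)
Your proposal is correct and follows essentially the same route as the paper: log-ability of $\W$ via Proposition~\ref{thm:SigmaDeltaHygiene}, the pure pair categories via part~(\ref{enum:PNuOpenFiniteType}) of Proposition~\ref{thm:BoundednessOfRank1SigmaSemistables}, and the mixed category $\Pair(\T_{\nu,\delta+\epsilon},\F_{\nu,\delta-\epsilon})$ via openness (Proposition~\ref{thm:TFOpenImpliesPOpen}) together with the finiteness of decompositions in part~(\ref{enum:NuFinitelyManyDecompositions}), which is exactly what the paper's terse citation of that part, implicitly combined with Lemma~\ref{thm:equivalenceOfFiniteTypeConditions} and Proposition~\ref{thm:extensionStackIsFiniteType}, is doing. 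You merely spell out the finite-type argument for the mixed stack in more detail than the paper does, which is a faithful expansion rather than a different method.
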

\begin{proof}
  It is a torsion triple by Lemma~\ref{lem:Walls_for_Delta_Pairs}, which is open by Proposition~\ref{thm:SigmaDeltaHygiene}.
  The category $\M_{\nu}([\delta-\epsilon, \delta + \epsilon))$ is log-able by part~\eqref{enum:DecompositionallyFinite} of Proposition~\ref{thm:SigmaDeltaHygiene}.
  
  Part (\ref*{enum:PNuOpenFiniteType}) of Proposition~\ref{thm:BoundednessOfRank1SigmaSemistables} states that the subcategories $\jP_{\nu, \delta \pm \epsilon}$ define elements of $H_{\gr}(\cC)$.
  Part (\ref*{enum:NuFinitelyManyDecompositions}) of Proposition~\ref{thm:BoundednessOfRank1SigmaSemistables} now proves that $(\T_{\nu, \delta+\epsilon}, \M^{\ss}_{\nu}(\delta), \F_{\nu, \delta-\epsilon})$ is wall-crossing material.
\end{proof}

\subsection{DT invariants}
\label{sec:StabDelta_DTinvariants}
We are now in a position to apply the integration map to define DT-type invariants counting Nironi-semistable sheaves and $(\cat T_{\nu, \delta}, \cat F_{\nu, \delta})$-pairs.

\subsubsection{Rank $0$}
Let $a \in \R$.
By Lemma~\ref{thm:SigmaDeltaHygiene}, the stack $\uM^{\ss}_{\nu}(a)$ defines an element $[\uM^{\ss}_{\nu}(a)] \in H_{\gr}(\cC)$, which is moreover log-able.
Thus we obtain an element
\begin{equation*}
  \eta_{\nu, a} \coloneq (\LL-1)\log ([\uM^{\ss}_{\nu}(a)]) \in H_{\gr,\reg}(\cC)
\end{equation*}
by Theorem~\ref{thm:NoPoles}.
Projecting this element to the semi-classical quotient $H_{\gr,\Sc}(\cC)$, we define DT-type invariants $J^{\nu}_{(\beta,c)} \in \Q$ by the formula
\begin{align}\label{eq:DTRank0Definition}
  \sum_{\nu(\beta,c) = a} J^{\nu}_{(\beta,c)} z^\beta q^c \coloneq I\left(\eta_{\nu,a} \right) \in \Q\{N(\X)\}.
\end{align}
These are the Joyce--Song orbifold-analogues of Toda's $N$-invariants; see \cite{todajams}.
These invariants count Nironi-semistable objects of slope $a$.

\subsubsection{Rank $-1$}
Let $(\beta, c) \in N_{\le 1}(\X)$, and let $\delta \in \R$.
By Corollary \ref{cor:PairsDefineRegularElements}, we obtain an element $(\LL - 1)[\uP_{\nu,\delta}(\beta, c)] \in H_{\reg}(\cC)$.
Projecting this element to the semi-classical quotient and applying the integration morphism, we define DT-type invariants
\begin{equation}\label{eq:DTRank1Definition}
  DT^{\nu,\delta}_{(\beta,c)}z^\beta q^ct^{-[\hO_{\X}]} \coloneq I\bigl((\LL-1)[\uP_{\nu,\delta}(\beta,c)] \bigr).
\end{equation}
Crucially, the $J$-invariants do not depend on $\delta$, whereas the invariants $DT^\delta$ \emph{do}.

\subsection{The limit as \texorpdfstring{$\delta \to \infty$}{delta goes to infinity}}
Fix a class $(\beta,c) \in N_{\leq 1}(\X)$.
We now show that the invariant $DT^{\nu, \delta}_{(\beta,c)}$ stabilises as $\delta$ tends to infinity, and that its limit equals the stable pair invariant $PT(\X)_{(\beta,c)}$.
By Proposition \ref{thm:BoundednessOfRank1SigmaSemistables}, we may define numbers
\begin{align*}
  M^{+}_{\le \beta} &= \max_{\substack{0 \le \beta' \le \beta \\ c \in N_{0}(\X)}}\{\deg(\beta',c) \mid \uP_{\nu,0}(\beta',c) \not= \varnothing\}, \\
  M^{-}_{\le \beta} &= \min_{\substack{0 \le \beta' \le \beta \\ c \in N_{0}(\X)}}\{\deg(\beta',c) \mid \uP_{\nu,0}(\beta',c) \not= \varnothing\}.
\end{align*}
\begin{lem}
  \label{thm:vanishingOfUP}
  Let $0 \le \beta' \le \beta$, and let $c \in N_{0}(\X)$.
  If $\delta \le 0$ and $\deg(\beta',c) > M^{+}_{\le \beta}$ then $\uP_{\nu,\delta}(\beta',c) = \varnothing$.
  If $\delta \ge 0$ and $\deg(\beta',c) < M^{-}_{\le \beta}$, then $\uP_{\nu,\delta}(\beta',c) = \varnothing$.
\end{lem}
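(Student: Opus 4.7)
The plan is to exploit the torsion triple
\[
  \A = \langle \T_{\nu, 0},\ V(\T_{\nu, 0}, \F_{\nu, 0}),\ \F_{\nu, 0} \rangle
\]
on $\A$ supplied by Proposition \ref{prop:InducedTorsionTriple} applied to the torsion pair $(\T_{\nu, 0}, \F_{\nu, 0})$, whose middle piece intersected with $\A_{\rk = -1}$ is exactly $\jP_{\nu, 0}$. Two elementary degree observations drive the argument: (i) every $T \in \T_{\nu, 0}$ satisfies $\deg(T) \ge 0$, with equality only when $T = 0$, since both $0$-dimensional summands and $1$-dimensional HN factors of slope $\ge 0$ contribute non-negatively to $\deg$; and (ii) every $F \in \F_{\nu, 0}$ is pure $1$-dimensional (because $0$-dimensional sheaves have infinite slope) with all HN factors of strictly negative slope, forcing $\deg(F) \le 0$, with equality only when $F = 0$.

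For the first statement, suppose $E \in \uP_{\nu,\delta}(\beta',c)$ with $\delta \le 0$ and $\beta' \le \beta$. Since $\delta \le 0$ one has $\T_{\nu, 0} \subset \T_{\nu, \delta}$ and $\F_{\nu, \delta} \subset \F_{\nu, 0}$, so the defining pair condition on $E$ gives $\Hom(\T_{\nu, 0}, E) = 0$. Applying the triple decomposition $0 \subset E_1 \subset E_2 \subset E$ with $E_1 \in \T_{\nu, 0}$, $E_2/E_1 \in V(\T_{\nu, 0}, \F_{\nu, 0})$, and $E/E_2 \in \F_{\nu, 0}$, the above vanishing forces $E_1 = 0$; since $E$ has rank $-1$ and both $E_1$ and $E/E_2$ are sheaves of rank $0$, additivity of rank places $E_2$ in $\jP_{\nu, 0}$. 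Observation (ii) then yields $\deg(E) = \deg(E_2) + \deg(E/E_2) \le \deg(E_2)$, while $\beta_{E_2} \le \beta_E = \beta' \le \beta$ together with the non-emptiness of $\uP_{\nu, 0}(\beta_{E_2}, c_{E_2})$ forces $\deg(E_2) \le M^+_{\le \beta}$, contradicting $\deg(\beta', c) > M^+_{\le \beta}$.

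The second statement is symmetric: for $\delta \ge 0$ one has $\F_{\nu, 0} \subset \F_{\nu, \delta}$, hence $\Hom(E, \F_{\nu, 0}) = 0$. The triple decomposition now forces $E/E_2 = 0$, so $E_2 = E$ and one obtains a short exact sequence $0 \to E_1 \to E \to E/E_1 \to 0$ with $E_1 \in \T_{\nu, 0}$ and $E/E_1 \in \jP_{\nu, 0}$; observation (i) then gives $\deg(E) = \deg(E_1) + \deg(E/E_1) \ge \deg(E/E_1) \ge M^-_{\le \beta}$. The only point requiring genuine care is the boundary behaviour at the wall $\delta = 0$, where the non-strict inequality defining $\T_{\nu, 0}$ and the strict inequality defining $\F_{\nu, 0}$ must be tracked carefully to secure observations (i) and (ii) with the correct strict and non-strict signs; once these are settled the rest is numerical bookkeeping.
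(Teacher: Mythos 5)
Your argument is correct, but it is not the paper's route. The paper argues by contradiction along the wall structure: since the claim holds at $\delta = 0$ and the notion of pair is locally constant away from the discrete set of walls $W_{\beta}$ (Lemma~\ref{lem:Walls_for_Delta_Pairs}), a failure would produce a maximal wall $\delta \in W_{\beta}$ where an object stops being a pair; the wall-crossing decomposition at that wall exhibits it as an extension of a $(\T_{\nu,\delta+\epsilon},\F_{\nu,\delta+\epsilon})$-pair by a $\nu$-semistable sheaf $F$ with $\nu(F)=\delta\le 0$, and the two degree estimates $\deg(\beta'',c'')\le M^{+}_{\le\beta}$ and $\deg F = \nu(F)l(F)\le 0$ give the contradiction. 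You instead decompose the given $(\T_{\nu,\delta},\F_{\nu,\delta})$-pair in one step against the torsion triple $\langle \T_{\nu,0}, V(\T_{\nu,0},\F_{\nu,0}), \F_{\nu,0}\rangle$ of Proposition~\ref{prop:InducedTorsionTriple}, use the inclusions $\T_{\nu,0}\subset\T_{\nu,\delta}$ (resp.\ $\F_{\nu,0}\subset\F_{\nu,\delta}$) to kill the outer torsion (resp.\ free) piece, identify the remaining rank $-1$ subquotient as an object of $\jP_{\nu,0}$ with curve class $\le\beta$ via Corollary~\ref{thm:subAndQuotientObjectsHaveLowerD}, and conclude from the sign of $\deg$ on $\T_{\nu,0}$ and $\F_{\nu,0}$. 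This is more direct: it avoids the induction over walls and the local-constancy input entirely, treats all $\delta\le 0$ (resp.\ $\delta\ge 0$) uniformly, and uses only the definition of $M^{\pm}_{\le\beta}$; the paper's version, on the other hand, is of the same shape as the arguments it reuses later (e.g.\ Lemmas~\ref{thm:sigmaInfinityIsPT} and \ref{prop:SendingEta_to_Infinity_Sliding_Off_Gamma_Wall}), so it fits the wall-crossing template already in place. One small inaccuracy: in your observation (i), "$\deg(T)\ge 0$ with equality only when $T=0$" overstates the strictness — a nonzero $\nu$-semistable sheaf of slope exactly $0$ lies in $\T_{\nu,0}$ and has $\deg = 0$ — but your argument only uses the non-strict inequality, so nothing breaks.
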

\begin{proof}
  We only treat the claim for $\delta \le 0$, the other one is similar.
  
  The claim is clear for $\delta = 0$.
  We argue by contradiction, and assume the claim false.
  By Lemma~\ref{lem:Walls_for_Delta_Pairs}, there then exists a maximal $\delta \in W_{\beta}$ such that the claim holds for $\delta + \epsilon$ and fails for $\delta - \epsilon$.

  Thus there is a class $(\beta', c)$ with $\deg(\beta',c) > M^{+}_{\le \beta}$ such that $\uP_{\nu,\delta - \epsilon}(\beta',c) \not= \varnothing = \uP_{\nu,\delta + \epsilon}(\beta',c)$.
  This implies that we can find $(\beta'', c'')$ such that $\uP_{\nu,\delta + \epsilon}(\beta'', c'') \not= \varnothing$, and a $\nu$-semistable object $F$ of class $(\beta' - \beta'', c - c'')$ such that $\nu(F) = \delta \le 0$.
  Then
  \begin{gather*}
    \deg(\beta'',c'') \le M^{+}_{\le \beta}\\
    \deg(\beta' - \beta'', c-c'') = \nu(F)l(F) \le 0
  \end{gather*}
  and so $\deg(\beta',c) \le M^{+}_{\le \beta}$.
  This is a contradiction.
\end{proof}

\begin{lem}
  \label{lem:Delta_Pairs_are_Stable_Pairs_at_Infinity}
  \label{thm:sigmaInfinityIsPT}
  Let $E \in \A$ be an object of class $(-1,\beta, c)$, let
  \[
    \delta_{(\beta,c)} = \max\{0, \deg(\beta,c) - M^{-}_{\le \beta}\},
  \]
  and let $\delta > \delta_{(\beta,c)}$.
  Then the following are equivalent:
  \begin{enumerate}
  \item\label{enum:delta} $E$ is a $(\cat T_{\nu,\delta},\cat F_{\nu,\delta})$-pair.
  \item\label{enum:PT} $E$ is a stable pair.
  \end{enumerate}
\end{lem}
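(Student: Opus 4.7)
The plan is to prove both implications by contradiction, using the hypothesis $\delta > \delta_{(\beta,c)} \ge \deg(\beta,c) - M^{-}_{\le \beta}$ to extract from any would-be counterexample a $(\T_{\nu,\delta}, \F_{\nu,\delta})$-pair of class $(\beta', c')$ with $\beta' \le \beta$ and $\deg(\beta', c') < M^{-}_{\le \beta}$, which is forbidden by Lemma \ref{thm:vanishingOfUP}.

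For (\ref{enum:delta}) $\Rightarrow$ (\ref{enum:PT}): since $H^0(E) \in \T_{\nu,\delta}$ and $\T_{\nu,\delta}$ is closed under quotients, the pure one-dimensional quotient $T_1$ of $H^0(E)$ (its cokernel modulo the zero-dimensional torsion) lies in $\T_{\nu,\delta}$, so $\deg(T_1) \ge \delta \cdot l(T_1) \ge \delta$. Assuming $T_1 \neq 0$, the kernel $E' \in \cat A$ of the surjection $E \onto T_1$ is again a $(\T_{\nu,\delta}, \F_{\nu,\delta})$-pair (its $H^0$ is zero-dimensional and $\Hom(T, E')$ injects into $\Hom(T, E) = 0$) of class $(\beta - \beta_{T_1}, c - c_{T_1})$ with degree $\deg(\beta,c) - \deg(T_1) < M^{-}_{\le \beta}$, contradicting Lemma \ref{thm:vanishingOfUP}. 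Hence $H^0(E)$ is zero-dimensional and $E$ is a stable pair, the remaining $\Hom$-vanishing conditions being immediate from $H^0(E) \in \Coh_0(\X)$.

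For (\ref{enum:PT}) $\Rightarrow$ (\ref{enum:delta}) I would induct on $\beta$. The condition $\Hom(E, F') = \Hom(H^0(E), F') = 0$ for $F' \in \F_{\nu,\delta}$ is automatic. For $\Hom(T, E) = 0$ with $T \in \T_{\nu,\delta}$, reduce via Harder--Narasimhan to $T$ pure one-dimensional semistable of slope $\ge \delta$; using the triangle $F \to E \to \hO_\X[1]$ coming from the two-term description of the stable pair, together with the CY3 Serre-duality vanishings $\Ext^i(T, \hO_\X) = H^{3-i}(\X, T)^\vee = 0$ for $i \in \{0,1\}$, one obtains $\Hom(T, E) \cong \Hom(T, F)$. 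A nonzero $T \to F$ has image $T' \subset F$ pure one-dimensional with $\nu_-(T') \ge \delta$; let $T'' \subset F$ be its saturation. If $T'' \subsetneq F$, then $E_{\mathrm{new}} = (\hO_\X \to F/T'')$ is a stable pair of strictly smaller curve class with $\deg(E_{\mathrm{new}}) < M^{-}_{\le \beta}$; by the inductive hypothesis $E_{\mathrm{new}}$ is itself a $(\T_{\nu,\delta}, \F_{\nu,\delta})$-pair, contradicting Lemma \ref{thm:vanishingOfUP}. In the edge case $T'' = F$, one has $\nu_-(F) \ge \delta$ and thus $\delta \le \deg(\beta,c)/l(\beta) \le \delta_{(\beta,c)}$, contradicting our hypothesis; here the last inequality uses $M^{-}_{\le \beta} \le 0$, which holds because $\hO_\X[1]$ is always a $0$-pair (by $\Hom(T, \hO_\X[1]) = H^2(\X, T)^\vee = 0$ for $T \in \Coh_{\le 1}(\X)$). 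The main obstacle is this edge case, which falls outside the inductive reduction and depends crucially on the explicit form of $\delta_{(\beta,c)}$ together with the observation $M^{-}_{\le \beta} \le 0$.
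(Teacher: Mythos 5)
Your proof is correct, but it is organised genuinely differently from the paper's. The paper first shows that the notion of $(\T_{\nu,\delta},\F_{\nu,\delta})$-pair of class $(-1,\beta,c)$ is \emph{constant} for all $\delta > \delta_{(\beta,c)}$: a nontrivial wall-crossing at a wall $\delta$ in this range would force a decomposition $(\beta,c)=(\beta',c')+(\beta'',c'')$ with $\uP_{\nu,\delta}(\beta',c')\neq\varnothing\neq\M^{\ss}_{\nu}(\beta'',c'')$ and $\nu(\beta'',c'')=\delta$, which Lemma \ref{thm:vanishingOfUP} rules out numerically; with this $\delta$-independence in hand, the equivalence with stable pairs is then checked only for $\delta$ large depending on the object (namely $\delta\ge\nu_{+}(F)$, resp.\ $\delta>\nu(G)$ for the pure part $G$ of $H^{0}(E)$), which is easy. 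You instead prove both implications at a \emph{fixed} $\delta>\delta_{(\beta,c)}$ by extracting from any would-be destabilising sub- or quotient object a $(\T_{\nu,\delta},\F_{\nu,\delta})$-pair of degree $<M^{-}_{\le\beta}$ and contradicting Lemma \ref{thm:vanishingOfUP} directly; this costs you an induction on the curve class in the stable-pair-to-$(\T_{\nu,\delta},\F_{\nu,\delta})$-pair direction and the extra observation $M^{-}_{\le\beta}\le 0$ (via $\hO_{\X}[1]\in\jP_{\nu,0}$ of class $(0,0)$), neither of which the paper needs, but it avoids the wall-crossing/trivial-wall step entirely; both arguments ultimately hinge on the same lemma and the same numerics around $\delta_{(\beta,c)}$. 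Two routine points you should spell out: in the Harder--Narasimhan reduction the zero-dimensional factors of $T$ must also be handled (trivially, since $\Hom(\Coh_{0}(\X),F)=0$ for $F$ pure of dimension one), and in the edge case $T''=F$ the claimed implication $\nu_{-}(F)\ge\delta$ needs the standard argument that any pure quotient $F \onto G$ receives a map from $T'$ with zero-dimensional cokernel, so that $\nu(G)\ge\nu(\mathrm{im}(T'\to G))\ge\delta$.
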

\begin{proof}
  We first show that condition (\ref*{enum:delta}) is independent of the precise value of $\delta > \delta_{(\beta,c)}$.
  By Lemma \ref{lem:Walls_for_Delta_Pairs}, it is enough to show that for any $\delta \in W_{\beta} \cap (\delta_{(\beta,c)},\infty)$ we have $\uP_{\nu,\delta-\epsilon}(\beta,c) = \uP_{\nu,\delta + \epsilon}(\beta,c)$ for $0 < \epsilon \ll 1$, \ie, every such wall-crossing is trivial.

  By the wall-crossing formula, the moduli stack $\uP_{\nu,\delta+\epsilon}(\beta,c)$ can only differ from $\uP_{\nu,\delta}(\beta,c) = \uP_{\nu,\delta-\epsilon}(\beta,c)$ if there exists an object in $\jP_{\nu,\delta + \epsilon}(\beta,c)$ that is destabilised when the wall $\delta$ is crossed.
  This happens precisely if we can decompose the class $(\beta,c) = (\beta',c') + (\beta'', c'')$ with $\nu(\beta'',c'') = \delta$ and
  \begin{equation*}
    \uP_{\nu,\delta}(\beta',c') \neq \varnothing \neq \M^{\ss}_{\nu}(\beta'',c'').
  \end{equation*}
  Suppose for a contradiction that there exists such a decomposition.
  Since $\delta \geq 0$, we have $\deg(\beta',c') \ge M_{\le \beta}^{-}$.
  Applying Lemma~\ref{thm:vanishingOfUP}, we find
  \begin{align*}
    \deg(\beta,c) &= \deg(\beta',c') + \deg(\beta'',c'') = \deg(\beta',c') + \delta l(\beta'') \\
       &> M_{\le \beta}^{-} + \delta_{\beta,c} > \deg(\beta,c),
  \end{align*}
  which is a contradiction.
  
  Suppose that $E$ is a PT pair, so $E = (\hO_{\X} \xrightarrow{s} F)$ with $\coker(s) \in \Coh_0(\X)$ and $F \in \Coh_1(\X)$.
  If $S \in \Coh_{\leq 1}(\X)$ is a subobject of $E$, then the inclusion factors through an inclusion $S \into F$ by Lemma~\ref{lem:SubobjectFactors}.
  Hence $\nu_{+}(S) \leq \nu_{+}(F)$.
  Taking $\delta \ge \nu_{+}(F)$, we find $S \in \F_{\nu,\delta}$.
  Furthermore, if $Q \in \Coh_{\le 1}(\X)$ is a quotient object of $E$, it is a quotient of $\coker(s)$.
  Hence, $Q \in \Coh_{0}(\X) \subset \T_{\nu,\delta}$ and $E$ is a $(\T_{\nu,\delta},\F_{\nu,\delta})$-pair.

  Conversely, suppose that $E$ is a $(\T_{\nu,\delta}, \F_{\nu,\delta})$-pair.
  Set $\T_{PT} = \Coh_{0}(\X)$ and $\F_{PT} = \Coh_{1}(\X)$.
  By Lemma~\ref{prop:CohomCrit}, it suffices to show that $E$ is a $(\T_{PT},\F_{PT})$-pair.

  If $S \in \Coh_{\leq 1}(\X)$ is a subobject of $E$, then $S$ is a pure 1-dimensional sheaf.
  Hence $S \in \F_{PT}$.
  Furthermore, let $G$ be the pure 1-dimensional part of $H^0(E)$, which is a quotient of $E$ in $\A$.
  If $G \neq 0$, then taking $\delta > \nu(G)$ implies $G \not\in \T_{\nu,\delta}$, which contradicts $E$ being a $(\T_{\nu,\delta}, \F_{\nu,\delta})$-pair.
  Hence $G = 0$, and so $H^0(E) \in \Coh_{0}(\X) = \T_{PT}$.
\end{proof}
\begin{lem}\label{lem:SubobjectFactors}
  Let $E \in D(\X)$ be an object of the form $E = (\hO_\X \to G)$, and let $C \in \Coh_{\leq 1}(\X)$.
  Any morphism $C \to E$ factors through $G \to E$.
\end{lem}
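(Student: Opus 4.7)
The plan is to exploit the fact that $E$, being given as a two-term complex with $\hO_\X$ in degree $-1$ and $G$ in degree $0$, is canonically isomorphic to $\cone(\hO_\X \xrightarrow{s} G)$. This yields a distinguished triangle
\[
  \hO_\X \xrightarrow{s} G \xrightarrow{\pi} E \to \hO_\X[1]
\]
in $D(\X)$, in which $\pi$ is the natural map $G \to E$ appearing in the statement (on chains: identity in degree $0$, zero in degree $-1$). Applying $\Hom(C,-)$ produces the long exact sequence
\[
  \cdots \to \Hom(C,G) \xrightarrow{\pi_{*}} \Hom(C,E) \to \Ext^{1}(C,\hO_\X) \to \cdots,
\]
so the desired factorisation through $\pi$ is equivalent to the vanishing $\Ext^{1}(C,\hO_\X) = 0$.

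To establish this vanishing, I would invoke the Calabi--Yau condition $\omega_\X \cong \hO_\X$ together with $\dim \X = 3$. By Serre duality on the smooth proper Deligne--Mumford stack $\X$, one has $\Ext^{1}(C,\hO_\X) \cong \Ext^{2}(\hO_\X,C)^{\vee} = H^{2}(\X,C)^{\vee}$. Since $C$ is supported in dimension at most $1$, Grothendieck's cohomological dimension bound forces $H^{i}(\X,C) = 0$ for all $i > 1$. Equivalently, on a smooth $n$-dimensional stack one has the codimension inequality $\Ext^{i}(F,\hO_\X) = 0$ for $i < \codim(\supp F)$, which here rules out non-zero classes in the range $i < 3 - \dim C$, and in particular for $i = 1$.

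I do not anticipate any serious obstacle in this proof. The only minor points to verify are that the natural map $G \to E$ in the statement genuinely coincides with the connecting arrow of the cone triangle, which is immediate from the cone construction, and that Serre duality and Grothendieck vanishing transfer from smooth projective varieties to the smooth projective Deligne--Mumford stack $\X$, which they do by the assumptions recorded in Section~\ref{suborbifolds}.
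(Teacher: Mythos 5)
Your proof is correct and follows essentially the same route as the paper: apply $\Hom(C,-)$ to the triangle $G \to E \to \hO_\X[1]$ and kill the obstruction $\Hom(C,\hO_\X[1]) \cong H^{2}(\X,C)^{\vee} = 0$ via Serre duality on the CY3 stack and the dimension of the support of $C$. The alternative local $\mathcal{E}xt$-vanishing argument you sketch is also fine, but it is not needed beyond the paper's one-line reasoning.
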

\begin{proof}
  We have an exact triangle $G \to E \to \hO_\X[1]$, and for reasons of dimension and Serre duality $\Hom(C,\hO_\X[1]) \cong H^2(\X,C)^{\vee} = 0$.
\end{proof}

\begin{cor}\label{cor:Delta_Pairs_are_Stable_Pairs_at_Infinity}
  Let $(\beta,c) \in N_{\leq 1}(\X)$.
  If $\delta > \max(0, \deg(\beta,c) - M^{-}_{\le \beta})$, then
  \begin{equation*}
    DT^{\nu,\delta}_{(\beta,c)} = PT(\X)_{(\beta,c)}.
  \end{equation*}
\end{cor}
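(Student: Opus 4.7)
The plan is to deduce this corollary as a direct consequence of Lemma~\ref{lem:Delta_Pairs_are_Stable_Pairs_at_Infinity}, which gives an object-wise identification of $(\T_{\nu,\delta},\F_{\nu,\delta})$-pairs with Pandharipande--Thomas stable pairs in the stated range of $\delta$. Since both sides of the claimed equality are obtained by applying the integration map to $(\LL-1)$ times the class of a moduli stack in the Hall algebra, it suffices to show that the two moduli stacks $\uP_{\nu,\delta}(\beta,c)$ and $\uPair(\T_{PT},\F_{PT})(\beta,c)$ coincide as open substacks of $\Mum_\X$.

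First, I would note that both $(\T_{\nu,\delta},\F_{\nu,\delta})$ and $(\T_{PT},\F_{PT}) = (\Coh_0(\X),\Coh_1(\X))$ are open torsion pairs on $\Coh_{\le 1}(\X)$ containing $\Coh_0(\X)$ in their torsion part, so by Proposition~\ref{thm:TFOpenImpliesPOpen} both stacks of pairs are open substacks of $\Mum_\X$. The condition defining a $(\T,\F)$-pair depends only on the underlying object in $D(\X)$, and by Lemma~\ref{lem:Delta_Pairs_are_Stable_Pairs_at_Infinity} any object $E \in \A$ of class $(-1,\beta,c)$ with $\delta > \max(0,\deg(\beta,c) - M^{-}_{\le \beta})$ is a $(\T_{\nu,\delta},\F_{\nu,\delta})$-pair if and only if it is a stable pair. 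Hence the two open substacks of $\Mum_{\X,(-1,\beta,c)}$ agree set-theoretically, and since they are open and hence reduced-scheme-theoretically determined by their underlying points in families, they agree as substacks.

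Given this equality of moduli stacks, applying the integration morphism $I$ defined in Section~\ref{Section5_IntegrationMap} to the common class $(\LL-1)[\uP_{\nu,\delta}(\beta,c)] = (\LL-1)[\uPair(\T_{PT},\F_{PT})(\beta,c)]$ in $H_{\reg}(\cC)$ yields the same element of the Poisson torus. Comparing with the defining equations~\eqref{eq:DTRank1Definition} and \eqref{eq:PT_Generating_Function_PTX}, the coefficients of $z^\beta q^c t^{-[\hO_\X]}$ must match, giving $DT^{\nu,\delta}_{(\beta,c)} = PT(\X)_{(\beta,c)}$.

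There is essentially no obstacle here: Lemma~\ref{lem:Delta_Pairs_are_Stable_Pairs_at_Infinity} does all the substantive work by identifying the two moduli problems, and the corollary is merely a translation of that identification through the integration map. The only minor care required is to verify that the equality of the two open substacks of $\Mum_\X$ holds not just on $\C$-points but scheme-theoretically; this is automatic because both are cut out by open conditions on $\Mum_{\X,(-1,\beta,c)}$, and any two open substacks with the same underlying set coincide.
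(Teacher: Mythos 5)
Your proposal is correct and matches the paper's (implicit) argument: the paper states this corollary without proof precisely because Lemma~\ref{lem:Delta_Pairs_are_Stable_Pairs_at_Infinity} identifies the two moduli problems object-wise, so the open substacks $\uP_{\nu,\delta}(\beta,c)$ and $\uPair(\T_{PT},\F_{PT})_{(\beta,c)}$ of $\Mum_{\X}$ coincide and the integration map gives equal invariants. Your extra remark about the set-theoretic versus scheme-theoretic agreement of open substacks is fine and exactly the routine point one would check.
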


\subsection{The proof of rationality}
\label{subsec:Proof_of_Rationality}
With all the boundedness results in place, we now apply the numerical wall-crossing formula to prove the rationality of the generating series of stable pair invariants on a general CY3 orbifold $\X$.
Our approach is to compare the full series $DT^{\nu,\delta}(\X)$ with $DT^{\nu,\infty}(\X)= PT(\X)$.

Applying the wall-crossing formula directly is problematic, because the $\delta$-walls are dense in $\R$.
However, fixing a $\beta \in N_{1}(\X)$, we may instead focus on the sub-series $DT^{\nu,\delta}(\X)_{\beta}$, and more generally the series
\begin{align}
    DT^{\nu,\delta}_{\leq \beta} \coloneq DT^{\nu,\delta}(\X)_{\leq \beta} &\coloneq \sum_{\beta' \leq \beta} \sum_{c \in N_{0}(\X)} DT^{\nu,\delta}_{(\beta',c)}z^{\beta'}q^{c}, \\
    J^{\nu}(\delta)_{\leq \beta} &\coloneq \sum_{\beta' \leq \beta} \sum_{\substack{c \in N_0(\X) \\ \nu(\beta',c) = \delta}} J^{\nu}_{(\beta',c)} z^{\beta'} q^c \label{eqn:DefOfJDelta}.
\end{align}
By Lemma~\ref{lem:Walls_for_Delta_Pairs}, the notion of $(\T_{\nu,\delta},\F_{\nu,\delta})$-pair of class $(-1,\beta',c)$ with $\beta' \leq \beta$ can only change when $\delta \in W_{\beta}$.
Consequently, the same holds for the above series.
In particular the walls $W_{\beta}$ are discrete, and one can write the wall-crossing formula comparing $DT^{\nu,\delta}(\X)_{\le \beta}$ to $DT^{\nu, \infty}(\X)_{\le \beta}$ as a countably infinite product.

Given that we restrict our attention to $DT^{\nu,\delta}(\X)_{\le \beta}$, we now introduce a suitable truncation $\Q[N^{\eff}(\X)]_{\le \beta}$ of the Poisson torus $\Q[N(\X)]$.
\begin{defn}\label{Quantum_Torus_of_A}
  Define $\Q[N^{\eff}(\X)] \subset \Q[N(\X)]$ as the vector space with $\Q$-basis
  \begin{equation*}
    \Bigl\{z^\beta q^c t^{-k[\hO_{\X}]} \in \Q[N(\X)] \Bigm| \beta \in N^{\eff}_1(\X),\, c \in N_0(\X),\, k \in \Z_{\geq 0}\Bigr\}
  \end{equation*}
  The subspace $\Q[N^{\eff}(\X)]$ is a Poisson subalgebra of $\Q[N(\X)]$.
\end{defn}
Consider the ideal $I_\beta \subset \Q[N^{\eff}(\X)]$ generated by $\{z^{\beta'}, t^{-2[\hO_{\X}]} \mid \beta' \not\leq \beta\}$, and let $\Q[N^{\eff}(\X)]_{\le \beta} = \Q[N^{\eff}(\X)]/I_{\beta}$ denote the quotient.
It is again a Poisson algebra.
\begin{prop}\label{prop:WCusingTFpairs}
  Let $\beta \in N_1(\X)$, let $\delta \in W_\beta$, and let $0 < \epsilon < \frac{1}{l(\beta)!}$.
  The identity
  \begin{align*}
    DT^{\nu,\delta + \epsilon}_{\leq \beta}t^{-[\hO_\X]}  = \exp(\{J^{\nu}(\delta)_{\leq \beta},- \})
    DT^{\nu,\delta - \epsilon}_{\leq \beta}t^{-[\hO_\X]}
  \end{align*}
  holds in $\Q[N^{\eff}(\X)]_{\le \beta}$, where the term $J^{\nu}(\delta)_{\le \beta}$ is defined in the proof.
\end{prop}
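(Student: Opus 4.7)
The strategy is to apply Theorem~\ref{thm:WallCrossingFormula} to the torsion triple
\[
(\T_{\nu,\delta+\epsilon},\ \M_\nu([\delta-\epsilon, \delta+\epsilon)),\ \F_{\nu,\delta-\epsilon}),
\]
which is wall-crossing material by Proposition~\ref{cor:WallCrossingMaterial_Delta_Pairs}. This yields, in the Poisson torus $\Q[N(\X)]$, the identity
\[
I\bigl((\LL-1)[\uP_{\nu,\delta+\epsilon}]\bigr) = \exp(\{w,-\})\, I\bigl((\LL-1)[\uP_{\nu,\delta-\epsilon}]\bigr),
\]
where $w = I\bigl((\LL-1)\log [\uM_\nu([\delta-\epsilon, \delta+\epsilon))]\bigr)$. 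Projecting to $\Q[N^{\eff}(\X)]_{\le \beta}$, the two sides become $DT^{\nu,\delta+\epsilon}_{\le\beta}\,t^{-[\hO_\X]}$ and $\exp(\{w_{\le\beta},-\})\,DT^{\nu,\delta-\epsilon}_{\le\beta}\,t^{-[\hO_\X]}$ respectively, and it only remains to identify $w_{\le \beta}$ with $J^{\nu}(\delta)_{\le\beta}$.

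The key point is that $\delta \in W_\beta = \frac{1}{l(\beta)!}\Z$ and $0 < \epsilon < \frac{1}{l(\beta)!}$, so the intersection $[\delta-\epsilon, \delta+\epsilon) \cap \frac{1}{l(\beta)!}\Z$ consists of $\{\delta\}$ alone. Any $F \in \M_\nu([\delta-\epsilon, \delta+\epsilon))$ with $\beta_F \le \beta$ has HN factors of length dividing $l(\beta)$, hence slopes in $\frac{1}{l(\beta)!}\Z$, and so is $\nu$-semistable of slope exactly $\delta$. Consequently $[\uM_\nu([\delta-\epsilon, \delta+\epsilon))]$ and $[\uM^{\ss}_\nu(\delta)]$ agree in the graded Hall pre-algebra after projecting away classes with $\beta$-part $\not\le \beta$, and the same holds for their Hall-algebra logarithms by functoriality of $\log$. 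Applying the integration map and recalling the definition in equation~\eqref{eq:DTRank0Definition} yields $w_{\le\beta} = J^\nu(\delta)_{\le\beta}$.

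Finally, one must verify that $\exp(\{J^\nu(\delta)_{\le\beta}, -\})$ is well defined on $\Q[N^{\eff}(\X)]_{\le \beta}$. Since $\delta$ is finite and $\nu$ takes the value $+\infty$ on $\Coh_0(\X)$, every summand of $J^\nu(\delta)_{\le\beta}$ has nonzero curve-class part; combining part~(\ref{enum:NuStableFinitelyManyC}) of Proposition~\ref{thm:SigmaDeltaHygiene} with the slope constraint $\deg(\beta',c) = \delta\,l(\beta')$ (which picks a unique degree representative in each coset of $\Z(A \cdot \beta')$) shows that $J^\nu(\delta)_{\le\beta}$ is in fact a \emph{finite} sum. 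Each application of $\{J^\nu(\delta)_{\le\beta}, -\}$ increases the curve class by a nonzero effective class, and Lemma~\ref{lem:Effective_Cone_Convex} bounds $\{\beta' \in N_1^{\eff}(\X) \mid \beta' \le \beta\}$; hence after finitely many iterations the output lies in $I_\beta$. Thus the exponential operator reduces to a finite sum, and the claimed identity follows. The principal obstacle is the identification $w_{\le\beta} = J^\nu(\delta)_{\le\beta}$, which hinges on the discreteness of the walls $W_\beta$ established in Lemma~\ref{lem:Walls_for_Delta_Pairs}; once this is in place, the proposition is a direct consequence of Theorem~\ref{thm:WallCrossingFormula}.
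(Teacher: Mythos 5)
Your proof is correct and follows essentially the same route as the paper: apply Theorem~\ref{thm:WallCrossingFormula} to the torsion triple of Proposition~\ref{cor:WallCrossingMaterial_Delta_Pairs}, and use $0<\epsilon<\frac{1}{l(\beta)!}$ to identify $\M_{\nu}([\delta-\epsilon,\delta+\epsilon))$ with $\M^{\ss}_{\nu}(\delta)$ on classes with curve part $\le\beta$, so that $w_{\le\beta}=J^{\nu}(\delta)_{\le\beta}$ by equation~\eqref{eq:DTRank0Definition}. Your additional check that $J^{\nu}(\delta)_{\le\beta}$ is a finite sum and that the exponential terminates in $\Q[N^{\eff}(\X)]_{\le\beta}$ is a sound elaboration of what the paper leaves implicit.
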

\begin{proof}
  The assumption on $\epsilon$ implies that for any $E \in \Coh_{\le 1}(\X)$ with $\beta_{E} \le \beta$ and $\nu(E) \in [\delta - \epsilon, \delta + \epsilon)$, we have $\nu(E) = \delta$.
  In particular, this implies that for $\beta' \le \beta$, we have $\M_{\nu}([\delta - \epsilon, \delta + \epsilon)) = \M_{\nu}^{\ss}(\delta)$.
  After projecting to $\Q[N^{\eff}(\X)]_{\le \beta}$, we then have
  \begin{align*}
    I((\LL-1)\log([\uM_{\nu}([\delta-\epsilon, \delta + \epsilon))]) & = I((\LL-1)\log([\uM^{\ss}_{\nu}(\delta)]) \\
                                                                   & = \sum_{\beta' \le \beta}\sum_{\substack{c \in N_0(\X) \\ \nu(\beta',c) = \delta}} J^{\nu}_{(\beta',c)}z^{\beta'}q^{c}
  \end{align*}
  which is equal to $J^{\nu}(\delta)_{\le \beta}$.
  The torsion triple $(\T_{\nu,\delta - \epsilon},\M_\nu([\delta - \epsilon, \delta + \epsilon)),\F_{\nu,\delta + \epsilon})$ is wall-crossing material by Proposition \ref{cor:WallCrossingMaterial_Delta_Pairs}.
  Projecting equation~\eqref{eq:NumericalWallCrossingFormula} of Theorem~\ref{thm:WallCrossingFormula} to $\Q[N^{\eff}(\X)]_{\leq \beta}$, we obtain the identity
  \begin{equation*}
    I \Bigl((\LL-1)[\uP_{\nu, \delta + \epsilon}]\Bigr) = \exp(\{J^{\nu}(\delta)_{\leq \beta},-\}) I \Bigl((\LL-1)[\uP_{\nu, \delta - \epsilon}]\Bigr).
  \end{equation*}
  Evaluating the integrals by equation~\eqref{eq:DTRank1Definition} completes the proof.
\end{proof}

We now prove the rationality of the generating series of stable pair invariants.
\begin{thrm}\label{thm:PTIsRational}
  For each class $\beta \in N_{1}(\X)$, there exists a unique rational function $f_\beta(q)$ such that the series 
  \begin{equation}
    PT(\X)_{\beta} = \sum_{c \in N_{0}(\X)}PT(\X)_{(\beta,c)}q^c
  \end{equation}
  is the expansion in $\Q[N_{0}(\X)]_{\deg}$ of $f_\beta(q)$.

  More precisely, we can write $f_\beta(q)$ as a sum of functions $g_{D}/h_{D}$, where $D$ is a decomposition $\beta = \sum_{i=1}^{r}\beta_{i}$ into effective classes, where $g_{D} \in \Z[N_{0}(\X)]$, and where 
  \begin{equation}
    h_D = \prod_{i=1}^{r}(1-\prod_{j=1}^{i}q^{2\beta_{j}\cdot A})^{2i}.
  \end{equation}
\end{thrm}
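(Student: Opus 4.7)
The strategy is to interpolate between $PT(\X)_{\le \beta}$ and a manifestly rational baseline series $DT^{\nu, \delta_0}_{\le \beta}$ for $\delta_0 \ll 0$ by iterating the wall-crossing formula of Proposition~\ref{prop:WCusingTFpairs} across the discrete set of walls $W_\beta$, then collecting the resulting infinite expansion into finitely many sums of quasi-polynomial type, each rational by Lemma~\ref{thm:SumDefinedByInequalitiesIsARationalFunction}.

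First I would fix $\delta_0 < 0$ with $|\delta_0|$ large enough that $\uP_{\nu, \delta_0}(\beta', c) = \emptyset$ once $\deg(\beta', c) > M^{+}_{\le \beta}$, by Lemma~\ref{thm:vanishingOfUP}. Combined with the $\deg$-boundedness-below statement in Lemma~\ref{thm:LinearTermInHilbertPolynomialOfQuotSchemeIsBoundedBelow}, this pins down $c$ to a finite set for each $\beta' \le \beta$, so $DT^{\nu, \delta_0}_{\le \beta}$ is a Laurent polynomial. Telescoping Proposition~\ref{prop:WCusingTFpairs} over all $\delta \in W_\beta \cap [\delta_0, \infty)$ and invoking Corollary~\ref{cor:Delta_Pairs_are_Stable_Pairs_at_Infinity} to identify the $\delta \to \infty$ limit with the PT series, yields in $\Q[N^{\eff}(\X)]_{\le \beta}$
\[
PT(\X)_{\le \beta}\, t^{-[\hO_\X]} = \prod_{\delta \in W_\beta \cap [\delta_0, \infty)} \exp\{J^\nu(\delta)_{\le \beta}, -\}\, DT^{\nu, \delta_0}_{\le \beta}\, t^{-[\hO_\X]},
\]
the product taken with $\delta$ increasing; this is well defined because for each total class only finitely many walls contribute, by Proposition~\ref{thm:BoundednessOfRank1SigmaSemistables}.

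Next I would expand each exponential formally. A typical term is a scalar multiple of
\[
\{J^{\nu}_{(\beta_r, c_r)} z^{\beta_r} q^{c_r}, -\} \circ \cdots \circ \{J^{\nu}_{(\beta_1, c_1)} z^{\beta_1} q^{c_1}, -\}\, DT^{\nu, \delta_0}_{(\beta'', c'')} z^{\beta''} q^{c''}\, t^{-[\hO_\X]},
\]
indexed by ordered decompositions $\beta = \beta_1 + \cdots + \beta_r + \beta''$ together with a weak ordering $\nu(\beta_i, c_i) \le \nu(\beta_{i+1}, c_{i+1})$ with equalities prescribed in selected positions. Using the twist-invariance $J^{\nu}_{(\beta_i, c_i)} = J^{\nu}_{(\beta_i, c_i + \beta_i \cdot A)}$ (immediate because $-\otimes A$ is an autoequivalence respecting Nironi semistability), I would parametrise $c_i = c_i^{(0)} + k_i (\beta_i \cdot A)$ with $k_i \in \Z_{\ge 0}$ and $c_i^{(0)}$ running over a set of residue representatives. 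Proposition~\ref{thm:SigmaDeltaHygiene}(3), Lemma~\ref{lem:Effective_Cone_Convex}, and the polynomiality of the baseline together bound the number of such groupings.

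Within a fixed grouping, the summand is a function of $(k_1, \ldots, k_r) \in \Z_{\ge 0}^r$ of the form (polynomial in the $k_i$) $\cdot (-1)^{\varepsilon(k)}$ with $\varepsilon$ integer-linear in the $k_i$: indeed, each Poisson bracket coefficient is $(-1)^{\chi(\alpha, \alpha')} \chi(\alpha, \alpha')$, and $\chi$ is bilinear. This is a quasi-polynomial of quasi-period $2$. The slope ordering translates into conditions $k_i < k_{i+1}$ or $k_i = k_{i+1}$ after one arranges matters so that the shifts $\beta_i \cdot A$ move the slopes monotonically, and Lemma~\ref{thm:SumDefinedByInequalitiesIsARationalFunction} with $p = 2$ and $q_i = q^{\beta_i \cdot A}$ then expresses the sum as the $\deg$-Laurent expansion of a rational function $g_D/h_D$ with denominator of the claimed shape. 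Summing over the finitely many groupings proves the theorem. The main obstacle is this final combinatorial bookkeeping: one must verify that the slope inequalities among the $\nu(\beta_i, c_i)$ translate cleanly into the inequality pattern required by Lemma~\ref{thm:SumDefinedByInequalitiesIsARationalFunction}, and that the degrees of the quasi-polynomial in each $k_i$ match the exponent $2i$ in $h_D$; the nontrivial Euler pairing on $N_{\le 1}(\X)$, which distinguishes the orbifold from the variety case, is precisely what makes the Poisson brackets nontrivial and hence the quasi-polynomial machinery necessary.
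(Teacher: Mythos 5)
Your proposal follows essentially the same route as the paper's proof: iterate the wall-crossing formula of Proposition~\ref{prop:WCusingTFpairs} from the (automatically finite, by Proposition~\ref{thm:BoundednessOfRank1SigmaSemistables}) baseline $DT^{\nu,\delta_0}_{\le\beta}$ up to the PT limit, expand the exponentials, group terms by the curve classes $\beta_i$, the residues of $c_i$ modulo $\beta_i\cdot A$, and the pattern of slope equalities, use twist-invariance of the $J$-invariants and quasi-polynomiality (quasi-period $2$) of the Poisson-bracket coefficients, and conclude with Lemma~\ref{thm:SumDefinedByInequalitiesIsARationalFunction} together with the finiteness statements of Propositions~\ref{thm:BoundednessOfRank1SigmaSemistables} and \ref{thm:SigmaDeltaHygiene}. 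The final bookkeeping you flag as the main obstacle is carried out in the paper exactly along the lines you indicate, by passing to a minimal representative sequence $(c_i^0)$ so that the slope constraints become the index set of that lemma, and by bounding the degrees of the quasi-polynomial factor $B$ to obtain the exponent $2i$ in $h_D$.
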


\begin{proof}
  Let $\delta_{0} \in \R$.
  Iterating the wall-crossing formula from Proposition~\ref{prop:WCusingTFpairs},
  \begin{align*}
    DT^{\nu,\infty}_{\le \beta}t^{-[\hO_{\X}]} = \prod_{\delta \in W_{\beta} \cap [\delta_{0}, \infty)} \exp(\{J(\delta)_{\le \beta},-\})(DT^{\nu, \delta_{0}}_{\le \beta}t^{-[\hO_{\X}]}),
  \end{align*}
  where the product is taken in increasing order of $\delta$.
  Substituting in the definition of $J^{\nu}(\delta)_{\leq \beta}$ in equation~\eqref{eqn:DefOfJDelta} and expanding the exponential, the $z^{\beta}t^{-[\hO_{\X}]}$-coefficient of the right hand side becomes an infinite sum.
  The terms of this sum are described as follows.
  Fix an integer $r \in \Z_{\geq 1}$, a sequence $(\alpha_{i})_{i=1}^{r} = (\beta_{i},c_{i})_{i= 1}^{r} \subset N_{\le 1}(\X)$, and a class $\alpha' = (\beta',c') \in N_{\le 1}(\X)$, satisfying
  \begin{itemize}
  \item $\beta = \beta' + \sum \beta_{i}$,
  \item $\delta_{0} \le \nu(\alpha_{1}) \le \nu(\alpha_{2}) \le \cdots \le \nu(\alpha_{r})$,
  \item $J^{\nu}_{\alpha_{i}} \not= 0$ for all $1 \leq i \leq r$,
  \item $DT^{\nu,\delta}_{\alpha'} \not= 0$.
  \end{itemize}
  The non-zero term in the infinite sum associated with this data is
  \begin{align*}\label{eqn:Contribution}
    T((\alpha_{i}), \alpha')z^{\beta}q^{c'+\sum c_{i}}t^{-[\hO_{\X}]} &= A_{(\alpha_{i})}\{J^{\nu}_{\alpha_{r}}z^{\beta_{r}}q^{c_{r}},-\}\circ \cdots \circ \{J^{\nu}_{\alpha_{1}}z^{\beta_{1}}q^{c_{1}}, -\}(DT^{\nu, \delta_{0}}_{\alpha'}z^{\beta'}q^{c'}t^{-[\hO_{\X}]}),
  \end{align*}
  where $A_{(\alpha_i)}$ is a factor arising from the exponential:
  \begin{equation*}
    A_{(\alpha_{i})} \coloneq \prod_{\delta \in W_{\beta}}\frac{1}{|\{i \mid \nu(\alpha_{i})= \delta\}|!}.
  \end{equation*}
  Putting all these terms together gives
  \begin{equation*}
    DT^{\nu,\infty}_{\beta} = \sum_{(\alpha_{i}),\alpha'} T((\alpha_{i}),\alpha')z^{\beta}q^{c' + \sum c_{i}}.
  \end{equation*}

  We now claim that this is the expansion of a rational function with respect to $\deg$.
  To see this, we write out the formula for the Poisson bracket.
  This yields
  \begin{align*}
    T((\alpha_{i}),\alpha') =  A_{(\alpha_{i})} B_{(\alpha_{i}),\alpha'} \left(\prod_{i=1}^{r} J^{\nu}_{\alpha_{i}}\right) DT^{0}_{\alpha'}
  \end{align*}
  where
  \begin{equation}\label{eqn:ExpressionForB}
    B_{(\alpha_{i}),\alpha'} = \sigma^{\sum_{i < j}\chi(\alpha_{j},\alpha_{i}) + \sum_{i} \chi(\alpha_{i}, \alpha' - [\hO_{\X}])}\prod_{i = 1}^{r}\chi(\alpha_{i}, -[\hO_{\X}] + \alpha' + \sum_{j=1}^{i-1}\alpha_{j}).
  \end{equation}
  We emphasize that for the proof of rationality, the precise formula for $B_{(\alpha_{i}),\alpha'}$ is only important in that it depends quasi-polynomially on the classes $\alpha_{i}$.

  We partition these $T$-terms in groups as follows.
  A \emph{group} consists of the data of a class $\alpha' = (\beta',c')$, a sequence $(\beta_i)_{i=1}^{r}$, a sequence $(\kappa_i)_{i=1}^r$ where $\kappa_i \in N_{0}(\X)/\Z (A \cdot \beta_{i})$, and a subset $E \subseteq \{1, \ldots, r-1\}$.
  This data is required to satisfy the conditions
  \begin{itemize}
  \item $\beta = \beta' + \sum_{i=1}^{r}\beta_{i}$, and
  \item $J^{\nu}_{(\beta_{i},c_{i})} \not= 0$ for $c_i \in \kappa_i$ and $i = 1,2, \ldots, r$.
  \end{itemize}
  Note that for any class $(\beta'',c'') \in N_{\le 1}(\X)$, tensoring by $A$ induces an isomorphism $\uM^{\ss}_{\nu}(\beta'',c'') \cong \uM^{\ss}_{\nu}(\beta'', c'' + A \cdot \beta'')$.
  The invariant $J^{\nu}_{(\beta_i,c_i)}$ is thus independent of the choice of representative $c_i \in \kappa_i$, and we may write $J^{\nu}_{(\beta_i,\kappa_i)} \coloneq J^{\nu}_{(\beta_i,c_i)}$.

  Collecting all terms belonging to the group $(\alpha',(\beta_i),(\kappa_i),E)$, we obtain
  \begin{align*}
    C(\alpha',(\beta_{i}),(\kappa_{i}), E) = \sum_{c_{i}} T(r,((\beta_{i},c_{i})),\alpha')q^{c' + \sum c_{i}},
  \end{align*}
  where the sum is over all $c_{i} \in N_{0}(\X)$ such that
  \begin{align}\label{eq:SlopeConstraintsSigma}
    &c_i \in \kappa_i, \\
    &\label{eqn:SlopeConstraintInequalitySigma}
      \delta_{0} \le \nu(\beta_{1},c_{1}) \le \nu(\beta_{2},c_{2}) \le \cdots \le \nu(\beta_{r},c_{r}), \\
    &\label{eqn:SlopeConstraintEqualitySigma}
      \nu(\beta_{i},c_{i}) = \nu(\beta_{i+1},c_{i+1}) \Leftrightarrow i \in E.
  \end{align}
  Note that for such a choice of $c_i$, the factor $A_{((\beta_i,c_i))}$ defined above depends only on $E$.
  Indeed, set $\{n_i\} = \{1, \ldots, r\} \setminus E$ with $n_1 < n_2 < \ldots < n_{r - |E|}$.
  Then
  \begin{equation*}
    A_E \coloneq \prod \frac{1}{(n_{i}-n_{i-1})!} = A_{((\beta_i,c_i))}.
  \end{equation*}
  We find that the contribution of the group $(\alpha',(\beta_i),(\kappa_i),E)$ is
  \begin{equation*}
    C(\alpha',(\beta_i),(\kappa_i), E) = A_{E} \prod_{i=1}^{r} J^{\nu}_{\beta_i,\kappa_i} DT^{\nu,\delta_{0}}_{\alpha'} 
    \left(\sum_{c_i} B_{(\beta_i,c_i),\alpha'} q^{c' + \sum c_i}\right)
  \end{equation*}
  where the sum runs over the $c_i \in N_0(\X)$ satisfying the above conditions.
  
  Now, for every choice of $(\beta_i)$, $(\kappa_i)$, and $E$, there exists a sequence $(c_{i}^{0})$ with $c_{i}^{0} \in \kappa_{i}$ which is \emph{minimal} in the sense that replacing any $c_{i}^{0}$ with $c_{i}^{0} - A \cdot \beta_{i}$ would violate one of (\ref{eqn:SlopeConstraintInequalitySigma}) and (\ref{eqn:SlopeConstraintEqualitySigma}).
  We find
  \begin{equation*}
    C(\alpha',(\beta_i),(\kappa_i), E) = A_{E}  \prod_{i=1}^{r} J^{\nu}_{\beta_{i},\kappa_{i}} DT^{0}_{\alpha'} \left(\sum_{a_i} B_{(\beta_i ,c^0_i + a_i \beta_i \cdot A),\alpha'} q^{c'+\sum c_i^0+a_i\beta_i \cdot A}\right)
  \end{equation*}
  where the sum is over the set $S_E = \{0 \leq a_1 \leq a_2 \leq \ldots \leq a_r \mid a_i \in \Z,\, a_i = a_{i+1} \Leftrightarrow i \in E\}$.

  Since the Euler form is bilinear, we conclude by equation~\eqref{eqn:ExpressionForB} that $B$ depends quasi-polynomially on the $a_i$ with quasi-period $2$ (because of $\sigma$).
  Lemma \ref{thm:SumDefinedByInequalitiesIsARationalFunction} shows
  \begin{equation}\label{eq:Group_C_resummed_as_a_Rational_Function}
    C(\alpha',(\beta_i),(\kappa_i),E) = \frac{p}{\prod_{i\in [r] \setminus E} (1-\prod_{j = 1}^{i}q^{2\beta_{j}\cdot A})^{2i}}
  \end{equation}
  holds in $\Q[N_0(\X)]_{\deg}$ for some Laurent polynomial $p \in \Q[N_0(\X)]$.
  Moreover, it shows that the exponent is $2i$ because $\sum_{j \geq r-i+1} \deg_{a_j} B \leq 2i-1$.

  Finally, we claim that there are only finitely many such \emph{groups}, \ie, there are only finitely many non-trivial choices for the data of $(\alpha',(\beta_i), (\kappa_i),E)$.
  The sum of those rational functions is then $f_\beta(q)$.
  For the choice of $(\beta_i)$ and $E$, this is obvious.
  The claim for $\alpha'$ follows from part~(\ref*{enum:PNuFinitelyManyC}) of Proposition~\ref{thm:BoundednessOfRank1SigmaSemistables}, and the claim for $(\kappa_i)$ follows from part~(\ref*{enum:NuStableFinitelyManyC}) of Proposition~\ref{thm:SigmaDeltaHygiene}.
\end{proof}

\subsection{Duality properties of \texorpdfstring{$PT(\X)$}{stable pair invariants}}
\label{sec:PTDuality}
We establish a symmetry of $PT(\X)$ induced by the derived dualising functor $\D(-) = R\lHom(-,\hO_{\X})[2]$, analogous to the $q~\leftrightarrow~q^{-1}$ symmetry of $PT(Z)_{\gamma}(q)$ if $Z$ is a variety.
We fix a curve class $\beta \in N_1(\X)$.
Recall that if $F \in \Coh_{\leq 1}(\X)$ is of class $[F] = \beta$, then
\begin{equation*}
  p_{F}(k) = l(F)k + \deg(F)
\end{equation*}
denotes its modified Hilbert polynomial, where $l(F) = l(\beta)$ since $l(N_0(\X)) = 0$.

\begin{lem}\label{thm:DeltaPairsAreDualStablePairsAtMinusInfinity}
  Let $(\beta,c) \in N_{\leq 1}(\X)$ and $\delta \le 0$.
  If $\delta \le \deg(\beta,c) + M^{-}_{\le \beta}$, then we have $DT^{\nu,\delta}_{(\beta,c)} = PT(\X)_{\D(\beta,c)}$.
\end{lem}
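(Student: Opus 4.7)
The plan is to reduce this statement to Lemma~\ref{lem:Delta_Pairs_are_Stable_Pairs_at_Infinity} by applying the derived dualising functor $\D$ and using the symmetry $\D(\jP_{\nu,\delta}) = \jP_{\nu,-\delta}$ established in Lemma~\ref{thm:DualisingPreservesSigmaStables}. First I would perturb $\delta$ by a small irrational amount; by Lemma~\ref{lem:Walls_for_Delta_Pairs} this does not change the stack $\uP_{\nu,\delta}(\beta,c)$ (for $(\beta,c)$ fixed), and it allows us to assume $\delta \notin \Q$, which is the hypothesis of Lemma~\ref{thm:DualisingPreservesSigmaStables}.

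Next I would promote the equivalence of categories $\D\colon \jP_{\nu,\delta} \to \jP_{\nu,-\delta}$ to an isomorphism of moduli stacks. Since $\D(-) = \lRHom(-,\hO_{\X})[2]$ is a derived auto-equivalence of $D(\X)$ that behaves well in families (it is induced by a Fourier--Mukai kernel), it defines an automorphism of the stack $\Mum_{\X}$ of gluable complexes, cf.\ the proof of Proposition~\ref{thm:TFOpenImpliesPOpen}. Restricting this automorphism yields an isomorphism
\[
    \uP_{\nu,\delta}(\beta,c) \;\xrightarrow{\sim}\; \uP_{\nu,-\delta}\bigl(\D(\beta,c)\bigr).
\]
Because DT-invariants are Behrend-weighted Euler characteristics, which are intrinsic to the stack, this isomorphism immediately gives
\[
    DT^{\nu,\delta}_{(\beta,c)} \;=\; DT^{\nu,-\delta}_{\D(\beta,c)}.
\]

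To finish, I would apply Lemma~\ref{lem:Delta_Pairs_are_Stable_Pairs_at_Infinity} to the class $\D(\beta,c)$ at slope $-\delta$, which is large and positive under the hypothesis on $\delta$. That lemma requires $-\delta > \max\{0, \deg(\D(\beta,c)) - M^{-}_{\leq \D(\beta)}\}$. Using Lemma~\ref{lem:Modified_Hilbert_Polynomial_Dualising} to get $\deg(\D(\beta,c)) = -\deg(\beta,c)$, and using that $\D$ induces a degree-negating bijection between the sets defining $M^{\pm}_{\leq \beta}$ and $M^{\mp}_{\leq \D(\beta)}$, this translates precisely into the hypothesis stated in our lemma. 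Applying Lemma~\ref{lem:Delta_Pairs_are_Stable_Pairs_at_Infinity} then yields $DT^{\nu,-\delta}_{\D(\beta,c)} = PT(\X)_{\D(\beta,c)}$, and combining with the duality identity above produces the desired equality.

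The main obstacle is the bookkeeping of step three: one must check carefully that duality really does interchange the min/max degree bounds at $\delta = 0$, which amounts to verifying that $\D$ restricts to a bijection between $\uP_{\nu,0}$-pairs of class $\leq \beta$ and those of class $\leq \D(\beta)$ (taking a limit $\delta \to 0^{\pm}$ through irrationals to avoid the rationality hypothesis in Lemma~\ref{thm:DualisingPreservesSigmaStables}) and that $\D$ induces an involution on the poset of effective classes below $\beta$. Once these degree identities are in hand, the proof is a matter of substitution.
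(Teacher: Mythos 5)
Your proposal is correct and follows essentially the same route as the paper, whose entire proof is the one-line combination of Lemma~\ref{thm:DualisingPreservesSigmaStables} and Lemma~\ref{thm:sigmaInfinityIsPT} that you spell out in more detail (irrational perturbation of $\delta$, the duality automorphism of $\Mum_{\X}$, then the $\delta\to\infty$ identification with stable pairs). The only caveat is your final bookkeeping claim: applying Lemma~\ref{lem:Delta_Pairs_are_Stable_Pairs_at_Infinity} to the dual class naturally yields the bound $\delta < \deg(\beta,c) + M^{-}_{\le \D(\beta)}$, and your proposed min/max interchange would instead give $\deg(\beta,c) - M^{+}_{\le\beta}$, neither of which is literally the stated $\delta \le \deg(\beta,c) + M^{-}_{\le \beta}$ (there is also a wall at $\delta=0$, so $\D$ sends $\uP_{\nu,0}$-pairs to $\uP_{\nu,0^{+}}$-pairs of the dual class rather than to $\uP_{\nu,0}$-pairs); the paper glosses over exactly the same point, and the discrepancy is harmless since only ``$\delta$ sufficiently negative'' is ever used downstream.
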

\begin{proof}
  Combining Lemmas \ref{thm:DualisingPreservesSigmaStables} and \ref{thm:sigmaInfinityIsPT} yields
  \[
    DT^{\nu,\delta}_{(\beta,c)} = DT^{\nu,-\delta}_{\D(\beta,c)} = PT(\X)_{\D(\beta,c)}
  \]
  as required.
\end{proof}
As a consequence, we may define $DT^{\nu,-\infty}_{(\beta,c)} = DT^{\nu,\delta}_{(\beta, c)}$ for $\delta \ll 0$.

We now put a grading on $\Q[N_{0}(\X)]$ by $\deg(q^{c}) = \deg(c)$, and extend this to a grading on $\Q(N_{0}(\X))$ as in Section~\ref{sec:QuasiPolynomials}.
\begin{lem}\label{thm:degreeOfTheCTerm}
  Let $\beta \in N_1(\X)$ and assume that $\delta \leq -l(\beta)$.
  Then
  \begin{equation*}
    \deg\left(DT_{\beta}^{\nu,\delta} - DT_{\beta}^{\nu,\infty}\right) \le M^{+}_{\le \beta} + \delta l(\beta) + l(\beta)^{2}.
  \end{equation*}
  \end{lem}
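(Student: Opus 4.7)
The plan is to use the iterated wall-crossing expansion from the proof of Theorem~\ref{thm:PTIsRational} to write $DT^{\nu,\delta}_{\beta} - DT^{\nu,\infty}_{\beta}$ as a finite sum of rational functions, one for each ``group'', and then to bound the degree of each such rational function uniformly.

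First, I would apply Proposition~\ref{prop:WCusingTFpairs} iteratively from $\delta_{0} = \delta$ to $+\infty$, obtaining
\[
  DT^{\nu,\infty}_{\beta} - DT^{\nu,\delta}_{\beta} = \sum_{r\ge 1,\, (\alpha_{i}),\, \alpha'} T(r,(\alpha_{i}),\alpha')\, q^{c'+\sum c_{i}},
\]
where the sum ranges as in the proof of Theorem~\ref{thm:PTIsRational} with the extra condition $r \ge 1$. Following that proof, I would reorganise this sum into finitely many \emph{groups} indexed by $(\alpha',(\beta_{i}),(\kappa_{i}),E)$ and, using Lemma~\ref{thm:SumDefinedByInequalitiesIsARationalFunction}, re-express each group as a rational function
\[
  C(\alpha',(\beta_{i}),(\kappa_{i}),E) = A_{E}\prod_{i=1}^{r} J^{\nu}_{(\beta_{i},\kappa_{i})} \cdot DT^{\nu,\delta}_{\alpha'}\cdot q^{c'+\sum c_{i}^{0}}\cdot R,
\]
where $(c_{i}^{0})$ is the minimal representative and $R$ is a rational function with $\deg(R) \le -1$. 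Since there are finitely many groups, it suffices to bound $\deg C$ uniformly by $M^{+}_{\le \beta} + \delta l(\beta) + l(\beta)^{2}$.

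Next, I would exploit the non-vanishing conditions. Since $DT^{\nu,\delta}_{\alpha'}\ne 0$ requires $\uP_{\nu,\delta}(\alpha') \ne \varnothing$ and $\delta \le -l(\beta) \le 0$, Lemma~\ref{thm:vanishingOfUP} gives $\deg(\alpha') \le M^{+}_{\le \beta}$. The minimality of $(c_{i}^{0})$, together with the slope constraints (\ref{eqn:SlopeConstraintInequalitySigma}) and (\ref{eqn:SlopeConstraintEqualitySigma}), implies (setting $\nu(\alpha_{0}^{0}) = \delta$) that $\nu(\alpha_{i}^{0}) < \nu(\alpha_{i-1}^{0}) + 1$ whenever $i-1\notin E$ and $\nu(\alpha_{i}^{0}) = \nu(\alpha_{i-1}^{0})$ when $i-1\in E$, so by induction $\nu(\alpha_{i}^{0}) < \delta + i$. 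Because each $\alpha_{i}$ corresponds to a finite wall we have $\beta_{i}\ne 0$, hence $l(\beta_{i})\ge 1$, which forces $r \le \sum l(\beta_{i}) \le l(\beta)$.

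Combining these ingredients, $\delta + i \le \delta + l(\beta) \le 0$, so $(\delta+i)\, l(\beta_{i}) \le (\delta+i)$ for each $i$; summing,
\[
  \sum_{i=1}^{r} \deg(\alpha_{i}^{0}) = \sum_{i=1}^{r}\nu(\alpha_{i}^{0})\,l(\beta_{i}) < r\delta + \tfrac{r(r+1)}{2}.
\]
A short calculus check (using $\delta \le -l(\beta)$) shows that this is maximised, as $r$ ranges over $\{1,\dots,l(\beta)\}$, by a value at most $l(\beta)\delta + \tfrac{l(\beta)(l(\beta)+1)}{2} \le l(\beta)\delta + l(\beta)^{2}$. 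Adding $\deg(\alpha') \le M^{+}_{\le \beta}$ and using $\deg(R) < 0$ to absorb the correction coming from the splitting constant, I obtain $\deg C \le M^{+}_{\le \beta} + \delta l(\beta) + l(\beta)^{2}$. The hard part is really the combinatorial bookkeeping in the last step: ensuring that the extremal case $r = l(\beta)$ with $l(\beta_{i}) = 1$ indeed gives the largest degree and that this matches the stated bound, since the trade-off between $r$ and the $l(\beta_{i})$ pulls in opposite directions once the sign of $\delta+i$ is taken into account.
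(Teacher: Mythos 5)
Your route is the same as the paper's: iterate the wall-crossing formula, regroup the resulting terms into finitely many groups, bound each group's degree by $\deg(\beta',c') + \sum_{i}\deg(\beta_i,c_i^0)$, use Lemma~\ref{thm:vanishingOfUP} to get $\deg(\beta',c') \le M^{+}_{\le \beta}$, and use minimality of $(c_i^0)$ to get $\nu(\beta_i,c_i^0) < \delta + i$ together with $r \le l(\beta)$. Up to the last step this matches the paper's proof essentially line by line.

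The problem is the final ``short calculus check''. Writing $\phi(r) = r\delta + \tfrac{r(r+1)}{2}$, the increments are $\phi(r+1)-\phi(r) = \delta + r + 1 \le \delta + l(\beta) \le 0$ for $r+1 \le l(\beta)$, so under the hypothesis $\delta \le -l(\beta)$ the function $\phi$ is \emph{non-increasing} on $\{1,\dots,l(\beta)\}$ and its maximum is attained at $r=1$, with value $\delta+1$ --- not at $r = l(\beta)$ as you assert. Since $\delta + 1 - \bigl(l(\beta)\delta + l(\beta)^2\bigr) = (1-l(\beta))(\delta + 1 + l(\beta)) > 0$ whenever $l(\beta)\ge 2$ and $\delta < -(l(\beta)+1)$, your chain of inequalities does not deliver the stated bound for groups with $r < l(\beta)$ (e.g.\ $r=1$, $\beta_1 \ne \beta$): all your estimates give for such a group is degree $< M^{+}_{\le\beta} + \delta + 1$. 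To be fair, the paper's own proof makes exactly the same jump (it bounds $\sum_i \deg(\beta_i,c_i^0) < r(\delta + r)$ and then says ``as $r \le l(\beta)$, the claim follows''), so you have reproduced its argument faithfully, including its weakest point; but your explicit maximisation claim is the wrong way around. Note that the uniform bound which genuinely follows from these estimates, $M^{+}_{\le\beta} + \delta + l(\beta)^{2}$ (indeed $M^{+}_{\le\beta}+\delta+1$), is all that is used downstream in Lemma~\ref{thm:DTMinusInfinityIsDTInfinity}, where one only needs the right-hand side to tend to $-\infty$ as $\delta \to -\infty$; to obtain the bound with the coefficient $l(\beta)$ in front of $\delta$ exactly as stated, one would need a further argument handling the groups with $r < l(\beta)$.
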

\begin{proof}
  We use the notation as in the proof of Theorem~\ref{thm:PTIsRational}, so $\delta = \delta_{0}$.
  The function $DT_{\beta}^{\nu,\delta_{0}} - DT_{\beta}^{\nu,\infty}$ is a finite sum of rational functions of the form of equation~\eqref{eq:Group_C_resummed_as_a_Rational_Function}.
  By Lemma~\ref{thm:degreeClaimForRationalFunction}, the degree of these rational functions is less than $\deg(\beta,c' + \sum_{i=1}^{r}c_{i}^{0})$.

  By Lemma~\ref{thm:vanishingOfUP}, it thus suffices to show that
  \begin{equation*}
    \sum_{i=1}^{r} \deg(\beta_{i}, c_{i}^{0}) < \delta_{0} l(\beta) + l(\beta)^{2}.
  \end{equation*}
  We argue as follows.
  By minimality of the classes $c_{i}^{0} \in N_0(\X)$ satisfying condition \eqref{eq:SlopeConstraintsSigma}, \eqref{eqn:SlopeConstraintInequalitySigma}, \eqref{eqn:SlopeConstraintEqualitySigma}, we have inequalities
  \begin{equation*}
  \begin{split}
    \nu(\beta_{1},c_{1}^{0}) &< \delta_{0} + 1 \\
    \nu(\beta_{i},c_{i}^{0}) &\le \nu(\beta_{i-1},c_{i-1}^{0}) + 1 \quad \text{ for }i > 1,
  \end{split}
  \end{equation*}
  for if not one could replace some $c_{i}^{0}$ by $c_{i}^{0} - \beta_{i}\cdot A$.
  Thus $\nu(\beta_{i},c_{i}^{0}) \le \delta_{0} + i$, and so
  \[
    \deg(\beta_{i},c_{i}^{0}) < (\delta_{0} + i)l(\beta_{i}) \le (\delta_{0} + r)l(\beta_{i})
      \le \delta_{0} + r
  \]
  since $\delta_{0} + r < 0$.
  As $\beta = \beta_1 + \ldots + \beta_r$ implies that $r \le l(\beta)$, the claim follows.
\end{proof}

\begin{lem}
  \label{thm:DTMinusInfinityIsDTInfinity}
  For each class $\beta \in N_{1}(\X)$, the series
  \[
    DT^{\nu,-\infty}_{\beta} = \sum_{c \in N_{0}(\X)} DT^{\nu,-\infty}_{\beta,c}q^{c}
  \]
  is the expansion in $\Q[N_{0}(\X)]_{-\deg}$ of the rational function $PT(\X)_{\beta}$.
\end{lem}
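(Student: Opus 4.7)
The plan is to apply Lemma~\ref{thm:QuasipolynomialDifferenceIsANewExpansion} with $L_{-} = \deg$, $L_{+} = -\deg$, the rational function $f = f_{\beta}$ from Theorem~\ref{thm:PTIsRational} (so $f_{L_{-}} = PT(\X)_{\beta}$), and candidate expansion $f' = DT^{\nu,-\infty}_{\beta}$. A suitable shift vector is any $c_{0} \in N_{0}(\X)$ with $\deg(c_{0}) < 0$, for instance $c_{0} = -[\hO_{x}]$ for a non-stacky point $x \in \X$ (possible because every effective class has positive $\deg$). Once the two hypotheses of that lemma are checked, its conclusion $f' = f_{L_{+}}$ is precisely the statement of the present lemma.

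First, I verify that $f' \in \Z[N_{0}(\X)]_{-\deg}$. By Lemma~\ref{thm:DeltaPairsAreDualStablePairsAtMinusInfinity}, $DT^{\nu,-\infty}_{\beta,c} = PT(\X)_{\D(\beta,c)}$; by Lemma~\ref{lem:Modified_Hilbert_Polynomial_Dualising}, $\deg \circ \D = -\deg$ on $N_{0}(\X)$. The $\deg$-boundedness of $\supp PT(\X)_{\D\beta}$ (Theorem~\ref{thm:PTIsRational}) therefore translates, under pullback along $c \mapsto \D(\beta,c)$, into $-\deg$-boundedness of $\supp DT^{\nu,-\infty}_{\beta}$.

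The crux is to show that for each $c \in N_{0}(\X)$, the sequence $a_{k} := PT(\X)_{\beta,c+kc_{0}} - DT^{\nu,-\infty}_{\beta,c+kc_{0}}$ is quasi-polynomial in $k \in \Z$. For each $\delta_{0} \leq -l(\beta)$, the iterated wall-crossing identity from the proof of Theorem~\ref{thm:PTIsRational} produces a rational function $R_{\delta_{0}}$ whose $\deg$-expansion equals $PT(\X)_{\beta} - DT^{\nu,\delta_{0}}_{\beta}$, with $\deg(R_{\delta_{0}}) \leq D(\delta_{0}) := M^{+}_{\leq\beta} + \delta_{0} l(\beta) + l(\beta)^{2}$ by Lemma~\ref{thm:degreeOfTheCTerm}. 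For $\delta_{0} \leq \deg(\beta, c+kc_{0}) + M^{-}_{\leq\beta}$, Lemma~\ref{thm:DeltaPairsAreDualStablePairsAtMinusInfinity} gives $DT^{\nu,\delta_{0}}_{\beta, c+kc_{0}} = DT^{\nu,-\infty}_{\beta, c+kc_{0}}$, so $a_{k} = [R_{\delta_{0}}]_{c+kc_{0}}$ in the $\deg$-expansion. The denominators appearing in $R_{\delta_{0}}$ are products of cyclotomic-type factors $(1-q^{m})$ with $\deg(m) > 0$, hence both the $\deg$- and $-\deg$-expansions of $R_{\delta_{0}}$ exist, and the difference of their coefficients at $q^{c+kc_{0}}$ is quasi-polynomial in $k$ (each factor $1/(1-q^{m})^{e}$ contributes a quasi-polynomial indicator, and products thereof remain quasi-polynomial, exactly as in the example following Lemma~\ref{thm:QuasipolynomialDifferenceIsANewExpansion}). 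The degree bound forces the $-\deg$-expansion coefficient at $q^{c+kc_{0}}$ to vanish whenever $\deg(c+kc_{0}) > D(\delta_{0})$, hence for all $k$ below a threshold $K_{1}(\delta_{0})$ tending to $+\infty$ as $\delta_{0} \to -\infty$. Consequently, for any fixed $k$, choosing $\delta_{0}$ sufficiently negative makes $a_{k}$ equal to the quasi-polynomial difference evaluated at $k$; the resulting quasi-polynomials for different $\delta_{0}$ agree on an infinite common range (since the left-hand side $a_{k}$ does not depend on $\delta_{0}$) and therefore coincide as quasi-polynomials, giving a single quasi-polynomial $P$ with $a_{k} = P(k)$ for all $k \in \Z$.

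The main obstacle is the bookkeeping in that last step: identifying the $-\deg$-expansion of $R_{\delta_{0}}$ term by term, showing coefficient-wise that its difference with the $\deg$-expansion is genuinely quasi-polynomial along any line $c + kc_{0}$, and ensuring the periods of the quasi-polynomials obtained are uniformly bounded across $\delta_{0}$ (which follows because the monomials $m$ entering the denominators come from the finite set of effective classes $\leq \beta$, so $\textrm{lcm}$s of periods stay controlled as $\delta_{0} \to -\infty$).
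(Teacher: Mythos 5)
Your opening reduction is fine (and matches the paper's first step): via Lemma~\ref{thm:DeltaPairsAreDualStablePairsAtMinusInfinity} and Lemma~\ref{lem:Modified_Hilbert_Polynomial_Dualising} one gets $DT^{\nu,-\infty}_{\beta} \in \Z[N_0(\X)]_{-\deg}$. The gap is in the crux: the quasi-polynomiality hypothesis of Lemma~\ref{thm:QuasipolynomialDifferenceIsANewExpansion} is in general \emph{false} for a direction $c_0$ chosen only subject to $\deg(c_0)<0$, such as $c_0=-[\hO_x]$. Toy example in two variables $x,y$ of positive degree: the difference of the $\deg$- and $(-\deg)$-expansions of $1/(1-x)$ is $\sum_{n \in \Z} x^{n}$, so along the line $x^{a}y^{b-k}$ (direction $c_0=-[y]$) the coefficient is the indicator of $k=b$ --- finitely supported and nonzero, hence not quasi-polynomial; for $1/\bigl((1-x)(1-y)\bigr)$ one gets a step function in $k$, again not quasi-polynomial. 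Your justification (``each factor contributes a quasi-polynomial indicator, and products thereof remain quasi-polynomial'') is not valid: the difference of expansions of a product is not the product of the differences, and coefficients along a line transverse to the exponent $m$ of a factor $(1-q^{m})$ are not quasi-polynomial. In the situation at hand the denominators of $R_{\delta_0}$ (and of $f_\beta$) are built from $(1-q^{2\beta'\cdot A})$ for the various partial sums $\beta'\le\beta$, which span several independent directions of $N_0(\X)$ once $\X$ is a genuine orbifold, so no single choice of $c_0$ rescues the argument. Indeed, granting the (true) conclusion of the lemma, the actual difference $PT(\X)_\beta - DT^{\nu,-\infty}_\beta$ restricted to a line $c+\Z c_0$ transverse to these exponents is finitely supported or step-like, so it could only be quasi-polynomial if it vanished identically, which there is no reason for: the hypothesis you need is genuinely false, not merely hard to check. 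This is precisely why the paper invokes Lemma~\ref{thm:QuasipolynomialDifferenceIsANewExpansion} only in Theorem~\ref{thm:CrossingBigWallPreservesRationalFunction}, where the wall-crossing involves only multiples of the single class $\beta_\gamma$, the denominators are powers of factors in the direction $c_\gamma=\beta_\gamma\cdot A$, and the line direction is taken to be $c_\gamma$ itself.

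The paper's proof of this lemma avoids Lemma~\ref{thm:QuasipolynomialDifferenceIsANewExpansion} altogether. Duality gives more than membership in $\Z[N_0(\X)]_{-\deg}$: applying Theorem~\ref{thm:PTIsRational} to $\D(\beta)$ and dualising shows $DT^{\nu,-\infty}_\beta$ is the $(-\deg)$-expansion of \emph{some} rational function, so it suffices to prove $DT^{\nu,\infty}_\beta = DT^{\nu,-\infty}_\beta$ in $\Q(N_0(\X))$. This is done by a degree estimate: for $\delta\ll 0$ the series $DT^{\nu,\delta}_\beta$ is a Laurent polynomial (Proposition~\ref{thm:BoundednessOfRank1SigmaSemistables}), Lemma~\ref{thm:DeltaPairsAreDualStablePairsAtMinusInfinity} gives $\deg\bigl(DT^{\nu,\delta}_\beta - DT^{\nu,-\infty}_\beta\bigr) \le -M^{-}_{\le\beta}+\delta-\deg(\beta,0)$, Lemma~\ref{thm:degreeOfTheCTerm} gives $\deg\bigl(DT^{\nu,\delta}_\beta - DT^{\nu,\infty}_\beta\bigr) \le M^{+}_{\le\beta}+\delta l(\beta)+l(\beta)^2$, and letting $\delta\to-\infty$ forces the difference of the two $\delta$-independent rational functions to vanish. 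You already have all these ingredients in your write-up (the bound $D(\delta_0)$ and the stabilisation of coefficients for $\delta_0\ll 0$); routing them through the quasi-polynomial lemma is where the argument breaks, whereas the direct degree comparison closes the proof.
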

\begin{proof}
  Lemma~\ref{thm:DeltaPairsAreDualStablePairsAtMinusInfinity} together with Theorem \ref{thm:PTIsRational} shows that $DT^{\nu,-\infty}(\X)_{\beta}$ is the expansion in $\Q[N_{0}(\X)]_{-\deg}$ of a rational function, and so it suffices to show that $DT^{\nu,\infty}_{\beta} - DT^{\nu,-\infty}_{\beta} = 0$ in $\Q(N_{0}(\X))$.

  Now let $\delta \ll 0$.
  On the one hand, Lemma~\ref{thm:DeltaPairsAreDualStablePairsAtMinusInfinity} shows that $DT^{\nu,\delta}_{(\beta, c)} = DT^{\nu,-\infty}_{(\beta,c)}$ if $\deg(\beta,c) \ge - M_{\le \beta}^{-} + \delta$.
  It follows that
  \[
    \deg\left(DT_{\beta}^{\nu,\delta} - DT_{\beta}^{\nu,-\infty}\right) \le - M_{\le \beta}^{-} + \delta - \deg(\beta,0).
  \]
  On the other hand, by taking $\delta < - l(\beta)$ smaller if necessary, Lemma~\ref{thm:degreeOfTheCTerm} yields
  \[
    \deg\left(DT_{\beta}^{\nu,\delta} - DT_{\beta}^{\nu,\infty}\right) \le M^{+}_{\le \beta} + \delta l(\beta) + l(\beta)^{2}.
  \]
  Combining these two bounds, we obtain
  \[
    \deg\left(DT_{\beta}^{\nu,\infty}- DT_{\beta}^{\nu,-\infty}\right) \le \max\{-M_{\le \beta}^{-} + \delta - \deg(\beta, 0), M^{+}_{\le \beta} + \delta l(\beta) + l(\beta)^{2}\}.
  \]
  Letting $\delta \to -\infty$ now implies that $DT_{\beta}^{\nu,\infty} = DT_{\beta}^{\nu,-\infty}$ in $\Q(N_0(\X))$.
\end{proof}

\begin{prop}
  \label{thm:dualityResultForPT}
  We have an equality of rational functions
  \[
    \D(z^{\beta}f_{\beta}(q)) = z^{\D(\beta)}f_{\D(\beta)}(q).
  \]
  Equivalently, the function
  \[
    PT(\X) = \sum_{\beta \in N_{1}(\X)} \sum_{c \in N_{0}(\X)} PT(\X)_{(\beta,c)} z^{\beta}q^{c}
  \]
  is invariant under the involution $\D(-)$, when the $q$-parts of the series are thought of as the rational functions $f_{\beta}(q)$.
\end{prop}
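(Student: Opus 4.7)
My plan is to deduce the duality by reading the rational function $f_\beta$ through its two opposing Laurent expansions and observing that the involution $\D$ precisely swaps these two expansions. The key ingredients are already in place: Theorem~\ref{thm:PTIsRational} identifies $PT(\X)_\beta$ with the $\deg$-expansion of $f_\beta$, Lemma~\ref{thm:DTMinusInfinityIsDTInfinity} identifies $DT^{\nu,-\infty}_\beta$ with its $(-\deg)$-expansion, and Lemma~\ref{thm:DeltaPairsAreDualStablePairsAtMinusInfinity} identifies the coefficients of the latter as $PT(\X)_{\D(\beta,c)}$.

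First I would record these three facts together, obtaining that the expansion of $z^\beta f_\beta(q)$ in $\Q[N_0(\X)]_{-\deg}$ equals $\sum_{c} PT(\X)_{\D(\beta,c)}\, z^\beta q^c$. Second, I would apply the involution $\D$ termwise. To do this rigorously, one needs the compatibility $\deg \circ \D|_{N_0(\X)} = -\deg$, which comes out of the modified Hilbert polynomial identity $p_{\D(F)}(k) = -p_F(-k)$ of Lemma~\ref{lem:Modified_Hilbert_Polynomial_Dualising} evaluated at $k=0$ for $F \in \Coh_0(\X)$. This compatibility upgrades $\D$ to an isomorphism $\Q[N_0(\X)]_{-\deg} \to \Q[N_0(\X)]_{\deg}$ that commutes with Laurent expansion of rational functions, so the termwise-applied series will genuinely represent $\D(z^\beta f_\beta)$ in the $\deg$-completion.

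Third, I would identify the resulting series. For fixed $\beta$, writing $\D(\beta,c) = (\D(\beta), \gamma(c))$ under the chosen splitting, the assignment $c \mapsto \gamma(c)$ is a bijection of $N_0(\X)$ by involutivity of $\D$, and reindexing the termwise-$\D$-applied series by $\gamma$ yields
\[
  \D(z^\beta f_\beta(q)) \;=\; z^{\D(\beta)}\sum_{\gamma \in N_0(\X)} PT(\X)_{(\D(\beta),\gamma)}\, q^{\gamma} \;=\; z^{\D(\beta)} PT(\X)_{\D(\beta)},
\]
which is precisely the expansion of $z^{\D(\beta)} f_{\D(\beta)}(q)$ in $\Q[N_0(\X)]_{\deg}$. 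Uniqueness of Laurent expansions then forces equality as rational functions, and summing over $\beta \in N_1(\X)$ gives the invariance statement for $PT(\X)$. The main obstacle I anticipate is not any single lemma but the bookkeeping around the two completions together with the fact that the splitting $N_{\le 1}(\X) = N_1(\X) \oplus N_0(\X)$ is not compatible with $\D$; keeping track of the resulting shift in $\gamma(c)$ and verifying the reindexing is the only place where careful attention is required.
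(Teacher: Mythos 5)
Your proposal is correct and follows essentially the same route as the paper: the paper's proof is exactly the combination of Lemma~\ref{thm:DeltaPairsAreDualStablePairsAtMinusInfinity} (identifying $DT^{\nu,-\infty}$ with $\D(PT(\X))$) and Lemma~\ref{thm:DTMinusInfinityIsDTInfinity} (identifying $DT^{\nu,-\infty}_{\beta}$ as the $(-\deg)$-expansion of $f_{\beta}$), which are precisely your two main steps. The extra care you take with $\deg\circ\D = -\deg$ on $N_{0}(\X)$ and the reindexing forced by the $\D$-incompatible splitting is bookkeeping the paper leaves implicit, and your handling of it is sound.
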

\begin{proof}
  By Lemma~\ref{thm:DeltaPairsAreDualStablePairsAtMinusInfinity}, we have $\D(PT(\X)) = DT^{\nu,-\infty}$, and by Lemma~\ref{thm:DTMinusInfinityIsDTInfinity} the equality of rational functions $DT^{\nu,\infty}_{\beta} = DT^{\nu,-\infty}_{\beta}$ holds for every $\beta \in N_{1}(\X)$.
\end{proof}



\section{The crepant resolution conjecture}
\label{sec:zetaStability}
In the final two sections, we prove the crepant resolution conjecture, in the form stated in Theorem~\ref{thm:MainTheorem}.
Recall that $\X$ is a CY3 orbifold in the sense of Section~\ref{suborbifolds}, and that $f \colon Y \to X$ denotes the distinguished crepant resolution of its coarse moduli space.
From this point on, we moreover assume that $\X$ satisfies the hard Lefschetz condition of Section~\ref{sec:hardLefschetz}.

We define a stability condition $\zeta$ on $\Coh_{\le 1}(\X)$ whose stability function takes values in $(-\infty,\infty]^{2}$.
Associated with $\zeta$, we obtain a two-parameter family of torsion pairs $(\Tz, \Fz)$ with $\gamma \in \R_{> 0}$ and $\eta \in \R$.
The associated $(\Tz,\Fz)$-pairs interpolate between stable pairs on $\X$ for $\gamma \gg 0$, and Bryan--Steinberg pairs relative to $f$ for $0 < \gamma \ll 1$.

Let $\beta \in N_{1}(\X)$.
The series $DT^{{\zeta, (\gamma, \eta)}}_{\le \beta}$ has a wall-crossing behaviour as described in Section~\ref{Intro_PT_vs_BS} and illustrated by its diagram.
The key step consists in controlling the series as a $\gamma$-wall is crossed.
This involves crossing countably infinitely many $\eta$-walls, and relates $DT_{\beta}^{\zeta, (\gamma,\infty)}$ to $DT_{\beta}^{\zeta, (\gamma,-\infty)}$ via the wall-crossing formula in a way which is similar to the passage from $DT^{\nu, \infty}$ to $DT^{\nu, -\infty}$ in Section~\ref{sec:rationalityOfStablePairs}.
However, the required boundedness results are more subtle as compared to $DT^{\nu, \delta}$.
In particular, the series $DT^{{\zeta, (\gamma, \eta)}}_{\le \beta}$ is in general not a Laurent polynomial.

We argue as follows.
Given a wall $\gamma \in V_{\beta}$, we show that there is a unique curve class $\beta_{\gamma} \le \beta$ for which $J^{\zeta}_{(\beta_{\gamma},c)}$ can contribute to the wall-crossing formula relating $DT_{\le \beta}^{\zeta, (\gamma, \infty)}$ to $DT_{\le \beta}^{\zeta, (\gamma, -\infty)}$.
Setting $c_{\gamma} = \beta_{\gamma} \cdot A$, we organise the wall-crossing in sub-series $DT^{\zeta, (\gamma,\eta)}_{\beta, c + \Z c_{\gamma}}$ of $DT^{\zeta, (\gamma, \eta)}_{\beta}$ for each class $c \in N_0(\X) / \Z c_{\gamma}$, defined by
\[
  DT_{\beta, c + \Z c_{\gamma}}^{\zeta, (\gamma,\eta)} = \sum_{k \in \Z} DT_{(\beta, c + kc_{\gamma})}^{\zeta, (\gamma,\eta)} z^{\beta}q^{c + kc_{\gamma}}.
\]
Through various boundedness results, we then show that $DT_{\beta, c + \Z c_{\gamma}}^{\zeta, (\gamma,\eta)}$ is a Laurent polynomial for any $\eta \in \R$.
An argument similar to that of Section~\ref{sec:rationalityOfStablePairs} shows that the limits $DT_{\beta, c + \Z c_{\gamma}}^{\zeta, (\gamma,\infty)}$ and $DT_{\beta, c + \Z c_{\gamma}}^{\zeta, (\gamma,-\infty)}$ exist and are equal as rational functions.
An application of Lemma~\ref{thm:QuasipolynomialDifferenceIsANewExpansion} then shows $DT_{\beta}^{\zeta, (\gamma,\infty)}$ and $DT_{\beta}^{\zeta, (\gamma,\infty)}$ are also equal as rational functions.
Finally, we slide off the $\gamma$-wall and show that
\[
DT_{\beta}^{\zeta,(\gamma,\pm \infty)} = DT_{\beta}^{\zeta,(\gamma \pm \epsilon,\eta)}
\]
for $0 < \epsilon \ll 1$ and $\eta \in \R$, thus completing the $\gamma$-wall-crossing.


\subsection{\texorpdfstring{$\zeta$}{Zeta}-stability}
For the remainder of the paper, we fix a generic ample class $\omega \in N^{1}(Y)_{\R}$.
Recall that $\Psi \colon D(\X) \to D(Y)$ is the inverse of the McKay correspondence.
To avoid confusion, we write $\deg_Y$ for the usual degree of zero-cycles on $Y$.


We now define a stability condition $\zeta$ on $\Coh_{\le 1}(\X)$.
\begin{defn}\label{def:Zeta_Stability}
  Define $\zeta \colon N_{1}^{\eff}(\X) \setminus \{0\} \to (-\infty,+\infty]^2$ by
  \begin{equation}\label{eq:Zeta_Stability_Condition}
    \zeta(\beta,c) = \left(-\dfrac{\deg_{Y}(\ch_{2}(\Psi(A \cdot \beta)) \cdot \omega)}{\deg(A \cdot \beta)},\nu(\beta,c)\right) \in (-\infty,+\infty]^2
  \end{equation}
  if $\beta \not= 0$, and let
  \[
    \zeta(0,c) = (\infty,\infty).
  \]
  We think of $(\infty,\infty]^2$ as a totally ordered set via the lexicographical ordering, so $(a,b) \le (a',b')$ if $a < a'$, or if $a = a'$ and $b \leq b'$.
\end{defn}
\begin{lem}
  The category $\Coh_{\leq 1}(\X)$ is $\zeta$-Artinian.
\end{lem}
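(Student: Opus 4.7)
The plan is to exploit the lexicographic structure of $\zeta = (\zeta_{1}, \nu)$ to separate the chain condition into a coarse part (governed by the curve class) and a fine part (governed by the Nironi slope), and then appeal to already-established results.

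First I would observe that $\zeta_{1}(F)$ depends only on $\beta_{F}$, so the relevant monotonicity along a chain $F_{0} \supset F_{1} \supset \ldots$ with $\zeta(F_{i}) \geq \zeta(F_{i-1})$ splits naturally. Since each quotient $Q_{i} = F_{i-1}/F_{i}$ lies in $\Coh_{\leq 1}(\X)$, the curve classes $\beta_{F_{i}}$ form a weakly decreasing sequence in the effective order. By Corollary~\ref{lem:Effective_Cone_Convex} the set $\{\beta' \in N_{1}^{\eff}(\X) \mid \beta' \leq \beta_{F_{0}}\}$ is finite, so the sequence $\beta_{F_{i}}$ stabilises at some $\beta^{*}$ after finitely many steps. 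From that point on $\beta_{Q_{i}} = 0$, i.e.\ $Q_{i} \in \Coh_{0}(\X)$.

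Next I would split into two cases according to $\beta^{*}$. If $\beta^{*} = 0$, then the tail of the chain lives in $\Coh_{0}(\X)$, and since every $0$-dimensional coherent sheaf has finite length, $\Coh_{0}(\X)$ is Artinian and the chain must stabilise. If $\beta^{*} \neq 0$, then $\nu(F_{i}) \in \Q$ is finite and $\zeta_{1}(F_{i}) = \zeta_{1}(\beta^{*})$ is constant, so the lexicographic order makes the hypothesis $\zeta(F_{i}) \geq \zeta(F_{i-1})$ equivalent to $\nu(F_{i}) \geq \nu(F_{i-1})$. However, a nonzero $Q_{i} \in \Coh_{0}(\X)$ has $\nu(Q_{i}) = +\infty$, and applying the see-saw property of $\nu$ to $0 \to F_{i} \to F_{i-1} \to Q_{i} \to 0$ would force $\nu(F_{i}) < \nu(F_{i-1})$, a contradiction. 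Hence $Q_{i} = 0$ and the chain stabilises.

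There is no real obstacle here: once one notices that the lex order lets the curve-class stabilisation come first (bounded by the finiteness in Corollary~\ref{lem:Effective_Cone_Convex}), both surviving cases reduce to standard facts — Artinianity of $\Coh_{0}(\X)$ in one case and see-saw on a finite-slope object with an infinite-slope $0$-dimensional quotient in the other. The only point to be mindful of is the convention $\zeta(0,c) = (\infty,\infty)$, which makes the argument trivially handle chains that pass through the zero object.
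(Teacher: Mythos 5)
Your proof is correct, and its main step coincides with the paper's: both use that the classes $\beta_{F_i}$ decrease in the effective order and that, by Corollary~\ref{lem:Effective_Cone_Convex}, only finitely many effective classes lie below $\beta_{F_0}$, so one may pass to a tail of the chain on which the curve class, and hence the first lexicographic component of $\zeta$, is constant. Where you diverge is in the finish: the paper disposes of the resulting chain with weakly increasing $\nu$ by citing the Harder--Narasimhan theory for $\nu$ (Proposition~\ref{thm:NironiSlopeIsJoyce}, i.e.\ Nironi's results), whereas you argue directly that each successive quotient has vanishing curve class, hence is $0$-dimensional, and then either the tail lies in $\Coh_0(\X)$, which is Artinian by finite length, or the see-saw property for $\nu$ (a nonzero $0$-dimensional quotient having slope $+\infty$) forces $\nu(F_i) < \nu(F_{i-1})$, so the quotients must vanish and the chain terminates. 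Your version is more self-contained and, as a bonus, treats the $\beta^* = 0$ tail explicitly, a case the paper's one-line citation passes over silently; the paper's version is shorter because it outsources the $\nu$-chain condition to the already-established stability theory for $\nu$.
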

\begin{proof}
  Let $F \in \Coh_{\le 1}(\X)$, and assume for a contradiction that $F = F_{0} \supset F_{1} \supset \ldots$ is an infinite chain of subobjects with $\zeta(F_{i}) \ge \zeta(F_{i-1})$ for all $i$.
  Now, as $\beta_{F_{i}} \le \beta_{F_{i-1}}$ and the set $\{ \beta \mid 0 \le \beta \le \beta_{F}\}$ is finite, we may reduce to the case where $\beta_{F_{i}} = \beta_{F}$ for all $i$.
  But then $\zeta(F_{i}) \ge \zeta(F_{i-1})$ implies that $\nu(F_{i}) \ge \nu(F_{i-1})$ for all $i$, which is impossible by the existence of Harder--Narasimhan filtrations for $\nu$.
\end{proof}
It is easy to see that $\zeta$ satisfies the see-saw property, and so we deduce
\begin{cor}\label{cor:Zeta_Is_A_Stability_Conditions}
  The function $\zeta$ defines a stability condition on $\Coh_{\leq 1}(\X)$.
\end{cor}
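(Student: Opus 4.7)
By Definition \ref{defn:Stability} we must verify two properties of $\zeta$: the see-saw property and the existence of Harder--Narasimhan filtrations on $\Coh_{\leq 1}(\X)$.

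\textbf{Step 1 (see-saw).} Let $0 \to A \to B \to C \to 0$ be a short exact sequence in $\Coh_{\leq 1}(\X)$, so that $\beta_B = \beta_A + \beta_C$ as classes in $N_1^{\eff}(\X)$. Writing $\zeta = (\zeta_1, \zeta_2)$ with $\zeta_2 = \nu$, the first coordinate has the form $\zeta_1(F) = p(\beta_F)/q(\beta_F)$ where $p, q$ are group homomorphisms $N_1(\X) \to \R$, and where $q(\beta) = \deg(A \cdot \beta) > 0$ for every $0 \neq \beta \in N_1^{\eff}(\X)$ since $A$ is ample. One then does a case analysis. If $\beta_A$ and $\beta_C$ are both non-zero, the standard ratio-comparison argument for a slope built from two additive functions yields the trichotomy for $\zeta_1(A), \zeta_1(B), \zeta_1(C)$. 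If $\beta_A = 0$ (resp.\ $\beta_C = 0$), then $\zeta_1(A) = +\infty$ (resp.\ $\zeta_1(C) = +\infty$) and $\zeta_1(B) = \zeta_1(C)$ (resp.\ $\zeta_1(B) = \zeta_1(A)$); in these cases one drops to the lexicographically second coordinate $\nu$, for which the see-saw property was already established in Proposition \ref{thm:NironiSlopeIsJoyce}. Combining the trichotomies in $\zeta_1$ and in $\nu$ with the lexicographic order on $(-\infty,+\infty]^2$ produces the required trichotomy for $\zeta$.

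\textbf{Step 2 (HN filtrations).} The category $\Coh_{\leq 1}(\X)$ is Noetherian, and the preceding unnumbered lemma proves it is $\zeta$-Artinian. As noted in the remark following Definition \ref{defn:Stability}, these two properties together with the see-saw property imply the existence of HN filtrations with respect to $\zeta$; this is Joyce's criterion \cite[Thm.~4.4]{MR2354988}, applied here to the slope function $\zeta$ valued in the totally ordered set $(-\infty,+\infty]^2$.

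\textbf{Main obstacle.} There is essentially no serious obstacle: the work has already been done in the $\zeta$-Artinian lemma, which reduces by finiteness of $\{\beta' \leq \beta_F\}$ (Corollary \ref{lem:Effective_Cone_Convex}) to the Noetherian category of $\nu$-objects. The only points requiring a moment's care are the lexicographic bookkeeping in Step 1 (handling boundary cases when $\beta_A$ or $\beta_C$ vanishes, so that $\zeta_1$ takes the value $+\infty$), and ensuring that the cited criterion of \cite[Thm.~4.4]{MR2354988} applies to slope functions valued in a general totally ordered set rather than just in $\R$; this extension is routine given that all arguments depend only on strict inequalities and the existence of maximal destabilising subobjects, both of which make sense in $(-\infty,+\infty]^2$.
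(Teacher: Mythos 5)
Your proposal is correct and follows exactly the route the paper takes: the see-saw property (which the paper dismisses as "easy to see" and you spell out via the mediant argument plus the lexicographic boundary cases) combined with the preceding $\zeta$-Artinian lemma, Noetherianity of $\Coh_{\leq 1}(\X)$, and the criterion of \cite[Thm.~4.4]{MR2354988} noted in the remark after Definition \ref{defn:Stability}. Your closing caveat about that criterion applying to slopes valued in an arbitrary totally ordered set is exactly the (routine) point the paper implicitly relies on, so there is no gap.
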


Let $(\gamma,\eta) \in \R_{> 0} \times \R$.
We obtain a family of torsion pairs on $\Coh_{\leq 1}(\X)$ by collapsing the Harder--Narasimhan filtration of $\zeta$-stability:
\begin{equation}
  \begin{split}
    \Tz &\coloneq \{T \in \Coh_{\leq 1}(\X) \,|\, T \onto Q \neq 0
    \Rightarrow \zeta(Q) \geq (\gamma,\eta) \} \\
    \Fz &\coloneq \{F \in \Coh_{\leq 1}(\X) \,|\, 0 \neq S \into F
    \Rightarrow \zeta_{\gamma,\eta}(F) < (\gamma,\eta) \}.
  \end{split}
\end{equation}
We write $\jP_{\zeta,(\gamma,\eta)} \subset \A$ for the full subcategory of $(\Tz,\Fz)$-pairs in the sense of Definition~\ref{def:TFpair}.

For any $\gamma > 0$, we define the linear function $L_{\gamma} \colon N_{0}(\X) \to \R$ by
\begin{equation}\label{deq:Definition_Lgamma}
  L_{\gamma}(c) = \deg(c) + \gamma^{-1}\deg_{Y}(\ch_{2}(\Psi(c)) \cdot \omega).
\end{equation}

\begin{rmk}
  The function $L_{\gamma}$ controls the series expansion, in the sense of Definition \ref{def:Expansion_Rational_Function_wrt_L}, of the rational functions $f_{\beta}(q) \in \Q(N_{0}(\X))$ of Theorem~\ref{thm:PTIsRational}.
  Roughly speaking, this means that a class $c \in N_{0}(\X)$ is thought of as ``effective'' in the expansion of $f_{\beta}(q)$ at $(\gamma,\eta)$ if $L_{\gamma}(c) > 0$.

  It is easy to see that when $\gamma \gg 0$, which corresponds to stable pairs on $\X$, we have $L_{\gamma}(c) > 0$ if $c$ is effective.
  Similarly, when $0 < \gamma \ll 1$, which corresponds to Bryan--Steinberg pairs on $Y$, we have $L_{\gamma}(c) > 0$ if $\Psi(c)$ is effective.
\end{rmk}

\begin{rmk}
  For $\beta \not= 0$, if $\zeta(\beta,c) = (\zeta_{1}(\beta),\nu(\beta,c))$, then $\gamma - \zeta_{1}(\beta)$ has the same sign as $L_{\gamma}(\beta \cdot A)$.
  So the torsion pair $(\Tz, \Fz)$ can equivalently be described as
  \begin{equation}
    \begin{split}
      \Tz \coloneq &\{T \in \Coh_{\leq 1}(\X) \,|\, T \onto Q \neq 0
          \Rightarrow L_{\gamma}(Q \cdot A) < 0\} \\
          = &\{T \in \Coh_{\leq 1}(\X) \,|\, T \onto Q \neq 0
          \Rightarrow L_{\gamma}(Q \cdot A) = 0,\, \nu(Q) \ge \eta\} \\
      \Fz \coloneq &\{F \in \Coh_{\leq 1}(\X) \,|\, 0 \neq S \into F
          \Rightarrow L_{\gamma}(F \cdot A) > 0\} \\
          = &\{F \in \Coh_{\leq 1}(\X) \,|\, 0 \neq S \into F
          \Rightarrow L_{\gamma}(F \cdot A) = 0,\, \nu(F) < \eta\}.
    \end{split}
  \end{equation}

\end{rmk}

\subsection{Openness of \texorpdfstring{$(\Tz,\Fz)$}{the Zeta torsion pair}}
Let $(\gamma,\eta) \in \R_{\geq 0} \times \R$.
In this section we prove that the torsion pair $(\Tz,\Fz)$ is open.

\begin{defn}\label{def:Theta_Stability_Function}
  Define a stability function $\theta \colon \N^{\eff}_{0}(\X) \setminus \{0\} \to \R$ by setting
  \begin{equation}
    \theta(c) = - \dfrac{\deg_{Y}(\ch_{2}(\Psi(c)) \cdot \omega)}{\deg(c)}
  \end{equation}
\end{defn}
The function $\theta$ satisfies the see-saw property, and hence defines a stability condition on $\Coh_{0}(\X)$, since this category is Artinian.
In particular, objects in $\Coh_0(\X)$ have Harder--Narasimhan filtrations with respect to $\theta$.

Thus we may define a torsion pair $(\Tt,\Ft)$ on $\Coh_0(\X)$ by setting
\begin{equation}\label{eq:Theta_Torsion_Pair}
  \begin{split}
    \Tt &\coloneq \{T \in \Coh_0(\X) \mid T \onto Q \neq 0 \Rightarrow \theta(Q) \geq \gamma\} \\
    \Ft &\coloneq \{F \in \Coh_0(\X) \mid 0 \neq S \into F \Rightarrow \theta(S) < \gamma\}.
  \end{split}
\end{equation}

\begin{lem}
  The torsion pair $(\Tt, \Ft)$ is open.
\end{lem}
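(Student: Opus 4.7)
The plan is to show that both $\Tt$ and $\Ft$ are open subcategories of $\Coh_0(\X)$ by checking that their complements cut out closed conditions in families. The key reduction is that, although the definitions involve a quantifier over all subobjects or quotients, for a sheaf of fixed numerical class this quantifier ranges over finitely many possibilities on the level of numerical classes.

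First I would establish the following finiteness: for every $c \in N_0^{\eff}(\X)$, the set $\{c' \in N_0^{\eff}(\X) \mid 0 \le c' \le c\}$ is finite. Indeed $N_0(\X)$ is a free abelian group of finite rank generated by classes of skyscraper sheaves, each of which has $\deg > 0$, so any effective class $c'$ below $c$ satisfies $0 \le \deg(c') \le \deg(c)$, leaving only finitely many possibilities.

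Next I would use the projectivity of $\Quot_\X(F,p)$ from Theorem~\ref{thm:Quot_Scheme_Orbifold_Is_Projective}. Given a family of sheaves $F \in \Coh_0(\X)$ of class $c$ parameterised by a finite type scheme $S$, and given a fixed class $c_0 \in N_0^{\eff}(\X)$, the relative Quot scheme parameterising quotients of class $c_0$ is proper over $S$, and hence its image in $S$ is closed. Thus the locus ``$F_s$ admits a nonzero quotient of class $c_0$'' is a closed subset of $S$. A sheaf $F$ of class $c$ fails to lie in $\Tt$ precisely when it admits a nonzero quotient of some class $c_0 \le c$ with $\theta(c_0) < \gamma$; by the finiteness above, this is a finite union of closed conditions, so $\Tt$ is open. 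The argument for $\Ft$ is symmetric: subobjects $S \into F$ of class $c_0$ correspond bijectively to quotients of class $c - c_0$, so the same Quot-scheme argument shows that the locus where $F$ admits a nonzero subobject of class $c_0$ with $\theta(c_0) \ge \gamma$ is closed, and again only finitely many $c_0$ are relevant.

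I do not expect a serious obstacle here: the Noetherian hypotheses on $\X$ and the finite rank of $N_0(\X)$ make the combinatorics manageable, and the properness of Quot schemes of $0$-dimensional quotients supplies the geometric input for closedness. The only point requiring care is to work with families of sheaves of a fixed numerical class, so that the relevant Quot schemes are genuinely proper (not merely separated) over the base; this is precisely what Theorem~\ref{thm:Quot_Scheme_Orbifold_Is_Projective} provides once the modified Hilbert polynomial is fixed.
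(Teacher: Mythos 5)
Your proof is correct and follows essentially the same route as the paper: the paper reduces openness to the finiteness of $\{c' \in N_0(\X) \mid 0 \le c' \le c\}$ (justified by $\Coh_0(\X)$ being Artinian, matching your degree-bound argument) and then invokes the argument of Toda for closedness of the destabilising locus, which is precisely the relative Quot-scheme properness argument you spell out. No gaps.
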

\begin{proof}
  We must show that the substacks $\uT_{\tg}$ and $\uF_{\tg}$, parametrising objects in $\Tt$ and $\Ft$ respectively, are open in $\cCoh_{\X,0}$.
  This follows from the arguments of \cite[Thm.~2.3.1]{MR2665168}, since there are at most finitely many classes of potentially destabilising quotients.
  In turn, this follows because the set $\{0 \leq c' \leq c \mid c' \in N_{0}(\X)\}$ is finite for every $c \in N_0(\X)$, since $\Coh_0(\X)$ is Artinian.
\end{proof}

\begin{defn}\label{def:Good_Pencil}
  Let $E \in \Coh_{\leq 1}(\X)$, and let $n \in \Z_{>0}$.
  We say that a pencil $L = \P^{1} \subset |nA|$ is a \emph{good pencil for $E$} if the following conditions hold:
  \begin{enumerate}
  \item the base locus of $L$ intersects neither $\supp(E)$ nor the singular locus of $X$,
  \item no member of $L$ contains a 1-dimensional component of $\supp(E)$.
  \end{enumerate}
  For a point $p \in L$ we denote the associated divisor substack by $D_p \into \X$.
  Let $\X_L$ denote the blow-up of $\X$ in the base locus of $L$, and let $b \colon \X_L \to L$ denote the natural morphism.
\end{defn}

By Bertini's theorem, there exists a good pencil for every $E \in \Coh_{\leq 1}(\X)$.
\begin{lem}\label{thm:ConditionsForSemistability}
  Let $E \in \Coh_{\leq 1}(\X)$, and let $L$ be a good pencil for $E$.
  Then $E \in \Tz$ if and only if it satisfies conditions \textbf{T1} and \textbf{T2}:
  \begin{enumerate}
  \item[\textbf{(T1)}] There exists a $p \in L$ such that the restriction $E|_{D_p}$ lies in $\cat T_{\theta, \gamma}$.
  \item[\textbf{(T2)}] The sheaf $E$ admits no quotient sheaf $Q$ with
    \begin{equation}
      L_{\gamma}(A \cdot \beta_Q) = 0
    \end{equation}
    and $\nu(Q) < \eta$.
  \end{enumerate}
  We have $E \in \Fz$ if and only if it satisfies conditions \textbf{F1} and \textbf{F2}:
  \begin{enumerate}
  \item[\textbf{(F1)}] There exists a $p \in L$ such that the restriction $E|_{D_p}$ lies in $\Ft$
  \item[\textbf{(F2)}] The sheaf $E$ admits no subsheaf $S$ with
    \begin{equation}
      L_{\gamma}(A \cdot \beta_S) = 0
    \end{equation}
    and $\nu(S) \geq \eta$.
  \end{enumerate}
\end{lem}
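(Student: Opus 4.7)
The plan is to separate the two coordinates of the stability function $\zeta$ and translate each condition into a testable property of restrictions to the pencil. Since $\zeta(Q) = (\zeta_{1}(\beta_{Q}), \nu(Q))$ for $\beta_{Q} \neq 0$ and $\zeta(Q) = (\infty,\infty)$ otherwise, membership $E \in \Tz$ is equivalent to the conjunction of: \emph{(a)} no quotient $E \onto Q$ has $L_{\gamma}(A\cdot\beta_{Q}) > 0$, and \emph{(b)} no quotient $E \onto Q$ has $L_{\gamma}(A\cdot\beta_{Q}) = 0$ with $\nu(Q) < \eta$. Condition \emph{(b)} is precisely \textbf{T2}, so the substantive claim is that \emph{(a)} is equivalent to \textbf{T1}. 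The characterisation of $\Fz$ proceeds dually, with subobjects replacing quotients.

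The core computation is that the $L_{\gamma}$-condition on a quotient of $E$ can be read off from its restriction to a divisor in the pencil. Goodness of $L$ ensures that $E$ is Tor-independent from $D_{p}$ for every $p \in L$, so we have a short exact sequence
\[ 0 \to E(-nA) \to E \to E|_{D_{p}} \to 0 \]
in $\Coh(\X)$, and $[E|_{D_{p}}] = n[A\cdot E]$ in $N_{0}(\X)$. For any quotient $E \onto Q$ and any $p \in L$ avoiding the Zariski-closed locus where some one-dimensional component of $\supp(Q)$ lies in $D_{p}$, the restriction $E|_{D_{p}} \onto Q|_{D_{p}}$ has $[Q|_{D_{p}}] = n[A\cdot\beta_{Q}]$, and the definitions of $\theta$ and $\zeta_{1}$ give
\[ \theta([Q|_{D_{p}}]) = \zeta_{1}(\beta_{Q}), \]
so $\theta([Q|_{D_{p}}]) < \gamma$ if and only if $L_{\gamma}(A\cdot\beta_{Q}) > 0$.

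For the direction \textbf{T1} $\Rightarrow$ \emph{(a)}: given a hypothetical quotient $E \onto Q$ with $L_{\gamma}(A \cdot \beta_{Q}) > 0$, the condition $E|_{D_{p}} \in \Tt$ is Zariski-open in $p$ (since $\Tt$ is an open subcategory) and non-empty by T1, so it intersects the Zariski-open generic condition on $Q$; any $p$ in the intersection produces a destabilising quotient $Q|_{D_{p}}$ of $E|_{D_{p}}$, contradicting $E|_{D_{p}} \in \Tt$. For the reverse direction \emph{(a)} $\Rightarrow$ \textbf{T1}, I would argue by contradiction: if $E|_{D_{p}} \notin \Tt$ for every $p \in L$, each $E|_{D_{p}}$ admits a $\theta$-destabilising quotient whose class lies in a finite set of possibilities in $N_{0}(\X)$ (by boundedness of $\theta$-semistables of bounded total class), so after passing to a dense open $U \subset L$ some fixed $\tau \in N_{0}(\X)$ is realised. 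The relative Quot scheme over $U$ is projective by Theorem \ref{thm:Quot_Scheme_Orbifold_Is_Projective} and surjects onto $U$, yielding a relative quotient sheaf on the blow-up $\X_{L}$. Descending through $\pi \colon \X_{L} \to \X$, which is an isomorphism away from the base locus and hence near $\supp(E)$, produces a quotient of $E$ with $L_{\gamma} > 0$, contradicting \emph{(a)}.

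The main obstacle is making the descent step in the reverse direction precise: one must extract a single destabilising quotient of $E$ from a family of destabilising quotients of the restrictions $E|_{D_{p}}$. This requires a relative Quot scheme argument over the pencil together with careful Tor-independence and class-tracking bookkeeping. The conditions \textbf{F1} and \textbf{F2} characterising $\Fz$ follow by an entirely analogous analysis applied to subobjects, using the dual observation that subsheaves restrict to subsheaves under Tor-independent restriction.
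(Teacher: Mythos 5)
Your overall route is the same as the paper's: you split the lexicographic condition so that \textbf{T2}/\textbf{F2} absorb the $\nu$-component, reduce \textbf{T1}/\textbf{F1} to the claim that $E$ admits a quotient $Q$ with $L_{\gamma}(A\cdot\beta_{Q})>0$ if and only if \textbf{T1} fails, and prove the forward implication by restricting a destabilising quotient to a general member of the pencil (your use of openness of $\Tt$ in the parameter $p$ is in fact a little more careful than the paper's wording). One small inaccuracy: goodness of $L$ does \emph{not} give the exact sequence $0 \to E(-nA) \to E \to E|_{D_p} \to 0$ for \emph{every} $p$; zero-dimensional associated points of $E$ (or of $Q$) may lie on special members, so $[Q|_{D_p}] = nA\cdot \beta_{Q}$ and $\theta(Q|_{D_p})=\zeta_1(\beta_Q)$ only for general $p$. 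The genericity you need to impose is avoidance of these finitely many points, not of one-dimensional components of $\supp(Q)$, which are automatically excluded by goodness; this does not damage the forward direction, since you work at a general $p$ anyway.

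The genuine gap is in the converse, exactly at the step you flag. Knowing that a destabilising quotient of some fixed class $\tau$ exists on $E|_{D_p}$ for all $p$ in a dense subset shows (via properness of the relative Quot scheme, Theorem~\ref{thm:Quot_Scheme_Orbifold_Is_Projective}) that the image of the relative Quot over the flat locus $U \subseteq L$ is all of $U$; but surjectivity onto $U$ only yields a multisection, i.e.\ a family of quotients after base change along a finite cover $L' \to U$, not a quotient of $E|_{U}$ over $U$ itself, and there is no reason a section exists. Pushing a family down from such a cover multiplies the numerical class and entangles the bookkeeping, so "yielding a relative quotient sheaf on the blow-up" is not justified as written. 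The paper circumvents this by working with the canonical object that \emph{does} descend: the relative Harder--Narasimhan filtration in the sense of \cite[Thm.~2.3.2]{MR2665168}, which exists over the generic point of $L$ by uniqueness of HN filtrations and extends to a filtration of $E|_{U}$ over an open $U \subseteq L$ whose restriction to a generic $D_p$ is the $\theta$-HN filtration; one then pushes the minimal destabilising quotient forward along $j\colon \X_{L}|_{U} \to \X$ and takes the image of $E \to j_{*}(E|_{U}/E_{n-1})$, obtaining a quotient $Q$ of $E$ on $\X$ with $\theta(Q|_{D_p}) < \gamma$ for general $p$, hence $L_{\gamma}(\beta_{Q}\cdot A) > 0$. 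Unless you replace your relative-Quot step by such a canonical-filtration (or genuine descent) argument, the implication "\textbf{T1} fails $\Rightarrow$ there is a quotient with $L_{\gamma}(A\cdot\beta_{Q})>0$" remains unproved, and with it the characterisation of $\Tz$ (and dually of $\Fz$).
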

\begin{proof}
  We only treat the characterisation of membership of $\Tz$ since the arguments for membership of $\Fz$ are similar.

  A sheaf $E$ fails to lie in $\Tz$ if and only if there is a surjection $E \onto Q$ with $L_{\gamma}(A \cdot \beta_{Q}) > 0$ or with $Q$ as in \textbf{T2}.
  Thus it suffices to show that $E$ violates condition \textbf{T1} if and only if there exists a surjection $E \onto Q$ with $L_{\gamma}(A \cdot \beta_Q) > 0$.
  
  First assume that such a quotient $E \onto Q$ exists.
  For a general point $p \in L$, its restriction $Q|_{D_p}$ is a quotient of $E|_{D_p}$ since $L$ is a good pencil for $E$. 
  This shows that $E|_{D_p} \not\in \T_{\theta, \gamma}$.

  Conversely, suppose that condition \textbf{T1} does not hold.
  Since the support of $E$ is disjoint from the base locus of $L$, we may think of $E$ as a sheaf on the blow-up $b \colon \X_L \to L$.
  There exists an open subset $U \subseteq L$ such that $E|_{U}$ is flat over $U$.

  An easy modification of the argument of \cite[Thm.~2.3.2]{MR2665168} shows that there exists a filtration of $E|_{U} \in \Coh(\X_{L}|_{U})$,
  \[
    0 \subset E_{1} \subset \cdots \subset E_{n} = E|_{U},
  \]
  such that for a generic point $p \in U$, the induced filtration of $E|_{p}$ is the $\theta$-HN-filtration.
  In particular, since \textbf{T1} fails, we have $\theta((E_{n}/E_{n-1})|_{p}) < \gamma$.
  Let $j \colon \X_{L}|_{U} \to \X$ denote the natural map, and consider the composition
  \[
    E \to j^{*}j_{*}(E) = j_{*}(E|_{U}) \to j_{*}(E|_{U}/E_{n}).
  \]
  Letting $Q$ be the image of $E$ under this map, we obtain a surjection $E \to Q$.
  For a general $p \in U$, we have $\theta(Q|_{D_{p}}) < \gamma$, and so $L_{\gamma}(\beta_{Q} \cdot A) > 0$ as required.
\end{proof}
We now prove that the torsion pair $(\Tz,\Fz)$ is open.
\begin{lem}\label{thm:ConditionAIsOpen} \label{thm:ConditionBIsOpen}
  Conditions \textbf{T1}, \textbf{T2}, \textbf{F1} and \textbf{F2} are open in flat families in $\Coh_{\leq 1}(\X)$.
\end{lem}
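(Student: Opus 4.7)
The plan is to handle the four conditions in two groups. Conditions \textbf{T1} and \textbf{F1} assert the existence of a favourable restriction to a divisor in the pencil; these will be handled by openness of the $\theta$-torsion pair (established in the lemma immediately preceding) together with generic flatness. Conditions \textbf{T2} and \textbf{F2} forbid a destabilising quotient or subsheaf of specific numerical type; these will be handled by showing that the numerical classes of potential destabilisers form a finite set and invoking projectivity of the relevant Quot schemes.

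For \textbf{T1} and \textbf{F1}, I would take a flat family $E_{S} \to S \times \X$ and a point $s_{0}$ satisfying the condition, witnessed by a good pencil $L \subset |nA|$ for $E_{s_{0}}$ and a point $p_{0} \in L$ with $E_{s_{0}}|_{D_{p_{0}}} \in \Tt$ (resp.\ $\Ft$). The good-pencil condition is a conjunction of open conditions on $s \in S$ (base locus disjoint from support, no member containing a one-dimensional component of support), so $L$ remains a good pencil for $E_{s}$ on a neighborhood of $s_{0}$. Over this neighborhood, pulling the family back to $S \times \X_{L}$ and restricting to fibres of the projection $S \times \X_{L} \to S \times L$ gives, on a dense open $U \subset S \times L$ containing $(s_{0}, p_{0})$ over which the restriction is flat, a classifying morphism $U \to \cCoh_{\X,0}$. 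Since $\Tt$ and $\Ft$ are open in $\cCoh_{\X,0}$, the preimage of the corresponding substack is open in $U$, and its projection to $S$ is the required open neighborhood of $s_{0}$.

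For \textbf{T2} and \textbf{F2}, the plan is to show that the locus in $S$ where a destabilising quotient (resp.\ subsheaf) exists is closed. The key boundedness assertion is: for any quotient $E_{s} \onto Q$ with $L_{\gamma}(A \cdot \beta_{Q}) = 0$ and $\nu(Q) < \eta$, the numerical class of $Q$ lies in a finite set. Indeed, $\beta_{Q} \le \beta_{E}$ forces $\beta_{Q}$ into the finite set of Corollary~\ref{lem:Effective_Cone_Convex}; for each such $\beta_{Q}$ the inequality $\nu(Q) < \eta$ bounds $\deg(Q)$ above, and an adaptation of the floating-points argument of Lemma~\ref{thm:LinearTermInHilbertPolynomialOfQuotSchemeIsBoundedBelow} --- applied to Quot schemes of the fixed family $E_{S}$ rather than of $\hO_{\X}$ --- bounds $\deg(Q)$ below. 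For each allowable class, the relative Quot scheme $\Quot_{\X \times S/S}(E_{S}, (\beta_{Q}, c_{Q}))$ is projective over $S$ by Theorem~\ref{thm:Quot_Scheme_Orbifold_Is_Projective}, its image in $S$ is closed, and a finite union of these images exhibits the complement of the \textbf{T2} locus as closed. Condition \textbf{F2} is handled dually, either by running the same argument with relative Hom-schemes into $E_{S}$ or by reducing to the quotient case via the derived dual $\D(E)$.

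The main obstacle is precisely this lower bound on $\deg(c_{Q})$ for destabilising quotients in \textbf{T2} (and, dually, for subsheaves in \textbf{F2}). The upper bound falls out of the slope constraint, but the lower bound must exploit the fact that $Q$ is a quotient of a sheaf in the bounded family $E_{S}$; producing it requires transplanting the floating-points boundedness from the ideal-sheaf setting of Lemma~\ref{thm:LinearTermInHilbertPolynomialOfQuotSchemeIsBoundedBelow} to quotients of an arbitrary fixed sheaf. Once this boundedness is in place, the remaining assembly by standard arguments about projective morphisms and open substacks is routine.
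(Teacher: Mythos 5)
Your treatment of \textbf{T1} and \textbf{F1} is essentially the paper's argument (fix a good pencil, note goodness is open in $s$, and use openness of $(\Tt,\Ft)$ on the restricted family), and your overall architecture for \textbf{T2}/\textbf{F2} — finitely many numerical classes of potential destabilisers, then projectivity of relative Quot schemes makes the failure locus closed — is also the same as the paper's. The problem is that the one step you flag as ``the main obstacle'' is exactly the step the paper has to supply, and your proposed route to it does not work. The floating-points trick of Lemma~\ref{thm:LinearTermInHilbertPolynomialOfQuotSchemeIsBoundedBelow} requires a surjection from the source sheaf onto a skyscraper $\hO_x$ at a point $x$ moving in a $3$-dimensional family away from the support of the quotient; this is available because the source there is $\hO_{\X}$. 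For a destabilising quotient $E_{s} \onto Q$ with $E_{s} \in \Coh_{\le 1}(\X)$ there is no surjection $E_{s} \onto \hO_{x}$ for $x \notin \supp(E_{s})$, so at best the ``floating point'' moves along the $1$-dimensional support, and it must moreover avoid $\supp(Q)$, which may be impossible when $\beta_{Q}$ exhausts the curve components of $\beta_{E_s}$. So the dimension-growth estimate does not transplant, and the lower bound on $\deg(Q)$ (equivalently, the upper bound on the degree of the kernel subsheaf) is left unproven in your proposal.

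The tool the paper uses instead is the Grothendieck lemma for stacks (Nironi, Lem.~4.13), exactly as in the openness part of Theorem~\ref{thm:NironisBoundednessTheorem}: for a flat family $E$ over a finite type base, the quotients $E_{s} \onto F$ with $\nu(F) < \eta$ (and dually, for \textbf{F2}, the subsheaves with $\nu \ge \eta$) form a bounded family, so only finitely many classes $(\beta_{F}, c_{F})$ occur; imposing $L_{\gamma}(\beta_{F}\cdot A)=0$ one is left with a finite list, and the complement of the \textbf{T2} locus is the image in $S$ of a finite union of relative Quot schemes, hence closed. Note also that bounding $\deg(Q)$ above and below is not by itself the same as bounding the class: since $N_{0}(\X)$ has rank $\ge 2$, you still need either boundedness of the family of destabilisers or the finiteness of components of the relative Quot schemes with fixed modified Hilbert polynomial to conclude that only finitely many $c_{Q}$ occur. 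With the Grothendieck lemma in hand, both issues disappear and the rest of your assembly (projectivity over $S$, closed images, and the dual/Quot-of-complementary-class treatment of \textbf{F2}) goes through as you describe.
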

\begin{proof}
  We first prove openness of \textbf{T1}.
  Let $S$ be the base scheme of a flat family of sheaves in $\Coh_{\leq 1}(\X)$, and let $E_s$ be the sheaf corresponding to some point $s \in S$.
  There exists a good pencil $L \subset |nA|$ for $E_s$. 
  Suppose that $E_s$ satisfies condition \textbf{T1}, and let $p \in L$ be a point for which the restriction $(E_s)|_{D_p}$ lies in $\Tt$.
  Picking a suitable open neighbourhood $s \in U \subset S$, the pencil $L$ remains good for all sheaves in the neighbourhood.
  Since $\Tt$ is open, $(E_u)|_{D_{p}}$ lies in $\Tt$ for all $u \in U$.
  Openness of condition \textbf{F1} follows by the same argument.

  Openness of \textbf{T2} is shown in the same way as the openness part of Theorem~\ref{thm:NironisBoundednessTheorem}: Given a family of sheaves $E$ over a finite type base scheme $S$, then for any $s \in S$ and surjection $E_{s} \onto F$ with $\nu(F) < \eta$, we must have $(\beta_{F}, c_{F}) \subset \{(\beta_{i}, c_{i})\}_{i=1}^r$. 
  Imposing the extra condition $L_{\gamma}(\beta' \cdot A) = 0$, we are left with a finite set of classes $\{(\beta'_{i},c'_{i})\}$, and the set of $s \in S$ where \textbf{T2} holds is the complement of the image of
  \[
    \bigcup_{i} \Quot(E, (\beta_{i}', c_{i}'))
  \]
  in $S$.
  But this set is closed since the $\Quot$ scheme is projective over $S$.
  This proves openness of \textbf{F1}.
  The case of \textbf{F2} is similar.
\end{proof}

\begin{cor}\label{cor:ZetaGammaEta_TorsionPair_Open}
  The torsion pair $(\Tz,\Fz)$ is open for all $(\gamma,\eta) \in \R_{>0} \times \R$.
\end{cor}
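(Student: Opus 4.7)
My plan is to deduce the corollary directly from the two preceding lemmas, Lemma~\ref{thm:ConditionsForSemistability} (which characterises $\Tz$- and $\Fz$-membership via the conditions T1, T2, F1, F2 with respect to a good pencil) and Lemma~\ref{thm:ConditionAIsOpen} (which shows those conditions are open in flat families).

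First I would treat the $\Tz$ case. Let $E$ be a flat family of objects of $\Coh_{\leq 1}(\X)$ over a finite type base scheme $S$ and let $s_0 \in S$ be a point with $E_{s_0} \in \Tz$. By Bertini's theorem there exists an integer $n > 0$ and a good pencil $L \subset |nA|$ for $E_{s_0}$ in the sense of Definition~\ref{def:Good_Pencil}. By Lemma~\ref{thm:ConditionsForSemistability}, $E_{s_0}$ satisfies conditions T1 and T2 with respect to $L$. Next, I claim that being a good pencil is itself an open condition on $S$, so that $L$ remains good for $E_s$ in some neighbourhood of $s_0$: condition (1) of Definition~\ref{def:Good_Pencil} asks that the base locus $B \subset \X$ of $L$ and the singular locus of $X$ be disjoint from the support of $E_s$, and condition (2) asks that no member of $L$ contain a 1-dimensional component of $\supp(E_s)$. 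Both conditions are open in $S$ by the properness of $\X$: for any closed $Z \subset S \times \X$ the projection $\pi_S(Z \cap (S \times B))$ is closed in $S$, and one applies this observation to the support of the family (and its 1-dimensional part) against the base locus and against the members of $L$.

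Given openness of the good-pencil condition, by Lemma~\ref{thm:ConditionAIsOpen} I can shrink $S$ around $s_0$ so that the pencil $L$ is good for every $E_s$ and conditions T1, T2 hold for every $E_s$. Applying Lemma~\ref{thm:ConditionsForSemistability} to this shrunken neighbourhood then gives $E_s \in \Tz$ for all $s$ in the neighbourhood, proving openness of $\uT_{\zeta, (\gamma,\eta)} \subset \cCoh_{\X, \le 1}$. The case of $\Fz$ is entirely analogous, with T1, T2 replaced by F1, F2.

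The step I expect to require the most care is the openness of the good-pencil condition, since the support of a flat family of coherent sheaves does not vary in a naively continuous way; however, since all the relevant loci in $\X$ are proper and the conditions involve disjointness with proper subschemes, the argument reduces to the standard fact that proper morphisms are closed. Everything else is a purely formal chain of implications between the two preceding lemmas.
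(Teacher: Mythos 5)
Your proposal is correct and matches the paper's route: Corollary~\ref{cor:ZetaGammaEta_TorsionPair_Open} is deduced directly by combining the characterisation of Lemma~\ref{thm:ConditionsForSemistability} with the openness of \textbf{T1}, \textbf{T2}, \textbf{F1}, \textbf{F2} from Lemma~\ref{thm:ConditionAIsOpen}. The only extra work you do, checking that a good pencil for $E_{s_0}$ remains good in a neighbourhood of $s_0$, is already built into the paper's proof of Lemma~\ref{thm:ConditionAIsOpen}, so your argument is a slightly more explicit version of the same deduction.
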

Proposition \ref{thm:TFOpenImpliesPOpen} then gives the following result.
\begin{cor}\label{cor:ZetaGammaEta_Pairs_Open_TorsionPair_Open_Stack}
  The category $\jP_{\zeta,(\gamma,\eta)}$ is open for all $(\gamma, \eta) \in \R_{>0} \times \R$.
\end{cor}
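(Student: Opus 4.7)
The plan is to deduce this corollary directly from Proposition~\ref{thm:TFOpenImpliesPOpen} by applying it to the pair of open torsion pairs $(\T,\F) = (\tT,\fF) = (\Tzge, \Fzge)$. To carry this out, I need to verify the two hypotheses of the proposition: that $(\Tzge, \Fzge)$ is an open torsion pair on $\Coh_{\le 1}(\X)$, and that $\Coh_0(\X) \subset \Tzge$.

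First, the fact that $(\Tzge, \Fzge)$ is a torsion pair follows from Corollary~\ref{cor:Zeta_Is_A_Stability_Conditions}, since the pair is obtained by collapsing the Harder--Narasimhan filtration of the stability condition $\zeta$ at the cut-off slope $(\gamma,\eta) \in (-\infty,\infty]^{2}$. Openness of this torsion pair is the content of Corollary~\ref{cor:ZetaGammaEta_TorsionPair_Open}, which was established via the criteria \textbf{T1}, \textbf{T2}, \textbf{F1}, \textbf{F2} of Lemma~\ref{thm:ConditionsForSemistability} together with the openness of each of these conditions in flat families (Lemma~\ref{thm:ConditionAIsOpen}).

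Next, I verify the containment $\Coh_0(\X) \subset \Tzge$. Let $T \in \Coh_0(\X)$, and suppose $T \onto Q$ is a nonzero quotient in $\Coh_{\le 1}(\X)$. Then $Q \in \Coh_0(\X)$, so $\beta_Q = 0$, and by Definition~\ref{def:Zeta_Stability} we have $\zeta(Q) = (\infty, \infty) \ge (\gamma, \eta)$. Hence $T \in \Tzge$, as required. Equivalently, $\Fzge \subset \Coh_1(\X)$ since any subobject of a pure 1-dimensional sheaf satisfies $\beta \ne 0$.

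With both hypotheses in place, Proposition~\ref{thm:TFOpenImpliesPOpen} immediately yields that $\uPair(\Tzge, \Fzge) \subset \Mum_{\X}$ is an open substack and hence an algebraic stack locally of finite type. Since this is exactly the statement that the subcategory $\jP_{\zeta,(\gamma,\eta)}$ of $(\Tzge, \Fzge)$-pairs is open in the sense of Section~4.1, the corollary follows. There is no serious obstacle here; the work was already done in establishing openness of the torsion pair itself (Corollary~\ref{cor:ZetaGammaEta_TorsionPair_Open}), and the remaining verification is trivial.
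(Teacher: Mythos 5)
Your proof is correct and follows the paper's route exactly: the paper deduces this corollary by applying Proposition~\ref{thm:TFOpenImpliesPOpen} to the open torsion pair $(\Tzge,\Fzge)$ established in Corollary~\ref{cor:ZetaGammaEta_TorsionPair_Open}. Your explicit check that $\Coh_0(\X) \subset \Tzge$ (via $\zeta(Q)=(\infty,\infty)$ for $0$-dimensional quotients) is the only hypothesis the paper leaves implicit, and it is verified correctly.
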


\subsection{Boundedness results}
We now prove a number of boundedness properties of the moduli stacks of $\zeta$-semistable sheaves and of $(\Tz,\Fz)$-pairs.

Consider the following `limit' subcategories of $\Coh_{\leq 1}(\X)$.
\begin{equation}\label{eq:TorsionPair_Zeta_Infinity}
  \T_{\zeta, (\gamma,-\infty)} = \bigcup_{\eta \in \R} \T_{{\zeta, (\gamma, \eta)}}
  \quad \text{ and } \quad 
  \F_{\zeta, (\gamma,\infty)} = \bigcup_{\eta \in \R} \F_{{\zeta, (\gamma, \eta)}}.
\end{equation}

\begin{defn}
  Let $S \subset N_{0}(\X)$, and let $L \colon N_{0}(\X) \to \R$ be a homomorphism.
  We say that $S$ is \emph{weakly $L$-bounded} if the image of $S$ in $N_{0}(\X)/\ker L$ is $L$-bounded in the sense of Definition \ref{def:Lbounded}.
\end{defn}
\begin{lem}
  \label{thm:ZetaBoundedness1DimensionalSheaves}
  Let $\gamma \in \R_{> 0}$, let $\eta,\eta_{1},\eta_{2} \in \R$, and let $\beta \in N_{1}(\X)$.
  The sets
  \begin{align}
    \label{eqn:setNumberOne} \{c_{F} \in N_{0}(\X) &\mid \exists F \in \T_{\nu, \eta_1}
                                                     \cap \F_{\zeta, (\gamma,\eta_2)} \text{ with } \beta_{F} \leq \beta \}, \\
    \label{eqn:setNumberTwo} \{c_{F} \in N_{0}(\X) &\mid \exists F \in \F_{\nu, \eta_1}
                                                     \cap \T_{\zeta, (\gamma,\eta_2)} \text{ with } \beta_{F} \leq \beta\}
  \end{align}
  are each $L_{\gamma}$-bounded.
  The sets
  \begin{align}
    \label{eqn:setNumberThree} \{c_{F} \in N_{0}(\X) &\mid \exists F \in \T_{\nu, \eta} \cap \F_{\zeta, (\gamma,\infty)} \text{ with } \beta_{F} \leq \beta \}, \\ 
    \label{eqn:setNumberFour} \{c_{F} \in N_{0}(\X) &\mid \exists F \in \F_{\nu, \eta} \cap \T_{\zeta, (\gamma,-\infty)} \text{ with } \beta_{F} \leq \beta\}
  \end{align}
  are each weakly $L_{\gamma}$-bounded.
\end{lem}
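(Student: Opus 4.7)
The plan is to reduce each of the four statements to a finite-type question via Nironi's boundedness Theorem~\ref{thm:NironisBoundednessTheorem}. Since the set of effective $\beta' \le \beta$ is finite by Corollary~\ref{lem:Effective_Cone_Convex}, it suffices to fix $\beta_F = \beta'$ throughout and prove the analogous statements. Under this reduction, $L_\gamma(c_F)$ agrees with the linear functional $\tilde L_\gamma(F) := \deg(F) + \gamma^{-1}\deg_Y(\ch_2(\Psi(F)) \cdot \omega)$ up to a constant depending only on $\beta'$, so an $L_\gamma$-bound on $c_F$ is equivalent to a $\tilde L_\gamma$-bound on the numerical class of $F$.

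For the set in \eqref{eqn:setNumberOne}, the assumption $F \in \T_{\nu,\eta_1}$ yields $\deg(F) \ge \eta_1 l(\beta')$, and the assumption $F \in \F_{\zeta,(\gamma,\eta_2)}$ forces $F$ to be pure of dimension $1$ (since $0$-dimensional classes have $\zeta = (\infty,\infty)$). Using the identity $L_\gamma(\beta \cdot A) = \deg(\beta \cdot A)\bigl(1 - \zeta_1(\beta)/\gamma\bigr)$, the analysis splits into the cases $\zeta_1(\beta') = \gamma$ and $\zeta_1(\beta') < \gamma$. In the first, $\zeta(F) < (\gamma,\eta_2)$ directly forces $\nu(F) < \eta_2$, bounding $\deg(F)$ above; Lemma~\ref{thm:BoundOnNuMax} then bounds $\nu_+(F)$, placing $F$ in $\M_\nu(I,\beta')$ for a bounded interval $I$, and Theorem~\ref{thm:NironisBoundednessTheorem} yields a finite-type substack with only finitely many numerical classes, hence finitely many $c_F$ with $L_\gamma(c_F) \le M$. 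In the second case, the upper bound on $\deg(F)$ is extracted instead by propagating the $\F_\zeta$-constraint through the $\nu$-HN filtration of $F$ and combining it with the hypothesis $\tilde L_\gamma(F) \le M$; once obtained, the same finite-type argument concludes. The set in \eqref{eqn:setNumberTwo} follows by the dual argument, swapping sub- and quotient sheaves: $\F_{\nu,\eta_1}$ supplies the upper bound on $\deg(F)$ and $\T_{\zeta,(\gamma,\eta_2)}$ controls quotients.

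For the sets in \eqref{eqn:setNumberThree} and \eqref{eqn:setNumberFour}, the same strategy applies, except that taking $\eta \to \pm\infty$ eliminates the $\nu$-refinement in the $\zeta$-stability condition and leaves only the constraint $\zeta_1 \le \gamma$ on subsheaves (respectively $\ge \gamma$ on quotients). This surviving constraint controls the component of $c_F$ transverse to $\ker L_\gamma$ but not along it, so one obtains boundedness of the image of $\{c_F\}$ in $N_0(\X)/\ker L_\gamma$, which is precisely weak $L_\gamma$-boundedness.

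The main obstacle is extracting the upper bound on $\deg(F)$ in the subtle case $\zeta_1(\beta') < \gamma$ of \eqref{eqn:setNumberOne}, since the $\F_\zeta$-condition gives no direct bound on $\nu(F)$ there. We expect the resolution to come from the additivity of $\tilde L_\gamma$ along the $\nu$-HN filtration together with the $\zeta$-constraint applied to each HN factor, in the spirit of Lemma~\ref{thm:BoundOnNuMax}: the hypothesis $L_\gamma(c_F) \le M$ transforms into a bound on $\sum_i \deg_Y(\ch_2(\Psi(F_i/F_{i-1})) \cdot \omega)$, which in combination with the finitely many possibilities for the classes $\beta_{F_i/F_{i-1}}$ should force $\deg(F)$ itself to be bounded above.
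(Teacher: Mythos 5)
Your preliminary reductions are all sound (finiteness of the $\beta' \le \beta$, purity of $F$, the lower bound $\nu_{-}(F) \ge \eta_{1}$, the sign relation between $L_{\gamma}(\beta \cdot A)$ and $\gamma - \zeta_{1}(\beta)$, and the case $\zeta_{1}(\beta_{F}) = \gamma$, where $\nu(F) < \eta_{2}$ combined with Lemma~\ref{thm:BoundOnNuMax} and Theorem~\ref{thm:NironisBoundednessTheorem} does give finiteness). But the entire content of the lemma sits in the step you defer to your last paragraph, and the mechanism you sketch there does not work as stated. The condition $F \in \F_{\zeta,(\gamma,\eta_{2})}$ constrains the \emph{subobjects} of $F$, i.e.\ the partial sums $F_{i}$ of the $\nu$-HN filtration, not the individual factors $G_{i} = F_{i}/F_{i-1}$: a factor may well have $\zeta_{1}(\beta_{G_{i}}) > \gamma$, in which case its term $\deg_{Y}(\ch_{2}(\Psi G_{i}) \cdot \omega)$ can be arbitrarily negative while $\deg(G_{i})$ grows, so the inequality $\gamma^{-1}\sum_{i} \deg_{Y}(\ch_{2}(\Psi G_{i})\cdot \omega) \le M - \deg(F)$ yields no contradiction. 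Likewise, "finitely many possibilities for the classes $\beta_{G_{i}}$" only controls the curve classes; the unboundedness to be excluded lives in the point components $c_{G_{i}} \in N_{0}(\X)$, about which this finiteness says nothing. What is actually needed is a strictly positive $L_{\gamma}$-cost per unit of degree growth, and strictness does not follow from $\zeta_{1}(\beta_{F}) < \gamma$ alone.

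The paper's proof supplies exactly this missing mechanism. It first reduces to $\eta_{1} = \eta_{2} = \eta$ (for $\eta_{1} < \eta_{2}$ one peels off $F_{[\eta_{1},\eta_{2})}$, which has only finitely many classes by Theorem~\ref{thm:NironisBoundednessTheorem}, while the subsheaf $F_{[\eta_{2},\infty)}$ again lies in $\T_{\nu,\eta_{2}} \cap \F_{\zeta,(\gamma,\eta_{2})}$). With equal cut-offs, every $F \in \T_{\nu,\eta} \cap \F_{\zeta,(\gamma,\eta)}$ satisfies $\zeta_{1}(\beta_{F}) < \gamma$ \emph{strictly}: $\zeta_{1}(\beta_{F}) = \gamma$ would force $\nu(F) < \eta$ by the lexicographic tie-break, contradicting $\nu_{-}(F) \ge \eta$. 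One then inducts on the number of non-zero pieces of the HN filtration coarsened into unit windows $[\eta + j, \eta + j + 1)$: the bottom window contributes only finitely many classes (Nironi boundedness), the complementary piece is a subobject lying in the same category, and the only remaining source of degree growth is twisting by $A$, whose $L_{\gamma}$-cost per twist is $L_{\gamma}(\beta'' \cdot A) > 0$ by the strict inequality above. In the two limit cases the tie-break disappears, one only gets $L_{\gamma}(\beta'' \cdot A) \ge 0$, and this is precisely why the conclusion weakens to weak $L_{\gamma}$-boundedness; your intuition about the direction transverse to $\ker L_{\gamma}$ is correct, but it too rests on the inductive bookkeeping that your proposal leaves unexecuted.
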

\begin{proof}
  We only prove the claims for the sets in equations~\eqref{eqn:setNumberOne} and \eqref{eqn:setNumberThree}, as the other two sets can be dealt with by a similar argument.

  Define
  \[
    S \coloneq \{F \in \Coh_{\leq 1}(\X) \mid F \in \T_{\nu, \eta_1} \cap \F_{\zeta, (\gamma,\eta_2)} \text{ with } \beta_{F} \leq \beta\},
  \]
  and let $x \in \R$.
  We have to prove that $c(S) \cap \{c \in N_0(\X) \mid L_\gamma(c) \leq x\}$ is a finite set.

  We first assume that $\eta_{1} = \eta_{2} = \eta$.
  Let $F \in S$.
  Then $F$ is pure 1-dimensional, and so there exists a $k \in \Z_{\ge 0}$ such that we may coarsen the $\nu$-HN-filtration of $F$ to get
  \begin{equation}
    F_{[\eta,\eta+1)}, F_{[\eta+1,\eta+2)}, \ldots, F_{[\eta+k, \eta+k+1)},
  \end{equation}
  with each $F_{I} \in \M_{\nu}(I)$; note that some $F_{I}$ may be zero.
  
  Let $S_{m} \subseteq S$ denote the set of $F$ with at most $m$ non-zero pieces in this coarse filtration, and let $S_{m}' \subseteq S_{m}$ be the subset of those for which $F_{[\eta, \eta+1)} \not= 0$.
  Note that $S_1'$ is a subset of
  \begin{equation}
    R \coloneq \{F \in \Coh_{1}(\X) \mid \beta_{F} \le \beta,\, F \in \M_{\nu}([\eta,\eta+1))\}.
  \end{equation}
  Hence, it follows that $c(S_{1}') \subseteq c(R)$ is finite by Theorem~\ref{thm:NironisBoundednessTheorem}.

  The set $Q = \Z_{>0}\{\beta' \cdot A \mid \beta' \leq \beta \text{ and }L_{\gamma}(\beta' \cdot A) > 0\}$ is $L_{\gamma}$-bounded.
  Twisting by $A$, we find $c(S_{m}) \subseteq c(S_{m}') + Q$, and so $c(S_{m})$ is $L_{\gamma}$-bounded if $c(S'_{m})$ is.

  Now take an object $F \in S'_m$, and decompose $F$ as
  \[
    0 \to F_{[\eta + 1, \infty)} \to F \to F_{[\eta, \eta + 1)} \to 0
  \]
  with $F_{I} \in \M_{\nu}(I)$.
  Then $F_{[\eta,\eta+1)} \in R$, and since $\cat F_{\zeta, (\gamma, \eta)}$ is closed under subobjects, $F_{[\eta+1,\infty)} \in S_{m-1}$.
  Hence $c(S_{m}') \subseteq c(S_{m-1}) + c(R)$, and so $c(S'_{m})$ is $L_{\gamma}$-bounded if $c(S_{m-1})$ is.
  Since $S_{m} = S$ when $m \ge l(\beta)$, a finite induction then gives the claim.

  Now let $\eta_1,\eta_2 \in \R$ be arbitrary.
  Without loss of generality we may assume $\eta_1 < \eta_2$ for otherwise we reduce to the known claim.
  For $F \in S$, consider the $\nu$-HN filtration
  \begin{equation}
    0 \to F_{[\eta_2, \infty)} \to F \to F_{[\eta_1,\eta_2)} \to 0.
  \end{equation}
  The set of possible values for $c_{F_{[\eta_{1},\eta_{2})}}$ is finite by Theorem~\ref{thm:NironisBoundednessTheorem}.
  Since $\F_{\zeta, (\gamma,\eta_2)}$ is closed under subobjects, we have $F_{[\eta_{2}, \infty)} \in \T_{\nu, \eta_2} \cap \F_{\zeta, (\gamma,\eta_2)}$.
  Thus by the previously treated case of $\eta_{1} = \eta_{2}$, the set of possible classes for $F_{[\eta_{2}, \infty)}$ is $L_{\gamma}$-bounded.
  This completes the claim for the first mentioned set.

  For the set (\ref{eqn:setNumberThree}), define $Q = \Z_{>0}\{\beta' \cdot A \mid \beta' \le \beta \text{ and }L_{\gamma}(\beta\cdot A) \ge 0\}$ and note that this set is only weakly $L_{\gamma}$-bounded.
  We conclude by the same argument.
\end{proof}

Let $\M^{\ss}_{\zeta}(a,b) \subset \Coh_{\leq 1}(\X)$ denote the full subcategory of $\zeta$-semistable sheaves of slopes $(a,b) \in \R^2$.
\begin{prop}\label{prop:ZetaGammaEta_Semistables_DecompositionallyFinite}
  Let $(\beta,c) \in N_{\leq 1}(\X)$ be a class and let $(a,b) \in \R^2$ be a slope.
  \begin{enumerate}
  \item The moduli stack $\uM^{\ss}_{\zeta}(a,b) \subset \cCoh_{\leq 1,\X}$ is open and, in particular, it is an algebraic stack locally of finite type.
  \item The set
    \[
      \{c \in N_{0}(\X) \mid \uM^{\ss}_{\zeta}(a,b) \not= \varnothing\}
    \]
    is $L_{\gamma}$-bounded.
  \item The category $\M^{\ss}_{\zeta}(a,b)$ is log-able as in Definition~\ref{dfn:DecompositionallyFinite}.
  \end{enumerate}
\end{prop}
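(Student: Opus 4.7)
My plan is to prove the three parts in sequence, with the decomposition of a $\zeta$-semistable sheaf into ``$\nu$-slopes above $b$'' and ``$\nu$-slopes at most $b$'' pieces as the unifying trick. For part (1), I would note that a sheaf $F$ with $\zeta(F) = (a,b)$ is $\zeta$-semistable if and only if $F \in \T_{\zeta, (a,b)} \cap \F_{\zeta, (a, b+\epsilon)}$: the first condition says every nonzero quotient has $\zeta$-slope $\ge (a,b)$, and since the Nironi slopes of subobjects of class $\beta' \leq \beta$ lie in $\frac{1}{l(\beta)!}\Z$, the second condition for $0 < \epsilon < \frac{1}{l(\beta)!}$ encodes $\zeta(S) \le (a,b)$ for every subobject $S$. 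Both halves are open by (a routine extension to arbitrary $a \in \R$ of) Corollary~\ref{cor:ZetaGammaEta_TorsionPair_Open}, so $\uM^{\ss}_\zeta(a,b)$ is open in $\cCoh_{\le 1, \X}$.

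For part (2), I would decompose $F$ along its Nironi slope $b$ into a short exact sequence $0 \to F^{>b} \to F \to F^{\le b} \to 0$, where $F^{>b}$ collects the $\nu$-HN-factors of slope strictly greater than $b$. Since $F^{>b} \into F \in \F_{\zeta, (a, b+\epsilon)}$ and $\nu_{-}(F^{>b}) > b$, we obtain $F^{>b} \in \T_{\nu, b+\delta} \cap \F_{\zeta, (a, b+\epsilon)}$ for $0 < \delta \ll 1$, placing $c_{F^{>b}}$ in an $L_\gamma$-bounded set by the first $L_\gamma$-boundedness statement of Lemma~\ref{thm:ZetaBoundedness1DimensionalSheaves}. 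Dually, $F^{\le b}$ is a quotient of $F \in \T_{\zeta, (a,b)}$, hence $F^{\le b} \in \F_{\nu, b+\delta} \cap \T_{\zeta, (a, b)}$, and $c_{F^{\le b}}$ is $L_\gamma$-bounded by the second statement of the same Lemma. The additivity $c_F = c_{F^{>b}} + c_{F^{\le b}}$ and Lemma~\ref{thm:LBoundedTrivialities} then yield the required bound.

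For part (3), closure of $\M^{\ss}_\zeta(a,b)$ under direct sums and summands is immediate from the see-saw property of $\zeta$. To see that $\uM^{\ss}_\zeta(a,b)_{(\beta,c)}$ is of finite type, I would argue that $c_{F^{>b}}$ and $c_{F^{\le b}}$ each range over a \emph{finite} set: each is $L_\gamma$-bounded by part (2), and the relation $L_\gamma(c_{F^{>b}}) + L_\gamma(c_{F^{\le b}}) = L_\gamma(c)$ (fixed) pins each into a bounded interval, hence a finite set of values. For each such pair of classes, $F^{>b}$ and $F^{\le b}$ live in finite-type moduli stacks by Lemma~\ref{thm:BoundOnNuMax} and Theorem~\ref{thm:NironisBoundednessTheorem}, and their extensions form a finite-type stack by Proposition~\ref{thm:extensionStackIsFiniteType}; openness in part (1) then cuts out the $\zeta$-semistable locus. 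Finiteness of decompositions $(\beta,c) = \sum (\beta_i, c_i)$ into summand classes of semistable sheaves follows from Corollary~\ref{lem:Effective_Cone_Convex} (finitely many $(\beta_i)$), from $\nu(\beta_i, c_i) = b$ fixing $\deg(c_i)$ once $\beta_i$ is fixed, and from the same balancing trick $\sum L_\gamma(c_i) = L_\gamma(c)$ combined with part (2) to force each $L_\gamma(c_i)$ into a finite set of values.

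The hard part will be part (2): $\zeta$-semistability alone does not directly bound the Nironi HN-slopes of $F$, since subobjects with $\zeta_1 < a$ can have arbitrarily large $\nu$ and quotients with $\zeta_1 > a$ arbitrarily small $\nu$, so one must execute the $F^{>b}/F^{\le b}$ split carefully so that each piece lands in one of the mixed torsion-pair intersections to which Lemma~\ref{thm:ZetaBoundedness1DimensionalSheaves} applies. Once part (2) is in hand, the rest follows from the well-oiled Nironi machinery of Section~\ref{sec:preliminaries}.
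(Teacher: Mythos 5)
Your proposal is correct and follows essentially the same route as the paper: openness via the identification $\M^{\ss}_{\zeta}(a,b) = \T_{\zeta,(a,b)} \cap \F_{\zeta,(a,b+\epsilon)}$ together with Corollary~\ref{cor:ZetaGammaEta_TorsionPair_Open}, then the $\nu$-HN truncation of a semistable sheaf into two pieces landing in the mixed intersections of Lemma~\ref{thm:ZetaBoundedness1DimensionalSheaves}, additivity of the point classes for the $L_{\gamma}$-bound, finite type via the Nironi boundedness results and Proposition~\ref{thm:extensionStackIsFiniteType}, and the balancing argument $\sum L_{\gamma}(c_i) = L_{\gamma}(c)$ for finiteness of decompositions. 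The only (harmless) cosmetic difference is that you cut the HN filtration precisely at slope $b$ rather than at an arbitrary $\eta$ as the paper does.
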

\begin{proof}
  For the first claim, note that for any $(\beta, c) \in N_{\le 1}(\X)$, there exists an $\epsilon > 0$ such that if $[F] = (\beta,c)$, then
  \[
    F \in \M^{\ss}_{\zeta}(a,b) \Leftrightarrow F \in \cat T_{\gamma, (a, b)} \cap \cat F_{\gamma, (a, b+\epsilon)}.
  \]
  Openness of $\M^{\ss}_{\zeta}(a,b)$ then follows by Corollary \ref{cor:ZetaGammaEta_TorsionPair_Open}.
  
  For the second part, the category $\M^{\ss}_{\zeta}(a,b)$ is obviously closed under direct sums and direct summands.
  Let $E \in \M^{\ss}_{\zeta}(a,b)$, and decompose $E$ with respect to the $\nu$-HN filtration
  \begin{equation}
    0 \to E_{\geq \eta} \to E \to E_{< \eta} \to 0
  \end{equation}
  where $E_{\geq \eta} \in \T_{\nu, \eta}$ and $E_{< \eta} \in \F_{\nu, \eta}$.
  We deduce $E_{\geq \eta} \in \F_{\gamma, (a, b + \epsilon)}$ and $E_{< \eta} \in \cat T_{\gamma, (a, b)}$.
  Thus by Lemma \ref{thm:ZetaBoundedness1DimensionalSheaves}, the set of possible values for $c_{E_{\geq \eta}}$ and $c_{E_{< \eta}}$ are each $L_{\gamma}$-bounded.
  Since $c_{E_{\geq \eta}} + c_{E_{< \eta}} = c_{E}$, it follows that there are finitely many choices for each one.

  Applying Corollary \ref{thm:BoundingDegreeAndNuMinMaxGivesFiniteType}, we find that the moduli of possibilities for $E_{\geq \eta}$ and $E_{< \eta}$ are of finite type.
  By Proposition \ref{thm:extensionStackIsFiniteType}, then, the stack $\M^{\ss}_{\zeta}(a,b)$ is of finite type.

  The decomposition $c_{E} = c_{E_{\geq \eta}} + c_{E_{< \eta}}$ also shows that the set of possible values for $c_{E}$ is $L_{\gamma}$-bounded, which implies that $(\beta_{E}, c_{E})$ can be written as a sum of classes $(\beta_{E_{i}}, c_{E_{i}})$ with $E_{i} \in \M^{\ss}_{\zeta}(a,b)$ in at most finitely many ways. 
\end{proof}

\begin{prop}\label{thm:BoundednessOfRank1ObjectsForZetaStability}
  For any $(\gamma,\eta) \in \R_{>0} \times \R$, the set
  \begin{equation}\label{eq:Boundedness_of_classes_c_ZetaGammaEta_Pairs}
    \{c \in N_{0}(X) \mid \jP_{\zeta,(\gamma,\eta)}(\beta, c) \not= \emptyset\}
  \end{equation}
  is $L_{\gamma}$-bounded.
  Moreover, the stack $\uP_{\zeta,(\gamma,\eta)}(\beta,c)$ is of finite type.
\end{prop}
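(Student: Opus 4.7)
The plan is to follow the template of Proposition~\ref{thm:BoundednessOfRank1SigmaSemistables}, using Lemma~\ref{thm:ZetaBoundedness1DimensionalSheaves} as the key $L_\gamma$-boundedness input in place of the simpler Nironi-boundedness of Lemma~\ref{thm:SetOfBoundedBelowGuysIsBounded}. Let $E \in \uP_{\zeta,(\gamma,\eta)}(\beta,c)$. By Corollary~\ref{cor:Class_TFpair_Effective_HMinusOne_IdealSheaf} and Remark~\ref{rmk:TFpairStandardForm}, $E$ fits into a short exact sequence in $\A$,
\[
0 \to I_C[1] \to E \to H^0(E) \to 0,
\]
with $C \subset \X$ a (possibly empty) curve such that $\beta_C \leq \beta$, and $H^0(E) \in \T_{\zeta,(\gamma,\eta)} \cap \Coh_{\leq 1}(\X)$. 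The class splits as $c = c_{\hO_C} + c_{H^0(E)}$, and I would bound each summand separately in the $L_\gamma$-sense.

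For the ideal-sheaf piece, $\hO_C \in \Quot_\X(\hO_\X)_{(\beta_C, c_{\hO_C})}$, so Lemma~\ref{thm:LinearTermInHilbertPolynomialOfQuotSchemeIsBoundedBelow} gives $\deg$-boundedness of $c_{\hO_C}$. Upgrading this to $L_\gamma$-boundedness amounts to controlling the correction term $\gamma^{-1}\deg_Y(\ch_2(\Psi(-))\cdot\omega)$: the variation of $c_{\hO_C}$ for fixed $\beta_C$ is by effective $0$-dimensional classes from embedded or floating points, and the hard Lefschetz hypothesis via Lemma~\ref{thm:DivisorsCapAAreLinearlyIndependent} controls the exceptional contributions tightly enough to absorb them into an $L_\gamma$-bound.

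For the $H^0(E)$-piece, I would decompose it via the dimensional torsion pair $(\Coh_0,\Coh_1)$ followed by the Nironi $\nu$-HN filtration of its pure $1$-dimensional quotient:
\[
0 \to H^0(E)_0 \to H^0(E) \to H^0(E)_1 \to 0, \qquad 0 \to G_{\geq\eta} \to H^0(E)_1 \to G_{<\eta} \to 0.
\]
Since $\T_{\zeta,(\gamma,\eta)}$ contains $\Coh_0(\X)$ and is closed under quotients, all four pieces lie in $\T_{\zeta,(\gamma,\eta)}$. The quotient $G_{<\eta} \in \F_{\nu,\eta} \cap \T_{\zeta,(\gamma,\eta)}$ has $L_\gamma$-bounded class by set~\eqref{eqn:setNumberTwo} of Lemma~\ref{thm:ZetaBoundedness1DimensionalSheaves}. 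The pieces $H^0(E)_0$ and $G_{\geq\eta}$ are to be controlled by the non-embedding condition $\Hom(T,E)=0$ for all $T \in \T_{\zeta,(\gamma,\eta)}$: combined with the long exact sequence attached to $I_C[1] \to E \to H^0(E)$ and Serre duality on the CY3 orbifold $\X$, this yields an $\Ext$-theoretic constraint bounding these pieces in terms of $\beta$ and the already-bounded classes $[\hO_C]$ and $[G_{<\eta}]$.

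Once $c$ is shown to be $L_\gamma$-bounded, finite-typeness of $\uP_{\zeta,(\gamma,\eta)}(\beta,c)$ follows as in Proposition~\ref{thm:BoundednessOfRank1SigmaSemistables}: each summand has finite-type moduli by Theorems~\ref{thm:NironisBoundednessTheorem} and~\ref{thm:Quot_Scheme_Orbifold_Is_Projective}, and the stack of extensions is of finite type by Proposition~\ref{thm:extensionStackIsFiniteType}. The main obstacle I anticipate is the bound on $G_{\geq\eta}$ and $H^0(E)_0$: unlike the $\nu$-stability setting, where Lemma~\ref{thm:DualisingPreservesSigmaStables} supplied a clean self-duality argument, $\zeta$-stability is not manifestly preserved by $\D$, so the $\Hom$-vanishing condition must be extracted more delicately from the pair structure.
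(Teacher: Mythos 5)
There is a genuine gap here, in two places. First, your treatment of the $I_C[1]$-piece rests on the claim that the $\deg$-bounded set of classes $c_{\hO_C}$ coming from Lemma~\ref{thm:LinearTermInHilbertPolynomialOfQuotSchemeIsBoundedBelow} can be ``upgraded'' to an $L_{\gamma}$-bounded set using only hard Lefschetz and Lemma~\ref{thm:DivisorsCapAAreLinearlyIndependent}. That intermediate statement is false for $0 < \gamma \ll 1$: there exist effective point classes $c$ with $\deg_{Y}(\ch_{2}(\Psi(c)) \cdot \omega) < 0$, hence with $L_{\gamma}(c) < 0$, and the example of Appendix~\ref{sec:Appendix_ExampleCRC} exhibits non-empty Quot spaces on $\X$ in the classes $(2\beta,(m,4))$ for \emph{all} $m \ge 4$ --- a set of point classes that is not $L_{\gamma}$-bounded for small $\gamma$. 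This failure is precisely why $DT(\X)_{\beta}$ has to be re-expanded in the crepant resolution conjecture, so no Quot-scheme boundedness statement can give what you need; the $L_{\gamma}$-bound must come from the $(\T_{\zeta,(\gamma,\eta)},\F_{\zeta,(\gamma,\eta)})$-pair condition on $E$, and your cohomological decomposition does not transmit that condition to $\hO_C$, which is neither a subobject nor a quotient of $E$. Second, the pieces $H^{0}(E)_{0}$ and $G_{\ge \eta}$ are subquotients of $H^{0}(E)$, again neither subobjects nor quotients of $E$, so the vanishing $\Hom(\T_{\zeta,(\gamma,\eta)},E) = 0$ gives no direct handle on them (membership in $\T_{\zeta,(\gamma,\eta)}$ is the wrong kind of information --- what Lemma~\ref{thm:ZetaBoundedness1DimensionalSheaves} needs is membership in $\F_{\zeta,(\gamma,\eta)}$ together with a $\nu$-condition). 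You acknowledge this as the main obstacle, but the promised Serre-duality/Ext argument is not supplied, and as you note the duality trick of Lemma~\ref{thm:DualisingPreservesSigmaStables} is unavailable for $\zeta$. So both halves of the boundedness claim are unproved, and the first relies on a false statement.

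The paper's proof avoids both problems by decomposing $E$ not by cohomology but by the torsion triple $(\T_{\nu,\eta}, \cat{V}(\T_{\nu,\eta},\F_{\nu,\eta}), \F_{\nu,\eta})$ on $\A$ from Proposition~\ref{prop:InducedTorsionTriple}. This produces a genuine subobject $E_{\ge \eta} \in \T_{\nu,\eta}$ and quotient $E_{<\eta} \in \F_{\nu,\eta}$ of the $\zeta$-pair $E$, so that $E_{\ge \eta} \in \T_{\nu,\eta} \cap \F_{\zeta,(\gamma,\eta)}$ and $E_{<\eta} \in \F_{\nu,\eta} \cap \T_{\zeta,(\gamma,\eta)}$, and Lemma~\ref{thm:ZetaBoundedness1DimensionalSheaves} applies directly to both; the middle factor is a $(\T_{\nu,\eta},\F_{\nu,\eta})$-pair, whose possible point classes form a finite set by part~(1) of Proposition~\ref{thm:BoundednessOfRank1SigmaSemistables}. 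Finite-typeness then follows piecewise, much as in the last step of your outline. If you want to repair your argument, replace your short exact sequence $I_C[1] \into E \onto H^{0}(E)$ by this three-step filtration of $E$ inside $\A$.
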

\begin{proof}
  Let $E \in \jP_{\zeta,(\gamma,\eta)}$.
  By Proposition~\ref{prop:InducedTorsionTriple}, it has a three-term filtration induced by the torsion triple $(\T_{\nu,\eta},V(\T_{\nu,\eta},\F_{\nu,\eta}),\F_{\nu,\eta})$ on $\A$.
  Thus we have an inclusion and a surjection
  \begin{equation}
    E_{\geq \eta} \into E, \quad \text{ and } \quad E \onto E_{<\eta},
  \end{equation}
  where $E_{\geq \eta} \in \T_{\nu, \eta}$, $E_{<\eta} \in \F_{\nu, \eta}$, and the middle filtration quotient
  \[
  E_{\jP_{\nu,\eta}} \coloneq \ker(E \onto E_{<\eta})/E_{\geq \eta} \in \jP_{\nu, \eta}.
  \]

  Since $E \in \jP_{\zeta,(\gamma,\eta)}$, we have $E_{\geq \eta} \in \F_{\gamma,\eta}$ and $E_{<\eta} \in \T_{\gamma,\eta}$.
  By Lemma~\ref{thm:ZetaBoundedness1DimensionalSheaves}, it follows that the sets of possible values for $c(E_{\geq \eta})$ and $c(E_{<\eta})$ are both $L_{\gamma}$-bounded.
  Moreover, the set of possible values for $c(E_{\jP_{\nu,\eta}})$ is finite, by part~\ref*{enum:PNuFinitelyManyC} of Lemma~\ref{thm:BoundednessOfRank1SigmaSemistables}.
  Thus, the set (\ref{eq:Boundedness_of_classes_c_ZetaGammaEta_Pairs}) is $L_{\gamma}$-bounded.

  Arguing as in the proof of Proposition \ref{thm:ZetaBoundedness1DimensionalSheaves}, the moduli for $E_{\geq \eta}, E_{< \eta}$, and $E_{\jP_{\nu,\eta}}$ are each of finite type, by Corollary~\ref{thm:BoundingDegreeAndNuMinMaxGivesFiniteType} and Lemma~\ref{thm:BoundednessOfRank1SigmaSemistables}.
  Hence $\uP_{\zge}(\beta_{E},c_{E})$ is of finite type.
\end{proof}
\begin{cor}
  Let $(\gamma,\eta) \in \R_{>0} \times \R$.
  The category $\jP_{\zge}$ defines an element in $H_{\gr}(\cC)$.
\end{cor}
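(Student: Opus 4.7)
The plan is to verify the two conditions of Definition~\ref{defdefel} directly, by assembling results that have already been established in this section.

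First, I would note that objects of $\jP_{\zge}$ have rank $-1$ by Definition~\ref{def:TFpair}, so there is a splitting $\jP_{\zge} = \coprod_{\alpha} \jP_{\zge,\alpha}$ indexed by classes $\alpha = -[\hO_{\X}] + (\beta,c)$ with $(\beta,c) \in N_{\le 1}(\X)$; for all other $\alpha \in N(\X)$ the substack $\uP_{\zge,\alpha}$ is empty and there is nothing to check.

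Next, I would invoke Corollary~\ref{cor:ZetaGammaEta_Pairs_Open_TorsionPair_Open_Stack}, which states precisely that $\jP_{\zge}$ is an open subcategory of $\Coh^{\flat}(\X)$; this handles the openness hypothesis of Definition~\ref{defdefel}. Finally, for each fixed class $(\beta,c)$, the finite-type claim for $\uP_{\zge,\alpha}$ is exactly the second assertion of Proposition~\ref{thm:BoundednessOfRank1ObjectsForZetaStability}. Taken together, these two inputs give the corollary immediately, so there is no real obstacle — the substantive work has been carried out in the preceding openness and boundedness propositions.
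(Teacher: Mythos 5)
Your proposal is correct and matches the paper's (implicit) argument: the corollary is stated as an immediate consequence of Corollary~\ref{cor:ZetaGammaEta_Pairs_Open_TorsionPair_Open_Stack} for openness and the finite-type assertion of Proposition~\ref{thm:BoundednessOfRank1ObjectsForZetaStability} for each class, exactly as you assemble them. Your preliminary remark that only classes of the form $-[\hO_{\X}] + (\beta,c)$ can occur, so the remaining graded pieces are empty, is a correct and harmless elaboration.
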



\subsection{The walls for \texorpdfstring{$(\Tz,\Fz)$}{zeta}-pairs}
\label{Chap5_CRC_CountingGammaEtaPairs}
Let $\beta \in N_1(\X)$ be a class.
First, we locate the walls for $(\gamma,\eta) \in \R_{>0} \times \R$ where the notion of $(\Tz,\Fz)$-pair of class $(-1,\beta',c')$ with $\beta' \le \beta$ could change.
Recall that $W_{\beta} = (1/l(\beta)!)\Z \subset \R$.

Let $\Coh_{\le 1}(\X)_{\le \beta} \subset \Coh_{\le 1}(\X)$ be the subcategory consisting of sheaves $F$ with $\beta_{F} \le \beta$.
\begin{lem}
  \label{thm:ZetaStabilityUnchangedAwayFromBigWalls}
  \label{cor:Walls_For_ZetaGammaEta_Pairs_On_Gamma_Wall_are_NironiWalls}
  Let $\beta \in N_1(\X)$.
  The categories $\Tz \cap \Coh_{\le 1}(\X)_{\le \beta}$ and $\Fz \cap \Coh_{\le 1}(\X)_{\le \beta}$ are constant on the connected components of $(\R_{> 0} \times \R) \setminus (V_{\beta} \times \R)$, where 
  \begin{equation}
    V_{\beta} = \left\{-\dfrac{\deg_{Y}(\ch_{2}(\Psi(A \cdot \beta')) \cdot \omega)}{\deg(A \cdot \beta')} \,\colon\, 0 < \beta' \leq \beta \right\} \cap \R_{>0}.
  \end{equation}
  Moreover, for each $\gamma \in V_{\beta}$, the parts $\Tz \cap \Coh_{\le 1}(\X)_{\le \beta}$ and $\Fz~\cap~\Coh_{\le 1}(\X)_{\le \beta}$ of the torsion pair $(\Tz,\Fz)$ are locally constant on $\{\gamma\} \times \R \setminus W_{\beta}$.
\end{lem}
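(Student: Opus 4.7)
The plan is to reduce the lemma to a direct case analysis on when the torsion-pair conditions $\zeta(Q) \ge (\gamma,\eta)$ and $\zeta(S) < (\gamma,\eta)$ can change as $(\gamma,\eta)$ varies, exploiting the fact that any quotient $Q$ or subobject $S$ of a sheaf $F$ with $\beta_F \le \beta$ has class $\beta_Q, \beta_S \le \beta$, so only finitely many values of the first slope coordinate $\zeta_1$ are ever in play.

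First I would record that by Corollary~\ref{lem:Effective_Cone_Convex}, the set $\{\beta' \in N_{1}^{\eff}(\X) \mid \beta' \le \beta\}$ is finite. Consequently
\[
  \Sigma_\beta \coloneq \{\zeta_1(\beta') \mid 0 < \beta' \le \beta\} \subset \R
\]
is finite, and by definition $V_\beta = \Sigma_\beta \cap \R_{>0}$. For the first claim, fix $F \in \Coh_{\le 1}(\X)_{\le \beta}$ and any non-zero quotient $F \onto Q$. I would then split into three cases for the comparison $\zeta(Q) \ge (\gamma,\eta)$: if $\beta_Q = 0$ then $\zeta(Q) = (\infty,\infty)$ and the inequality holds unconditionally; if $\beta_Q \neq 0$ and $\zeta_1(\beta_Q) \le 0$ then $\zeta_1(\beta_Q) < \gamma$ for any $\gamma > 0$, so the inequality fails; and if $\zeta_1(\beta_Q) > 0$ then $\zeta_1(\beta_Q) \in V_\beta$. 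In this last case, on any connected component of $\R_{>0} \setminus V_\beta$ the sign of $\zeta_1(\beta_Q) - \gamma$ is locally constant and non-zero, so the truth of $\zeta(Q) \ge (\gamma,\eta)$ is determined by $\zeta_1$ alone, independently of $\eta$. The identical argument applied to subobjects, with $\F_{\zeta,(\gamma,\eta)}$ in place of $\T_{\zeta,(\gamma,\eta)}$, shows that both halves of the torsion pair, restricted to $\Coh_{\le 1}(\X)_{\le \beta}$, are constant on each connected component of $(\R_{>0} \setminus V_\beta) \times \R$.

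For the second claim, I would fix $\gamma \in V_\beta$ and vary $\eta$. Now $\eta$ intervenes only in the comparisons $\zeta(Q) \ge (\gamma,\eta)$ (resp.\ $\zeta(S) < (\gamma,\eta)$) for those quotients (resp.\ subobjects) with $\zeta_1(\beta_Q) = \gamma$ (resp.\ $\zeta_1(\beta_S) = \gamma$), in which case the condition collapses to $\nu(Q) \ge \eta$ (resp.\ $\nu(S) < \eta$). Since $\beta_Q \le \beta$ forces $l(\beta_Q) \le l(\beta)$, we have
\[
  \nu(Q) \in \tfrac{1}{l(\beta_Q)}\Z \subseteq \tfrac{1}{l(\beta)!}\Z = W_\beta,
\]
and the same for $\nu(S)$. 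Hence as $\eta$ varies in $\R \setminus W_\beta$ none of these inequalities flips, giving the desired local constancy.

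There is no genuine obstacle here; the argument is formal once the finiteness of $\{\beta' \le \beta\}$ is in hand. The only point requiring care is the three-way case split on $\zeta_1(\beta_Q)$ and verifying that the boundary between these cases is precisely $V_\beta \cup \{0\}$, with the positive part $V_\beta$ being where walls can actually occur for $\gamma \in \R_{>0}$.
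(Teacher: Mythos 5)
Your argument is correct and is essentially the paper's own: the paper simply says ``argue as in Lemma~\ref{lem:Walls_for_Delta_Pairs}'', i.e.\ use that any quotient or subobject of a sheaf of class $\le \beta$ again has class $\le \beta$, so only the finitely many values $\zeta_{1}(\beta')$ (giving $V_{\beta}$) and the discrete set $\nu \in \frac{1}{l(\beta)!}\Z = W_{\beta}$ can produce walls. Your three-way case split on $\zeta_{1}(\beta_{Q})$ and the inclusion $\frac{1}{l(\beta_{Q})}\Z \subseteq \frac{1}{l(\beta)!}\Z$ is just a spelled-out version of that same argument, so there is nothing to add.
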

\begin{proof}
  Argue as in Lemma \ref{lem:Walls_for_Delta_Pairs}.
\end{proof}

\subsection{Counting invariants for \texorpdfstring{$\zeta$}{zeta}-stability}
Second, we define DT-type invariants virtually counting $\zeta$-semistable sheaves and $(\Tz,\Fz)$-pairs.

\subsubsection{Rank $0$}
Let $(a,b) \in \R^2$.
Consider the subcategory $\M^{\ss}_{\zeta}(a,b) \subset \Coh_{\leq 1}(\X)$ of $\zeta$-semistable sheaves of slope $(a,b)$.
By Proposition~\ref{prop:ZetaGammaEta_Semistables_DecompositionallyFinite}, it defines a log-able element $[\uM^{\ss}_{\zeta}(a,b)]$ in $H_{\gr}(\cC)$.
Thus we obtain a regular element
\begin{equation*}
  \eta_{\zeta, (a,b)} = (\LL-1)\log [\uM^{\ss}_{\zeta}(a,b)] \in H_{\gr,\reg}(\cC)
\end{equation*}
by Theorem~\ref{thm:NoPoles}.
Projecting this element to the semi-classical quotient $H_{\gr,\Sc}(\cC)$ and applying the integration morphism, we define DT-type invariants $J^{\zeta}_{(\beta,c)} \in \Q$ by the formula
\begin{align*}
  \sum_{\zeta(\beta,c)=(a,b)} J^{\zeta}_{(\beta,c)} z^\beta q^c \coloneq I\left(\eta_{\zeta, (a,b)} \right) \in \Q\{N(\X)\}.
\end{align*}

\subsubsection{Rank $-1$}
Let $(\beta, c) \in N_{\le 1}(\X)$, and let $(\gamma, \eta) \in \R_{>0} \times \R$ be away from any wall.
By Lemmas~\ref{thm:TFPairsHaveSimpleAutomorphisms} and \ref{thm:BoundednessOfRank1ObjectsForZetaStability}, we obtain an element $(\LL - 1)[\uP_{\zge}(\beta,c)] \in H_{\reg}(\cC)$.
Again, projecting to the semi-classical quotient $H_{\Sc}(\cC)$ and applying the integration morphism, we define integer DT-type invariants
\begin{equation*}
  DT^{\zeta, (\gamma,\eta)}_{(\beta,c)}z^{\beta}q^ct^{-[\hO_{\X}]} \coloneq I\bigl((\LL-1)[\uP_{\zge}(\beta,c)]\bigr).
\end{equation*}

Finally, we assemble these invariants into generating series
\begin{align}\label{eq:ZetaPair_Ninvariants_Generating_Series}  
  DT^{\zge}_{\beta} &\coloneq \sum_{c \in N_0(\X)} DT^{\zge}_{(\beta,c)} q^c, \\
  J^\zeta(a,b)_{\beta} &\coloneq \sum_{\substack{c \in N_0(\X) \\ \zeta(\beta,c) = (a,b)}} J^\zeta_{(\beta,c)} q^c.
\end{align}
These series are elements in smaller subrings of $\Q\{N(\X)\}$.
\begin{lem}\label{lem:ZetaPair_Ninvariants_Generating_Series_InLGammaCompletion}
  We have $DT^{\zeta, (\gamma, \eta)}_{\beta} \in \Z[N_{0}(\X)]_{L_{\gamma}}$ and $J^\zeta(\gamma,\eta)_{\beta} \in \Q[N_{0}(\X)]_{L_{\gamma}}$.
\end{lem}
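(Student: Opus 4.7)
The plan is to deduce both claims directly from the boundedness results already established in this section, together with the general stability of $L_\gamma$-boundedness under finite unions and Minkowski sums (Lemma~\ref{thm:LBoundedTrivialities}).

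For the first inclusion, $DT^{\zeta, (\gamma, \eta)}_{\beta} \in \Z[N_{0}(\X)]_{L_{\gamma}}$, I would argue as follows. The coefficient of $q^{c}$ in $DT^{\zeta,(\gamma,\eta)}_\beta$ equals $DT^{\zeta,(\gamma,\eta)}_{(\beta,c)}$, and by the definition of the invariant via the integration map, a necessary condition for this coefficient to be non-zero is that $\uP_{\zeta,(\gamma,\eta)}(\beta,c) \neq \varnothing$. Proposition~\ref{thm:BoundednessOfRank1ObjectsForZetaStability} states precisely that the set of such classes $c$ is $L_\gamma$-bounded, which is exactly the condition required to lie in $\Z[N_{0}(\X)]_{L_{\gamma}}$.

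For the second inclusion, $J^\zeta(\gamma,\eta)_{\beta} \in \Q[N_{0}(\X)]_{L_{\gamma}}$, I would argue by analysing the support of the Hall-algebra logarithm. Since $J^\zeta_{(\beta,c)}$ is defined through $(\LL - 1)\log [\uM^{\ss}_{\zeta}(\gamma,\eta)]$, a class $(\beta,c)$ with $J^\zeta_{(\beta,c)} \neq 0$ must admit a decomposition $(\beta,c) = \sum_{i=1}^n (\beta_i,c_i)$ with each $\uM^{\ss}_{\zeta}(\gamma,\eta)_{(\beta_i,c_i)} \neq \varnothing$. Here $n$ is bounded above by $l(\beta)$, and the decompositions $\beta = \sum \beta_i$ with $\beta_i \in N_1^{\eff}(\X)$ are finite in number by Corollary~\ref{lem:Effective_Cone_Convex}. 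For each summand, part (2) of Proposition~\ref{prop:ZetaGammaEta_Semistables_DecompositionallyFinite} guarantees that the set $\{c_i \in N_0(\X) \mid \uM^{\ss}_{\zeta}(\gamma,\eta)_{(\beta_i,c_i)} \neq \varnothing\}$ is $L_\gamma$-bounded. Finally, iterated application of Lemma~\ref{thm:LBoundedTrivialities} shows that finite unions of Minkowski sums of $L_\gamma$-bounded sets remain $L_\gamma$-bounded, which gives the required conclusion.

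There is no real obstacle here: the lemma is a bookkeeping consequence of the work already done. The only small point worth stressing in writing up is that the definition of $J^\zeta$ passes through the formal logarithm in the graded Hall pre-algebra, so strictly speaking one must trace through how $L_\gamma$-boundedness of the support of $[\uM^{\ss}_{\zeta}(\gamma,\eta)]$ transfers to its logarithm; but this is automatic from the $\log$-able hypothesis verified in Proposition~\ref{prop:ZetaGammaEta_Semistables_DecompositionallyFinite}, which ensures that only finitely many decompositions of $(\beta,c)$ contribute and so every coefficient of the logarithm is a finite $\Q$-linear combination of finite-type data.
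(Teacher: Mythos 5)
Your proof is correct and takes essentially the same route as the paper, which deduces the first claim from Proposition~\ref{thm:BoundednessOfRank1ObjectsForZetaStability} and the second from part (2) of Proposition~\ref{prop:ZetaGammaEta_Semistables_DecompositionallyFinite}. The only available shortcut for the second claim: since $\M^{\ss}_{\zeta}(\gamma,\eta)$ is closed under direct sums, any class $(\beta,c)$ in the support of the Hall-algebra logarithm is itself the class of a $\zeta$-semistable object of that slope, so part (2) applies directly and your Minkowski-sum bookkeeping via Lemma~\ref{thm:LBoundedTrivialities}, while valid, is not needed.
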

\begin{proof}
  The first claim follows from Proposition~\ref{thm:BoundednessOfRank1ObjectsForZetaStability}.
  The second claim follows from part 2 of Proposition~\ref{prop:ZetaGammaEta_Semistables_DecompositionallyFinite}.
\end{proof}

\subsection{The limit as \texorpdfstring{$\gamma \to \infty$}{gamma goes to infinity}}
We describe the limit invariants as $\gamma$ becomes large.
\begin{lem}\label{thm:ZetaEpsilonIsSigmaInfinity}
  Let $\beta \in N_{1}(\X)$, and let $\gamma > \max_{\gamma' \in V_{\beta}} \gamma'$.
  An object $E \in \A$ of class $(-1,\beta,c)$ is a $(\Tz,\Fz)$-pair if and only if it is a stable pair.
  In particular,
  \begin{equation}
    DT^{\zeta,(M,\eta)}_{(\beta,c)} = PT(\X)_{(\beta,c)}
  \end{equation}
  for all $\eta \in \R$ and for all $M$ large enough.
\end{lem}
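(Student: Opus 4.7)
The plan is to show that when $\gamma > \max V_{\beta}$, the notion of $(\Tz, \Fz)$-pair of class $(-1, \beta, c)$ is independent of $\eta$ and coincides with that of stable pair, whence the invariant identity. This will follow once we check that the torsion pairs $(\Tz, \Fz)$ and $(\T_{PT}, \F_{PT}) = (\Coh_{0}(\X), \Coh_{1}(\X))$ agree on $\Coh_{\le 1}(\X)_{\le \beta}$.

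For the torsion part, $\Coh_{0}(\X) \subseteq \Tz$ always holds, since any quotient of a $0$-dimensional sheaf has $\zeta = (\infty,\infty)$. Conversely, if $T \in \Tz$ with $0 \neq \beta_T \le \beta$, then the quotient $T \onto T$ is non-trivial and the first coordinate of $\zeta(T)$ lies in $V_{\beta}$, hence is $< \gamma$, contradicting $T \in \Tz$. The free part is analogous: a pure $1$-dimensional sheaf $F$ with $\beta_F \le \beta$ lies in $\Fz$ since every non-zero subsheaf $S$ has $0 \neq \beta_S \le \beta$, so the first coordinate of $\zeta(S)$ lies in $V_{\beta}$; conversely, a non-zero $0$-dimensional subsheaf of any $F \in \Fz$ would have $\zeta = (\infty, \infty) \not< (\gamma, \eta)$, so $F$ must be pure $1$-dimensional.

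To conclude the pair equivalence, I would invoke Corollary \ref{thm:subAndQuotientObjectsHaveLowerD}: any non-zero sheaf subobject or quotient of $E$ in $\cat A$ has curve class $\le \beta$. Since $\Tz$ is closed under quotients in $\Coh_{\le 1}(\X)$, the vanishing of $\Hom(T,E)$ for all $T \in \Tz$ is equivalent to vanishing for $T \in \Tz \cap \Coh_{\le 1}(\X)_{\le \beta} = \Coh_{0}(\X) \cap \Coh_{\le 1}(\X)_{\le \beta}$; the analogous reformulation for $\Fz$ gives that $E$ is a $(\Tz,\Fz)$-pair if and only if it is a $(\T_{PT},\F_{PT})$-pair, which by Remark \ref{ex:PTasTF} means a PT stable pair. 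The invariant identity then follows at once, as the two moduli stacks coincide as open substacks of $\Mum_{\X}$ parametrising the same objects. No substantial obstacle arises, since $V_{\beta}$ is a finite set and the comparison reduces to elementary manipulation of the torsion pair conditions once the hypothesis $\gamma > \max V_{\beta}$ is in place.
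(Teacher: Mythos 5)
Your proposal is correct and takes essentially the same route as the paper: the paper's (more compressed) proof likewise observes that for $\gamma$ beyond all walls in $V_{\beta}$ and sheaves of class $\le \beta$, one has $\zeta(E) \ge (\gamma,\eta)$ iff $E \in \Coh_{0}(\X)$, so that $(\Tz,\Fz)$ and $(\T_{PT},\F_{PT})$ agree on $\Coh_{\le 1}(\X)_{\le \beta}$, and then uses that sub/quotient objects of a rank $-1$ object have curve class $\le \beta$. One minor imprecision: since $V_{\beta}$ is by definition intersected with $\R_{>0}$, the first coordinate of $\zeta$ of a sheaf with non-zero curve class need not lie in $V_{\beta}$ (it may be $\le 0$), but the inequality $< \gamma$ you need still holds because $\gamma > 0$, so the argument goes through.
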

\begin{proof}
  Recall that $\T_{PT} = \Coh_0(\X)$, that $\F_{PT} = \Coh_1(\X)$, and that $(\T_{PT},\F_{PT})$-pairs are precisely stable pairs.
  If $\gamma > \max_{\gamma' \in V_{\beta}} \gamma'$ and $E \in \Coh_{\le 1}(\X)$ with $\beta_{E} \le \beta$, then $\zeta(E) \ge (\gamma,\eta)$ if and only if $E \in \Coh_{0}(T)$.
  This implies that $\Tz \cap \Coh_{\le 1}(\X)_{\le \beta} = \T_{PT}$ and $\Fz \cap \Coh_{\le 1}(\X)_{\le \beta} = \F_{PT} \cap \Coh_{\le 1}(\X)_{\le \beta}$, which gives the claim.
\end{proof}

\subsection{Crossing the \texorpdfstring{$\gamma$}{gamma}-wall}
Let $\beta \in N_1(\X)$ be a curve class.
We analyse what happens to the generating series $DT^{\zeta, (\gamma, \eta)}_{\leq \beta} \in \Z[N(\X)]_{L_{\gamma}}$ of $(\Tz,\Fz)$-pair invariants when $\gamma$ crosses a wall in $V_\beta$.

To describe this wall-crossing, we first show that by choosing the ample classes $(A,\omega) \in \N^{1}(X) \times \N^{1}(Y)_{\R}$ to be sufficiently general, there is a unique curve class $\beta_{\gamma}$ for which the invariants $J^{\zeta}_{(\beta_{\gamma},c)}$ contribute to the wall-crossing formula.

Recall the function $L_{\gamma}(c) = \deg(c) + \gamma^{-1}\deg_{Y}(\ch_{2}(\Psi(c)) \cdot \omega)$ for $c \in N_0(\X)$.
\begin{lem}
  \label{thm:OmegaCanBeChosenGenerically}
  If $A \in N^{1}(X)$ is general and $\omega \in N^{1}(Y)_{\R}$ is very general, then for each $\gamma \in V_{\beta}$ there is, up to scaling, a unique class $\beta_{\gamma} \in N_{1} (\X)$ with $0 < \beta_{\gamma} \le \beta$ such that
  \[
    L_{\gamma}(A\cdot \beta_{\gamma}) = 0.
  \]
  The class $c_{\gamma} \coloneq \beta_{\gamma} \cdot A \in N_{0}(\X)$ is, up to scaling, the unique class such that $L_{\gamma}(c_{\gamma}) = 0$.
\end{lem}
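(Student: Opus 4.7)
The plan is to reduce the two uniqueness assertions to choosing $\omega$ outside a (finite, respectively countable) union of hyperplanes in $N^{1}(Y)_{\R}$. By Corollary~\ref{lem:Effective_Cone_Convex}, the set $B = \{\beta' \in N_{1}(\X) : 0 < \beta' \le \beta\}$ is finite, so only finitely many candidate wall values
\[
  \gamma(\beta') = -\frac{\deg_{Y}(\ch_{2}(\Psi(A \cdot \beta')) \cdot \omega)}{\deg(A \cdot \beta')}
\]
with $\beta' \in B$ can arise in $V_{\beta}$.

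\textbf{First part.} For any two linearly independent classes $\beta', \beta'' \in B$, the equality $\gamma(\beta') = \gamma(\beta'')$ is equivalent to the vanishing of $\omega \cdot C_{\beta', \beta''}$, where
\[
    C_{\beta', \beta''} \coloneq \deg(A \cdot \beta'') \ch_{2}(\Psi(A \cdot \beta')) - \deg(A \cdot \beta') \ch_{2}(\Psi(A \cdot \beta''))
\]
is a class in $N_{1,\exc}(Y)$. The plan is to show that for $A$ general, $C_{\beta', \beta''} \neq 0$ for every such pair, and then take $\omega$ in the complement of the resulting finite union of hyperplanes in $N^{1}(Y)_{\R}$. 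This forces each $\gamma \in V_{\beta}$ to be attained on at most one $\Q$-line in $B$, giving $\beta_{\gamma}$ unique up to scaling.

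\textbf{Second part.} Fix $\gamma \in V_{\beta}$ and set $c_{\gamma} = A \cdot \beta_{\gamma}$. For each $c' \in N_{0}(\X)$ linearly independent of $c_{\gamma}$, the equation $L_{\gamma}(c') = 0$, combined with $L_{\gamma}(c_{\gamma}) = 0$ to eliminate $\gamma$, becomes a linear condition on $\omega$ defined by the class $\deg(c') \ch_{2}(\Psi(c_{\gamma})) - \deg(c_{\gamma}) \ch_{2}(\Psi(c'))$ in $N_{1,\exc}(Y)$. Using that $\ker(\ch_{2} \circ \Psi) = \Psi^{-1}(N_{0}(Y)) \subset N_{0}(\X)$ is of rank one, with $\deg$ non-zero on it, one checks that this class is non-zero whenever $c'$ is not a rational multiple of $c_{\gamma}$. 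Since $c'$ ranges over a countable set, ``$\omega$ very general'' means outside the countable union of these proper hyperplanes, and then $\ker L_{\gamma} \cap N_{0}(\X)$ is of rank one and spanned by $c_{\gamma}$ up to scaling.

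\textbf{Main obstacle.} The crux is the non-vanishing $C_{\beta', \beta''} \neq 0$ for $A$ general. Using the identity $\Psi(A \cdot \beta) = f^{*}A \cdot \Psi(\beta)$, which follows because the McKay kernel is set-theoretically supported on the fibre product $Y \times_{X} \X$, this reduces to excluding a numerical proportionality in $N_{1,\exc}(Y)$ between the exceptional 1-cycle components of $f^{*}A \cdot \Psi(\beta')$ and $f^{*}A \cdot \Psi(\beta'')$. The linear independence of the classes $D_{i} \cdot A$ in $N_{1}(Y)$ granted by Lemma~\ref{thm:DivisorsCapAAreLinearlyIndependent} provides enough room in the lattice $N_{1,\exc}(Y)$ to exclude such coincidences for $A$ in a Zariski open subset of the ample cone of $X$.
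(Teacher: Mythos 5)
Your overall strategy (eliminate $\gamma$ to get a linear condition on $\omega$, show the relevant class is non-zero, then avoid finitely/countably many hyperplanes) is the same as the paper's, and your second part is essentially the paper's argument: the identification $\ker(\ch_{2}\circ\Psi)\cap N_{0}(\X) = \Psi^{-1}(N_{0}(Y))$, of rank one with $\deg$ non-vanishing on it, does show that $\deg(c')\ch_{2}(\Psi(c_{\gamma})) - \deg(c_{\gamma})\ch_{2}(\Psi(c'))\neq 0$ for $c'$ not proportional to $c_{\gamma}$, and the countable hyperplane avoidance is exactly what ``very general $\omega$'' is for.

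The gap is in your ``main obstacle'' step, i.e.\ the non-vanishing of $C_{\beta',\beta''}$ for general $A$. You reduce this to excluding a proportionality in $N_{1,\exc}(Y)$ between the exceptional components of $\ch_2$ of $f^{*}A\cdot\Psi(\beta')$ and $f^{*}A\cdot\Psi(\beta'')$, to be ruled out by genericity of $A$ via Lemma~\ref{thm:DivisorsCapAAreLinearlyIndependent}. This cannot work in the case where $c_{1}(\Psi(\beta'))$ and $c_{1}(\Psi(\beta''))$ are proportional, equivalently where (after rescaling) $\beta'-\beta''\in N_{1,\mr}(\X)$: since $\ch_{2}(\Psi(A\cdot\beta')) = c_{1}(\Psi(\beta'))\cdot f^{*}A$, in that case the two exceptional $1$-cycle classes are proportional (equal after normalisation) for \emph{every} choice of $A$, so no genericity of $A$ excludes the coincidence. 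This case is not vacuous: two non-multi-regular classes $\le\beta$ can differ by a multi-regular class. There $C_{\beta',\beta''}$ is a multiple of the common $\ch_{2}$-class with coefficient essentially $\deg\bigl(A\cdot(\beta'-\beta'')\bigr)$, and one must instead show that this degree is non-zero for general $A$; this is where the paper invokes Lemma~\ref{thm:PushforwardInjectiveOnOneDimClasses} (injectivity of $g_{*}$ on $N_{1,\mr}(\X)$), an ingredient your argument never uses. In the complementary case (non-proportional $c_{1}$'s) Lemma~\ref{thm:DivisorsCapAAreLinearlyIndependent} is indeed the right tool, but your final sentence only asserts that it ``provides enough room'' rather than producing an $A$ with $\ch_{2}(\Psi(\beta')\cdot A)$ not proportional to $\ch_{2}(\Psi(\beta'')\cdot A)$; so even there the key verification is asserted rather than proved. (A minor further point: as stated, ``$C_{\beta',\beta''}\neq 0$ for every linearly independent pair in $B$'' is false when both classes are multi-regular; you should first discard multi-regular classes, which never satisfy $L_{\gamma}(A\cdot\beta')=0$ for $\gamma>0$ since then $L_{\gamma}(A\cdot\beta')=\deg(A\cdot\beta')>0$.)
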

\begin{proof}
  We first prove that if $\omega$ is very general, then there is up to scaling at most one class $0 \not= c \in N_0(\X)$ such that $L_{\gamma}(c) = 0$.
  Note that if $L_{\gamma}(c) = 0$, then $c \in T := \{c \in N_{0}(\X) \mid \Psi(c) \not\in N_{0}(Y)\text{ and }\deg(c) \not= 0\}$.
  For if $\Psi(c) \in N_{0}(Y)$, then $c$ is some multiple of the class of an unstacky point, and so $L_{\gamma}(c) = \deg(c)$ is $0$ if and only if $c = 0$.
  If $\deg(c) = 0$ and $\Psi(c) \not\in N_{0}(Y)$, then $L_{\gamma}(c) = \gamma^{-1}\deg_{Y}(\ch_{2}(\Psi(c)) \cap \omega) \not= 0$, since $\omega$ is very general.

  Define for any $c \in T$ the number
  \[
    \gamma_c(\omega) = -\frac{\deg(c)}{\deg_{Y}(\ch_{2}(\Psi(c)) \cdot \omega)},
  \]
  which is the unique number such that $L_{\gamma_{c}(\omega)}(c) = 0$.
  If $c, c' \in T$ are not proportional, then after rescaling we may assume $\deg(c) = \deg(c')$ and $\ch_{2}(\Psi(c)) \not= \ch_{2}(\Psi(c'))$.
  It then follows that the $\omega$ satisfying $\gamma_{c}(\omega) \not= \gamma_{c'}(\omega)$ form a non-empty Zariski open subset of $N^{1}(Y)_{\R}$.
  Taking $\omega$ to lie in the intersection of the countably many such subsets gives the claim.

  We now prove the uniqueness of $\beta_{\gamma}$.
  If $\beta' \in N_{1,\mr}(\X) \setminus 0$, then $A\cdot \beta' \in \Psi^{-1}(N_{0}(Y))$, and as shown above, then $L_\gamma(A \cdot \beta') \not= 0$.
  Thus the candidates for $\beta_{\gamma}$ are the elements of the set $S = \{\beta' \mid 0 < \beta' \le \beta, \beta' \not\in N_{1,\mr}(\X)\}$.
  For $\beta' \in S$, define
  \[
    \gamma_{\beta'}(A,\omega) = \gamma_{\beta' \cdot A}(\omega) = -\dfrac{\deg(\beta'\cdot A)}{\deg_{Y}(\ch_{2}(\Psi(\beta')\cdot A) \cdot \omega)} \in \R.
  \]
  Given a pair of classes $(A,\omega)$, the number $\gamma_{\beta'}(A,\omega)$ is the unique number for which $L_{\gamma_{\beta'}}(\beta'\cdot A) = 0$.

  Take $\beta', \beta'' \in S$ and assume that $\beta'$ is not proportional to $\beta''$.
  Fix a very general $\omega \in N^{1}(Y)_{\R}$.
  The locus of $A \in N^{1}(X)$ for which $\gamma_{\beta'}(A,\omega) \not= \gamma_{\beta''}(A,\omega)$ is an open algebraic subset of $N^{1}(X)$.
  We claim that it is non-empty, and then since $S$ is a finite set, taking $A$ to be an ample class in the intersection of these finitely many open subsets we get uniqueness of $\beta_{\gamma}$.

  So assume for a contradiction that $\gamma_{\beta'}(A,\omega) = \gamma_{\beta''}(A,\omega)$ for all $(A,\omega)$.  
  If $c_{1}(\Psi(\beta'))$ is proportional to $c_{1}(\Psi(\beta''))$, then rescaling $\beta'$ (which does not change $\gamma_{\beta'}$) we may assume that $c_{1}(\Psi(\beta')) = c_{1}(\Psi(\beta''))$, and so $\beta' - \beta'' \in N_{1,\mr}(\X) \setminus 0$.
  For a general $A$, by Lemma \ref{thm:PushforwardInjectiveOnOneDimClasses}, we then have $(\beta' - \beta'') \cdot A \in \Psi^{-1}(N_{0}(Y)) \setminus 0$.
  Then as argued above
  \[
    \deg((\beta' - \beta'') \cdot A) \not= 0,
  \]
  which shows that $\gamma_{\beta'}(A,\omega) \not= \gamma_{\beta''}(A,\omega)$.

  If $c_{1}(\Psi(\beta'))$ is not proportional to $c_{1}(\Psi(\beta''))$, then by Lemma \ref{thm:DivisorsCapAAreLinearlyIndependent} we may find $A$ such that $\ch_{2}(\Psi(\beta')\cdot A)$ is not proportional to $\ch_{2}(\Psi(\beta'')\cdot A)$, which implies that $\beta' \cdot A$ and $\beta'' \cdot A$ are not proportional.
  The first part of the proof then shows that $\gamma_{\beta'}(A,\omega) = \gamma_{\beta' \cdot A}(\omega) \not= \gamma_{\beta''\cdot A}(\omega) = \gamma_{\beta''}(A,\omega)$.
\end{proof}
\begin{rmk}
  Henceforth, we write $\beta_{\gamma} \in N_{1}(\X)$ for the \emph{minimal} effective class satisfying the conditions of Lemma~\ref{thm:OmegaCanBeChosenGenerically}.
\end{rmk}

\subsection{Vanishing and changing bounds}
We collect further boundedness results allowing us to apply the wall-crossing formula.
Let $\gamma \in V_{\beta}$, let $\eta \in \R$, and let $x \in \R$.
By Proposition~\ref{thm:BoundednessOfRank1ObjectsForZetaStability}, the set
  \[
    S = \bigcup_{\beta' \le \beta} \{(\beta',c) \mid L_{\gamma}(c) \le x \text{ and }\uP_{\zge}(\beta', c) \not= \varnothing \}
  \]
is finite.
Thus we may define
\[
  M^{+}_{\beta, \gamma, x} = \max_{(\beta',c) \in S} \deg(\beta',c) \qquad \text{and} \qquad
  M^{-}_{\beta, \gamma, x} = \min_{(\beta',c) \in S} \deg(\beta',c).
\] 

\begin{lem}
  \label{thm:vanishingOfUPZeta}
  Let $\beta' \le \beta$, and let $c \in N_{0}(\X)$.
  \begin{enumerate}
    \item If $\eta \le 0$ and $\deg(\beta',c) > M^{+}_{\beta, \gamma, L_{\gamma}(c)}$, then $\uP_{\zge}(\beta',c) = \varnothing$.
    \item If $\eta \ge 0$ and $\deg(\beta',c) < M^{-}_{\beta, \gamma, L_{\gamma}(c)}$, then $\uP_{\zge}(\beta',c) = \varnothing$.
  \end{enumerate}
\end{lem}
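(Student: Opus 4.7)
The plan is to mimic the inductive wall-crossing argument of Lemma \ref{thm:vanishingOfUP}, now applied to the $\eta$-walls at a fixed $\gamma$-wall. I focus on (1), since (2) is dual. By Lemma \ref{cor:Walls_For_ZetaGammaEta_Pairs_On_Gamma_Wall_are_NironiWalls}, the notion of $(\Tz,\Fz)$-pair of class $(-1,\beta',c)$ with $\beta'\leq\beta$ is locally constant in $\eta$ away from the discrete set $W_{\beta}$. The base case $\eta = 0$ is tautological: if $\uP_{\zeta,(\gamma,0)}(\beta',c)\neq\varnothing$ then $(\beta',c)$ lies in the defining set $S$ for $M^{+}_{\beta,\gamma,L_{\gamma}(c)}$.

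For the inductive step I would argue by contradiction. Were the claim to fail for some $\eta\leq 0$, there would exist a maximal wall $\eta^{*}\in W_{\beta}\cap(\eta,0]$ at which it holds for $\eta^{*}+\epsilon$ but fails for $\eta^{*}-\epsilon$, with $0<\epsilon\ll 1$. Pick a class $(\beta',c)$ realising the failure; then the transition across $\eta^{*}$ forces a decomposition
\[
(\beta',c) = (\beta'',c'') + (\beta_{F},c_{F})
\]
with $\uP_{\zeta,(\gamma,\eta^{*}+\epsilon)}(\beta'',c'')\neq\varnothing$ and $F\in\Coh_{\leq 1}(\X)$ a $\zeta$-semistable sheaf of slope $\zeta(F) = (\gamma,\eta^{*})$. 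Since $\nu(F) = \eta^{*}\leq 0$, one has $\deg(F) = \eta^{*}\cdot l(F)\leq 0$, whence $\deg(\beta',c)\leq\deg(\beta'',c'')$; and the inductive hypothesis applied at $\eta^{*}+\epsilon$ to $(\beta'',c'')$ gives $\deg(\beta'',c'')\leq M^{+}_{\beta,\gamma,L_{\gamma}(c'')}$.

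To close the induction one must compare $M^{+}_{\beta,\gamma,L_{\gamma}(c'')}$ with $M^{+}_{\beta,\gamma,L_{\gamma}(c)}$, using the constraint $\zeta_{1}(F) = \gamma$, that is $L_{\gamma}(\beta_{F}\cdot A) = 0$; by Lemma \ref{thm:OmegaCanBeChosenGenerically}, $\beta_{F}\cdot A$ is a positive integer multiple of the distinguished class $c_{\gamma}$. Combined with the monotonicity of $M^{+}_{\beta,\gamma,x}$ in $x$, this should yield $\deg(\beta',c)\leq M^{+}_{\beta,\gamma,L_{\gamma}(c)}$ and contradict the choice of $(\beta',c)$. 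I expect this final comparison --- controlling how $L_{\gamma}(c_{F})$ affects the $x$-argument of $M^{+}$ under the semistability of $F$ --- to be the main technical obstacle, since the Nironi analogue involved no $L$-parameter and the degree bookkeeping was one-dimensional. Statement (2) then follows by the dual argument, starting from $\eta = 0$ and increasing $\eta$: for $\eta^{*}\geq 0$ the sign $\deg(F)\geq 0$ yields the reversed inequality.
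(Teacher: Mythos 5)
Your setup faithfully reproduces the paper's (one-line) proof, which is indeed just the downward induction over $\eta$-walls of Lemma \ref{thm:vanishingOfUP}: base case at $\eta=0$, a maximal wall $\eta^{*}$ where the claim first fails, the wall-crossing decomposition into a pair at $\eta^{*}+\epsilon$ plus a $\zeta$-semistable sheaf $F$ with $\zeta(F)=(\gamma,\eta^{*})$, and the sign $\deg(F)=\eta^{*}l(F)\le 0$. But the step you defer --- comparing $M^{+}_{\beta,\gamma,L_{\gamma}(c'')}$ with $M^{+}_{\beta,\gamma,L_{\gamma}(c)}$ --- is a genuine gap, and the tool you propose does not fill it. The constraint $\zeta_{1}(F)=\gamma$, i.e.\ $L_{\gamma}(\beta_{F}\cdot A)=0$, only concerns the curve part of $[F]$; what the induction needs is control of $L_{\gamma}(c_{F})$, the $N_{0}$-component, since $L_{\gamma}(c'')=L_{\gamma}(c)-L_{\gamma}(c_{F})$ and monotonicity of $M^{+}$ in $x$ helps only if $L_{\gamma}(c_{F})\ge 0$. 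There is no such positivity: for wall semistables one only has $L_{\gamma}(c_{F})\ge K_{\gamma}$, where $K_{\gamma}$ is the constant introduced immediately after this lemma via Lemma \ref{thm:BoundednessOfZetaSemistable1DimensionalSheaves} (it exists because the relevant set is only \emph{weakly} $L_{\gamma}$-bounded), and $K_{\gamma}$ can be negative --- indeed, if it were automatically non-negative the paper would not need the shifted level $L_{\gamma}(c)-K_{\gamma}$ that appears in Lemma \ref{prop:SendingEta_to_Infinity_Sliding_Off_Gamma_Wall}. So with $M^{\pm}_{\beta,\gamma,x}$ read at the reference value $\eta=0$ (the reading under which your base case and the later applications make sense), the induction does not close at the fixed level $L_{\gamma}(c)$ by monotonicity alone; some bookkeeping of the $L_{\gamma}$-level of the destabilising semistable pieces (via $K_{\gamma}$, or a normalisation making it non-negative) has to enter.

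Note also that the alternative reading, in which $M^{\pm}_{\beta,\gamma,x}$ is defined using $\uP_{\zeta,(\gamma,\eta)}$ at the very same $\eta$ as in the statement (the notation suppresses $\eta$), makes the lemma an immediate consequence of the definition of the set $S$, with no wall-crossing needed at all; your proof would then be correct but vacuous. Either way, the point you flag as ``the main technical obstacle'' is exactly where your argument, as written, is incomplete: you should either prove the required lower bound on $L_{\gamma}(c_{F})$ (or restate the bound with the $K_{\gamma}$-corrected level, matching how the estimate is actually consumed later), or make explicit which definition of $M^{\pm}_{\beta,\gamma,x}$ you are using, rather than relying on $L_{\gamma}(\beta_{F}\cdot A)=0$ and monotonicity.
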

\begin{proof}
  This follows from the argument of Lemma~\ref{thm:vanishingOfUP}.
\end{proof}

\begin{lem}
  \label{thm:BoundednessOfZetaSemistable1DimensionalSheaves}
  Let $\gamma \in V_{\beta}$.
  The set
  \[
    \bigcup_{\substack{d \ge 1 \\ d \beta_{\gamma} \le \beta}} \{c \in N_{0}(\X) \mid \uM^{\ss}_{\zeta}(d\beta_{\gamma}, c) \not= \varnothing\}
  \]
  is weakly $L_{\gamma}$-bounded.
\end{lem}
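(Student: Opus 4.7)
The plan is to reduce the statement to part (2) of Proposition~\ref{prop:ZetaGammaEta_Semistables_DecompositionallyFinite} via a twisting argument using the ample line bundle $A$. The first step is to observe that twisting $F \mapsto F \otimes A$ preserves $\zeta$-semistability: for a sheaf $F$ with $\beta_F = d\beta_\gamma$, the first coordinate $\zeta_1$ depends only on $\beta_F$ and is unchanged, while the Nironi slope $\nu$ shifts by one. Crucially, the effect on $c_F$ is the addition of $d\cdot c_\gamma$, and by the defining property of $\beta_\gamma$ from Lemma~\ref{thm:OmegaCanBeChosenGenerically} we have $L_\gamma(c_\gamma) = 0$, so twisting by $A$ preserves both the value of $L_\gamma(c_F)$ and the image of $c_F$ in $N_0(\X)/\ker L_\gamma$.

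Next I would use this twisting to reduce to finitely many slope regimes: since $\nu(F) \in \tfrac{1}{d \cdot l(\beta_\gamma)}\Z$, after a suitable twist one may assume $\nu(F) \in [0, 1)$, leaving only finitely many possible values of $\nu(F)$. Combined with the finiteness of $\{d \ge 1 \mid d\beta_\gamma \le \beta\}$ from Lemma~\ref{lem:Effective_Cone_Convex}, this reduces the problem to a finite union of sets of the form $\{c \mid \uM^{\ss}_\zeta(\gamma, b)_{(d\beta_\gamma, c)} \ne \varnothing\}$, each of which is $L_\gamma$-bounded by part (2) of Proposition~\ref{prop:ZetaGammaEta_Semistables_DecompositionallyFinite}. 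Applying Lemma~\ref{thm:LBoundedTrivialities} to this finite union yields an $L_\gamma$-bounded set of twisted classes.

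Finally, since each original class $c$ and its chosen twist agree modulo $\ker L_\gamma$, the image in $N_0(\X)/\ker L_\gamma$ of the set in the lemma's statement coincides with the image of this $L_\gamma$-bounded set of twists, and is therefore $L_\gamma$-bounded. By the definition of weak $L_\gamma$-boundedness, this completes the argument. There is no serious obstacle: the only substantive input is the identification $L_\gamma(c_\gamma) = 0$, which is precisely what makes twisting by $A$ invisible to the linear function $L_\gamma$ and is built into the generic choice of $(A,\omega)$ of Lemma~\ref{thm:OmegaCanBeChosenGenerically}.
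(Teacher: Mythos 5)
Your proof is correct, and it is close in spirit to the paper's but routes through a different key input. The paper's own argument is shorter: it observes that a $\zeta$-semistable $F$ with $\beta_F = d\beta_\gamma$ has $\zeta(F) = (\gamma,\nu(F))$, hence lies in $\F_{\zeta,(\gamma,\infty)}$, and that after twisting by $A^{n}$ for $n \gg 0$ it lies in $\T_{\nu,0} \cap \F_{\zeta,(\gamma,\infty)}$; it then invokes the \emph{weakly} $L_\gamma$-bounded set \eqref{eqn:setNumberThree} of Lemma~\ref{thm:ZetaBoundedness1DimensionalSheaves} directly. You instead normalise the twist so that $\nu(F) \in [0,1)$, note that this leaves only finitely many slopes $(\gamma,b)$ and finitely many $d$ (Corollary~\ref{lem:Effective_Cone_Convex}), and apply part (2) of Proposition~\ref{prop:ZetaGammaEta_Semistables_DecompositionallyFinite} slope by slope, finishing with Lemma~\ref{thm:LBoundedTrivialities} and the quotient argument. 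Since that proposition precedes this lemma and its proof does not use it, there is no circularity; its proof does, however, rest on the $L_\gamma$-bounded sets \eqref{eqn:setNumberOne}--\eqref{eqn:setNumberTwo} of Lemma~\ref{thm:ZetaBoundedness1DimensionalSheaves}, so ultimately both arguments draw on the same source, just via different parts of it. Both hinge on the same two facts: twisting by $A$ preserves $\zeta$-semistability because it shifts every slope uniformly by $(0,1)$, and it changes $c$ by $d c_\gamma \in \ker L_\gamma$ since $L_\gamma(c_\gamma)=0$ (Lemma~\ref{thm:OmegaCanBeChosenGenerically}), so it is invisible in $N_0(\X)/\ker L_\gamma$. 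A small bonus of your write-up is that you make explicit the final passage to the quotient (the images of the original classes and their chosen twists agree modulo $\ker L_\gamma$, and a subset of an $L_\gamma$-bounded set is $L_\gamma$-bounded), a step the paper leaves implicit; your phrase that the two images ``coincide'' should strictly be ``the image of the original set equals the image of the chosen twists, which is contained in the finite union'', but containment is all that is needed.
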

\begin{proof}
  Let $F \in \M^{\ss}_{\zeta}(d\beta_{\gamma},c)$.
  We have $\zeta(F) = (\gamma, \nu(F))$ and so $F \in \F_{\zeta, (\gamma,\infty)}$.
  Note that $\zeta(F(n)) = (\gamma,\nu(F) + n)$, thus for $n \gg 0$ we have $F(n) \in \T_0 \cap \F_{\zeta, (\gamma,\infty)}$.
  An application of Lemma~\ref{thm:ZetaBoundedness1DimensionalSheaves} completes the proof.
\end{proof}

As a consequence of Lemma~\ref{thm:BoundednessOfZetaSemistable1DimensionalSheaves}, we may define the number
\[
  K_{\gamma} = \min \{L_{\gamma}(c) \mid \exists d \text{ such that }d\beta_{\gamma} \le \beta \text{ and }\M_{\zeta}^{\ss}(d\beta_{\gamma}, c)\not= \varnothing\}
\]
because $A \cdot d \beta_{\gamma} = dc_{\gamma}$ and so $L_{\gamma}(c_{\gamma}) = 0$.
This allows us to slide off the $\gamma$-wall.
\begin{lem}
  \label{prop:SendingEta_to_Infinity_Sliding_Off_Gamma_Wall}
  \label{thm:EtaStableIsEventuallyInfinityStable}
  Let $\gamma \in V_{\beta}$, let $E \in \A$ be of class $(-1,\beta, c)$, and let
  \begin{gather*}
    \eta_{\gamma, (\beta,c)}^{+} = \max\{0, \deg(\beta,c) - M^{-}_{\beta, \gamma, L_{\gamma}(c) - K_{\gamma}}\} \\
    \eta_{\gamma, (\beta,c)}^{-} = \min\{0, \deg(\beta,c) - M^{+}_{\beta, \gamma, L_{\gamma}(c) - K_{\gamma}}\}
  \end{gather*}

  If $\eta > \eta_{\gamma, (\beta,c)}^{+}$, then the following are equivalent:
  \begin{enumerate}
  \item\label{enum:deltaBig} $E$ is a $(\cat T_{\zeta, (\gamma,\eta)},\cat F_{\zeta, (\gamma,\eta)})$-pair
  \item\label{enum:PT_Big} $E$ is a $(\cat T_{\zeta, (\gamma + \epsilon, \eta)}, \cat F_{\zeta, (\gamma + \epsilon, \eta)})$-pair.
  \end{enumerate}

  If $\eta < \eta_{\gamma, (\beta,c)}^{-}$, then the following are equivalent:
  \begin{enumerate}
  \item\label{enum:deltaSmall} $E$ is a $(\cat T_{\zeta, (\gamma,\eta)},\cat F_{\zeta, (\gamma,\eta)})$-pair
  \item\label{enum:PT_Small} $E$ is a $(\cat T_{\zeta, (\gamma - \epsilon, \eta)}, \cat F_{\zeta, (\gamma - \epsilon, \eta)})$-pair.
  \end{enumerate}
\end{lem}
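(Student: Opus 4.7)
The plan is to adapt the argument of Lemma~\ref{lem:Delta_Pairs_are_Stable_Pairs_at_Infinity} to the $\gamma$-wall-crossing setting. I focus on the first equivalence, with $\eta > \eta_{\gamma,(\beta,c)}^{+}$; the second is symmetric, using the first rather than the second part of Lemma~\ref{thm:vanishingOfUPZeta}. For $\epsilon>0$ small enough that no wall in $V_\beta$ lies between $\gamma$ and $\gamma+\epsilon$, one has $\T_{\zeta,(\gamma+\epsilon,\eta)} \subseteq \T_{\zeta,(\gamma,\eta)}$ and $\F_{\zeta,(\gamma,\eta)} \subseteq \F_{\zeta,(\gamma+\epsilon,\eta)}$, so Proposition~\ref{prop:InducedTorsionTriple} produces a torsion triple on $\Coh_{\leq 1}(\X)$ with middle category
\[
\cat W \coloneq \T_{\zeta,(\gamma,\eta)} \cap \F_{\zeta,(\gamma+\epsilon,\eta)}.
\]
Unpacking the defining conditions, $\cat W$ consists of those $F \in \Coh_{\leq 1}(\X)$ that are $\zeta_1$-semistable of slope $\gamma$ and Nironi-semistable with $\nu_{-}(F) \geq \eta$; by the uniqueness clause in Lemma~\ref{thm:OmegaCanBeChosenGenerically}, every non-zero such $F$ has class $(d\beta_\gamma, c_W)$ for some $d \geq 1$.

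Both $\uP_{\zeta,(\gamma,\eta)}(\beta,c)$ and $\uP_{\zeta,(\gamma+\epsilon,\eta)}(\beta,c)$ are contained in $\uPair(\T_{\zeta,(\gamma+\epsilon,\eta)},\F_{\zeta,(\gamma,\eta)})(\beta,c)$, which by Lemma~\ref{thm:equivalenceOfFiniteTypeConditions} sits in two geometric bijections
\[
\uW \ast \uP_{\zeta,(\gamma,\eta)} \;\cong\; \uPair\bigl(\T_{\zeta,(\gamma+\epsilon,\eta)},\F_{\zeta,(\gamma,\eta)}\bigr) \;\cong\; \uP_{\zeta,(\gamma+\epsilon,\eta)} \ast \uW.
\]
I claim that for $\eta > \eta_{\gamma,(\beta,c)}^{+}$, the $\cat W$-factor in either decomposition of an element of class $(-1,\beta,c)$ is forced to vanish, which will imply that both containments are equalities. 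Suppose instead that $W \in \cat W$ is non-zero of class $(d\beta_\gamma, c_W)$: then $\nu_{-}(W) \geq \eta$ forces $\deg(W) \geq \eta \cdot d \cdot l(\beta_\gamma) \geq \eta$, while the definition of $K_\gamma$ just after Lemma~\ref{thm:BoundednessOfZetaSemistable1DimensionalSheaves} gives $L_\gamma(c_W) \geq K_\gamma$, and hence $L_\gamma(c - c_W) \leq L_\gamma(c) - K_\gamma$.

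In the first decomposition, the complementary factor lies in the non-empty stack $\uP_{\zeta,(\gamma,\eta)}(\beta - d\beta_\gamma, c - c_W)$, so part~(2) of Lemma~\ref{thm:vanishingOfUPZeta} applies (using $\eta > \eta^{+} \geq 0$), and combined with the monotonicity of $M^{-}_{\beta,\gamma,x}$ in $x$ yields $\deg(\beta - d\beta_\gamma, c - c_W) \geq M^{-}_{\beta,\gamma,L_\gamma(c) - K_\gamma}$. Summing contributions,
\[
\deg(\beta,c) \;\geq\; M^{-}_{\beta,\gamma,L_\gamma(c) - K_\gamma} + \eta \;>\; M^{-}_{\beta,\gamma,L_\gamma(c) - K_\gamma} + \eta_{\gamma,(\beta,c)}^{+} \;\geq\; \deg(\beta,c),
\]
a contradiction. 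For the second decomposition, the complementary factor $E_{+} \in \uP_{\zeta,(\gamma+\epsilon,\eta)}$ also lies in $\uPair(\T_{\zeta,(\gamma+\epsilon,\eta)},\F_{\zeta,(\gamma,\eta)})$; applying the first bijection to $E_{+}$ produces a $\uP_{\zeta,(\gamma,\eta)}$-factor whose $L_\gamma$-value is shifted by at most one further multiple of $K_\gamma$, so by monotonicity of $M^{-}$ the same lower bound $M^{-}_{\beta,\gamma,L_\gamma(c) - K_\gamma}$ is retained and the identical contradiction forces $W = 0$. The main obstacle will be confirming that $\cat W$ really has the ``$\zeta_1 = \gamma$ and $\nu_{-} \geq \eta$'' shape claimed above, and that the uniqueness of $\beta_\gamma$ from Lemma~\ref{thm:OmegaCanBeChosenGenerically} produces a $K_\gamma$ which is strictly positive on every non-zero element of $\cat W$; this is precisely the input that allows the $L_\gamma$-bookkeeping to close up after at most a single iteration.
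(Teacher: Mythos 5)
Your overall strategy---showing that for $\eta$ beyond the stated threshold both $\uP_{\zeta,(\gamma,\eta)}(\beta,c)$ and $\uP_{\zeta,(\gamma+\epsilon,\eta)}(\beta,c)$ coincide with $\uPair(\T_{\zeta,(\gamma+\epsilon,\eta)},\F_{\zeta,(\gamma,\eta)})(\beta,c)$ by forcing the $\W$-factors in the two canonical filtrations to vanish---is a legitimately different route from the paper's, but the numerical bookkeeping has a genuine gap. First, a small slip: a nonzero $W\in\W=\T_{\zeta,(\gamma,\eta)}\cap\F_{\zeta,(\gamma+\epsilon,\eta)}$ need not satisfy $\nu_{-}(W)\ge\eta$; what the definitions give is that every $\zeta$-HN factor of $W$ has slope $(\gamma,\nu_i)$ with $\nu_i\ge\eta$, which still yields $\nu(W)\ge\eta$ and hence $\deg(W)\ge\eta$, so this part is harmless. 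The real problem is the inequality $L_{\gamma}(c_W)\ge K_{\gamma}$. By its definition, $K_{\gamma}$ only bounds $L_{\gamma}$ on classes of $\zeta$-\emph{semistable} sheaves of class a multiple of $\beta_{\gamma}$; a general $W\in\W$ is an iterated extension of $k\ge 1$ such semistables with possibly different $\nu$-slopes, so all you can conclude is $L_{\gamma}(c_W)\ge kK_{\gamma}$, which is weaker than $K_{\gamma}$ unless $K_{\gamma}\ge 0$. No positivity of $K_{\gamma}$, nor of $L_{\gamma}$ on $\W$ (which your closing sentence explicitly hopes for), is available or claimed anywhere in the paper: $L_{\gamma}$ takes negative values on many classes when $\gamma$ is small. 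Consequently the bound $L_{\gamma}(c-c_W)\le L_{\gamma}(c)-K_{\gamma}$, and with it the appeal to $M^{-}_{\beta,\gamma,L_{\gamma}(c)-K_{\gamma}}$ and to the specific thresholds $\eta^{\pm}_{\gamma,(\beta,c)}$ in the statement, is unjustified; the treatment of the second decomposition (``shifted by at most one further multiple of $K_{\gamma}$, so the same lower bound is retained by monotonicity'') has the same sign problem, now with $2K_{\gamma}$.

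The natural repair is to peel off one $\zeta$-semistable piece at a time instead of the whole $\W$-part at once, and this is in effect what the paper does: it first proves that condition (1) is independent of $\eta>\eta^{+}_{\gamma,(\beta,c)}$ by crossing the $\eta$-walls in $W_{\beta}$ one at a time, where the only possible destabiliser is a single class $(\beta'',c'')$ with $\uM^{\ss}_{\zeta}(\beta'',c'')\neq\varnothing$ and slope exactly $(\gamma,\eta)$, for which $L_{\gamma}(c'')\ge K_{\gamma}$ does hold by the definition of $K_{\gamma}$, so the contradiction with $\eta^{+}$ closes. With this $\eta$-independence in hand (and the $\eta$-independence of condition (2), since $\gamma\pm\epsilon$ is not a wall, Lemma~\ref{thm:ZetaStabilityUnchangedAwayFromBigWalls}), the paper then compares the two pair conditions directly on sub- and quotient sheaves of $E$, using that $L_{\gamma+\epsilon}(S\cdot A)>0$ forces $L_{\gamma}(S\cdot A)\ge 0$ and conversely, with the freedom to enlarge $\eta$. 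Your one-shot decomposition could be salvaged either by inducting on the $\zeta$-HN factors of $W$ (which reproduces the wall-by-wall argument) or by enlarging $\eta^{\pm}$ to absorb a term like $l(\beta)\min(K_{\gamma},0)$, but as written it does not prove the lemma with the stated thresholds.
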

\begin{proof}
  We treat the claim for $\eta > \eta_{\gamma,(\beta,c)}^{+}$, the other one is handled similarly.
  
  We first show that condition (\ref*{enum:deltaBig}) is independent of the precise value of $\eta > \eta^{+}_{\gamma,(\beta,c)}$.
  The argument is a slightly refined version of that of Lemma~\ref{lem:Delta_Pairs_are_Stable_Pairs_at_Infinity}.
  By Lemma~\ref{thm:ZetaStabilityUnchangedAwayFromBigWalls}, it is enough to show that for any $\eta \in W_{\beta} \cap (\eta_{(\beta,c)},\infty)$ we have
  \[
    \uP_{\zeta,(\gamma, \eta-\epsilon)}(\beta,c) = \uP_{\zeta,(\gamma,\eta + \epsilon)}(\beta,c).
  \]
  By the wall-crossing formula, these two moduli stacks can only differ if there exists a decomposition $(\beta,c) = (\beta',c') + (\beta'', c'')$ with $\zeta(\beta'',c'') = (\gamma, \eta)$ and
  \begin{equation*}
    \uP_{\zge}(\beta',c') \neq \varnothing \neq \uM^{\ss}_{\zeta}(\beta'',c'').
  \end{equation*}
  Suppose for a contradiction that there exists such a decomposition of $(\beta,c)$.
  It follows that $L_{\gamma}(c'') \ge K_{\gamma}$, hence $L_{\gamma}(c') \le L_{\gamma}(c) - K_{\gamma}$, and so $\deg(\beta',c') \ge M_{\beta, \gamma, L_{\gamma}(c) - K_{\gamma}}^{-}$.
  But then
  \begin{align*}
    \deg(\beta,c) &= \deg(\beta',c') + \deg(\beta'',c'') \\
                  &\ge M_{\beta, \gamma, L_{\gamma}(c) - K_{\gamma}}^{-} + l(\beta'')\nu((\beta'',c'')) \\
                  &> M_{\beta, \gamma, L_{\gamma}(c) - K_{\gamma}}^{-} + \eta^{+}_{\gamma, (\beta,c)} = \deg(\beta,c)
  \end{align*}
  which is a contradiction.

  Assume that $E$ is a $(\T_{\zeta,(\gamma+ \epsilon, \eta)}, \F_{\zeta,(\gamma + \epsilon, \eta)})$-pair.
  If $S$ is a subobject of $E$, then $L_{\gamma + \epsilon}(S \cdot A) > 0$, and hence by continuity $L_{\gamma}(S \cdot A) \ge 0$.
  Taking $\eta \ge \nu(S)$, it follows that $S \in \cat F_{\zeta, (\gamma,\eta)}$.
  If $Q$ is a quotient object, then $L_{\gamma + \epsilon}(Q \cdot A) \le 0$, so
  \[
    \deg_{Y}(\ch_{2}(\Psi(Q \cdot A) \cdot \omega)) \le -(\gamma + \epsilon) \deg(Q \cdot A).
  \]
  Now either $\deg(Q \cdot A) = 0$, in which case $Q \cdot A = 0$ and so $Q \in \Coh_{0}(\X)$, or else $\deg(Q \cdot A) > 0$, which implies $L_{\gamma}(Q \cdot A) < 0$.
  In either case $Q \in \Tz$, and so $E$ is a $(\Tz, \Fz)$-pair as claimed.

  Conversely, suppose that $E$ is a $(\T_{{\zeta, (\gamma, \eta)}}, \F_{{\zeta, (\gamma, \eta)}})$-pair and let $S \in \Coh_{\leq 1}(\X)$ be a subobject of $E$.
  Then either $L_{\gamma}(A \cdot S) > 0$, in which case $L_{\gamma + \epsilon}(A \cdot S) > 0$ as well, or $L_{\gamma}(A \cdot S) = 0$ and so, as $\deg(S \cdot A) > 0$, we find
  \[
    \deg_{Y}(\ch_{2}(\Psi(S) \cdot A) \cdot \omega) < 0,
  \]
  which implies $L_{\gamma + \epsilon}(S \cdot A) > 0$.
  If $Q \in \Coh_{\le 1}(\X)$ is a quotient object of $E$, then either $Q \in \Coh_{0}(\X) \subset \T_{\zeta, (\gamma + \epsilon, \eta)}$, or else taking $\eta > \nu(Q)$, we find $L_{\gamma}(A \cdot Q) < 0$, and so $L_{\gamma + \epsilon}(A \cdot Q) < 0$.
  We conclude that $E$ is a $(\T_{\zeta,(\gamma + \epsilon, \eta)}, \F_{\zeta,(\gamma + \epsilon, \eta)})$-pair.
\end{proof}

With all the relevant lemmas in place, we obtain a wall-crossing formula relating the DT-type invariants of $(\gamma,\eta) \in \R_{>0} \times \R$ before and after an $\eta$-wall on a $\gamma$-wall. 
\begin{prop}
\label{thm:WCFormulaForZetaPairs}
  Let $\beta \in N_1(\X)$, let $\gamma \in V_{\beta}$, and let $\eta \in W_\beta$.
  The identity
  \begin{align}
    DT^{\zeta,(\gamma,\eta + \epsilon)}_{\leq \beta}t^{[-\hO_{\X}]}  = \exp(\{J^{\zeta}(\gamma,\eta)_{\leq \beta},- \}) DT^{\zeta,(\gamma,\eta - \epsilon)}_{\leq \beta}t^{-[\hO_{\X}]}
  \end{align}
  holds in $\Q[N^{\eff}_{1}(\X)]_{\le \beta}$.
\end{prop}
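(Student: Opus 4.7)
The plan is to mirror the argument of Proposition~\ref{prop:WCusingTFpairs}: choose an appropriate torsion triple at the wall, check that it is wall-crossing material in the sense of Definition~\ref{defn:wallCrossingMaterial}, apply the abstract wall-crossing formula (Theorem~\ref{thm:WallCrossingFormula}), and then project the resulting identity to the truncated (completed) Poisson algebra.

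First I would pick $0 < \epsilon < 1/l(\beta)!$ small enough that for classes $\beta' \leq \beta$ no further $\eta$-wall lies strictly between $\eta - \epsilon$ and $\eta + \epsilon$, using Lemma~\ref{cor:Walls_For_ZetaGammaEta_Pairs_On_Gamma_Wall_are_NironiWalls}.  With this choice, on the subcategory $\Coh_{\le 1}(\X)_{\le \beta}$ the intersection $\T_{\zeta,(\gamma,\eta-\epsilon)} \cap \F_{\zeta,(\gamma,\eta+\epsilon)}$ coincides with $\M^{\ss}_\zeta(\gamma,\eta)$, so I obtain the torsion triple
\[
\bigl(\T_{\zeta,(\gamma,\eta+\epsilon)},\ \M^{\ss}_\zeta(\gamma,\eta),\ \F_{\zeta,(\gamma,\eta-\epsilon)}\bigr)
\]
on $\Coh_{\le 1}(\X)$, at least after reducing to classes $\le \beta$ as in the proof of Proposition~\ref{prop:WCusingTFpairs}. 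Openness of both outer torsion pairs is Corollary~\ref{cor:ZetaGammaEta_TorsionPair_Open}; the wall category $\W = \M^{\ss}_\zeta(\gamma,\eta)$ is log-able by Proposition~\ref{prop:ZetaGammaEta_Semistables_DecompositionallyFinite}; and the outer pair categories $\jP_{\zeta,(\gamma,\eta\pm\epsilon)}$ define elements of $H_{\gr}(\cC)$ by combining Corollary~\ref{cor:ZetaGammaEta_Pairs_Open_TorsionPair_Open_Stack} with Proposition~\ref{thm:BoundednessOfRank1ObjectsForZetaStability}. Finally, the intermediate category of $(\T_{\zeta,(\gamma,\eta+\epsilon)}, \F_{\zeta,(\gamma,\eta-\epsilon)})$-pairs defines an element of the graded Hall pre-algebra by Lemma~\ref{thm:equivalenceOfFiniteTypeConditions}, since the stack of extensions $\uP_{\zeta,(\gamma,\eta+\epsilon)} * \uW$ is of finite type over each numerical class by Proposition~\ref{thm:extensionStackIsFiniteType}.

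With the triple verified to be wall-crossing material, Theorem~\ref{thm:WallCrossingFormula} yields
\[
I\bigl((\LL-1)[\uP_{\zeta,(\gamma,\eta+\epsilon)}]\bigr) = \exp\bigl(\{w,-\}\bigr)\, I\bigl((\LL-1)[\uP_{\zeta,(\gamma,\eta-\epsilon)}]\bigr),
\]
where $w = I((\LL-1)\log[\uW])$. Projecting this identity to $\Q\{N_1^{\eff}(\X)\}_{\le \beta}$ identifies the two pair terms with $DT^{\zeta,(\gamma,\eta\pm\epsilon)}_{\le \beta}\, t^{-[\hO_{\X}]}$ by definition \eqref{eq:ZetaPair_Ninvariants_Generating_Series}, and identifies $w$ with $J^\zeta(\gamma,\eta)_{\le \beta}$ by the same calculation carried out in the Nironi setting of Proposition~\ref{prop:WCusingTFpairs}, using that our choice of $\epsilon$ guarantees $\M_\nu([\eta-\epsilon,\eta+\epsilon)) \cap \Coh_{\le 1}(\X)_{\le \beta} = \M^{\ss}_\zeta(\gamma,\eta)_{\le \beta}$ at the wall.

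The main subtlety, as compared with the Nironi analogue, is that $J^\zeta(\gamma,\eta)_{\le \beta}$ carries infinitely many nonzero terms: by Lemma~\ref{thm:BoundednessOfZetaSemistable1DimensionalSheaves}, the set of classes $c$ supporting semistables of slope $(\gamma,\eta)$ with curve class $\le \beta$ is only \emph{weakly} $L_\gamma$-bounded. So the formula must be interpreted in the completed ring $\Q\{N_1^{\eff}(\X)\}_{\le \beta}$, and one must check that the formal exponential on the right hand side makes sense after acting on $DT^{\zeta,(\gamma,\eta-\epsilon)}_{\le \beta}\, t^{-[\hO_{\X}]}$. This follows because (i) each Poisson bracket strictly decreases the remaining rank or curve class, so only the first $l(\beta)+1$ terms of $\exp(\{J^\zeta(\gamma,\eta)_{\le \beta},-\})$ contribute, and (ii) for each fixed output class $(\beta',c)$ with $\beta'\le \beta$ only finitely many inputs contribute, by the $L_\gamma$-boundedness statements of Proposition~\ref{thm:BoundednessOfRank1ObjectsForZetaStability} and Proposition~\ref{prop:ZetaGammaEta_Semistables_DecompositionallyFinite}. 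Once this finiteness is confirmed, the projected identity is precisely the claimed wall-crossing formula.
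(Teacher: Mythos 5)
Your proposal is correct and follows essentially the same route as the paper, whose proof of this proposition is literally the remark that it goes through exactly as Proposition~\ref{prop:WCusingTFpairs}: you verify the same torsion triple at the wall is wall-crossing material using the $\zeta$-analogues (Corollary~\ref{cor:ZetaGammaEta_TorsionPair_Open}, Propositions~\ref{prop:ZetaGammaEta_Semistables_DecompositionallyFinite} and \ref{thm:BoundednessOfRank1ObjectsForZetaStability}), apply Theorem~\ref{thm:WallCrossingFormula}, and project. Your extra care about interpreting the identity in the completed ring and checking coefficientwise finiteness via $L_{\gamma}$-boundedness is a legitimate filling-in of a point the paper leaves implicit, not a deviation in method.
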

\begin{proof}
This follows exactly as Proposition~\ref{prop:WCusingTFpairs}.
\end{proof}

Now, we establish rationality as a $\gamma$-wall is crossed.
Define the series
\[
  DT^{{\zeta, (\gamma, \eta)}}_{\beta, c_{0} + \Z c_{\gamma}} \coloneq \sum_{k \in \Z} DT^{{\zeta, (\gamma, \eta)}}_{\beta,c_{0} + kc_{\gamma}} q^{c_{0} + kc_{\gamma}}.
\]
As in Section~\ref{sec:rationalityOfStablePairs}, we equip $\Q[N_{0}(\X)]$ with a grading by setting $\deg(q^{c}) = \deg(c)$.
This extends naturally to a grading on $\Q(N_{0}(\X))$.
\begin{lem}
  \label{thm:ZetaWallCrossingIsQuasiPolynomial}
  Let $\beta \in N_{1}(\X)$, let $c_{0} \in N_{0}(\X)$, let $\gamma \in V_{\beta}$, and let $\eta_{0} \le -l(\beta)$.
  Then the series
  \[
    DT^{{\zeta, (\gamma, \eta_{0})}}_{(\beta,c_{0}+\Z c_{\gamma})} - DT^{\zeta, (\gamma,\infty)}_{(\beta,c_{0}+ \Z c_{\gamma})}
  \]
  is rational of degree $< \deg(\beta,0) + M^{+}_{\beta, \gamma, L_{\gamma}(c_{0})} + \eta_{0} l(\beta) + l(\beta)^{2}$.
\end{lem}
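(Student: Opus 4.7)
The proof mirrors that of Lemma \ref{thm:degreeOfTheCTerm}, transposing from Nironi-stability to $\zeta$-stability on the fixed wall $\gamma \in V_\beta$. First, I would iterate the $\eta$-wall-crossing formula of Proposition \ref{thm:WCFormulaForZetaPairs} across the discrete set $W_\beta \cap [\eta_0,\infty)$ of $\eta$-walls on the $\gamma$-wall; Lemma \ref{prop:SendingEta_to_Infinity_Sliding_Off_Gamma_Wall} ensures the $\eta \to \infty$ limit is eventually constant on each numerical class, so the composite gives
\[
  DT^{\zeta,(\gamma,\infty)}_{\le\beta} t^{-[\hO_\X]}
  = \prod_{\eta \in W_\beta \cap [\eta_0,\infty)} \exp\{J^\zeta(\gamma,\eta)_{\le\beta}, -\}\, DT^{\zeta,(\gamma,\eta_0)}_{\le\beta} t^{-[\hO_\X]}.
\]
By Lemma \ref{thm:OmegaCanBeChosenGenerically}, the only curve classes $\le \beta$ carrying non-zero $J^\zeta$-terms on the $\gamma$-wall are the multiples $d\beta_\gamma$ with $d \ge 1$.

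Next, I would expand the exponentials and organise the resulting infinite sum exactly as in the proof of Theorem \ref{thm:PTIsRational}, partitioning terms by data $(\alpha',(\beta_i = d_i\beta_\gamma)_{i=1}^r,(\kappa_i),E)$, where $\alpha' = (\beta',c')$, $\kappa_i \in N_0(\X)/d_i\Z c_\gamma$, and $E \subseteq \{1,\ldots,r-1\}$ records the equalities $\nu(\beta_i,c_i) = \nu(\beta_{i+1},c_{i+1})$. Because $c_\gamma = A \cdot \beta_\gamma$ lies in $\ker L_\gamma$, the restriction to the sub-series indexed by $c_0 + \Z c_\gamma$ is compatible with this grouping. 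Proposition \ref{thm:BoundednessOfRank1ObjectsForZetaStability} and Lemma \ref{thm:BoundednessOfZetaSemistable1DimensionalSheaves} guarantee that only finitely many groups contribute. Within a fixed group, writing $c_i = c_i^0 + a_i d_i c_\gamma$ for minimal $c_i^0 \in \kappa_i$ and $a_i \in \Z_{\ge 0}$ subject to the slope-ordering constraints, the Poisson-bracket weights are quasi-polynomial in $(a_i)$ by bilinearity of $\chi$, and Lemma \ref{thm:SumDefinedByInequalitiesIsARationalFunction} identifies each group's contribution with a rational function of degree strictly less than $\deg(\beta, c' + \sum_i c_i^0)$.

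For the degree bound I would copy the argument of Lemma \ref{thm:degreeOfTheCTerm}. The identity $\deg c_\gamma = l(\beta_\gamma)$ gives $\nu(\beta_i, c_i^0 - d_i c_\gamma) = \nu(\beta_i, c_i^0) - 1$; minimality of $c_i^0$ then forces $\nu(\beta_1,c_1^0) < \eta_0 + 1$ and $\nu(\beta_i, c_i^0) < \nu(\beta_{i-1}, c_{i-1}^0) + 1$, hence $\nu(\beta_i, c_i^0) < \eta_0 + i$. Using $\eta_0 \le -l(\beta)$ and $r \le l(\beta)$, each $\eta_0 + i \le 0$, so $\deg(\beta_i, c_i^0) < (\eta_0 + i)l(\beta_i) \le \eta_0 + i$, and summing yields $\sum_i \deg(\beta_i, c_i^0) < \eta_0 l(\beta) + l(\beta)^2$. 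For the $\alpha'$-factor, Lemma \ref{thm:vanishingOfUPZeta} (applicable since $\eta_0 \le 0$) gives $\deg(\beta',c') \le M^+_{\beta,\gamma,L_\gamma(c')}$; combining weak $L_\gamma$-boundedness of $\zeta$-semistable sheaves of class $d\beta_\gamma$ (Lemma \ref{thm:BoundednessOfZetaSemistable1DimensionalSheaves}) with minimality of $c_i^0$ to ensure $\sum_i L_\gamma(c_i^0) \ge 0$, the relation $L_\gamma(c) = L_\gamma(c_0)$ for $c \in c_0 + \Z c_\gamma$ forces $L_\gamma(c') \le L_\gamma(c_0)$, and monotonicity of $M^+$ gives $\deg(\beta',c') \le M^+_{\beta,\gamma,L_\gamma(c_0)}$. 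Converting $\deg(c)$ to $\deg(\beta,c) = \deg(\beta,0) + \deg(c)$ and adding the two bounds yields the stated inequality.

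The main obstacle is the $L_\gamma$-bookkeeping in the $\alpha'$-factor: unlike the Nironi case where all $c$-classes could be controlled by $\deg$ alone, here the decomposition $c = c' + \sum c_i$ mixes classes with a priori unrelated $L_\gamma$-values, and one has to use the minimality of the $c_i^0$ together with the weak $L_\gamma$-boundedness of the $\zeta$-semistable pieces to conclude $L_\gamma(c') \le L_\gamma(c_0)$ within each contributing group.
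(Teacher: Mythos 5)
Your proposal follows the paper's proof essentially step for step: iterate Proposition~\ref{thm:WCFormulaForZetaPairs} over the $\eta$-walls $W_{\beta}\cap[\eta_{0},\infty)$ on the fixed $\gamma$-wall, use Lemma~\ref{thm:OmegaCanBeChosenGenerically} to restrict the $J^{\zeta}$-insertions to classes $d\beta_{\gamma}$, group terms by $(\alpha',(d_{i}),(\kappa_{i}),E)$, get rationality from quasi-polynomiality of the bracket coefficients together with Lemma~\ref{thm:SumDefinedByInequalitiesIsARationalFunction}, and run the degree estimate of Lemma~\ref{thm:degreeOfTheCTerm} for $\sum_{i}\deg(\beta_{i},c_{i}^{0})$ and Lemma~\ref{thm:vanishingOfUPZeta} for the $\alpha'$-term. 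This is exactly the argument in the paper.

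The one point where you add something beyond what the paper writes is also the one step that does not hold as you justify it: the claim $\sum_{i}L_{\gamma}(c_{i}^{0})\ge 0$. Minimality of the $c_{i}^{0}$ only moves $c_{i}$ by multiples of $d_{i}c_{\gamma}\in\ker L_{\gamma}$, so it carries no information about $L_{\gamma}(c_{i}^{0})$; and Lemma~\ref{thm:BoundednessOfZetaSemistable1DimensionalSheaves} (weak $L_{\gamma}$-boundedness) only gives $L_{\gamma}(c_{i})\ge K_{\gamma}$, where $K_{\gamma}$ has no a priori sign --- the splitting $N_{\le 1}(\X)=N_{1}(\X)\oplus N_{0}(\X)$ is non-canonical, so there is no positivity of $L_{\gamma}$ on the point-classes of $\zeta$-semistable sheaves. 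What the estimates actually yield is $L_{\gamma}(c')=L_{\gamma}(c_{0})-\sum_{i}L_{\gamma}(c_{i})\le L_{\gamma}(c_{0})-rK_{\gamma}$ with $r\le l(\beta)$, hence $\deg(\beta',c')\le M^{+}_{\beta,\gamma,L_{\gamma}(c_{0})-rK_{\gamma}}$, a constant depending only on $(\beta,\gamma,c_{0})$ and in particular independent of $\eta_{0}$; this is all that the intended application, Corollary~\ref{thm:DTDeltaIsQuasipolynomial}, requires, since there only the behaviour as $\eta_{0}\to-\infty$ matters. For what it is worth, the paper's own proof is no more careful at this point: it stops at $\deg(\beta',c')\le M^{+}_{\beta,\gamma,L_{\gamma}(c')}$ and passes silently to $L_{\gamma}(c_{0})$ in the statement, so your instinct that the $L_{\gamma}$-bookkeeping for the $\alpha'$-factor is the delicate point was correct --- only the proposed fix is not valid as stated.
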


\begin{proof}
  Since $\gamma \in V_\beta$, the walls for $(\Tz,\Fz)$-pairs of class $(-1,\beta',c)$ with $\beta' \leq \beta$ are given by the $\eta$-walls $W_\beta$ for Nironi stability by Corollary~\ref{cor:Walls_For_ZetaGammaEta_Pairs_On_Gamma_Wall_are_NironiWalls}.
  By Proposition~\ref{thm:WCFormulaForZetaPairs}, we have a wall-crossing formula, which when iterated yields
  \begin{equation*}
    DT^{\zeta,(\gamma,\infty)}_{\le \beta}t^{-[\hO_{\X}]} = \prod_{\eta \in W_{\beta} \cap [\eta_{0}, \infty)} \exp(\{J^{\zeta}_{\beta_{\gamma}}(\eta),-\}) DT^{{\zeta, (\gamma, \eta_{0})}}_{\leq \beta}t^{-[\hO_{\X}]},
  \end{equation*}
  where
  \begin{equation*}
    J_{\beta_{\gamma}}^{\zeta}(\eta) = \sum_{\substack{d \in \Z_{\ge 1}, c \in N_{0}(\X) \\ d\beta_{\gamma} \le \beta \\ \nu(d\beta_{\gamma},c) = \eta}} J^{\zeta}_{(d\beta_{\gamma},c)}z^{d\beta_{\gamma}}q^{c}.
  \end{equation*}

  Expanding the exponential, substituting the expression from equation~\eqref{eq:ZetaPair_Ninvariants_Generating_Series}, we collect all terms contributing to the coefficient of $z^{\beta}q^{c_0+kc_{\gamma}}t^{-[\hO_{\X}]}$ on the right hand side.
  The terms of this sum are described as follows.
  Fix an $r \geq 1$, a sequence $(d_i)_{i=1}^r$ in $\Z_{\geq 1}$, a sequence $(c_i)_{i=1}^r$ in $N_{0}(\X)$, and a class $\alpha' = (\beta',c') \in N_{\leq 1}(\X)$, satisfying
  \begin{itemize}
  \item $\beta = \beta' + \sum_{i=1}^r d_{i}\beta_{\gamma}$,
  \item $c' + \sum_{i=1}^r c_{i} \equiv c_{0} \pmod{c_{\gamma}}$,
  \item $\eta_{0} \le \nu(d_{1}\beta_{\gamma},c_{1}) \le \cdots \le \nu(d_{r}\beta_{\gamma},c_{r})$,
  \item $J^{\zeta}_{(d_{i}\beta_{\gamma},c_{i})} \not= 0$ for all $1 \leq i \leq r$,
  \item $DT^{\zeta, (\gamma, \eta_{0})}_{\alpha'} \not= 0$.
  \end{itemize}
  The non-zero term in the coefficients of $z^{\beta}q^{c_0+kc_\gamma}t^{-[\hO_{\X}]}$ associated with this data is
  \begin{align*}
    \label{eqn:Contribution}
    T(r,(d_{i}),(c_{i}),\alpha')z^{\beta}q^{c'+\sum c_{i}}s &= A_{(d_{i}),(c_{i})}\{J^{\zeta}_{(d_{r}\beta_{\gamma},c_{r})}z^{d_{r}\beta_{\gamma}}q^{c_{r}},-\}\circ \\
                                                            &\cdots \circ \{J^{\zeta}_{(d_{1}\beta_{\gamma},c_{1})}z^{d_{1}\beta_{\gamma}}q^{c_{1}}, -\}(DT^{{\zeta, (\gamma, \eta_{0})}}_{(\beta',c')}z^{\beta'}q^{c'}t^{-[\hO_{\X}]})
  \end{align*}
  where $A_{(d_i),(c_i)}$ is a factor arising from the exponential:
  \begin{equation*}
    A_{(d_i),(c_i)} = \prod_{\nu \in W_{\beta}} \frac{1}{|\{i \mid \nu(d_{i}\beta_{\gamma},c_{i}) = \nu\}|!}.
  \end{equation*}
  Putting all these terms together, we have
  \begin{equation*}
    \sum_{k \in \Z} \left(DT^{\zge}_{(\beta,c_0+kc_{\gamma})} - DT^{\zeta,(\gamma,-\infty)}_{(\beta,c_0+kc_{\gamma})}\right)q^{c_0+kc_{\gamma}} = \sum_{r,(d_{i}),(c_{i}),\alpha'} T(r,(d_{i}),(c_{i}),\alpha')q^{c' + \sum c_{i}}.
  \end{equation*}
  We now analyse the $T$-terms.
  Expanding the Poisson brackets yields
  \begin{align*}
    T(r,(d_{i}),(c_{i}),\alpha') = A_{(d_{i}),(c_{i})} B_{(d_{i}),(c_{i}),\alpha'}
    \prod_{i=1}^{r} J^{\zeta}_{(d_{i}\beta_{\gamma},c_{i})}DT^{{\zeta, (\gamma, \eta_{0})}}_{\alpha'}
  \end{align*}
  where, letting $\alpha_{i} = (d_{i}\beta_{\gamma},c_{i})$ and $\alpha' = (\beta',c')$, we have
  \begin{equation*}
    B_{(d_{i}),(c_{i}),\alpha'} = \sigma^{\sum_{i < j}\chi(\alpha_{j},\alpha_{i}) + \sum_{i} \chi(\alpha_{i}, \alpha' - [\hO_{\X}])}\prod_{i = 1}^{r}\chi(\alpha_{i}, -[\hO_{\X}] + \alpha' + \sum_{j=1}^{i-1}\alpha_{j})
  \end{equation*}
  where $\sigma \in \{\pm 1\}$ depending on the integration morphism chosen; see Section~\ref{Section5_IntegrationMap}.

  We partition these $T$-terms in groups as follows.
  A \emph{group} consists of the data of a class $\alpha' = (\beta',c') \in N_{\leq 1}(\X)$ and a sequence of positive integers $(d_{i})_{i=1}^r$ satisfying the same conditions as above, a sequence $(\kappa_{i})_{i=1}^r \in N_{0}(\X)/\Z (d_{i}\beta_{\gamma} \cdot A)$, and a subset $E \subseteq \{1, \ldots, r-1\}$.
  The $\kappa_{i}$ are required to satisfy
  \begin{equation}
    J^{\zeta}_{(d_{i}\beta_{\gamma}, c_{i})} \not= 0 \text{ for }c_{i} \in \kappa_{i}
    \text{ and all } i = 1,2,\ldots,r.
  \end{equation}
  Tensoring by the line bundle $A$ induces an isomorphism $\uM^{\ss}_{\zeta}(\gamma,d) \cong \uM^{\ss}_{\zeta}(\gamma,d+\gamma\cdot A)$.
  It follows that the invariant $J^{\zeta}_{(d_i\beta_\gamma,c_i)}$ is independent of the choice of representative $c_i \in \kappa_i$, thus we may define
  \[
  J^{\zeta}_{(d_{i}\beta_{\gamma},\kappa_{i})} \coloneq J^{\zeta}_{(d_{i}\beta_{\gamma},c_{i})}.
  \]

  Collecting all terms belonging to the group $(\alpha',(d_i),(\kappa_i),E)$, we obtain
  \begin{align*}
    C(\alpha',(d_{i}),(\kappa_{i}),E) = \sum_{(c_{i})} T(r,(d_{i}),(c_{i}),\alpha')q^{c' + \sum c_{i}},
  \end{align*}
  where the sum is over all $c_{i} \in N_{0}(\X)$ such that
  \begin{align}
    &c_i \in \kappa_i, \\
    \label{eqn:SlopeConstraintInequality}
    &\eta_{0} \le \nu(d_{1}\beta_{\gamma},c_{1}) \le \cdots \le \nu(d_{r}\beta_{\gamma},c_{r}), \\
    \label{eqn:SlopeConstraintEquality}
    &\nu(d_{i}\beta_{\gamma},c_{i}) = \nu(d_{i+1}\beta_{\gamma},c_{i+1}) \Leftrightarrow i \in E.
  \end{align}
  Note that for such a choice of $c_{i}$, the factor $A_{(d_{i}),(c_{i})}$ defined above depends only on $E$.
  Indeed, set $\{n_{i}\} = \{1, \ldots, r\} \setminus E$ with $n_{1}< \cdots < n_{r - |E|}$.
  Then
  \begin{equation*}
    A_{E} := \prod \frac{1}{(n_{i}-n_{i-1})!} = A_{(d_{i}),(c_{i})}.
  \end{equation*}
  We find that the contribution of the group $(\alpha',(d_i),(\kappa_i),E)$ is
  \begin{equation*}
    C(\alpha',(d_{i}),(\kappa_{i}),E) = A_E \prod_{i=1}^r J^\zeta_{(d_i\beta_\gamma,\kappa_i)}
    DT^{{\zeta, (\gamma, \eta_{0})}}_{\alpha'} \left( \sum_{c_i} B_{(d_{i}),(c_{i}),\alpha'} q^{c'+\sum c_i} \right)
  \end{equation*}
  where the sum runs over all $c_i \in N_0(\X)$ as above.
  Now, for every choice of $(d_{i})$, $(\kappa_{i})$, and $E$, there exists a sequence $(c_{i}^{0})$ with $c_{i}^{0} \in \kappa_{i}$ which is \emph{minimal} in the sense that replacing any $c_{i}^{0}$ with $c_{i}^{0} - d_{i}c_{\gamma}$  would violate one of \eqref{eqn:SlopeConstraintInequality} and \eqref{eqn:SlopeConstraintEquality}.
  We find
  \begin{equation*}
    C(\alpha',(d_i),(\kappa_i), E) = A_{E}  \prod_{i=1}^{r} J^{\zeta}_{(d_i \beta_\gamma,\kappa_{i})} DT^{\zeta, (\gamma, \eta_{0})}_{\alpha'} \left(\sum_{a_i} B_{(d_i),(c^0_i+a_i d_i c_\gamma),\alpha'} q^{c'+\sum c_i^0+a_id_i c_\gamma}\right)
  \end{equation*}
  where the sum is over the set $S_E = \{0 \leq a_1 \leq a_2 \leq \ldots \leq a_r \mid a_i \in \Z,\, a_i = a_{i+1} \Leftrightarrow i \in E\}$.
  Note that the coefficients of this expression depend quasi-polynomially on the $a_i$.

  Now observe by Lemma \ref{thm:SumDefinedByInequalitiesIsARationalFunction} that this $C$ is a rational function.
  Moreover if $\deg(C)$ denotes the degree of $C$ as a rational function, Lemma~\ref{thm:degreeClaimForRationalFunction} also implies that
  \[
    \deg(C) < \deg(\beta,0) + \deg(c' + \sum c_{i}^{0}).
  \]
  We have $\deg(c') \le M^{+}_{\beta, \gamma, L_{\gamma}(c')}$, and arguing as in Section \ref{sec:PTDuality} we find that
  \[
    \deg(\sum c_{i}^{0}) \le l(\beta) \eta_{0} + l(\beta)^{2},
  \]
  and thus
  \[
    \deg(C) < \deg(\beta,0) + M^{+}_{\le \beta, L_{\gamma}(c')} + l(\beta)\eta_{0} + l(\beta)^{2}.
  \]
  This completes the proof.
\end{proof}

Combining Lemmas~\ref{thm:EtaStableIsEventuallyInfinityStable} and \ref{thm:ZetaWallCrossingIsQuasiPolynomial}, and letting $\eta_0$ go to $-\infty$, we obtain
\begin{cor}\label{thm:DTDeltaIsQuasipolynomial}
  Let $\beta \in N_{1}(\X)$, let $c_{0} \in N_{0}(\X)$, and let $\gamma \in V_{\beta}$ be a wall for $\beta$.
  The functions $DT^{\zeta,(\gamma,\infty)}_{\beta, c_{0} + \Z c_{\gamma}}$ and $DT^{\zeta,(\gamma,-\infty)}_{(\beta,c_{0} + \Z c_{\gamma})}$ are equal as rational functions.
\end{cor}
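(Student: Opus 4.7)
The plan is to adapt the degree-comparison argument of Lemma~\ref{thm:DTMinusInfinityIsDTInfinity} (which established the Nironi analogue $DT^{\nu,\infty}_\beta = DT^{\nu,-\infty}_\beta$) to the $\zeta$-stability setting, using Lemma~\ref{thm:ZetaWallCrossingIsQuasiPolynomial} in place of Lemma~\ref{thm:degreeOfTheCTerm} and Lemma~\ref{thm:EtaStableIsEventuallyInfinityStable} in place of Lemma~\ref{thm:DeltaPairsAreDualStablePairsAtMinusInfinity}. A preliminary step is to ensure that $DT^{\zeta,(\gamma,\pm\infty)}_{\beta,c_0+\Z c_\gamma}$ are Laurent expansions of rational functions in appropriate completions of $\Q[N_0(\X)]$; this is handled by Lemma~\ref{thm:QuasipolynomialDifferenceIsANewExpansion} applied to the quasi-polynomial remainders produced by Lemma~\ref{thm:ZetaWallCrossingIsQuasiPolynomial}.

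For $\eta_0 \le -l(\beta)$, Lemma~\ref{thm:ZetaWallCrossingIsQuasiPolynomial} directly gives
\begin{equation*}
  \deg\bigl(DT^{\zeta,(\gamma,\eta_0)}_{\beta,c_0+\Z c_\gamma} - DT^{\zeta,(\gamma,\infty)}_{\beta,c_0+\Z c_\gamma}\bigr) < \deg(\beta,0) + M^+_{\beta,\gamma,L_\gamma(c_0)} + \eta_0\,l(\beta) + l(\beta)^2.
\end{equation*}
In parallel, because $L_\gamma$ is constant along $c_0 + \Z c_\gamma$, the threshold $\eta^-_{\gamma,(\beta,c)}$ from Lemma~\ref{thm:EtaStableIsEventuallyInfinityStable} depends only on $\deg(\beta,c)$, so the coefficient of $q^c$ in $DT^{\zeta,(\gamma,\eta_0)}_{\beta,c_0+\Z c_\gamma} - DT^{\zeta,(\gamma,-\infty)}_{\beta,c_0+\Z c_\gamma}$ vanishes whenever $\deg(\beta,c) > \eta_0 + M^+_{\beta,\gamma,L_\gamma(c_0) - K_\gamma}$. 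This yields a second degree bound
\begin{equation*}
  \deg\bigl(DT^{\zeta,(\gamma,\eta_0)}_{\beta,c_0+\Z c_\gamma} - DT^{\zeta,(\gamma,-\infty)}_{\beta,c_0+\Z c_\gamma}\bigr) \le M^+_{\beta,\gamma,L_\gamma(c_0) - K_\gamma} + \eta_0 - \deg(\beta,0),
\end{equation*}
which again tends to $-\infty$ as $\eta_0 \to -\infty$.

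Subtracting, the degree of the rational function $DT^{\zeta,(\gamma,\infty)}_{\beta,c_0+\Z c_\gamma} - DT^{\zeta,(\gamma,-\infty)}_{\beta,c_0+\Z c_\gamma}$ is bounded above by the maximum of the two estimates for every choice of $\eta_0 \le -l(\beta)$. Since the left-hand side is independent of $\eta_0$, sending $\eta_0 \to -\infty$ forces the degree to be $-\infty$, so the difference vanishes as a rational function, giving the claim.

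The main obstacle I foresee is not the degree chase itself, which is formal, but the preliminary step of identifying the two limit series as Laurent expansions of the same rational function in different completions of $\Q[N_0(\X)]$; without this, the two estimates would compare objects of different formal types. This is where Lemma~\ref{thm:QuasipolynomialDifferenceIsANewExpansion} does the decisive bookkeeping, converting the quasi-polynomial behaviour encoded in the finite-$\eta_0$ differences into the genuine re-expansion relation needed to make sense of the degree subtraction.
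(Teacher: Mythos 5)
Your degree-comparison is exactly the paper's intended argument: the proof in the paper is literally ``argue as in Lemma~\ref{thm:DTMinusInfinityIsDTInfinity}, using Lemmas~\ref{prop:SendingEta_to_Infinity_Sliding_Off_Gamma_Wall} and \ref{thm:ZetaWallCrossingIsQuasiPolynomial}'', and both of your estimates are the right inputs --- the first is Lemma~\ref{thm:ZetaWallCrossingIsQuasiPolynomial} verbatim, and the second correctly exploits that $L_{\gamma}$ is constant on the coset $c_{0}+\Z c_{\gamma}$, so the sliding threshold $\eta^{-}_{\gamma,(\beta,c)}$ depends on $c$ only through $\deg(\beta,c)$, forcing the coefficients of $DT^{\zeta,(\gamma,\eta_{0})}_{\beta,c_{0}+\Z c_{\gamma}}-DT^{\zeta,(\gamma,-\infty)}_{\beta,c_{0}+\Z c_{\gamma}}$ to vanish above a bound that tends to $-\infty$ with $\eta_{0}$.

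The one point to repair is your preliminary step. Lemma~\ref{thm:QuasipolynomialDifferenceIsANewExpansion} cannot be what establishes that the two limit series are expansions of rational functions: its hypotheses already require a rational function together with a quasi-polynomial discrepancy (essentially the conclusion being sought), and in the paper it is invoked only afterwards, in Theorem~\ref{thm:CrossingBigWallPreservesRationalFunction}. What is actually needed is: (i) for finite $\eta$ the coset series $DT^{\zeta,(\gamma,\eta)}_{\beta,c_{0}+\Z c_{\gamma}}$ is a Laurent polynomial, by Proposition~\ref{thm:BoundednessOfRank1ObjectsForZetaStability} combined with $L_{\gamma}(c_{\gamma})=0$, whence Lemma~\ref{thm:ZetaWallCrossingIsQuasiPolynomial} makes the $\eta\to+\infty$ limit rational; and (ii) rationality of the $\eta\to-\infty$ limit, which neither cited lemma provides --- the sliding lemma only stabilises individual coefficients and says nothing about the tail as $k\to-\infty$. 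In the model argument this role was played by duality (Lemma~\ref{thm:DeltaPairsAreDualStablePairsAtMinusInfinity} plus Theorem~\ref{thm:PTIsRational}), which is unavailable here; instead one runs the $\eta$-wall-crossing downward from $\eta_{0}$ and repeats the resummation of Lemma~\ref{thm:ZetaWallCrossingIsQuasiPolynomial} with minimal representatives replaced by maximal ones and twists by $-c_{\gamma}$, giving rationality of $DT^{\zeta,(\gamma,\eta_{0})}_{\beta,c_{0}+\Z c_{\gamma}}-DT^{\zeta,(\gamma,-\infty)}_{\beta,c_{0}+\Z c_{\gamma}}$ in the $-\deg$ direction. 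Without this, your second estimate bounds the support of a formal series rather than the degree of a rational function, and the final subtraction of the two bounds does not parse; with it, your argument is complete and coincides with the paper's.
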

\begin{proof}
  Argue as in Lemma \ref{thm:DTMinusInfinityIsDTInfinity}, using Lemmas \ref{prop:SendingEta_to_Infinity_Sliding_Off_Gamma_Wall} and \ref{thm:ZetaWallCrossingIsQuasiPolynomial}.
\end{proof}

\begin{thrm}
  \label{thm:CrossingBigWallPreservesRationalFunction}
  Let $\beta \in N_{1}(\X)$, let $\gamma \in \R_{> 0} \setminus V_{\beta}$, and let $\eta \in \R$.
  Then $DT^{\zeta, (\gamma, \eta)}_\beta$ is the expansion of the rational function $f_{\beta}(q)$ in $\Z[N_{0}(\X)]_{L_{\gamma}}$.
\end{thrm}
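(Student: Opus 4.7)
The plan is to proceed by induction on the walls $V_\beta = \{\gamma_1 < \cdots < \gamma_r\}$, starting at $\gamma > \gamma_r$ and sweeping down across each wall in turn; set $\gamma_0 = 0$ and $\gamma_{r+1} = \infty$. Since $\gamma \notin V_\beta$, the series $DT^{\zeta,(\gamma,\eta)}_\beta$ is independent of $\eta$ by Lemma~\ref{thm:ZetaStabilityUnchangedAwayFromBigWalls}, and is locally constant on each interval $(\gamma_i, \gamma_{i+1})$; by uniqueness of Laurent expansions it thus suffices to establish the claim for a single representative $\gamma$ in each interval. For the base case $\gamma > \gamma_r$, Lemma~\ref{thm:ZetaEpsilonIsSigmaInfinity} gives $DT^{\zeta,(\gamma,\eta)}_\beta = PT(\X)_\beta$, which by Theorem~\ref{thm:PTIsRational} is the $\deg$-expansion of $f_\beta$; since the defining identity $PT(\X)_\beta \cdot h = g$ (with $f_\beta = g/h$) involves only polynomial multiplication and $PT(\X)_\beta \in \Z[N_0(\X)]_{L_\gamma}$ by Lemma~\ref{lem:ZetaPair_Ninvariants_Generating_Series_InLGammaCompletion}, it is simultaneously the $L_\gamma$-expansion of $f_\beta$.

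For the inductive step, suppose the claim holds for $\gamma' \in (\gamma_i, \gamma_{i+1})$, and fix $\gamma \in (\gamma_{i-1}, \gamma_i)$. I plan to apply Lemma~\ref{thm:QuasipolynomialDifferenceIsANewExpansion} with $L_- = L_{\gamma'}$, $L_+ = L_\gamma$, $f = f_\beta$, $f_{L_-} = DT^{\zeta,(\gamma',\eta)}_\beta$ (known by hypothesis), $f' = DT^{\zeta,(\gamma,\eta)}_\beta$, and $c_0 = -c_{\gamma_i}$. The equation $L_{\gamma_i}(c_{\gamma_i}) = 0$ together with the direct computation $\frac{d}{d\gamma}L_\gamma(c_{\gamma_i})|_{\gamma = \gamma_i} = \gamma_i^{-1}\deg(c_{\gamma_i}) > 0$ (using $c_{\gamma_i} = \beta_{\gamma_i}\cdot A$) shows that $L_\gamma(c_{\gamma_i})$ is strictly increasing at $\gamma_i$, yielding the sign conditions $L_-(c_0) < 0 < L_+(c_0)$ required by the lemma. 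Membership $f' \in \Z[N_0(\X)]_{L_+}$ is Lemma~\ref{lem:ZetaPair_Ninvariants_Generating_Series_InLGammaCompletion}, so what remains is the quasi-polynomiality hypothesis.

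The main obstacle is verifying that for each $c \in N_0(\X)$, the coefficient $DT^{\zeta,(\gamma',\eta)}_{(\beta,c - kc_{\gamma_i})} - DT^{\zeta,(\gamma,\eta)}_{(\beta,c - kc_{\gamma_i})}$ of $q^{c + kc_0}$ in $f_{L_-} - f'$ is quasi-polynomial in $k \in \Z$. The sliding-off lemma (Lemma~\ref{prop:SendingEta_to_Infinity_Sliding_Off_Gamma_Wall}) identifies this difference with $DT^{\zeta,(\gamma_i,+\infty)}_{(\beta,c-kc_{\gamma_i})} - DT^{\zeta,(\gamma_i,-\infty)}_{(\beta,c-kc_{\gamma_i})}$ for each fixed $k$; collecting these over $k$ gives the formal difference of two Laurent expansions of the \emph{same} rational function by Corollary~\ref{thm:DTDeltaIsQuasipolynomial}. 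To conclude quasi-polynomiality I unpack the explicit description of this rational function furnished in the proof of Lemma~\ref{thm:ZetaWallCrossingIsQuasiPolynomial}: it is a finite sum of rational functions of the shape treated by Lemmas~\ref{thm:wickedLemmaSimpleVersion} and \ref{thm:SumDefinedByInequalitiesIsARationalFunction}, and for each such piece the two Laurent expansions in the $c_{\gamma_i}$-direction differ by a formal series with quasi-polynomial coefficients (as illustrated by the worked example following Lemma~\ref{thm:QuasipolynomialDifferenceIsANewExpansion}). Lemma~\ref{thm:QuasipolynomialDifferenceIsANewExpansion} then yields $DT^{\zeta,(\gamma,\eta)}_\beta = (f_\beta)_{L_\gamma}$, advancing the induction; iterating down to $\gamma \in (0, \gamma_1)$ establishes the theorem.
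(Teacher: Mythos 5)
Your proposal is correct and follows essentially the same route as the paper's proof: a downward induction over the finite wall set $V_{\beta}$, with the base case $\gamma > \max V_{\beta}$ given by $PT(\X)_{\beta}$ via Lemma~\ref{thm:ZetaEpsilonIsSigmaInfinity}, and each wall crossed by combining Lemma~\ref{prop:SendingEta_to_Infinity_Sliding_Off_Gamma_Wall}, Corollary~\ref{thm:DTDeltaIsQuasipolynomial}, and Lemma~\ref{thm:QuasipolynomialDifferenceIsANewExpansion}. Your extra details (the derivative computation showing $L_{\gamma}(c_{\gamma_{i}})$ increases through the wall, hence the choice $c_{0}=-c_{\gamma_{i}}$, and the unpacking of quasi-polynomiality from the denominators in Lemma~\ref{thm:ZetaWallCrossingIsQuasiPolynomial}) are sound elaborations of steps the paper leaves implicit rather than a different argument.
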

\begin{proof}
  The set of walls is finite by Lemma~\ref{thm:ZetaStabilityUnchangedAwayFromBigWalls}, so we may write $V_{\beta}=\{\gamma_{i}\}_{i=1}^{n}$ where
  \begin{equation}
    0 < \gamma_{1} < \gamma_{2} < \ldots < \gamma_{n}.
  \end{equation}
  By Lemma~\ref{lem:ZetaPair_Ninvariants_Generating_Series_InLGammaCompletion}, we know that $DT^{\zeta, (\gamma, \eta)}_{\beta} \in \Z[N_{0}(\X)]_{L_{\gamma}}$.
  What remains to be shown is that it is an expansion of the rational function $f_{\beta}(q)$ of Theorem~\ref{thm:PTIsRational}.

  Lemmas~\ref{cor:Delta_Pairs_are_Stable_Pairs_at_Infinity}, \ref{thm:ZetaStabilityUnchangedAwayFromBigWalls}, and \ref{thm:ZetaEpsilonIsSigmaInfinity} prove the claim when $\gamma > \gamma_{n}$.
  Moreover, by Lemma~\ref{thm:ZetaStabilityUnchangedAwayFromBigWalls} it suffices to prove the claim for $\gamma_{-} = \gamma_{i} - \epsilon$ under the assumption that the claim is true for $\gamma_{+} = \gamma_{i} + \epsilon$.
  By Lemma~\ref{thm:OmegaCanBeChosenGenerically} then, there is up to scale a unique class $c_{\gamma_{i}} \in N_{0}(\X)$ such that $L_{\gamma_{i}}(c_{\gamma_{i}}) = 0$, $L_{\gamma_{-}}(c_{\gamma_i}) > 0$, and $L_{\gamma_{+}}(c_{\gamma_i}) < 0$.

  By induction, the series $DT^{\zeta,(\gamma_{+},\eta)}_{\beta}$ is the expansion of $f_{\beta}(q)$ in $\Z[N_0(\X)]_{L_{\gamma_{+}}}$.
  By Proposition~\ref{prop:SendingEta_to_Infinity_Sliding_Off_Gamma_Wall}, we have the equality of coefficients
  \begin{equation}
    DT^{\zeta, (\gamma_{\pm}, \eta)}_{(\beta,c_0+kc_{\gamma_i})} = DT^{\zeta, (\gamma_i,\pm \infty)}_{(\beta,c_0+kc_{\gamma_i})}.
  \end{equation}
  Thus it follows by Corollary~\ref{thm:DTDeltaIsQuasipolynomial} that the difference
  \begin{equation}
    DT^{\zeta, (\gamma_{+}, \eta)}_{(\beta,c_{0} + kc_{\gamma_i})} - DT^{\zeta, (\gamma_{-}, \eta)}_{(\beta,c_{0} + kc_{\gamma_i})}
  \end{equation}
  is quasi-polynomial in $k$. 
  Finally, by Lemma~\ref{thm:QuasipolynomialDifferenceIsANewExpansion} we may conclude that the series $DT^{\zeta, (\gamma_{-}, \eta)}_{\beta}$ is the re-expansion of $DT^{\zeta, (\gamma_{+}, \eta)}_{\beta}$ in $\Z[N_0(\X)]_{L_{\gamma_{-}}}$.
\end{proof}

\section{Recovering Bryan--Steinberg invariants}
\label{sec:comparisonBS}

In this section, we relate the end product of the $\gamma$-wall-crossing, the notion of $(\Tzge,\Fzge)$-pair as $\gamma \to 0$, to stable pairs relative the crepant resolution $f \colon Y \to X$, which we briefly recall.
By Theorem~\ref{thm:CrossingBigWallPreservesRationalFunction}, this completes the proof of the crepant resolution conjecture.

As before $\X$ denotes a CY3 orbifold that satisfies the hard Lefschetz condition.
By the McKay correspondence, its coarse moduli space $X$ has a distinguished crepant resolution $f \colon Y \to X$, and $\dim f^{-1}(x) \leq 1$ for all $x \in X$; see Section~\ref{subsubsec:CrepantResolutions}.

We denote the inverse to the McKay equivalence $\Phi \colon D(Y) \to D(\X)$ by $\Psi$.
It commutes with the natural pushforwards $g_* \circ \Phi = Rf_*$, where $g \colon \X \to X$, and sends $\Phi(\hO_Y) = \hO_{\X}$.
Recall that $\Psi(\Coh(\X)) = \Per(Y/X)$ by \cite[Thm~1.4]{MR3419955}, see Section~\ref{subsubsec:CrepantResolutions}.
Finally, we write $\Per(Y) \coloneq \Per(Y/X)$ and $\Per_{\leq 1}(Y) \coloneq \Psi(\Coh_{\leq 1}(\X))$.

\subsection{Bryan--Steinberg pairs}
We recall the notion of f-stable pair from \cite{MR3449219}.
Let $(\T_f,\F_f)$ be the torsion pair on $\Coh_{\leq 1}(Y)$ defined by
\begin{equation}
    \T_{f} = \{F \in \Coh_{\leq 1}(Y) \mid \R f_{*} F \in \Coh_{0}(X)\}
\end{equation}
where $\F_f = \T_{f}^{\perp}$ denotes the orthogonal complement.
\begin{defn}\label{def:BSpairs}
  An \emph{$f$-stable pair} or \emph{Bryan--Steinberg pair} $(F,s)$ consists of $F \in \Coh_{\leq 1}(Y)$ and a section $s \in H^0(Y,F)$.
  This data satisfies two stability requirements:
  \begin{enumerate}
    \item[(i)] $\coker(s)$ pushes down to a zero-dimensional sheaf, \ie, $\coker(s) \in \T_f$, and
    \item[(ii)] $G$ admits no maps from such sheaves, \ie, $\Hom(\T_f,F) = 0$,
  \end{enumerate}
  Two f-stable pairs $(F,s)$ and $(F',s')$ are isomorphic if there exists an isomorphism $\phi \colon F \to F'$ such that $\phi \circ s = s'$.
\end{defn}

By Proposition \ref{prop:CohomCrit}, the $f$-stable pair $(F,s)$ corresponds to the $(\T_f,\F_f)$-pair $(s \colon \hO_Y \to F)$ in the abelian category $\A_Y = \langle \hO_Y[1], \Coh_{\leq 1}(Y) \rangle_{\text{ex}}$.
For a numerical class $(\beta, n) \in N_{\le 1}(Y) = N_1(Y) \oplus \Z$, let $\uP_{BS}(\beta,n)$ denote the moduli stack of $f$-stable pairs of class $(-1,\beta,n) \in \Z \oplus N_{\leq 1}(Y)$.
It is a $\C^*$-gerbe over its coarse space by Propositions \ref{thm:TFOpenImpliesPOpen} and \ref{thm:TFPairsHaveSimpleAutomorphisms}, and of finite type by the boundedness results of \cite{MR3449219}.
The BS invariant of class $(\beta,n)$ is then defined as 
\begin{equation*}
  BS(Y/X)_{(\beta,n)} = e_B(\uP_{BS}(\beta,n))
\end{equation*}
and this definition agrees with that of the original invariants of \cite{MR3449219}.

The McKay equivalence sends f-stable pairs into the full subcategory $\A \subset D(\X)$.
\begin{lem}\label{lem:BSpairs_in_A}
  If $E = (\hO_Y \xrightarrow{s} F)$ is a $(\T_f,\F_f)$-pair in $\A_Y$, then $\Phi(E) \in \A$.  
\end{lem}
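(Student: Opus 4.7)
The approach is to realise $\Phi(E)$ as an extension of an object of $\Coh_{\leq 1}(\X)$ by $\hO_\X[1]$, using the canonical two-term triangle associated to $E$ and the McKay equivalence. Since $\cat A = \langle \hO_\X[1], \Coh_{\leq 1}(\X) \rangle_{\eex}$ is closed under extensions in $D(\X)$, this will immediately yield $\Phi(E) \in \cat A$.

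\textbf{Step 1.} The two-term complex $E = (\hO_Y \xrightarrow{s} F)$ is concentrated in degrees $-1$ and $0$, so there is a canonical exact triangle $\hO_Y[1] \to E \to F \to \hO_Y[2]$ in $D(Y)$. Applying the McKay equivalence $\Phi$ and using $\Phi(\hO_Y) = \hO_\X$ yields an exact triangle
\begin{equation*}
  \hO_\X[1] \to \Phi(E) \to \Phi(F) \to \hO_\X[2]
\end{equation*}
in $D(\X)$. The main work is then to show that $\Phi(F) \in \Coh_{\leq 1}(\X)$; once this is known, the above triangle is a short exact sequence in $\cat A$ exhibiting $\Phi(E)$ as an extension of $\Phi(F)$ by $\hO_\X[1]$, hence $\Phi(E) \in \cat A$.

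\textbf{Step 2 (core step).} To show $\Phi(F) \in \Coh_{\leq 1}(\X)$, I would use Proposition~\ref{prop:McKay_Hearts}, which says that $\Phi$ restricts to an equivalence $\Per(Y/X) \simeq \Coh(\X)$. It thus suffices to show (i) $F \in \Per(Y/X)$, and (ii) the sheaf $\Phi(F) \in \Coh(\X)$ is supported in dimension $\leq 1$. For (i), since $F$ is concentrated in cohomological degree $0$, the vanishing $\Hom(C[1], F) = 0$ for $C \in \Coh(Y)$ with $Rf_* C = 0$ is automatic, and the remaining perverse requirement reduces to $R^1 f_* F = 0$. This vanishing is a consequence of $F \in \F_f = \T_f^\perp$: if $R^1 f_* F \neq 0$, then by Serre duality on the $\le 1$-dimensional fibres of $f$ (using the hard Lefschetz condition and $\omega_Y = f^*\omega_X$) one produces a non-zero morphism $C \to F$ from a sheaf $C \in \T_f$, contradicting the $f$-stability of $(F,s)$; this is essentially the content of the characterisation of $f$-stable pairs in \cite{MR3449219}. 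For (ii), the compatibility $g_* \circ \Phi = Rf_*$ gives $g_*\Phi(F) = Rf_*F$, which is supported in dimension $\le 1$ since $F$ is. Because $g \colon \X \to X$ has finite fibres, $\supp \Phi(F) = g^{-1}(\supp Rf_* F)$ has dimension $\le 1$ as required.

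\textbf{Main obstacle.} The only substantive point is the verification that $F \in \Per(Y/X)$, i.e.~that $R^1 f_* F = 0$ when $F \in \F_f$. The fibrewise Serre duality argument sketched above should go through directly under the hard Lefschetz hypothesis, but the cleanest route is to invoke the perverse-sheaf characterisation of $f$-stable pairs from \cite{MR3449219}, which identifies $\F_f$ with (the curve part of) $\Per(Y/X)$. Everything else in the argument is formal: it uses only the existence of the two-term triangle for $E$, the compatibility $\Phi(\hO_Y) = \hO_\X$, and the extension-closedness of $\cat A$.
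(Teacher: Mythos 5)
Your overall strategy is the same as the paper's: split $E$ into its $\hO_Y$-part and $F$-part, apply $\Phi$, use extension-closure of $\A$, and reduce everything to showing $\Phi(F) \in \Coh_{\le 1}(\X)$ (which the paper disposes of by citing \cite[Prop.~18]{MR3449219}). One small correction to Step 1: there is in general no map $\hO_Y[1] \to E$ (such a map exists only when the complex splits, e.g.\ $s=0$), so your triangle $\hO_Y[1] \to E \to F$ does not exist. The canonical triangle is $\hO_Y \xrightarrow{s} F \to E \to \hO_Y[1]$, which after applying $\Phi$ exhibits $\Phi(E)$ as an extension of $\hO_\X[1]$ by $\Phi(F)$; this is harmless for your purposes, since extension-closure of $\A$ applies either way, but the triangle should be written in the correct direction.

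The genuine gap is in Step 2(i). With the paper's definition of $\T_f$ (which requires $R f_*T \in \Coh_0(X)$, in particular $R^{1}f_{*}T = 0$), the implication ``$\Hom(\T_f,F)=0 \Rightarrow R^{1}f_{*}F = 0$'' is false: take a contracted fibre $C \cong \P^{1}$ of $f$ and $F = \hO_C(-2)$. Any non-zero map $T \to F$ has image a subsheaf $\hO_C(-2-m)$, which is a quotient of $T$ and hence would lie in $\T_f$ (the latter is closed under quotients, as $R^{1}f_{*}$ is right exact when fibres have dimension $\le 1$); but $R^{1}f_{*}\hO_C(-2-m) \neq 0$, so no such map exists. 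Thus $F \in \F_f$ while $R^{1}f_{*}F \neq 0$, so $\Phi(F) \notin \Coh(\X)$; in particular your parenthetical claim that $\F_f$ is identified with the curve part of $\Per(Y/X)$ is also not correct (compare Lemma~\ref{thm:MasterLemma}, where one must intersect $\T_f^{\perp}$ with $\Per_{\le 1}(Y)$). Note also that fibrewise Serre duality produces non-zero maps \emph{out of} $F$ (to a dualising sheaf on the fibre), not maps from $\T_f$ \emph{into} $F$. The vanishing $R^{1}f_{*}F = 0$ really needs the section, which your Step 2 never uses: writing $\im(s) = \hO_Z$, this is a quotient of $\hO_Y$, and since $X$ has rational (quotient) singularities one has $R^{1}f_{*}\hO_Y = 0$, hence $R^{1}f_{*}\hO_Z = 0$ by right exactness of $R^{1}f_{*}$; combined with $R^{1}f_{*}\coker(s) = 0$ (because $\coker(s) \in \T_f$), the long exact sequence for $0 \to \hO_Z \to F \to \coker(s) \to 0$ gives $R^{1}f_{*}F = 0$. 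With that repair (or by simply quoting \cite[Prop.~18]{MR3449219}, as the paper does), the rest of your argument — perversity of a sheaf reducing to $R^{1}f_{*}F=0$, and the support bound via $g_{*} \circ \Phi = Rf_{*}$ and finiteness of $g$ — is fine.
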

\begin{proof}
    The pair $E$ fits into an exact triangle $\hO_Y \xrightarrow{s} F \to E \to \hO_Y[1]$ in $D(Y)$, which $\Phi$ sends to an exact triangle $\hO_\X \to \Phi(F) \to \Phi(E) \to \hO_\X[1]$ in $D(\X)$.
    The claim follows by extension-closure of $\A$, since $\Phi(F) \in \Coh_{\leq 1}(\X)$ by \cite[Prop.~18]{MR3449219}.
\end{proof}

\subsection{The crepant resolution conjecture}
The remainder of this section concerns the proof of the following comparison result, and thus establishes Theorem~\ref{thm:MainTheorem}.
\begin{prop}\label{bspairsarezetapairs}
  Let $(\beta, n) \in N_{\leq 1}(Y)$.
    Restriction induces an isomorphism
    \begin{equation}
      \Phi \colon \uP_{BS}(\beta,n) \cong \uP_{\zge}(\Phi(\beta,n))
    \end{equation}
    provided that $0 < \gamma < \min_{\gamma' \in V_{\beta}} \gamma'$.
\end{prop}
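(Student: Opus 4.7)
The plan is to exhibit the isomorphism via the underlying Fourier--Mukai transform $\Phi$, which works in families, and then to verify the matching of pair notions on $\C$-points. Since $\Phi$ has a flat kernel, it induces an isomorphism of the stacks of universally gluable complexes $\Mum_Y \cong \Mum_\X$ respecting numerical classes, and both $\uP_{BS}(\beta,n)$ and $\uP_{\zge}(\Phi(\beta,n))$ are open substacks of their respective $\Mum$ (by standard openness arguments analogous to Proposition~\ref{thm:TFOpenImpliesPOpen}, together with Corollary~\ref{cor:ZetaGammaEta_Pairs_Open_TorsionPair_Open_Stack} on the $\zge$-side). It thus suffices to verify pointwise that $\Phi$ bijects BS pairs on $Y$ of class $(-1,\beta,n)$ with $\zge$-pairs on $\X$ of class $(-1, \Phi(\beta,n))$.

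Next, I would put both notions of pair into the standard two-term form $(\hO \to G)$: for BS pairs this is the definition, while for $\zge$-pairs it follows from Lemma~\ref{prop:CohomCrit} after verifying the cohomological vanishing $H^i(\X, T) = 0$ for $i \ne 0$ and $T \in \T_{\zge}$ (immediate for $i \ge 2$, and for $i = 1$ via Serre duality against the $\nu$-subobject condition defining $\T_{\zge}$). Since $\Phi(\hO_Y) = \hO_{\X}$ and, in hard Lefschetz, $\Coh_{\le 1}(Y) \subset \Per_{\le 1}(Y)$ so that $\Phi$ sends $\Coh_{\le 1}(Y)$ into $\Coh_{\le 1}(\X)$ by Proposition~\ref{prop:McKay_Hearts}, the matching reduces to showing that a section $s \colon \hO_Y \to F$ with $F \in \Coh_{\le 1}(Y)$ of class $(\beta,n)$ defines a BS pair iff $(\hO_{\X} \to \Phi(F))$ defines a $\zge$-pair. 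This in turn reduces to a compatibility of torsion pairs on the sub- and cokernel pieces of such pairs: a sheaf $T$ should lie in $\T_f$ iff $\Phi(T) \in \T_{\zge}$, and a sheaf $F'$ in $\F_f$ iff $\Phi(F') \in \F_{\zge}$.

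For the forward direction $\T_f \to \T_{\zge}$: $T \in \T_f$ has $[T] \in N_{\exc}(Y)$, so $[\Phi(T)] \in N_0(\X)$, and any quotient $\Phi(T) \onto Q$ in $\Coh_{\le 1}(\X)$ then has $\beta_Q = 0$ by effectivity of classes, giving $\zeta(Q) = (+\infty, +\infty)$ automatically. The statement $\F_f \to \F_{\zge}$ is dual via Lemma~\ref{thm:ConditionsForSemistability}. For the reverse directions the hypothesis $\gamma < \min V_{\beta}$ enters essentially: a sub- or quotient of $\Phi(F)$ whose McKay-inverse would violate the BS condition has curve class $\beta'$ with $0 < \beta' \le \Phi(\beta)$ and $\beta' \notin N_{1,\mr}(\X)$, so $\zeta_1(\beta')$ is either negative, zero (for multi-regular), or lies in $V_\beta$; combining $\gamma < \min V_\beta$ with the $\zge$-stability of $\Phi(F)$ rules out each of these options.

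The main obstacle will be precisely this torsion-class matching, because $(\T_f, \F_f)$ is naturally a torsion pair on $\Coh_{\le 1}(Y)$, whereas $\Phi$ most cleanly identifies $\Coh_{\le 1}(\X)$ with $\Per_{\le 1}(Y)$. Sub- and quotient objects in $\Coh_{\le 1}(\X)$ do not pull back directly to sub- and quotient objects in $\Coh_{\le 1}(Y)$, but rather to ones in $\Per_{\le 1}(Y)$; to go back and forth one must use the short exact sequences relating $\Coh_{\le 1}(Y)$ and $\Per_{\le 1}(Y)$ via the tilting torsion pair of hard Lefschetz. The combination of hard Lefschetz (to keep $\Coh_{\le 1}(Y) \subset \Per_{\le 1}(Y)$), the numerical bound $[F] \le (\beta,n)$ (bounding the finitely many potentially-offending sub/quotient classes), and the smallness $\gamma < \min V_\beta$ (preventing wall-crossings) is precisely what makes this translation succeed for every potentially-problematic sub/quotient.
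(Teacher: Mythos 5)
Your plan hinges on putting a $\zge$-pair on $\X$ into the two-term form $(\hO_{\X} \to G)$ by invoking Lemma~\ref{prop:CohomCrit}, after ``verifying'' $H^{1}(\X,T)=0$ for all $T \in \Tz$ by Serre duality. This step is a genuine gap. For $0 < \gamma < \min V_{\beta}$ the torsion class $\Tz$ is (in the relevant range of classes) the perverse-type class $\T_{\zeta,0}$, which by Lemma~\ref{thm:MasterLemma} contains every $\Phi(F[1])$ with $F[1] \in \Per_{\le 1}(Y)\cap \Coh(Y)[1]$; for such an object $H^{1}(\X,\Phi(F[1])) = H^{2}(Y,F) = H^{1}(X,R^{1}f_{*}F)$, which is non-zero in general when $F$ lives on a divisor contracted to a curve (the typical hard Lefschetz situation). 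More generally, for a pure $1$-dimensional sheaf $T$ on a CY3 orbifold Serre duality gives $H^{1}(\X,T) \cong \Ext^{2}_{\X}(T,\hO_{\X})^{\vee}$, which does not vanish, so the proposed Serre-duality argument does not deliver the hypothesis of Lemma~\ref{prop:CohomCrit}. The paper flags exactly this failure in Remark~\ref{ex:PTasTF}: the cohomological criterion breaks down for the perverse torsion pair coming from Proposition~\ref{prop:McKay_Hearts}. The paper's proof circumvents this by never normalising the $\zge$-pair on $\X$: it applies the criterion of Lemma~\ref{prop:CohomCrit} on $Y$ to the pair $(\T_{f},\F_{f})$, where $H^{>0}(Y,T) = H^{>0}(X,Rf_{*}T) = 0$ does hold for $T \in \T_{f}$, after first proving directly that $H^{-1}(\Psi E)$ is torsion free (via $\Hom(\Phi(T[1]),E)=0$ for the torsion part $T$) and that $c_{1}(\Psi E) = 0$, the latter using multi-regularity of $\beta$ in an essential way that your sketch omits.

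The second problem is that your ``torsion-class matching'' ($T \in \T_{f}$ iff $\Phi T \in \Tz$, $F' \in \F_{f}$ iff $\Phi F' \in \Fz$) only addresses objects of $\Coh_{\le 1}(Y)$, whereas verifying that $\Phi$ of a BS pair is a $\zge$-pair requires Hom-vanishing against \emph{all} of $\Tz$ and $\Fz$. For small $\gamma$ these categories are strictly larger than $\Phi(\T_{f})$ and $\Phi(\F_{f})$: by Lemma~\ref{thm:MasterLemma}, $\Tz$ also contains $\Phi$ of shifted contracted sheaves, and $\Fz = \Phi\bigl(\Per_{\le 1}(Y)\cap\Coh(Y)\cap\T_{f}^{\perp}\bigr)$ contains images of sheaves (e.g.\ structure sheaves of contracted divisors) that are not $1$-dimensional on $Y$ at all. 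Establishing these descriptions, and then running the Hom-vanishing through them (using Lemma~\ref{lem:SubobjectFactors}, effectivity of $\ch_{1}$, and the structure of $\Per(Y/X)$), is precisely the content of the paper's Lemma~\ref{thm:MasterLemma} and the two lemmas following it; in your proposal this is deferred to the final paragraph and asserted rather than proved. The reduction to $\C$-points via openness, and the use of $\gamma < \min V_{\beta}$ to freeze the torsion pair, are fine, but the core of the argument is missing.
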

We begin by introducing a torsion pair on $\Coh_{\leq 1}(\X)$ which is the limit of the torsion pairs $(\Tzge,\Fzge)$ as $\gamma \to 0^{+}$.
\begin{defn}\label{def:Tinfinity_Finfinity}
    Let $\Ti \subset \Coh_{\leq 1}(\X)$ denote the subcategory of sheaves $T$ such that if $T \onto Q$ is a surjection in $\Coh_{\leq 1}(\X)$, then either $Q \in \Coh_0(\X)$ or
    \[
    \deg_{Y}( \ch_{2}(\Psi (Q) \cdot A) \cdot \omega) < 0.
    \]
    Let $\Fi \subset \Coh_{\leq 1}(\X)$ denote the subcategory of sheaves $F$ such that if $S \into F$ is an injection in $\Coh_{\leq 1}(\X)$, then $S$ is pure of dimension 1 and $\deg_{Y}(\ch_{2}(\Psi (S) \cdot A) \cdot \omega) \geq 0$.
\end{defn}
\begin{lem}\label{lem:TFinfinity_Torsion_Pair}
  The pair $(\Ti,\Fi)$ defines a torsion pair on $\Coh_{\leq 1}(\X)$.
\end{lem}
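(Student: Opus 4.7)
The plan is to verify the two torsion-pair axioms for $(\Ti, \Fi)$ on the Noetherian category $\Coh_{\leq 1}(\X)$. For the $\Hom$-vanishing, given $T \in \Ti$, $F \in \Fi$, and a morphism $\phi \colon T \to F$, the image $I \coloneq \im \phi$ is simultaneously a quotient of $T$ and, if nonzero, a nonzero subobject of $F$. As a subobject of $F \in \Fi$, $I$ is pure $1$-dimensional with $\deg_Y(\ch_2(\Psi(I) \cdot A) \cdot \omega) \geq 0$; as a non-$\Coh_0(\X)$ quotient of $T \in \Ti$, it satisfies $\deg_Y(\ch_2(\Psi(I) \cdot A) \cdot \omega) < 0$. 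This contradiction forces $\phi = 0$.

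Next I would establish the torsion decomposition by applying Lemma~\ref{lem:TodaTorsion} to $\Ti$ and identifying $\Ti^\perp$ with $\Fi$. Closure of $\Ti$ under quotients is immediate from the definition. Closure under extensions relies on two facts. First, the functional $F \mapsto \deg_Y(\ch_2(\Psi(F) \cdot A) \cdot \omega)$ is additive on short exact sequences in $\Coh_{\leq 1}(\X)$, because $\Psi \colon \Coh(\X) \to \Per(Y)$ is an equivalence of abelian categories and $\ch_2$ paired with $\omega$ is additive on distinguished triangles. Second, this functional vanishes on $\Coh_0(\X)$: under the hard Lefschetz hypothesis, $\Psi(Q')$ has support of dimension $\leq 1$ in $Y$ for $Q' \in \Coh_0(\X)$, hence $\ch_0(\Psi(Q')) = \ch_1(\Psi(Q')) = 0$, and the expansion $\ch(E \cdot A) = \ch(E) \cdot (1 - e^{-f^*A})$ then kills the degree-$4$ component. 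Given $0 \to T_1 \to T \to T_2 \to 0$ with $T_i \in \Ti$ and a quotient $T \onto Q$ with $Q \notin \Coh_0(\X)$, set $Q_1 \coloneq \im(T_1 \to Q)$ and $Q_2 \coloneq Q/Q_1$; both $Q_i$ are quotients of $T_i$, and since $\Coh_0(\X)$ is closed under extensions, at least one of them lies outside $\Coh_0(\X)$. Those in $\Coh_0(\X)$ contribute $0$ and the remaining contribute strictly negative amounts, so additivity gives strict negativity for $Q$, as required.

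Finally, I would show $\Ti^\perp = \Fi$. The inclusion $\Fi \subseteq \Ti^\perp$ is the $\Hom$-vanishing above. For the converse, take $F \in \Ti^\perp$ and $0 \neq S \into F$. Since $\Ti^\perp$ is closed under subobjects and $\Coh_0(\X) \subseteq \Ti$, we find that $S$ lies in $\Ti^\perp$ and is pure $1$-dimensional. The main obstacle is the inequality $\deg_Y(\ch_2(\Psi(S) \cdot A) \cdot \omega) \geq 0$, which I expect to prove by descent on the quantity $\deg(S \cdot A) = l(S) \in \Z_{\geq 0}$. If the inequality failed, then since $S \neq 0$ and $\Ti \cap \Ti^\perp = 0$ we have $S \notin \Ti$, so there exists a quotient $S \onto Q$ with $Q \notin \Coh_0(\X)$ and $\deg_Y(\ch_2(\Psi(Q) \cdot A) \cdot \omega) \geq 0$. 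Its kernel $S' \subsetneq S$ is nonzero (else $S \cong Q$ would already have non-negative $\omega$-degree), remains pure $1$-dimensional with $\deg_Y(\ch_2(\Psi(S') \cdot A) \cdot \omega) < 0$ by additivity, and satisfies $\deg(S' \cdot A) < \deg(S \cdot A)$ because $\deg(Q \cdot A) = l(Q) > 0$ for $Q \notin \Coh_0(\X)$. Iterating this descent terminates at some nonzero $S^{(k)} \subseteq S \subseteq F$ lying in $\Ti$ with strictly negative $\omega$-degree, whose inclusion into $F \in \Ti^\perp$ contradicts $\Ti \cap \Ti^\perp = 0$.
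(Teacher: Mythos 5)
Your proof is correct and follows essentially the same route as the paper: the paper likewise observes that $\Ti$ is closed under extensions and quotients and invokes Lemma~\ref{lem:TodaTorsion} on the Noetherian category $\Coh_{\leq 1}(\X)$. The only difference is that you additionally spell out the identification $\Ti^{\perp} = \Fi$ (via the additivity of $\deg_{Y}(\ch_{2}(\Psi(-)\cdot A)\cdot\omega)$, its vanishing on $\Coh_{0}(\X)$, and the descent on $l$), a verification the paper treats as implicit.
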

\begin{proof}
  It is easy to see that $\Ti$ is closed under extensions and quotients.
  Since $\Coh_{\leq 1}(\X)$ is noetherian, the claim follows by Lemma \ref{lem:TodaTorsion}.
\end{proof}
The following straightforward result shows that $(\T_{\zeta,0},\F_{\zeta,0})$ is indeed a limit.
\begin{lem}
  Let $\beta \in N_{1}(\X)$.
  If $0 < \gamma < \min_{\gamma' \in V_{\beta}} \gamma'$, then an object $E \in \A$ of class $(-1,\beta,c)$ is a $(\Ti, \Fi)$-pair if and only if it is a $(\Tz, \Fz)$-pair.
\end{lem}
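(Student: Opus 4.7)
The plan is to reduce the equivalence of pair notions to a direct comparison of the underlying torsion classes on sheaves of bounded curve class. Both $\Tz$ and $\Ti$ are closed under quotients in $\Coh_{\le 1}(\X)$, and both $\Fz$ and $\Fi$ are closed under subobjects. Consequently, for either torsion pair, $E \in \A_{\rk = -1}$ is a $(\T,\F)$-pair precisely when no nonzero subobject of $E$ in $\Coh_{\le 1}(\X)$ lies in $\T$ and no nonzero quotient in $\Coh_{\le 1}(\X)$ lies in $\F$. By Corollary \ref{thm:subAndQuotientObjectsHaveLowerD}, any such subobject or quotient has curve class $\beta' \le \beta$, so it suffices to show
\[
  \Tz \cap \Coh_{\le 1}(\X)_{\le \beta} = \Ti \cap \Coh_{\le 1}(\X)_{\le \beta}
\]
together with the analogous equality for $\Fz$ and $\Fi$.

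The key observation is that the first coordinate $\zeta_1$ is constrained on sheaves of class $\le \beta$: for $G \in \Coh_{\le 1}(\X)$ with $0 < \beta_G \le \beta$, the value $\zeta_1(G)$ either lies in $V_\beta$, in which case $\zeta_1(G) \ge \min V_\beta > \gamma$, or is non-positive, in which case $\zeta_1(G) < \gamma$. The borderline case $\zeta_1(G) = \gamma$ therefore never occurs under the hypothesis $\gamma < \min V_\beta$, and the lexicographic comparison of $\zeta(G)$ with $(\gamma,\eta)$ is decided by the sign of $\zeta_1(G)$ alone. Since $\Psi$ commutes with twisting by the pullback of $A$, this sign translates directly into the sign condition on $\deg_Y(\ch_2(\Psi(G) \cdot A) \cdot \omega)$ appearing in the definitions of $\Ti$ and $\Fi$.

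With this in place, matching the two descriptions is a direct unpacking. For $T$ with $\beta_T \le \beta$, $T \in \Tz$ iff every nonzero quotient $Q$ satisfies $\zeta(Q) \ge (\gamma,\eta)$, i.e., either $Q \in \Coh_0(\X)$ (where $\zeta(Q) = (\infty,\infty)$) or $\zeta_1(Q) > 0$, which is exactly the defining condition of $\Ti$. For $F$ with $\beta_F \le \beta$ and $F \in \Fz$, any embedded 0-dimensional subsheaf would have $\zeta = (\infty,\infty) \not< (\gamma,\eta)$, forcing $F$ to have no such subsheaves; every subobject $S$ of $F$ is then pure of dimension 1 and satisfies $\zeta(S) < (\gamma,\eta)$, which by the preceding paragraph reduces to $\deg_Y(\ch_2(\Psi(S) \cdot A) \cdot \omega) \ge 0$, matching $\Fi$.

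No step poses a substantive obstacle; the argument is essentially a translation between two parametrisations of the same torsion structure. The only point requiring care is the control of $\zeta_1$ on sheaves of class $\le \beta$, where the hypothesis $\gamma < \min V_\beta$ is used precisely to exclude the borderline case and render the choice of $\eta$ irrelevant for the pair notion.
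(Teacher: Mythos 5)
Your proof is correct, and it is exactly the ``straightforward'' argument the paper omits: reduce the pair condition to subobjects and quotients in $\Coh_{\le 1}(\X)$ of class $\le \beta$ (as in Lemma \ref{lem:Walls_for_Delta_Pairs}), and observe that on such classes $\zeta_{1}$ takes no value in $(0,\min V_{\beta})$, so for $0<\gamma<\min V_{\beta}$ the lexicographic comparison with $(\gamma,\eta)$ is decided by the sign of $\zeta_{1}$ alone, which is precisely the condition defining $(\Ti,\Fi)$. This matches the spirit of Lemma \ref{thm:ZetaStabilityUnchangedAwayFromBigWalls}, so no further comment is needed.
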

\begin{lem}\label{thm:StandardExactSequenceIsPerverse}
    Let $G \in \Coh(\X)$.
    The sequence
    \begin{equation}\label{eqn:CohomologyOfPhiInverseE}
      0 \to \Phi(H^{-1}(\Psi G)[1]) \to G \to \Phi(H^{0}(\Psi G)) \to 0
    \end{equation}
    is exact in $\Coh(\X)$.
\end{lem}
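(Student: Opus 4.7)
The plan is to interpret the triangle
\[
H^{-1}(\Psi G)[1] \to \Psi G \to H^{0}(\Psi G) \to H^{-1}(\Psi G)[2]
\]
in $D(Y)$ as a short exact sequence in the perverse heart $\Per(Y/X)$ and then transport it to $\Coh(\X)$ via the McKay equivalence.

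First, by Proposition~\ref{prop:McKay_Hearts}, $\Phi$ restricts to an equivalence of abelian categories $\Per(Y/X) \simeq \Coh(\X)$, so $\Psi G \in \Per(Y/X)$. The category $\Per(Y/X)$ is a left tilt of $\Coh(Y)$ at a torsion pair $(\T_{\Per},\F_{\Per})$ on $\Coh(Y)$ (see the discussion after Proposition~\ref{prop:McKay_Hearts}); concretely, $E \in \Per(Y/X)$ has standard cohomology concentrated in degrees $-1,0$ with $H^{-1}(E) \in \F_{\Per}$ and $H^{0}(E) \in \T_{\Per}$. In particular, $H^{-1}(\Psi G)[1]$ and $H^{0}(\Psi G)$ are themselves perverse sheaves.

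Next, I would observe that the standard-cohomology triangle above is precisely the torsion pair decomposition of $\Psi G$ inside $\Per(Y/X)$: the object $H^{-1}(\Psi G)[1]$ lies in $\F_{\Per}[1]$ (the torsion part of $\Per(Y/X)$ with respect to the induced torsion pair) and $H^{0}(\Psi G)$ lies in $\T_{\Per}$ (the torsion-free part). Since the tilted t-structure is a bounded t-structure, a triangle with both outer terms in $\Per(Y/X)$ is automatically a short exact sequence in this abelian category. Thus
\[
0 \to H^{-1}(\Psi G)[1] \to \Psi G \to H^{0}(\Psi G) \to 0
\]
is a short exact sequence in $\Per(Y/X)$.

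Finally, applying the exact equivalence $\Phi \colon \Per(Y/X) \to \Coh(\X)$ and using $\Phi \Psi G = G$ yields the desired short exact sequence
\[
0 \to \Phi(H^{-1}(\Psi G)[1]) \to G \to \Phi(H^{0}(\Psi G)) \to 0
\]
in $\Coh(\X)$. The only subtle point is the identification of the standard truncation triangle with the torsion pair decomposition inside the tilted heart, but this is a formal consequence of how tilting produces a new heart from a torsion pair.
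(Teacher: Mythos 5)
Your proposal is correct and follows essentially the same route as the paper: the paper identifies the truncation triangle of $\Psi G$ with its decomposition under the induced torsion pair $(\Coh(Y)[1]\cap\Per(Y/X),\ \Coh(Y)\cap\Per(Y/X))$ on the perverse heart, and then applies the exact equivalence $\Phi$, exactly as you do. Your only cosmetic slip is attributing the "outer terms in the heart implies short exact sequence" step to boundedness of the t-structure rather than to the heart being extension-closed, which does not affect the argument.
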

\begin{proof}
  The abelian category $\Per(Y)$ has an induced torsion pair
  \[
  (\Coh(Y)[1] \cap \Per(Y), \Coh(Y) \cap \Per(Y)).
  \]
  The sequence is the image under $\Phi$ of the torsion pair decomposition of $\Psi(G)$.
\end{proof}

The following result relates the torsion pairs $(\Ti,\Fi)$ on $\Coh_{\leq 1}(\X)$ and $(\T_f,\F_f)$ on $\Coh_{\leq 1}(Y)$ under the McKay equivalence.
\begin{lem}\label{thm:MasterLemma}
    We have
    \begin{equation}\label{eqn:DescriptionOfTf}
      \T_f = \Psi(\Coh_{0}(\X)) \cap \Coh(Y).
    \end{equation}
    Moreover, we have
    \begin{align}
      \label{eqn:DescriptionOfTinfty}
      \Ti &= \Big\langle \Phi\bigl(\Per_{\leq 1}(Y)\cap\Coh(Y)[1]\bigr),\Phi(\T_f)\Big\rangle_{\eex} \\
    \label{eqn:DescriptionOfFinfty}
      \Fi &= \Phi\bigl(\Per_{\leq 1}(Y) \cap \Coh(Y) \cap \T_f^{\perp}\bigr).
    \end{align}
\end{lem}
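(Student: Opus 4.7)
The plan is to prove parts (1), (2), (3) in sequence, using the McKay equivalence $\Phi:\Per(Y) \to \Coh(\X)$ together with the standard exact sequence
\begin{equation*}
  0 \to \Phi(H^{-1}(\Psi E)[1]) \to E \to \Phi(H^0(\Psi E)) \to 0
\end{equation*}
of Lemma~\ref{thm:StandardExactSequenceIsPerverse}, which decomposes any $E \in \Coh(\X)$ into a ``$\Coh(Y)[1]$-part'' and a ``$\Coh(Y)$-part'' on the $Y$-side.

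For (1), if $F \in \T_f$ then $Rf_*F \in \Coh_0(X)$ is concentrated in degree 0, so the vanishing condition $\Hom(C[1],F) = 0$ defining the perverse heart is automatic, giving $F \in \Per(Y)$ and hence $\Phi(F) \in \Coh(\X)$. Since $g$ is a finite morphism, it preserves dimension of support, so $g_*\Phi(F) = Rf_*F \in \Coh_0(X)$ forces $\Phi(F) \in \Coh_0(\X)$. The converse is similar: for $F \in \Psi(\Coh_0(\X)) \cap \Coh(Y)$, the pushforward $Rf_*F = g_*\Phi(F)$ is 0-dimensional, and hard Lefschetz forces $F$ itself to be supported on at most 1-dimensional fibres of $f$, so $F \in \Coh_{\le 1}(Y)$ and $F \in \T_f$.

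For (3), I would prove: (a) $F \in \Fi$ implies $H^{-1}(\Psi F) = 0$, and (b) $\Psi F \in \T_f^\perp$. For (a), if $H^{-1}(\Psi F) \ne 0$ then $S := \Phi(H^{-1}(\Psi F)[1]) \into F$ is a nonzero subobject; applying Lemma~\ref{thm:DivisorsCapAAreLinearlyIndependent} combined with the K-theoretic identity $\Psi(Q \cdot A) = \Psi Q \cdot f^*A$ (from pulling back $A$ through the Fourier--Mukai kernel), and a sign analysis for shifted exceptional classes, shows $\deg_Y(\ch_2(\Psi(S \cdot A)) \cdot \omega) < 0$, contradicting $F \in \Fi$. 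For (b), any nonzero map $G \to \Psi F$ with $G \in \T_f$ factors through an image $G' \subset \Psi F$ with $G' \in \T_f$ (as $\T_f$ is closed under quotients in $\Coh(Y)$), and by (1), $\Phi(G')$ is a nonzero 0-dimensional subobject of $F$, contradicting the purity required by $\Fi$. The reverse inclusion is proved by reversing these arguments: subobjects of $\Phi(P)$ for $P \in \Per_{\le 1}(Y) \cap \Coh(Y) \cap \T_f^\perp$ correspond to subsheaves of $P$ that are forced to lie outside $\T_f$, hence to non-0-dimensional subobjects of $\Phi(P)$, and the degree condition is verified by running the sign analysis in reverse.

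For (2), apply the standard exact sequence to $T \in \Ti$. The first factor $\Phi(H^{-1}(\Psi T)[1])$ lies in $\Phi(\Per_{\le 1}(Y) \cap \Coh(Y)[1])$ by construction. For the second, $\Phi(H^0(\Psi T))$ is a quotient of $T$, and I would show $H^0(\Psi T) \in \T_f$ by a mirror argument to (3)(b): a quotient of $H^0(\Psi T)$ failing the $\T_f$ condition would correspond to a further quotient of $T$ with $\deg_Y(\ch_2(\Psi(-\cdot A)) \cdot \omega) \ge 0$, contradicting the $\Ti$ condition. Since $\Ti$ is extension-closed (being the torsion part of a torsion pair), the reverse inclusion reduces to checking $\Phi(\T_f), \Phi(\Per_{\le 1}(Y) \cap \Coh(Y)[1]) \subset \Ti$, which follows from the definitions together with the same sign analysis. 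The main obstacle throughout is precisely this sign analysis: showing that $\deg_Y(\ch_2(\Psi(Q \cdot A)) \cdot \omega)$ sharply distinguishes between $\Psi Q$ having a nontrivial $\Coh(Y)[1]$-component versus being purely a sheaf. This is carried out using the K-theoretic projection formula for the Fourier--Mukai kernel, the genericity of $\omega$ (Lemma~\ref{thm:OmegaCanBeChosenGenerically}), and the Negativity Lemma applied to the exceptional locus (Lemma~\ref{thm:DivisorsCapAAreLinearlyIndependent}).
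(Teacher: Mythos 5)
Your outline follows the same architecture as the paper's proof (the standard exact sequence of Lemma~\ref{thm:StandardExactSequenceIsPerverse}, a sign analysis on $\ch_{1}$, and part (1) feeding into (2) and (3)), but two steps have genuine gaps. First, in the forward inclusion of (1) you deduce $\Phi(F) \in \Coh_{0}(\X)$ from $g_{*}\Phi(F) = Rf_{*}F \in \Coh_{0}(X)$ on the grounds that $g$ ``preserves dimension of support''. That principle is false for the coarse moduli map: $g_{*}$ is formation of isotropy invariants and can annihilate sheaves with nontrivial stabiliser action — in the geometry of Appendix~\ref{sec:Appendix_ExampleCRC} the sheaf $F^{-}$ is pure $1$-dimensional yet $g_{*}F^{-} = 0 \in \Coh_{0}(X)$ — so $g_{*}\Phi(F)$ being $0$-dimensional does not force $\Phi(F)$ to be. The paper instead argues on $Y$: every $1$-dimensional component of $\supp F$ must be contracted by $f$ (otherwise it would contribute $1$-dimensional support to $R^{0}f_{*}F$), so $F$ is supported over finitely many points of $X$, and then $\Phi(F) \in \Coh_{0}(\X)$ because the McKay kernel is supported on $Y \times_{X} \X$, so $\Phi$ and $\Psi$ preserve supports over $X$. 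Some such support-compatibility statement is also what is silently needed in your converse step ``hard Lefschetz forces $F$ to be supported on the fibres'', and you never invoke it.

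Second, your sign analysis in (3)(a) (and the analogous step in (2)) claims a strict inequality $\deg_{Y}(\ch_{2}(\Psi(S)\cdot A)\cdot \omega) < 0$ for $S = \Phi(H^{-1}(\Psi F)[1])$. Writing $T = H^{-1}(\Psi F)$, one only gets $\deg_{Y}(\ch_{2}(\Psi(S)\cdot A)\cdot\omega) = -\deg_{Y}(\ch_{1}(T)\cdot A\cdot\omega) \le 0$ from effectivity of $\ch_{1}(T)$; strictness fails exactly when $\ch_{1}(T) = 0$, e.g. $T = \hO_{C}(-1)$ for a contracted curve $C$, where $T[1] \in \Per(Y)$ and the degree is $0$. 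In that boundary case your contradiction with the inequality in $\Fi$ evaporates, and the contradiction must come from the purity clause instead: such an $S$ is then a nonzero $0$-dimensional subsheaf of $F$ (again via support compatibility and the fact that $f_{*}T = 0$), which $\Fi$ forbids — this two-case argument is exactly what the paper runs. Relatedly, the tools you cite, Lemma~\ref{thm:DivisorsCapAAreLinearlyIndependent} (Negativity Lemma) and genericity of $\omega$, are not what drives this step; the actual inputs are effectivity of $\ch_{1}$ for sheaves on $Y$ and the hard Lefschetz fact that $f$ contracts no divisor, so a nonzero effective divisor class pairs strictly positively with $A \cdot \omega$. Finally, the converse of (3) is done more safely the paper's way, via (2) and taking perpendiculars, rather than by ``reversing the arguments''.
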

\begin{proof}
  The inclusion $\T_f \supseteq \Psi(\Coh_{0}(\X)) \cap \Coh(Y)$ follows from the definition of $\T_f$.
  For the reverse inclusion, let $T \in \T_{f}$.
  Since $\cat T_{f} \subset \Per(Y)$, we have $\Phi(T) \in \Coh(\X)$.
  Every 1-dimensional component of the support of $T$ is contracted to a point by $f$, since otherwise its image under $f$ would be a $1$-dimensional component in the support of $\R f_{*}(T)$.
  It follows that $T$ is supported over finitely many points of $X$, and so $\Phi(T) \in \Coh_{0}(\X)$.
  This proves (\ref{eqn:DescriptionOfTf}).

  Let $T \in \Per_{\le 1}(Y) \cap (\Coh(Y)[1])$.
  If $T \onto T'$ is a surjection in $\Per_{\le 1}(Y)$, then $T' \in \Coh(Y)[1]$.
  Hence $\deg_{Y}(\ch_{2}(T' \cdot A) \cdot \omega) \le 0$, and equality occurs if and only if $T' \in \Coh_{\le 1}(Y)$.
  In that case, since $T' \in \Per_{\leq 1}(Y)$, the support of $f_{*}(T')$ must be $0$-dimensional, and so $\Phi(T') \in \Coh_{0}(\X)$.
  This proves that $T \in \Ti$, and so we find $\Phi(\Per(Y) \cap \Coh(Y)[1]) \subset \Ti$.
  Thus $\langle \Phi(\Per(Y) \cap \Coh(Y)[1]),\Phi(\cat T_{f})\rangle\subseteq\Ti$.
  
  For the reverse inclusion, let $G \in \Ti$, and let $F_{0} = H^{0}(\Psi G)$ and $F_{-1} = H^{-1}(\Psi G)$.
  By Lemma~\ref{thm:StandardExactSequenceIsPerverse}, we have the short exact sequence
  \begin{equation}
    \label{eqn:DecompositionOfG}
    0 \to \Phi F_{-1}[1] \to G \to \Phi F_{0} \to 0
  \end{equation}
  in $\Coh(\X)$.

  The surjection $G \onto \Phi F_{0}$ implies that $\Phi F_{0} \in \Ti$.
  Thus either $\Phi F_{0} \in \Coh_0(\X)$ or $\deg_{Y}(\ch_1(F_{0}) \cdot A \cdot \omega) < 0$.
  But $\ch_1(F_{0})$ is effective, forcing $\deg_{Y}(\ch_{1}(F_{0}) \cdot A \cdot \omega) \ge 0$, and thus $\Phi F_{0} \in \Coh_0(\X)$.
  By \eqref{eqn:DescriptionOfTf} then $F_{0} \in \cat T_{f}$, and so the decomposition of $G$ in \eqref{eqn:DecompositionOfG} is an extension as in (\ref{eqn:DescriptionOfTinfty}).

  For \eqref{eqn:DescriptionOfFinfty}, let $G \in \Fi$ and write $F = H^{-1}(\Psi G)$.
  By definition of $\Fi$, we have $\Phi F \in \Coh_1(\X)[-1]$ and $\ch_1(F) \cdot \omega \cdot A \leq 0$.
  But $\ch_1(F)$ is effective, hence $F \in \Coh_{\leq 1}(Y)$.
  Again it follows that $F$ is contracted, hence $\Phi F \in \Coh_{0}(\X)[-1]$.
  But then as $F[1] \into G$ and $G \in \Fi$, we get $F = 0$, and thus $\Psi G \in \Coh(Y)$.
  Finally, equation \eqref{eqn:DescriptionOfTf} implies that $\Hom(\T_f,F) = \Hom(\Phi(\T_f),\Phi(F)) = 0$ and so $G \in \Phi(\Per(Y) \cap \Coh(Y) \cap \cat T_{f}^{\perp})$.

  Conversely, if $G \in \Per_{\le 1}(Y) \cap \Coh(Y) \cap \cat T_{f}^{\perp}$, then by (\ref{eqn:DescriptionOfTinfty})
  \[
    G \in \Per_{\le 1}(Y) \cap (\Per_{\le 1}(Y) \cap \Coh(Y)[1])^{\perp} \cap \T_{f}^{\perp} = \Per_{\le 1}(Y) \cap \Psi(\Ti)^{\perp},
  \]
  and so $\Phi G \in \Ti^{\perp} \cap \Coh_{\le 1}(\X) = \Fi$ as was to be shown.
\end{proof}

Finally, we identify \emph{multi-regular} $(\Ti,\Fi)$-pairs on $\X$ with $f$-stable pairs on $Y$ under the McKay equivalence.
We prove each implication separately.
\begin{lem}
  If $E$ is a $(\Ti, \Fi)$-pair with $\beta_E \in N_{1,\mr}(\X)$, then $\Psi(E)$ is an $f$-stable pair.
\end{lem}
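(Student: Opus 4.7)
The plan is to verify that $\Psi(E)$ satisfies Definition~\ref{def:BSpairs}. Via the analogue of Lemma~\ref{prop:CohomCrit} on $Y$---whose cohomological hypothesis is automatic for $\T_f$, since every $T \in \T_f$ has support in finitely many fibres of $f$, each of dimension $\le 1$ by hard Lefschetz---it suffices to show that $\Psi(E)$ is a $(\T_f, \F_f)$-pair in $\A_Y$ of rank $-1$.

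The two orthogonality conditions translate cleanly through $\Phi$. Since $\Phi(\T_f) \subseteq \Ti$ by \eqref{eqn:DescriptionOfTinfty}, we obtain
\[
  \Hom(\T_f, \Psi(E)) = \Hom(\Phi(\T_f), E) \subseteq \Hom(\Ti, E) = 0.
\]
For the other direction I would verify the inclusion $\Phi(\F_f) \subseteq \Fi$, which follows from \eqref{eqn:DescriptionOfFinfty} once one checks $\F_f \subseteq \Per_{\le 1}(Y) \cap \Coh(Y) \cap \T_f^{\perp}$; this in turn uses that an $\F_f$-object $F$ is automatically perverse under hard Lefschetz, since the vanishing $\Hom(\T_f, F) = 0$ against structure sheaves of exceptional curves rules out the obstruction $R^1 f_{*}F \neq 0$ to $F$ lying in $\Per(Y)$.

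The substantive step is to realise $\Psi(E)$ as a two-term complex $(\hO_Y \xrightarrow{t} F)$ with $F \in \Coh_{\le 1}(Y)$. I would first put $E$ in two-term form $(\hO_{\X} \xrightarrow{s} G)$ with $G \in \Fi$ via Lemma~\ref{prop:CohomCrit}; the cohomological vanishing $H^i(\X, T) = 0$ for $T \in \Ti$ and $i \neq 0$ is checked on the generators of $\Ti$ listed in \eqref{eqn:DescriptionOfTinfty}. Vanishing in positive degrees follows from support dimension bounds, while vanishing in degree $-1$ for shifted pieces $\Phi(F'[1])$ arising from $\Per_{\le 1}(Y) \cap \Coh(Y)[1]$ uses that $F'$ is supported on exceptional fibres together with the perverse $t$-structure constraint. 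Writing $G = \Phi(F)$ via \eqref{eqn:DescriptionOfFinfty} and applying $\Psi$ to the triangle $\hO_\X \to G \to E \to \hO_\X[1]$ yields a triangle $\hO_Y \to F \to \Psi(E) \to \hO_Y[1]$ in $D(Y)$. Thus $\Psi(E) = (\hO_Y \xrightarrow{t} F)$ is two-term in $\Coh(Y)$ with $\ker(t) \subseteq \hO_Y$ torsion-free of trivial determinant, and Proposition~\ref{prop:ObjectsSmallRank} applied on $Y$ gives $\Psi(E) \in \A_Y$ of rank $-1$.

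The main obstacle is the cohomological hypothesis on $\Ti$ required by Lemma~\ref{prop:CohomCrit}. Since an arbitrary $T \in \Ti$ may have $H^1(\X, T) \neq 0$, the multi-regularity assumption on $\beta_E$ plays a crucial role: it constrains the filtration of $H^0(E) \in \Ti$ provided by \eqref{eqn:DescriptionOfTinfty} so that the graded pieces have classes descending from non-exceptional curves on $Y$, which forces the obstruction $\Ext^2(H^0(E), \hO_\X)^{\vee} \cong H^1(\X, H^0(E))$ to the required lift to vanish. A careful case analysis on this filtration, combined with the numerical compatibility of $\Phi$ across $N_{1,\mr}(\X) = \Phi(N_{\nexc}(Y))$, completes the argument.
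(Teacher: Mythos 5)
Your verification of the orthogonality $\Hom(\Psi(E),\F_f)=0$ has a genuine gap: the inclusion $\Phi(\F_f)\subseteq\Fi$ is false, and the mechanism you propose for it (that $\Hom(\T_f,F)=0$ forces $F\in\Per(Y)$) points in the wrong direction. For a sheaf $F$ on $Y$, lying in $\Per(Y)$ is exactly the vanishing $R^1f_*F=0$, and this is not implied by receiving no maps from $\T_f$. Concretely, let $C'\cong\P^1$ be an irreducible exceptional curve and let $F$ be a line bundle of degree $-2$ on $C'$. Since $\T_f$ is closed under quotients, a nonzero map $T\to F$ with $T\in\T_f$ would exhibit a nonzero subsheaf of $F$, necessarily of degree $\le -2$, lying in $\T_f$; this is impossible because such a subsheaf has $R^1f_*\neq 0$. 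Hence $F\in\F_f$, while $R^1f_*F\neq 0$, so $F\notin\Per(Y)$, $\Phi(F)$ is not even a sheaf on $\X$, and in particular $\Phi(F)\notin\Fi$. Equation \eqref{eqn:DescriptionOfFinfty} identifies $\Psi(\Fi)$ with $\Per_{\le1}(Y)\cap\Coh(Y)\cap\T_f^{\perp}$, which is a \emph{proper} subcategory of $\F_f$, so your translation of the second pair condition through $\Phi$ breaks down. The paper instead proves the equivalent condition $H^0(\Psi E)\in\T_f$ (cf.\ Remark~\ref{rmk:TFpairStandardForm}) directly, using the pair condition on $E$ together with \eqref{eqn:DescriptionOfTf}; alternatively one could decompose $F\in\F_f$ by the perverse torsion pair on $\Coh(Y)$ and check that the non-perverse quotient is sent by $\Phi$ into $\Coh_0(\X)[-1]$, which receives no maps from $E$ --- but some such argument is required, and you do not have one.

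Second, you attach the multi-regularity hypothesis to a non-issue and leave open the step where it is actually needed. The cohomological hypothesis of Lemma~\ref{prop:CohomCrit} for $\Ti$ holds unconditionally: for $T'\in\T_f$ one has $R\Gamma(\X,\Phi T')=R\Gamma(X,f_*T')$ with $f_*T'$ zero-dimensional, and for $F'[1]\in\Per_{\le 1}(Y)\cap\Coh(Y)[1]$ one has $f_*F'=0$ and $R\Gamma(\X,\Phi(F'[1]))=R\Gamma(X,R^1f_*F')$ with $R^1f_*F'$ zero-dimensional; hence $H^i(\X,T)=0$ for $i\neq 0$ and every $T\in\Ti$ (your final paragraph also contradicts your third on this point). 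What is genuinely unproved is your assertion that $\Psi(E)=(\hO_Y\xrightarrow{t}F)$ is two-term with $F\in\Coh_{\le 1}(Y)$, $\coker(t)\in\Coh_{\le 1}(Y)$ and $\det\ker(t)=\hO_Y$: the category $\Psi(\Fi)=\Per_{\le1}(Y)\cap\Coh(Y)\cap\T_f^{\perp}$ contains two-dimensional sheaves, for instance $\hO_D$ for an irreducible exceptional divisor $D$ (these are the surface-like objects of the introduction; cf.\ Appendix~\ref{sec:Appendix_ExampleCRC}), so $F=\Psi(G)$ need not be one-dimensional and $\ker(t)$ is a priori only of the form $I_C(-D')$ for some effective divisor $D'$. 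This is exactly where $\beta_E\in N_{1,\mr}(\X)$ must enter: it gives $c_1(\Psi E)=0$, which, combined with the torsion-freeness of $H^{-1}(\Psi E)$ (proved in the paper by applying \eqref{eqn:DescriptionOfTinfty} to the shifted torsion part) and with $H^0(\Psi E)\in\T_f$, forces the divisorial part to vanish. Without this step the conclusion fails --- as it must, since for non-multi-regular $\beta_E$ the class of $\Psi(E)$ has $c_1\neq 0$ and cannot be the class of an $f$-stable pair.
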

\begin{proof}
  Writing $E$ as an iterated extension of objects $\hO_{\X}[1]$ and $E_{1},\ldots, E_{n}$ with $E_{i} \in \Coh_{\le 1}(\X)$ shows that $H^{-1}(\Psi E)$ has rank one, that $H^{0}(\Psi E) \in \Coh_{\le 1}(Y)$, and that all other cohomology sheaves of $\Psi E$ vanish.

  We claim that $H^{-1}(\Psi E)$ is torsion free.
  Let $T \into H^{-1}(\Psi E)$ be the torsion part of the sheaf $H^{-1}(\Psi E)$, then $T[1] \in \Per(Y)$.
  By Lemma \ref{thm:MasterLemma}, we find $\Phi(T[1]) \in \Ti$.
  Since $E$ is a $(\Ti,\Fi)$-pair, it satisfies $\Hom(\Phi T[1], E) = 0$ by definition.
  But we have a chain of inclusions
  \begin{equation}
    \Hom(T,T) \into \Hom(T,H^{-1}(\Psi E)) \into \Hom(T[1],\Psi E) = \Hom(\Phi T[1], E) = 0
  \end{equation}
  forcing $T = 0$.
  We conclude that $H^{-1}(\Psi E)$ is torsion free.

  It follows that $H^{-1}(\Psi E)$ is of the form $I_C(D)$ for some 1-dimensional scheme $C \subset Y$ and some divisor $D$.
  But since $\beta_E$ is multi-regular we have $c_{1}(\Psi E) = 0$, and so $c_{1}(H^{0}(\Psi E)) = [D]$.

  We have $H^{0}(\Psi E) \in \Per_{\le 1}(Y)$.
  Since $E$ is a $(\Ti,\Fi)$-pair, we must have $\Phi(H^{0}(\Psi E)) \in \Ti$.
  By Lemma \ref{thm:MasterLemma}, this implies $H^{0}(\Psi E)) \in \T_{f}$, and so in particular $D = 0$ and $H^{>0}(Y,H^{0}(\Psi E)) = 0$.
  The criterion of Lemma \ref{prop:CohomCrit} then implies that $\Psi E$ has the form $(\hO_{Y} \to F)$ for some $1$-dimensional sheaf $F$ on $Y$.
  For any $T \in \T_f$,
  \[
  \Hom(T,F) = \Hom(T,\Psi E) = \Hom(\Phi T,E) = 0,
  \]
  using Lemma \ref{thm:MasterLemma}, and so $F \in \F_f$.
  This proves that $\Psi E$ is an $f$-stable pair.
\end{proof}

\begin{lem}
  If $E = (\hO_{Y} \to F)$ is an $f$-stable pair, then $\Phi E$ is a $(\Ti,\Fi)$-pair.
\end{lem}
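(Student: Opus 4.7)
The plan is to verify the three defining conditions of a $(\Ti,\Fi)$-pair for the complex $\Phi E$, using Lemma~\ref{thm:MasterLemma} to reduce each condition to a statement about $E$ itself in $\A_Y$. First, the rank condition $\rk(\Phi E) = -1$ is immediate because $\Phi(\hO_Y) = \hO_\X$. That $\Phi E$ lies in $\A$ is exactly Lemma~\ref{lem:BSpairs_in_A}.

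Next I would check the $\Fi$-vanishing. By equation~\eqref{eqn:DescriptionOfFinfty}, an arbitrary element of $\Fi$ has the form $\Phi(F'')$ for some $F'' \in \Per_{\le 1}(Y) \cap \Coh(Y) \cap \T_f^{\perp}$; in particular $F'' \in \F_f$. Therefore
\[
\Hom(\Phi E, \Phi F'') = \Hom(E, F'') = 0,
\]
the last vanishing holding because $E$ is a $(\T_f,\F_f)$-pair.

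For the $\Ti$-vanishing, I would use that both the vanishing condition $\Hom(-,\Phi E) = 0$ and the subcategory $\Ti$ behave well under extensions: by equation~\eqref{eqn:DescriptionOfTinfty} it suffices to show $\Hom(T,\Phi E) = 0$ when $T = \Phi(T')$ with either $T' \in \T_f$, or $T' = G[1]$ for some $G \in \Coh(Y)$ with $G[1] \in \Per_{\le 1}(Y)$. In the first case, $\Hom(\Phi T', \Phi E) = \Hom(T', E) = 0$ directly from $E$ being a $(\T_f,\F_f)$-pair. In the second case, rotate the defining triangle of $E$ to obtain an exact triangle
\[
F[-1] \to E \to \hO_Y \to F
\]
in $D(Y)$, and apply $\Hom(G[1], -)$: the outer terms $\Hom(G[1], F[-1]) = \Ext^{-2}(G,F)$ and $\Hom(G[1], \hO_Y) = \Ext^{-1}(G,\hO_Y)$ both vanish since $G$ is a sheaf, so $\Hom(G[1], E) = 0$.

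The main subtlety, and the one place I want to be careful, is the reduction to generators of $\Ti$. The decomposition in equation~\eqref{eqn:DescriptionOfTinfty} is as an \emph{extension closure}, so I need the fact that $\Hom(-,\Phi E)$ is left exact in the first variable, which is a standard property of $\Hom$ applied to short exact sequences in the abelian category $\A$. Beyond that, every step is a direct application of either Lemma~\ref{thm:MasterLemma} or the defining conditions of an $f$-stable pair, with no further boundedness or stability input required.
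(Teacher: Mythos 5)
Your overall strategy (reduce to the generators of $\Ti$ and $\Fi$ via Lemma~\ref{thm:MasterLemma} and then invoke the defining properties of the $f$-stable pair) is reasonable, and your first case $T' \in \T_f$ is correct, but two steps have genuine gaps. In the $\Fi$-direction, the assertion ``in particular $F'' \in \F_f$'' is false in general: $\F_f$ is by definition the orthogonal $\T_f^{\perp}$ taken \emph{inside} $\Coh_{\le 1}(Y)$, whereas \eqref{eqn:DescriptionOfFinfty} only places $F''$ in $\Per_{\le 1}(Y) \cap \Coh(Y) \cap \T_f^{\perp}$, and such a sheaf can have $2$-dimensional support (the McKay equivalence sends curve-like objects on $\X$ to surface-like objects on $Y$); it then belongs to neither piece of the torsion pair $(\T_f,\F_f)$, so the pair condition $\Hom(E,\F_f)=0$ does not apply to it. The repair is the paper's: since $F''$ is a sheaf, applying $\Hom(-,F'')$ to the triangle $H^{-1}(E)[1] \to E \to H^{0}(E)$ gives a surjection $\Hom(H^{0}(E),F'') \onto \Hom(E,F'')$ (the third term is $\Ext^{-1}(H^{-1}(E),F'') = 0$), and $H^{0}(E) = \coker(s) \in \T_f$ while $\Hom(\T_f,F'') = 0$.

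In the $\Ti$-direction your triangle is off by a shift. With the paper's conventions $E = \cone(s \colon \hO_Y \to F)$ is concentrated in degrees $-1,0$, so the triangle through $E$ is $\hO_Y \to F \to E \to \hO_Y[1]$; the sequence $F[-1] \to E \to \hO_Y \to F$ you use is a triangle through $E[-1]$, not $E$. Applying $\Hom(G[1],-)$ to the correct triangle, the outer terms are $\Ext^{-1}(G,F) = 0$ and $\Hom(G[1],\hO_Y[1]) = \Hom(G,\hO_Y)$, and the latter is a degree-zero Hom which does not vanish ``since $G$ is a sheaf''. It does vanish, but for a reason you must supply: $G[1] \in \Per_{\le 1}(Y)$ forces $f_{*}G = 0$, so $G$ is a torsion sheaf mapping to the torsion-free sheaf $\hO_Y$. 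Note that the paper's proof sidesteps both the generator decomposition and this issue simultaneously: by Lemma~\ref{lem:SubobjectFactors} any map from $T \in \Ti \subset \Coh_{\le 1}(\X)$ to $\Phi E = (\hO_{\X} \to \Phi F)$ factors through $\Phi F$, whence $\Hom(T,\Phi E) = \Hom(\Psi T, F) = \Hom(H^{0}(\Psi T),F) = 0$, because $H^{0}(\Psi T) \in \T_f$ by Lemma~\ref{thm:MasterLemma} and $F \in \F_f$. So your conclusion is correct, but as written both the $\Fi$-vanishing and the second $\Ti$-case rest on justifications that fail and need the factor-through-cohomology arguments above.
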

\begin{proof}
  By the proof of Lemma~\ref{lem:BSpairs_in_A}, $\Phi E$ is the cone of the map $\hO_{\X} \to \Phi F$ where $\Phi F \in \Coh_{\le 1}(\X)$.
  By Lemma \ref{thm:MasterLemma}, if $T \in \Ti$, then $H^{0}(\Psi T) \in \cat T_{f}$.
  By Lemma \ref{lem:SubobjectFactors}
  \[
    \Hom(T, \Phi E) = \Hom(T, \Phi F) = \Hom(\Psi T, F) = \Hom(H^{0}(\Psi T),F) = 0,
  \]
  because $F \in \F_{f}$.
  
  Let $G \in \Fi$, then by Lemma \ref{thm:MasterLemma}, we have $\Psi G \in \Coh(Y) \cap \cat T_{f}^{\perp}$.
  This implies that
  \[
    \Hom(\Phi E, G) = \Hom(E, \Psi G) = \Hom(H^{0}(E), \Psi G) = 0,
  \]
  because $H^0(E) \in T_f$.
  We conclude that $\Phi E$ is a $(\T_{\zeta,0},\F_{\zeta,0})$-pair.
\end{proof}
\begin{proof}[Proof of Proposition~\ref{bspairsarezetapairs}]
  Collecting the above results yields the result.
\end{proof}
As a consequence, we may define the generating function of $f$-stable pair invariants of class $\beta \in N_{1,\mr}(\X)$ as the generating function of $(\Ti,\Fi)$-pairs of class $\beta$.
In turn, this is nothing but the generating function of $\zeta$-pairs of class $\beta$ for which $(\gamma,\eta) \in \R_{>0} \times \R$ satisfy $0 < \gamma < \min_{\gamma' \in V_{\beta}} \gamma'$.

Collecting our previous results, we prove the crepant resolution conjecture.
\begin{thrm}\label{thm:MainTheorem}
  Let $\X$ be a 3-dimensional Calabi--Yau orbifold satisfying the hard Lefschetz condition with projective coarse moduli space, and let $\beta \in N_{1,\mr}(\X)$ be a multi-regular curve class.
  Then the equality
  \[
  PT(\X)_{\beta} = BS(Y/X)_{\beta}
  \]
  holds as rational functions in $\Q(N_0(\X))$.

  More precisely, there exists a unique rational function $f_\beta \in \Q(N_0(\X))$ such that
  \begin{enumerate}
    \item the Laurent expansion of $f_\beta$ with respect to $\deg$ is the series $PT(\X)_\beta$, 
    \item the Laurent expansion of $f_\beta$ with respect to $L_{\gamma}$ is the series $BS(Y/X)_\beta$,
  \end{enumerate}
  where $0 < \gamma < \min_{\gamma' \in V_{\beta}} \gamma'$.
\end{thrm}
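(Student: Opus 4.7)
The plan is to assemble three previously established results; once the wall-crossing machinery of Sections~\ref{sec:rationalityOfStablePairs} and~\ref{sec:zetaStability} is in place, the main theorem is a bookkeeping consequence.

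First I would invoke Theorem~\ref{thm:PTIsRational} to produce the rational function $f_\beta \in \Q(N_0(\X))$ whose Laurent expansion with respect to $\deg$ equals $PT(\X)_\beta$. Its uniqueness is automatic, since a rational function admits at most one Laurent expansion in $\Z[N_0(\X)]_L$ for any homomorphism $L$.

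Next, the set of walls $V_\beta \subset \R_{>0}$ is finite by Lemma~\ref{thm:ZetaStabilityUnchangedAwayFromBigWalls}, so I may fix any $\gamma \in (0, \min V_\beta)$ and any $\eta \in \R$ (if $V_\beta = \varnothing$, any positive $\gamma$ works). Theorem~\ref{thm:CrossingBigWallPreservesRationalFunction} then tells me that $DT^{\zeta,(\gamma,\eta)}_\beta$ is the Laurent expansion of the same rational function $f_\beta$, but now with respect to $L_\gamma$.

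Finally, I would apply Proposition~\ref{bspairsarezetapairs}, which for such small $\gamma$ exhibits an isomorphism $\uP_{BS}(\beta',n) \xrightarrow{\sim} \uP_{\zge}(\Phi(\beta',n))$ of moduli stacks induced by the McKay equivalence. Since both stacks are $\C^*$-gerbes over their common coarse moduli space by Lemma~\ref{thm:TFPairsHaveSimpleAutomorphisms} and the isomorphism is a $\C^*$-gerbe map, the Behrend-weighted Euler characteristics agree, yielding $BS(Y/X)_{(\beta',n)} = DT^{\zeta,(\gamma,\eta)}_{\Phi(\beta',n)}$. The multi-regularity hypothesis on $\beta$ enters here: it ensures that the curve-class part of a BS pair of class $(\beta',n)$ matches a chosen $\beta$ under $\Phi$, and that varying $n \in \Z$ and $\beta'_{\exc}$ over exceptional curve classes varies $\Phi(\beta'_{\exc},n) \in N_0(\X)$ over all of $N_{\exc}(Y) \cong N_0(\X)$ compatibly with the splittings fixed in the introduction. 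Summing gives the equality $DT^{\zeta,(\gamma,\eta)}_\beta = BS(Y/X)_\beta$ of generating series in $\Z[N_0(\X)]_{L_\gamma}$, and combining with the previous step shows that $PT(\X)_\beta$ and $BS(Y/X)_\beta$ are the $\deg$- and $L_\gamma$-Laurent expansions of the same rational function $f_\beta$.

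The hard part of the theorem is not in this final assembly but in the two wall-crossing arguments that feed into it: rationality of $PT(\X)_\beta$ via the Nironi $\delta$-wall-crossing, and the preservation-of-rational-function statement under the two-parameter $(\gamma,\eta)$-wall-crossing that links the large-$\gamma$ and small-$\gamma$ regimes. At this stage the only residual subtlety is checking compatibility of the moduli isomorphism of Proposition~\ref{bspairsarezetapairs} with the Behrend-weighted integration map, which is immediate because the identification is induced by the derived McKay equivalence on underlying categories.
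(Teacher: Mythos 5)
Your proposal is correct and follows essentially the same route as the paper, whose proof is precisely the combination of Theorem~\ref{thm:CrossingBigWallPreservesRationalFunction} (which already packages Theorem~\ref{thm:PTIsRational} and the $\gamma$-wall re-expansions) with Proposition~\ref{bspairsarezetapairs}. The only nitpick is your uniqueness remark, which should be stated in the other direction — a given series in $\Z[N_0(\X)]_{\deg}$ is the expansion of at most one rational function (if $s h_1 = g_1$ and $s h_2 = g_2$ then $g_1 h_2 = g_2 h_1$) — but this is immediate and does not affect the argument.
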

\begin{proof}
  This follows from Theorem~\ref{thm:CrossingBigWallPreservesRationalFunction} and Proposition~\ref{bspairsarezetapairs}.
\end{proof}


\appendix

\newpage

\section{An example: rational functions vs. generating series}
\label{sec:Appendix_ExampleCRC}
We present a simple example of a CY3 orbifold $\X$ for which the equality
\begin{equation}\label{eq:CRC_as_Series}
  \frac{DT_{\mr}(\X)}{DT_{0}(\X)} = \frac{DT(Y)}{DT_{\exc}(Y)}
\end{equation}
holds as an equality of rationality functions, but fails to hold as an equality of generating series.
This mismatch occurs when considering the DT invariants associated to a curve class on $\X$ that, under the McKay equivalence $\Phi \colon D(Y) \to D(\X)$, does not correspond to the class of a quotient of $\hO_Y$ on $Y$.
\begin{rmk}
We emphasise that the computation carried out here is a subset of the calculations of both \cite{MR2854183} and \cite[Thm.~2.2]{MR3595887}.
We include it not to claim fault in these works, but to demonstrate that even for the very simplest of examples, the equalities in the crepant resolution conjecture must be interpreted as being between rational functions rather than generating series.
\end{rmk}

The geometry of our example is a quasi-projective CY3 orbifold with transverse $A_1$-singularities, which we now describe.
Consider the vector bundle $Z = \hO_{\P^{1}}(-1)^{\oplus 2} \to \P^{1}$, and let $\Z/2$ act on $\Z$ via multiplication by $-1$ in each fibre.
Let $\X = [Z/(\Z/2)]$.
Since $\X$ is non-compact, we work with compactly supported $K$-theory, and we write $N(\X)$ for the numerical compactly supported $K$-group.

Let $\rho$ be the non-trivial character of $\Z/2$, and let $\hO_{\X}(\rho)$ be the corresponding line bundle on $\X$.
Let $C_{0} \subset Z$ be the zero-section.
If $F$ is the push-forward of a line bundle on $C_{0}$ to $X$, there are two lifts $F^{\pm}$ of $F$ to $\X$, that is to say there are two lifts of $F$ to $\Z/2$-equivariant sheaves on $X$.
These satisfy $F^{\pm} \cong \hO(\rho) \otimes F^{\mp}$, and we label them so that $F^{+}$ locally has $\Z/2$-invariant sections and $F^{-}$ does not.
Similarly, if $p \in C_{0}$, then the skyscraper sheaf $\hO_{p}$ has two lifts $\hO_{p}^{\pm}$ to sheaves on $\X$.

A natural basis for the numerical K-group of multi-regular classes $N_{\mr}(\X)$, defined above equation~\eqref{tikz:NumericalDiagram}, is then given by the three classes $\beta = [\hO_{C}^{-}(-1)] + [\hO^{+}_{C}(-1)], [\hO^{+}_{p}], [\hO_{p}^{-}]$.
We use the shorthand
\[
  (d, (m,n)) = d\beta + m[\hO_{p}^{+}] + n[\hO_{p}^{-}] \in N_{\mr}(\X).
\]
The elements of $\Z[N_{\le 1}(\X)]$ corresponding to $\beta, [\hO_{p}^{+}], [\hO_{p}^{-}]$ are denoted by $z, q_{+}, q_{-}$, respectively.

The coarse moduli space $X$ of $\X$ is a family of singular quadric cones over $\P^{1}$.
The distinguished crepant resolution $Y$, given by the McKay correspondence, is the blow up of $X$ in the singular locus $\P^{1}$, so that $Y$ is the total space of the line bundle $\hO_{\P^{1} \times \P^{1}}(-2,-2) \to \P^{1} \times \P^{1}$.

We let $C_{h}$ and $C_{v}$ be orthogonal lines in $\P^{1} \times \P^{1} \subset Y$ such that $C_{v}$ is contracted by $f \colon Y \to X$ and $C_{h}$ is not.
We denote their numerical classes by $\beta_{h} = [\hO_{C_{h}}(-1)]$, $\beta_{v} = [\hO_{C_{v}}(-1)]$ and we let $[p] \in N_{0}(Y)$ be the class of a point.
Under the McKay equivalence, we have $[C_{h}] = (1, (0,1))$, $[C_{v}] = (0, (0,1))$ and $[p] = (0, (1,1))$. 

In the following result, we collect some DT invariants of $\X$ and $Y$.
\begin{prop}\label{prop:AppendixA_computation}
  We have
  \begin{itemize}
  \item $DT(Y)_{(0, (m, n))} = 0$ if $n \le 0$.
  \item $\sum_{m \in \Z} DT(\X)_{(0,(m,0))}q_{+}^{m} = \sum_{m \ge 0} (-1)^{m}(m + 1)q_{+}^m = (1+ q_{+})^{-2}$.
  \item $DT(Y)_{(2\beta, (m,n))} = DT(\X)_{(2\beta, (m,n))} = 0$ if $n < 4$.
  \end{itemize}
  Setting $n = 4$, we have
\begin{align*}
\sum_{m \in \Z} DT(Y)_{(2\beta,(m,4))}z^{2}q_{+}^{m}q_{-}^{4} &= \sum_{m \le 2} (-1)^{m+1}(3m-9)z^{2}q_{+}^{m}q_{-}^{4} = \frac{3z^{2}q_{+}^{4}q_{-}^{4}}{(1+ q_{+})^{2}} \\
\sum_{m \in \Z} DT(\X)_{(2\beta,(m,4))}z^{2}q_{+}^{m}q_{-}^{4} &= 3\sum_{m \ge 4} \binom{m+3}{3} z^{2}q_{+}^{m}q_{-}^{4} = \frac{3z^{2}q_{+}^{4}q_{-}^{4}}{(1+ q_{+})^{4}}
\end{align*}
\end{prop}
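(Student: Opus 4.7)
The plan is to establish each claim by concrete equivariant calculation, after first translating the stated numerical classes between the $\X$-side splitting and the geometric basis $\{[C_h], [C_v], [p]\}$ on $Y$. From the given data $[C_v] = (0,(0,1))$, $[p] = (0,(1,1))$, $[C_h] = (1,(0,1))$ one reads off $[\hO_p^-] = [C_v]$, $[\hO_p^+] = [p]-[C_v]$, and $\beta = [C_h]-[C_v]$, so
\[
(d,(m,n)) = d[C_h] + m[p] + (n-m-d)[C_v].
\]
This identity is the arithmetic engine behind every claim in the proposition.

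For the vanishing statements I would argue by non-effectivity. The cone $N_{\exc}^{\eff}(Y)$ is generated by $[C_v]$ and $[p]$, so the class $(0,(m,n))$ has an effective representative only when $m\ge 0$ and $n-m\ge 0$; thus for $n\le 0$ only the trivial class survives (which is excluded from $DT$ in the convention used) and $DT(Y)_{(0,(m,n))}=0$ otherwise. For the third bullet, the class $(2\beta,(m,n))$ translates to $2[C_h]+m[p]+(n-m-2)[C_v]$ on $Y$, so effectivity on the exceptional curve forces $n\ge m+2$. On $\X$, an ideal sheaf or stable pair of class $2\beta$ is supported on the stacky $\P^1 \subset \X$, and a direct Riemann--Roch computation on this $\P^1$, using the fact that $\beta = [\hO_C^-(-1)]+[\hO_C^+(-1)]$, shows that the minimum $\chi$ compatible with $2\beta$ occurs exactly when $n=4$ (with $m\le 4$); below this threshold no structure sheaf of curve class $2\beta$ exists.

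For the two explicit generating functions I would use $\mathbf{T}$-equivariant localization with the natural $(\mathbf{C}^*)^3$-action preserving the CY3 structure on both sides. The generating function $\sum_m DT(\X)_{(0,(m,0))}q_+^m$ enumerates $\mathbf{T}$-fixed $0$-dimensional subschemes of $\X$ whose structure sheaf has trivial $\Z/2$-character; these are supported on the two stacky fixed points over $0,\infty \in \P^1$, and at each such point they are enumerated by $\Z/2$-invariant plane partitions carrying only trivial-character boxes, via the orbifold vertex of Bryan--Cadman--Young. The invariant generating series at one stacky point is $(1+q_+)^{-1}$, yielding $(1+q_+)^{-2}$ globally. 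For the $n=4$ line, the relevant $\mathbf{T}$-fixed stable pairs/subschemes of curve class $2\beta$ on $\X$ (resp.\ $2[C_h]+k[C_v]$ on $Y$) are counted by finite combinatorial data attached to the two ends of the curve; the Behrend-weighted local contribution at each fixed point then gives, after summing, the asserted rational functions $3z^2q_+^4q_-^4/(1+q_+)^4$ on $\X$ and $3z^2q_+^4q_-^4/(1+q_+)^2$ on $Y$, whose Laurent expansions live in disjoint half-lines $m\ge 4$ and $m\le 2$ respectively.

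The main obstacle is the careful bookkeeping of $\mathbf{T}$-weights, $\Z/2$-characters, and Behrend signs at each orbifold fixed point, together with verifying that the vertex/edge contributions for curve class $2\beta$ re-sum into the claimed closed forms $(-1)^{m+1}(3m-9)$ and $\binom{m+3}{3}$. The payoff, and the reason the example is illuminating, is that these two Laurent series have disjoint supports in the $q_+$-variable yet are \emph{different} rational functions: any would-be CRC identity $PT(\X)_\beta = BS(Y/X)_\beta$ must therefore be understood after re-expansion of rational functions, exactly as in Theorem~\ref{thm:MainTheorem}, and cannot hold as a naive equality of generating series.
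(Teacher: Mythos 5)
Your starting point (torus localization plus the Behrend-weight formula $\nu(p)=(-1)^{\dim T_pM}$) is the same as the paper's, and your translation $(d,(m,n)) = d[C_h]+m[p]+(n-m-d)[C_v]$ is correct; but the substantive content of the proposition, namely the closed formulas $(-1)^{m+1}(3m-9)$ on $Y$ and the corresponding coefficients on $\X$, is exactly the part you defer: you assert that the orbifold-vertex contributions ``re-sum into the claimed closed forms'' without carrying out, or even outlining, that bookkeeping, so the proof stops at its key step. The paper never touches the vertex formalism: it identifies the moduli spaces outright, namely $\Quot(Y,\hO_Y)_{(2\beta,(m,4))}\cong \P^2\times\P^{2-m}$ (quotients correspond to divisors of class $2C_h+(2-m)C_v$ on $\P^1\times\P^1$), $\Quot(\X,\hO_\X)_{(0,(m,0))}\cong \Sym^m\P^1=\P^m$, and, via the squeeze $\hI_C^3\subset\hI_{C'}\subset\hI_C^2$, $\Quot(\X,\hO_\X)_{(2\beta,(m,4))}\cong\Quot(\P^1,\hO_{\P^1}(2)^{\oplus 3})_{[\hO_{\P^1}(m-2)]}$; all of these are smooth, so $\nu\equiv(-1)^{\dim}$ and the invariants are signed topological Euler characteristics read off immediately. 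Producing such an identification (or actually executing the signed box-counting you invoke) is the missing idea in your plan.

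The vanishing bullets as you argue them also have gaps. On $Y$, effectivity of the vertical curve class only gives $k=n-m-2\ge 0$, i.e.\ $n\ge m+2$, which does not exclude, say, $(m,n)=(0,3)$; the actual claim for $n<4$ amounts to the bound $\chi(\hO_Z)\ge 2-k$ for every subscheme of curve class $2[C_h]+k[C_v]$, which in the paper's approach comes out of the divisor/ideal-squeeze description, and on the orbifold side the $\hO_p^-$-coefficient is pinned to exactly $4$ by the surjection $\hO_Z\onto\hO_{2C}$ together with the fact that $\hI_C^2/\hI_C^3\cong\hO_{\P^1}(2)^{\oplus 3}$ carries only the trivial character; your ``direct Riemann--Roch on the stacky $\P^1$'' is too vague to deliver this, and your parenthetical ``$m\le 4$'' for $\X$ is backwards (the $\X$-series is supported in $m\ge 4$, the $Y$-series in $m\le 2$). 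Likewise, the first bullet needs more than ``the effective cone is spanned by $[C_v]$ and $[p]$'': point coefficients of one-dimensional sheaves can a priori be negative, and the real input is that every graded piece of $\hO_Z$ along a contracted fibre has non-negative degree (the conormal bundle is $\hO\oplus\hO(2)$), so $\chi>0$ whenever the vertical curve class is non-zero. Your concluding observation that the two series are disjoint-support expansions of different rational functions, forcing the rational-function reading of the CRC, is correct once the formulas are established.
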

\begin{proof}
  There is a natural action of $T = (\C^{*})^{3}$ on both $\X$ and $Y$, and the computation of DT invariants thus reduces to counting the $T$-fixed points with their Behrend weights.
  By \cite[Thm.~3.4]{MR2407118}, these are computable as
  \[
    \nu_M(p) = (-1)^{\dim T_{M, p}}.
  \]

  We omit the details of this fixed-point counting.
  However, we remark that the Behrend weight calculation is particularly simple in the cases we consider since the moduli spaces involved are in fact smooth, and so $\nu_M \equiv (-1)^{\dim M}$.

  To see smoothness on the side of the resolution $Y$, one shows that objects in $\Quot(Y, \hO_{Y})_{(2\beta,(m,4))}$ correspond to divisors on $\P^{1}\times \P^{1}$ of class $2C_{h} + (2-m)C_{v}$.
  In particular,
  \[
  \Quot(Y)_{(2\beta,(m,4)} \cong \P^{2} \times \P^{2-m}.
  \]
  To see smoothness on the side of the orbifold $\X$, we remark that $\Quot(\hO_{X})_{0,(m,0)} \cong \Sym^{m}\P^{1} = \P^{m}$, while one can show that the kernel $\hI_{C'}$ of an object $(\hO_{\X} \onto \hO_{C'}) \in \Quot(\X, \hO_{\X})_{(2\beta,(m,4))}$ must satisfy $\hI_{C}^{3} \subset \hI_{C'} \subset \hI_{C}^{2}$.
  This induces isomorphisms
  \begin{align*}
    \Quot(\X, \hO_{\X})_{(2\beta,(m,4))} &\cong \Quot(\X, \hI_{C}^{2}/\hI_{C}^{3})_{2(\beta,(m,4))} \\
        &\cong \Quot(\P^{1}, \hO_{\P^{1}}(2)^{\oplus 3})_{[\hO_{\P^{1}}(m-2)]},
  \end{align*}
  and the latter scheme is non-singular.
\end{proof}

We obtain the following DT generating series, contradicting equation~\eqref{eq:CRC_as_Series}.
\begin{cor}\label{cor:Mismatch_As_Generating_Series}
  The $z^{2}q_{+}^{m}q_{-}^{4}$-coefficient of $DT(\X)/DT_{0}(\X)$ is $0$ if $m \le 3$, and otherwise is $(-1)^{m}(3m-9)$.

  The $z^{2}q_{+}^{m}q_{-}^{4}$-coefficient of $DT(Y)/DT_{\exc}(Y)$ is $0$ if $m \ge 3$, and otherwise is $(-1)^{m+1}(3m-9)$.
\end{cor}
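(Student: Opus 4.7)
The plan is to deduce the two coefficient formulas directly from Proposition~\ref{prop:AppendixA_computation}. The first observation is that $DT_0(\X)$ and $DT_{\exc}(Y)$ are independent of $z$, so the $z^2 q_+^m q_-^4$-coefficient of each CRC ratio equals the $q_+^m q_-^4$-coefficient of $DT(\X)_{2\beta}/DT_0(\X)$ and $DT(Y)_{2\beta}/DT_{\exc}(Y)$ respectively. The second observation is that, since the proposition forces $DT(\X)_{(2\beta,(m,n))} = DT(Y)_{(2\beta,(m,n))} = 0$ for $n < 4$, extracting the $q_-^4$-coefficient of either ratio requires only the $q_-^0$-parts of $(DT_0(\X))^{-1}$ and $(DT_{\exc}(Y))^{-1}$; higher $q_-$-parts of the denominators cannot contaminate.

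On the orbifold side, the proposition gives $DT_0(\X)|_{q_-^0} = (1+q_+)^{-2}$, whose reciprocal is $(1+q_+)^2$. Multiplying by $DT(\X)_{2\beta}|_{q_-^4} = 3q_+^4/(1+q_+)^4$ yields $3q_+^4/(1+q_+)^2$. Expanding $(1+q_+)^{-2} = \sum_{k \ge 0}(-1)^k(k+1)q_+^k$ in non-negative powers of $q_+$ — the natural direction on $\X$ since $q_+ = [\hO_p^+]$ is an effective class — and reindexing by $m = k+4$ gives the coefficient $(-1)^m(3m-9)$ for $m \ge 4$ and zero otherwise.

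On the resolution side, the proposition states $DT(Y)_{(0,(m,n))} = 0$ for $n \le 0$, so $DT_{\exc}(Y)|_{q_-^0} = 1$ (only the empty configuration contributes). The $q_-^4$-part of the resolution ratio is therefore simply $DT(Y)_{2\beta}|_{q_-^4} = 3q_+^4/(1+q_+)^2$, which the proposition has already presented as $\sum_{m \le 2}(-1)^{m+1}(3m-9)q_+^m$ via the expansion of $(1+q_+)^{-2}$ in non-positive powers of $q_+$. This is the natural direction on $Y$, since through the McKay identification $\Phi^{-1}([\hO_p^+]) = [p] - [C_v]$ is not effective.

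No step is technically hard; the substantive point is that the two coefficient sequences are expansions of the \emph{same} rational function $3q_+^4/(1+q_+)^2$ in opposite directions of $q_+$, so they are visibly unequal as formal power series while being compatible with Theorem~\ref{thm:MainTheorem}. This is the entire content of the example: equation~\eqref{eq:CRCEquality} must be read as an equality of rational functions, not of generating series.
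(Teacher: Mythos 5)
Your proposal is correct and is essentially the derivation the paper intends: Corollary~\ref{cor:Mismatch_As_Generating_Series} is stated as an immediate consequence of Proposition~\ref{prop:AppendixA_computation}, obtained exactly by the division you perform, using that the vanishing $DT_{(2\beta,(m,n))}=0$ for $n<4$ means only the $q_{-}^{0}$-parts of the inverted denominators contribute. Your two extra observations --- that $DT_{\exc}(Y)|_{q_{-}^{0}}=1$ (the empty configuration) and that the two sides are opposite-direction expansions of $3z^{2}q_{+}^{4}q_{-}^{4}/(1+q_{+})^{2}$ --- are precisely the points the appendix is making, so nothing is missing.
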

\begin{rmk}
  Note that the difference between the generating series collecting, for all $m \in \Z$, the $z^2q_{+}^mq_{-}^4$-terms on $\X$ and the series collecting those terms on $Y$, is
  \[
  z^2q_{-}^4 \sum_{m \in \Z} a_m q_{+}^m = z^2q_{-}^4\sum_{m \in \Z}(-1)^m(3m-9) q_{+}^m .
  \]
  In particular, the coefficient of the difference $a_m = (-1)^m(3m-9)$ is quasi-polynomial.
  By Lemma~\ref{thm:QuasipolynomialDifferenceIsANewExpansion}, these two generating series are the expansions at $q_{+} = 0$ and $q_{+}^{-1} = 0$, respectively, of the same rational function; this follows directly from Proposition~\ref{prop:AppendixA_computation}.
\end{rmk}


\section{Open hearts give exact linear algebraic stacks}
\label{sec:BRaxioms}
All our schemes are defined over $\C$.
Let $Y$ be a smooth, projective variety (these hypotheses are likely far stronger than necessary).
Recall that $\Mum_{Y}$ is the stack such that objects of $\Mum_{Y}(S)$ correspond to perfect objects $\hE \in D^{b}(S \times Y)$ which are universally gluable, \ie~for any scheme morphism $f \colon S' \to S$, we have $\Hom(f^{*}\hE, f^{*}\hE[i]) = 0$ if $i \le -1$.
Here, we write $f^*$ for the more correct $(f \times \ID_Y)^*$.

We want to show that a reasonable substack $\A \into \Mum_{Y}$ satisfies the axioms used in \cite{1612.00372} to analyse its Hall algebra.
Our definition of reasonable is that $\A$ is open as a substack, and as a category is closed under extensions, sums and summands, with all negative degree Ext-groups vanishing.
More precisely:
\begin{ass}
  \label{categoryAssumption}
  Assume that $\A \into \Mum_{Y}$ is an open substack such that for any scheme $S$, the full subcategory of $D^{b}(S \times Y)$ whose objects are $\A(S)$ is closed under extensions, sums and summands, and if $\hE_{1}, \hE_{2} \in \A(S)$, then $\Hom(\hE_{1}, \hE_{2}[i]) = 0$ for $i \le -1$.
\end{ass}

\begin{rmk}\label{redundancyRemark}
  There is some redundancy in the assumptions above.
  In fact, it is enough to require that $\A(S)$ be closed under extensions and summands.
  Being closed under sums then follows from being closed under extensions, by the trivial extension $\hE_{1} \to \hE_{1} \oplus \hE_{2} \to \hE_{2}$.
  Moreover, $\A(S)$ being closed under sums implies that the sum of any two objects of $\A(S)$ is universally gluable, which implies the condition of vanishing of Ext-groups of negative degree.
\end{rmk}

\begin{rmk}
  Assumption \ref{categoryAssumption} implies that direct sum defines a morphism $\oplus \colon \A \times \A \to \A$.
  There is no such morphism on the bigger stack $\Mum_{Y}$, because the sum of two universally gluable complexes is not in general universally gluable.
\end{rmk}

We wish to apply the results of \cite{1612.00372} to the Hall algebra of $\A$.
In order to do so, we must show that we can give $\A$ the structure of an ``exact linear algebraic stack''.
For precise definitions of this and related terms we refer to \cite{1612.00372}.
Roughly speaking, the data of a linear algebraic stack consists of the following:
\begin{itemize}
\item a stack $\A$, which is algebraic locally of finite type,
\item a stack
  \[
    \Hom(-,-) \to \A \times \A,
  \]
  along with certain composition maps defining an $\hO_{S}$-linear category with the same objects as $\A(S)$, and whose underlying groupoid is $\A(S)$.
  We require that locally on $\A \times \A$ the functor $\Hom(-,-)$ is \emph{coherent representable}, i.e., equal to the kernel of a homomorphism of finite rank locally free sheaves.
\end{itemize}
To say that $\A$ is moreover \emph{exact} linear algebraic \cite[Sec.~3]{1612.00372} means that we have a substack $\A^{(2)}$ of the stack of all 3-term sequences in $\A$, with morphisms $a_{1}, a_{2}, a_{3} \colon \A^{(2)} \to \A$ taking a sequence to its first, second and third term, such that
\begin{itemize}
\item for every scheme $S$, the objects of $\A^{(2)}(S)$ define the structure of Quillen exact category on $\A(S)$,
\item the stack $\A^{(2)}$ is linear algebraic,
\item the morphism $a_{2} \colon \A^{(2)} \to \A$ is representable as a morphism of algebroids in the sense of \cite[Def.~1.49]{1612.00372},
\item the morphism $a_{1} \times a_{3} \colon \A^{(2)} \to \A \times \A$ is of finite type, and
\item for every scheme $S$, the category $\A(S)$ is Karoubian.
\end{itemize}

The main result of this appendix is the following.
\begin{prop}\label{thm:AppendixProposition}
If $\A$ satisfies Assumption \ref{categoryAssumption}, then the data of Hom-spaces and exact triangles in the full subcategory $\A(S) \subset D^{b}(S \times Y)$ give $\A$ the structure of an exact linear algebraic stack.
\end{prop}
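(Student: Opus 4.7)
The plan is to verify the defining properties of an exact linear algebraic stack one by one, in each case reducing to standard representability statements for the derived pushforward along the smooth proper projection $\pi_S \colon S \times Y \to S$ of perfect complexes on $S \times Y$. Algebraicity of $\A$ and its being locally of finite type are immediate from Lieblich's algebraicity theorem for $\Mum_Y$ together with the openness hypothesis $\A \subset \Mum_Y$, while Karoubian-ness of $\A(S)$ is part of Assumption \ref{categoryAssumption}.

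First I would construct the Hom-stack. Given $\hE_1, \hE_2 \in \A(S)$, the complex $R\pi_{S,*}R\mathcal{H}om(\hE_1, \hE_2)$ is perfect on $S$. Zariski-locally on $S$ it admits a strict bounded model $P^\bullet$ by finite rank locally free sheaves; since universal gluability of the $\hE_i$ forces the cohomology of $P^\bullet$ to lie in non-negative degrees (and this property persists under arbitrary pullback), I may truncate and realise $\Hom_{S \times Y}(\hE_1, \hE_2)$ as $\ker(P^0 \to P^1)$. This yields coherent representability of $\Hom(-,-) \to \A \times \A$ locally on $\A \times \A$, with composition inherited from $D^b(S \times Y)$.

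Next I would define $\A^{(2)}$ to be the substack of the stack of exact triangles $\hE_1 \to \hE_2 \to \hE_3 \to \hE_1[1]$ in $D^b(S \times Y)$ with all three terms in $\A(S)$. Extension closure together with the vanishing of negative $\Ext$ groups give $\A(S)$ the structure of a Quillen exact category whose admissible short exact sequences are precisely the objects of $\A^{(2)}(S)$. Openness of $\A \subset \Mum_Y$ makes $\A^{(2)}$ open inside the analogous stack of triangles in $\Mum_Y$, hence algebraic; its Hom-stack is produced by applying the same derived pushforward machinery to triples of objects. For the projection $a_1 \times a_3 \colon \A^{(2)} \to \A \times \A$, the fibre over a family $(\hE_1, \hE_3)$ is a torsor classified by $\mathcal{E}xt^1(\hE_3, \hE_1)$, which locally arises as $\coker(P^0 \to P^1)$ of a two-term complex of vector bundles; this exhibits $a_1 \times a_3$ as of finite type. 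Representability of $a_2 \colon \A^{(2)} \to \A$ as a morphism of algebroids will follow because a triangle with fixed middle term $\hE_2$ is classified by an admissible subobject $\hE_1 \hookrightarrow \hE_2$, which is cut out by a subfunctor of a coherent representable Hom-functor.

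The hard part will be arranging global compatibility of these Zariski-local constructions so that the result is a bona fide exact linear algebraic stack whose structure survives arbitrary base change, rather than a patchwork of local models. The enabling technical input is that for perfect complexes on $S \times Y$ with $Y$ smooth projective, the derived pushforward $R\pi_{S,*}R\mathcal{H}om$ lands in perfect complexes on $S$ and commutes with arbitrary base change, so the local strict models of the required shape exist and behave coherently under pullback.
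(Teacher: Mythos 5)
Your overall strategy coincides with the paper's: everything is reduced to perfectness and base change of $R\pi_{S,*}\lRHom(\hE_{1},\hE_{2})$, tor-amplitude in $[0,\infty)$ coming from vanishing of negative Ext-groups, and local presentations of Hom-functors as kernels of maps of finite rank locally free sheaves. The genuine gap is at the step where you make $\A^{(2)}$ algebraic. You assert it is ``open inside the analogous stack of triangles in $\Mum_Y$, hence algebraic'', but no such algebraic stack is available: for two \emph{distinct} universally gluable complexes negative Ext-groups need not vanish, so the Hom-functor over $\Mum_Y\times\Mum_Y$ is not coherent representable and the natural construction of a stack of sequences (let alone triangles) over $\Mum_Y$ breaks down; the construction must be carried out inside $\A$, where Assumption \ref{categoryAssumption} supplies the vanishing. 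Moreover, exactness is not an open condition on a pair of composable morphisms: one first needs the \emph{closed} condition $p\circ i=0$ before the comparison map out of the cone even exists, and only then is the requirement that this map be an isomorphism open. The paper's route is to form $\Seq_{\A}\cong \Hom_{\A}(-,-)\times_{\A}\Hom_{\A}(-,-)$, algebraic because the Hom-stacks are affine over $\A\times\A$, and then to exhibit $\A^{(2)}$ as a \emph{locally closed} substack of $\Seq_{\A}$ (Lemma \ref{thm:LinearAlgebraicMorphismStacks}); some version of this two-step construction is needed and your plan skips it.

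Two smaller corrections. Karoubianness of $\A(S)$ is not part of Assumption \ref{categoryAssumption}: closure under summands is assumed, but idempotent completeness requires idempotents to split somewhere, which the paper obtains from idempotent completeness of $D_{\text{QCoh}}(S\times Y)$ (B\"okstedt--Neeman), perfectness of the resulting summands, and then closure under summands (Lemma \ref{thm:Karoubian}). Similarly, for the Hom-stack between two different objects, universal gluability of each $\hE_{i}$ separately does not give vanishing of $\Hom(f^{*}\hE_{1},f^{*}\hE_{2}[i])$ for $i<0$; you need the cross-term vanishing of Assumption \ref{categoryAssumption}, equivalently universal gluability of $\hE_{1}\oplus\hE_{2}$. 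Finally, for finiteness of $a_{1}\times a_{3}$, the fibre is a quotient stack of extensions and $\Ext^{1}$ does not commute with base change, so one should cover it by the affine scheme $\ker(K^{1}\to K^{2})$ coming from a presentation of $\CHom(\hE_{3},\hE_{1})$, as in Lemma \ref{thm:finiteType}, rather than argue through a cokernel of a two-term complex classifying extensions; your intended mechanism would not give the required atlas, though the underlying idea is the same.
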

The proof is the combination of Lemmas \ref{thm:AIsLinearAlgebraic}, \ref{thm:LinearAlgebraicMorphismStacks}, \ref{thm:Karoubian}, \ref{thm:representable}, \ref{thm:finiteType} and Corollary \ref{thm:QuillenExact}.
These results generalise those for the category $\A(\C) = \Coh(Y)$, in which case the verification of the hard parts can be found in \cite{MR2854172}.

\subsection{Preliminaries on Hom-spaces}
If $f \colon S \to \Mum_{Y}$ is a morphism from a scheme $S$, we will write $f^{*}\hU$ for the corresponding object in $D^{b}(S \times Y)$.
To avoid getting into the definition of derived categories on general algebraic stacks, we think of this as strictly formal notation, and avoid positing the existence of a universal object $\hU$ in some derived category of $\Mum_{Y} \times Y$.

Let $S$ be a scheme, and let $\hE_{1}, \hE_{2}$ be perfect objects in $D^{b}(S \times Y)$.
Let $\pi \colon S \times Y \to S$ be the projection, and define the complex
\[
\CHom(\hE_{1}, \hE_{2}) \coloneq R\pi_{*}\lRHom(\hE_{1}, \hE_{2}),
\]
which lies in $D^b(S)$.
\begin{lem}
  Let $S$ be a scheme, and let $\hE_{1}, \hE_{2} \in D^{b}(S \times Y)$ be perfect.
  The object $\CHom(\hE_{1},\hE_{2})$ is perfect.
  For any scheme morphism $f \colon S' \to S$, we have
  \[
    \CHom(f^{*}\hE_{1}, f^{*}\hE_{2}) \cong f^{*}\CHom(\hE_{1}, \hE_{2})
  \]
  and
  \[
    \Hom(f^{*}\hE_{1}, f^{*}\hE_{2}[i]) = H^{i}(S', f^{*}\CHom(\hE_{1},\hE_{2}))
  \]
\end{lem}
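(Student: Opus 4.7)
The plan is to decompose the statement into three standard facts about derived pushforwards, local homs, and base change, and verify that each applies in our setting.

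First I would establish perfectness. Since $Y$ is smooth and projective over $\C$, the projection $\pi \colon S \times Y \to S$ is smooth and proper. Given perfect $\hE_1, \hE_2 \in D^b(S \times Y)$, the local hom complex $\lRHom(\hE_1,\hE_2) \cong \hE_1^{\vee} \otimes \hE_2$ is again perfect. Perfectness of $R\pi_*$ of a perfect complex along a smooth proper morphism is a classical result (see, e.g., SGA 6 or the Thomason--Trobaugh approach), so $\CHom(\hE_1,\hE_2)$ is perfect on $S$.

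Next I would establish the base change isomorphism. Consider the Cartesian diagram
\[
\begin{tikzcd}
S' \times Y \ar[r, "{f \times \ID_Y}"] \ar[d, "\pi'"] & S \times Y \ar[d, "\pi"] \\
S' \ar[r, "f"] & S.
\end{tikzcd}
\]
Since $\pi$ is flat, the square is Tor-independent, and so the base change morphism $f^{*}R\pi_{*} \to R\pi'_{*}(f \times \ID_Y)^{*}$ is an isomorphism on bounded complexes with coherent cohomology. Combined with the standard fact that $\lRHom$ commutes with pullback when one of the arguments is perfect, namely $(f \times \ID_Y)^{*}\lRHom(\hE_1,\hE_2) \cong \lRHom((f \times \ID_Y)^{*}\hE_1,(f \times \ID_Y)^{*}\hE_2)$, this yields
\[
\CHom(f^{*}\hE_{1}, f^{*}\hE_{2}) = R\pi'_{*}\lRHom(f^{*}\hE_1, f^{*}\hE_2) \cong f^{*}R\pi_{*}\lRHom(\hE_1,\hE_2) = f^{*}\CHom(\hE_1,\hE_2).
\]

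Finally, the Ext computation is a Leray-type identity. Writing $p \colon S' \times Y \to \Spec \C$ for the structure map and noting $p = a \circ \pi'$ where $a \colon S' \to \Spec \C$, we have
\[
\Hom(f^{*}\hE_{1}, f^{*}\hE_{2}[i]) = H^{i}(S' \times Y, \lRHom(f^{*}\hE_1, f^{*}\hE_2)) = H^{i}(S', R\pi'_{*}\lRHom(f^{*}\hE_1, f^{*}\hE_2)),
\]
where the second equality uses the Leray spectral sequence (or the adjunction $\pi'^{*} \dashv R\pi'_{*}$). Substituting the base change isomorphism from the previous step gives the claimed identity.

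None of the steps presents a real obstacle; the only care needed is to check that the hypotheses (properness of $\pi$, perfectness of $\hE_i$, Tor-independence of the base change square) are all met, which they are thanks to $Y$ being smooth projective and $\pi$ being a projection.
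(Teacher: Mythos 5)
Your proof is correct and follows essentially the same route as the paper: after observing that $\lRHom(\hE_1,\hE_2)$ is perfect, the paper simply cites the Stacks Project result that pushforward along a flat proper finitely presented morphism preserves perfection and commutes with arbitrary base change, which is exactly the package you re-derive by hand (perfect pushforward, Tor-independent/flat base change, and the Leray identity for the Ext groups).
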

\begin{proof}
  Since $\hE_{1}$ and $\hE_{2}$ are perfect, so is $\lRHom(\hE_{1}, \hE_{2})$.
  The claims now follow from \cite[\href{https://stacks.math.columbia.edu/tag/0DJT}{Lemma 0DJT}]{stacks-project}.
\end{proof}


\begin{lem}
  Let $S$ be a scheme, and let $E \in D^{b}(S)$ be a perfect complex.
  The following conditions are equivalent:
  \begin{enumerate}
  \item\label{locallyFree} Locally on $S$, we can find a finite locally free complex $K^{\bullet}$ representing $E$ with $K^{i} = 0$ if $i \le -1$.
  \item\label{totalVanishing} For every scheme morphism $f \colon S' \to S$, we have $H^{i}(f^{*}E) = 0$ if $i \le -1$.
  \item\label{affineVanishing} For every affine scheme morphism $f \colon S' \to S$, we have $H^{i}(f^{*}E) = 0$ if $i \le -1$.
  \item For every field $k$ and every morphism $f \colon \Spec k \to S$, we have $H^{i}(f^{*}E) = 0$ if $i \le -1$.
  \item\label{points} For every $x \in S$ with quotient field $k(x)$ and inclusion $i_{x} \colon \Spec k(x) \to S$, we have $H^{i}(i_{x}^{*}E) = 0$ if $i \le -1$.
  \end{enumerate}

  If $S$ is affine, then these conditions are equivalent to
  \begin{enumerate}
  \item[(\ref*{points}*)] For every closed point $x \in S$ with quotient field $k(x)$ and corresponding inclusion $i_{x} \colon \Spec k(x) \to S$, we have $H^{i}(i_{x}^{*}E) = 0$ if $i \le -1$.
  \end{enumerate}
\end{lem}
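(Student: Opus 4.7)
The plan is to establish the cycle of implications (\ref*{locallyFree}) $\Rightarrow$ (\ref*{totalVanishing}) $\Rightarrow$ (\ref*{affineVanishing}) $\Rightarrow$ (4) $\Rightarrow$ (\ref*{points}) $\Rightarrow$ (\ref*{locallyFree}), and then to reduce the affine closed-points variant to this cycle. The four forward implications are straightforward: (\ref*{locallyFree}) $\Rightarrow$ (\ref*{totalVanishing}) holds because the derived pullback of a complex of locally free sheaves concentrated in non-negative degrees remains concentrated in non-negative degrees, so its cohomology vanishes in degrees $\le -1$; and (\ref*{totalVanishing}) $\Rightarrow$ (\ref*{affineVanishing}) $\Rightarrow$ (4) $\Rightarrow$ (\ref*{points}) are progressive specialisations of the class of test morphisms, noting that $\Spec k \to S$ is affine and that residue fields are in particular fields.

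The substantial content lies in (\ref*{points}) $\Rightarrow$ (\ref*{locallyFree}). Since $E$ is perfect, I may work Zariski-locally on $S$ and assume that $E$ is represented by a bounded complex $K^{-n} \to K^{-n+1} \to \cdots \to K^{m}$ of finite locally free $\hO_{S}$-modules. If $n \le 0$ we are done, so suppose $n \ge 1$. For each $x \in S$, the leftmost differential $d \colon K^{-n} \to K^{-n+1}$ satisfies $\ker(d \otimes k(x)) = H^{-n}(i_{x}^{*}E) = 0$ by hypothesis, hence $d \otimes k(x)$ is injective for every $x$. The key linear-algebra input is then the following: if $d \colon M \to N$ is a map of finite locally free modules over a local ring $(R, \mathfrak{m}, k)$ such that $d \otimes k$ is injective, then $d$ is a split monomorphism with locally free cokernel, as one sees by lifting a complement of the image in $N \otimes k$ via Nakayama. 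Thus, shrinking $S$ further if necessary, we may write $K^{-n+1} \cong K^{-n} \oplus K'$ with $d$ the inclusion of the first summand, making $K^{\bullet}$ quasi-isomorphic to the complex obtained by deleting $K^{-n}$ and replacing $K^{-n+1}$ by $K'$. Iterating this truncation finitely many times yields the local representative claimed in (\ref*{locallyFree}).

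For the affine closed-points variant, (\ref*{points}) $\Rightarrow$ (\ref*{points}*) is trivial, so it remains to establish (\ref*{points}*) $\Rightarrow$ (\ref*{locallyFree}). The truncation scheme above goes through unchanged once I observe that the locus in $S$ where $d$ is fibrewise injective, equivalently where $\coker d$ is locally free of the expected rank, is Zariski open. Under the standing convention that all schemes are locally of finite type over $\C$, an affine such $S$ is Jacobson, so an open subset containing every closed point is all of $S$. The main obstacle in the whole proof is the Nakayama step producing the split injection together with this density argument in the closed-points case; both are standard but are the only non-formal ingredients.
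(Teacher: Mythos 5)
Your proof is correct, and it differs from the paper's mainly in being self-contained where the paper simply cites the literature. The paper disposes of the forward implications as obvious and handles the key step (\ref*{points}) $\Rightarrow$ (\ref*{locallyFree}) by quoting Stacks Project Lemma 0BCD; you instead reprove that lemma by hand, via fibrewise injectivity of the leftmost differential, the Nakayama splitting of a map of finite locally free modules that is injective modulo the maximal ideal, and iterated truncation — exactly the argument underlying the cited lemma, so nothing is lost and the write-up becomes independent of the reference. For the affine closed-points variant the routes genuinely diverge: the paper notes that in an affine scheme every point specialises to a closed point and then uses Stacks 0BDI to propagate the condition from the specialisation back to the point, whereas you use openness of the fibrewise-vanishing (equivalently injectivity) loci together with the fact that an open subset of an affine scheme containing all closed points is the whole scheme. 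Two small remarks: your appeal to the Jacobson property (via the finite-type-over-$\C$ convention) is more than you need, since for any affine scheme a nonempty closed subset $V(I)$ contains a maximal ideal, hence a closed point; and in the iteration for (\ref*{points}*) $\Rightarrow$ (\ref*{locallyFree}) one should note that the relevant loci $\{x \in S \mid H^{-j}(i_x^*E) = 0\}$ are intrinsic open subsets of $S$ (locally they are the injectivity loci of the successive leftmost differentials), so that the hypothesis at closed points of $S$ can be fed back in at each stage — you gloss this with ``goes through unchanged,'' but the gap is only expository.
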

\begin{proof}
  The implications $(i) \Rightarrow (i+1)$ are all obvious, so we need to show $(\ref*{points}) \Rightarrow (\ref*{locallyFree})$.
  This follows from \cite[\href{https://stacks.math.columbia.edu/tag/0BCD}{Lem.~0BCD}]{stacks-project}, which shows that if $x \in S$ satisfies (\ref*{points}), then $E$ satisfies (\ref*{locallyFree}) in an open neighbourhood of $x$.

  Finally, if $S$ is affine, then the closure of any point $x$ contains a closed point $y$.
  If condition (\ref*{points}) holds for $y$, then by \cite[\href{https://stacks.math.columbia.edu/tag/0BDI}{Lem.~0BDI}]{stacks-project} it also holds for $x$, and so (\ref*{points}*) implies (\ref*{points}).
  This completes the proof.
\end{proof}
If any (hence all) of the conditions above are satisfied, we say that \emph{$E$ has tor-amplitude in $[0,\infty)$}.

\begin{lem}
  Let $S$ be a scheme, and let $\hE \in D^{b}(S \times Y)$ be perfect.
  The object $\hE$ is universally gluable if and only if $\CHom(\hE, \hE)$ has tor-amplitude in $[0,\infty)$.
  There exists an open subset $U \subset S$ such that for any scheme morphism $f \colon S' \to S$, the object $f^{*}\hE$ is universally gluable if and only if $f$ factors through $U$.
\end{lem}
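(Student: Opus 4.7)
The plan is to reduce both claims to the two preceding lemmas: the formula $\Hom(f^{*}\hE, f^{*}\hE[i]) = H^{i}(S', f^{*}\CHom(\hE,\hE))$ from the previous lemma, and the point-wise criterion for tor-amplitude in $[0,\infty)$ just established.

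For the first assertion, assume first that $\hE$ is universally gluable. For each $x \in S$ with residue field $k(x)$ and inclusion $i_{x} \colon \Spec k(x) \to S$, universal gluability applied to $i_x$ gives $\Hom(i_{x}^{*}\hE, i_{x}^{*}\hE[i]) = 0$ for $i \leq -1$. Over a point, hypercohomology collapses to the ordinary cohomology of the complex, so this vanishing is equivalent to $H^{i}(i_{x}^{*}\CHom(\hE,\hE)) = 0$. Applying the point-wise criterion of the preceding lemma then shows that $\CHom(\hE,\hE)$ has tor-amplitude in $[0, \infty)$. Conversely, if $\CHom(\hE,\hE)$ has tor-amplitude in $[0, \infty)$, it is locally represented by a complex $K^{\bullet}$ of finite locally free sheaves with $K^{i} = 0$ for $i \leq -1$. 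Pullback preserves this property, and the hypercohomology spectral sequence $E_{2}^{p,q} = H^{p}(S', H^{q}(f^{*}K^{\bullet})) \Rightarrow H^{p+q}(S', f^{*}K^{\bullet})$ has all non-zero terms concentrated in bidegrees with $p, q \geq 0$. Thus $H^{i}(S', f^{*}\CHom(\hE,\hE)) = 0$ for $i \leq -1$, giving universal gluability.

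For the second assertion, set $E = \CHom(\hE,\hE) \in D^{b}(S)$ and let $U \subset S$ be the locus where $H^{i}(i_{x}^{*}E) = 0$ for every $i \leq -1$. Since $E$ is perfect and bounded, only finitely many values of $i$ are relevant; for each such $i$, the locus where the fibre cohomology vanishes is open by upper-semicontinuity of fibre dimension applied to a local locally-free resolution of $E$, so $U$ is open. If $f \colon S' \to S$ factors through $U$, then $f^{*}E$ has vanishing negative fibre cohomology at every point of $S'$, and the preceding lemma shows $f^*E$ has tor-amplitude in $[0, \infty)$; by the first part, $f^{*}\hE$ is universally gluable. Conversely, if $f^{*}\hE$ is universally gluable, the first part gives that $f^{*}E$ has tor-amplitude in $[0,\infty)$; for any $x \in S'$, the residue field extension $k(x)/k(f(x))$ is faithfully flat (a vector space whose tensor product with a field extension vanishes must itself vanish), so $H^{i}(i_{f(x)}^{*}E) = 0$ for $i \leq -1$, giving $f(x) \in U$.

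The main obstacle, such as it is, lies in keeping track of the subtle difference between the hypercohomology formula for $\Hom$, which is global in $S'$, and the tor-amplitude criterion, which concerns cohomology sheaves. The point-wise characterisation of tor-amplitude bridges this gap: testing at residue fields turns hypercohomology into ordinary cohomology, and the hypercohomology spectral sequence in the reverse direction then propagates pointwise vanishing back to vanishing of $\Hom$ over arbitrary $S'$.
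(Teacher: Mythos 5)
Your proof is correct and follows essentially the same route as the paper: both arguments reduce universal gluability to the equivalent characterisations of tor-amplitude in $[0,\infty)$ for $\CHom(\hE,\hE)$ from the preceding lemma (you invoke the residue-field criterion and the locally free model plus the hypercohomology spectral sequence, while the paper uses the affine and total vanishing conditions), and both obtain $U$ as the open locus of fibrewise vanishing of negative cohomology. Your explicit flat base change to residue fields for the factorisation statement is a detail the paper leaves implicit, but it is not a different method.
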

\begin{proof}
  The object $\hE$ being universally gluable obviously implies condition (\ref*{affineVanishing}).
  Conversely, condition (\ref*{totalVanishing}) implies that $\hE$ is universally gluable.
  
  Let $V \subset S$ be an open affine.
  Then $\CHom(\hE,\hE)$ can be represented by a finite complex of finite rank projectives on $V$.
  The set $V_{j}$ of points $x \in V$ for which $H^{j}(i_{x}^{*}\CHom(\hE,\hE)) = 0$ is open for any $i$, by \cite[\href{https://stacks.math.columbia.edu/tag/0BDI}{Lem.~0BDI}]{stacks-project}.
  The set of points $x \in V$ for which (\ref*{points}) holds, is the intersection of finitely many $V_{i}$, hence is itself open.
  Thus the set of points in $S$ for which (\ref*{points}) holds is an open subscheme $U \subset S$, and $f^{*}\hE$ is universally gluable if and only if $f$ factors through $U$.
\end{proof}

\subsection{Checking the assumption on closed points}
Before giving the proof of Proposition~\ref{thm:AppendixProposition}, let us show that Assumption \ref{categoryAssumption} can be checked on $\C$-points.

\begin{lem}\label{lem:Assumptions_on_Cpoints}
  Let $\A \into \Mum_{Y}$ be an open substack such that the full subcategory of $D^{b}(Y)$ whose objects are $\A(\C)$ is closed under sums, summands and extension, and moreover $\Hom(E_{1}, E_{2}[i]) = 0$ if $E_{1}, E_{2} \in \A(\C)$ and $i \le -1$.

  Then $\A$ satisfies Assumption \ref{categoryAssumption}.
\end{lem}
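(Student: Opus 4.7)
The plan is to verify the two non-trivial clauses of Assumption~\ref{categoryAssumption}---closure of $\A(S)$ under extensions and under summands---by an openness argument that reduces each to the corresponding hypothesis on $\C$-points. By Remark~\ref{redundancyRemark}, all the other clauses (closure under direct sums and the vanishing of negative Ext-groups) then follow formally. Both verifications share the same skeleton, so I would write the extension case in detail and leave the summand case as a parallel argument.

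For the extension case, I would start with an exact triangle $\hE_1 \to \hE \to \hE_2$ in $D^b(S \times Y)$ with $\hE_1, \hE_2 \in \A(S)$. Since perfect complexes are closed under cones, $\hE$ is perfect. By the preliminary lemma of this appendix, the locus $U \subset S$ on which $\hE$ is universally gluable is open, so $\hE|_U$ defines a morphism $U \to \Mum_Y$. Since $\A \hookrightarrow \Mum_Y$ is open, the preimage $V \subset U$ of $\A$ is open in $S$, and the whole problem reduces to showing $V = S$.

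To prove this, I would first reduce to the case where $S$ is of finite type over $\C$---a standard move, because $V = S$ is a property that can be tested locally on $S$ and $\Mum_Y$ is locally of finite type, so we may restrict to any finite-type affine open of $S$. In this setting the closed points of $S$ are dense and all have residue field $\C$. At any closed point $s \in S$, the restriction $\hE|_s$ sits in an exact triangle $\hE_1|_s \to \hE|_s \to \hE_2|_s$ in $D^b(Y)$ with $\hE_j|_s \in \A(\C)$, and the hypothesis that $\A(\C)$ is closed under extensions yields $\hE|_s \in \A(\C) \subset \Mum_Y(\C)$. In particular, $\hE|_s$ is universally gluable (so $s \in U$) and defines a $\C$-point of $\A$ (so $s \in V$). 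Density of closed points in a finite-type $\C$-scheme then forces the open set $V$ to equal $S$.

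The summand case runs by the same pattern: given $\hE \in \A(S)$ with a splitting $\hE \cong \hE_1 \oplus \hE_2$ in $D^b(S \times Y)$, each $\hE_j$ is perfect, and the locus $V_j \subset S$ on which $\hE_j$ is gluable and lies in $\A$ is open; at a closed point, $\hE_j|_s$ is a summand of $\hE|_s \in \A(\C)$, hence in $\A(\C)$ by hypothesis, and density of closed points yields $V_j = S$. The only delicate step in the whole argument is the reduction to finite-type schemes, which I expect to be the main, though fairly routine, obstacle; once this is in place, everything else is bookkeeping with the openness of $\A \subset \Mum_Y$ and the preliminary Hom-space lemma.
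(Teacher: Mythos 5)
Your argument is fine as far as it goes, but it only goes as far as schemes of finite type over $\C$, and the reduction you propose to that case is where the genuine gap lies. Assumption \ref{categoryAssumption} quantifies over \emph{all} schemes $S$ (and is later applied to arbitrary affine test schemes, e.g.\ to $\Spec k(x)$ for arbitrary residue fields, both in the paper's own argument and implicitly in the linear-algebraic-stack axioms), so the non-finite-type case cannot be dispensed with. Your justification --- ``$V=S$ can be tested locally on $S$ and $\Mum_{Y}$ is locally of finite type, so we may restrict to any finite-type affine open of $S$'' --- conflates two different things: testing locally on $S$ only reduces you to \emph{arbitrary} affine opens of $S$, and an arbitrary scheme has no cover by affine opens of finite type over $\C$ (consider $\Spec K$ for a large field $K$, or the spectrum of a local ring at a non-closed point). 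On such schemes there may be no $\C$-points at all, so the density-of-closed-points step collapses. The correct use of ``$\Mum_{Y}$ is locally of finite type'' is a limit/approximation argument, not a locality argument: for affine $S$ one factors the classifying morphisms through a finite-type scheme (this is the citation to the Stacks Project in the paper's proof), and --- specifically for the extension case --- one must in addition descend the extension class itself to a finitely generated subring $R'\subset \Gamma(S,\hO_S)$, which the paper does using that $\CHom(\hE_{2},\hE_{1})$ is perfect, since factoring the two endpoints of the triangle through a finite-type model does not by itself put the triangle over that model. Finally, a general (non-affine, non-finite-type) $S$ is handled by reducing, via openness of gluability and of $\A$, to the residue fields $k(x)$ of all points, and then invoking the affine case for $\Spec k(x)$.

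So the skeleton you give (reduce to extensions and summands via Remark \ref{redundancyRemark}, openness of the good locus, testing on $\C$-points of a finite-type scheme) coincides with the first third of the paper's proof, but the part you dismiss as ``fairly routine'' --- the passage from finite-type schemes to arbitrary ones --- is in fact the bulk of the paper's argument and is missing from yours. To repair the proposal you would need to add: (i) the factorisation of morphisms from an arbitrary affine scheme through a finite-type scheme, using that $\Mum_{Y}$ is locally of finite type (hence limit preserving); (ii) the spreading out of the Ext-class over a finitely generated subring in the extension case; and (iii) the reduction of a general scheme to its points $\Spec k(x)$, where the affine case (not the $\C$-point case) applies.
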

\begin{proof}
  Let $S$ be a scheme, and let $\hE_{1}, \hE_{2} \in D^{b}(S \times Y)$ be perfect complexes.
  By Remark~\ref{redundancyRemark}, it suffices to show the following two conditions:
  \begin{enumerate}
  \item\label{sums} if $\hE_{1} \oplus \hE_{2} \in \A(S)$, then $\hE_{1} \in \A(S) \text{ and }\hE_{2} \in \A(S)$, and
  \item\label{extensions} if $\hE_{1} \to \hE \to \hE_{2}$ is an exact triangle and $\hE_{1}, \hE_{2} \in \A(S)$, then $\hE \in \A(S)$.
  \end{enumerate}

  First, assume that $S$ is a finite type $\C$-scheme.
  Since the property of being universally gluable is open and $\A$ is open in $\Mum_{Y}$, an object $\hE \in D^{b}(S \times Y)$ lies in $\A(S)$ if and only if $i_{x}^{*}\hE \in \A(\C)$ for every $\C$-point $x \in S$.
  We thus obtain property (\ref*{sums}) by observing that for every $\C$-point $x \in S$, we have
  \[
    \hE_{1} \oplus \hE_{2} \in \A(S) \Rightarrow i_{x}^{*}\hE_{1} \oplus i_{x}^{*}\hE_{2} \in \A(\C) \Rightarrow i_{x}^{*}\hE_{1}, i_{x}^{*}\hE_{2} \in \A(\C).
  \]
  As for property (\ref*{extensions}), observe that $i_{x}^{*}\hE \in \A(\C)$ for all $x \in S$ by extension-closure of $\A(\C)$.
  It then follows that $\hE \in \A(S)$.
  
  Second, assume that $S$ is any affine scheme.
  To prove (\ref*{sums}) we argue as follows.
  Since $\hE_{1} \oplus \hE_{2}$ is universally gluable, so are $\hE_{1}$ and $\hE_{2}$.
  So let $f_{1}, f_{2} \colon S \to \Mum_{Y}$ be such that $\hE_{i} \cong f_{i}^{*}\hU$.
  By {\cite[\href{https://stacks.math.columbia.edu/tag/0CMY}{Prop.~0CMY}]{stacks-project}} and the fact that $\Mum_{Y}$ is locally of finite type, there exists a finite type affine $S'$ such that the $f_i$ factor as
  \[
  S \stackrel{g}{\to} S' \stackrel{f_{i}'}{\to} \Mum_Y.
  \]
  Let $U \subset S'$ be the open subset on which $(f_{1}')^{*}\hU \oplus (f_{2}')^{*}\hU$ lies in $\A(U)$, which exists by openness of universal gluability and $\A$.
  Then $g$ factors through $U$.
  By the finite type case above, we then have $(f_{1}')^{*}\hU|_{U}, (f_{2}')^{*}\hU|_{U} \in \A(U)$, and so $f_{1}^{*}\hU, f_{2}^{*}\hU \in \A(S)$.

  To prove (\ref*{extensions}), note that the exact triangle corresponds to a class in
  \[
  \Hom(\hE_{2}, \hE_{1}[1]) = H^{1}(S, \CHom(\hE_{1},\hE_{2})).
  \]
  Since $\CHom(\hE_{2}, \hE_{1})$ is perfect, it is representable by a finite complex of finitely generated projectives $K^{\bullet}$, and the extension class defining $\hE$ comes from an element of $K^{1}$.
  We can find a finitely generated subring $R' \subset R = \Gamma(S,\hO_{S})$ such that $\hE_{1}$, $\hE_{2}$ and this Ext-class are all defined over $R'$, which means that the triangle is pulled back from $S' = \Spec R'$.
  Applying property (\ref*{extensions}) from the finite type case above then proves the claim.

  Finally, the openness of universal gluability and of $\A$ shows that a complex $\hE \in D^{b}(S \times Y)$ lies in $\A(S)$ if and only $i_{x}^{*}\hE \in \A(\Spec k(x))$ for every $x \in S$.
  We may then argue as in the finite type case, replacing $\C$ by the fields $k(x)$, where we have just shown that (\ref*{sums}) and (\ref*{extensions}) hold.
\end{proof}

\begin{cor}
  If $\A$ is an open substack such that the objects of $\A(\C)$ define the heart of a t-structure on $D^{b}(Y)$, then $\A$ satisfies the assumptions of Lemma~\ref{lem:Assumptions_on_Cpoints}.
\end{cor}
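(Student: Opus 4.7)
The plan is to verify that the hypotheses of Lemma~\ref{lem:Assumptions_on_Cpoints} are satisfied, namely that $\A(\C) \subset D^b(Y)$ is closed under sums, summands and extensions, and that $\Hom(E_1, E_2[i]) = 0$ for $E_1, E_2 \in \A(\C)$ and $i \le -1$. Openness of $\A \hookrightarrow \Mum_Y$ is part of the hypothesis.

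Write $(\mathcal D^{\le 0}, \mathcal D^{\ge 0})$ for the t-structure with heart $\A(\C)$. By the standard theory of t-structures (Beilinson--Bernstein--Deligne), the heart $\A(\C)$ is an abelian subcategory of $D^b(Y)$. In particular it has finite biproducts, which in $D^b(Y)$ agree with direct sums, so $\A(\C)$ is closed under sums. Closure under extensions in $D^b(Y)$ is likewise standard: an exact triangle $E_1 \to E \to E_2$ in $D^b(Y)$ with $E_1, E_2 \in \A(\C)$ has $E \in \mathcal D^{\le 0} \cap \mathcal D^{\ge 0} = \A(\C)$ by the long exact sequence for the truncation functors.

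For the Ext-vanishing, apply the defining orthogonality of the t-structure: for $E_1 \in \mathcal D^{\le 0}$ and $E_2 \in \mathcal D^{\ge 0}$, shifting gives $E_2[i] \in \mathcal D^{\ge -i} \subset \mathcal D^{\ge 1}$ whenever $i \le -1$, and so $\Hom(E_1, E_2[i]) = 0$ by the t-structure axiom $\Hom(\mathcal D^{\le 0}, \mathcal D^{\ge 1}) = 0$.

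The only step requiring some care is closure under summands. I would argue this by invoking the fact that $D^b(Y)$ is idempotent complete (Karoubian), since $Y$ is a smooth projective variety, and that the heart of any t-structure on an idempotent complete triangulated category is itself idempotent complete: if $E = E_1 \oplus E_2$ in $D^b(Y)$ with $E \in \A(\C)$, then the projector $E \to E_1$ factors through $\A(\C)$ because any idempotent in $\End_{D^b(Y)}(E) = \End_{\A(\C)}(E)$ (by the Ext-vanishing just established for $i = 0$ it's the same group) splits in the abelian category $\A(\C)$. Concretely, $E_1$ is realised as the image in $\A(\C)$ of the idempotent $E \to E \to E$, hence lies in $\A(\C)$. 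Having verified all four conditions, the conclusion follows from Lemma~\ref{lem:Assumptions_on_Cpoints}.
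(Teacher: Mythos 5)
Your proof is correct and is precisely the standard verification the paper intends (it states the corollary without a written proof): the heart of a t-structure is closed under extensions, finite sums and direct summands in $D^{b}(Y)$, and $\Hom(E_{1},E_{2}[i])=0$ for $i\le -1$ follows from the axiom $\Hom(\mathcal{D}^{\le 0},\mathcal{D}^{\ge 1})=0$, after which Lemma~\ref{lem:Assumptions_on_Cpoints} applies. Two cosmetic remarks: idempotent completeness of $D^{b}(Y)$ is not actually needed in the summand step, since the projector onto $E_{1}$ already splits in $D^{b}(Y)$ by hypothesis and it suffices that idempotents split in the abelian heart together with uniqueness of idempotent splittings (alternatively, the aisle and co-aisle are the left, resp.\ right, orthogonals of each other's shifts and hence closed under summands); and the equality $\End_{D^{b}(Y)}(E)=\End_{\A(\C)}(E)$ is simply fullness of the heart, not a consequence of the Ext-vanishing.
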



\subsection{Proof of main proposition}
We now assume that $\A$ satisfies Assumption \ref{categoryAssumption}.
Let $S$ be a scheme, and let $f_{1}, f_{2} \colon S \to \A$ be morphisms.
Let
\[
\CHom(f_{1},f_{2}) = \CHom(f_{1}^{*}\hU, f_{2}^{*}\hU).
\]


The algebraic stack $\A$ admits an obvious refinement to a linear stack, in the sense of \cite[Def.~1.9]{1612.00372}.
Explicitly, given schemes $T_{1}, T_{2}$ with morphisms $g \colon T_{1} \to T_{2}$ and $f_{1}, f_{2} \colon T_{i} \to \A$ corresponding to objects $\hE_{1}, \hE_{2}$, we set
\[
\Hom_{g}(f_{1}, f_{2}) = \Hom(\hE_{1}, g^{*}(\hE_{2})),
\]
where this Hom-space has the obvious $\hO$-linear structure.

\begin{lem}
  \label{thm:AIsLinearAlgebraic}
  The stack $\A$ is an algebraic linear stack.
\end{lem}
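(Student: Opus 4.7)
The plan is to assemble the three pieces required of a linear algebraic stack (algebraicity, representable linear $\Hom$-functor, composition law) from standard derived-category inputs, relying on Assumption~\ref{categoryAssumption} only at one place: to force $\CHom$ to have tor-amplitude in $[0,\infty)$.

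First I would record that $\A$ is algebraic and locally of finite type, which is immediate from the openness of the inclusion $\A \subset \Mum_{Y}$ and the fact that Lieblich's stack $\Mum_{Y}$ has these properties. The composition maps and the $\hO_S$-linear structure on $\Hom_{g}(f_{1},f_{2})$ will be induced by ordinary composition in $D^{b}(T \times Y)$ under pullback, so associativity, bilinearity, and functoriality in $g$ are formal consequences of the derived-category structure and require no further argument.

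The main content is therefore the representability and coherence part. Given $f_{1},f_{2} \colon S \to \A$, I claim that $\CHom(f_{1},f_{2}) \in D^{b}(S)$ has tor-amplitude in $[0,\infty)$. Indeed, by Assumption~\ref{categoryAssumption}, $\A(S)$ is closed under direct sums, so $f_{1}^{*}\hU \oplus f_{2}^{*}\hU \in \A(S)$, which is universally gluable; by the second lemma of the appendix this forces $\CHom(f_{1}^{*}\hU \oplus f_{2}^{*}\hU, f_{1}^{*}\hU \oplus f_{2}^{*}\hU)$ to have tor-amplitude in $[0,\infty)$. Since tor-amplitude in $[0,\infty)$ passes to direct summands (it is detected on fibres at points), the summand $\CHom(f_{1},f_{2})$ inherits the same property.

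Using this, locally on $S$ I can represent $\CHom(f_{1},f_{2})$ by a bounded complex of finite rank locally free sheaves concentrated in non-negative degrees, say $[K^{0} \to K^{1} \to \cdots]$. For any $g \colon T \to S$,
\[
  \Hom_{g}(f_{1},f_{2}) = H^{0}(T, g^{*}\CHom(f_{1},f_{2})) = \ker\bigl(g^{*}K^{0} \to g^{*}K^{1}\bigr),
\]
so the relative $\Hom$-functor is represented by $\ker(K^{0} \to K^{1})$, viewed as an $S$-linear functor on $S$-schemes. This is exactly coherent representability in the sense of \cite{1612.00372}, and it assembles into a stack $\Hom(-,-) \to \A \times \A$ that is locally on the base coherent representable. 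The potential subtlety, which I would check explicitly, is that the local choices of presentation $[K^{0} \to K^{1}]$ glue compatibly on overlaps; this follows from the uniqueness of the $\CHom$ complex up to quasi-isomorphism together with the usual patching argument for kernels of maps of vector bundles, and is the only step where one must be a little careful, though it is routine. The composition morphisms are then induced by the cup-product on $\CHom$, and linearity and associativity are inherited from $D^{b}(S \times Y)$.
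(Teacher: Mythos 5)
Your proof is correct and follows essentially the same route as the paper: algebraicity comes from openness in $\Mum_{Y}$, and coherent representability of the $\Hom$-functor comes from the tor-amplitude of $\CHom(f_{1},f_{2})$ in $[0,\infty)$, represented locally by a non-negative complex of vector bundles with the presheaf given by $\ker(K^{0}\to K^{1})$. The only (harmless) deviation is that you justify the tor-amplitude via the direct-sum-and-summand trick of Remark~\ref{redundancyRemark}, whereas the paper invokes the negative-Ext vanishing of Assumption~\ref{categoryAssumption} directly; also note that, since coherent representability is only required locally on $\A\times\A$, the gluing of local presentations you worry about is not actually needed.
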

\begin{proof}
  The stack $\A$ is algebraic in the usual sense (as a category fibred in groupoids) because it is an open substack of an algebraic stack.

  Given two scheme morphisms $f_{1}, f_{2} \colon S \to \A$, we get a presheaf $\Hom(f_{1},f_{2})$ on $(\text{Sch}/S)$.
  This presheaf takes $g \colon S' \to S$ to $H^{0}(S', g^{*}\CHom(f_{1},f_{2}))$.
  Since $\CHom(f_{1},f_{2})$ has tor-amplitude in $[0, \infty)$, it is locally on $S$ of the form $0 \to K^{0} \stackrel{d}{\to} K^{1} \to \ldots$ with the $K^{i}$ locally free of finite rank.
  The presheaf $\Hom(f_{1},f_{2})$ is thus locally on $S$ given by $\ker d$, and hence is locally coherent representable.
  See \cite[\href{https://stacks.math.columbia.edu/tag/08JX}{Lem.~08JX}]{stacks-project}, which gives a similar argument in the case $\A = \Coh(Y)$.
\end{proof}

\subsection{Exact algebraic stack}
The linear stack $\A$ is naturally equipped with a structure of an exact algebraic stack, in the sense of \cite{1612.00372}.
We define $\Seq_{\A}$ to be the stack of all sequences $\hE' \to \hE \to \hE''$ in $\A(S)$, and we let $\A^{(2)}$ denote the substack of those sequences which are exact in $D(S \times Y)$.
Both of these stacks have natural structures of linear stacks, by letting Hom-spaces be the component-wise maps commuting with the morphisms in the sequence.

Since $\A$ is linear algebraic, there exists an algebraic stack $\Hom_{\A}(-,-) \to \A \times \A$, and this morphism is representable in affine schemes.
\begin{lem}
  \label{thm:LinearAlgebraicMorphismStacks}
  The stacks $\Seq_{\A}$ and $\A^{(2)}$ are algebraic linear stacks.
\end{lem}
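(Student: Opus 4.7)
The plan is to build $\Seq_{\A}$ via fibre products of the Hom-stacks produced by Lemma~\ref{thm:AIsLinearAlgebraic}, and then to realise $\A^{(2)}$ as an open substack of $\Seq_{\A}$.

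First I would construct $\Seq_{\A}$ over $\A \times \A \times \A$. For a scheme $S$ and three maps $f_{1}, f_{2}, f_{3} \colon S \to \A$ classifying $\hE_{1}, \hE_{2}, \hE_{3}$, the data needed for an object of $\Seq_{\A}(S)$ lying above $(f_{1}, f_{2}, f_{3})$ consists of a pair $a \in \Hom(\hE_{1}, \hE_{2})$, $b \in \Hom(\hE_{2}, \hE_{3})$ with $b \circ a = 0$. By Lemma~\ref{thm:AIsLinearAlgebraic} the stacks $\Hom_{\A}(-,-) \to \A \times \A$ are algebraic and locally coherent-representable, so pulling them back along the three projections $\A^{3} \to \A^{2}$ and taking the fibre product over $\A^{3}$ yields an algebraic stack of pairs $(a,b)$. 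Composition is a morphism of (locally) coherent-representable stacks, and the condition $b \circ a = 0$ cuts out a closed substack. Hence $\Seq_{\A}$ is algebraic.

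Next I would upgrade $\Seq_{\A}$ to a linear stack. Given two $S$-families $\sigma = (\hE_{1} \to \hE_{2} \to \hE_{3})$ and $\sigma' = (\hF_{1} \to \hF_{2} \to \hF_{3})$, set $\Hom_{\Seq_{\A}}(\sigma, \sigma')$ to be the kernel of the natural map
\[
  \bigoplus_{i=1}^{3} \Hom_{\A}(\hE_{i}, \hF_{i}) \longrightarrow \Hom_{\A}(\hE_{1}, \hF_{2}) \oplus \Hom_{\A}(\hE_{2}, \hF_{3})
\]
whose components are $(\phi_{1},\phi_{2},\phi_{3}) \mapsto (a' \circ \phi_{1} - \phi_{2} \circ a,\, b' \circ \phi_{2} - \phi_{3} \circ b)$. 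This is visibly an $\hO_{S}$-linear functor, and since the target and source are locally coherent-representable, so is the kernel. This is the linear structure on $\Seq_{\A}$.

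Finally, I would show $\A^{(2)} \subset \Seq_{\A}$ is an open substack; it then inherits the linear algebraic structure. Given $(a,b) \in \Seq_{\A}(S)$ with $b \circ a = 0$, working locally on $S$ one forms the cone $C = \mathrm{cone}(a) \in D^{b}(S \times Y)$. The vanishing $b \circ a = 0$ yields a factorisation $b = \bar b \circ p$ with $\bar b \colon C \to \hE_{3}$, and the sequence is exact as a triangle iff $\bar b$ is an isomorphism. Equivalently, the cone $C' = \mathrm{cone}(\bar b) \in D^{b}(S \times Y)$ must be zero; since $C'$ is perfect, its vanishing is an open condition by semi-continuity, hence $\A^{(2)}$ is open in $\Seq_{\A}$.

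The main subtlety is that the cone of a morphism of universally gluable complexes is not canonical on a scheme $S$, only up to non-unique isomorphism, so the construction must be handled locally on a smooth cover. This is standard: one works on an étale atlas of $\Seq_{\A}$ where a representative cone $C$ and representative factorisation $\bar b$ exist, observes that the open locus $\{s \in S \mid C'_{s} = 0\}$ pulls back compatibly under further base change (since formation of $\mathrm{cone}(\bar b)$ commutes with pullback and perfectness is preserved), and so descends to an open substack of $\Seq_{\A}$. Equivalently—and avoiding the choice of cone altogether—exactness can be detected fibrewise by checking that $i_{x}^{*}\bar b$ is a quasi-isomorphism at every point $x \in S$, which is again open. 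Either viewpoint yields the claim.
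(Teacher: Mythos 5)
Your proposal is correct and follows essentially the same route as the paper: $\Seq_{\A}$ is obtained from fibre products of the Hom-stacks of Lemma~\ref{thm:AIsLinearAlgebraic}, the linear structure is the kernel of the commutativity-constraint map on Hom-functors (which is coherent representable because it is $H^{0}$ of a shifted cone of Hom-complexes of tor-amplitude in $[0,\infty)$), and $\A^{(2)}$ is cut out by the closed condition that the composite vanish together with the open condition that the comparison map to the cone be an isomorphism. The only differences are cosmetic: the paper's $\Seq_{\A}$ consists of \emph{all} sequences (so $\A^{(2)}$ is locally closed in it rather than open in your smaller $\Seq_{\A}$), and your worry about non-canonicity of cones is unnecessary, since $\Hom(\hE_{1}[1],\hE_{3})=0$ by Assumption~\ref{categoryAssumption} makes the factorisation $\bar b$ unique, so one can work with an arbitrary scheme $S \to \Seq_{\A}$ directly and deduce openness from the closedness of $\supp(\cone(\bar b))$ and properness of $S \times Y \to S$.
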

\begin{proof}
  As ordinary stacks, $\Seq_{\A}$ is equivalent to $\Hom_{\A}(-,-) \times_{\A} \Hom_{\A}(-,-)$, and hence is algebraic.
  Let $S$ be a scheme and let $f_{1}, f_{2} \colon S \to \Seq_{\A}$ be morphisms.
  For $j = 1,2$, let $\hE_{j}^{\bullet} = (\hE_{j}^{1} \stackrel{i_{j}}{\to} \hE_{j}^{2} \stackrel{p_{j}}{\to} \hE_{j}^{3})$ in $D^{b}(S\times Y)$ denote the corresponding sequence.
  On a morphism $g \colon S' \to S$, the presheaf $\Hom(f_{1},f_{2})$ evaluates to
  \begin{align*}
    \Hom(g^{*}\hE_{1}^{\bullet}, g^{*}\hE_{2}^{\bullet}) = \ker \left(\bigoplus_{k = 1}^{3}\Hom(f^{*}\hE_{1}^{k}, f^{*}\hE_{2}^{k}) \stackrel{q}{\to} \Hom(f^{*}\hE_{1}^{1}, f^{*}\hE_{2}^{2}) \oplus \Hom(f^{*}\hE_{1}^{2}, f^{*}\hE_{2}^{3})\right),
  \end{align*}
  where
  \[
    q = \begin{pmatrix}
      i_{2}\circ (-) & (-) \circ -i_{1} & 0 \\
      0 & p_{2} \circ (-) & (-) \circ -p_{1}
      \end{pmatrix}
    \]
  is the map representing the commutativity constraints.
  Let $Kq$ be the corresponding map of complexes on $S$, so
  \[
    Kq \colon \ker \left(\bigoplus_{j = 1}^{3}\CHom(\hE_{1}^{j}, \hE_{2}^{j}) \stackrel{Cq}{\to} \CHom(\hE_{1}^{1}, \hE_{2}^{2}) \oplus \CHom(\hE_{1}^{2}, \hE_{2}^{3})\right)
  \]
  Since each $\CHom(\hE_{1}^{k},\hE_{2}^{k'})$ has tor-amplitude in $[0,\infty)$, so does $C(Kq)[-1]$.
  Hence
  \[
    \Hom(g^{*}\hE_{1}^{\bullet}, g^{*}\hE_{2}^{\bullet}) = H^{0}(S', g^{*}C(Kq)[-1]),
  \]
  which as in Lemma \ref{thm:AIsLinearAlgebraic} shows that the Hom-functor is locally coherent representable.

  To show that $\A^{(2)}$ is algebraic linear, it is enough to show that $\A^{(2)}$ is a locally closed substack of $\Seq_{\A}$.
  Let $S$ be a scheme and let $f \colon S \to \Seq_{\A}$ be a morphism corresponding to a sequence $\hE_{1} \stackrel{i}{\to} \hE_{2} \stackrel{p}{\to} \hE_{3}$ of objects in $D^{b}(S \times Y)$.
  The morphism $f$ factors through $\A^{(2)}$ if and only if this sequence is an exact triangle.
  This firstly requires $p \circ i = 0$, which defines a closed subscheme of $S$, since $\Hom(\hE_{1}, \hE_{3})$ is affine over $S$.
  
  Replacing $S$ by this subscheme, we may assume that $p \circ i = 0$.
  Let $h \colon \hE_{2} \to C(i)$ be the canonical map to the cone $C(i)$ of $i$.
  Since $\Hom(\hE_{1}[1], \hE_{3}) = 0$, there is a unique map $j \colon C(i) \to \hE_{3}$ such that $p = j \circ h$.
  The given sequence is exact if and only if $j$ is an isomorphism.
  Let $Z$ be the support of $C(j)$, which is a closed subset of $S \times Y$.
  If $\pi \colon S \times Y \to S$ is the projection, then $S \setminus \pi(Z)$ is open, and equals $S \times_{\Seq_{\A}} \A^{(2)}$.
  This shows that $\A^{(2)}$ is locally closed in $\Seq_{\A}$.
\end{proof}

\begin{lem}
If $\hD$ is a triangulated category and $\hC$ is a subcategory closed under extensions such that $\Hom(E,F[-1]) = 0$ for $E, F \in \hC$, then the class of exact triangles with objects in $\hC$ give $\hC$ the structure of a Quillen exact category.
\end{lem}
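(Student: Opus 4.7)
The plan is to verify the Quillen exact category axioms for the class $\hE$ of sequences $A \xrightarrow{i} B \xrightarrow{p} C$ in $\hC$ that fit into an exact triangle $A \to B \to C \to A[1]$ in $\hD$. First I would note that $\hC$ inherits additivity from $\hD$: the split triangle $A \to A \oplus B \to B \to A[1]$ together with extension-closure shows that $\hC$ is closed under finite direct sums. Closure of $\hE$ under isomorphism and the fact that identities yield conflations (via the split triangle $A \xrightarrow{\mathrm{id}} A \to 0 \to A[1]$, using that $0 \in \hC$ as the zero extension) are immediate.

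Next I would show that admissible monomorphisms are closed under composition using the octahedral axiom. Given composable admissible monos $A \xrightarrow{i} B \xrightarrow{j} D$ with triangles $A \to B \to C \to A[1]$ and $B \to D \to E \to B[1]$ (with $C,E\in\hC$), the octahedron applied to $ji$ produces an exact triangle $C \to F \to E \to C[1]$ in $\hD$ and a triangle $A \xrightarrow{ji} D \to F \to A[1]$. Extension-closure of $\hC$ forces $F \in \hC$, and so $ji$ is admissible. The dual argument establishes closure of admissible epimorphisms under composition.

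The core construction is the pushout axiom. Given an admissible mono $i \colon A \to B$ with completing triangle $A \xrightarrow{i} B \to C \to A[1]$ and any morphism $f \colon A \to A'$ in $\hC$, I would complete the morphism $(f,-i) \colon A \to A' \oplus B$ to an exact triangle
\[
A \xrightarrow{(f,-i)} A' \oplus B \to B' \to A[1].
\]
Applying the octahedral axiom to the composition $A \xrightarrow{(f,-i)} A'\oplus B \onto B$ produces a triangle $A' \to B' \to C \to A'[1]$, and hence $B' \in \hC$ by extension-closure. To verify that the resulting square
\[
\begin{array}{ccc} A & \xrightarrow{i} & B \\ f\downarrow & & \downarrow \\ A' & \to & B' \end{array}
\]
is actually a pushout in $\hC$, take any pair $(g,h) \colon A' \to X$, $B \to X$ with $gf = hi$ and $X \in \hC$. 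The morphism $(g,h) \colon A' \oplus B \to X$ annihilates the incoming map from $A$, so the long exact sequence obtained by applying $\Hom(-,X)$ to the defining triangle of $B'$ produces a lift $k \colon B' \to X$ with $k\circ \mathrm{(incl.)} = (g,h)$. Dualising the construction establishes the pullback axiom for admissible epimorphisms.

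The main obstacle is the \emph{uniqueness} of the lift $k$, which is precisely where the hypothesis enters: uniqueness holds if and only if $\Hom(A[1], X) = \Hom(A, X[-1]) = 0$, and this is exactly the vanishing assumed for objects of $\hC$. Without this vanishing one would obtain only an ``extension square'' rather than a genuine categorical pushout, and the axioms of a Quillen exact category would fail. Once the pushout/pullback axioms are secured, the remaining verifications (closure under isomorphism, well-definedness of the admissible classes, and any auxiliary axioms in whichever presentation of the Quillen axioms one adopts) are routine consequences of the triangulated structure on $\hD$ and extension-closure of $\hC$.
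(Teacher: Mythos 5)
Your proposal is correct and follows essentially the same route as the paper: the octahedral axiom for composability of admissible monics/epics, the cone of $A \to A' \oplus B$ (dually, $C(A \oplus B' \to B)[-1]$ in the paper) for the base-change axiom, and the hypothesis $\Hom(E,F[-1])=0$ to secure the universal property. The only differences are cosmetic — you verify the pushout axiom where the paper verifies its dual pullback axiom, and the cokernel–kernel pair axiom you defer as "routine" uses the same Hom-vanishing via the long exact sequence, exactly as in the paper.
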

\begin{proof}
  We refer to \cite{buhler_exact_2010} for the axioms of a Quillen exact category.

  Using the long exact sequence of shifted Hom-spaces associated to an exact triangle and the vanishing of negative extensions, we find that the set of exact triangles define a set of cokernel-kernel pairs on $\hC$.
  Axioms [E0] and [E0\textsuperscript{op}] follow.

  For [E1], suppose $f_{1}$ and $f_{2}$ are composable admissible monics, which means that $C(f_{1})$ and $C(f_{2})$ are objects of $\hC$.
  By the octahedral axiom, $C(f_{2} \circ f_{1})$ is an extension of $C(f_{2})$ by $C(f_{1})$, and since $\hC$ is closed under extensions, it follows that $C(f_{2} \circ f_{1}) \in \hC$, hence $f_{2} \circ f_{1}$ is an admissible monic.
  Axiom [E1\textsuperscript{op}] is similar.

  For [E2\textsuperscript{op}], let $A \to B$ be an admissible epic in $\hC$, and let $B' \to B$ be an arbitrary morphism of $\hC$.
  Define $A'$ as $C(A \oplus B' \to B)[-1]$, and let $f \colon A \to B'$ be the natural map.
  Applying the octahedral axiom to the sequence of morphisms $A' \to B' \to B' \oplus A$ we find that $C(f)$ equals the cone of the map $A \to B$, and this shows both that $A' \in \hC$ and that $A' \to B'$ is an admissible epic.
  Finally, for every $E \in \hC$, we have a long exact sequence beginning with
  \[
    0 \to \Hom(E, A') \to \Hom(E, A \oplus B') \to \Hom(E, B),
  \]
  which shows that $A'$ is the pullback of $A \to B$ along $B' \to B$.
\end{proof}

\begin{cor}
  \label{thm:QuillenExact}
  For any scheme $S$, the objects of the category $\A^{(2)}(S)$ define the structure of a Quillen exact category on the category $\A(S)$.
\end{cor}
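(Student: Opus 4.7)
The plan is simply to apply the preceding lemma to the triangulated category $\hD = D^b(S \times Y)$ and its full subcategory $\hC = \A(S)$. With this choice, the collection of exact triangles in $\hD$ whose three vertices all lie in $\hC$ coincides, by the definition of $\A^{(2)}$ given earlier in the appendix, with the objects of $\A^{(2)}(S)$. So, modulo verifying the two hypotheses of the lemma, the conclusion is immediate.

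The two hypotheses of the lemma are (a) that $\hC$ is closed under extensions in $\hD$, and (b) that $\Hom_{\hD}(E, F[-1]) = 0$ for all $E, F \in \hC$. Both are direct consequences of Assumption~\ref{categoryAssumption}: the first is part of the assumption verbatim, and the second is the special case $i = -1$ of the vanishing of $\Hom(\hE_1, \hE_2[i])$ for $i \le -1$ applied to the scheme~$S$ (equivalently, it is the universal gluability of objects of $\A(S)$).

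With the hypotheses checked, the lemma produces the required Quillen exact structure on $\A(S)$ whose admissible short exact sequences are exactly the triangles in $\A^{(2)}(S)$. Since no step involves more than invoking definitions already in place, there is essentially no obstacle here; the real content has been isolated into the abstract lemma above, whose proof uses the octahedral axiom together with the vanishing of $\Hom(-,-[-1])$ to verify the Quillen axioms \textup{[E0], [E0\textsuperscript{op}], [E1], [E1\textsuperscript{op}], [E2], [E2\textsuperscript{op}]}. The only mild subtlety worth noting is that one should remember that the pullback/pushout squares required by Quillen's axioms exist inside $\A(S)$ rather than merely in $\hD$, but this follows from extension-closure exactly as in the proof of the lemma.
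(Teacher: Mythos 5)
Your proposal is correct and matches the paper's (implicit) proof: the corollary is exactly the preceding lemma applied to $\hD = D^{b}(S \times Y)$ and $\hC = \A(S)$, with both hypotheses supplied by Assumption~\ref{categoryAssumption}. One small aside: the vanishing $\Hom(E,F[-1])=0$ for $E,F\in\A(S)$ is the pairwise statement in Assumption~\ref{categoryAssumption} (not literally the same as universal gluability, which concerns self-extensions, though Remark~\ref{redundancyRemark} relates them), but your primary justification is the right one.
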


\begin{lem}
  \label{thm:Karoubian}
  For any scheme $S$, the category $\A(S)$ is Karoubian.
\end{lem}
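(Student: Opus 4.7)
My plan is to split an arbitrary idempotent $e \colon \hE \to \hE$ in $\A(S)$ by first splitting it in a larger ambient triangulated category, and then invoking the closure of $\A(S)$ under direct summands provided by Assumption~\ref{categoryAssumption}. The argument thus breaks into two steps: a splitting step in the derived category of quasi-coherent sheaves on $S \times Y$, and a descent step showing that the resulting summands lie in $\A(S)$.

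For the splitting step, I would regard $\hE$ as an object of the unbounded derived category $D(\mathrm{QCoh}(S \times Y))$ via the natural inclusions $\A(S) \subset D^{b}(\Coh(S \times Y)) \subset D(\mathrm{QCoh}(S \times Y))$. Since $D(\mathrm{QCoh}(S \times Y))$ admits arbitrary coproducts, it is Karoubian by the standard theorem of B\"okstedt--Neeman on triangulated categories with countable coproducts. The idempotent $e$ therefore splits there as $\hE \cong \hF \oplus \mathcal{G}$, with $e$ identified with the projection onto $\hF$.

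For the descent step, I verify that $\hF$ and $\mathcal{G}$ belong to $\A(S)$. Both are retracts of the perfect complex $\hE$, and the property of being a perfect complex on $S \times Y$ is preserved under direct summands in $D(\mathrm{QCoh}(S \times Y))$; this can be checked locally on $S \times Y$, where $\hE$ is quasi-isomorphic to a bounded complex of finite-rank free modules, and the splitting descends to a summand of this model up to homotopy. Moreover, $\hF$ and $\mathcal{G}$ are universally gluable, since for any base change $f \colon S' \to S$ the groups $\Hom(f^{*}\hF, f^{*}\hF[i])$ and $\Hom(f^{*}\mathcal{G}, f^{*}\mathcal{G}[i])$ are direct summands of $\Hom(f^{*}\hE, f^{*}\hE[i]) = 0$ for $i \le -1$. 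Hence $\hF, \mathcal{G} \in \Mum_{Y}(S)$, and Assumption~\ref{categoryAssumption} forces $\hF, \mathcal{G} \in \A(S)$.

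The whole argument is essentially routine; the only point that requires some care is the preservation of perfection under retracts in $D(\mathrm{QCoh}(S \times Y))$, which is standard for the locally Noetherian scheme $S \times Y$ arising here (since $Y$ is projective and the paper's standing convention takes $S$ locally of finite type over $\C$).
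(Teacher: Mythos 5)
Your proof is correct and follows essentially the same route as the paper: split the idempotent in the unbounded derived category $D_{\text{QCoh}}(S \times Y)$ (which is Karoubian by B\"okstedt--Neeman), observe the summands are perfect, and conclude via the closure of $\A(S)$ under direct summands from Assumption~\ref{categoryAssumption}. The extra checks you spell out (perfection and universal gluability of the summands) are fine and are only implicit in the paper's argument.
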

\begin{proof}
  By Assumption \ref{categoryAssumption}, the category has finite direct sums.
  It remains to show that it is idempotent complete.
  By \cite[Sec.~3]{bokstedt_homotopy_1993}, the unbounded derived category $D_{\text{QCoh}}(S \times Y)$ is idempotent complete, so any idempotent $p \colon \hE \to \hE$ with $\hE \in \A(S)$ induces a splitting $\hE = \hE_{1} \oplus \hE_{2}$ in $D_{\text{QCoh}}(S \times Y)$, and then both $\hE_{1}$ and $\hE_{2}$ must be perfect.
  By our assumptions on $\A$, this implies that $\hE_{1}, \hE_{2} \in \A(S)$.
\end{proof}

Let $a_{i} \colon \A^{(2)} \to \A$ denote the morphism taking the sequence $\hE_{1} \to \hE_{2} \to \hE_{3}$ to $\hE_{i}$.
\begin{lem}
  \label{thm:representable}
  The morphism $a_{2} \colon \A^{(2)} \to \A$ is representable in the sense of \cite[Def.~1.49]{1612.00372}
\end{lem}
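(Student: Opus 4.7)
The plan is to reduce representability of $a_2$ to the representability-by-affine-schemes of the Hom-stacks from Lemma \ref{thm:AIsLinearAlgebraic}, exploiting the explicit description of $\A^{(2)}$ as a locally closed substack of $\Seq_\A$ given in the proof of Lemma \ref{thm:LinearAlgebraicMorphismStacks}. It suffices to show that for every scheme $S$ and morphism $s \colon S \to \A$ corresponding to an object $\hE_2 \in \A(S)$, the fibre product $\A^{(2)} \times_{\A, a_2} S$ is an algebraic stack locally of finite type over $S$.

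First I would build an ambient algebraic stack containing $\A^{(2)} \times_{\A, a_2} S$. Over $\A \times \A \times S$, consider the stack $\M_S$ whose $T$-points (with $g \colon T \to S$) are quadruples $(\hE_1, \hE_3, i \colon \hE_1 \to g^{*}\hE_2, p \colon g^{*}\hE_2 \to \hE_3)$ with $\hE_1, \hE_3 \in \A(T)$. This stack is the fibre product
\[
  \M_S \;=\; \Hom_\A(-\,,\, s^{*}\hU)\big|_{\A \times S} \;\times_{\A \times \A \times S}\; \Hom_\A(s^{*}\hU\,,\, -)\big|_{\A \times S}
\]
of two pullbacks of the universal Hom stack. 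By Lemma \ref{thm:AIsLinearAlgebraic}, each factor is representable by affine schemes over $\A \times \A \times S$, so $\M_S \to \A \times \A \times S$ is representable by affine schemes. Since $\A \times \A \times S$ is algebraic locally of finite type, so is $\M_S$.

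Next I would identify $\A^{(2)} \times_{\A, a_2} S$ as a locally closed substack of $\M_S$ by re-running the argument from the proof of Lemma \ref{thm:LinearAlgebraicMorphismStacks}. Imposing $p \circ i = 0$ is a closed condition, since $\Hom_\A(\hE_1, \hE_3)$ is affine over the base. On the resulting closed substack the canonical map $h \colon g^{*}\hE_2 \to C(i)$ lifts $p$ uniquely to $j \colon C(i) \to \hE_3$ (using that $\Hom(\hE_1[1], \hE_3) = 0$ by Assumption \ref{categoryAssumption}), and the triangle is exact if and only if $j$ is an isomorphism; as in that proof, the open locus where $j$ is an isomorphism is the complement of the image of $\supp C(j)$ under the proper projection $T \times Y \to T$. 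Therefore $\A^{(2)} \times_{\A, a_2} S$ is locally closed in $\M_S$, hence algebraic and locally of finite type, which is the required representability property for $a_2$.

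The main obstacle is that the cone construction $i \mapsto C(i)$ is not a functor on derived categories in a canonical way, so one must be careful in setting up the map $\M_S \to \A$ (or equivalently, the open locus where the induced $j$ is an isomorphism). The Ext-vanishing in Assumption \ref{categoryAssumption} resolves this uniqueness problem after the closed condition $p \circ i = 0$ has been imposed, exactly as in the proof of Lemma \ref{thm:LinearAlgebraicMorphismStacks}, and the remaining steps are routine base-change compatibilities for Hom-stacks and for the support of a perfect complex.
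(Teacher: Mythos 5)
Your proposal proves the wrong statement: representability in the sense of \cite[Def.~1.49]{1612.00372} (as for ordinary stacks) requires that for every scheme $S$ and morphism $S \to \A$ the fibre product $\A^{(2)} \times_{\A, a_{2}} S$ be an algebraic \emph{space} over $S$ --- equivalently, that $a_{2}$ be faithful, i.e.\ injective on automorphism groups of objects. What you establish is only that this fibre product is an algebraic stack locally of finite type, which carries no new content: both $\A^{(2)}$ and $\A$ are already known to be algebraic and locally of finite type (Lemma \ref{thm:LinearAlgebraicMorphismStacks}), so any fibre product of this kind is automatically an algebraic stack. Nowhere do you address the automorphisms of objects in the fibre, which is the whole point of the lemma; your construction of $\M_S$ and the locally closed locus of exact triangles simply re-runs the proof of Lemma \ref{thm:LinearAlgebraicMorphismStacks} without touching the stackiness of the fibres.

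The missing idea --- which is the entirety of the paper's proof --- is that an endomorphism $(\phi_{1},\phi_{2},\phi_{3})$ of an exact sequence $\hE_{1} \xrightarrow{i} \hE_{2} \xrightarrow{p} \hE_{3}$ in $\A(S)$ is determined by $\phi_{2}$. Indeed, applying $\Hom(\hE_{1},-)$ to the triangle and using $\Hom(\hE_{1},\hE_{3}[-1]) = 0$ (the negative-Ext vanishing of Assumption \ref{categoryAssumption}) shows that $\psi \mapsto i \circ \psi$ is injective on $\Hom(\hE_{1},\hE_{1})$, so $\phi_{1}$ is determined by $i \circ \phi_{1} = \phi_{2} \circ i$; dually, $\Hom(\hE_{1}[1],\hE_{3}) = 0$ shows $\phi_{3}$ is determined by $\phi_{3} \circ p = p \circ \phi_{2}$. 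Hence an automorphism of an object of $\A^{(2)}(T)$ mapping to the identity automorphism of its middle term is the identity, the objects of the fibre $\A^{(2)} \times_{\A,a_{2}} S$ have trivial automorphism groups, and the fibre is an algebraic space. Your proposed argument would be salvageable only if you appended exactly this faithfulness argument at the end; without it the proof does not establish representability.
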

\begin{proof}
  Any endomorphism of an exact sequence $\hE_{1} \to \hE_{2} \to \hE_{3}$ is determined by the induced endomorphism of $\hE_{2}$, and the claim follows.
\end{proof}


\begin{lem}
  \label{thm:finiteType}
  The morphism $a_{1} \times a_{3} \colon \A^{(2)} \to \A \times \A$ is of finite type.
\end{lem}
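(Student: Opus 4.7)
The plan is to reduce to a statement about an affine base and then exhibit a finite-type atlas. It suffices to show that for every affine scheme $S$ and every morphism $S \to \A \times \A$ corresponding to objects $\hE_1, \hE_3 \in \A(S)$, the fiber product $T \coloneq S \times_{\A \times \A} \A^{(2)}$ is of finite type over $S$. An $S'$-point of $T$, for a scheme morphism $g \colon S' \to S$, is an exact triangle $g^{*}\hE_1 \to \hE_2 \to g^{*}\hE_3 \to g^{*}\hE_1[1]$ in $D^{b}(S' \times Y)$. Such a triangle is classified, up to non-unique isomorphism of $\hE_2$, by its extension class $\xi \in \Hom(g^{*}\hE_3, g^{*}\hE_1[1])$, and $\hE_2$ automatically lies in $\A(S')$ by the extension-closure part of Assumption~\ref{categoryAssumption}.

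Next I would use the perfect complex $E \coloneq \CHom(\hE_3, \hE_1)[1]$ on $S$ to control the extension classes. By Assumption~\ref{categoryAssumption} the complex $\CHom(\hE_3, \hE_1)$ has tor-amplitude in $[0, \infty)$, so $E$ has tor-amplitude in $[-1, \infty)$. After Zariski-localizing on $S$, represent $E$ by a bounded-above complex $K^{-1} \xrightarrow{d^{-1}} K^{0} \xrightarrow{d^{0}} K^{1} \to \cdots$ of finite rank locally free sheaves. For any $g \colon S' \to S$ one then has
\[
  \Hom(g^{*}\hE_3, g^{*}\hE_1[1]) = H^{0}(S', g^{*}E) = g^{*}\ker(d^{0})/g^{*}\im(d^{-1}),
\]
so the sheaf of extension classes is a quotient of two finite rank locally free sheaves on $S$.

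I would then identify $T$ with the quotient stack obtained by letting the total space of $K^{-1}$ act on the total space of $\ker(d^{0})$ by translation through $d^{-1}$. The natural morphism from the total space of $\ker(d^{0})$ to $T$, sending a class $\xi$ to the exact triangle it determines, is affine, smooth, and surjective, providing an atlas of finite type over $S$. This exhibits $T$ as an algebraic stack of finite type over $S$ and concludes the proof.

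The main obstacle is the rigorous identification of the automorphism structure of $T$ with the translation action of the total space of $K^{-1}$. At the level of points this is the standard statement that the group of automorphisms of an exact triangle fixing its endpoints is the image of $\Hom(\hE_3, \hE_1)$ under the map $\alpha \mapsto \ID_{\hE_2} + i \circ \alpha \circ p$, but globalising this in families requires some care. Alternatively, the result can be extracted by transporting Bridgeland's analogous analysis for coherent sheaves \cite{MR2854172} across the open inclusion $\A \into \Mum_{Y}$, using the derived-category foundations of Lieblich \cite{MR2177199}.
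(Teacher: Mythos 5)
Your construction is essentially the paper's: both use the perfect complex $\CHom$ between the two fixed objects, represent it Zariski-locally by a finite complex of finite rank locally free sheaves, and map the finite-type affine $S$-scheme of degree-one cocycles (your total space of $\ker(d^{0})$, the paper's scheme attached to $\ker d_{1}$) to the fibre product. The difference lies in how much you try to prove. You assert the full quotient-stack presentation $[\ker(d^{0})/K^{-1}]$ together with smoothness of the atlas, and you yourself flag that this requires globalising in families the computation of automorphisms of a triangle fixing its outer terms -- a step you do not carry out (the paper states this presentation only as an unproved remark after the lemma). The paper's proof shows this is unnecessary: the fibre product is automatically locally of finite type over $S$, so it suffices that the cocycle scheme surjects onto it, and surjectivity is tested on affine schemes $S'$, where $\Hom(g^{*}\hE_{3},g^{*}\hE_{1}[1])$ is computed by the global-sections complex of $g^{*}K^{\bullet}$, so every extension class lifts to a cocycle; quasi-compactness then gives finite type. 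Relatedly, your identity $\Hom(g^{*}\hE_{3},g^{*}\hE_{1}[1]) = g^{*}\ker(d^{0})/g^{*}\im(d^{-1})$ claimed for arbitrary $g\colon S'\to S$ is not correct as stated: it conflates the hypercohomology group $H^{0}(S',g^{*}E)$ with a sheaf-level quotient, and pullback does not commute with kernels; the correct (and sufficient) statement is the affine one just described. If you trim your argument to surjectivity from the finite-type cocycle scheme, dropping the unproved quotient and smoothness claims, it is complete and coincides with the paper's proof.
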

\begin{proof}
  Let $S$ be an affine scheme, and let $f_{1}, f_{2} \colon S \to \A$ be morphisms.
  We must show that the fibre product stack $\X := S \times_{\A^2} \A^{(2)}$ is of finite type over $S$.
  The stack $\X$ parametrises exact sequences $f_{1}^{*}\hU \to \hE \to f_{2}^{*}\hU$, and the automorphisms are those that are the identity on $f_{1}^{*}\hU$ and $f_{2}^{*}\hU$.
  The complex $\CHom(f_{2}, f_{1})$ is represented by a finite complex of finite rank projectives $0 \to K^{0} \to K^{1} \stackrel{d_{1}}{\to} K^{2} \to \ldots$.
  Let $Y \to S$ be the affine scheme defined by $\ker d_{1}$, \ie~for any morphism $g \colon S' \to S$, we have $Y(S') = H^{0}(S', \ker (g^{*}K^{1} \stackrel{g^{*}d_{1}}{\to} g^{*}K^{2}))$.
  There is a homomorphism
  \begin{equation}
    \label{eqn:homomorphism}
    H^{0}(S', \ker (g^{*}K^{1} \stackrel{g^{*}d_{1}}{\to} g^{*}K^{2})) \to H^{1}(S', g^{*}K^{\bullet}) = \Ext^{1}(g^{*}f_{2}^{*}\hU, g^{*}f_{1}^{*}\hU),
  \end{equation}
  which defines a morphism $Y \to \X$.
  If $S'$ is affine, then (\ref{eqn:homomorphism}) is an isomorphism, which implies that the functor $Y(S) \to \X(S')$ is essentially surjective.
  Thus the morphism $Y \to \X$ is surjective, and since $Y$ is of finite type over $S$, so is $\X$.
\end{proof}

\begin{rmk}
  One can show that the stack $\X$ in the proof above is equivalent to the quotient stack $[\ker d_{1}/K^{0}]$, where $K^{0}$ acts additively on $\ker d_{1}$ via $d_{0}$.
\end{rmk}


%
%
%
%
%
\bibliographystyle{alpha}
\bibliography{biblio}

\end{document}